\pgfplotsset{compat=1.17} 
\colorlet{linkequation}{blue}
\renewcommand{\P}{\mathbb{P}}
\newcommand{\E}{\mathbb{E}}
\newcommand{\Var}{\text{Var}}
\newcommand{\Cov}{\text{Cov}}
\newcommand{\cN}{\mathcal{N}}
\newcommand{\Z}{\mathbb{Z}}
\newcommand{\R}{\mathbb{R}}
\renewcommand{\S}{\mathbb{S}}
\newcommand{\eps}{\varepsilon} 
\def\id{{\mathbf I}}
\renewcommand{\d}{\textup{d}}
\newcommand{\<}{\langle}
\renewcommand{\>}{\rangle}
\newcommand{\sign}{\operatorname{sign}}
\newcommand{\diag}{\text{diag}}
\newcommand{\op}{{\rm op}}
\def\sT{{\mathsf T}}
\def\bzero{{\boldsymbol 0}}
\DeclareMathOperator*{\argmin}{arg\,min}
\newtheorem{theorem}{Theorem}[section]
\newtheorem{lemma}[theorem]{Lemma}
\newtheorem{proposition}[theorem]{Proposition}
\newtheorem{corollary}[theorem]{Corollary}
\newtheorem{definition}[theorem]{Definition}
\newtheorem{assumption}[theorem]{Assumption}
\newtheorem*{theorem*}{Theorem}
\newtheorem*{assumption*}{Assumption}
\theoremstyle{definition}
\newtheoremstyle{myremark} 
    {\topsep}                    
    {\topsep}                    
    {\rm}                        
    {}                           
    {\bf}                        
    {.}                          
    {.5em}                       
    {}  
\theoremstyle{myremark}
\newtheorem{remark}{Remark}[section]
\DeclareSymbolFont{rsfs}{U}{rsfs}{m}{n}
\DeclareSymbolFontAlphabet{\mathscrsfs}{rsfs}
\def\bA{{\boldsymbol A}}
\def\bB{{\boldsymbol B}}
\def\bC{{\boldsymbol C}}
\def\bD{{\boldsymbol D}}
\def\bF{{\boldsymbol F}}
\def\bG{{\boldsymbol G}}
\def\bH{{\boldsymbol H}}
\def\bI{{\boldsymbol I}}
\def\bL{{\boldsymbol L}}
\def\bM{{\boldsymbol M}}
\def\bR{{\boldsymbol R}}
\def\bS{{\boldsymbol S}}
\def\bT{{\boldsymbol T}}
\def\bU{{\boldsymbol U}}
\def\bV{{\boldsymbol V}}
\def\bW{{\boldsymbol W}}
\def\bX{{\boldsymbol X}}
\def\bY{{\boldsymbol Y}}
\def\bZ{{\boldsymbol Z}}
\def\ba{{\boldsymbol a}}
\def\bb{{\boldsymbol b}}
\def\be{{\boldsymbol e}}
\def\boldf{{\boldsymbol f}}
\def\bg{{\boldsymbol g}}
\def\bh{{\boldsymbol h}}
\def\bi{{\boldsymbol i}}
\def\bk{{\boldsymbol k}}
\def\bt{{\boldsymbol t}}
\def\bu{{\boldsymbol u}}
\def\bv{{\boldsymbol v}}
\def\bw{{\boldsymbol w}}
\def\bx{{\boldsymbol x}}
\def\by{{\boldsymbol y}}
\def\bz{{\boldsymbol z}}
\def\bmu{{\boldsymbol \mu}}
\def\bbeta{{\boldsymbol \beta}}
\def\beps{{\boldsymbol \eps}}
\def\btheta{{\boldsymbol \theta}}
\def\bxi{{\boldsymbol \xi}}
\def\bDelta{{\boldsymbol \Delta}}
\def\bLambda{{\boldsymbol \Lambda}}
\def\bSigma{{\boldsymbol \Sigma}}
\def\bTheta{{\boldsymbol \Theta}}
\def\cR{\mathcal{R}}
\def\test{{\rm test}}
\def\de{{\rm d}}
\def\Tr{{\rm Tr}}
\def\de{{\rm d}}
\def\Unif{{\rm Unif}}
\def\RF{\mathsf{RF}}
\def\cV{{\mathcal V}}
\def\cG{{\mathcal G}}
\def\cP{{\mathcal P}}
\def\cT{{\mathcal T}}
\def\cL{{\mathcal L}}
\def\cE{{\mathcal E}}
\def\cI{{\mathcal I}}
\def\cV{{\mathcal V}}
\def\cG{{\mathcal G}}
\def\cP{{\mathcal P}}
\def\cT{{\mathcal T}}
\def\cH{{\mathcal H}}
\def\cA{{\mathcal A}}
\def\Unif{{\sf Unif}}
\def\proj{{\mathsf P}}
\def\RF{{\sf RF}}
\def\naturals{{\mathbb N}}
\def\proj{{\mathsf P}}
\def\Unif{{\sf Unif}}
\def\normal{\mathcal{N}}
\def\proj{{\mathsf P}}
\def\RF{{\sf RF}}
\def\naturals{{\mathbb N}}
\def\proj{{\mathsf P}}
\def\cE{{\mathcal E}}
\def\cI{{\mathcal I}}
\def\He{{\rm He}}
\def\de{{\rm d}}
\def\Unif{{\rm Unif}}
\def\cE{{\mathcal E}}
\def\bt{{\boldsymbol t}}
\def\bDelta{{\boldsymbol \Delta}}
\def\bA{{\boldsymbol A}}
\def\btheta{{\boldsymbol \theta}}
\def\bTheta{{\boldsymbol \Theta}}
\def\bLambda{{\boldsymbol \Lambda}}
\def\cM{{\mathcal M}}
\def\cT{{\mathcal T}}
\def\cV{{\mathcal V}}
\def\diag{{\rm diag}}
\def\bS{{\boldsymbol S}}
\def\bD{{\boldsymbol D}}
\def\bL{{\boldsymbol L}}
\def\bb{{\boldsymbol b}}
\def\bR{{\boldsymbol R}}
\def\bc{{\boldsymbol c}}
\def\bC{{\boldsymbol C}}
\def\boldf{\boldsymbol{f}}
\def\hbtheta{\hat \btheta}
\def\dom{{\rm dom}}
\def\br{{\boldsymbol r}}
\def\bGamma{{\boldsymbol \Gamma}}
\def\boldf{\boldsymbol{f}}
\def\hbtheta{\hat \btheta}
\def\bq{\boldsymbol{q}}
\def\bs{{\boldsymbol s}}
\def\bones{{\boldsymbol 1}}
\DeclareSymbolFont{Greekletters}{OT1}{iwona}{m}{n}
\DeclareSymbolFont{greekletters}{OML}{iwona}{m}{it}
\DeclareMathSymbol{\salpha}{\mathord}{greekletters}{"0B}
\DeclareMathSymbol{\sbeta}{\mathord}{greekletters}{"0C}
\DeclareMathSymbol{\sgamma}{\mathord}{greekletters}{"0D}
\DeclareMathSymbol{\sOmega}{\mathord}{Greekletters}{"0A}
\DeclareMathSymbol{\smu}{\mathord}{greekletters}{"16}
\DeclareMathSymbol{\svarepsilon}{\mathord}{greekletters}{"22}
\DeclareMathSymbol{\svarrho}{\mathord}{greekletters}{"25}
\DeclareMathSymbol{\svarphi}{\mathord}{greekletters}{"27}
\def\hcR{\widehat{\cR}}
\def\sK{\mathsf{K}}
\def\sG{\mathsf{G}}
\def\sCG{\mathsf{CG}}
\def\sPG{\mathsf{PG}}
\def\sC{\mathsf{C}}
\def\sc{\mathsf{c}}
\def\sr{\mathsf{r}}
\def\btau{{\boldsymbol \tau}}
\def\R{\mathbb{R}}
\def\E{\mathbb{E}}
\def\cE{\mathcal{E}}
\def\cL{\mathcal{L}}
\def\cN{\normal}
\def\cH{\mathcal{H}}
\def\1{\mathbf{1}}
\def\test{\mathrm{test}}
\def\op{\mathrm{op}}
\def\Sym{\mathrm{Sym}}
\def\Tr{\operatorname{Tr}}
\def\Prox{{\rm Prox}}
\def\CG{{\rm CG}}
\newtcolorbox{modelbox}[2][]{
    colback=black!5!white,  
    colframe=black!75!white,  
    fonttitle=\bfseries,
    coltitle=black,
    title=#2,
    breakable,
    enhanced,
    attach boxed title to top left={yshift=-2mm, xshift=3mm},
    boxed title style={
        colback=white,
        colframe=black!75!white,
    },
    #1
}
\title{When does Gaussian equivalence fail and how to fix it:\\
Non-universal behavior of random features with quadratic scaling
}
\author[1]{Garrett G. Wen}
\author[2,3]{\;Hong Hu}
\author[4]{\;Yue M. Lu}
\author[1]{\;Zhou Fan}
\author[1]{\;Theodor Misiakiewicz}
\affil[1]{\small Department of Statistics and Data Science, Yale University}
\affil[2]{\small 
Department of Electrical and System Engineering,  Washington University in Saint Louis}
\affil[3]{\small 
Department of Statistics and Data Science,  Washington University in Saint Louis}
\affil[4]{\small 
John A. Paulson School of Engineering and Applied Sciences, Harvard University}
\date{\today}
\begin{document}
\maketitle

\begin{abstract}
A major effort in modern high-dimensional statistics has been devoted to the analysis of linear predictors trained on nonlinear feature embeddings via empirical risk minimization (ERM). \textit{Gaussian equivalence theory} (GET) has emerged as a powerful universality principle in this context: it states that the behavior of high-dimensional, complex features can be captured by Gaussian surrogates, which are more amenable to analysis. Despite its remarkable successes, numerical experiments show that this equivalence can fail even for simple embeddings---such as polynomial maps---under general scaling regimes.

We investigate this breakdown in the setting of random feature (RF) models in the \textit{quadratic scaling regime}, where both the number of features and the sample size grow quadratically with the data dimension.  We show that when the target function depends on a low-dimensional projection of the data, such as generalized linear models, GET yields incorrect predictions. To capture the correct asymptotics, we introduce a \textit{Conditional Gaussian Equivalent} (CGE) model, which can be viewed as appending a low-dimensional non-Gaussian component to an otherwise high-dimensional Gaussian model. This hybrid model retains the tractability of the Gaussian framework and accurately describes RF models in the quadratic scaling regime. We derive sharp asymptotics for the training and test errors in this setting, which continue to agree with numerical simulations even when GET fails.

Our analysis combines general results on CLT for Wiener chaos expansions and a careful two-phase Lindeberg swapping argument. Beyond RF models and quadratic scaling, our work hints at a rich landscape of universality phenomena in high-dimensional ERM. 
\end{abstract}

\tableofcontents

\clearpage

\section{Introduction}

A central paradigm in statistical learning is \textit{Empirical Risk Minimization} (ERM), where predictors are constructed by minimizing a (possibly regularized) empirical risk on the training data. In recent decades, substantial effort has been devoted to characterizing the behavior of ERM-based procedures in the high-dimensional regime relevant to modern applications. Classical frameworks---such as consistency or uniform convergence---often fail to accurately describe estimators in these settings. In response, a large body of work has developed sharp theories that precisely capture the asymptotic performance of ERM, including the prediction risk and estimation errors, and their dependence on problem parameters \cite{bayati2011lasso,thrampoulidis2015regularized,donoho2016high,thrampoulidis2018precise,el2018impact,lei2018asymptotics,candes2020phase,liang2022precise,celentano2023lasso,montanari2025generalization,asgari2025local}. This program has clarified the high-dimensional behavior of widely used methods---including the Lasso, ridge regression, and logistic regression---while also guiding the design of new statistical procedures (e.g., debiasing methods for inference \cite{javanmard2014confidenceintervalshypothesistesting,van_de_Geer_2014,zhang2012confidenceintervalslowdimensionalparameters,sur2019likelihood,celentano2023lasso}) and elucidating a number of high-dimensional phenomena such as phase transitions in estimation \cite{donoho2009message,barbier2019optimal}, computational bottlenecks \cite{celentano2020estimation,celentano2022fundamental,montanari2024exceptional}, and benign overfitting \cite{hastie2022surprises,cheng2024dimension}. 

Consider the standard supervised learning setting in which we observe $n$ training samples $\{ (y_i,\bx_i) \}_{i \leq n}$ drawn i.i.d.~from an unknown distribution, where $\bx_i \in \R^d$ are covariate (input) vectors and $y_i \in \R$ are responses (or labels). The goal is to construct a predictor $\hat f$ with small test error from a parametric model class $\{ f(\bx;\btheta) : \btheta \in \bTheta\}$. For concreteness, we focus on the popular class of \textit{kernel} (or \textit{linear}) \textit{methods}. In these models, the inputs are embedded into a feature space through a (possibly stochastic) featurization map $\phi:\R^d \to \R^p$, and the predictor is taken to be linear in the features: 
\begin{equation}\label{eq:intro_linear_predictor}
f(\bx;\btheta) = \< \btheta , \phi (\bx ) \> , \qquad \btheta \in \R^p.
\end{equation}
The parameter $\btheta$ is estimated by minimizing the regularized empirical risk:
\begin{equation}\label{eq:intro_ERM}
\hbtheta = \argmin_{\btheta \in \R^p} \hcR_{n,p} (\btheta ), \qquad \quad \hcR_{n,p} (\btheta ) := \frac{1}{n} \sum_{i \in [n]} \ell (y_i, \< \btheta, \phi (\bx_i)\> )  + \frac{\lambda}{2} \| \btheta \|_2^2,
\end{equation}
where $\ell:\R \times \R \to \R_{\geq 0}$ is a loss function and $\lambda \geq 0$ is a regularization parameter. The performance of the resulting predictor is evaluated through its test error
\begin{equation}\label{eq:intro_test_error}
\cR_{\test} (\hbtheta) = \E_{y,\bx} [ \ell_{\test} (y, \< \hbtheta, \phi (\bx) \>)],
\end{equation}
for a test loss $\ell_{\test} : \R \times \R \to \R_{\geq 0}$ that may differ from the training loss. The method \eqref{eq:intro_ERM} covers many popular approaches, including classical linear and logistic regression, and kernel and random feature models.

We are interested in understanding the high-dimensional behavior of \eqref{eq:intro_ERM} when $n,d,p$ are all large. This analysis is challenging for two reasons. First, the estimator $\hbtheta$ is random and defined only implicitly as the solution  of a high-dimensional optimization problem. Second, the featurization map $\phi (\bx)$ may introduce complex dependencies among the feature coordinates. A fruitful approach addresses these difficulties by replacing the (potentially complicated) features $\phi(\bx_i)$ with a suitably constructed Gaussian model, and assuming that the asymptotic behavior of the ERM solution is preserved under this substitution. We refer to this approach as the \textit{Gaussian Equivalent Theory} (GET).  Specifically, assume the labels are generated according to $y_i = \eta ( f_* (\bx_i) ; \eps_i)$ where $\eta : \R^m \times \R \to \R$ is a link function, $f_*(\bx) \in \R^m$ is a latent signal (with $m = O(1)$) and $\eps_i$ is independent noise. GET posits that the joint distribution of $(f_* (\bx),\phi(\bx))$ can be approximated by a Gaussian vector $(f^\sG,\bz^\sG) \sim \normal (\bmu^\sG,\bSigma^\sG)$ with matching first two moments
\begin{equation}\label{eq:GET_model}
\bmu^\sG = \E_{\bx} \left[ \begin{pmatrix}
    f_*(\bx)\\
    \phi(\bx)
\end{pmatrix} \right]\in \R^{m+p},  \qquad\quad \bSigma^\sG = \Cov_\bx \left[ \begin{pmatrix}
    f_*(\bx)\\
    \phi(\bx)
\end{pmatrix} \right] \in \R^{(m+p)\times (m+p)},
\end{equation}
and that the asymptotic training and test errors of \eqref{eq:intro_ERM} remain unchanged when the original data $\{(f_*(\bz_i),\phi(\bx_i))\}_{i \leq n}$ are replaced by their Gaussian counterparts $\{ (f_i^\sG,\bz_i^\sG)\}_{i \leq n}$.

Importantly, Gaussian features often allow for exact asymptotic analyses of \eqref{eq:intro_ERM} through powerful tools such as Gaussian comparison inequalities \cite{gordon1985some,stojnic2013framework,oymak2013squared,thrampoulidis2015gaussianminmaxtheorempresence,thrampoulidis2015regularized,thrampoulidis2018precise}, Approximate Message Passing (AMP) \cite{donoho2009message,Bayati_2011,bayati2011lasso,Bayati_2015,donoho2016high,sur2019likelihood}, Dynamical Mean-Field Theory (DMFT) \cite{Zdeborov__2016,celentano2021high,bordelon2022selfconsistentdynamicalfieldtheory,gerbelot2024rigorous}, or Kac-Rice formula \cite{auffinger2011randommatricescomplexityspin,maillard2020landscape,asgari2025local}. 
Studying these Gaussian models has led to a number of remarkable insights, for instance in determining optimal losses and regularizers \cite{donoho2016high,celentano2022fundamental,aubin2020generalization},  debiasing statistical procedures
\cite{sur2019likelihood,celentano2023lasso}, or characterizing phase transitions \cite{donoho2009message,barbier2019optimal,sur2019likelihood,candes2020phase} and double descent phenomena  \cite{deng2022model,hastie2022surprises,montanari2025generalization,hastie2022surprises,zhou2023uniform}. Due to its effectiveness, GET has become a common modeling assumption or preprocessing step, explicitly or implicitly adopted in various recent works \cite{loureiro2021learning,cui2023bayes,defilippis2024dimension,atanasov2025scalingrenormalizationhighdimensionalregression,montanari2025dynamicaldecouplinggeneralizationoverfitting}.

This naturally leads to the question:
    \emph{Under what conditions on the model \eqref{eq:intro_ERM} does GET hold?} Early results establishing such universality were limited to feature maps with independent coordinates  \cite{korada2011applications,montanari2017universality,panahi2017universal}, which are far from the complex dependencies arising in practice. In a remarkable series of recent work, \cite{huUniversalityLawsHighdimensional2022,montanariUniversalityEmpiricalRisk2022,montanari2023universality} extended GET to significantly richer feature maps $\phi(\bx)$ with dependency structures. These results have now been established in several important models, including random features in the linear scaling regime \cite{Goldt_2020,meiGeneralizationErrorRandom2022,huUniversalityLawsHighdimensional2022}, certain neural tangent models \cite{montanariUniversalityEmpiricalRisk2022}, Gaussian mixtures with random labels \cite{gerace2024gaussian}, and kernel and random feature ridge regression on the sphere in the polynomial scaling \cite{xiao2022precise,meiGeneralizationErrorRandom2022a,hu2024asymptotics,misiakiewicz2024non}. Extensive numerical studies further demonstrate that Gaussian equivalent models provide remarkably accurate predictions even for complex feature maps, such as trained neural networks, and real data \cite{bordelon2020spectrum,jacot2020kernel,loureiro2021learning,goldt2022gaussian,wei2022more,ba2022high,defilippis2024dimension}.

Despite this success, GET can also fail to describe the quantitative behavior of \eqref{eq:intro_ERM} even in simple settings. Figure \ref{fig:quad_vs_non_quad_losses} illustrates this breakdown: whether GET holds depends jointly on the loss functions and the response $y$. For a ``random polynomial'' label $y_{\rm R}$, GET always holds. In contrast, for a single-index label $y_{\rm SI}$, GET fails as soon as either the training loss or the test loss is non-quadratic. Figure \ref{fig:marginal_pdf} further clarifies this phenomenon by plotting the marginal distribution of the fitted predictor $\<\hbtheta, \phi(\bx)\>$ on test data. For $y_{\rm R}$, the marginal is approximately Gaussian---a necessary condition for GET to hold. However, for $y_{\rm SI}$, the resulting marginal is non-Gaussian. 

In this paper, our goal is to investigate this non-universal behavior in Empirical Risk Minimization. In particular, we aim to clarify the precise scope of validity of GET and how to correct it when it fails. To this end, we focus on the \textit{Random Feature} (RF) model \cite{balcan2006kernels,Rahimi2007}
\begin{equation}\label{eq:intro_RF_featurization}
\phi_{\RF} ( \bx) := \sigma ( \bW \bx) = \big(\sigma (\<\bw_1,\bx\>), \ldots , \sigma (\<\bw_p,\bx\>) \big) \in \R^p, 
\end{equation}
where the weights $\bW = (\bw_j)_{j \in [p]} \in \R^{p \times d}$ are drawn i.i.d.~uniformly from the unit sphere and $\bx \sim \normal (0,\id_d)$. Although GET has been shown to hold for RF models in the linear scaling regime $n\asymp p \asymp d$ \cite{huUniversalityLawsHighdimensional2022,gerace2020generalisation,goldt2022gaussian,montanariUniversalityEmpiricalRisk2022}, we demonstrate that GET fails under more general scaling regimes as soon as the latent signal $f_*(\bx)$ presents some low-dimensional structure. In such settings, certain non-Gaussian components survive in the high-dimensional limit and are not captured by a purely Gaussian surrogate.

Our analysis focuses on the quadratic scaling regime $n \asymp p \asymp d^2$. We show that when the latent signal depends on a low-dimensional projection $\bW_{*}^\sT \bx \in \R^s$ (e.g., multi-index models), GET fails in this regime as soon as either the train or test loss is non-quadratic. 
To remedy this, we introduce a \textit{Conditional Gaussian Equivalent} (CGE) model that conditions on this low-dimensional signal subspace. We show that this model reduces to appending a small number of non-Gaussian features to an otherwise high-dimensional Gaussian model. In particular, this preserves the tractability of the Gaussian framework---e.g., it is amenable to CGMT \cite{oymak2013squared,thrampoulidis2015regularized}. We prove that CGE yields the correct asymptotic train and test errors for the RF model \eqref{eq:intro_RF_featurization} in the quadratic scaling regime for a large class of target functions. More broadly, we expect this CGE approach to extend to more general models and scalings, and we point to our companion papers \cite{wen2025asymptotics,wen2025empirical}.

\begin{figure}[t!]

\centering
\begin{tikzpicture}[scale=2.5]

\draw[dashed, thick] (-2.4,0) -- (2.4,0) node[right] {};
\draw[dashed, thick] (0,-2.30) -- (0,2.25) node[above] {};
\draw[->, thick] (-2.4,-2.30) -- (2.4,-2.30) node[right] {};
\draw[->, thick] (-2.4,-2.30) -- (-2.4,2.25) node[right] {};

\node at (-1.25, -2.45) {Quadratic training loss};

\node at (1.25, -2.45) {Non-quadratic training loss};

\node[rotate=90] at (-2.55, 1.2) {Quadratic test loss};

\node[rotate=90] at (-2.55, -1.2) {Non-quadratic test loss};

\node at (1.2, 2.0) { \textbf{ \textcolor[rgb]{0.6, 0.0, 0.0}{Gaussian equivalence fails} } };

\node at (-1.2, 2.0) { \textbf{ \textcolor[rgb]{0.0, 0.6, 0.0}{Gaussian equivalence holds} } };

\node at (-1.2, -0.25) { \textbf{ \textcolor[rgb]{0.6, 0.0, 0.0}{Gaussian equivalence fails} } };

\node at (1.2, -0.25) { \textbf{ \textcolor[rgb]{0.6, 0.0, 0.0}{Gaussian equivalence fails} } };

\node at (1.25,0.85) [anchor=center] {
  \begin{subfigure}[t]{5.5cm}
    \centering
    \includegraphics[width=\linewidth]{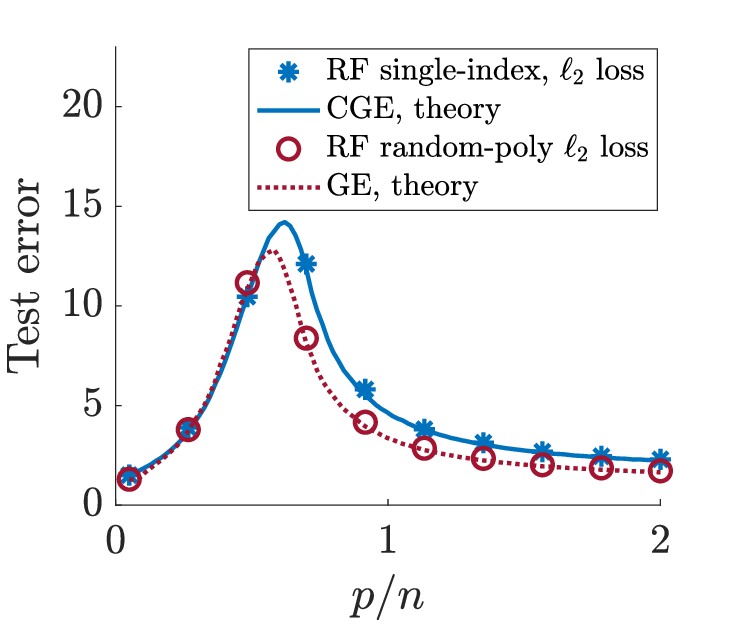}
    \caption*{}
  \end{subfigure}
};

\node at (-1.20,0.85) [anchor=center] {
  \begin{subfigure}[t]{5.5cm}
    \centering
    \includegraphics[width=\linewidth]{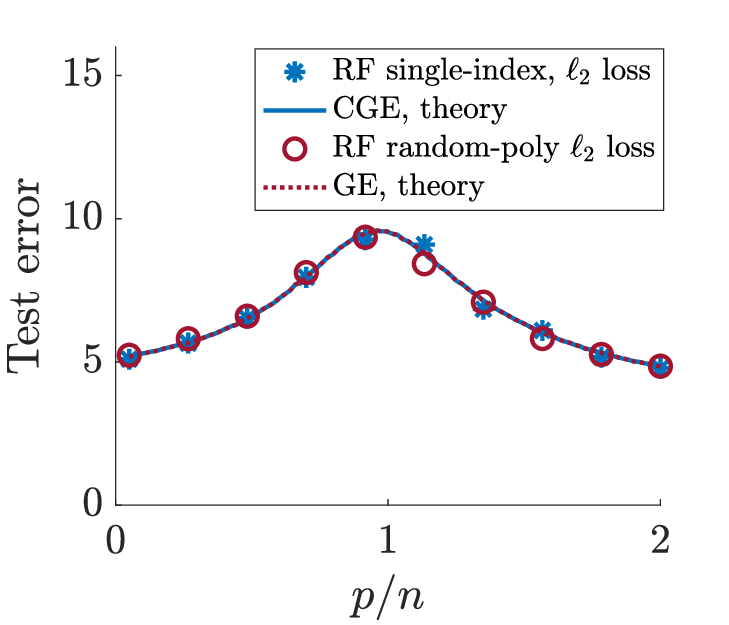}
    \caption*{}
  \end{subfigure}
};

\node at (-1.22,-1.45) [anchor=center] {
  \begin{subfigure}[t]{5.5cm}
    \centering
    \includegraphics[width=\linewidth]{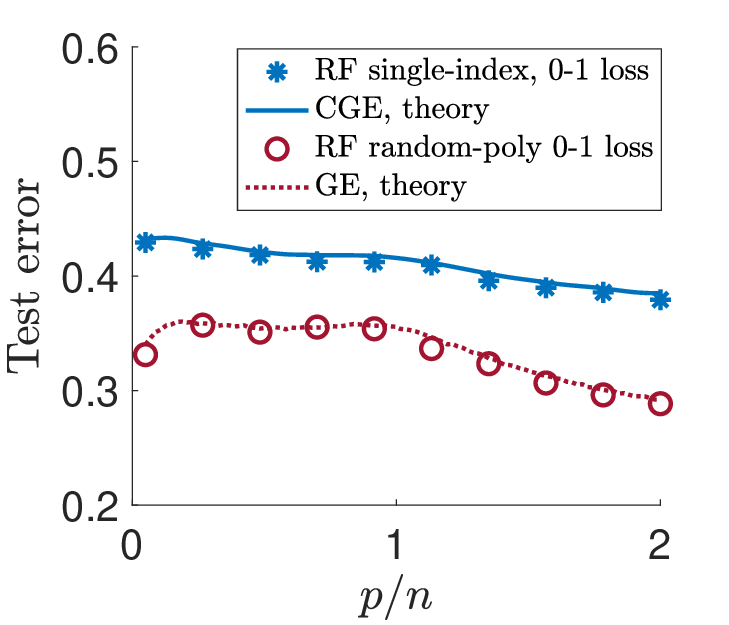}
    \caption*{}
  \end{subfigure}
};

\node at (1.3,-1.45) [anchor=center] {
  \begin{subfigure}[t]{5.5cm}
    \centering
    \includegraphics[width=\linewidth]{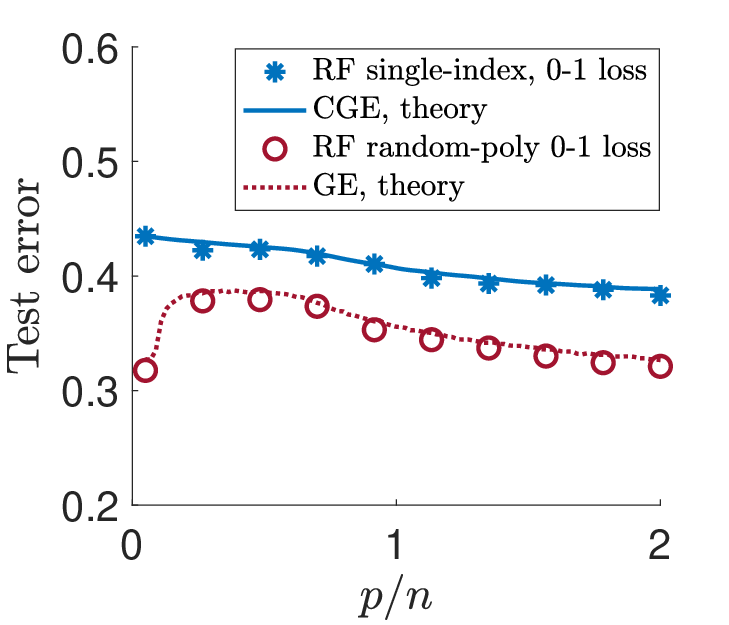}
    \caption*{}
  \end{subfigure}
};

\end{tikzpicture}
\caption{Universality and non-universality of the test error, with each quadrant representing a combination of either $\ell_2$ loss (quadratic) or hinge loss (non-quadratic) for the training loss and either $\ell_2$ loss (quadratic) or 0-1 loss (non-quadratic) for the test loss. 
We choose $d=50$, $n/d^2 = 0.5$, $\lambda = 10^{-3}$ and $\sigma(x) = \max\{0,x\}$, and consider two different responses: (i) a ``single-index'' response: $y_{\rm SI} =  1 + 2 \He_2(\< \bu_{*}, \bx\>) + \He_3(\<\bu_{*},  \bx\>)$; and (ii) a ``random-poly'' response: $y_{\rm R} =  1 + 2 \bbeta_{ 2}^\sT \bh_{2} (\bx) + \bbeta_{ 3}^\sT \bh_{3} (\bx)$, where $\bbeta_{ k} \sim \Unif \left( \S^{B_{d,k} - 1} \right)$. Both responses result in the same GE model predictions.
\label{fig:quad_vs_non_quad_losses}} 
\end{figure}

\subsection{An example: regression on quadratic polynomials}
\label{sec:intro_example}

Before summarizing our results, we illustrate how non-Gaussian behavior naturally arises in this setting. For simplicity, consider the example of regression on all degree-2 Hermite polynomials. The corresponding feature map is
\begin{equation}\label{eq:intro_Hermite_2_regression}
\phi_{\rm quad} (\bx) = \left\{ \He_2 (x_i) = \frac{1}{\sqrt{2}}(x_i^2 - 1) \; :\; i \in [d] \right\} \cup \left\{ x_i x_j \; : \; i<j \in [d] \right\} \in \R^{B_{d,2}},
\end{equation}
where $B_{d,2} = \frac{d(d+1)}{2}$.
Informally, this model can be viewed as the kernel limit\footnote{To obtain this precise kernel limit, take $\bw \sim \normal (0,\bI_d)$ and $\sigma (\bx;\bw) = \frac{1}{2}(\< \bx,\bw\>^2 - \| \bx\|_2^2 - \| \bw \|_2^2 +d)$.} ($p \to \infty$) of the RF model \eqref{eq:intro_RF_featurization} with activation $\sigma = \He_2$. The feature map \eqref{eq:intro_Hermite_2_regression} and linear predictor $f ( \bx; \btheta) = \< \btheta,\phi_{\rm quad} (\bx) \>$ can be written equivalently in matrix form as 
\begin{equation}
\phi_{\rm quad}(\bx) = \frac{1}{\sqrt{2}} ( \bx \bx^\sT - \id_d) \in \R^{d \times d}, \qquad f ( \bx; \btheta) = \frac{1}{\sqrt{2}} \big( \bx^\sT \bB \bx - \Tr(\bB) \big), 
\end{equation}
where we identify $\btheta : = \bB \in {\rm Sym}_d$ (the space of symmetric $d \times d$ matrices). 

Assume the response is generated by $y = \eta(f_*(\bx)) = \<\btheta_*, \phi_{\rm quad}(\bx)\>$ with $\| \btheta_* \|_2 = 1$ (noiseless labels) and we fit this data using ridge regression:
\begin{equation}
    \hat \btheta = \argmin_{\btheta \in \R^{B_{d,2}}} \left\{ \frac{1}{n}\sum_{i=1}^n \left(y_i - \<\btheta, \phi_{\rm quad} (\bx_i) \> \right)^2 + \frac{\lambda}{2} \| \btheta\|_2^2  \right\}.
\end{equation}
The solution can be expressed explicitly in terms of the resolvent of the feature matrix, and one can show that 
\begin{equation}
    \hat \btheta = \alpha \btheta_* + \hat \btheta_\perp, \qquad \| \hat \bB_\perp \|_\op = o_d(1),
\end{equation}
where $\hat \btheta_\perp$ denotes the component orthogonal to $\btheta_*$ and $\hat \bB_\perp $ its matrix representation.

 Consider two representative responses (let $\bB_*$ be the matrix representation of $\btheta_*$):
\begin{itemize}
\item[(1)] $y_{\rm R} = f_{\rm R} (\bx) = \< \btheta_*,\phi_{\rm quad}(\bx)\>$  with $\| \bB_* \|_\op = o_d(1)$ (here, $\eta (x) = x$): In this case,
\begin{equation}\label{eq:quad_example_fR_marginal_law}
{\rm Law} \left( f_{\rm R} (\bx), \< \hat \btheta , \phi_{\rm quad}(\bx)\> \right) \approx {\rm Law} \left(  G_1, \alpha G_1 + \| \hbtheta_\perp \|_2 G_2 \right),
\end{equation}
where $G_1,G_2 \sim \normal (0,1)$ independently. The signal and the fitted model are asymptotically jointly Gaussian, and one can indeed show that the GE model \eqref{eq:GET_model} correctly predicts the train and test errors.

\item[(2)] $y_{\rm SI} = \eta( \<\bu_*,\bx\>)= \He_2 (\< \bu_* ,\bx\>)$ with $\| \bu_* \|_2 =  1$ (here, $\eta (x) = \He_2 (x)$ and $\btheta_*:=\bB_* = \bu_*\bu_*^\sT$): One can show
\begin{equation}
{\rm Law} \left( \< \bu_* ,\bx\>, \< \hat \btheta , \phi_{\rm quad}(\bx)\> \right) \approx {\rm Law} \left( G_1, \alpha \frac{1}{\sqrt{2}}(G_1^2 - 1) + \| \hbtheta_\perp \|_2 G_2 \right).
\end{equation}
In this case, the joint law contains a non-Gaussian chi-squared component aligned with the rank-one structure of the target. Only means and covariances match those of the GE model \eqref{eq:GET_model}, but the full distribution does not. Consequently, a non-quadratic test loss produces different asymptotic test errors in the quadratic model and Gaussian surrogate.
\end{itemize}

We illustrate this non-Gaussian additive term in the marginal for the RF model in Figure \ref{fig:marginal_pdf}, where we plot the marginal distribution of $\< \hat \btheta, \phi_\RF (\bx)\>$ using the same two target functions as above. In Figure~\ref{fig:quad_vs_non_quad_losses}, we consider the four possible combinations of quadratic and non-quadratic train and test losses, and plot the resulting test error for each case and target function. For $y_{\rm R}$ without rank-$1$ component, the GET always yields the correct prediction. In contrast, for the single-index response $y_{\rm SI}$, GET fails as soon as either the train loss or the test loss is non-quadratic.

\begin{figure}[t]
\centering
\begin{subfigure}[b]{0.48\textwidth}
     \centering
     \includegraphics[width=\textwidth]{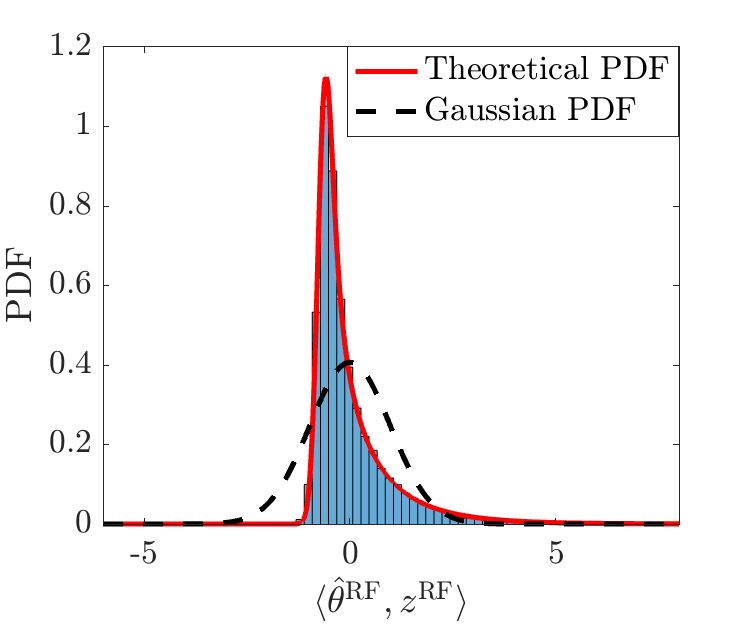}
     \caption{$y_{\rm SI} = \He_2(\bu_{*}^\sT \bx)$, $\;\| \bu_{*} \|_2 = 1$. \label{fig:marginal_chi_dist}}
 \end{subfigure}
 \hfill
 \begin{subfigure}[b]{0.48\textwidth}
     \centering
     \includegraphics[width=\textwidth]{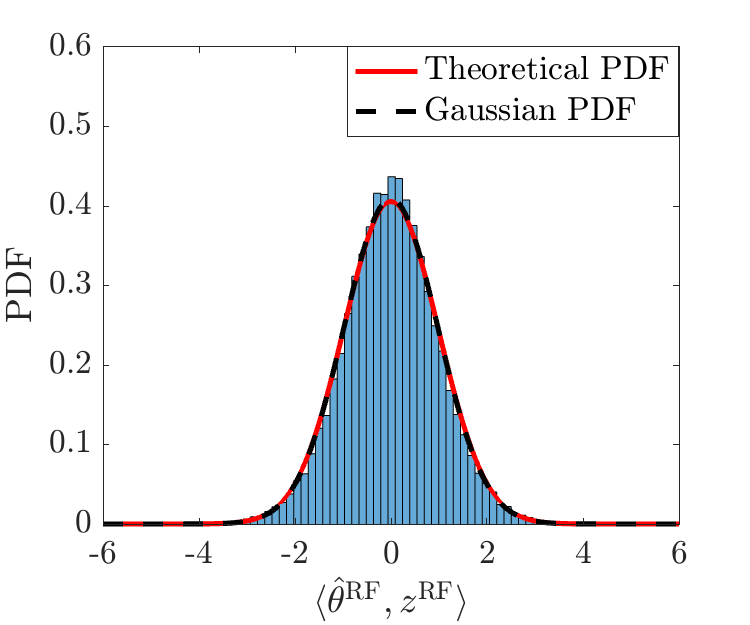}
     \caption{$y_{\rm R} = \bbeta_{*}^\sT \phi_{\rm quad} (\bx)$, $\;\bbeta_{*} \sim \Unif \left( \S^{B_{d,2} - 1} \right)$. \label{fig:marginal_gauss_dist}}
 \end{subfigure}
\caption{Marginal distributions of $\<\hbtheta,\phi_{\RF} (\bx)\>$ for two different responses $y_{\rm SI}$ and $y_{\rm R}$, where $\hbtheta$ is the solution of \eqref{eq:intro_ERM} with squared loss. The histogram corresponds to the empirical distribution; the curves correspond to the predictions from the CGE model (solid red line) and the GE model (black dashed line). 
We choose $d=50$, $n/d^2 = 1$, $p/d^2 = 0.5$, $\lambda = 10^{-3}$, and $\sigma(x) = x^2$. }
\label{fig:marginal_pdf}
\end{figure}

To address this issue, we show that a simple modification of the Gaussian model recovers the correct asymptotics for $y_{\rm SI}$ in all cases. We condition explicitly on the signal direction and define
\begin{equation}\label{eq:CGE-quadratic-example}
\bz^\sCG :=  \He_2 ( \<\bu_*,\bz\>) \btheta_* + (\id_{B_{d,2}} - \btheta_* \btheta_*^\sT) \bg,
\end{equation}
where $\bg \sim \normal (0,\id_{B_{d,2}})$ is independent of $\bz$. This is a Gaussian model augmented with a single non-Gaussian component (a centered chi-square term) in the direction $\btheta_*$. The main result of this paper shows that this \textit{Conditional Gaussian Equivalent} model yields the correct asymptotic train and test errors for a large class of losses and responses and for the (more complex) RF model \eqref{eq:RF_model_class}.

This construction extends to general polynomial scaling by conditioning on the appropriate higher-order Hermite chaos components (see Remark \ref{rmk:general-polynomial-scaling}); we postpone the presentation of this general Conditional Gaussian universality principle to a follow-up paper \cite{wen2025empirical}.

\subsection{Summary of main results}

Our main results characterize the high-dimensional behavior of the ERM solution \eqref{eq:intro_ERM} for the Random Feature (RF) model \eqref{eq:intro_RF_featurization} in the quadratic scaling regime $n \asymp p \asymp d^2$, in terms of a \textit{Conditional Gaussian Equivalent} (CGE) model. 

Consider responses of the form $y = \eta( f_*(\bx); \eps)$, where  $\eta : \R^m \times \R \to \R$ is a link function, $f_* (\bx) \in \R^m$ is a latent signal, and $\eps$ is independent noise. A key subtlety is that the same joint law $(y,\bx)$ may admit multiple equivalent decompositions $(\eta,f_*)$, while the Gaussian Equivalent (GE) model \eqref{eq:GET_model} depends on this choice. We resolve this ambiguity by fixing a canonical representation for the response that separates the different Wiener chaos contributions: 
\begin{equation}\label{eq:label_multi-index-chaos_intro}
f_*(\bx) = \{ \bxi_1 (\bx), \ldots, \bxi_{D'} (\bx) \},
\end{equation}
where each $\bxi_{k} = ( \xi_{ki} (\bx))_{i \in [s_{k}]}$ consists of coordinates $\xi_{ki} (\bx)$ that are pure degree-$k$ Hermite polynomials, with approximate Gaussian marginals ${\rm Law} (\xi_{ki} (\bx)) \approx {\rm Law} ( \normal (0,1) )$ (e.g., $f_{\rm R}$ in \eqref{eq:quad_example_fR_marginal_law} is an example of degree-$2$ chaos with approximate Gaussian marginal). The representation \eqref{eq:label_multi-index-chaos_intro} encompasses a wide class of responses: (1) Multi-index models $f_*(\bx) = \bxi_1 (\bx) = \bTheta_*^\sT \bx$ with $\bTheta_* \in \R^{d \times m}$; (2) Regression functions with random coefficients; (3) A broad class of deterministic regression functions (see discussion in Section \ref{sec:setting_CGE}). 

The CGE model conditions on the linear chaos components $\bxi_1 (\bx)$ in the response and compares the RF model to the GE model \eqref{eq:GET_model} \textit{conditional} on $\bxi_1 (\bx)$. For example, consider the case $\bxi_1 (\bx) = \<\bu_*,\bx\>$ with $\| \bu_*\|_2 =1$. In this setting, the CGE model takes the form 
\[
f^{\sCG} = \{\<\bu_*,\bx\>, \Tilde f^\sG \}, \qquad \bz^{\sCG} = \mu_1 \br_1 \< \bu_*, \bx\> + \mu_2 \br_2 \He_2 (\< \bu_*, \bx\>) + \Tilde \bz^\sG,
\]
where $(\Tilde f^\sG,\Tilde \bz^\sG)$ is independent of $\<\bu_*,\bx\>$ and
\[
\br_1 := ( \<\bw_j, \bu_* \>)_{j \in [p]},  \quad \br_2 := (\<\bw_j,\bu_* \>^2)_{j \in [p]}, \quad \mu_1 = \E[\sigma (G)G], \quad \mu_2 = \E[\sigma (G) \He_2 (G)],
\]
with $G \sim \normal (0,1)$. The pair $(\Tilde f^\sG, \Tilde \bz^\sG)$ is the GE model \eqref{eq:GET_model} associated to the remaining components $\{\bxi_2(\bx), \ldots, \bxi_{D'} (\bx) \}$ and $\sigma (\bW_{-\bu_*} \bx)$, where $\bW_{-\bu_*} = \bW (\id_d - \bu_*\bu_*^\sT)$. We postpone the full and explicit presentation of the CGE model to Section \ref{sec:setting_CGE}.

The main contributions of this paper are:
\begin{enumerate}
    \item[(1)] \textit{Conditional Gaussian Universality.} We prove that the CGE model correctly captures the asymptotic train and test errors of the ERM solution \eqref{eq:intro_ERM} in the quadratic scaling regime, for the RF model \eqref{eq:RF_model_class}, responses \eqref{eq:label_multi-index-chaos_intro}, and a broad class of loss and activation functions.

    \item[(2)] \textit{Exact asymptotics.} Using the convex Gaussian minimax theorem (CGMT) \cite{stojnic2013framework,oymak2013squared,thrampoulidis2015regularized}, we derive precise asymptotics for the train and test errors in the CGE model. These predictions are stated in terms of an explicit low-dimensional minimax problem that can be evaluated numerically. See Figure \ref{fig:quad_vs_non_quad_losses} for an illustration of these theoretical predictions. 

    \item[(3)] \textit{Applications.} We illustrate our framework by analyzing binary classification in the quadratic scaling regime with random features and single-index responses. We obtain sharp characterizations for (a) the decision boundary (given by two hyperplanes), (b) phase transition in the existence of interpolating solutions, and (c) double descent and benign overfitting. None of these phenomena are correctly captured by GET.
\end{enumerate}

Because of space constraints, we only summarize points (2) and (3) in this paper, and defer full derivations of the asymptotics as well as detailed applications to the companion paper~\cite{wen2025asymptotics}.

We are now ready to state what our results imply about the scope of validity of GET. In the RF model under the quadratic scaling regime, we find two key phenomena:
\begin{itemize}
    \item[(i)] \textit{Linear chaos in the response:} If the response contains no linear chaos component, then the CGE model collapses to the GE model. In particular, multi-index models always require a correction to GE, but even more complicated targets---e.g., $y= \< \bu_*,\bx\> g(\bx)$ for $g(\bx)$ a pure degree-$k$ Hermite polynomial---can also lead to non-universality.

    \item[(ii)] \textit{Quadratic versus non-quadratic losses:} When both the train and test losses are quadratic, the asymptotics of CGE and GE coincide even in the presence of linear chaos. However, changing either the train or the test loss to a non-quadratic function causes GET to break down.
\end{itemize}
Figure \ref{fig:quad_vs_non_quad_losses} illustrates this Gaussian/non-Gaussian universality for the single-index target $f_{\rm SI}$ and for the response $f_{\rm R}$ without linear chaos (in which case CGE and GE always agree), across both quadratic and non-quadratic training and test losses.

From a technical standpoint, establishing conditional universality in our setting presents significant challenges. In particular, our proof must contend with: (1) the non-sub-Gaussianity of $\phi_{\mathrm{RF}}$, (2) dependencies between Wiener chaos components of different orders; and (3) the presence of diverging singular values (spikes) in the random features arising in the quadratic scaling regime. We overcome these obstacles through a careful two-phase Lindeberg swapping argument and by leveraging a Malliavin-Stein argument to establish a quantitative ``partial'' CLT for Wiener chaos expansions---following the approach of the celebrated ``Fourth Moment Theorem'' \cite{nualart2005central,nourdin2009stein,Nourdin_2010,nourdin2012normal}.

The remainder of the paper is organized as follows.  Section \ref{sec:main_GE_CGE_RF} introduces our setting and describes precisely the GE and CGE models for RF features in the quadratic scaling regime. Section \ref{sec:main_results} states our main conditional Gaussian universality results for the train and test errors of the RF model, and presents several applications. Finally, Section \ref{sec:proof_outline} outlines the proof strategy for our main results. Technical details and full proofs are deferred to the appendices.

\subsection{Additional related work}

As mentioned in the introduction, several papers have established universality of ERM beyond features with independent entries. For a broad class of (potentially non-convex) loss functions, \cite{montanariUniversalityEmpiricalRisk2022} established ERM universality using a smooth free energy approximation, assuming sub-Gaussian data and pointwise normality. \cite{montanari2023universality} subsequently extended this to max-margin classifiers by leveraging duality and relating the problem back to a surrogate ERM, proving universality via   verifying the assumptions   of \cite{montanariUniversalityEmpiricalRisk2022}. Closely related to our approach, \cite{huUniversalityLawsHighdimensional2022} utilized the Lindeberg swapping technique \cite{chatterjee2006generalization,korada2011applications} for random feature models with  convex losses  and ridge penalty, relying on a careful localization of the ERM solution via a leave-one-out argument. 
While we follow the approach of \cite{huUniversalityLawsHighdimensional2022}, our setting introduces substantial new technical challenges which we detail in Section \ref{sec:proof_outline}. We further note that several works, including \cite{meiGeneralizationErrorRandom2022, hu2024asymptotics, xiao2022precise,meiGeneralizationErrorRandom2022a, misiakiewicz2024non,pandit2024universality}, have leveraged random matrix theory to show universality of kernel and random feature ridge regression for arbitrary responses and general polynomial scaling regimes. However, as noted in this paper, this universality stops holding as soon as one changes either the train or test loss to a non-quadratic loss.  

Several works have documented the breakdown of Gaussian universality in the context of mixture data. For Gaussian mixture inputs, \cite{pesce2023gaussian} showed that a covariance-matched single Gaussian surrogate fails to reproduce the asymptotic risk in generalized linear estimation if the teacher vector correlates with the data or if the mixture is heteroscedastic, though universality of the training error persists under square loss with vanishing regularization in underparametrized settings. On the other hand, \cite{dandi2023universalitylawsgaussianmixtures} proposed a conditional one-dimensional CLT to argue that in general mixtures, the ERM performance is governed solely by class-conditional means and covariances.
Scrutinizing the validity of this conditional CLT, \cite{mai2025breakdowngaussianuniversalityclassification} identified the  boundaries of this Gaussian-mixture universality for a specific  model.   They also showed that while ERM with square loss yields a classifier statistically indistinguishable from one trained on the equivalent Gaussian mixture, the resulting generalization error on the true distribution is different from the Gaussian mixture data.  Our work differs fundamentally from these studies in both the model setting and the source of non-universality. We exhibit a breakdown of Gaussian equivalence even with isotropic Gaussian inputs: specifically, beyond the linear scaling, the low-dimensional structure in the response preserves non-trivial Hermite components that are not captured by a purely Gaussian model.

From a technical standpoint, establishing the CLT condition differs from previous approaches \cite{goldt2022gaussian,huUniversalityLawsHighdimensional2022,montanari2023universality} which used (sub-)Gaussian specific tools. Instead, we leverage the Malliavin-Stein approach \cite{nualart2005central,nourdin2009stein,nourdin2012normal,azmoodeh2021malliavinsteinmethodsurveyrecent,Nourdin_2010}  to establish a quantitative ``partial'' CLT for Wiener chaos expansions. Malliavin calculus has proven to be a powerful probabilistic tool in the statistics and machine learning literature, as evidenced by   recent applications in \cite{peccati2013gammalimitsustatisticspoisson,schroder2024asymptoticslearningdeepstructured,celli2025entropicboundsconditionallygaussian,caponera2019asymptoticssphericalfunctionalautoregressions,neufeld2024solvingstochasticpartialdifferential, mirafzali2025malliavincalculusapproachscore,mirafzali2025malliavincalculusscorebaseddiffusion,pidstrigach2025conditioningdiffusionsusingmalliavin}.

\section{The Conditional Gaussian Equivalent model}
\label{sec:main_GE_CGE_RF}

We begin by describing our setting and the Conditional Gaussian Equivalent (CGE) model for random features in the quadratic polynomial scaling regime. In Section \ref{sec:setting_ERM}, we set up the basic notations and definitions for the empirical risk minimization problem we consider in this paper. Section \ref{sec:setting_GET} introduces the Wiener chaos expansion of the random features and presents the Gaussian Equivalent (GE) model. Section \ref{sec:setting_CGE} then develops the CGE model by modifying the GE model with additional non-Gaussian components.

\subsection{Empirical risk minimization with Random Features}
\label{sec:setting_ERM}

Fix an integer $m \in \naturals$. Throughout the paper, we assume the data $(y,\bx) \in \R \times \R^d$ is generated as
\begin{equation}\label{eq:data_distribution}
\bx \sim \normal (0,\id_d), \qquad\qquad y = \eta (f_*(\bx);\eps),
\end{equation}
for some $\eta : \R^m \times \R \to \R$, target function\footnote{Formally, we consider a sequence $f_{*,d} : \R^d \to \R^m$ indexed by $d$. We suppress the dependence on $d$ for notational simplicity.} $f_* :\R^d \to \R^m$ satisfying $\E [ \| f_*(\bx) \|_2^2] <\infty$, and noise $\eps \sim \nu$ independent of $\bx$. 

We consider our predictor to be a Random Feature (RF) model of the form
\begin{equation}\label{eq:RF_model_class}
h_\RF (\bx; \btheta) = \< \btheta, \phi_{\RF} (\bx) \> , \qquad\qquad \phi_{\RF} (\bx) = \sigma (\bW \bx) = \begin{pmatrix}
    \sigma (\<\bw_1,\bx\>) \\
    \vdots\\
    \sigma (\<\bw_p,\bx\>) 
\end{pmatrix} \in \R^p,
\end{equation}
where the activation $\sigma := \sigma_d : \R \to \R$ is allowed to depend on $d$, and the weight matrix $\bW = [ \bw_1, \ldots , \bw_p ]^\sT \in \R^{p \times d}$ is fixed independently of the data. We assume the weight vectors $ \bw_1, \ldots, \bw_p $ are drawn i.i.d.~uniformly from the unit sphere \begin{equation}
    \bw_{j} \overset{\text{i.i.d.}}{\sim} \Unif (\S^{d-1} ).
\end{equation} 
For convenience, we write $\bz^\RF := \phi_\RF (\bx)$, $f^\RF := f_* (\bx)$, and $y^\RF := \eta (f^\RF;\eps)$. Denote by $\E_{\bz^\RF,f^\RF}$ the expectation over $(\bz^\RF,f^\RF) = (\phi_\RF (\bx),f_*(\bx))$ when $\bx \sim \normal (0,\bI_d)$. 

We observe $n$ i.i.d.~samples $(y_i,\bx_i)_{i \in [n]}$ from \eqref{eq:data_distribution}. Define the feature matrix, signal matrix, and noise vector:
\begin{equation}
    \begin{aligned}
\bZ^\RF = [\bz^\RF_1,\ldots,\bz^\RF_n] := [ \phi_{\RF} (\bx_1), \ldots , \phi_{\RF} (\bx_n) ] \in& \R^{p \times n}, \\
\boldf^\RF = [f^\RF_1, \ldots, f^\RF_n] := (f_{*} (\bx_1 ), \ldots , f_* (\bx_n)) \in& \R^{m \times n},\\
\beps = (\eps_1, \ldots , \eps_n) \in& \R^n.
\end{aligned}
\end{equation}

Given a training loss $\ell:\R \times \R \to \R_{\geq 0}$ and regularization parameter $\lambda > 0$, we define the \textit{empirical risk with ridge penalty} as
\begin{equation}\label{eq:empirical_risk_def}
    \hcR_{n,p} (\btheta ; \bZ,\boldf,\beps) :=  \frac{1}{n} \sum_{i \in [n]} \ell (\eta(f_i;\eps_i), \< \btheta, \bz_i\> )  + \frac{\lambda}{2} \| \btheta \|_2^2.
\end{equation}
 For brevity, we might keep the dependence on $\beps$ implicit and write $\hcR_{n,p} (\btheta ; \bZ,\boldf) := \hcR_{n,p} (\btheta ; \bZ,\boldf,\beps).$

We fit the RF model \eqref{eq:RF_model_class} by minimizing this empirical risk: 
\begin{equation}\label{eq:ERM_setting_risk}
\begin{aligned}
\hbtheta^\RF =&~ \argmin_{\btheta \in \R^p} \hcR_{n,p} (\btheta ; \bZ^\RF,\boldf^\RF,\beps).
\end{aligned}
\end{equation}
The predictor is evaluated through the \textit{test error}:
\begin{equation}
\cR_{\test} (\hbtheta^\RF; \P_{\bz^\RF,f^\RF}) = \E_{\bz^\RF,f^\RF,\eps} [ \ell_{\test} (\eta (f^\RF;\eps), \< \hbtheta, \bz^\RF \>) | \bW],
\end{equation} 
for some test loss $\ell_{\test} : \R \times \R \to \R_{\geq 0}$, where $\P_{\bz^\RF,f^\RF}$ denotes the joint law of $(\bz^\RF,f^\RF)$ for a new test sample $\bx \sim \normal (0,\bI_d)$ with same weight matrix $\bW$.  
For example, one can set the test loss equal to the training loss $\ell_{\test} (y,\hat y) := \ell (y,\hat y)$.

\begin{remark}[Beyond ridge penalty]
    For simplicity, we focus here on the ridge penalty: it is the most natural regularizer\footnote{The $\ell_2$-norm $\| \btheta\|_2$ corresponds to the RKHS norm of $ \<\btheta,\phi (\cdot)\>$ in the associated Reproducing Kernel Hilbert space.} when considering featurized data $\phi(\bx)$ as it allows for tractability of \eqref{eq:ERM_setting_risk} via the kernel trick even when $p = \infty$. Our proofs extend to general strongly convex regularizers---cf.\ the analysis in \cite{huUniversalityLawsHighdimensional2022}---but we restrict the presentation and the proof to ridge penalty for clarity. We leave the exploration of non strongly-convex regularizers to future work.
\end{remark}

\subsection{Gaussian Equivalent model for Random Features}
\label{sec:setting_GET}

Before presenting the Conditional Gaussian model, we first describe the Gaussian Equivalent (GE) model for random features. Early work in which this equivalence was observed are \cite{Goldt_2020,meiGeneralizationErrorRandom2022}. It was proven for ridge regression under proportional scaling $n \asymp p \asymp d$ \cite{meiGeneralizationErrorRandom2022}, and more recently in the polynomial scaling $n \asymp d^{k_1}$ and $p \asymp d^{k_2}$ in \cite{hu2024asymptotics}. Subsequent work has established that the equivalence holds in the proportional scaling for a broad class of losses and regularizers \cite{gerace2020generalisation,goldt2022gaussian,huUniversalityLawsHighdimensional2022,montanariUniversalityEmpiricalRisk2022}.

\paragraph*{Wiener Chaos (Hermite) expansion.} We first introduce the Hermite expansions of the target $f_*$ and feature map $\phi_\RF$. Let $\{ \He_k \}_{k \geq 0} $ denote the normalized univariate Hermite polynomials, 
\[
\E [ \He_k (G) \He_{l} (G)] = \delta_{kl}, \qquad G \sim \normal (0,1),
\]
which form an orthonormal basis of $L^2 (\R, \normal(0,1))$. For a multi-index $\bk \in \Z_{\geq 0}^d$, define the multivariate Hermite polynomial in $\R^d$ by
\begin{equation}
\He_\bk (\bx) = \prod_{j \in [d]} \He_{k_j} (x_j),
\end{equation}
so that $\{ \He_\bk \}_{\bk \in \Z_{\geq 0}}$ is an orthonormal basis of $L^2(\R^d,\normal (0,\id_d))$. 

For each $k \geq 0$, let
\begin{equation}
\bh_k (\bx) = ( \He_{\bk} (\bx) )_{\| \bk \|_1 = k} \in \R^{B_{d,k}}, \qquad B_{d,k}  = {{d+k -1}\choose{k}}.
\end{equation}
The span of functions $\{\<\bu,\bh_k(\bx)\> : \bu \in \R^{B_{d,k}}\}$ defines the degree-$k$ Wiener chaos, denoted $\cH_k$. 

\paragraph*{The Random Feature model.}
Following these notations, the target function\footnote{As before, we suppress the dependence of $f_*$ and $\bbeta_{k}$ on $d$.} admits the Hermite expansion (equality in $L^2$)
\begin{equation}\label{eq:hermite_expansion_target}
    f_* (\bx) = \sum_{k =0}^\infty \bbeta_{k}^\sT \bh_k (\bx),\qquad \bbeta_{k} = \E_{\bx} [ \bh_k (\bx) f_*(\bx)^\sT]\in \R^{  B_{d,k}\times m}.
\end{equation}
Turning to the random feature map, let the activation $\sigma$ have Hermite expansion\footnote{We allow $\sigma$ to depend on $d$, and thus the coefficients $\mu_{k} := \mu_{d,k}$. We suppress this dependence on $d$ for simplicity.}
\begin{equation}\label{eq:sigma_Hermite_expansion}
    \sigma (u) = \sum_{k =0}^\infty \mu_{k} \He_k (u), \qquad \mu_{k} := \E [ \sigma(G) \He_k (G)].
\end{equation}
For any $\bw \in \S^{d-1}$ and $k \geq 0$, one has (see Appendix \ref{sec:TechnicalBackground} for details)
\begin{equation}\label{eq:Hermite_w_x_expansion}
    \He_k (\<\bw,\bx\>) = \bq_k(\bw)^\sT \bh_k (\bx), \qquad \quad\bq_k (\bw) = \left( \frac{\sqrt{k!}} {\sqrt{k_1!k_2!\cdots k_d!}}\prod_{j \in [d]} w_j^{k_j} \right)_{\| \bk \|_1 = k} \in \R^{B_{d,k}}.
\end{equation}
Defining
\begin{equation}
    \bV_k = \bq_k (\bW) = [ \bq_k (\bw_1), \ldots , \bq_k (\bw_p)]^\sT \in \R^{p \times B_{d,k}},
\end{equation}
we obtain the Hermite expansion of the feature map:
\begin{equation}
     \phi_{\RF} (\bx) = \sigma(\bW\bx) = \sum_{k =0}^\infty \mu_{k} \bV_k \bh_k (\bx).
\end{equation}
Note in particular that $\bV_1= \bW$ is the original weight matrix. In summary:

\begin{modelbox}{Random Feature (RF) Model}
\vspace{-0pt}
\begin{equation}\label{eq:Hermite_expansion_RF_model}
\begin{aligned}
\bz^\RF
   &:= \mu_{0} \bones_p + \sum_{k =1}^\infty \mu_{k} \bV_k \bh_k (\bx),
      \\
f^\RF
   &:=\bbeta_{0}^\sT  + \sum_{k =1}^\infty \bbeta_{k}^\sT \bh_k (\bx).
\end{aligned}
\end{equation}
\end{modelbox}

\paragraph*{The Gaussian Equivalent model.} Using the expansion \eqref{eq:Hermite_expansion_RF_model} and the orthogonality of multivariate Hermite polynomials, the Gaussian Equivalent model \eqref{eq:GET_model} for $(f_*(\bx),\phi_\RF(\bx))$ takes the form 
\begin{equation}
    (f^\sG,\bz^\sG) \sim \normal (\bmu^\sG, \bSigma^\sG),
\end{equation} 
with mean and covariance
\begin{equation}\label{eq:general_GET_RF}
    \bmu^\sG = \E_{\bx} \left[ \begin{pmatrix}
        f_*(\bx) \\ \phi_{\RF} (\bx)
    \end{pmatrix}\right] = \begin{pmatrix}
    \bbeta_{0}^\sT \\
    \mu_{0} \bones_p
\end{pmatrix}, \quad  \quad \bSigma^\sG =  \begin{pmatrix} \bSigma_{11}^\sG
    & (\bSigma_{21}^\sG)^\sT \\
    \bSigma_{21}^\sG &  \bSigma_{22}^\sG
\end{pmatrix},
\end{equation}
where
\begin{equation}\label{eq:covariance_matrix_GE_RF_general}
\begin{aligned}
    \bSigma_{11}^\sG = &~ \Cov_{\bx}( f_*, f_*) = \sum_{k \geq 1} \bbeta_{k}^\sT \bbeta_{k} \in \R^{m \times m}, \\
    \bSigma_{21}^\sG =&~ \Cov_{\bx} (\phi_{\RF} ,f_*)= \sum_{k \geq 1} \mu_{k} \bV_k \bbeta_{k}  \in \R^{p \times m}, \\
    \bSigma_{22}^\sG =&~ \Cov_{\bx} (\phi_{\RF} ,\phi_{\RF})=  \sum_{k \geq 1} \mu_{k}^2 \bV_k \bV_k^\sT \in \R^{p \times p}.
\end{aligned}
\end{equation}
Equivalently, the GE model \eqref{eq:general_GET_RF} can be written as a Gaussian linear model:
\begin{modelbox}{Gaussian Equivalent (GE) Model}
\vspace{-0pt}
\begin{equation}\label{eq:general_GET_RF_expanded}
\begin{aligned}
\bz^\sG
   &:= \mu_{0} \bones_p + \sum_{k =1}^\infty \mu_{k} \bV_k \bg_k,
      \\
f^\sG
   &:=\bbeta_{0}^\sT  + \sum_{k =1}^\infty \bbeta_{k}^\sT \bg_k.
\end{aligned}
\end{equation}
\end{modelbox}
\noindent
Here, $\bg_k \sim \normal (\bzero, \id_{B_k})$ are independent across $k$. In words, the GE model simply replaces the Hermite basis $\{ \He_\bk (\bx) \}_{\bk \in \Z_{\geq 0}^d}$ with independent standard Gaussian variables.

\paragraph*{Gaussian Equivalence Theory (GET):} GET states that the empirical and test risks under the RF model \eqref{eq:Hermite_expansion_RF_model} coincide asymptotically with those of the GE model \eqref{eq:general_GET_RF_expanded}:
\begin{equation}\label{eq:GET_test_train}
\begin{aligned}
    \hcR_{n,p} (\hbtheta ; \bZ^\RF,\boldf^\RF) =&~ \hcR_{n,p} (\hbtheta^\sG ; \bZ^\sG,\boldf^\sG) + o_{d,\P}(1), \\
     \cR_{\test} (\hbtheta; \P_{\bz^\RF,f^\RF}) =&~ \cR_{\test} (\hbtheta^\sG ; \P_{\bz^\sG,f^\sG} ) + o_{d,\P}(1),
    \end{aligned}
\end{equation}
where $\hbtheta^\sG$ is the empirical risk minimizer under the GE model
\[
\hbtheta^\sG = \argmin_{\btheta \in \R^p} \;\hcR_{n,p} (\btheta ; \bZ^\sG,\boldf^\sG),
\]
and $(\boldf^\sG,\bZ^\sG)= (f_i^\sG, \bz_i^\sG)_{i \in [n]}$ are i.i.d.~samples of \eqref{eq:general_GET_RF}.

\medskip

In the covariance of $\phi_{\RF}$ \eqref{eq:covariance_matrix_GE_RF_general}, we have the identity 
\[
\bV_k \bV_k^\sT = (\bW \bW^\sT)^{\odot k},
\]
proved in Appendix~\ref{sec:TechnicalBackground}. This allows the GE model \eqref{eq:general_GET_RF_expanded} to be further simplified under specific high-dimensional scalings.

\paragraph*{Simplification in the linear scaling.} In the proportional regime $p \asymp d \asymp n$, one has\footnote{Specifically, $\| \bV_2 \bV_2^\sT - d^{-1} \bones_p \bones_p^\sT - \id_p \|_\op =o_{d,\P}(1)$ (spike absorbed in $\mu_{0}$) and $\| \bV_k \bV_k^\sT - \id_p \|_\op = o_{d,\P}(1), k \geq 3$.} $\bV_k \bV_k^\sT = (\bW \bW^\sT)^{\odot k} \approx \bI_d$ for $k \geq 2$. Consequently, the covariance $\bSigma^{\sG}$ simplifies as
\begin{equation}\label{eq:simplification_covariance_linear}
 \sum_{k = 0}^\infty \mu_{k}^2 \bV_k \bV_k^\sT = ( \mu_{0}^2 + o_{d,\P}(1))   \bones_p \bones_p^\sT+ \mu_{1}^2 \bW \bW^\sT + \mu_{>1}^2 \id_p + \bDelta ,
\end{equation}
and $\|  \bV_k \bbeta_k  \|_F = o_{d,\P}(1)$ for $k \geq 2$, where $\mu_{>1}^2 = \sum_{k \geq 2} \mu_{k}^2$ and $\| \bDelta\|_\op = o_{d,\P}(1)$. Thus, the GE model  \eqref{eq:general_GET_RF_expanded} reduces to:
\begin{modelbox}{Gaussian Equivalent (GE) Model in the linear scaling $n\asymp p \asymp d$}
\vspace{-0pt}
\begin{equation}\label{eq:GE_model_linear_scaling}
\begin{aligned}
\bz^\sG
   &:= \mu_{0} \bones_p + \mu_{1} \bW \bg_1 + \mu_{>1}^2 \bg_* ,
      \\
f^\sG
   &:=\bbeta_{0}^\sT  +  \bbeta_{1}^\sT \bg_1 + \bC_{>1}^{1/2} \bG_*.
\end{aligned}
\end{equation}
\end{modelbox}
\noindent
Here, 
\[
\bg_1 \sim \normal (0,\id_d), \qquad \bg_* \sim \normal (0,\id_p), \qquad \bG_* \sim \normal (0, \id_m) \;\;\;\text{ independently},
\]
and $\bC_{>1} = \sum_{k \geq 2} \bbeta_{k}^\sT \bbeta_{k}$. 

\cite{huUniversalityLawsHighdimensional2022,montanariUniversalityEmpiricalRisk2022} established the Gaussian equivalence \eqref{eq:GET_test_train} with model \eqref{eq:GE_model_linear_scaling} for $f_*(\bx) = \bbeta_{0}^\sT  + \bbeta_{1}^\sT \bx$ in the proportional regime for a broad class of loss functions and regularizations\footnote{These works assume $\mu_{0}=0$; our analysis extends to $\mu_{0} \neq 0$, which introduces a diverging ``spike'' singular value in the feature matrix.}.

\paragraph*{Simplification in the quadratic scaling.} In this paper, we consider the quadratic polynomial scaling $n \asymp p \asymp d^2$. Analogously to \eqref{eq:simplification_covariance_linear}, the covariance $\bSigma^{\sG}$ simplifies to\footnote{Specifically, $\| \bV_3 \bV_3^\sT - 3\bW\bW^\sT/d - \id_p \|_\op = o_{d,\P}(1)$, $\| \bV_4 \bV_4^\sT - 3 \bones \bones^\sT/d^2 - \id_p \|_\op = o_{d,\P}(1)$, and $\| \bV_{k} \bV_{k}^\sT - \id_p \|_\op = o_{d,\P}(1)$ for $k \geq 5$ in the regime $p\asymp d^2$. We absorb the spikes in $\bV_3,\bV_4$ into $\mu_{0}$ and $\mu_{1}$. See Appendix \ref{sec:ConcentrationInequality} for details.}
\begin{equation}\label{eq:simplification_covariance_quad}
 \sum_{k = 0}^\infty \mu_{k}^2 \bV_k \bV_k^\sT = (\mu_{0}^2 + o_{d,\P}(1))\bones_p \bones_p^\sT + (\mu_{1}^2 +o_{d,\P}(1)) \bW \bW^\sT + \mu_{2}^2 \bV_2 \bV_2^\sT + \mu_{>2}^2 \id_p + \bDelta, 
\end{equation}
where $\mu_{>2}^2 = \sum_{k =3}^\infty \mu_{k}^2$ and $\| \bDelta\|_\op = o_{d,\P}(1)$, and we have $\| \bV_k \bbeta_{k}  \|_F = o_{d,\P}(1)$ for $k \geq 3$.
In this regime, the GE model \eqref{eq:general_GET_RF_expanded} reduces to:
\begin{modelbox}{Gaussian Equivalent (GE) Model in the quadratic scaling $n\asymp p \asymp d^2$}
\vspace{-0pt}
\begin{equation}\label{eq:GET_quadratic_scaling}
\begin{aligned}
\bz^\sG
   &:= \mu_{0} \bones_p + \mu_{1} \bW \bg_1 + \mu_{2} \bV_2 \bg_2 +\mu_{>2} \bg_* ,
      \\
f^\sG
   &:=\bbeta_{0}^\sT  +  \bbeta_{1} ^\sT\bg_1 + \bbeta_{2}^\sT \bg_2 +\bC_{>2}^{1/2} \bG_*.
\end{aligned}
\end{equation}
\end{modelbox}
\noindent
Here, 
\[
\bg_1 \sim \normal (0,\id_d), \quad \bg_2 \sim \normal (0,\id_{B_{d,2}}),\quad \bg_* \sim \normal (0,\id_p),  \quad \bG_* \sim \normal (0, \id_m) \;\;\;\text{ independently},
\]
and $\bC_{>2} = \sum_{k \geq 3} \bbeta_{k}^\sT \bbeta_{k} $.

We provide conditions on the coefficients $\{ \bbeta_{k} \}_{k \geq 1}$ under which the RF model \eqref{eq:Hermite_expansion_RF_model} is accurately captured by the GE model \eqref{eq:GET_quadratic_scaling} (see Assumption \ref{assumption:target} and Section \ref{sec:abstract_CLT}), 
so that Gaussian universality \eqref{eq:GET_test_train} continues to hold in this quadratic regime. However, we will see in the following section that this equivalence breaks as soon as $\bbeta_{1} \neq 0$.

\subsection{Conditional Gaussian Equivalent model for Random Features}
\label{sec:setting_CGE}

We now introduce the \textit{Conditional Gaussian Equivalent} (CGE) model.
As discussed in Section \ref{sec:intro_example}, in the quadratic scaling $n \asymp p \asymp d^2$, non-Gaussian behavior can arise due to the dependency on low-dimensional directions in the degree-$2$ Wiener chaos. The GE model fails to capture this effect. The CGE model corrects this by conditioning on the relevant low-dimensional subspace.

\paragraph*{Labels and target functions.} Recall that the responses are generated as $y=\eta(f_*(\bx);\eps)$ with $\eta:\R^m\times\R\to\R$ and $f_*:\R^d\to\R^m$. The decomposition of the pair $(\eta,f_*)$ is not unique: different choices can induce the same joint law of $(y,\bx)$, yet GE (and CGE) models may depend on this choice. For example, in the proportional scaling $n\asymp p\asymp d$ with $y=\bar\eta(\He_2(\<\bu_*,\bx\>);\eps)$, one can take either:
\begin{itemize}
    \item[(i)] $f_* (\bx) := \He_2 ( \< \bu_*,\bx\>)$ and $\eta := \bar \eta$; or

    \item[(ii)] $f_* (\bx) :=  \< \bu_*,\bx\>$ and $\eta (u;\eps) := \bar \eta (\He_2 (u);\eps)$.
\end{itemize}
In case (i), the marginal of $f_*(\bx)$ is non-Gaussian and GE fails; in case (ii), GE holds \cite{huUniversalityLawsHighdimensional2022}. 

To resolve this ambiguity and avoid such trivial failures, we will fix a specific representation for the response which isolates the different chaos contributions. Specifically, we take \footnote{Again, $\xi_{ki}$ (and therefore $\bbeta_{k,i}$) is a sequence indexed by $d$. We suppress this dependence for notational simplicity.}
\begin{equation}\label{eq:label_multi-index-chaos}
\begin{aligned}
    f_* (\bx) =&~ \{ \xi_{ki} \}_{k \in [D'], i \in [s_k]}, \qquad \xi_{ki} = \< \bbeta_{ki} ,  \bh_k (\bx)\>, \\
    y =&~ \eta ( \{ \xi_{ki} \}_{k \in [D'], i \in [s_k]};\eps),
\end{aligned}
\end{equation}
so that each coordinate $\xi_{ki}$ of $f_*$ belongs to a fixed Wiener chaos subspace $\cH_k$. In words, $y$ is a multi-index response whose indices are polynomials of $\bx$ with fixed chaos order.

Without loss of generality, we normalize $\| \bbeta_{ki} \|_2 = 1$. We impose the following ``genericity condition'' on the\footnote{Note that this is automatically verified for $k = 1$ as $\< \bbeta_{1i},\bx\> \sim \normal (0,1)$.} $\bbeta_{ki}$:
\begin{equation}\label{eq:fourth-moment-condition}
    \E [ \< \bbeta_{ki}, \bh_k (\bx) \>^4] = 3 + o_d (1).
\end{equation}
By the Fourth Moment Theorem for Wiener chaos (see Remark \ref{rmk:fourth-moment} below), \eqref{eq:fourth-moment-condition} implies $\xi_{ki} \overset{\de}{\Rightarrow} \normal(0,1)$ as $d \to \infty$. Hence, marginally $f_* (\bx)$ is asymptotically Gaussian with
\[
\E[\xi_{ki} \xi_{k'i'}] = \delta_{kk'} \< \bbeta_{ki},\bbeta_{ki'}\>,
\]
ruling out the aforementioned trivial non-Gaussianity. Note that condition \eqref{eq:fourth-moment-condition} is purely deterministic in $\{\bbeta_{k i}\}$. For $k=2$ (Hermite-2 as in Section \ref{sec:intro_example}), it amounts to the associated matrix representation $\bB$ of $\bbeta_{2i}$ satisfying $\|\bB\|_{\op}=o_d(1)$. For $k\ge 3$, it translates to vanishing Frobenius norms of certain tensor contractions of the coefficient tensor; see Section~\ref{sec:abstract_CLT} for the precise statements.

\begin{remark}[Fourth Moment Theorem]\label{rmk:fourth-moment}
If $X_d$ lies in a fixed Wiener chaos of order $k$ and is standardized by $\E[X_d] = 0$, $\Var (X_d) = 1$, then $X_d \overset{\de}{\Rightarrow} \normal (0,1)$ if and only if $\E[X_d^4] \to 3$. This is the Nualart-Peccati criterion; e.g., see \cite{nualart2005central,peccati2004gaussian} and the monograph \cite{nourdin2012normal}. In our setting, $X_d = \< \bbeta_{ki}, \bh_k (\bx) \>$ so \eqref{eq:fourth-moment-condition} yields asymptotic Gaussianity of each coordinate $\xi_{ki}$, which also implies the asymptotic Gaussianity of the joint distribution.
\end{remark}

\paragraph*{Models covered.} The representation \eqref{eq:label_multi-index-chaos} encompasses a large class of responses:
\begin{itemize}
    \item \textit{Multi-index models:} $y = \eta (\bTheta_*^\sT \bx; \eps)$ with $\bTheta_* \in \R^{d \times m}$ (including GLMs when $m=1$).

    \item \textit{Regression functions with random coefficients:} Let $m=1$ and  $y = f_* (\bx) + \eps$ with
    \[
    f_* (\bx) = \beta_0+\sum_{k = 1}^{D'} \<\bbeta_{k}, \bh_k (\bx)\>,
    \]
    where $\bbeta_{k}$ verify \eqref{eq:fourth-moment-condition}. E.g., it is satisfied by $\bbeta_k \sim \Unif (\S^{B_{d,k}-1})$ (see Appendix \ref{app:excess-kurtosis}).
    \item \textit{A broad class of deterministic regression functions.} Any $f_* \in L^2 ( \R^d,\normal (0,\id_d))$ can be written as $F (\{ \xi_{ki} \})$, where the spikes (low-rank components) are embedded among lower-order chaos coordinates. For instance, consider $f_*(\bx) = \bx^\sT \bB_* \bx - \Tr(\bB_*)$: then if $ \bB_* $ has a non-vanishing top eigenvalue, e.g., $\bB_* = \bu_* \bu_*^\sT + \bB_0$ with $\| \bB_0\|_\op = o_d(1)$, then  $f_*(\bx) = F( \< \bu_*,\bx\>, \bx^\sT \bB_0 \bx - \Tr(\bB_0))$. Here, \eqref{eq:label_multi-index-chaos} constrains $f_*$ to depend on at most a finite number of such chaos projections with Gaussian marginal.
\end{itemize}

\paragraph*{Conditioning on the degree-$1$ coordinates.}
Without loss of generality, take the first-order chaos coordinates as the first $s:=s_1$ coordinates of $\bx$ after an orthogonal change of basis, that is
\begin{equation}
\{ \xi_{1i}\}_{i \in [s]} = \bx_S = (x_1, \ldots, x_s).
\end{equation}
 
In the quadratic scaling $n\asymp p\asymp d^2$, our Conditional Gaussian universality result conditions on $\bx_S$ and compares the RF model to a GE model \emph{conditional on} these coordinates.

\begin{remark}[General polynomial scaling]\label{rmk:general-polynomial-scaling} For scalings $n\asymp p\asymp d^k$, $k >2$, one will need to condition on higher-order chaos in the response~\eqref{eq:label_multi-index-chaos}. We postpone this general construction to \cite{wen2025empirical}, which develops a general Conditional Gaussian universality principle for abstract polynomial chaos expansions.
\end{remark}

\paragraph*{The Conditional Gaussian Equivalent (CGE) model.} Conditioning on the $\bx_S$ component of $\phi_\RF (\bx)$ in \eqref{eq:Hermite_expansion_RF_model} leads to the following decomposition:
\begin{enumerate}
    \item \textit{First-order chaos $\bW\bx$:} Let $\bW_S \in \R^{p \times s}$ be the submatrix of $\bW$ corresponding to the coordinates of $\bx_S$, and let $\proj_{S,\perp} : \R^d \to \R^d$ denote the orthogonal projection onto the complement of that subspace. Then
    \begin{equation}
    \bW\bx = \bW_S \bx_S + \bW \proj_{S,\perp} \bx \overset{\de}{=} \bW_S \bx_S +  \bW \proj_{S,\perp} \bg_1,
    \end{equation}
    where $\bg_1 \sim \normal (0,\id_d)$ is independent of $\bx_S$.

    \item \textit{Second-order chaos $\bV_2\bh_2 (\bx)$:} Let $\bV_{2,S} \in \R^{p \times B_{s,2}}$ (with $B_{s,2} = s(s+1)/2$) denote the submatrix of $\bV_2$ corresponding to multi-indices supported within the first $s$ coordinates (in particular, $\bV_{2,S} = q_2 (\bW_S)$), and let $\proj_{S,\perp} :\R^{B_{d,2}} \to \R^{B_{d,2}}$ denote the orthogonal projection onto the complement subspace (with a slight overloading of notations). Then
    \begin{equation}
    \bV_2 \bh_2 (\bx) = \bV_{2,S} \bh_2 (\bx_S) + \bV_2 \proj_{S,\perp} \bh_2 (\bx),
    \end{equation}
    where $ \bh_2 (\bx_S) \in \R^{B_{s,2}}$ contains all degree-$2$ Hermite polynomials of $\bx_S$. The term $\proj_{S,\perp} \bh_2 (\bx)$ contains cross-terms $x_ix_j$, with $i \leq s <j$, and hence is not independent of $\bx_S$. However,
    \begin{equation}
        \E [(\proj_{S,\perp} \bh_2 (\bx)) (\proj_{S,\perp} \bh_2 (\bx))^\sT \vert \bx_S] = \proj_{S,\perp},
    \end{equation}
    and in the quadratic scaling, this term will behave as $\bV_2 \proj_{S,\perp} \bg_2$ where $\bg_2 \sim \normal (0,\id_{B_{d,2}})$ is independent of $(\bx_S,\bg_1)$.

    \item \textit{Higher-order chaos $\bV_k \bh_k (\bx), k\geq 3$:} Each term is a random low-dimensional projection of $\bh_k (\bx) \in \R^{\Theta(d^k)}$ onto the $p=\Theta(d^2)$-dimensional span of $\bV_k$. Similarly as in the GE model \eqref{eq:GET_quadratic_scaling}, these higher-order chaos components behave as an additive noise-term $\mu_{>2} \bg_*$, with $\bg_* \sim \normal (0, \id_p)$ independent of  $(\bx_S,\bg_1,\bg_2)$.

    \item \textit{Responses $y = \eta ( \{ \bx_S , (\xi_{ki})_{2 \leq k \leq D', i \in [s_k]} \};\eps)$:} Condition \eqref{eq:fourth-moment-condition} implies that the $\xi_{2i}$ are asymptotically independent of $\bx_S$, and can be replaced by $\{ \< \beta_{2i}, \bg_2\> \}_{i \in [s_2]}$. For $k \geq 3$, the $\{\xi_{ki}\}$ can be replaced by $\{ G_{ki}\} \sim \normal (0, \bC_{*,>2})$, independent of $(\bx_S,\bg_1,\bg_2,\bg_*)$, where $\bC_{*,>2}$ denotes the covariance of $(\xi_{ki})_{3 \leq k \leq D', i \in [s_k]}$, that is, $\E[G_{ki} G_{k'i'}] = \delta_{kk'} \< \bbeta_{ki},\bbeta_{ki'}\>$. 
\end{enumerate}
Combining these approximations yields the CGE model:
\begin{modelbox}{Conditional Gaussian Equivalent (CGE) Model in quadratic scaling $n\asymp p \asymp d^2$}
\vspace{-5pt}
\begin{equation}\label{eq:CGE_definition}
\begin{aligned}
\bz^\sCG
   &:= \mu_{0} \bones_p + \underbrace{\mu_{1} \bW_S \bx_S + \mu_{2} \bV_{2,S} \bh_2 (\bx_S)}_{\text{Part depending on the support $\bx_S$}} + \underbrace{\mu_{1} \bW \proj_{S,\perp}  \bg_1 + \mu_{2} \bV_{2} \proj_{S,\perp} \bg_2 +\mu_{>2} \bg_*}_{\text{Independent Gaussian part}} ,
      \\
f^\sCG
   &:= \big\{\bx_S, ( \<\bbeta_{2i}, \bg_2 \> )_{i \in [s_2]}, ( G_{ki} )_{3\leq k \leq D', i \in [s_k]} \big\} .
\end{aligned}
\end{equation}
\end{modelbox}
\noindent
Here,
\begin{equation}
    \bx_S \sim \normal (0,\id_s), \quad \bg_1 \sim \normal (0,\id_d), \quad \bg_2 \sim \normal (0,\id_{B_{d,2}}), \quad \bg_* \sim \normal (0,\id_p), \quad ( G_{ki} )_{\substack{3\leq k \leq D', \\i \in [s_k]}} \sim \normal (0, \bC_{*,>2}),
\end{equation}
independently. Thus, the CGE model is a high-dimensional Gaussian model augmented with $s(s+1)/2$ non-Gaussian components $\bh_2(\bx_S)$ capturing the low-dimensional dependence structure $\bx_S$ in the response. In the next section, we show that the asymptotic test and train errors of the RF model \eqref{eq:Hermite_expansion_RF_model} coincide with those of the CGE model \eqref{eq:CGE_definition} for a broad class of problems.

Let us illustrate the CGE model on two simple examples:

\paragraph*{Example 1: Generalized Linear Models (GLMs).} For $y = \eta ( \< \bu_*, \bx\> ; \eps)$, the associated CGE model is given by
\begin{equation}
\begin{aligned}
\bz^{\sCG} :=&~ \mu_{0} \bones_p + \mu_{1} \br_1 \<\bu_*,\bx\> +  \mu_{2} \br_2 \He_2 (\<\bu_*,\bx\>) \\
&~ + \mu_{1} \bW \left(\id_d - \bu_* \bu_*^\sT \right)  \bg_1 + \mu_{2} \bV_{2} \left(\id_{B_{d,2}} - \bq_2 (\bu_*) \bq_2 (\bu_*)^\sT\right) \bg_2 +\mu_{>2} \bg_*, \\
f^{\sCG} := &~ \< \bu_*,\bx\>,
\end{aligned}
\end{equation}
where
\[
\br_1 := \bW\bu_* =  ( \<\bw_j ,\bu_* \>)_{j \in [p]}, \qquad \br_2 := \bV_2 \bq_2 (\bu_*) =  ( \<\bw_j ,\bu_* \>^2)_{j \in [p]},
\]
and we recall that 
\[
\bq_2 ( \bu_*) = ( u_{*,i}^2 )_{i \in [d]} \oplus (\sqrt{2}u_{*,i}u_{*,j})_{ i<j \in [d]}.
\]
Thus, for GLMs in the quadratic scaling regime, the CGE feature $\bz^{\sCG}$ will contain a one-dimensional non-Gaussian component in direction $\br_2$ with (shifted) chi-squared marginal. 

\paragraph*{Example 2: No first-order Chaos components.} When $f_*$ has no degree-$1$ components (and the remaining components of degrees $\geq 2$ satisfy the genericity condition \eqref{eq:fourth-moment-condition}), the CGE model coincides with the GE model \eqref{eq:GET_quadratic_scaling}:
\begin{equation}
\begin{aligned}
    \bz^{\sCG} := &~\bz^{\sG} = \mu_{0} \bones_p + \mu_{1} \bW \bg_1 + \mu_{2} \bV_2 \bg_2 +\mu_{>2} \bg_*, \\
    f^{\sCG} := &~ f^{\sG} = \big\{( \<\bbeta_{2i}, \bg_2 \> )_{i \in [s_2]}, ( G_{ki} )_{3\leq k \leq D', i \in [s_k]} \big\} .
\end{aligned}
\end{equation}

\section{Main results}
\label{sec:main_results}

In this section, we state our main result on the \textit{Conditional Gaussian universality} of empirical risk minimization with random features in the quadratic scaling regime. Section~\ref{sec:assumptionsmain} introduces the assumptions under which our analysis applies. The  universality for the training and test errors (Theorems~\ref{thm:universality_train_quadratic} and~\ref{thm:universality_test_error}) are stated in Section~\ref{sec:universality-RF-CGE}. Additional discussions and numerical simulations are provided in Section~\ref{sec:discussion}.

\subsection{Assumptions}
\label{sec:assumptionsmain}

For the reader’s convenience, we recall the empirical risk minimization (ERM) problem under consideration:
\begin{equation}
\begin{aligned}
\hbtheta^\RF &= \argmin_{\btheta \in \R^p} \hcR_{n,p} (\btheta ; \bZ^\RF,\boldf^\RF,\beps), \\
\hcR_{n,p} (\btheta ; \bZ^\RF,\boldf^\RF,\beps) &:=  \frac{1}{n} \sum_{i \in [n]} \ell (\eta(f_i^\RF;\eps_i), \< \btheta, \bz_i^\RF\> )  + \frac{\lambda}{2} \| \btheta \|_2^2,
\end{aligned}
\end{equation}
where $f_i^\RF =  f_* (\bx_i)$ and $\bz_i^\RF = \phi_{\RF} (\bx_i) = \sigma ( \bW\bx_i)$ with 
\begin{equation}
\bx_i \overset{\text{i.i.d.}}{\sim} \normal (0,\id_d), \qquad \eps_i \overset{\text{i.i.d.}}{\sim} \nu \qquad \text{independently},
\end{equation}
and $\bW=[\bw_1,\ldots,\bw_p]^\sT \in \R^{p \times d}$ is fixed independently with $\bw_j \overset{\text{i.i.d.}}{\sim} \Unif (\S^{d-1})$.

We study this problem in the high-dimensional regime where $n,p,d\to\infty$. The activation function $\sigma:=\sigma_d$, its Hermite coefficients $\mu_k:=\mu_{d,k}$, the regularization parameter $\lambda:=\lambda_d$, and the target function $f_*:=f_{d,*}$ may depend on the ambient dimension $d$. For notational simplicity, we suppress this dependence. All constants introduced below are fixed independent of~$d$.

We now state the assumptions under which our main results hold.

\begin{assumption}[Quadratic scaling regime]\label{assumption:scaling}
    There exists a constant $\sC_0 >0$ such that 
    \begin{equation}
    \sC_0^{-1} \leq n/d^2 \leq \sC_0, \qquad \text{and} \qquad \sC_0^{-1} \leq p/d^2 \leq \sC_0.
    \end{equation}
\end{assumption}

\begin{assumption}[Loss function]\label{assumption:loss}
    There exists a constant $\sC_1>0$ such that the loss function $\ell : \R \times \R \to \R$ satisfies the following.
\begin{itemize}
    \item[(i)]  The loss function is nonnegative, $\ell(y,\hat y) \ge 0$ for all $y, \hat y \in \R$, and convex with respect to $\hat y$.


    \item[(ii)]  The loss $\ell (y,\hat y)$ is three times continuously differentiable with
    \begin{equation}
   \| \nabla \ell (y,\hat y) \|_F, \;\; \| \nabla^2 \ell(y,\hat y) \|_F ,\;\;    \| \nabla^3 \ell(y,\hat y) \|_F \leq \sC_1, \qquad \text{for all } y,\hat y \in \R.
    \end{equation}
\end{itemize}
\end{assumption}

We will further assume a mild growth condition on the loss function:

\begin{assumption}[Calibrated growth]\label{assumption:polynomial-growth} There exist  constants $\sc_2,\sr_2>0$ and $\sC_2 \geq 0$, such that either:
\begin{itemize}
    \item[(i)] \emph{(Regression.)} We have $\ell (y,\hat y) \geq \sc_2\,| \hat y - y |^{\sr_2}$ for all $y,\hat y \in \R$ with $|\hat y - y | \geq \sC_2$. 
    
    \item[(ii)] \emph{(Binary classification.)} We have $\ell (y,\hat y) \geq \sc_2\, ( - y\hat y)_+^{\sr_2}$ for all $y,\hat y \in \R$.
\end{itemize}
\end{assumption}

This assumption is satisfied by a number of standard losses, such as the squared loss $\ell(y,\hat y)=\tfrac12(\hat y-y)^2$ for regression (with $\sr_2=2$, $\sc_2=\tfrac12$, $\sC_2=0$) and logistic loss $\ell(y,\hat y)=\log(1+e^{-y\hat y})$ for binary classification (with $\sr_2 = 1$, $\sc_2 = 1$, $\sC_2 = 0$).
Note that Assumption~\ref{assumption:loss}(ii) requires $\ell$ to be Lipschitz. The case of the squared loss can be handled with minor adjustments to the proof. However, extending the analysis to general \emph{pseudo-Lipschitz} losses would require substantial modifications, and we will not pursue this extension in the current work.

\begin{assumption}[Regularization parameter]\label{assumption:lambda}
  There exists $\sC_5 > 0$ such that $\lambda \ge \log^{-\sC_5} d$.
\end{assumption}

\begin{remark}
Considerable attention has recently been devoted to \emph{interpolating} estimators ($\lambda=0^+$) and to the phenomenon of \emph{benign overfitting} \cite{zhang2021understanding,belkin2018understand,Bartlett_2020,montanari2025generalization}. Examining the proof, our main results continue to hold for nearly interpolating solutions with $\lambda = d^{-c}$, for sufficiently small $c>0$ depending only on the constants in our assumptions. Establishing universality for the exact interpolating estimator $\lambda=0^+$ would, however, require a different proof strategy. For example, \cite{montanariUniversalityEmpiricalRisk2022} prove universality for interpolating solutions, but their argument applies only to sub-Gaussian features $\phi(\bx)$ and does not extend to our quadratic scaling regime, where $\sigma(\bW\bx)$ is no longer sub-Gaussian.
\end{remark}

\begin{assumption}[Response]\label{assumption:target}  For some constants $\sc_3,\sC_3>0$ and fixed integers $D'$ and $(s,s_2,\ldots,s_{D'})$ not depending on $n,p,d$, the following holds: Let $m = s +s_2 + \ldots + s_{D'}$. There exist a link function $\eta : \R^m \times \R \to \R$ and a latent signal $f_* : \R^d \to \R^m$ such that the response satisfies
\[
y = \eta (f_*(\bx), \eps),
\]
with noise $\eps \sim \nu$ independent of $\bx$.
\begin{itemize}
    \item[(i)] \emph{(Link function.)} The link function satisfies for all $\boldf,\boldf' \in \R^m, \eps \in \R$:
    \[
     | \eta ( \boldf, \eps) | \leq \sC_3 ( 1 + \| \boldf \|_2^{\sC_3} + | \eps |^{\sC_3} ), \qquad | \eta ( \boldf, \eps) - \eta (\boldf', \eps) | \leq \sC_3  \|\boldf-\boldf'\|_2 .
    \]
    \item[(ii)] \emph{(Latent signal.)} The latent signal $f_*$ is of the form (see Section \ref{sec:setting_CGE} for a detailed discussion)
    \begin{equation}
    f_*(\bx) = \{ \bx_S, \bxi_2, \bxi_3,\ldots, \bxi_{D'} \} ,
\end{equation}
where $\bx_S = (x_1, \ldots ,x_s)$ and $\bxi_{k} = (\xi_{ki})_{i = 1}^{s_k}$ with $\xi_{ki} = \< \bbeta_{ki}, \bh_k (\bx)\>$, $\bbeta_{ki} \in \R^{B_{d,k}}.$ We assume that\footnote{See Appendix \ref{app:excess-kurtosis} for an explicit criterion in terms of vanishing Frobenius norms of tensor  contractions.}
\begin{equation}\label{eq:ass_bbeta_ki_4_th_moment}
\| \bbeta_{ki} \|_2 = 1, \qquad \E [ \<\bbeta_{ki},\bh_k (\bx) \>^4 ] \leq 3 + d^{-\sc_3} \qquad \text{for all } \;\;2 \leq k \leq D', i\in [s_k].
\end{equation}
\end{itemize}
\end{assumption}

\begin{assumption}[Label noise] \label{assumption:noise}
For some constants $\sC_4,\sr_4 >0$, the noise variable $\eps$ satisfies
\begin{equation}\P[|\eps| \geq t] \leq \sC_4 e^{-t^{\sr_4}} \text{ for all } t \geq 0.\end{equation}
\end{assumption}

In the binary classification setting of Assumption \ref{assumption:polynomial-growth}(ii), we will further assume the following condition:

\begin{assumption}[Label noise in classification]\label{assumption:bayes-error}  
In the setting of Assumption \ref{assumption:polynomial-growth}(ii), for any constant $\sC>0$, there exists a constant $\sc \in (0,1/2)$ such that
\[\sup_{\boldf \in \R^m:\|\boldf\|_2 \leq \sC} \P[\eta(\boldf,\eps)=+1] \geq \sc,
\qquad \sup_{\boldf \in \R^m:\|\boldf\|_2 \leq \sC} \P[\eta(\boldf,\eps)=-1] \geq \sc.\]
\end{assumption}

This assumption requires that there is non-vanishing label noise, and hence non-vanishing Bayes prediction error, for $y=\eta(\boldf,\eps)$ over bounded domains of $\boldf \in \R^m$.
Note that in this context of binary classification, our results formally apply to a model where $\eta$ is any Lipschitz approximation of a function taking value in $\{-1,+1\}$.

 \begin{assumption}[Activation function]\label{assumption:activation}
 For some constants $\sc_6 \in (0,1/2)$ and $\sC_6>0$ and a fixed integer $D$ not depending on $n,p,d$, the following holds: The activation $\sigma:\mathbb{R}\to\mathbb{R}$ is a polynomial of degree at most~$D$. Its Hermite coefficients $ \mu_{k}:=\E_{G \sim \normal(0,1)} \bigl[\sigma(G)\,\He_{k}(G)\bigr]$ satisfy
 \begin{equation}\label{eq:assumption_sigma}
     |\mu_{0}| ,|\mu_{2}|\ge \log^{-\sC_6}d, \qquad d^{-\sc_6} /\sC_6 \leq | \mu_{1}| \leq \sC_6 d^{-\sc_6},\qquad  \bigl|\mu_{k}\bigr|\le\log^{\sC_6} d \;\;\; \text{for all $k \leq D$}.
 \end{equation}
\end{assumption}

\begin{remark} 
The restriction that $\sigma$ be a polynomial can be relaxed to requiring sufficiently fast decay of its Hermite coefficients.  
The scaling condition $|\mu_1| \asymp d^{-\sc_6}$ with $\sc_6 \in (0,1/2)$ is a technical assumption introduced to control the spikes appearing in the second Lindeberg swapping step.  
Note that under this condition, the covariance of the features still has a rank-$d$ spike of the form $\mu_1^2 \bW\bW^\sT$ with diverging eigenvalues on the order of $\Theta(d^{1 - 2\sc_6})$, allowing the model to still learn the linear component of the target function.
\end{remark}

\subsection{Universality under the Conditional Gaussian Model}
\label{sec:universality-RF-CGE}

Denote the minimum of the empirical risk
\begin{equation}
\widehat{\mathcal R}_{n,p}^*(\bZ^\RF,\boldf^\RF)
: = \min_{\btheta}\widehat{\mathcal R}_{n,p}(\btheta;\bZ^\RF,\boldf^\RF), \qquad \widehat{\mathcal R}_{n,p}^*(\bZ^\sCG,\boldf^\sCG)
: = \min_{\btheta}\widehat{\mathcal R}_{n,p}(\btheta;\bZ^\sCG,\boldf^\sCG),
\end{equation}
where $(\bZ^\RF,\boldf^\RF)$ is the training data under the random feature (RF) model \eqref{eq:Hermite_expansion_RF_model}, and $(\bZ^{\sCG},\boldf^{\sCG})$ is the training data under the Conditional Gaussian Equivalent (CGE) model \eqref{eq:CGE_definition}.

We first state our universality result for the training error:

\begin{theorem}[Universality of Training Error]\label{thm:universality_train_quadratic}
    Suppose Assumptions \ref{assumption:scaling}--\ref{assumption:activation}
    hold. Then, there exist constants $c,d_0>0$ depending only on the constants in these assumptions, such that for all $d \geq d_0$ and any twice-differentiable function $\varphi : \R \to \R$ with $\| \varphi \|_\infty,  \| \varphi' \|_\infty, \| \varphi '' \|_\infty \leq 1$, 
    \[
    \left| \E  \Big[ \varphi \left( \hcR_{n,p}^* (\bZ^\RF,\boldf^\RF) \right) \Big] - \E  \Big[ \varphi \left( \hcR_{n,p}^* (\bZ^\sCG,\boldf^\sCG) \right) \Big] \right| \leq  \frac{1}{d^{c}}.
    \]
    In particular, for every $\eps \in (0,1)$ and  $\kappa \in \R$, 
    \[
    \begin{aligned}
    \P \left( \left|\hcR_{n,p}^* (\bZ^\RF,\boldf^\RF) - \kappa \right| \geq 2\eps \right) \leq \P \left( \left|\hcR_{n,p}^* (\bZ^\sCG,\boldf^\sCG) - \kappa \right| \geq \eps \right) + \frac{1}{\eps d^{c}},\\
    \P \left( \left|\hcR_{n,p}^* ( \bZ^\sCG,\boldf^\sCG) - \kappa \right| \geq 2\eps \right) \leq \P \left( \left|\hcR_{n,p}^* (\bZ^\RF,\boldf^\RF) - \kappa \right| \geq \eps \right) + \frac{1}{\eps d^{c}}.
    \end{aligned}
    \]
    Consequently, for all $\kappa \in \R$,
    \[
    \hcR_{n,p}^* (\bZ^\RF,\boldf^\RF) \overset{\P}{\longrightarrow} \kappa\quad \text{ if and only if } \quad\hcR_{n,p}^* ( \bZ^\sCG,\boldf^\sCG) \overset{\P}{\longrightarrow} \kappa.
    \]
\end{theorem}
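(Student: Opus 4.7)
My plan is to prove the smooth-test inequality
\[|\E[\varphi(\hat\cR_{n,p}^*(\bZ^\RF,\boldf^\RF))] - \E[\varphi(\hat\cR_{n,p}^*(\bZ^\sCG,\boldf^\sCG))]| \le d^{-c},\]
and deduce the two probability inequalities by approximating the indicator of $\{|\cdot-\kappa|\ge 2\eps\}$ from above and below by smooth bumps $\varphi$ satisfying the stated hypotheses and rescaling (which is where the $1/(\eps d^c)$ factor enters); convergence in probability is an immediate consequence. Before swapping, I would localize $\hat\btheta$: the strong convexity from Assumption~\ref{assumption:lambda} combined with the calibrated growth of $\ell$ in Assumption~\ref{assumption:polynomial-growth} forces $\|\hat\btheta\|_2$ to be polylogarithmic in $d$ with overwhelming probability under both models, so that $\langle\hat\btheta,\bz_i\rangle$ and the loss derivatives entering the Taylor expansion below are uniformly bounded on a good event, whose complement contributes only $o(1)$ to the test functional since $\|\varphi\|_\infty \le 1$.

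\textbf{Sample-by-sample Lindeberg swap with conditional moment matching.}
I would then set up a Lindeberg telescoping sum, swapping $(\bz_i^\RF,f_i^\RF)$ for $(\bz_i^\sCG,f_i^\sCG)$ one index $i$ at a time. A leave-one-out surrogate $\hat\btheta_{(-i)}$ (controlled via standard stability estimates using the $\lambda$-strong convexity) decouples the optimizer from the $i$-th sample, and the envelope theorem identifies derivatives of $\hat\cR^*$ with respect to $\bz_i$ with derivatives of $\ell$ evaluated at $\hat\btheta$. Taylor-expanding to third order in $\bz_i$, the key moment-matching property is that, \emph{conditional on $\bx_{S,i}$ and $\bW$}, the laws of $\bz_i^\RF$ and $\bz_i^\sCG$ share by construction the same non-Gaussian ``skeleton'' $\mu_0\bones + \mu_1\bW_S\bx_{S,i} + \mu_2\bV_{2,S}\bh_2(\bx_{S,i})$ (see \eqref{eq:CGE_definition}), and their remaining centered parts have matching means and covariances up to the $o_{d,\P}(1)$ corrections of the quadratic-scaling simplification \eqref{eq:simplification_covariance_quad}. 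Consequently the zeroth, first, and second-order Taylor terms cancel in conditional expectation, reducing the per-swap error to a conditional trilinear form in the centered residuals, weighted by bounded derivatives of $\ell$ (Assumption~\ref{assumption:loss}).

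\textbf{Quantitative partial CLT via Malliavin--Stein.}
The crux of the proof is to bound the conditional expectation of this trilinear form by a vanishing quantity. The centered RF residual is a finite sum of Wiener chaos of orders $1,\ldots,D$ (Assumption~\ref{assumption:activation}), so cubic forms in its entries expand, via the product formula for multiple Wiener integrals, into iterated contractions of the coefficient tensors $\bV_k$. Following the Fourth Moment Theorem philosophy of Nualart--Peccati and Nourdin--Peccati \cite{nualart2005central,nourdin2009stein,nourdin2012normal}, I would upper bound the discrepancy between the RF cubic form and its matching Gaussian counterpart by a fourth-cumulant-type quantity that vanishes polynomially in $d$, using (a) the concentration bounds $\|\bV_k\bV_k^\sT-\id\|_\op=o_{d,\P}(1)$ for $k\ge 3$ and the refined form of $\bV_2\bV_2^\sT$ under quadratic scaling, and (b) the response-side kurtosis bound \eqref{eq:ass_bbeta_ki_4_th_moment}, which ensures that the degree-$k$ chaos coordinates of $f_*$ contribute trilinear terms of the required size. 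Summing the per-swap errors over $i\in[n]$ yields the total bound $d^{-c}$.

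\textbf{Main obstacle: two-phase Lindeberg against diverging spikes.}
The principal obstruction is that $\phi_\RF(\bx)$ is not sub-Gaussian and that, unlike in the linear scaling, certain Hermite components carry \emph{diverging spikes} in the quadratic regime: $\mu_1^2\bW\bW^\sT$ has eigenvalues of order $d^{1-2\sc_6}$ and $\bV_2\bV_2^\sT$ contains a rank-one $\bones\bones^\sT$ spike and a dense bulk. A single uniform cubic-remainder bound is far too lossy along these directions. To handle this, I would carry out the Lindeberg swap in two phases: \emph{Phase~1} replaces the degree-$\ge 3$ chaos components together with the $\bx_S$-orthogonal degree-$2$ chaos by their matching Gaussian counterparts, where the partial CLT above (driven by $o_{d,\P}(1)$ operator-norm bounds) is sufficient; \emph{Phase~2} then handles the surviving spiked degree-$1$ and degree-$2$ directions via a sharper moment estimate exploiting the explicit algebraic structure of $\bW$ and $\bV_2$, and in which Assumption~\ref{assumption:activation}'s scaling $|\mu_1|\asymp d^{-\sc_6}$ with $\sc_6\in(0,1/2)$ is essential to keep cubic contributions along $\bW\bW^\sT$ tractable. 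Reconciling this two-phase swap with the $\bx_S$-conditioning that defines the CGE model, and in particular propagating the conditional moment-matching consistently through both phases, is what I expect to be the main technical hurdle.
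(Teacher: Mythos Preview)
Your high-level strategy---Lindeberg swapping with leave-one-out surrogates and a Malliavin--Stein partial CLT---matches the paper's, and you correctly flag the diverging spikes as the main obstacle. But two ideas are missing, and without them the argument does not close.

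First, your localization and stability estimates rely only on the ridge curvature $\bH_{\setminus q}\succeq \lambda\bI$. The per-swap error in the Lindeberg sum involves cross-influence terms of the form $n^{-1}\bz_i^{\sT}\bH_{\setminus q}^{-1}\tilde\bz_q$, and these must be $\prec d^{-1}$ for the sum over $n\asymp d^2$ swaps to vanish; with only $\lambda$-strong convexity the bound is hopelessly large along the spike directions $\mu_0\bones_p$ and $\mu_1\bW$. The paper resolves this not by a ``sharper moment estimate'' but by first proving universality for a \emph{perturbed} risk $\hat\cR_{n,p}(\btheta)+\tau_1\|\bV_+^{\sT}\btheta\|_2^2+\tau_2 L_\bW(\btheta)$ with $\bV_+=[\mu_0\bones_p,\mu_1\bW]$, which supplies the Hessian lower bound $\bH\succeq \tau_1\bV\bV^{\sT}+\tfrac\lambda2\bI$ and hence $\|\bV^{\sT}\bH^{-1}\bV\|_\op\le 1/\tau_1$; the unperturbed statement is then recovered by a stability argument at $\tau_1=d^{-\eps}$. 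Second, the Malliavin--Stein bounds need much finer localization than $\|\hat\btheta\|_2\prec 1$: the degree-$k\ge 3$ chaos CLT requires $\|\hat\btheta\|_\infty\prec d^{-1/4}$ (to make the contraction norms $\|\bT_{k,\bW}(\hat\btheta)\otimes_r\bT_{k,\bW}(\hat\btheta)\|_F$ vanish), and the degree-$2$ chaos CLT requires $\|\bW_{\setminus S}^{\sT}\bD_{\hat\btheta}\bW_{\setminus S}\|_\op\le d^{-\eps}$. Neither follows from the calibrated-growth argument you sketch. The paper obtains the $\ell_\infty$ bound via a leave-one-\emph{coordinate}-out comparison, and the operator-norm bound via a leave-one-\emph{direction}-out objective augmented with a $d^{1/8}\|\btheta\|_\infty$ regularizer to get uniformity over $\bv\in\S^{d-1}$. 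This is also what dictates the two-phase split---Phase~I (RF$\to$PGE) swaps only the $k\ge 3$ chaos and needs $\hat\btheta\in\bTheta^\sPG$, Phase~II (PGE$\to$CGE) swaps the degree-$2$ chaos orthogonal to $\bx_S$ and needs the additional operator-norm condition defining $\bTheta^\sCG$---which is not the split you describe.
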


We outline the proof of Theorem~\ref{thm:universality_train_quadratic} in Section~\ref{sec:proof_outline}.  
The argument builds on the Lindeberg interpolation method \cite{lindeberg1922neue,chatterjee2006generalization}, which has already been used in earlier works to establish universality of various statistical problems \cite{korada2011applications,montanari2017universality,panahi2017universal,oymak2018universality,huUniversalityLawsHighdimensional2022}.  
We interpolate the minimum of the empirical risk between the original and the CGE model by sequentially replacing one data point at a time with its CGE counterpart, and control the resulting change in the objective at each step. Compared with previous ERM universality analyses \cite{huUniversalityLawsHighdimensional2022,montanariUniversalityEmpiricalRisk2022}, our setting is considerably more challenging. The features are not sub-Gaussian, with dependent chaos of different orders, and the feature covariance exhibits (a diverging number of) spikes.  
To handle these issues, we introduce an intermediate model---the \textit{Partial Gaussian Equivalent} (PGE) model---and perform two phases of Lindeberg swapping. Each phase requires delicate estimates to control the contributions of the different chaos and spike directions.

Theorem~\ref{thm:universality_train_quadratic} establishes universality for the training error. Of course, the quantity of primary interest from a statistical learning standpoint is the test error at the empirical minimizer: 
\begin{equation}\label{eq:test_error_result}
\cR_{\test} (\hbtheta^\RF; \P_{\bz^\RF,f^\RF} ) = \E_{\bz^\RF,f^\RF,\eps} [ \ell_{\test} (\eta (f^\RF;\eps), \<\hbtheta^\RF,\bz^\RF\>) | \bW],
\end{equation}
where $\hbtheta^\RF$ minimizes $\widehat{\mathcal R}_{n,p}(\btheta;\bZ^\RF,\boldf^\RF)$. We will establish universality of this test error under the following additional conditions:

\begin{assumption}[Test loss]\label{ass:test_loss}
    There exists a constant $\sC_7 >0$ such that the test loss function $\ell_\test :\R \times \R \to \R$ is three times differentiable with pseudo-Lipschitz derivatives, that is,
    \[
    \| g (\bu ) - g(\bu') \|_F \leq \sC_7 (1 + \| \bu \|_2^{\sC_7} + \| \bu' \|_2^{\sC_7} ) \| \bu - \bu'\|_2,
    \]
    for all $\bu , \bu' \in \R^2$ and $g \in \{ \ell_\test, \nabla \ell_\test, \nabla^2 \ell_\test, \nabla^3 \ell_\test\}$.
\end{assumption}
 
\begin{assumption}[Local strong convexity]\label{assumption:lsc}
For any constant $\sK>0$, there exists $\sK'>0$ such that for all $|y|,|\hat y| \leq (\log d)^{\sK}$,
\begin{equation}
\partial_{\hat y\hat y}^2\,\ell(y,\hat y)\ \ge\ \frac{1}{(\log d)^{\sK'}}.
\end{equation}
\end{assumption}

\begin{remark} Universality of the training error is more robust than that of the test error: without strong convexity, small perturbations in the training data may leave the training error nearly unchanged while inducing large deviations in the minimizer $\hbtheta$.
In our setting, the empirical risk is not strongly convex along the spike directions of the feature covariance, so we impose Assumption~\ref{assumption:lsc} to guarantee sufficient curvature in these directions.
This condition is satisfied, for instance, by a mild perturbation of the logistic loss (obtained via an $\ell_\infty$-perturbation outside a compact neighborhood of the origin, decaying poly-logarithmically in $d$). We note that weaker conditions are possible (see, e.g., \cite[Theorem~3]{montanariUniversalityEmpiricalRisk2022}), but we adopt Assumption~\ref{assumption:lsc} for simplicity.
\end{remark}

To transfer universality from the training to the test error, we introduce a \emph{perturbed empirical risk}. Let $\cT_{K_{\Gamma}} : \R \to \R$ be a smooth approximation of the truncation function $T_{K_{\Gamma}} (x) = \sign (x) \min(|x|, \log^{K_{\Gamma}} d)$, where $K_{\Gamma}>0$ is a constant which will be set sufficiently large but depending only on the constants in the assumptions. Define the perturbed empirical risk by
\begin{align}
\label{equ:perturbed_objectiveERM}
\widehat{\mathcal R}_{n,p}(\btheta;\btau,\bZ,\boldf)
=\widehat{\mathcal R}_{n,p}(\btheta;\bZ,\boldf)
+\btau\cdot\bGamma^\bW (\btheta),
\end{align}
where $\btau = (\tau_1,\tau_2) \in \R_{\geq 0}\times \R$ and 
\begin{equation}
    \Gamma_1^\bW (\btheta) = \cT_{K_{\Gamma}} ( \| \bV_+^\sT \btheta \|_2^2), \qquad \Gamma_2^\bW (\btheta) = \cT_{K_{\Gamma}} (  L_{\bW} (\btheta) ).
\end{equation}
Here $\bV_+ = [\mu_0\bones_p, \mu_1\bW] \in \R^{p \times (d+1)}$ collects the spike directions, and 
\begin{equation}\label{eq:test_loss_perturbation_def}
    L_{\bW} (\btheta) := \cR_{\test} (\btheta; \P_{\bz^\sPG,f^\sPG} ) =  \E_{\bz^\sPG,f^{\sPG},\eps} [ \ell_{\test} (\eta(f^{\sPG};\eps) , \< \btheta, \bz^\sPG \>) | \bW].
\end{equation}
The precise definition of the PGE model $(\bz^\sPG,f^\sPG)$ is postponed to Section \ref{sec:notations_and_conventions}.

Denote the minima of these perturbed risks by
\begin{equation}
    \widehat{\mathcal R}_{n,p}^*(\btau,\bZ,\boldf)
: = \min_{\btheta}\widehat{\mathcal R}_{n,p}(\btheta;\btau,\bZ,\boldf). 
\end{equation}
Then, Theorem \ref{thm:universality_train_quadratic} holds uniformly for all $\btau$ with  $d^{-c'} \leq \tau_1 \leq 1/(\log d)^{K_\Gamma}$ and $|\tau_2| \leq \tau_1/(\log d)^{K_\Gamma}$ (see Theorem \ref{thm:universality_Perturbed_risk} in Appendix \ref{app:perturbation_stability_ERM}). To extend universality from the perturbed risk to the test error, we ``differentiate'' the limiting value $\hcR_{n,p}^*(\btau,\bZ,\boldf)$ with respect to $\tau_2$ at $\tau_2 = 0$; as $\tau_1 \to 0$, this derivative coincides with the test error \eqref{eq:test_error_result}. Formally, we assume the following:

\begin{assumption}[Perturbed objective around $\btau = 0$]\label{ass:test_error} 
There exists a constant $\rho>0$ such that for any constant truncation threshold $K_\Gamma>0$ sufficiently large, setting $\tau_1=(\log d)^{-K_\Gamma}$ and $\tau_2 \in \{\pm (\log d)^{-2K_\Gamma}\}$, we have as $d \to \infty$
\begin{equation}
\frac{
\widehat{\mathcal R}_{n,p}^*(\btau,\bZ^\sCG,\boldf^\sCG)
-\widehat{\mathcal R}_{n,p}^*((\tau_1,0),\bZ^\sCG,\boldf^\sCG)
}{\tau_2}
\overset{\mathbb P}{\longrightarrow}
\rho  .
\end{equation}
\end{assumption}

Note that this condition only needs to be verified for the CGE model, allowing the use of Gaussian-specific tools (e.g., Convex Gordon minimax theorem) to check it in concrete cases.  
Moreover, it suffices to verify the assumption for a simplified perturbed objective where $\cT_{K_\Gamma}$ is replaced by identity and $L_\bW (\btheta)$ by the CGE test error
$\cR_{\test}(\hbtheta;\P_{\bz^{\sCG},f^{\sCG}})$. 

\begin{theorem}[Universality of Test Error]\label{thm:universality_test_error}
Suppose Assumptions~\ref{assumption:scaling}--\ref{assumption:activation} and~\ref{ass:test_loss}--\ref{ass:test_error} hold. Denote by $\hat\btheta^\RF$ (resp.\ $\hat\btheta^{\sCG}$) the minimizer of the empirical risk~\eqref{equ:perturbed_objectiveERM} with $\btau=0$ for the random feature data $(\bZ^\RF,\boldf^\RF)$  (resp.\ CGE data $(\bZ^{\sCG},\boldsymbol f^{\sCG})$). Then, as $d\to\infty$, 
\[
\biggl|\,
\cR_{\test}\left(\hbtheta^\RF;\P_{\bz^\RF,f^\RF}\right)
-
\cR_{\test}\left(\hbtheta^\sCG;\P_{\bz^{\sCG},f^{\sCG}}\right)
\biggr|
\;\xrightarrow{\;\P\;}0.
\]
\end{theorem}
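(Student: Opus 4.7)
The plan is to extract $\cR_{\test}$ as a derivative in the dummy parameter $\tau_2$ of the perturbed empirical value function, and transfer this derivative from the CGE model---where Assumption~\ref{ass:test_error} identifies its limit as $\rho$---to the RF model via universality of the perturbed value function. The perturbation $\tau_2\cdot\Gamma_2^{\bW}(\btheta)$ was designed precisely so that, by a Danskin envelope identity, differentiating in $\tau_2$ at $0$ evaluates $\Gamma_2^\bW$ at the empirical minimizer, which (once the truncation $\cT_{K_\Gamma}$ is inactive) is the PGE test error $L_\bW(\hbtheta)$.

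\textbf{Steps 1--2: universality of the perturbed value and envelope identification.} First, I would invoke the perturbed version of Theorem~\ref{thm:universality_train_quadratic} (Theorem~\ref{thm:universality_Perturbed_risk} in the appendix, referenced in the excerpt) to obtain $\hcR_{n,p}^*(\btau,\bZ^\RF,\boldf^\RF)-\hcR_{n,p}^*(\btau,\bZ^\sCG,\boldf^\sCG)\overset{\P}{\to}0$, uniformly for $\btau$ in the range given in Assumption~\ref{ass:test_error}. Combined with that assumption, this yields
\[
\frac{\hcR_{n,p}^*((\tau_1,\tau_2),\bZ^\RF,\boldf^\RF)-\hcR_{n,p}^*((\tau_1,0),\bZ^\RF,\boldf^\RF)}{\tau_2}\overset{\P}{\longrightarrow}\rho,
\]
and similarly with CGE replacing RF, for both $\tau_2=\pm(\log d)^{-2K_\Gamma}$. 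Since $\tau_2\mapsto\hcR_{n,p}^*(\btau,\cdot)$ is concave as an infimum of functions affine in $\tau_2$, these two finite differences sandwich both one-sided derivatives at $\tau_2=0$, forcing them to equal $\rho+o_{d,\P}(1)$. By Danskin's theorem and uniqueness of the ridge-penalized minimizer (ensured by $\lambda\ge(\log d)^{-\sC_5}>0$), this common derivative is exactly $\Gamma_2^\bW(\hbtheta_{(\tau_1,0)})$. Hence $\Gamma_2^\bW(\hbtheta^\RF_{(\tau_1,0)})\overset{\P}{\to}\rho$ and $\Gamma_2^\bW(\hbtheta^\sCG_{(\tau_1,0)})\overset{\P}{\to}\rho$.

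\textbf{Step 3: remove the truncation, the $\tau_1$-perturbation, and the PGE dependence.} An a priori bound $\|\hbtheta\|_2^2\le 2\hcR_{n,p}(\bzero;\cdot)/\lambda\le(\log d)^{O(1)}$ (using Assumptions~\ref{assumption:polynomial-growth} and~\ref{assumption:lambda}) combined with the pseudo-Lipschitz growth of $\ell_\test$ (Assumption~\ref{ass:test_loss}) gives $L_\bW(\hbtheta)\le(\log d)^{K_\Gamma}$ with high probability for $K_\Gamma$ large enough, so the truncation $\cT_{K_\Gamma}$ is inactive and $\Gamma_2^\bW=L_\bW$ on this event. Local strong convexity of $\ell$ (Assumption~\ref{assumption:lsc}) together with the spike regularizer $\tau_1\|\bV_+^\sT\btheta\|_2^2$---which supplies curvature in the diverging-spike directions of $\bW\bW^\sT$ where $\ell$ alone gives little---then yields $\hbtheta_{(\tau_1,0)}\to\hbtheta$ in a sense strong enough that $L_\bW(\hbtheta_{(\tau_1,0)})-L_\bW(\hbtheta)\overset{\P}{\to}0$, for both RF and CGE. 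Finally, for a test sample independent of $(\bW,\bZ,\boldf)$, a single-sample version of the chaos CLT developed in the proof of Theorem~\ref{thm:universality_train_quadratic} yields $L_\bW(\hbtheta^\RF)\approx\cR_{\test}(\hbtheta^\RF;\P_{\bz^\RF,f^\RF})$ and $L_\bW(\hbtheta^\sCG)\approx\cR_{\test}(\hbtheta^\sCG;\P_{\bz^\sCG,f^\sCG})$. Chaining these estimates, both test errors converge to $\rho$ in probability, giving the theorem.

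\textbf{Main obstacle.} The hardest step is the joint control in Step~3: simultaneously removing the spike-direction regularization $\tau_1\Gamma_1^\bW$, the truncation $\cT_{K_\Gamma}$, and replacing the PGE test distribution by the RF (resp.\ CGE) one at the fitted $\btheta$, without resurrecting the non-universality that the perturbed risk was built to smooth away. Local strong convexity only operates on bounded predictions, so one must establish a priori that $\<\hbtheta,\bz\>$ is $(\log d)^{O(1)}$-bounded on the test distribution, and that the diverging-spike modes of $\bW\bW^\sT$ project almost trivially onto the data-fit part of the minimizer. Doing this uniformly across the spike spectrum, while relying only on the training-error universality of Theorem~\ref{thm:universality_train_quadratic} (to avoid circularity with the final single-sample substitution), is the main technical difficulty.
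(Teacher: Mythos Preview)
Your proposal is correct and follows essentially the same route as the paper: transfer the finite-difference quotient in $\tau_2$ from CGE to RF via the perturbed universality theorem, sandwich $\Gamma_2^\bW(\hbtheta_{(\tau_1,0)})$ near $\rho$ (the paper does this by the elementary optimality inequality $\hcR_{n,p}^*(\btau)\le\hcR_{n,p}^*((\tau_1,0))+\tau_2\Gamma_2^\bW(\hbtheta_{(\tau_1,0)})$ for both signs of $\tau_2$, which is your concavity/Danskin argument unpacked), then remove truncation, pass from $L_\bW$ to $\cR_\test(\,\cdot\,;\P_{\bz^\RF,f^\RF})$ by the single-sample CLT (Theorem~\ref{theorem:RecallBoundedLipschitzFunctionSupErgetisotropic}), and finally show $\hbtheta_{(\tau_1,0)}\to\hbtheta_0$ in the relevant norm.

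One clarification on Step~3: you attribute the spike-direction curvature partly to the regularizer $\tau_1\|\bV_+^\sT\btheta\|_2^2$, but that cannot work since $\tau_1\to 0$. The paper instead proves a separate lemma (Lemma~\ref{lem:strong_convex-landscape}) showing that, under Assumption~\ref{assumption:lsc}, the \emph{data term alone} already yields $\nabla^2\hcR_{n,p}\succeq(\log d)^{-K'}\bV_+\bV_+^\sT+(\lambda/2)\bI$ with $K'$ independent of $\tau_1$; this is what allows $\|\bV_+^\sT(\hbtheta_{\tau_1}-\hbtheta_0)\|_2^2+\|\hbtheta_{\tau_1}-\hbtheta_0\|_2^2\le(\log d)^{K'}\cdot O(\tau_1)$ and hence the test-error continuity as $\tau_1\downarrow 0$. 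This is exactly the obstacle you flagged, and the resolution is that local strong convexity of $\ell$ on the (polylog-bounded) sublevel set, combined with the anticoncentration of $\{(1,\bx_i)\}$ already used in Lemma~\ref{lemma:sublevelgeometry}, supplies the needed curvature without any help from $\tau_1$.
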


Under these assumptions, the limiting test error of the ERM with random features \eqref{eq:Hermite_expansion_RF_model} coincides with that of its Conditional Gaussian Equivalent counterpart \eqref{eq:CGE_definition}. The proof of Theorem \ref{thm:universality_test_error} is given in Appendix \ref{sec:TestError}.

\begin{figure}
    \centering
    \begin{subfigure}[b]{0.32\textwidth}
         \centering         \includegraphics[width=\textwidth]{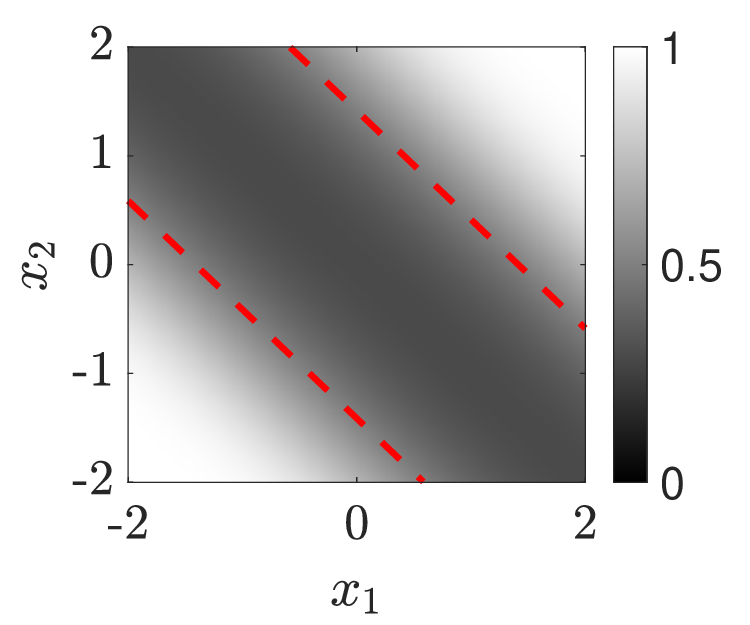}
         \caption{$p/d^2=0.25$}
     \end{subfigure}
     \hfill
     \begin{subfigure}[b]{0.32\textwidth}
         \centering         \includegraphics[width=\textwidth]{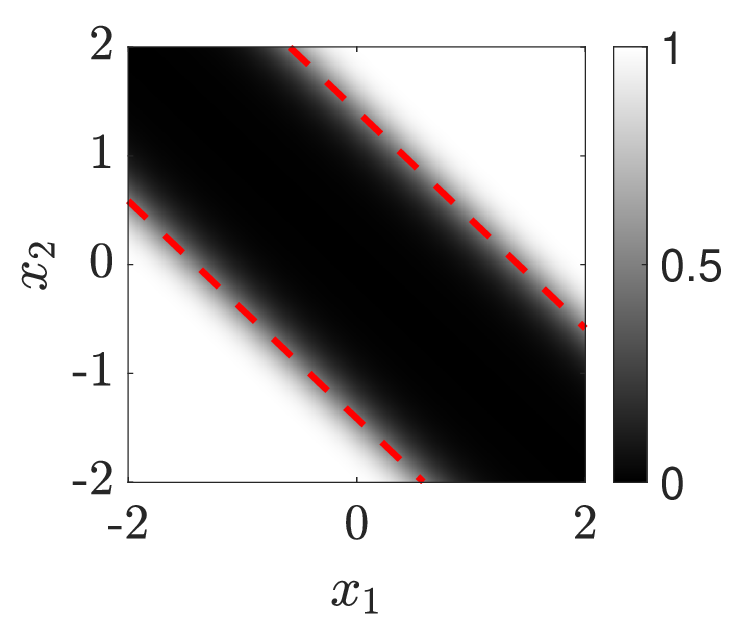}
         \caption{$p/d^2=1.5$}
     \end{subfigure}
     \hfill
     \begin{subfigure}[b]{0.32\textwidth}
         \centering         \includegraphics[width=\textwidth]{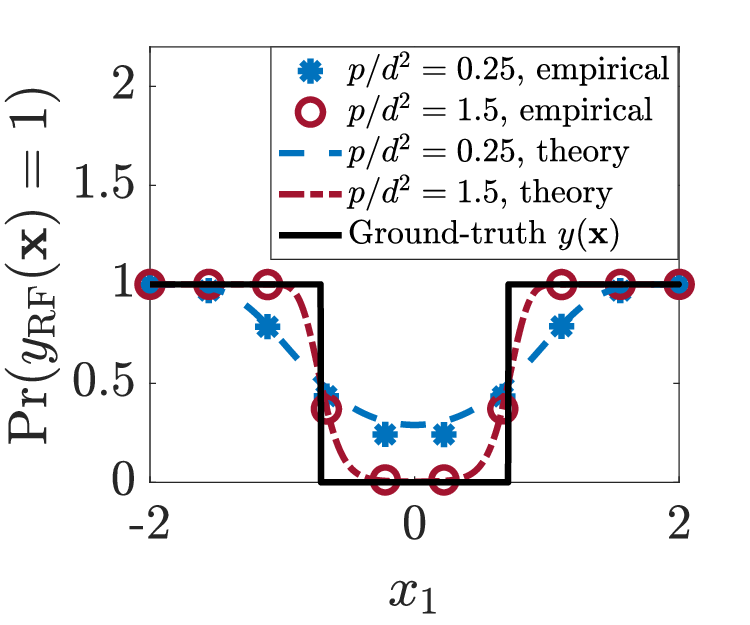}
         \caption{$\mathbb{P}(y_{\rm RF}(\bx) = +1)$}
     \end{subfigure}
     \caption{ (a) and (b): 2D diagram of $\P(y_{\RF}(\bx) = +1)$, where $y_{\RF}(\bx) = \sign(\phi_{\RF} (\bx))$ and $\bx \in \mathbb{R}^d$ is a new test sample, with $x_1 \text{ and } x_2$ being given and $(x_3, x_4, \cdots, x_d) \sim \mathcal{N}(0,\id_{d-3})$. We choose $d=30$, $\lambda = 10^{-3}$, and $y = \sign\left(\He_2\big( \frac{x_1 + x_2}{\sqrt{2}} \big)\right)$. We fix $n/d^2 = 2.5$. The red dash lines correspond to the ground truth boundaries between the two classes $y=+1$ and $y=-1$. (c): Theoretical predictions and empirical results of $\P(y_{\RF}(\bx) = +1)$ along the line $x_2 = x_1$ in the 2D diagram. The black solid line corresponds to the ground-truth label.}
     \label{fig:2d_class_boundary}
\end{figure}

\subsection{Discussion and numerical simulations}
\label{sec:discussion}

\begin{figure}[t]
\centering
\begin{subfigure}[b]{0.35\textwidth}
     \centering
     \includegraphics[width=\textwidth]{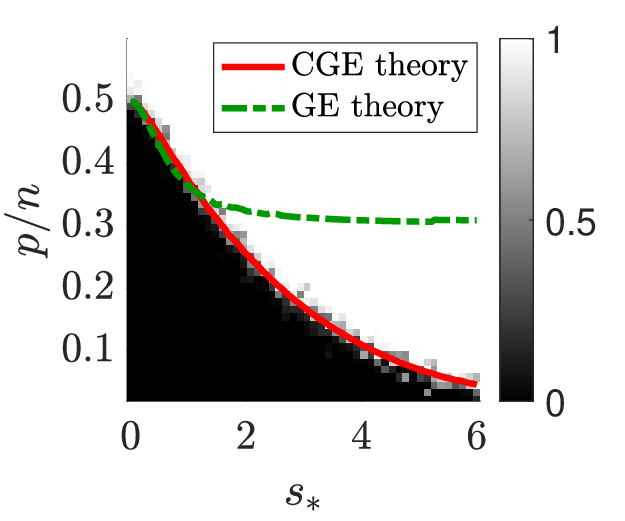}
 \end{subfigure}
 \hspace{4em}
 \begin{subfigure}[b]{0.35\textwidth}
     \centering
     \includegraphics[width=\textwidth]{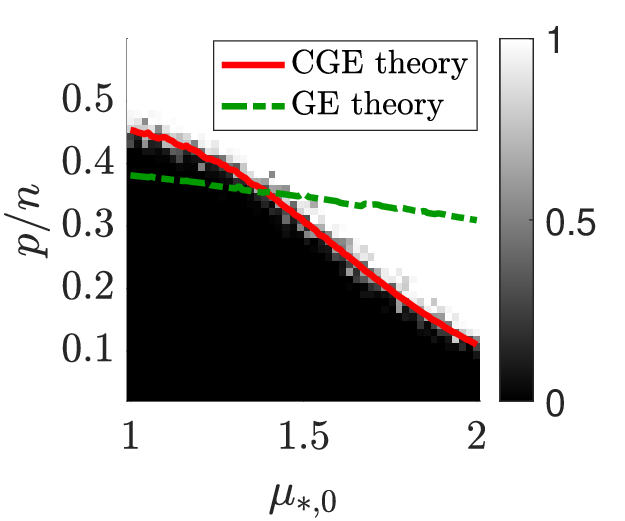}
 \end{subfigure}
\caption{Phase diagram for the existence of an interpolating RF model in binary classification in the quadratic scaling regime. Each pixel value represents the empirical probability that the RF model interpolates the training data $(y_i,\bx_i)_{i\in[n]}$, averaged over 60 independent trials. The red solid lines represent the asymptotic predictions for the phase transition boundary from the CGE model, and the green dashed curves are the asymptotic predictions from the GE model. We fix $d=50$ and $n/d^2 = 0.45$.
The target function is $f_*(\bx) = \sum_{k=0}^{3}\mu_{*,k} \He_k(\bu_{*}^\sT \bx)$
and the label is generated by the logistic model: $\P(y = 1 \mid \bx) = (1 + e^{-s_* f_*(\bx)})^{-1}$. 
Left panel: $\mu_{*,0}=2$, $\mu_{*,1}=1$, $\mu_{*,2}=2$ and $\mu_{*,3}=0.6$ ; Right panel: $s_* = 4$, $\mu_{*,1}=1$, $\mu_{*,2}=2$ and $\mu_{*,3}=0.6$.}
\label{fig:MLE_phasetransition}
\end{figure}

Below we briefly summarize several consequences of Theorems~\ref{thm:universality_train_quadratic} and \ref{thm:universality_test_error}. Due to space constraints, we postpone the explicit asymptotic formulas, their derivations, and full application details to the companion paper \cite{wen2025asymptotics}.

\paragraph*{Exact asymptotics for the CGE model.} Our main results imply that the asymptotic behavior of the RF model can be analyzed directly through the CGE model \eqref{eq:CGE_definition}. Crucially, this model  decomposes into the sum of a low-dimensional non-Gaussian component $\bz^\sCG_{\rm NonG}$ depending only on $\bx_S$, with $s + \frac{s(s+1)}{2}$ features given by $\{ \bx_S, \bh_2 (\bx_S)\}$, and an independent high-dimensional Gaussian model $\bz^\sCG_{\rm Gauss}$. Conditional on $\{\bx_{i,S}\}_{i \in [n]}$, we apply the Convex Gaussian Min–Max Theorem (CGMT) \cite{thrampoulidis2015regularized,thrampoulidis2018precise} to the high-dimensional Gaussian features $\bz^\sCG_{\rm Gauss}$, and combine this with uniform concentration for the low-dimensional component $\bz^\sCG_{\rm NonG}$. This yields an asymptotic characterization of both train and test errors in terms of $O(s^2)$ scalar parameters, determined as fixed points of an explicit deterministic minimax problem. Full formulas and derivations are provided in \cite{wen2025asymptotics}. Figures~\ref{fig:marginal_pdf}–\ref{fig:Double descent} illustrate the resulting theoretical predictions.

\medskip

We now apply this asymptotic characterization to several phenomena arising in binary classification with RF features in the quadratic scaling and single-index labels (generalized linear models).

\paragraph*{Non-linear classifiers.} Prior work in the linear scaling regime focused on linear classifiers and their high-dimensional limits \cite{huUniversalityLawsHighdimensional2022,montanari2023universality,sur2019likelihood,montanari2025generalization}. In the quadratic scaling regime, the learned classifier is generally quadratic, and our framework provides precise asymptotic descriptions of these nonlinear decision boundaries.
Figure \ref{fig:2d_class_boundary} gives an example with single-index model $y = \sign(\He_2((x_1 + x_2)/\sqrt{2}))$, whose true decision boundary consists of two parallel hyperplanes $(x_1 + x_2)/\sqrt{2} = \pm 1$. Additional examples for multi-index models are presented in \cite{wen2025asymptotics}.

\paragraph*{Existence of interpolating solution.} We next study the capacity of the RF model. Specifically, for $n$ samples $(y_i, \bx_i)_{i\in[n]}$, we are interested in characterizing the interpolation threshold $\psi_*$, that is, the smallest value such that when $p/n \ge \psi_*$, there exists $\btheta \in \R^p$ such that $y_i \cdot \btheta^\sT \bz_i^\RF \ge 0  $ for all $i \in [n]$ with high probability. Figure \ref{fig:MLE_phasetransition} provides phase diagrams for the existence of interpolating solutions under different over-parameterization ratios $p/n$, label SNR $s_*$, zeroth-order Hermite coefficient $\mu_{*,0}$, and single-index target.
A sharp interpolation phase transition emerges, and the transition boundaries align closely with our CGE predictions (red curves). In contrast, the GE model fails to correctly capture the phase transition boundary.

\paragraph*{Double descent and benign overfitting.} Figure~\ref{fig:MLE_phasetransition} shows that in the quadratic scaling regime, the RF model can interpolate purely random labels as soon as $p/n \ge 0.5$ ($s_* = 0$). While interpolation is often associated with overfitting, recent work shows that interpolating solutions can still generalize well, a phenomenon known as benign overfitting \cite{Bartlett_2020}. Using the same setting as Figure \ref{fig:MLE_phasetransition}, we show that the CGE model accurately captures benign overfitting in RF models.
Figure \ref{fig:Double descent} shows some numerical results on logistic regression with $\lambda\approx 0$.
In the left panel, as $p/n$ grows, the training loss decreases monotonically, while the test loss exhibits a ``double-descent'' phenomenon with a peak at $\psi_{\max}$. In the right panel, we fix $p/d^2 = 1$ and vary $n$.

\begin{figure}[t!]
\centering
\begin{subfigure}[b]{0.35\textwidth}
     \centering
     \includegraphics[width=\textwidth]{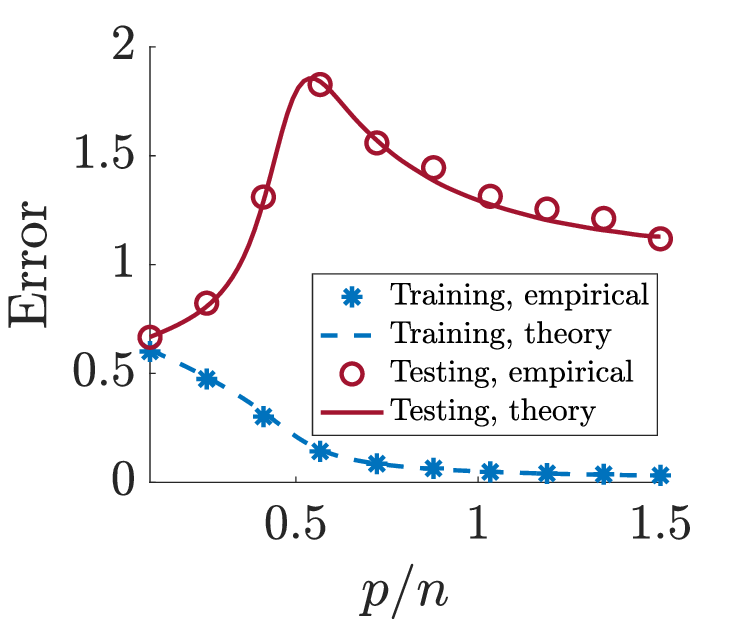}
 \end{subfigure}
 \hspace{4em}
 \begin{subfigure}[b]{0.35\textwidth}
     \centering
     \includegraphics[width=\textwidth]{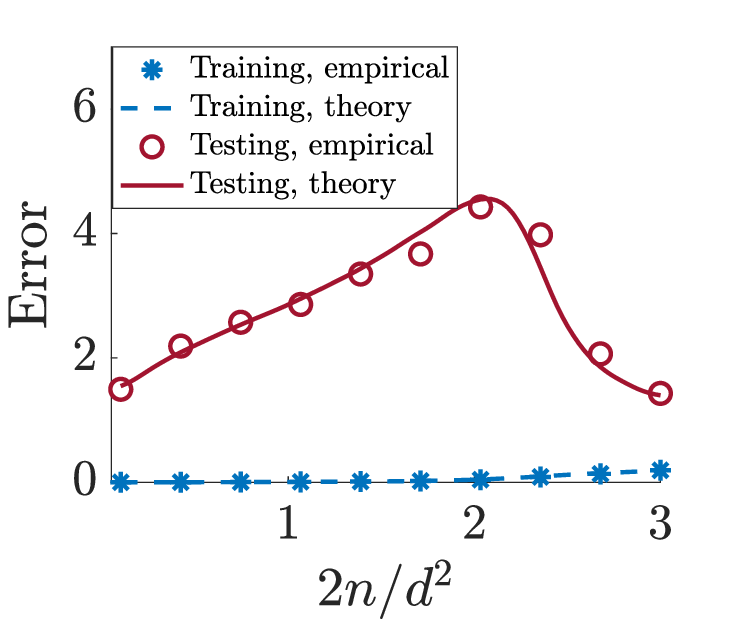}
 \end{subfigure} 
\caption{Benign overfitting and double descent phenomenon in binary classification. We choose $d=40$, $\lambda = 10^{-4}$, and the logistic function $\ell(y,z) = \log(1+e^{-yz})$ for both training and test loss. Left plot: Fix $n/d^2 = 0.75$. Right plot: Fix $p/d^2 = 1$. }
\label{fig:Double descent}
\end{figure}

\section{Proof outline for Theorem \ref{thm:universality_train_quadratic}}
\label{sec:proof_outline}

In this section, we provide the high-level roadmap for proving the training error universality (Theorem \ref{thm:universality_train_quadratic}). We summarize the main ideas and outline key technical steps. The detailed proofs are deferred to Appendices \ref{app:preliminaries}--\ref{sec:lindeberg-phase-ii}.

Our strategy is to prove universality directly for the \emph{perturbed empirical risk} $\widehat{\mathcal R}_{n,p}(\btheta; \btau, \bZ, \boldf)$ defined in~\eqref{equ:perturbed_objectiveERM}, and then transfer the result to the unperturbed case ($\tau_1=0$) using a stability argument (see Appendix~\ref{app:perturbation_stability_ERM} for details).  
Specifically, we will show (see Theorem~\ref{thm:universality_Perturbed_risk} for the precise statement) that there exist constants $c,C>0$, depending only on the constants in the assumptions, such that
\begin{equation}\label{eq:ERM_univeresality_goal}
\left| \E\left[ \varphi \left(\widehat{\mathcal R}_{n,p}^* (\btau,\bZ^\RF,\boldf^\RF) \right) \right] -  \E\left[ \varphi \left(\widehat{\mathcal R}_{n,p}^* ( \btau,\bZ^\sCG,\boldf^\sCG) \right) \right] \right| =  O_d( d^{-c} \tau_1^{-C}), 
\end{equation}
uniformly over all test functions $\varphi : \R \to \R$ with $\| \varphi\|_\infty, \| \varphi'\|_\infty, \| \varphi'' \|_\infty \leq 1$, where
\[
\widehat{\mathcal R}_{n,p}^*(\btau,\bZ,\boldf) := \min_{\btheta}\widehat{\mathcal R}_{n,p}(\btheta;\btau,\bZ,\boldf), \qquad \hbtheta = \argmin_{\btheta}\widehat{\mathcal R}_{n,p}(\btheta;\btau,\bZ,\boldf).
\]
There is by now a well-established approach for proving such universality results, which proceeds in three steps \cite{huUniversalityLawsHighdimensional2022,montanariUniversalityEmpiricalRisk2022}:
\begin{description}
    \item[Step 1.] (\textit{Uniform convergence of marginals over a constrained parameter set.}) Identify a subset of parameters $\bTheta_p \subseteq \R^p$ such that, uniformly over $\btheta \in \bTheta_p$, the marginal distributions of $(f,\<\bz,\btheta\>)$ coincide between the original and the CGE model: 
    \[
    \sup_{\btheta \in \bTheta_p} \left| \E_{\bz^\RF,f^\RF} \left[\psi \left(f^\RF, \<\bz^\RF,\btheta\>\right) \right] - \E_{\bz^{\sCG},f^{\sCG}} \big[\psi \big(f^{\sCG}, \<\bz^{\sCG},\btheta\>\big) \big]\right| = O_d (d^{-c'}\tau_1^{-C'}),
    \]
    uniformly over $1$-bounded $1$-Lipschitz functions $\psi : \R^m \times \R \to \R$.

        \item[Step 2.] (\textit{Unconstrained minimizers lie in $\bTheta_p$ with high probability.}) Show that both unconstrained minimizers $\hbtheta^\RF, \hbtheta^{\sCG}$ belong to $\bTheta_p$ with probability at least $1 - d^{-C}$.
    
    \item[Step 3.] (\textit{Universality of the constrained ERM.}) Using the above two steps, prove universality of the training error when the minimization is restricted to $\bTheta_p$:
 \[
\left|\E\left[ \varphi \left( \min_{\btheta \in \bTheta_p} \widehat{\mathcal R}_{n,p} (\btheta; \btau,\bZ^\RF,\boldf^\RF)\right) \right]  -  \E\left[ \varphi \left(\min_{\btheta \in \bTheta_p}  \widehat{\mathcal R}_{n,p} (\btheta; \btau,\bZ^\sCG,\boldf^\sCG)\right) \right] \right| = O_{d}(d^{-c''}\tau_1^{-C''}). 
\]
This is achieved via an interpolation argument---e.g., by swapping one data point at a time \cite{huUniversalityLawsHighdimensional2022} or via a continuous interpolation path \cite{korada2011applications,montanariUniversalityEmpiricalRisk2022}.

\end{description}
Combining these three steps yields the desired result~\eqref{eq:ERM_univeresality_goal}.

This strategy has been successfully applied to establish Gaussian universality for random feature models in the linear scaling $n \asymp p \asymp d$ \cite{huUniversalityLawsHighdimensional2022,montanariUniversalityEmpiricalRisk2022}. However, extending this approach to the quadratic scaling presents substantial difficulties. The feature map $\bz^\RF = \sigma (\bW\bx)$ is no longer a Lipschitz function in $\bx$ (indeed, $\| \bW\|_\op \asymp \sqrt{d}$), and its marginal distribution cannot be controlled as in \cite{goldt2022gaussian,huUniversalityLawsHighdimensional2022,montanariUniversalityEmpiricalRisk2022}. Instead, we consider its Hermite expansion
\begin{equation}\label{eq:hermite_exp_feature_intuition}
\bz^\RF = \mu_0 \bones_p + \mu_1 \bW \bx + \mu_2 \bV_2 \bh_2 (\bx) + \sum_{k \geq 3} \mu_k \bV_k \bh_k (\bx).
\end{equation}
The feature vector $\bz$ contains contributions from multiple Wiener chaos components that are mutually dependent and exhibit markedly distinct behaviors for $k \in \{0,1,2\}$, and $k\geq 3$---each requiring separate analyses to control. In particular, the feature covariance takes the form
\[
\E[(\bz^\RF)(\bz^\RF)^\sT] = \mu_0^2 \bones \bones_p^\sT + \mu_1^2 \bW \bW^\sT + \mu_2 \bV_2 \bV_2^\sT + \sum_{k \geq 3} \mu_k^2 \bV_k \bV_k^\sT,
\]
and exhibits one spike of size $\Theta(d^2)$ along $\bones_p$, and $d$ spikes of size $\Theta(d)$ along the columns of $\bW$. Our analysis therefore requires precise control of the projections $\btheta^\sT \bones_p$ and $\btheta^\sT \bW$ to prevent diverging terms in Step 3, which in turns necessitates a careful localization of the minimizers $\hbtheta$ along these spike directions in Step~2.

To address these challenges, we proceed as follows:
\begin{itemize}
    \item[(1)] To establish uniform convergence of the marginals over a constrained set, we prove a joint central limit theorem (CLT) for the different Wiener chaos components in the decomposition~\eqref{eq:hermite_exp_feature_intuition}, using a Malliavin--Stein argument \cite{nualart2005central,peccati2004gaussian,nourdin2009stein,nourdin2012normal} (see Section~\ref{sec:abstract_CLT}).  
    The constrained set $\bTheta_p$ is substantially more intricate than in the linear-scaling regime: the components corresponding to $k\in\{0,1,2\}$ and $k \ge 3$ impose different  constraints on~$\btheta$.  
    For instance, establishing the CLT for the order-$2$ chaos term requires showing
    \[
    \Big\| \sum_{j=1}^p \hat\theta_j\, \bw_{j,\setminus S}\bw_{j,\setminus S}^\sT \Big\|_{\op} = O_{d,\P}(d^{-c}).
    \]
        Directly localizing the minimizer $\hbtheta \in \bTheta_p$ with high probability is therefore challenging.  Instead, we localize the minimizers in two stages and introduce an intermediate model.

    \item[(2)]  We define a \emph{Partial Gaussian Equivalent} (PGE) model, serving as an intermediate step between the RF and CGE models.  
    Specifically, in the PGE model we replace the Wiener chaos components of order $k \ge 3$ in the feature-signal pair with their Gaussian equivalents.  
    Denote by $(\bz^\sPG, f^{\sPG})$ the data in this model.  
    The universality proof then proceeds in two phases: first between the RF and PGE models, and subsequently between the PGE and CGE models,
    \[
    \widehat{\mathcal R}_{n,p}^* ( \btau,\bZ^\RF,\boldf^\RF) \;\; \longrightarrow \;\;   \widehat{\mathcal R}_{n,p}^* ( \btau,\bZ^\sPG,\boldf^\sPG)\;\; \longrightarrow \;\;   \widehat{\mathcal R}_{n,p}^* ( \btau,\bZ^\sCG,\boldf^\sCG).
    \]
 In each phase, the proof follows the same three-step structure described above.
\end{itemize}

The remainder of this section is organized as follows.  
Section~\ref{sec:notations_and_conventions} introduces the Partial Gaussian Equivalent (PGE) model. Section~\ref{sec:abstract_CLT} defines the constrained parameter sets $\bTheta_p^{\sPG}$ and $\bTheta_p^{\sCG}$ associated with the two phases, and establishes the uniform CLT for the marginals.  
Section~\ref{sec:lindeberg-phase-is} and Section~\ref{sec:second_phase_step_2_and_3} then describe the first (RF to PGE) and second phase (PGE to CGE) phases of Lindeberg swapping.

\subsection{The Partial Gaussian Equivalent model}\label{sec:notations_and_conventions}

We define the PGE model below, as well as recall the RF and CGE models for clarity.

\paragraph*{Random Feature (RF) model:} Recall that $\bh_k (\bx) \in \R^{B_{d,k}}$ denotes an orthonormal basis of degree-$k$ Hermite polynomials, where $B_{d,k} \asymp d^k$, and let $\bV_k = (\bq_k(\bw_j))_{j \in [p]} \in \R^{p \times B_{d,k}}$ (see Section \ref{sec:setting_GET}). Under Assumptions \ref{assumption:target} and \ref{assumption:activation}, the RF model~\eqref{eq:Hermite_expansion_RF_model} can be written as
\begin{modelbox}{Random Feature (RF) Model}
\begin{equation}
    \begin{aligned}
\bz^\RF
   &:= \mu_{0} \bones_p + \mu_1 \bW \bx + \mu_2 \bV_2 \bh_2 (\bx) + \sum_{k =3}^{D} \mu_{k} \bV_k \bh_k (\bx),
      \\
f^\RF
   &:= \{ \bx_S, \bxi_2, \bxi_3,\ldots, \bxi_{D'} \} ,\qquad \qquad y^\RF := \eta (f^\RF;\eps).
\end{aligned}
\end{equation}
\end{modelbox}
Here, $\bx_S = (x_1, \ldots,x_s)$ and $\bxi_{k} = (\xi_{ki})_{i = 1}^{s_k}$ with $\xi_{ki} = \< \bbeta_{ki}, \bh_k (\bx)\>$.

\paragraph*{Partial Gaussian Equivalent (PGE) model:} The PGE model serves as an intermediate step in which we partially Gaussianize the feature-signal pair:  
we retain the degree-$1$ and degree-$2$ Hermite components and replace all higher-order chaos terms ($k \ge 3$) by their Gaussian equivalents.  
Following the same intuition as in the CGE construction (Section~\ref{sec:setting_CGE}):
\begin{enumerate}
    \item \textit{Higher-order chaos in the feature:}  
    The terms $\bV_k\bh_k(\bx)$ for $k \ge 3$ act as additive Gaussian noise in the feature.  
    We replace them by $\mu_{>2}\bg_*$, where $\bg_* \sim \normal(0,\id_p)$ is independent of $\bx$, and $\mu_{>2}^2 := \|\proj_{>2}\sigma\|_{L^2}^2 = \sum_{k=3}^D \mu_k^2.$

    \item \textit{Higher-order chaos in the response:}  
    Similarly, we replace $\{\bxi_k\}_{k=3}^{D'}$ by $\{\tilde\bxi_k\}_{k=3}^{D'}$, where
    $\tilde\bxi_k = (\tilde\xi_{ki})_{i=1}^{s_k}$ and
    $\tilde\xi_{ki} = \<\bbeta_{ki}, \bg_k\>$ with $\{ \bg_{k}\}_{k=3}^{D'}$, $\bg_k \sim \normal(0,\id_{B_{d,k}})$, mutually independent and independent of $\{\bx,\bg_*\}$.
\end{enumerate}

Combining these gives the feature-signal pair $(\bz^{\sPG}, f^{\sPG})$ under the PGE model:

\begin{modelbox}{Partial Gaussian Equivalent (PGE) Model}
\begin{equation}\label{equ:PGmodel1}
\begin{aligned}
\bz^{\sPG}
   &:=
      \underbrace{\mu_{0}\mathbf{1}_p
      +\mu_{1}\bW\bx
      +\mu_{2}\bV_2\bh_2(\bx)}_{\text{Same as RF model}}
      \;+\;
      \underbrace{\mu_{>2}\,\bg_*}_{\text{Gaussian substitute for } k \ge 3},
      \\
f^{\sPG}
   &:=
     \{\bx_{S}, \bxi_2, \tilde \bxi_3,\ldots,\tilde \bxi_{D'}\}, \qquad \qquad y^\sPG := \eta (f^\sPG;\eps).
\end{aligned}
\end{equation}
\end{modelbox}
We will denote
\[
\hbtheta^{\sPG} = \argmin \widehat{\cR}_{n,p} (\btheta; \tau,\bZ^{\sPG},\boldf^{\sPG}),
\]
where $(\bZ^{\sPG},\boldf^{\sPG})$ are $n$ i.i.d.~samples from the PGE model \eqref{equ:PGmodel1}.

\paragraph{Conditional Gaussian Equivalent (CGE) Model.}
To transition from the PGE model to the CGE model, we replace the components orthogonal to the support direction $\bx_S$ by Gaussian vectors.  For the degree-$1$ chaos,
\[
\bW\bx \;\; \longrightarrow \;\; \bW_S \bx_S + \bW \proj_{\perp S} \bg_1, \qquad \bW= [\bW_S, \bW_{\setminus S}],
\]
where $\bg_1 \sim \normal (0,\id_d)$ is independent of $\{\bx_S,\bg_*,\bg_3,\ldots,\bg_{D'}\}$.
For the degree-$2$ chaos,
\[
\begin{aligned}
\bV_2 \bh_2 (\bx) \;\; \longrightarrow &~\;\; \bV_{2,S} \bh_2 (\bx_S) + \bV_2 \proj_{S,\perp} \bg_2, \\
\bxi_2 = \{ \xi_{2i}\}_{i=1}^{s_2} = \{ \< \bbeta_{2,k}, \bh_2 (\bx) \>\}_{i=1}^{s_2} \;\; \longrightarrow&~ \;\; \tilde \bxi_2 = \{\tilde \xi_{2i}\}_{i=1}^{s_2} = \{ \< \bbeta_{2,k}, \bg_2 \>\}_{i=1}^{s_2},
\end{aligned}
\]
where $\bg_2 \sim \normal (0,\id_{B_{d,2}})$ is independent of $\{\bx_S,\bg_1, \bg_*,\bg_3,\ldots,\bg_{D'}\}$.

The corresponding feature-signal pair $(\bz^{\sCG}, f^{\sCG})$ under the CGE model is given by:

\begin{modelbox}{Conditional Gaussian Equivalent (CGE) Model}
\begin{equation}\label{equ:CGmodel1}
\begin{aligned}
\bz^{\sCG}
   &:=
      \underbrace{\mu_{0}\mathbf{1}_p
      +\mu_{1}\bW_S\bx_S
      +\mu_{2}\bV_{2,S}\bh_2(\bx_S)}_{\text{Same as RF model}}
      +
      \underbrace{\mu_1\bW \proj_{S,\perp}\bg_1 + \mu_2 \bV_2 \proj_{S,\perp} \bg_2 + \mu_{>2}\bg_*}_{\text{Independent Gaussian part}},
      \\
f^{\sCG}
   &:=
     \{\bx_{S}, \tilde \bxi_2,\tilde \bxi_3,\ldots,\tilde \bxi_{D'}\}, \qquad \qquad y^\sCG := \eta (f^\sCG;\eps).
\end{aligned}
\end{equation}
\end{modelbox}

\subsection{Uniform Central Limit Theorem on constrained parameter sets}
\label{sec:abstract_CLT}

To establish a uniform central limit theorem (CLT) for the marginal distributions over a constrained parameter set, we rely on a Malliavin–Stein argument developed in a series of works by \cite{nualart2005central,peccati2004gaussian,nourdin2009stein,nourdin2012normal}. This approach provides quantitative Gaussian approximations for random variables living in a fixed Wiener chaos. We briefly outline the main ideas below, deferring the technical background on Malliavin calculus to Appendix~\ref{sec:preliminaries_malliavin}.

\paragraph*{CLT on Wiener chaos \cite{nourdin2009stein}.} The Malliavin–Stein method combines two key ingredients: (i) a differential calculus on the Wiener space, allowing one to differentiate random variables with respect to the underlying Gaussian noise, and (ii) Stein’s characterization of the normal distribution via the identity $\E[\varphi ' (Z) -Z\varphi (Z)] = 0$ for all smooth $Z$. The method exploits an integration-by-parts formula on the Gaussian space to compare a given random variable $F$ with a standard normal $Z$. Quantitative control of this comparison can be obtained in terms of the Malliavin derivative $DF$, which `differentiates' $F$ with respect to the underlying Gaussian process. When $F$ belongs to a fixed Wiener chaos of order $k$, this analysis reduces to controlling the variance of $\|DF\|^2$, which, in turn, can be expressed through tensor contractions of the coefficients of $F$.

Let $\cH_k$ denote the $k$-th Wiener chaos, corresponding the degree-$k$ homogeneous polynomials of Gaussian variables. Any $F \in \cH_k$ can be represented in tensor form as 
\[
F = \<\bT, \bH_k (\bx)\>,
\]
where $\bT \in \Sym_k (\R^d)$ is a symmetric $k$-tensor, and $\bH_k (\bx)$  is the Hermite tensor of order $k$ (see Appendix~\ref{section:HermitePolynomials}). The isometry between $\cH_k$ and $\Sym_k(\R^d)$ is induced by the linear mapping $\iota : \R^{B_{d,k}} \to \Sym_k(\R^d)$ satisfying, for instance,
\[
\He_k (\<\bw,\bx\>)= \< \bq_k (\bw), \bh_k (\bx)\> = \< \iota(\bq_k (\bw)) , \bH_k (\bx) \> = \< \bw^{\otimes k} , \bH_k (\bx)\>.
\]
Building on this representation, the following result---often referred to as the \textit{Fourth Moment Theorem}---gives a remarkably sharp quantitative CLT within each Wiener chaos.

\begin{theorem}[Quantitative CLT on Wiener Chaos \cite{nualart2005central,nourdin2009stein}]\label{thm:fourth-moment-CLT}
    Fix $k \geq 1$. There exist universal constants $C_k,C_k'> 0$ such that the following holds: For any $F = \< \bT, \bH_k (\cdot) \> \in \cH_k$ with $\E[F^2] =1$, 
    \begin{equation}
        \sup_{\varphi : \R \to \R, \; {\rm Lip} (\varphi) \leq 1} \left| \E[ \varphi (F) ] - \E [ \varphi (Z)] \right| \leq C_k \sqrt{ \E [F^4] - 3 },
    \end{equation}
    where $Z \sim \normal(0,1)$. Moreover, the excess kurtosis  $\E [F^4] - 3$ is controlled by tensor contractions of the coefficients:
    \[
   \frac{1}{C_k'} \left\{ \max_{r=1, \ldots,k-1} \| \bT \otimes_r \bT \|_F^2 \right\}  \leq  \E [F^4] - 3  \leq C'_k  \left\{ \max_{r=1, \ldots,k-1} \| \bT \otimes_r \bT \|_F^2 \right\},
    \]
    where $\bT \otimes_r \bS$ denotes the contraction of $r$ indices between tensors $\bT,\bS \in \Sym_k(\R^d)$.
\end{theorem}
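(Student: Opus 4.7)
I would follow the Malliavin--Stein strategy of Nualart, Peccati, and Nourdin, proceeding in three steps.

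\emph{Step 1 (Stein's method reduction).} For any $1$-Lipschitz test function $\varphi$, classical estimates produce a solution $f_\varphi \in C^1(\R)$ of the Stein equation $f_\varphi'(x) - xf_\varphi(x) = \varphi(x) - \E[\varphi(Z)]$ with $\|f_\varphi\|_\infty, \|f_\varphi'\|_\infty \leq C$. Evaluating at $F$ and taking expectations, it suffices to bound $\bigl|\E[f_\varphi'(F) - F f_\varphi(F)]\bigr|$.

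\emph{Step 2 (Malliavin integration by parts).} Since $F \in \cH_k$ is an eigenfunction of the Ornstein--Uhlenbeck generator $L$ with eigenvalue $-k$, one has $-DL^{-1}F = k^{-1}DF$, where $D$ denotes the Malliavin derivative on Wiener space. The Malliavin duality formula then gives
\begin{equation*}
\E[F f_\varphi(F)] = \E\bigl\langle Df_\varphi(F),\, -DL^{-1}F\bigr\rangle_{\cH} = \frac{1}{k}\,\E\bigl[f_\varphi'(F)\,\|DF\|_{\cH}^{2}\bigr].
\end{equation*}
Combined with $\E[\|DF\|_{\cH}^{2}] = k\,\E[F^2] = k$, this yields
\begin{equation*}
\bigl|\E[\varphi(F)] - \E[\varphi(Z)]\bigr| \;\leq\; \|f_\varphi'\|_\infty\, \E\Bigl|1 - \tfrac{1}{k}\|DF\|_{\cH}^{2}\Bigr| \;\leq\; C\,\sqrt{\Var\bigl(k^{-1}\|DF\|_{\cH}^{2}\bigr)}.
\end{equation*}

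\emph{Step 3 (Identification with contractions).} I would then show $\Var(k^{-1}\|DF\|_{\cH}^{2}) \asymp_k \E[F^4] - 3 \asymp_k \max_{1\leq r \leq k-1}\|\bT \otimes_r \bT\|_F^2$ by invoking the Wiener product formula
\begin{equation*}
F^2 \;=\; \sum_{r=0}^{k} r!\binom{k}{r}^{2}\,\bigl\langle \bT\,\widetilde{\otimes}_r\,\bT,\; \bH_{2(k-r)}(\bx)\bigr\rangle,
\end{equation*}
where $\widetilde{\otimes}_r$ denotes the symmetrized $r$-fold tensor contraction. Taking $L^2$ norms and exploiting orthogonality of Wiener chaoses of distinct orders, the $r=0$ and $r=k$ extremal terms combine to exactly $3\,\E[F^2]^{2} = 3$, leaving
\begin{equation*}
\E[F^4] - 3 \;=\; \sum_{r=1}^{k-1} c_{k,r}\,\|\bT\,\widetilde{\otimes}_r\,\bT\|_F^2
\end{equation*}
for strictly positive constants $c_{k,r}$. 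A parallel chaos decomposition of $\|DF\|_{\cH}^{2}$ yields the same positive-weighted structure for $\Var(k^{-1}\|DF\|_{\cH}^{2})$, and the standard norm comparison $\|\bT\,\widetilde{\otimes}_r\,\bT\|_F \asymp_k \|\bT \otimes_r \bT\|_F$ converts symmetrized into un-symmetrized contractions as stated in the theorem.

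\emph{Main obstacle.} Steps 1--2 are essentially automatic once the Malliavin calculus framework is in place. The delicate part is the algebraic bookkeeping of Step 3: verifying that the extremal $r = 0$ and $r = k$ terms in the product-formula expansion of $\E[F^4]$ combine to produce exactly $3\,\E[F^2]^{2}$, and matching the intermediate $1 \leq r \leq k-1$ terms to a positive-weighted sum of contraction norms. This cancellation---the heart of the original Nualart--Peccati fourth moment theorem---requires careful tracking of the multinomial coefficients in the product formula and a parallel computation for $\|DF\|_{\cH}^{2}$ to conclude equivalence of all three quantities.
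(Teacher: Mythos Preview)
Your Steps 1--2 are correct and match the standard Malliavin--Stein derivation (the paper cites \cite{nourdin2009stein} for this part and does not reprove it). The gap is in Step 3, in two coupled claims.

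First, the $r=0$ and $r=k$ terms in the chaos expansion of $\E[F^4]$ do \emph{not} combine to exactly $3$. The $r=k$ term contributes $(k!)^2\|\bT\|_F^4$, but the $r=0$ term is $(2k)!\|\bT\,\tilde\otimes_0\,\bT\|_F^2 = (2k)!\|\Sym(\bT\otimes\bT)\|_F^2$. Expanding this symmetrization as a double sum over permutations of $[2k]$ and grouping pairs of permutations by the number $r$ of indices that ``cross'' between the two copies of $\bT$, one obtains
\[
(2k)!\,\|\bT\,\tilde\otimes_0\,\bT\|_F^2 \;=\; 2(k!)^2\|\bT\|_F^4 \;+\; \sum_{r=1}^{k-1}(k!)^2\binom{k}{r}^2\|\bT\otimes_r\bT\|_F^2,
\]
where the contractions on the right are \emph{unsymmetrized}. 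So the $r=0$ term produces both the missing $2(k!)^2\|\bT\|_F^4$ (which completes the ``$3$'') \emph{and} a strictly positive combination of unsymmetrized contraction norms. The paper carries out exactly this permutation-counting expansion in its excess-kurtosis lemma.

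Second, and relatedly, the ``standard norm comparison'' $\|\bT\,\tilde\otimes_r\,\bT\|_F \asymp_k \|\bT\otimes_r\bT\|_F$ you invoke is not available: only the direction $\|\bT\,\tilde\otimes_r\,\bT\|_F \leq \|\bT\otimes_r\bT\|_F$ holds, since symmetrization is an orthogonal projection. The lower bound $\E[F^4]-3 \gtrsim_k \max_r\|\bT\otimes_r\bT\|_F^2$ does not come from a norm comparison at all; it comes from the unsymmetrized contraction terms that the $r=0$ expansion above injects into the formula. The upper bound then uses the projection inequality to dominate the remaining symmetrized $1\leq r\leq k-1$ terms by their unsymmetrized counterparts. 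In short, fixing the first error removes the need for the second.
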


In particular, convergence to a Gaussian law occurs if and only if all nontrivial contractions $\bT \otimes_r \bT$ vanish asymptotically. Hence, verifying a CLT reduces to bounding these contraction norms. A multivariate CLT extension of Theorem \ref{thm:fourth-moment-CLT} also holds: convergence to a multivariate Gaussian with covariance $\bC = (c_{ij})$ occurs whenever each component $F_i$ satisfies $\E [F_i^4] - 3 \to 0$  and $\< F_i,F_j\> \to c_{ij}$. Appendix~\ref{sec:preliminaries_malliavin} provides additional background on Theorem \ref{thm:fourth-moment-CLT}.

In our setting, the relevant tensor coefficients take the form
\begin{equation}
\bT_{k,\bW} (\btheta) := \iota ( \bV_k^\sT \btheta) = \sum_{j = 1}^p \theta_j \bw_j^{\otimes k}.
\end{equation}
We adapt the proof of Theorem~\ref{thm:fourth-moment-CLT} to establish a partial Gaussian multivariate CLT tailored to our setting (Theorem \ref{theorem:RecallBoundedLipschitzFunctionSupErgetPartial} in Appendix \ref{sec:PartialGaussianEquivalence}). Our uniform convergence results then follow by restricting $\btheta$ to a parameter space in which the corresponding tensors $\bT_{k,\bW} (\btheta)$ have all nontrivial contractions vanishing.

\paragraph*{Uniform CLT for the first phase (RF to PGE).} In this first phase, we replace the higher-order chaos components ($k \ge 3$) by their Gaussian equivalents.  
Accordingly, we must show that the following chaos random variables
\[
\<\bV_3^\sT \btheta, \bh_3(\bx) \>,\; \ldots \;,\; \<\bV_D^\sT \btheta, \bh_4 (\bx) \>, \; \{ \< \bbeta_{3i}, \bh_3 (\bx) \>\}_{i = 1}^{s_3}, \; \ldots \; , \; \{ \< \bbeta_{D'i}, \bh_{D'} (\bx) \>\}_{i = 1}^{s_{D'}},
\]
have tensorized coefficients whose nontrivial contractions vanish asymptotically. Equation~\eqref{eq:ass_bbeta_ki_4_th_moment} in Assumption~\ref{assumption:target} ensures precisely this property for the degree-$k$ ($k \ge 3$) chaos components of the response, namely that they have vanishing excess kurtosis and satisfy a CLT.  
Hence, it remains to verify the CLT for the higher-order chaos terms appearing in the features.  For this purpose, we consider the contractions
\[
\bT_{k,\bW} (\btheta) \otimes_r \bT_{k,\bW} (\btheta) = \sum_{i,j = 1}^p \theta_i \theta_{j} \< \bw_i,\bw_j \>^r \bw_i^{\otimes (k-r)} \otimes \bw_j^{\otimes (k-r)}
\]
and show that
\[
\| \bT_{k,\bW} (\btheta) \otimes_r \bT_{k,\bW} (\btheta)\|_F =O_{d,\P}(d^{-c}),
\]
with high probability over $\bW$ for all $3 \leq k \leq D$ and $1 \leq r \leq k-1$, provided $\| \btheta \|_\infty = \Tilde O (d^{-1/4})$.

Thus, we consider the following constrained parameter set in the first phase RF to  PGE:
\begin{equation} \label{equation:Ustarthetap}
  \bTheta_{\bW}^{\sPG}(K)  := \left\{
      \btheta\in\R^{p} : 
      \begin{array}{l}
          \|\btheta\|_{\infty} \leq (\log d)^K d^{-1/4}, \quad \|\btheta\|_{2} \leq (\log d)^K, \\
          |\,\1^{\sT}\btheta| \leq (\log d)^K, \quad
           \bigl\|\mu_{1}\,\btheta^{\sT}\bW\bigr\|_{2} \leq (\log d)^K
      \end{array}
    \right\},
\end{equation}
which ensures a uniform CLT between the RF and PGE models for any fixed $K>0$.  
Later, we will set $K$ sufficiently large---depending only on the constants in the assumptions---so that $\hbtheta \in \bTheta_{\bW}^{\sPG}(K)$ with high probability.  
The constraints in~$\bTheta_{\bW}^{\sPG}(K)$ also control the projections of $\btheta$ onto the spiked directions of the feature covariance, which is essential for handling the Lindeberg swapping step (Step~3).  
For brevity, we may omit the dependence on $\bW$ and $K$ and simply write $\bTheta^{\sPG}$ when clear from context.

\begin{theorem}[Replacing higher-order chaos with isotropic gaussian]
\label{theorem:RecallBoundedLipschitzFunctionSupErgetisotropic}
Let $\mathcal{L}$ denote the class of $L$-Lipschitz functions $\varphi:\R^{m+1} \to \R$. Then for any constants $C,K>0$, there exists constants $c,C'>0$ such that with probability at least $1-d^{-C}$ over $\bW$,
\begin{align}
\sup_{\varphi \in \mathcal{L}}\;\;
\sup_{\btheta \in \bTheta_{\bW}^{\sPG}(K)} \Bigg| \E \Big[ \varphi\big(\bx_S,\bxi_2,\bxi_3,
\ldots,\bxi_{D'}, \btheta^\sT\bz^\RF\big) \Big] \nonumber 
- \E \Big[ \varphi\big(\bx_S, \bxi_2, \tilde \bxi_3,\ldots,\tilde\bxi_{D'},  \btheta^\sT\bz^{\sPG}\big) \Big] \Bigg| \leq C' \frac{L}{d^c}.
\end{align}
\end{theorem}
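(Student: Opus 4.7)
My plan is to invoke the partial multivariate Fourth Moment / Malliavin--Stein framework (Theorem~\ref{theorem:RecallBoundedLipschitzFunctionSupErgetPartial}, a vector-valued refinement of Theorem~\ref{thm:fourth-moment-CLT}), conditionally on $\bW$, applied to the joint random vector of Wiener chaos components of orders $k \geq 3$:
\[
\big\{ \xi_{ki} = \<\bbeta_{ki}, \bh_k(\bx)\> \big\}_{3 \leq k \leq D',\, i \in [s_k]} \;\cup\; \big\{ \<\bV_k^\sT \btheta, \bh_k(\bx)\> \big\}_{3 \leq k \leq D}.
\]
Since the low-order arguments $\bx_S$ and $\bxi_2$, as well as the degree-$\leq 2$ part of $\btheta^\sT \bz$, are identical in the RF and PGE models, the Lipschitz-test-function distance reduces to a quantitative CLT on these higher-chaos vectors. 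The PGE model is constructed so that the second-moment structure asymptotically matches (via concentration of $\bV_k \bV_k^\sT$ around $\id_p$ for $k\geq 3$ and $\|\bV_k \bbeta_{ki}\|_2 \to 0$, used in \eqref{eq:simplification_covariance_quad}), so the remaining Wasserstein gap is bounded by the relevant tensor contraction norms.

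\textbf{Feature-side contractions.} On the response side, Assumption~\ref{assumption:target}(ii) and the excess-kurtosis bound~\eqref{eq:ass_bbeta_ki_4_th_moment} immediately yield $d^{-\sc_3/2}$-Wasserstein closeness between $\{\xi_{ki}\}$ and $\{\tilde\xi_{ki}\}$. The main work is to show that, uniformly over $\btheta \in \bTheta_{\bW}^{\sPG}(K)$ and with probability at least $1-d^{-C}$ over $\bW$,
\[
\Delta_{k,r}(\btheta)^2 := \big\|\bT_{k,\bW}(\btheta) \otimes_r \bT_{k,\bW}(\btheta)\big\|_F^2 = \sum_{i,j,i',j' \in [p]} \theta_i\theta_j\theta_{i'}\theta_{j'} \<\bw_i,\bw_j\>^r \<\bw_{i'},\bw_{j'}\>^r \<\bw_i,\bw_{i'}\>^{k-r} \<\bw_j,\bw_{j'}\>^{k-r}
\]
is $O_d(d^{-c})$ for all $3\leq k \leq D$ and $1\leq r\leq k-1$, where $\bT_{k,\bW}(\btheta)=\sum_{j=1}^p \theta_j \bw_j^{\otimes k}$. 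I would stratify this sum by the partition of the four indices $\{i,j,i',j'\}$ into coincidence classes. The fully off-diagonal block (all four indices distinct) is handled by a second-moment computation over $\bW$, using independence of the $\bw_j$'s and the spherical moment bound $\E|\<\bw_i,\bw_j\>|^{2\ell}=O(d^{-\ell})$ for $i\neq j$; combined with $p \asymp d^2$ and $\|\btheta\|_2\leq (\log d)^K$, this yields a polylog-times-$d^{-c(k,r)}$ bound. The partial-diagonal blocks collapse to quantities of the form $\sum_j \theta_j^2 f(\bw_j)$ (controlled by $\|\btheta\|_\infty \leq (\log d)^K d^{-1/4}$) or $\big\|\sum_j \theta_j \bw_j^{\otimes m}\big\|$ for $m\leq k-1$; the latter are precisely controlled by the spike constraints $|\mathbf{1}^\sT\btheta|,\|\mu_1 \bW^\sT\btheta\|_2 \leq (\log d)^K$ in $\bTheta_{\bW}^{\sPG}(K)$, which prevent $\btheta$ from aligning with the diverging top eigendirections of $\bV_m \bV_m^\sT$ for $m\leq 2$.

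\textbf{Main obstacle: uniform-in-$\btheta$ control.} The hardest step is upgrading pointwise-in-$\btheta$ high-probability bounds into a genuine supremum over the $d$-dependent set $\bTheta_{\bW}^{\sPG}(K)$. I expect to resolve this by noting that $\Delta_{k,r}(\btheta)^2$ is a polynomial of fixed degree $4$ in the coordinates of $\btheta$, whose coefficients are products of inner products of the $\bw_j$'s; a single high-probability operator/injective-norm bound on the associated $\bW$-random multilinear form---available in the quadratic regime $p\asymp d^2$ via concentration of Hadamard powers $\bV_k \bV_k^\sT = (\bW\bW^\sT)^{\odot k}$ around their expectations, the same estimates invoked in \eqref{eq:simplification_covariance_quad}---then controls $\sup_{\btheta \in \bTheta_{\bW}^{\sPG}(K)} \Delta_{k,r}(\btheta)$ in one shot. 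As a backup, an $\eps$-net argument on $\bTheta_{\bW}^{\sPG}(K)$ (whose metric entropy is polynomial in $d$ after fixing the $\ell_\infty$ and $\ell_2$ cages) combined with the pointwise bounds also suffices. Combining the feature-side bound with the response-side kurtosis bound, and using that $\varphi$ enters only linearly through $\mathrm{Lip}(\varphi)=L$ in the Wasserstein Malliavin--Stein inequality, yields the claimed $C'L/d^c$ rate.
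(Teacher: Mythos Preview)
Your high-level framework is correct and matches the paper: apply the partial Malliavin--Stein CLT (Theorem~\ref{theorem:RecallBoundedLipschitzFunctionSupErgetPartial}) conditionally on $\bW$, reducing the problem to bounding the contraction norms $\|\bT_{k,\bW}(\btheta)\otimes_r\bT_{k,\bW}(\btheta)\|_F$ uniformly over $\bTheta_\bW^{\sPG}(K)$. The response-side kurtosis argument is also right. However, the uniformity-in-$\btheta$ step has a genuine gap.

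\textbf{The $\eps$-net backup fails outright.} The set $\bTheta_\bW^{\sPG}(K)$ lives in $\R^p$ with $p\asymp d^2$; neither the $\ell_2$ nor the $\ell_\infty$ cage reduces its metric entropy below $\exp(cd^2)$, so a union bound over a net is hopeless against $d^{-C}$-probability events.

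\textbf{The primary approach is too vague for the critical cases.} Your invocation of ``concentration of Hadamard powers $\bV_k\bV_k^\sT \approx \bI$'' only helps when both $r\geq 3$ and $k-r\geq 3$, which never happens since $r\leq k-1$ and $k\geq 3$ forces $\min(r,k-r)\in\{1,2\}$. For $r=1$ or $r=2$, the matrices $\bM_r=(\bW\bW^\sT)^{\odot r}$ carry diverging spikes and are far from identity. A naive injective-norm bound on the 4-tensor also loses: the diagonal contribution alone gives $\sum_j\theta_j^4$, which is controlled by $\|\btheta\|_\infty$, not by $\|\btheta\|_2$, so an injective-norm-times-$\|\btheta\|_2^4$ inequality is too weak. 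Likewise, your stratification with a crude entrywise bound $|\<\bw_i,\bw_j\>|\prec d^{-1/2}$ gives $d^{4-k}$ for the fully off-diagonal block, which is useless for $k=3,4$.

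\textbf{What the paper actually does} (Lemma~\ref{lemma:GaussianMalliavinVarianceBound112}) is a matricization identity
\[
\|\bT_k\otimes_r\bT_k\|_F^2=\tfrac{1}{(k!)^2}\,\btheta^\sT(\bH\circ\bM_{k-r})\btheta,\qquad \bH=\bM_r\bD_\btheta\bM_{k-r}\bD_\btheta\bM_r,
\]
followed by the Schur-product trick $\bM_{k-r}=\sum_{\ba}(\bw_{a_1}\odot\cdots\odot\bw_{a_{k-r}})(\cdots)^\sT$ to obtain $\btheta^\sT(\bH\circ\bM_{k-r})\btheta\leq\|\bH\|_\op\|\btheta\|_2^2$. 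This converts the 4-tensor problem into an operator-norm bound on a $\btheta$-dependent matrix, which is then controlled \emph{deterministically} on a high-probability $\bW$-event using all constraints of $\bTheta_\bW^{\sPG}(K)$ simultaneously: the $\ell_\infty$ bound enters via $\|\bD_\btheta\|_\op$, and the spike constraints $|\1^\sT\btheta|,\|\mu_1\bW^\sT\btheta\|_2\prec 1$ are used explicitly to neutralize the spikes in $\bM_1,\bM_2$ in the delicate case $(r,k-r)=(1,2)$. This is the missing technical idea.

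\textbf{Two further steps you omit.} First, the partial CLT replaces $\bh_k(\bx)$ by $\bg_k$, giving $\sum_{k\geq 3}\mu_k\bV_k\bg_k$ rather than the isotropic $\mu_{>2}\bg_*$ of the PGE model; the paper closes this by a Gaussian covariance comparison, using $\|\bV_k\bV_k^\sT-\bI\|_\op\prec d^{-1/2}$ modulo spikes and $|\mu_k\btheta^\sT\bV_k\bbeta_{ki}|\prec d^{-c}$ (Lemma~\ref{lemma:GaussianMalliavinVarianceBound112beta}). Second, Theorem~\ref{theorem:RecallBoundedLipschitzFunctionSupErgetPartial} requires $\varphi\in C^2$, so a Gaussian mollification step is needed to extend to Lipschitz $\varphi$; this is routine but must be stated.
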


The proof of this theorem can be found in Appendix \ref{section:ProofOfTheoremGaussianChaosConvergenceToStandardGaussianGeneral}.

\paragraph*{Uniform CLT for the second phase (PGE to CGE).} 
In the second phase, we replace the components of the features orthogonal to the support $\bx_S$ in the degree-$1$ and degree-$2$ Hermite chaos by their Gaussian equivalents.  
Note that $\proj_{S,\perp}\bx$ is already Gaussian and independent of $\bx_S$, so no replacement is needed for this part.  
For the degree-$2$ Hermite term, we can decompose
\[
\btheta^\sT \bV_2 \proj_{S,\perp} \bh_2 (\bx)  = \<\bW_{\setminus S}^\sT \bD_{\btheta}  \bW_{\setminus S} , \bH_2(\bx_{\setminus S})\>  +  \bx_S \bW_S^\sT \bD_\btheta \bW_{\setminus S} \bx_{\setminus S},
\]
where $\bx = (\bx_S,\bx_{\setminus S}) \in \R^{s + (d-s)}$, $\bW = [\bW_S, \bW_{\setminus S}] \in \R^{p \times (s + (d-s))}$, and $\bD_{\btheta} =\diag (\btheta) \in \R^{p\times p}$. For the first term, the only nontrivial contraction condition is equivalent to requiring that 
$\|\bW_{\setminus S}^\sT\bD_\btheta\bW_{\setminus S}\|_{\op}$ tends to zero.  
For the second term, we will simply show that it has vanishing contribution.  
For the degree-$2$ chaos components in the response, Assumption~\ref{assumption:target} again guarantees a CLT.

Hence, the constrained parameter set for this phase adds conditions controlling quadratic interactions beyond those in $\bTheta^{\sPG}$, and is defined as
\begin{equation}\label{equ:thetap2}
  \bTheta_{\bW}^{\sCG}(\eps,K) := \left\{
      \btheta \in \bTheta_\bW^{\sPG}(K):
          \bigl\|\bW_{\setminus S}^{\sT}\bD_{\btheta}\,
                 \bW_{\setminus S}\bigr\|_{\op}
              \leq d^{-\eps},\;
              \bigl\|\bW_S^{\sT}\bD_{\btheta}\,
                 \bW_{\setminus S}\bigr\|_{F}
              \leq d^{-\eps}
    \right\},
\end{equation}
which ensures uniform CLT between PGE and CGE for any fixed $K,\eps>0$. As before, we will choose $\eps>0$ sufficiently small---depending only on the constants in the assumptions---so that $\hbtheta^\sPG \in \bTheta_{\bW}^{\sCG}(\eps,K)$ with high probability. For brevity, we will omit the subscripts and write $\bTheta^{\sCG}$ when the dependence on $\bW$ and $(\eps,K)$ is clear from context.

\begin{theorem}[Replacing second-order chaos with partial isotropic gaussian]\label{theorem:RecallBoundedLipschitzFunctionSupErgettt}
Let $\mathcal{L}$ denote the class of $L$-Lipschitz functions $\varphi:\R^{m+1} \to \R$. 
Then for any constants $C,K,\eps>0$, there exists constants $c,C'>0$ such that with probability at least $1-d^{-C}$ over $\bW$,
\begin{align}
\sup_{\varphi \in \mathcal{L}}\;\;
\sup_{\btheta \in \bTheta_{\bW}^{\sCG}(\eps,K)} \Bigg| \E \Big[ \varphi\big(\bx_S,\bxi_2,\bxi_3,
\ldots,\bxi_{D'}, \btheta^\sT\bz^\sPG\big) \Big] \nonumber 
- \E \Big[ \varphi\big(\bx_S, \tilde \bxi_2, \tilde \bxi_3,\ldots,\tilde\bxi_{D'},  \btheta^\sT\bz^{\sCG}\big) \Big] \Bigg| \leq C' \frac{L}{d^c}.
\end{align}
\end{theorem}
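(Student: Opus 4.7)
My plan is to establish Theorem~\ref{theorem:RecallBoundedLipschitzFunctionSupErgettt} by first \emph{conditioning on} $\bx_S$ and then applying the multivariate Malliavin--Stein / Peccati--Tudor bound (the multivariate extension of Theorem~\ref{thm:fourth-moment-CLT}) to the remaining degree-$\le 2$ Wiener chaos in $\bx_{\setminus S}$. The decisive structural observation is that the PGE and CGE models share, identically, (i) the $\bx_S$-measurable pieces $\mu_0\btheta^\sT\bones_p+\mu_1\btheta^\sT\bW_S\bx_S+\mu_2\btheta^\sT\bV_{2,S}\bh_2(\bx_S)$ of $\btheta^\sT\bz$, (ii) the higher-order chaos coordinates $\tilde\bxi_3,\dots,\tilde\bxi_{D'}$ of the response (already independent Gaussian in both models), and (iii) the additive noise $\mu_{>2}\btheta^\sT\bg_*$. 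After conditioning on $\bx_S$, the theorem therefore reduces to comparing the joint law of
\[
\bigl(\bxi_2,\; \mu_1\btheta^\sT\bW_{\setminus S}\bx_{\setminus S}+\mu_2\btheta^\sT\bV_2\proj_{S,\perp}\bh_2(\bx)\bigr)
\]
under PGE with its Gaussian surrogate
\[
\bigl(\tilde\bxi_2,\;\mu_1\btheta^\sT\bW_{\setminus S}\bg_{1,\setminus S}+\mu_2\btheta^\sT\bV_2\proj_{S,\perp}\bg_2\bigr)
\]
under CGE.

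Next I would decompose the PGE side into its Wiener-chaos pieces in $\bx_{\setminus S}$. Using the identity $\<\bq_2(\bw),\bh_2(\bx)\>=\He_2(\<\bw,\bx\>)$ and splitting $\bW=[\bW_S,\bW_{\setminus S}]$ and $\bx=(\bx_S,\bx_{\setminus S})$, I find
\[
\mu_2\,\btheta^\sT\bV_2\proj_{S,\perp}\bh_2(\bx)
=\tfrac{\mu_2}{\sqrt 2}\bigl(\bx_{\setminus S}^\sT\bM\bx_{\setminus S}-\Tr\bM\bigr)
+\sqrt 2\,\mu_2\,\bx_S^\sT\bW_S^\sT\bD_\btheta\bW_{\setminus S}\bx_{\setminus S},
\]
with $\bM:=\bW_{\setminus S}^\sT\bD_\btheta\bW_{\setminus S}$; an analogous decomposition applies to each $\xi_{2i}=\<\bbeta_{2i},\bh_2(\bx)\>$ in terms of the symmetric matrix representation $\bB_{2i}$ and its $(d-s)\times(d-s)$ block $\bB_{2i,\setminus S,\setminus S}$. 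Only these two degree-$2$ pieces are nontrivial Wiener chaoses; the other terms are already Gaussian conditional on $\bx_S$. The multivariate quantitative CLT then yields, for any $L$-Lipschitz $\varphi$, an error of order $L$ times the largest nontrivial contraction $\|\bT\otimes_1\bT\|_F\le\|\bT\|_\op\|\bT\|_F$ among the degree-$2$ coefficient tensors. The constraints defining $\bTheta_{\bW}^{\sCG}(\eps,K)$ supply $\|\bM\|_\op\le d^{-\eps}$, while the Fourth Moment Theorem applied to \eqref{eq:ass_bbeta_ki_4_th_moment} gives $\|\bB_{2i}\|_\op=O(d^{-c})$ for some $c>0$ (hence the same bound for its restriction to $\setminus S$). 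All nontrivial contractions therefore decay polynomially in $d$, delivering the desired CLT rate.

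The remaining ingredient is target-covariance matching between the PGE conditional law and the CGE law. By orthonormality of the Hermite basis, $\E[\bh_2(\bx)\bh_2(\bx)^\sT\mid\bx_S]$ agrees with $\id$ on all indices except the cross entries $(i,j)$ with $i\le s<j$, where its value is $x_i^2$ in place of $1$. The induced bias in the conditional variance of $\mu_2\btheta^\sT\bV_2\proj_{S,\perp}\bh_2(\bx)$ is bounded by $\max_{i\le s}|x_i^2-1|\cdot\|\bW_S^\sT\bD_\btheta\bW_{\setminus S}\|_F^2=O_{d,\P}(d^{-2\eps})$, using the second constraint in $\bTheta_{\bW}^{\sCG}(\eps,K)$ and $\|\bx_S\|_\infty=O_{d,\P}(1)$ for fixed $s$; the linear cross-term $\sqrt 2\,\mu_2\,\bx_S^\sT\bW_S^\sT\bD_\btheta\bW_{\setminus S}\bx_{\setminus S}$ has conditional variance of the same order and can be absorbed into a small Gaussian perturbation. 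High-probability sphere-concentration bounds on $\bW$ (Appendix~\ref{sec:ConcentrationInequality}) make all deterministic estimates hold uniformly, while uniformity over $\btheta\in\bTheta_{\bW}^{\sCG}(\eps,K)$ is built into the definition of that set and uniformity over $\varphi\in\mathcal L$ follows from the linear dependence of the Malliavin--Stein bound on $L$. The main technical obstacle will be running the multivariate fourth-moment bound against a target covariance that is only \emph{approximately} matched at scale $d^{-2\eps}$: I plan to handle this by a short Gaussian-to-Gaussian interpolation absorbing the $O(d^{-2\eps})$ discrepancy, or equivalently by coupling the CGE to a perturbed Gaussian whose covariance exactly equals the PGE conditional covariance before invoking the fourth-moment bound.
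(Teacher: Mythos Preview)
Your proposal is essentially correct and follows the same route as the paper: condition on $\bx_S$, decompose the degree-$2$ chaos into pure ($\bx_{\setminus S}$ only) and mixed ($\bx_S\cdot\bx_{\setminus S}$) parts, control the degree-$2$ contractions via $\|\bW_{\setminus S}^\sT\bD_\btheta\bW_{\setminus S}\|_\op\le d^{-\eps}$ and the genericity bound $\|\bB_{2i}\|_\op\le d^{-c}$, and apply a multivariate fourth-moment/Malliavin--Stein bound to the remaining chaos.

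The one organizational difference worth noting: the paper disposes of the mixed cross-term $\sqrt{2}\,\mu_2\,\bx_S^\sT\bW_S^\sT\bD_\btheta\bW_{\setminus S}\bx_{\setminus S}$ \emph{before} invoking the CLT, by observing that its second moment is at most $2\mu_2^2\|\bW_S^\sT\bD_\btheta\bW_{\setminus S}\|_F^2\le 2\mu_2^2 d^{-2\eps}$ in both the PGE and CGE models (the analogous $\bbeta_{2i}$ cross-blocks are handled by Lemma~\ref{lemma:NoisePart}). After this removal the remaining pure parts have \emph{exactly} matching covariance, so no Gaussian-to-Gaussian interpolation or approximate-covariance version of the fourth-moment bound is needed. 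Your plan to retain the mixed term and absorb the $O(d^{-2\eps})$ covariance discrepancy afterwards is equivalent but less economical. Also, the paper's interpolation-based Corollary~\ref{corollary:RecallBoundedLipschitzFunctionSupErgetPartial} requires $C^2$ test functions (hence a mollification step to pass to Lipschitz $\varphi$), whereas a direct Peccati--Tudor Wasserstein bound would need the target covariance to be nondegenerate; either route works, but be explicit about which one you take.
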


The proof of this theorem can be found in Appendix \ref{app:proof_theorem:RecallBoundedLipschitzFunctionSupErgettt}.

\subsection{Lindeberg Swapping Phase I : RF to PGE}
\label{sec:lindeberg-phase-is}

In the first phase of the Lindeberg swapping argument, we replace all chaos components of degree $k \ge 3$ in the features and responses by Gaussian vectors with matching covariance.  
The result of this step is summarized below.

\begin{theorem}[Phase I of Lindeberg Swapping]\label{sec:main-conditions-lindeberg-swap-framework}
Suppose Assumptions \ref{assumption:scaling}--\ref{assumption:activation} and \ref{ass:test_loss} hold. There exist constants $c,c',C,K_0,d_0>0$ depending only on the constants in these assumptions, such that for all $K_{\Gamma} \geq K_0$, all $d^{-c'} \leq \tau_1 \leq 1/(\log d)^{K_{\Gamma}}$, all $|\tau_2| \leq \tau_1/(\log d)^{K_{\Gamma}}$, and all $d \geq d_0$, the following holds: For any twice-differentiable function $\varphi : \R \to \R$ with $\| \varphi \|_\infty,  \| \varphi' \|_\infty, \| \varphi '' \|_\infty \leq 1$,  
\begin{equation}\label{eq:diff_Lindeberg_phase_1_main_theorem}
 \left| \E \Big[ \varphi \left( \hcR_{n,p}^* (\btau,\bZ^\RF,\boldf^\RF) \right) \Big] - \E \Big[ \varphi \left( \hcR_{n,p}^* (\btau, \bZ^\sPG,\boldf^\sPG) \right) \Big] \right| \leq  \frac{d^{-c}}{\tau_1^C}. 
\end{equation}
\end{theorem}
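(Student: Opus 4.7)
My plan is to execute the three-step Lindeberg interpolation template outlined in Section~\ref{sec:proof_outline}, using $\bTheta^{\sPG}_\bW(K)$ as the constrained parameter set. The uniform marginal CLT on $\bTheta^{\sPG}_\bW(K)$ is already provided by Theorem~\ref{theorem:RecallBoundedLipschitzFunctionSupErgetisotropic}, so the work reduces to (i) localizing both minimizers in $\bTheta^{\sPG}_\bW(K)$ and (ii) swapping samples one at a time. Concretely, I would write
\begin{equation}
\E[\varphi(\hcR_{n,p}^*(\btau,\bZ^\RF,\boldf^\RF))]-\E[\varphi(\hcR_{n,p}^*(\btau,\bZ^\sPG,\boldf^\sPG))] = A + B + C,
\end{equation}
where $A,C$ measure the gap between the constrained and unconstrained minima under the RF and PGE models respectively, and $B$ compares the two constrained minima. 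Terms $A,C$ are controlled by localization and $B$ by the swap.

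\emph{Localization (Steps 1--2).} I would show that on a high-probability event $\cE_K := \{\hbtheta^\RF,\hbtheta^\sPG \in \bTheta^{\sPG}_\bW(K)\}$ of probability at least $1-d^{-c}/\tau_1^C$, the constrained minima coincide with the unconstrained ones. The $\|\btheta\|_2 \le (\log d)^K$ bound follows from the ridge penalty together with Assumption~\ref{assumption:polynomial-growth}, using that $\hcR_{n,p}(\bzero) = \tfrac{1}{n}\sum_i \ell(y_i,0)$ is controlled by the noise tail (Assumption~\ref{assumption:noise}) and $|\ell(y,0)|$-growth. The projections $|\bones^\sT \hbtheta|$ and $\|\mu_1\hbtheta^\sT \bW\|_2$ onto the $\Theta(d^2)$- and $\Theta(d)$-sized spikes of the feature covariance are absorbed by the perturbation term $\tau_1\Gamma_1^\bW(\btheta) = \tau_1 \cT_{K_\Gamma}(\|\bV_+^\sT\btheta\|_2^2)$ in \eqref{equ:perturbed_objectiveERM}; this is precisely why the rate carries a $1/\tau_1^C$ factor. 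The $\ell_\infty$ bound $\|\hbtheta\|_\infty \le (\log d)^K d^{-1/4}$ is the most delicate estimate, and I would establish it via a leave-one-feature-out argument (in the spirit of \cite{huUniversalityLawsHighdimensional2022}): the KKT identity $\hat\theta_j = -(n\lambda)^{-1}\sum_i \partial_{\hat y}\ell(y_i,\langle\hbtheta,\bz_i\rangle) z_{ij}$ together with stability of ridge regression allows one to replace $\hbtheta$ by its leave-feature-out counterpart $\hbtheta_{\setminus j}$ (which is independent of the $j$-th feature column), and then apply a uniform concentration inequality for polynomial chaos (Appendix~\ref{sec:ConcentrationInequality}) followed by a union bound over $j \in [p]$. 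This gives $|A|+|C| \le d^{-c}/\tau_1^C$ since $\varphi$ is bounded.

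\emph{Lindeberg swap (Step 3).} For $B$, define hybrid datasets $(\bU^{(k)},\bF^{(k)})$ in which the first $k$ samples come from PGE and the remaining $n-k$ from RF, and let $J_k := \E[\varphi(\min_{\btheta\in\bTheta^{\sPG}}\hcR_{n,p}(\btheta;\btau,\bU^{(k)},\bF^{(k)}))]$. Then $B = \sum_{k=1}^n(J_{k-1}-J_k)$ telescopes into $n$ single-sample swaps. For the $k$-th swap, I would introduce an auxiliary minimizer $\btheta_\star^{(k)}$ obtained after removing sample $k$ entirely; by construction $\btheta_\star^{(k)}$ is independent of both $(\bz^\RF_k,f^\RF_k)$ and $(\bz^\sPG_k,f^\sPG_k)$, and on $\cE_K$ it lies in $\bTheta^{\sPG}_\bW(K)$ (up to a negligible enlargement of $K$). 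Taylor-expanding $\varphi \circ \hcR_{n,p}^*$ around $\btheta_\star^{(k)}$ using the smoothness of $\varphi$ and Assumption~\ref{assumption:loss}(ii), the per-sample increment reduces, modulo cubic remainders, to an expectation of the form treated in Theorem~\ref{theorem:RecallBoundedLipschitzFunctionSupErgetisotropic} applied to the Lipschitz map $(\boldf,u)\mapsto \varphi(\tfrac1n\ell(\eta(\boldf;\eps),u))$, yielding $O(d^{-c_1}/n)$ per sample. The cubic remainders are controlled by the $\ell_\infty$ bound: $\Var(\langle\btheta,\bz^\RF-\bz^\sPG\rangle) = O(\|\btheta\|_\infty^2 \cdot d^2) / n = O(n^{-1/2})$ per sample, so the total cubic contribution is $n \cdot O(n^{-3/2}) \cdot d^{-1/2}$. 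Summing over $n = \Theta(d^2)$ samples yields $|B| = O(d^{-c})$.

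\emph{Main obstacle.} The hardest step is the $\ell_\infty$ localization. In the quadratic scaling, $\phi_\RF(\bx) = \sigma(\bW\bx)$ is not sub-Gaussian, its coordinates are strongly dependent through $\bW$, and the covariance carries a diverging number of spikes of mixed sizes $\Theta(d^2)$ and $\Theta(d)$. The leave-one-feature-out stability analysis must therefore separate cleanly the contributions of the spike directions (absorbed by $\tau_1\Gamma_1^\bW$) from the bulk directions, and rely on sharp uniform moment bounds for degree-$D$ polynomial chaos rather than Gaussian-type tail estimates. A secondary difficulty is ensuring the per-sample Taylor expansion behaves correctly: although $\btheta_\star^{(k)}$ is independent of sample $k$, the feature $\bz^\RF_k$ is not sub-Gaussian, so controlling the cubic remainder requires the $d^{-1/4}$ $\ell_\infty$ bound to be genuinely used, closing a circular dependence between Steps~2 and~3 that must be handled by first proving an a~priori weaker bound and then bootstrapping.
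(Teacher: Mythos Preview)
Your three-step Lindeberg scaffold matches the paper's, and your localization plan is essentially correct: the paper's Lemma~\ref{lemma:optinThetaPG} carries out exactly the leave-one-feature-out argument you describe for $\|\hbtheta\|_\infty \prec d^{-1/4}$. But Step~3 as written has a genuine gap. The sentence ``the per-sample increment reduces, modulo cubic remainders, to an expectation of the form treated in Theorem~\ref{theorem:RecallBoundedLipschitzFunctionSupErgetisotropic} applied to the Lipschitz map $(\boldf,u)\mapsto \varphi(\tfrac1n\ell(\eta(\boldf;\eps),u))$'' hides the hard part. The increment is $\Phi_q-\Phi_{\setminus q}$ where $\Phi_q$ is a \emph{minimum}, not an evaluation at a fixed $\btheta$; you only have the one-sided bound $\Phi_q \le \Phi_{\setminus q}+\tfrac1n\ell(\tilde y_q,\langle\hat\btheta_{\setminus q},\tilde\bz_q\rangle)$, and the reverse direction requires controlling how far the minimizer shifts. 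The paper's device is a \emph{quadratic surrogate} $\Psi_q$ (eq.~\eqref{eq:familyI-quadratic}) which replaces the LOO part of the risk by its second-order Taylor expansion but keeps the $q$-th loss exact. Its minimum has the closed form $\Psi_q=\Phi_{\setminus q}+\tfrac1n\cM_{\tilde y_q}(\langle\hat\btheta_{\setminus q},\tilde\bz_q\rangle;\gamma_q)$ via the Moreau envelope (Lemma~\ref{lemma:Psi_k_function_of_r_Phi_M_k_and_gamma_kr}); since $\cM$ is Lipschitz in all its arguments and $\hat\btheta_{\setminus q}$ is independent of sample $q$, this is what the CLT is actually applied to (Lemma~\ref{lemma:ExpectedValueDifferencePsiProjection}). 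The discrepancy $|\Phi_q-\Psi_q|$ is then bounded separately as $O(n^{-3/2}\tau_1^{-9/2})$ (Lemma~\ref{lemma:surrogateLOOauxcompare}) via a third-order expansion of the LOO risk together with the Hessian bound $\|\bV^\sT\bH_{\setminus q}^{-1}\bV\|_\op\le\tau_1^{-1}$ from Lemma~\ref{lemma:sublevelgeometry}(c). Without this surrogate there is no Lipschitz functional of $(\tilde y_q,\langle\btheta,\tilde\bz_q\rangle)$ to feed into the CLT, and a naive Taylor expansion of the minimum does not close.

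Two related misattributions are worth correcting. First, your cubic-remainder estimate via $\|\btheta\|_\infty$ does not work: for $k\ge 3$ one has $\|\btheta^\sT\bV_k\|_2\prec\|\btheta\|_2$ because $\|\bV_k\|_\op\prec 1$ (Lemma~\ref{lemma:OperatorNormFk}), so $\Var(\langle\btheta,\bz^\RF-\bz^\sPG\rangle)$ is $O((\log d)^K)$, not small. In the paper the $\ell_\infty$ bound enters only through the CLT itself, specifically the tensor-contraction estimates of Lemma~\ref{lemma:GaussianMalliavinVarianceBound112}; the remainder is controlled by the Hessian regularization of the spikes, and \emph{that} is where the factor $\tau_1^{-C}$ originates. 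Second, the paper obtains the spike-projection bounds $|\1_p^\sT\hbtheta|,\|\mu_1\hbtheta^\sT\bW\|_2\prec 1$ independently of $\tau_1$, directly from the calibrated-growth condition via Lemma~\ref{lemma:predictionbound} and the anti-concentration argument in the proof of Lemma~\ref{lemma:sublevelgeometry}(a); the perturbation $\tau_1\Gamma_1^\bW$ is reserved for the Hessian bound in part (c). No bootstrapping between Steps~2 and~3 is needed.
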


The proof of this theorem is given in Appendix~\ref{sec:convergence-optimizersspg} and follows the standard Lindeberg interpolation method.  
We construct a sequence of intermediate empirical risk problems by swapping one data point at a time between the two models.  
Let $(\bZ_q, \boldf_q)$ denote the dataset where the first $q$ samples are drawn from the RF model and the remaining $n-q$ samples from the PGE model.  
Thus $(\bZ_n, \boldf_n) = (\bZ^\RF, \boldf^\RF)$ and $(\bZ_0, \boldf_0) = (\bZ^\sPG, \boldf^\sPG)$.  
Define
\[
\Phi_q := \hcR_{n,p}^*(\btau, \bZ_q, \boldf_q),
\]
so that the difference in~\eqref{eq:diff_Lindeberg_phase_1_main_theorem} decomposes as
\[
\left| \E \big[ \varphi \left( \Phi_n \right) \big] - \E \big[ \varphi \left( \Phi_0 \right) \big] \right| \leq \sum_{q=1}^n \left| \E \big[ \varphi \left(\Phi_q \right) \big] - \E \big[ \varphi \left( \Phi_{q-1} \right) \big] \right|.
\]
Each term in the sum is bounded by $O(n^{-1} d^{-c} \tau_1^{-C})$.  
To obtain these bounds, we introduce a leave-one-out objective $\Phi_{\setminus q}$ and perform a second-order Taylor expansion of $\varphi(\Phi_q)$ and $\varphi(\Phi_{q-1})$ around $\Phi_{\setminus q}$, following the approach of~\cite{huUniversalityLawsHighdimensional2022}.  
We further introduce a quadratic surrogate to control the interpolation error (see Appendix~\ref{app:putting_things_together_phase_i_lindeberg} for details).

The key technical step is to show that all minimizers of the intermediate problems lie in the `good parameter' region $\bTheta_{\bW}^{\sPG} (K)$ defined in \eqref{equation:Ustarthetap}, with high probability.  
This is established in Lemmas~\ref{lemma:sublevelgeometry} (Appendix \ref{sec:sublevelsets}) and~\ref{lemma:optinThetaPG} (Appendix \ref{app:localization_Phase_1}).  
Intuitively, the bounds 
\[
|\mu_0 \1_p^\sT \hbtheta|, \quad
\|\mu_1 \bW^\sT \hbtheta\|_2 \le (\log d)^K
\]
follow from the calibrated growth condition (Assumption~\ref{assumption:polynomial-growth}):  
since the empirical risk value is bounded by $\widehat{\cR}_{n,p}(\bzero) \le (\log d)^K$ with high probability, the empirical predictions $\|\hat\by_{\cI}\|_2$ must remain at most $(\log d)^{K'}$ on a sufficiently large subset $\cI$ of samples, which in turn controls the low-dimensional spike directions. Finally, to establish the bound $\|\hbtheta\|_\infty \le (\log d)^K / d^{1/4}$, we compare the objective minimum with that of a modified problem in which the $j$-th feature $\sigma(\<\bw_j, \bx\>)$ is removed.  
By strong convexity, the difference between the two objective values controls $|\hbtheta_j|$, and a union bound over $j \in [p]$ completes the argument.

\subsection{Lindeberg Swapping Phase II : PGE to CGE}
\label{sec:second_phase_step_2_and_3}

In the second phase of the Lindeberg swapping argument, we replace with independent Gaussian vectors the degree-$2$ chaos components in the features and responses of the PGE model that lie outside the signal subspace spanned by $\bx_S$.  
The result of this step is summarized below.

\begin{theorem}[Phase II of Lindeberg Swapping]\label{sec:main-conditions-lindeberg-swap-framework2}
Suppose Assumptions \ref{assumption:scaling}--\ref{assumption:activation} and \ref{ass:test_loss} hold. There exist constants $c,c',C,K_0,d_0>0$ depending only on the constants in these assumptions, such that for all $K_{\Gamma} \geq K_0$, all $d^{-c'} \leq \tau_1 \leq 1/(\log d)^{K_{\Gamma}}$, all $|\tau_2| \leq \tau_1/(\log d)^{K_{\Gamma}}$, and all $d \geq d_0$, the following holds. For any twice-differentiable function $\varphi : \R \to \R$ with $\| \varphi \|_\infty,  \| \varphi' \|_\infty, \| \varphi '' \|_\infty \leq 1$,  
\begin{equation}\label{eq:diff_Lindeberg_phase_2_main_theorem}
 \left| \E \Big[ \varphi \left( \hcR_{n,p}^* (\btau,\bZ^\sPG,\boldf^\sPG) \right) \Big] - \E \Big[ \varphi \left( \hcR_{n,p}^* (\btau, \bZ^\sCG,\boldf^\sCG) \right) \Big] \right| \leq  \frac{d^{-c}}{\tau_1^C}. 
\end{equation}
\end{theorem}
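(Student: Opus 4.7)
The plan is to follow the same three-step template used in Phase I (Theorem \ref{sec:main-conditions-lindeberg-swap-framework}), now with the PGE model playing the role of the original RF model and the CGE model playing the role of the target Gaussian surrogate. The uniform marginal CLT of Step 1 is already provided by Theorem \ref{theorem:RecallBoundedLipschitzFunctionSupErgettt} on the constrained set $\bTheta^{\sCG}_{\bW}(\eps,K)$. What remains is to (Step 2) localize both minimizers $\hbtheta^\sPG$ and $\hbtheta^\sCG$ inside $\bTheta^{\sCG}$ with high probability, and (Step 3) run a Lindeberg interpolation one sample at a time between the PGE and CGE datasets, controlling each swap via a leave-one-out Taylor expansion as in Appendix \ref{app:putting_things_together_phase_i_lindeberg}.

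For Step 3 (the swapping itself), the setup is essentially identical to Phase I: I define the intermediate datasets $(\bZ_q,\boldf_q)$, where the first $q$ samples are drawn from the PGE model and the last $n-q$ from the CGE model, and write $\Phi_q := \hcR^{*}_{n,p}(\btau,\bZ_q,\boldf_q)$. For each $q$, I introduce a leave-one-out objective $\Phi_{\setminus q}$ by removing the $q$-th sample, Taylor expand $\varphi(\Phi_q)$ and $\varphi(\Phi_{q-1})$ around $\Phi_{\setminus q}$ to second order, and bound the residuals by a quadratic surrogate. The zeroth- and first-order terms in the expansion depend on the data point only through the marginal of $(f_q, \bz_q^\sT \hbtheta^{\setminus q})$, and therefore match between the PGE and CGE samples up to error $d^{-c}$ by Theorem \ref{theorem:RecallBoundedLipschitzFunctionSupErgettt} --- \emph{provided} $\hbtheta^{\setminus q}$ lies in $\bTheta^{\sCG}$ with high probability. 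The second-order terms contribute $O(n^{-1}\tau_1^{-C})$ each after using the local strong convexity induced by the perturbation $\tau_1\Gamma_1^{\bW}$, and summing $n$ such contributions yields the bound \eqref{eq:diff_Lindeberg_phase_2_main_theorem}.

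The main obstacle, and the genuinely new technical content of this phase, is Step 2: showing that with high probability
\begin{equation*}
\bigl\|\bW_{\setminus S}^{\sT}\bD_{\hbtheta^\sPG}\bW_{\setminus S}\bigr\|_{\op}\le d^{-\eps},\qquad \bigl\|\bW_{S}^{\sT}\bD_{\hbtheta^\sPG}\bW_{\setminus S}\bigr\|_{F}\le d^{-\eps},
\end{equation*}
on top of the $\bTheta^{\sPG}$ bounds already established in Appendix \ref{app:localization_Phase_1}. For a \emph{fixed} vector $\btheta$ with $\|\btheta\|_{\infty}\lesssim(\log d)^K d^{-1/4}$ and $\|\btheta\|_{2}\lesssim(\log d)^K$, a matrix Bernstein inequality applied to the sum $\sum_{j}\theta_j(\bw_{j,\setminus S}\bw_{j,\setminus S}^{\sT}-\frac{1}{d}\id_{d-s})$ gives exactly such a bound, with the spike contribution $\frac{1}{d}\bones^{\sT}\btheta$ absorbed into the $|\bones^{\sT}\btheta|\le(\log d)^K$ constraint already present in $\bTheta^{\sPG}$. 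The difficulty is that $\hbtheta^\sPG$ depends on $\bW$, so a naive union bound is unavailable. I plan to decouple this dependence by a leave-one-row-out argument on $\bW$: define $\hbtheta^{\sPG,(j)}$ as the minimizer after replacing $\bw_j$ by an independent resample $\tilde\bw_j$, and use the strong convexity coming from both the ridge penalty $\tfrac{\lambda}{2}\|\btheta\|_2^2$ and the perturbation $\tau_1 \Gamma_1^\bW$ to show $|\hat\theta_j^\sPG-\hat\theta_j^{\sPG,(j)}|=O_{\P}(d^{-1})$. Substituting $\hbtheta^{\sPG,(j)}$ for $\hbtheta^\sPG$ in the $j$-th contribution of the quadratic form introduces only a negligible error, but crucially $\hbtheta^{\sPG,(j)}$ is independent of $\bw_j$, so the conditional matrix Bernstein bound applies entrywise. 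Summing and using a union bound over $j\in[p]$ yields the $d^{-\eps}$ operator-norm control, and an analogous argument handles the Frobenius constraint against $\bW_S$.

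Once Step 2 is in place, the remainder of the proof mirrors Phase I almost verbatim: the swap at index $q$ is analyzed on the event $\{\hbtheta^{\setminus q,\sPG},\hbtheta^{\setminus q,\sCG}\in\bTheta^{\sCG}\}$, the residual is controlled by the quadratic surrogate and the Lipschitz estimates on $\nabla^{3}\ell$ from Assumption \ref{assumption:loss}(ii), and Theorem \ref{theorem:RecallBoundedLipschitzFunctionSupErgettt} replaces the first-phase CLT at the key step. The factor $\tau_1^{-C}$ on the right-hand side of \eqref{eq:diff_Lindeberg_phase_2_main_theorem} comes exclusively from the strong-convexity modulus induced by $\tau_1 \Gamma_1^\bW$, as in Phase I. The chief conceptual novelty, and the step I expect to consume most of the technical effort, is the leave-one-row-out decoupling used to transfer the matrix-Bernstein concentration from a fixed $\btheta$ to the random minimizer $\hbtheta^\sPG$.
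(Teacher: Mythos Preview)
Your overall three-step template is correct and matches the paper's structure: Step~1 is Theorem~\ref{theorem:RecallBoundedLipschitzFunctionSupErgettt}, Step~3 is a replay of the leave-one-out surrogate argument from Appendix~\ref{sec:convergence-optimizersspg}, and Step~2 (localization in $\bTheta^{\sCG}$) is indeed the new technical content. The paper states this explicitly in Appendix~\ref{sec:lindeberg-phase-ii}.

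However, your proposed mechanism for Step~2 has a genuine gap. You want to replace $\hat\theta_j$ by a leave-one-\emph{row}-out proxy $\hat\theta_j^{(j)}$ that is independent of $\bw_j$, and then invoke ``matrix Bernstein entrywise''. But the summands $\hat\theta_j^{(j)}\bw_j\bw_j^{\sT}$ are not jointly independent across $j$: each $\hat\theta_j^{(j)}$ still depends on all $\bw_{j'}$ for $j'\neq j$. Neither matrix Bernstein nor scalar Bernstein for a fixed direction $\bv$ applies to such a sum. Even if one could get a per-$\bv$ tail bound, the concentration obtainable from $\|\hat\btheta\|_\infty\prec d^{-1/4}$ and $\|\hat\btheta\|_2\prec 1$ is of order $\exp(-cd^{1/4-\eps})$, which is far too weak to survive a union bound over a $9^d$-point net of $\bv\in\S^{d-s-1}$. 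Your final sentence ``a union bound over $j\in[p]$'' does not address the supremum over $\bv$ at all.

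The paper's fix is a leave-one-\emph{direction}-out (LODO) construction, which is a substantively different decoupling. For each $\bv$, one removes the direction $\bv$ from \emph{all} $\bw_j$ simultaneously (and from $\bx_i$), yielding a single minimizer $\check\btheta_{-\bv}$ that is independent of the entire family $\{\langle\bw_j,\bv\rangle\}_{j\in[p]}$. Conditionally on $\check\btheta_{-\bv}$, the sum $\sum_j(\check\btheta_{-\bv})_j(\langle\bw_j,\bv\rangle^2-d^{-1})$ is then a genuine sum of independent terms, and Bernstein gives a tail of order $\exp(-c\,td/\|\check\btheta_{-\bv}\|_\infty)$. To make this survive the net, one needs $\|\check\btheta_{-\bv}\|_\infty$ small \emph{uniformly} over $\bv$, which is not automatic; the paper forces it by adding an extra $d^{1/8}\|\btheta\|_\infty$ penalty to the LODO objective, so that $\|\check\btheta_{-\bv}\|_\infty\prec d^{-1/8}$ trivially from the objective value. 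The bulk of the work is then a uniform comparison $\|\hat\btheta-\check\btheta_{-\bv}\|_2\prec d^{-c}$ over all $\bv\in\cN_{S^\perp}$, carried out in Lemmas~\ref{lem:firstlemma}--\ref{lem:samereanson} and Proposition~\ref{prop:abstractCLTcondition}. This is where the real effort lies, and your leave-one-row-out idea does not substitute for it.
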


The proof of this theorem is provided in Appendix~\ref{sec:lindeberg-phase-ii}.  
It follows the same general structure as the proof of Theorem~\ref{sec:main-conditions-lindeberg-swap-framework}, and we highlight only the new elements specific to this phase.  The main additional challenge is to ensure that the minimizers lie in the constrained parameter set $\bTheta_{\bW}^{\sCG}(K,\eps)$ defined in~\eqref{equ:thetap2}, with high probability.  
This localization result is established in Proposition~\ref{prop:abstractCLTcondition} (Appendix~\ref{sec:abstract-CLT-conditioniic}).  
The proof requires controlling the operator norm
\[
\| \bW_{\setminus S}^\sT \bD_{\hbtheta} \bW_{\setminus S} \|_\op = \sup_{\bv \in \S^{d-1}} \bv^\sT \bW_{\setminus S} ^\sT\bD_{\hbtheta} \bW_{\setminus S} \bv,
\]
which necessitates a uniform bound over all directions $\bv \in \S^{d-1}$. 

To obtain such a bound, we introduce a \emph{leave-one-direction-out} (LODO) objective.  
Specifically, for each $\bv \in \S^{d-1}$, we define modified data 
$(\bZ_{-\bv}, \boldf_{-\bv})$ by replacing
\[
\bx_i \;\mapsto\; \bx_{i,-\bv} = (\bI - \bv\bv^\sT)\bx_i,
\qquad
\bw_j \;\mapsto\; \bw_{j,-\bv} = \frac{(\bI - \bv\bv^\sT)\bw_j}{\|(\bI - \bv\bv^\sT)\bw_j\|_2},
\]
and consider the corresponding objective
\[
\cP_{n,-\bv} (\btheta) = \widehat{\cR}_{n,p} (\btheta;  \bZ_{-\bv},\boldf_{-\bv}) + \btau \cdot \bGamma^{\bW_{-\bv}} (\btheta) +d^{1/8}\| \btheta\|_\infty,
\]
where the additional regularization term enforces that the minimizer $\check{\btheta}_{-\bv}$ of $\cP_{n,-\bv}$ satisfies 
$\|\check{\btheta}_{-\bv}\|_\infty = \widetilde{O}_d(d^{-1/8})$ uniformly over $\bv$.  
A detailed explanation of this construction and its role in controlling the tensor contractions is given in Appendix~\ref{subsec:phaseII-tensor}.

\appendix

\bibliographystyle{amsalpha}
\bibliography{bibliography.bib}

\newcommand{\etalchar}[1]{$^{#1}$}
\providecommand{\bysame}{\leavevmode\hbox to3em{\hrulefill}\thinspace}
\providecommand{\MR}{\relax\ifhmode\unskip\space\fi MR }
\providecommand{\MRhref}[2]{%
  \href{http://www.ams.org/mathscinet-getitem?mr=#1}{#2}
}
\providecommand{\href}[2]{#2}
\begin{thebibliography}{vdGBRD14}

\bibitem[AAC11]{auffinger2011randommatricescomplexityspin}
A.~Auffinger, G.~Ben Arous, and J.~Cerny, \emph{Random matrices and complexity
  of spin glasses}, 2011.

\bibitem[AKLZ20]{aubin2020generalization}
Benjamin Aubin, Florent Krzakala, Yue Lu, and Lenka Zdeborov{\'a},
  \emph{Generalization error in high-dimensional perceptrons: Approaching bayes
  error with convex optimization}, Advances in Neural Information Processing
  Systems \textbf{33} (2020), 12199--12210.

\bibitem[AMS25]{asgari2025local}
Kiana Asgari, Andrea Montanari, and Basil Saeed, \emph{Local minima of the
  empirical risk in high dimension: General theorems and convex examples},
  arXiv preprint arXiv:2502.01953 (2025).

\bibitem[APY21]{azmoodeh2021malliavinsteinmethodsurveyrecent}
Ehsan Azmoodeh, Giovanni Peccati, and Xiaochuan Yang, \emph{Malliavin-stein
  method: a survey of recent developments}, 2021.

\bibitem[AZVP25]{atanasov2025scalingrenormalizationhighdimensionalregression}
Alexander Atanasov, Jacob~A. Zavatone-Veth, and Cengiz Pehlevan, \emph{Scaling
  and renormalization in high-dimensional regression}, 2025.

\bibitem[BBV06]{balcan2006kernels}
Maria-Florina Balcan, Avrim Blum, and Santosh Vempala, \emph{Kernels as
  features: On kernels, margins, and low-dimensional mappings}, Machine
  Learning \textbf{65} (2006), no.~1, 79--94.

\bibitem[BCP20]{bordelon2020spectrum}
Blake Bordelon, Abdulkadir Canatar, and Cengiz Pehlevan, \emph{Spectrum
  dependent learning curves in kernel regression and wide neural networks},
  International Conference on Machine Learning, PMLR, 2020, pp.~1024--1034.

\bibitem[BES{\etalchar{+}}22]{ba2022high}
Jimmy Ba, Murat~A Erdogdu, Taiji Suzuki, Zhichao Wang, Denny Wu, and Greg Yang,
  \emph{High-dimensional asymptotics of feature learning: How one gradient step
  improves the representation}, Advances in Neural Information Processing
  Systems \textbf{35} (2022), 37932--37946.

\bibitem[BKM{\etalchar{+}}19]{barbier2019optimal}
Jean Barbier, Florent Krzakala, Nicolas Macris, L{\'e}o Miolane, and Lenka
  Zdeborov{\'a}, \emph{Optimal errors and phase transitions in high-dimensional
  generalized linear models}, Proceedings of the National Academy of Sciences
  \textbf{116} (2019), no.~12, 5451--5460.

\bibitem[BLLT20]{Bartlett_2020}
Peter~L. Bartlett, Philip~M. Long, Gábor Lugosi, and Alexander Tsigler,
  \emph{Benign overfitting in linear regression}, Proceedings of the National
  Academy of Sciences \textbf{117} (2020), no.~48, 30063–30070.

\bibitem[BLM15]{Bayati_2015}
Mohsen Bayati, Marc Lelarge, and Andrea Montanari, \emph{Universality in
  polytope phase transitions and message passing algorithms}, The Annals of
  Applied Probability \textbf{25} (2015), no.~2.

\bibitem[BM11a]{Bayati_2011}
Mohsen Bayati and Andrea Montanari, \emph{The dynamics of message passing on
  dense graphs, with applications to compressed sensing}, IEEE Transactions on
  Information Theory \textbf{57} (2011), no.~2, 764–785.

\bibitem[BM11b]{bayati2011lasso}
\bysame, \emph{The lasso risk for gaussian matrices}, IEEE Transactions on
  Information Theory \textbf{58} (2011), no.~4, 1997--2017.

\bibitem[BMM18]{belkin2018understand}
Mikhail Belkin, Siyuan Ma, and Soumik Mandal, \emph{To understand deep learning
  we need to understand kernel learning}, International conference on machine
  learning, PMLR, 2018, pp.~541--549.

\bibitem[BP22]{bordelon2022selfconsistentdynamicalfieldtheory}
Blake Bordelon and Cengiz Pehlevan, \emph{Self-consistent dynamical field
  theory of kernel evolution in wide neural networks}, 2022.

\bibitem[CCM21]{celentano2021high}
Michael Celentano, Chen Cheng, and Andrea Montanari, \emph{The high-dimensional
  asymptotics of first order methods with random data}, arXiv preprint
  arXiv:2112.07572 (2021).

\bibitem[CGR24]{CARAMELLINO2024110239}
Lucia Caramellino, Giacomo Giorgio, and Maurizia Rossi, \emph{Convergence in
  total variation for nonlinear functionals of random hyperspherical
  harmonics}, Journal of Functional Analysis \textbf{286} (2024), no.~3,
  110239.

\bibitem[Cha06]{chatterjee2006generalization}
Sourav Chatterjee, \emph{A generalization of the lindeberg principle}, The
  Annals of Probability \textbf{34} (2006), no.~6.

\bibitem[CKZ23]{cui2023bayes}
Hugo Cui, Florent Krzakala, and Lenka Zdeborov{\'a}, \emph{Bayes-optimal
  learning of deep random networks of extensive-width}, International
  Conference on Machine Learning, PMLR, 2023, pp.~6468--6521.

\bibitem[CM19]{caponera2019asymptoticssphericalfunctionalautoregressions}
Alessia Caponera and Domenico Marinucci, \emph{Asymptotics for spherical
  functional autoregressions}, 2019.

\bibitem[CM22]{celentano2022fundamental}
Michael Celentano and Andrea Montanari, \emph{Fundamental barriers to
  high-dimensional regression with convex penalties}, The Annals of Statistics
  \textbf{50} (2022), no.~1, 170--196.

\bibitem[CM24]{cheng2024dimension}
Chen Cheng and Andrea Montanari, \emph{Dimension free ridge regression}, The
  Annals of Statistics \textbf{52} (2024), no.~6, 2879--2912.

\bibitem[CMW20]{celentano2020estimation}
Michael Celentano, Andrea Montanari, and Yuchen Wu, \emph{The estimation error
  of general first order methods}, Conference on Learning Theory, PMLR, 2020,
  pp.~1078--1141.

\bibitem[CMW23]{celentano2023lasso}
Michael Celentano, Andrea Montanari, and Yuting Wei, \emph{The lasso with
  general gaussian designs with applications to hypothesis testing}, The Annals
  of Statistics \textbf{51} (2023), no.~5, 2194--2220.

\bibitem[CP25]{celli2025entropicboundsconditionallygaussian}
Lucia Celli and Giovanni Peccati, \emph{Entropic bounds for conditionally
  gaussian vectors and applications to neural networks}, 2025.

\bibitem[CS20]{candes2020phase}
Emmanuel~J Cand{\`e}s and Pragya Sur, \emph{The phase transition for the
  existence of the maximum likelihood estimate in high-dimensional logistic
  regression}, The Annals of Statistics \textbf{48} (2020), no.~1, 27--42.

\bibitem[DKT22]{deng2022model}
Zeyu Deng, Abla Kammoun, and Christos Thrampoulidis, \emph{A model of double
  descent for high-dimensional binary linear classification}, Information and
  Inference: A Journal of the IMA \textbf{11} (2022), no.~2, 435--495.

\bibitem[DLM24]{defilippis2024dimension}
Leonardo Defilippis, Bruno Loureiro, and Theodor Misiakiewicz,
  \emph{Dimension-free deterministic equivalents and scaling laws for random
  feature regression}, Advances in Neural Information Processing Systems
  \textbf{37} (2024), 104630--104693.

\bibitem[{\relax DLMF}]{NIST:DLMF}
\emph{{\it NIST Digital Library of Mathematical Functions}},
  \url{https://dlmf.nist.gov/}, Release 1.2.4 of 2025-03-15, F.~W.~J. Olver,
  A.~B. {Olde Daalhuis}, D.~W. Lozier, B.~I. Schneider, R.~F. Boisvert, C.~W.
  Clark, B.~R. Miller, B.~V. Saunders, H.~S. Cohl, and M.~A. McClain, eds.

\bibitem[DM16]{donoho2016high}
David Donoho and Andrea Montanari, \emph{High dimensional robust m-estimation:
  Asymptotic variance via approximate message passing}, Probability Theory and
  Related Fields \textbf{166} (2016), 935--969.

\bibitem[DMM09]{donoho2009message}
David~L Donoho, Arian Maleki, and Andrea Montanari, \emph{Message-passing
  algorithms for compressed sensing}, Proceedings of the National Academy of
  Sciences \textbf{106} (2009), no.~45, 18914--18919.

\bibitem[DSK{\etalchar{+}}23]{dandi2023universalitylawsgaussianmixtures}
Yatin Dandi, Ludovic Stephan, Florent Krzakala, Bruno Loureiro, and Lenka
  Zdeborová, \emph{Universality laws for gaussian mixtures in generalized
  linear models}, 2023.

\bibitem[EK18]{el2018impact}
Noureddine El~Karoui, \emph{On the impact of predictor geometry on the
  performance on high-dimensional ridge-regularized generalized robust
  regression estimators}, Probability Theory and Related Fields \textbf{170}
  (2018), 95--175.

\bibitem[GKL{\etalchar{+}}24]{gerace2024gaussian}
Federica Gerace, Florent Krzakala, Bruno Loureiro, Ludovic Stephan, and Lenka
  Zdeborov{\'a}, \emph{Gaussian universality of perceptrons with random
  labels}, Physical Review E \textbf{109} (2024), no.~3, 034305.

\bibitem[GLK{\etalchar{+}}20]{gerace2020generalisation}
Federica Gerace, Bruno Loureiro, Florent Krzakala, Marc M{\'e}zard, and Lenka
  Zdeborov{\'a}, \emph{Generalisation error in learning with random features
  and the hidden manifold model}, International Conference on Machine Learning,
  PMLR, 2020, pp.~3452--3462.

\bibitem[GLR{\etalchar{+}}22]{goldt2022gaussian}
Sebastian Goldt, Bruno Loureiro, Galen Reeves, Florent Krzakala, Marc
  M{\'e}zard, and Lenka Zdeborov{\'a}, \emph{The gaussian equivalence of
  generative models for learning with shallow neural networks}, Mathematical
  and Scientific Machine Learning, PMLR, 2022, pp.~426--471.

\bibitem[GMKZ20]{Goldt_2020}
Sebastian Goldt, Marc Mézard, Florent Krzakala, and Lenka Zdeborová,
  \emph{Modeling the influence of data structure on learning in neural
  networks: The hidden manifold model}, Physical Review X \textbf{10} (2020),
  no.~4.

\bibitem[Gor85]{gordon1985some}
Yehoram Gordon, \emph{Some inequalities for gaussian processes and
  applications}, Israel Journal of Mathematics \textbf{50} (1985), 265--289.

\bibitem[GTM{\etalchar{+}}24]{gerbelot2024rigorous}
Cedric Gerbelot, Emanuele Troiani, Francesca Mignacco, Florent Krzakala, and
  Lenka Zdeborova, \emph{Rigorous dynamical mean-field theory for stochastic
  gradient descent methods}, SIAM Journal on Mathematics of Data Science
  \textbf{6} (2024), no.~2, 400--427.

\bibitem[HL22]{huUniversalityLawsHighdimensional2022}
Hong Hu and Yue~M Lu, \emph{Universality laws for high-dimensional learning
  with random features}, IEEE Transactions on Information Theory \textbf{69}
  (2022), no.~3, 1932--1964.

\bibitem[HLM24]{hu2024asymptotics}
Hong Hu, Yue~M Lu, and Theodor Misiakiewicz, \emph{Asymptotics of random
  feature regression beyond the linear scaling regime}, arXiv:2403.08160
  (2024).

\bibitem[HMRT22]{hastie2022surprises}
Trevor Hastie, Andrea Montanari, Saharon Rosset, and Ryan~J Tibshirani,
  \emph{Surprises in high-dimensional ridgeless least squares interpolation},
  Annals of statistics \textbf{50} (2022), no.~2, 949.

\bibitem[Jan97]{jansonGaussianHilbertSpaces1997}
Svante Janson, \emph{Gaussian {{Hilbert Spaces}}}, Cambridge {{Tracts}} in
  {{Mathematics}}, {Cambridge University Press}, {Cambridge}, 1997.

\bibitem[JM14]{javanmard2014confidenceintervalshypothesistesting}
Adel Javanmard and Andrea Montanari, \emph{Confidence intervals and hypothesis
  testing for high-dimensional regression}, 2014.

\bibitem[JSS{\etalchar{+}}20]{jacot2020kernel}
Arthur Jacot, Berfin Simsek, Francesco Spadaro, Cl{\'e}ment Hongler, and Franck
  Gabriel, \emph{Kernel alignment risk estimator: Risk prediction from training
  data}, Advances in neural information processing systems \textbf{33} (2020),
  15568--15578.

\bibitem[KM11]{korada2011applications}
Satish~Babu Korada and Andrea Montanari, \emph{Applications of the lindeberg
  principle in communications and statistical learning}, IEEE transactions on
  information theory \textbf{57} (2011), no.~4, 2440--2450.

\bibitem[LBEK18]{lei2018asymptotics}
Lihua Lei, Peter~J Bickel, and Noureddine El~Karoui, \emph{Asymptotics for high
  dimensional regression m-estimates: fixed design results}, Probability Theory
  and Related Fields \textbf{172} (2018), no.~3, 983--1079.

\bibitem[LGC{\etalchar{+}}21]{loureiro2021learning}
Bruno Loureiro, Cedric Gerbelot, Hugo Cui, Sebastian Goldt, Florent Krzakala,
  Marc Mezard, and Lenka Zdeborov{\'a}, \emph{Learning curves of generic
  features maps for realistic datasets with a teacher-student model}, Advances
  in Neural Information Processing Systems \textbf{34} (2021), 18137--18151.

\bibitem[Lin22]{lindeberg1922neue}
Jarl~Waldemar Lindeberg, \emph{Eine neue herleitung des exponentialgesetzes in
  der wahrscheinlichkeitsrechnung}, Mathematische Zeitschrift \textbf{15}
  (1922), no.~1, 211--225.

\bibitem[LS22]{liang2022precise}
Tengyuan Liang and Pragya Sur, \emph{A precise high-dimensional asymptotic
  theory for boosting and minimum-$\ell_1$-norm interpolated classifiers}, The
  Annals of Statistics \textbf{50} (2022), no.~3, 1669--1695.

\bibitem[MAB20]{maillard2020landscape}
Antoine Maillard, G{\'e}rard~Ben Arous, and Giulio Biroli, \emph{Landscape
  complexity for the empirical risk of generalized linear models}, Mathematical
  and Scientific Machine Learning, PMLR, 2020, pp.~287--327.

\bibitem[Mal06]{MalliavinCalculusRelated2006}
\emph{The {{Malliavin Calculus}} and {{Related Topics}}}, Probability, Its
  {{Applications}}, {Springer-Verlag}, {Berlin/Heidelberg}, 2006.

\bibitem[MGW{\etalchar{+}}25]{mirafzali2025malliavincalculusscorebaseddiffusion}
Ehsan Mirafzali, Utkarsh Gupta, Patrick Wyrod, Frank Proske, Daniele Venturi,
  and Razvan Marinescu, \emph{Malliavin calculus for score-based diffusion
  models}, 2025.

\bibitem[ML25]{mai2025breakdowngaussianuniversalityclassification}
Xiaoyi Mai and Zhenyu Liao, \emph{The breakdown of gaussian universality in
  classification of high-dimensional linear factor mixtures}, 2025.

\bibitem[MM22]{meiGeneralizationErrorRandom2022}
Song Mei and Andrea Montanari, \emph{The generalization error of random
  features regression: {{Precise}} asymptotics and the double descent curve},
  Communications on Pure and Applied Mathematics \textbf{75} (2022), no.~4,
  667--766.

\bibitem[MMM22]{meiGeneralizationErrorRandom2022a}
Song Mei, Theodor Misiakiewicz, and Andrea Montanari, \emph{Generalization
  error of random feature and kernel methods: Hypercontractivity and kernel
  matrix concentration}, Applied and Computational Harmonic Analysis
  \textbf{59} (2022), 3--84.

\bibitem[MN17]{montanari2017universality}
Andrea Montanari and Phan-Minh Nguyen, \emph{Universality of the elastic net
  error}, 2017 IEEE International Symposium on Information Theory (ISIT), IEEE,
  2017, pp.~2338--2342.

\bibitem[MPG{\etalchar{+}}25]{mirafzali2025malliavincalculusapproachscore}
Ehsan Mirafzali, Frank Proske, Utkarsh Gupta, Daniele Venturi, and Razvan
  Marinescu, \emph{A malliavin calculus approach to score functions in
  diffusion generative models}, 2025.

\bibitem[MRSS23]{montanari2023universality}
Andrea Montanari, Feng Ruan, Basil Saeed, and Youngtak Sohn, \emph{Universality
  of max-margin classifiers}, arXiv:2310.00176 (2023).

\bibitem[MRSY25]{montanari2025generalization}
Andrea Montanari, Feng Ruan, Youngtak Sohn, and Jun Yan, \emph{The
  generalization error of max-margin linear classifiers: Benign overfitting and
  high dimensional asymptotics in the overparametrized regime}, The Annals of
  Statistics \textbf{53} (2025), no.~2, 822--853.

\bibitem[MS22]{montanariUniversalityEmpiricalRisk2022}
Andrea Montanari and Basil~N Saeed, \emph{Universality of empirical risk
  minimization}, Conference on Learning Theory, {PMLR}, 2022, pp.~4310--4312.

\bibitem[MS24]{misiakiewicz2024non}
Theodor Misiakiewicz and Basil Saeed, \emph{A non-asymptotic theory of kernel
  ridge regression: deterministic equivalents, test error, and gcv estimator},
  arXiv preprint arXiv:2403.08938 (2024).

\bibitem[MU25]{montanari2025dynamicaldecouplinggeneralizationoverfitting}
Andrea Montanari and Pierfrancesco Urbani, \emph{Dynamical decoupling of
  generalization and overfitting in large two-layer networks}, 2025.

\bibitem[MZ24]{montanari2024exceptional}
Andrea Montanari and Kangjie Zhou, \emph{Which exceptional low-dimensional
  projections of a gaussian point cloud can be found in polynomial time?},
  arXiv:2406.02970 (2024).

\bibitem[NO07]{nualartCentralLimitTheorems2007}
David Nualart and Salvador Ortiz, \emph{Central limit theorems for multiple
  stochastic integrals and {{Malliavin}} calculus}, March 2007.

\bibitem[NP05]{nualart2005central}
David Nualart and Giovanni Peccati, \emph{Central limit theorems for sequences
  of multiple stochastic integrals}.

\bibitem[NP09]{nourdin2009stein}
Ivan Nourdin and Giovanni Peccati, \emph{Stein’s method on wiener chaos},
  Probability Theory and Related Fields \textbf{145} (2009), no.~1, 75--118.

\bibitem[NP12]{nourdin2012normal}
\bysame, \emph{Normal approximations with malliavin calculus: from stein's
  method to universality}, vol. 192, Cambridge University Press, 2012.

\bibitem[NPR10]{Nourdin_2010}
Ivan Nourdin, Giovanni Peccati, and Gesine Reinert, \emph{Invariance principles
  for homogeneous sums: Universality of gaussian wiener chaos}, The Annals of
  Probability \textbf{38} (2010), no.~5.

\bibitem[NS24]{neufeld2024solvingstochasticpartialdifferential}
Ariel Neufeld and Philipp Schmocker, \emph{Solving stochastic partial
  differential equations using neural networks in the wiener chaos expansion},
  2024.

\bibitem[OT18]{oymak2018universality}
Samet Oymak and Joel~A Tropp, \emph{Universality laws for randomized dimension
  reduction, with applications}, Information and Inference: A Journal of the
  IMA \textbf{7} (2018), no.~3, 337--446.

\bibitem[OTH13]{oymak2013squared}
Samet Oymak, Christos Thrampoulidis, and Babak Hassibi, \emph{The squared-error
  of generalized lasso: A precise analysis}, 2013 51st Annual Allerton
  Conference on Communication, Control, and Computing (Allerton), IEEE, 2013,
  pp.~1002--1009.

\bibitem[PBDE{\etalchar{+}}25]{pidstrigach2025conditioningdiffusionsusingmalliavin}
Jakiw Pidstrigach, Elizabeth Baker, Carles Domingo-Enrich, George
  Deligiannidis, and Nikolas Nüsken, \emph{Conditioning diffusions using
  malliavin calculus}, 2025.

\bibitem[PH17]{panahi2017universal}
Ashkan Panahi and Babak Hassibi, \emph{A universal analysis of large-scale
  regularized least squares solutions}, Advances in Neural Information
  Processing Systems \textbf{30} (2017).

\bibitem[PKLS23]{pesce2023gaussian}
Luca Pesce, Florent Krzakala, Bruno Loureiro, and Ludovic Stephan, \emph{Are
  gaussian data all you need? the extents and limits of universality in
  high-dimensional generalized linear estimation}, International Conference on
  Machine Learning, PMLR, 2023, pp.~27680--27708.

\bibitem[PT04]{peccati2004gaussian}
Giovanni Peccati and Ciprian~A Tudor, \emph{Gaussian limits for vector-valued
  multiple stochastic integrals}, S{\'e}minaire de Probabilit{\'e}s XXXVIII,
  Springer, 2004, pp.~247--262.

\bibitem[PT13]{peccati2013gammalimitsustatisticspoisson}
Giovanni Peccati and Christoph Thaele, \emph{Gamma limits and u-statistics on
  the poisson space}, 2013.

\bibitem[PWZ24]{pandit2024universality}
Parthe Pandit, Zhichao Wang, and Yizhe Zhu, \emph{Universality of kernel random
  matrices and kernel regression in the quadratic regime}, arXiv:2408.01062
  (2024).

\bibitem[RR07]{Rahimi2007}
Ali Rahimi and Benjamin Recht, \emph{Random features for large-scale kernel
  machines}, Advances in Neural Information Processing Systems (J.~Platt,
  D.~Koller, Y.~Singer, and S.~Roweis, eds.), vol.~20, Curran Associates, Inc.,
  2007.

\bibitem[SCC19]{sur2019likelihood}
Pragya Sur, Yuxin Chen, and Emmanuel~J Cand{\`e}s, \emph{The likelihood ratio
  test in high-dimensional logistic regression is asymptotically a rescaled
  chi-square}, Probability theory and related fields \textbf{175} (2019),
  487--558.

\bibitem[SDCL24]{schroder2024asymptoticslearningdeepstructured}
Dominik Schröder, Daniil Dmitriev, Hugo Cui, and Bruno Loureiro,
  \emph{Asymptotics of learning with deep structured (random) features}, 2024.

\bibitem[Sto13]{stojnic2013framework}
Mihailo Stojnic, \emph{A framework to characterize performance of lasso
  algorithms}, arXiv:1303.7291 (2013).

\bibitem[Sze75]{szeg1975orthogonal}
G.~Szego, \emph{Orthogonal polynomials}, American Math. Soc: Colloquium publ,
  American Mathematical Society, 1975.

\bibitem[TAH18]{thrampoulidis2018precise}
Christos Thrampoulidis, Ehsan Abbasi, and Babak Hassibi, \emph{Precise error
  analysis of regularized $ m $-estimators in high dimensions}, IEEE
  Transactions on Information Theory \textbf{64} (2018), no.~8, 5592--5628.

\bibitem[Tal96]{talagrand1996new}
Michel Talagrand, \emph{New concentration inequalities in product spaces},
  Inventiones mathematicae \textbf{126} (1996), no.~3, 505--563.

\bibitem[TOH15a]{thrampoulidis2015gaussianminmaxtheorempresence}
Christos Thrampoulidis, Samet Oymak, and Babak Hassibi, \emph{The gaussian
  min-max theorem in the presence of convexity}, 2015.

\bibitem[TOH15b]{thrampoulidis2015regularized}
Christos Thrampoulidis, Samet Oymak, and Babak Hassibi, \emph{Regularized
  linear regression: A precise analysis of the estimation error}, Conference on
  Learning Theory, PMLR, 2015, pp.~1683--1709.

\bibitem[vdGBRD14]{van_de_Geer_2014}
Sara van~de Geer, Peter Bühlmann, Ya’acov Ritov, and Ruben Dezeure, \emph{On
  asymptotically optimal confidence regions and tests for high-dimensional
  models}, The Annals of Statistics \textbf{42} (2014), no.~3.

\bibitem[Ver18]{vershyninHighDimensionalProbabilityIntroduction2018}
Roman Vershynin, \emph{High-{{Dimensional Probability}}: {{An Introduction}}
  with {{Applications}} in {{Data Science}}}, Cambridge {{Series}} in
  {{Statistical}} and {{Probabilistic Mathematics}}, {Cambridge University
  Press}, {Cambridge}, 2018.

\bibitem[WHL{\etalchar{+}}25a]{wen2025empirical}
Garrett~G Wen, Hong Hu, Yue~M Lu, Zhou Fan, and Theodor Misiakiewicz,
  \emph{Empirical risk minimization on polynomial chaos expansions: A
  conditional universality principle}, In preparation (2025).

\bibitem[WHL{\etalchar{+}}25b]{wen2025asymptotics}
\bysame, \emph{Kernel and random reature models beyond the linear regime: sharp
  asymptotics using conditional gaussian universality}, In preparation (2025).

\bibitem[WHS22]{wei2022more}
Alexander Wei, Wei Hu, and Jacob Steinhardt, \emph{More than a toy: Random
  matrix models predict how real-world neural representations generalize},
  International conference on machine learning, PMLR, 2022, pp.~23549--23588.

\bibitem[XHM{\etalchar{+}}22]{xiao2022precise}
Lechao Xiao, Hong Hu, Theodor Misiakiewicz, Yue~M Lu, and Jeffrey Pennington,
  \emph{Precise learning curves and higher-order scaling limits for dot product
  kernel regression}, Thirty-sixth Conference on Neural Information Processing
  Systems (NeurIPS), 2022.

\bibitem[ZBH{\etalchar{+}}21]{zhang2021understanding}
Chiyuan Zhang, Samy Bengio, Moritz Hardt, Benjamin Recht, and Oriol Vinyals,
  \emph{Understanding deep learning (still) requires rethinking
  generalization}, Communications of the ACM \textbf{64} (2021), no.~3,
  107--115.

\bibitem[ZDKS23]{zhou2023uniform}
Lijia Zhou, Zhen Dai, Frederic Koehler, and Nati Srebro, \emph{Uniform
  convergence with square-root lipschitz loss}, Advances in Neural Information
  Processing Systems \textbf{36} (2023), 12987--13021.

\bibitem[ZK16]{Zdeborov__2016}
Lenka Zdeborová and Florent Krzakala, \emph{Statistical physics of inference:
  thresholds and algorithms}, Advances in Physics \textbf{65} (2016), no.~5,
  453–552.

\bibitem[ZZ12]{zhang2012confidenceintervalslowdimensionalparameters}
Cun-Hui Zhang and Stephanie~S. Zhang, \emph{Confidence intervals for
  low-dimensional parameters in high-dimensional linear models}, 2012.

\end{thebibliography}

\clearpage

\section{Preliminaries}
\label{app:preliminaries}

\subsection{Notation for high probability bounds.} For (deterministic or)
random variables $X,Y \geq 0$ depending on $(n,p,d)$, we will write
\[X \prec Y\]
to mean, for any constant $C>0$, there exists a constant $K \equiv K(C)>0$
such that
\[\P[X \leq (\log d)^K \cdot Y] \geq 1-d^{-C}.\]
If $X,Y$ are both deterministic, this means simply that
$X \leq (\log d)^K \cdot Y$ for a constant $K>0$. Here, $K$ may
depend also on other constant quantities that do not depend on $n,p,d$, such as
the constants of Assumptions \ref{assumption:scaling} through
\ref{ass:test_loss}.

For a possibly random set $S$ and (deterministic or) random variables
$X(\btheta),Y(\btheta) \geq 0$ indexed by $\btheta \in S$, we will likewise
write
\[X(\btheta) \prec Y(\btheta) \text{ simultaneously over } \btheta \in S\]
to mean, for any constant $C>0$, there exists a constant $K \equiv K(C)>0$
such that
\[\P[X(\btheta) \leq (\log d)^K \cdot Y(\btheta) \text{ for all }
\btheta \in S] \geq 1-d^{-C}.\]

Throughout our arguments, we will apply repeatedly the Gaussian
hypercontractivity inequality
(c.f.\ \cite[Theorem 6.7]{jansonGaussianHilbertSpaces1997})
for any standard Gaussian vector $\bg$, polynomial function $F$ of degree $D$, 
and a constant $c \equiv c(D)>0$ depending only on $D$,
$\P[|F(\bg)-\E F(\bg)| \geq t\sqrt{\Var F(\bg)}] \leq 2e^{-ct^{2/D}}$.
This implies, in particular, that if $D>0$ is a constant independent of $n,p,d$, then
\[|F(\bg)| \prec |\E F(\bg)|+\sqrt{\Var F(\bg)}.\]

\subsection{Perturbed empirical risk and stability}
\label{app:perturbation_stability_ERM}

Let $\cT_{K_{\Gamma}}(\cdot)$ denote a smooth, increasing function satisfying
\[\cT_{K_{\Gamma}} (x)=x \text{ if } |x| \leq (\log d)^{K_{\Gamma}-1},
\qquad |\cT_{K_{\Gamma}}(x)| \leq (\log d)^{K_{\Gamma}} \text{ for all } x \in \R.\]
Recall that the perturbed empirical risk is defined as
\begin{align}
\label{equ:perturbed_objectiveERM_app}
\widehat{\mathcal R}_{n,p}(\btheta;\btau,\bZ,\boldf)
=\widehat{\mathcal R}_{n,p}(\btheta;\bZ,\boldf)
+\btau \cdot \bGamma^\bW (\btheta),
\end{align}
where 
\[
\Gamma_{1}^\bW (\btheta) = \cT_{K_{\Gamma}} \left( \| \bV_+^\sT \btheta \|_2^2 \right), \qquad \Gamma^\bW_{2} (\btheta) = \cT_{K_{\Gamma}} (L_{\bW} (\btheta)), 
\]
and
\[
\bV_+ = [\mu_0\bones_p , \mu_1\bW]\in\R^{p\times (d+1)}, \qquad 
L_\bW(\btheta)=\E_{\bz^{\sPG},y^{\sPG}}
[\ell_{\test}(y^{\sPG},\langle\btheta,\bz^{\sPG}\rangle)\mid \bW].
\]
Throughout, we take $\tau_1 \geq 0$ and $\| \btau \|_\infty \leq 1/ (\log d)^{K_{\Gamma}}$, so that $|\btau \cdot \bGamma^\bW (\btheta)| \leq 2$ for all $\btheta$.

We denote
\[
\widehat{\mathcal R}_{n,p}^*(\btau,\bZ,\boldf) = \min_\btheta \widehat{\mathcal R}_{n,p}(\btheta;\btau,\bZ,\boldf), \qquad \hbtheta := \argmin_{\btheta} \widehat{\mathcal R}_{n,p}(\btheta;\btau,\bZ,\boldf).
\]
When emphasizing dependencies, we use $\hbtheta_{\btau}$ or $\hbtheta (\btau,\bZ,\boldf)$.

To prove Theorem~\ref{thm:universality_train_quadratic} (universality for the unperturbed empirical risk, $\btau=\bzero$), we first establish universality for the perturbed objective with $\tau_1>0$, and then invoke stability of the ERM to extend the result to $\tau_1=0$. Specifically, we will first prove the following theorem:

\begin{theorem}[Universality of the Perturbed Empirical Risk]\label{thm:universality_Perturbed_risk}
    Suppose Assumptions \ref{assumption:scaling}--\ref{assumption:activation} and \ref{ass:test_loss}
    hold. There exist constants $c,c',C,K_0,d_0>0$ depending only on the constants in these assumptions, such that for all $K_{\Gamma} \geq K_0$, all $d^{-c'} \leq \tau_1 \leq 1/(\log d)^{K_{\Gamma}}$, all $|\tau_2| \leq \tau_1/(\log d)^{K_{\Gamma}}$, and all $d \geq d_0$, the following holds: For any twice-differentiable function $\varphi : \R \to \R$ with $\| \varphi \|_\infty,  \| \varphi' \|_\infty, \| \varphi '' \|_\infty \leq 1$, 
    \[
    \left| \E  \Big[ \varphi \left( \hcR_{n,p}^* (\btau,\bZ^\RF,\boldf^\RF) \right) \Big] - \E  \Big[ \varphi \left( \hcR_{n,p}^* (\btau, \bZ^\sCG,\boldf^\sCG) \right) \Big] \right| \leq  \frac{d^{-c}}{\tau_1^C}.
    \]
\end{theorem}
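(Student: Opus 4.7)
The plan is to derive Theorem~\ref{thm:universality_Perturbed_risk} as a direct corollary of the two Lindeberg swapping phases, namely Theorems~\ref{sec:main-conditions-lindeberg-swap-framework} and~\ref{sec:main-conditions-lindeberg-swap-framework2}, by interpolating through the PGE model and applying the triangle inequality. The first step is to align the constants supplied by the two phase theorems: each provides its own $(c^{(i)},c'^{(i)},C^{(i)},K_0^{(i)},d_0^{(i)})$ for $i=1,2$, and I would set $K_0:=\max(K_0^{(1)},K_0^{(2)})$, $d_0:=\max(d_0^{(1)},d_0^{(2)})$, $c':=\min(c'^{(1)},c'^{(2)})$, $C:=\max(C^{(1)},C^{(2)})$, and $c:=\tfrac12\min(c^{(1)},c^{(2)})$. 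Since both phase theorems are stated under exactly the same set of Assumptions~\ref{assumption:scaling}--\ref{assumption:activation} and~\ref{ass:test_loss} as Theorem~\ref{thm:universality_Perturbed_risk}, this choice guarantees that for any $K_\Gamma\ge K_0$, $d^{-c'}\le\tau_1\le(\log d)^{-K_\Gamma}$, $|\tau_2|\le\tau_1(\log d)^{-K_\Gamma}$ and $d\ge d_0$, the hypotheses of both phase theorems are satisfied simultaneously, and their conclusions become available for the same test function $\varphi$.

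The core computation is then a one-line triangle inequality around the intermediate PGE model:
\begin{align*}
&\Big| \E[\varphi(\hcR_{n,p}^*(\btau,\bZ^\RF,\boldf^\RF))] - \E[\varphi(\hcR_{n,p}^*(\btau,\bZ^\sCG,\boldf^\sCG))] \Big| \\
&\qquad \le \Big| \E[\varphi(\hcR_{n,p}^*(\btau,\bZ^\RF,\boldf^\RF))] - \E[\varphi(\hcR_{n,p}^*(\btau,\bZ^\sPG,\boldf^\sPG))] \Big| \\
&\qquad\qquad + \Big| \E[\varphi(\hcR_{n,p}^*(\btau,\bZ^\sPG,\boldf^\sPG))] - \E[\varphi(\hcR_{n,p}^*(\btau,\bZ^\sCG,\boldf^\sCG))] \Big| \\
&\qquad \le \frac{d^{-c^{(1)}}}{\tau_1^{C^{(1)}}} + \frac{d^{-c^{(2)}}}{\tau_1^{C^{(2)}}} \le \frac{d^{-c}}{\tau_1^C},
\end{align*}
where the last inequality holds for $d$ sufficiently large to absorb the multiplicative factor $2$ via the halving of the exponent in the definition of $c$. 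Enlarging $d_0$ if necessary gives the stated bound.

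The combination step itself is therefore essentially routine; all of the genuine obstacles live inside the two phase theorems, whose arguments have already been outlined in Sections~\ref{sec:lindeberg-phase-is} and~\ref{sec:second_phase_step_2_and_3}. Concretely, each phase requires (i) a uniform CLT on the corresponding constrained parameter set (Theorems~\ref{theorem:RecallBoundedLipschitzFunctionSupErgetisotropic} and~\ref{theorem:RecallBoundedLipschitzFunctionSupErgettt}), obtained via a Malliavin--Stein / fourth-moment analysis of the tensor contractions $\bT_{k,\bW}(\btheta)\otimes_r\bT_{k,\bW}(\btheta)$; (ii) a localization of the minimizers of all intermediate objectives into $\bTheta_\bW^\sPG(K)$ (resp.\ $\bTheta_\bW^\sCG(\eps,K)$), which is the very reason for introducing the perturbation $\btau\cdot\bGamma^\bW(\btheta)$ and for the calibrated growth condition (Assumption~\ref{assumption:polynomial-growth}), with the $\ell_\infty$ bound handled by a leave-one-out comparison and the tensor-contraction bound handled by a leave-one-direction-out construction; and (iii) a sample-wise Lindeberg swap controlled by a second-order Taylor expansion of $\varphi\circ\hcR_{n,p}^*$ around a leave-one-sample-out objective together with a quadratic surrogate for the interpolation error. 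The hardest part is therefore not the present theorem but the localization step in Phase~II, where the operator-norm control $\|\bW_{\setminus S}^\sT\bD_{\hbtheta^\sPG}\bW_{\setminus S}\|_\op\prec d^{-\eps}$ must hold uniformly over $\bv\in\S^{d-1}$; once those phase theorems are in hand, the proof of Theorem~\ref{thm:universality_Perturbed_risk} reduces to the triangle inequality displayed above.
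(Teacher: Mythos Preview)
Your proposal is correct and matches the paper's approach exactly: the paper explicitly states that the two Lindeberg phases (Theorems~\ref{sec:main-conditions-lindeberg-swap-framework} and~\ref{sec:main-conditions-lindeberg-swap-framework2}) together establish Theorem~\ref{thm:universality_Perturbed_risk}, with the combination being nothing more than the triangle inequality through the PGE model that you wrote out. Your alignment of the constants and your identification of where the genuine difficulties lie (the localization arguments inside the two phase theorems, especially the Phase~II LODO construction) are accurate.
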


To compare with the case $\tau_1=0$, we next establish a stability property of the empirical risk.

\begin{lemma}[Empirical Risk stability]\label{thm:ERM_stability}
    Suppose Assumptions \ref{assumption:scaling}--\ref{assumption:activation} and \ref{ass:test_loss}
    hold. Then for any $\tau_1 \geq 0$ and all 1-Lipschitz functions $\varphi : \R \to \R$,
    \begin{equation}\label{eq:ERM_tau_1_0_diff}
    \begin{aligned}
   \E  \Big[ \left| \varphi \left( \hcR_{n,p}^* (\btau,\bZ,\boldf) \right) - \varphi \left(\hcR_{n,p}^* ((0,\tau_2),\bZ,\boldf) \right) \right| \Big]  \leq&~ \tau_1 (\log d)^{K_\Gamma},
    \end{aligned}
    \end{equation}
    where $(\bZ,\boldf)\in \{(\bZ^\RF,\boldf^\RF), (\bZ^\sCG,\boldf^\sCG)\}$.
\end{lemma}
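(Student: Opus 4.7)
\textbf{Proof plan for Lemma \ref{thm:ERM_stability}.} The plan is to observe that the only difference between $\hcR_{n,p}(\btheta;\btau,\bZ,\boldf)$ and $\hcR_{n,p}(\btheta;(0,\tau_2),\bZ,\boldf)$ is the additive term $\tau_1\Gamma_1^\bW(\btheta)$, and the truncation $\cT_{K_\Gamma}$ in the definition of $\Gamma_1^\bW$ gives a hard, $\btheta$-uniform bound on this term. So stability is essentially built into the construction, and no concentration or strong-convexity argument is needed for this particular reduction.

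The first step is to note that by the definition of $\cT_{K_\Gamma}$ in Appendix \ref{app:perturbation_stability_ERM}, one has $|\Gamma_1^\bW(\btheta)|=|\cT_{K_\Gamma}(\|\bV_+^\sT\btheta\|_2^2)|\le (\log d)^{K_\Gamma}$ for every $\btheta\in\R^p$, deterministically. Therefore, pointwise in $\btheta$,
\[
\bigl|\hcR_{n,p}(\btheta;\btau,\bZ,\boldf)-\hcR_{n,p}(\btheta;(0,\tau_2),\bZ,\boldf)\bigr|
=\tau_1\,|\Gamma_1^\bW(\btheta)|\le \tau_1(\log d)^{K_\Gamma}.
\]

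The second step is to transfer this pointwise inequality to the minima. Since the two objectives differ by a function of $\btheta$ bounded (in absolute value) by $\tau_1(\log d)^{K_\Gamma}$, the standard sandwich argument
\[
\min_\btheta\bigl[\hcR_{n,p}(\btheta;(0,\tau_2),\bZ,\boldf)-\tau_1(\log d)^{K_\Gamma}\bigr]
\le \min_\btheta \hcR_{n,p}(\btheta;\btau,\bZ,\boldf)
\le \min_\btheta\bigl[\hcR_{n,p}(\btheta;(0,\tau_2),\bZ,\boldf)+\tau_1(\log d)^{K_\Gamma}\bigr]
\]
gives
\[
\bigl|\hcR_{n,p}^*(\btau,\bZ,\boldf)-\hcR_{n,p}^*((0,\tau_2),\bZ,\boldf)\bigr|\le \tau_1(\log d)^{K_\Gamma}.
\]

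The final step applies the $1$-Lipschitz property of $\varphi$ to this deterministic bound, then takes expectation, yielding \eqref{eq:ERM_tau_1_0_diff} for both the RF and CGE datasets simultaneously. There is no genuine obstacle here: the proof is essentially by inspection, and the whole point of introducing the truncation $\cT_{K_\Gamma}$ in \eqref{equ:perturbed_objectiveERM_app} is precisely to make this stability reduction automatic, so that the hard work (Theorem \ref{thm:universality_Perturbed_risk}) can be done at $\tau_1>0$ and then transferred to $\tau_1=0$ by combining this lemma with a triangle inequality.
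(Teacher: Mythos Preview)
Your proposal is correct and takes essentially the same approach as the paper: both exploit the uniform bound $|\cT_{K_\Gamma}(\cdot)|\le(\log d)^{K_\Gamma}$ to control the difference of the two objectives and hence of their minima. The only cosmetic difference is that the paper uses the nonnegativity of $\Gamma_1^\bW(\btheta)$ to get one inequality for free and evaluates at the minimizer $\hbtheta_{(0,\tau_2)}$ for the other, whereas you use the two-sided absolute value bound directly; both routes are equally valid and equally short.
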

\begin{proof}  
Since $\tau_1 \cdot \Gamma_1 (\btheta)$ is nonnegative, using the definition of the perturbed risk,
    \[
    \widehat{\mathcal R}_{n,p}^{*}((0,\tau_2),\bZ,\boldf) \leq \widehat{\mathcal R}_{n,p}^{*}(\btau,\bZ,\boldf) \leq \widehat{\mathcal R}_{n,p}(\hbtheta_{(0,\tau_2)};\btau,\bZ,\boldf) = \widehat{\mathcal R}^{*}_{n,p}((0,\tau_2),\bZ,\boldf)  + \tau_1 \cT_{K_{\Gamma}} ( \| \bV_+^\sT \hbtheta_{(0,\tau_2)}\|_2^2).
    \]
    The lemma follows from applying $|\cT_{K_{\Gamma}}(\cdot)| \leq (\log d)^{K_\Gamma}$.
 
\end{proof}

Combining Theorem \ref{thm:universality_Perturbed_risk} and Lemma \ref{thm:ERM_stability} and setting $\tau_1 = d^{-\eps}$ for sufficiently small $\eps>0$ yields Theorem \ref{thm:universality_train_quadratic}. Thus, Appendices \ref{sec:convergence-optimizersspg} and \ref{sec:lindeberg-phase-ii} are devoted to establishing Theorem \ref{thm:universality_Perturbed_risk} for the perturbed empirical risk with $\tau_1>0$.

\subsection{Basic bounds for features and labels.}
\begin{lemma}\label{lemma:yzbounds}
For each of the Random Features, Partial Gaussian, and Conditional Gaussian
models, we have
\[\|\bz^\RF\|_2 \prec d, \quad \|\bz^\sPG\|_2 \prec d,
\quad \|\bz^\sCG\|_2 \prec d,
\quad |y^\RF| \prec 1, \quad |y^\sPG| \prec 1,
\quad |y^\sCG| \prec 1.\]
\end{lemma}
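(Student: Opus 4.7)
The plan is, for each of the six bounds, to view the quantity as a polynomial of bounded degree in the underlying Gaussian variables, compute its expectation explicitly, and then invoke Gaussian hypercontractivity to convert the mean/variance estimate into a polylogarithmic deviation bound of the type encoded by $\prec$.

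First, for $\|\bz^\RF\|_2$: conditional on $\bW$, the function $\|\bz^\RF\|_2^2=\sum_{j=1}^p\sigma(\langle\bw_j,\bx\rangle)^2$ is a nonnegative polynomial in $\bx$ of degree at most $2D$. Since $\bw_j\in\S^{d-1}$, we have $\langle\bw_j,\bx\rangle\sim\normal(0,1)$, so Parseval's identity yields the deterministic bound $\E[\|\bz^\RF\|_2^2\mid\bW]=p\sum_{k=0}^D\mu_k^2\prec p\prec d^2$ via Assumption~\ref{assumption:activation}. Jensen's inequality applied to the nonnegative summands plus one-variable hypercontractivity on $\sigma(G)^{2q}$ gives $\E[\|\bz^\RF\|_2^{2q}\mid\bW]\leq p^{q-1}\sum_j\E[\sigma(\langle\bw_j,\bx\rangle)^{2q}]\prec p^q$ for any constant $q$, so $\Var(\|\bz^\RF\|_2^2\mid\bW)\prec p^2$. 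Applying Gaussian hypercontractivity to the polynomial $\|\bz^\RF\|_2^2-\E[\|\bz^\RF\|_2^2\mid\bW]$ of degree $\leq 2D$ yields $\|\bz^\RF\|_2^2\prec d^2$ and hence $\|\bz^\RF\|_2\prec d$. The same template handles $\|\bz^\sPG\|_2$ and $\|\bz^\sCG\|_2$: each is a polynomial of degree at most $2$ in the underlying Gaussian variables (i.e.\ $(\bx,\bg_*)$ for PGE and $(\bx_S,\bg_1,\bg_2,\bg_*)$ for CGE), so its squared norm has degree at most $4$. The key identity
\[
\Tr(\bV_k\bV_k^\sT)=\sum_{j=1}^p\|\bq_k(\bw_j)\|_2^2=\sum_{j=1}^p\|\bw_j\|_2^{2k}=p,
\]
together with $|\mu_k|\leq(\log d)^{\sC_6}$ and $\mu_{>2}^2=\sum_{k\geq 3}\mu_k^2\prec 1$, shows that every term in $\E[\|\bz^\sPG\|_2^2\mid\bW]$ and $\E[\|\bz^\sCG\|_2^2\mid\bW]$ is at most $p\cdot\text{polylog}(d)$, and the same hypercontractivity step then completes the bound.

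For the labels, by Assumption~\ref{assumption:target} each coordinate of $f_*(\bx)$ is either a coordinate of $\bx_S$ (standard Gaussian) or of the form $\langle\bbeta_{ki},\bh_k(\bx)\rangle$ with $\|\bbeta_{ki}\|_2=1$; each is a polynomial of fixed degree with unit second moment, so hypercontractivity gives each coordinate $\prec 1$ and $\|f_*(\bx)\|_2\prec 1$. The subexponential noise tail in Assumption~\ref{assumption:noise} gives $|\eps|\prec 1$. The polynomial-growth bound $|\eta(\boldf,\eps)|\leq\sC_3(1+\|\boldf\|_2^{\sC_3}+|\eps|^{\sC_3})$ then yields $|y^\RF|\prec 1$. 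The identical argument applies to $y^\sPG$ and $y^\sCG$, since the replacement coordinates $\tilde\xi_{ki}=\langle\bbeta_{ki},\bg_k\rangle$ are Gaussian with unit variance, and (for PGE) $\xi_{2i}=\langle\bbeta_{2i},\bh_2(\bx)\rangle$ is still a fixed-degree polynomial with unit second moment. No step is individually hard; the main care is purely bookkeeping---ensuring that every contribution to $\E[\|\bz\|_2^2]$ really is $O(p)$ via the trace identity above and the Hermite coefficient bounds of Assumption~\ref{assumption:activation}.
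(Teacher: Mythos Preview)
Your proposal is correct and follows essentially the same approach as the paper: compute the conditional second moment of $\|\bz\|_2^2$ via the trace identity $\Tr(\bV_k\bV_k^\sT)=p$, then invoke Gaussian hypercontractivity; for the labels, bound each Hermite projection by hypercontractivity and apply the growth condition on $\eta$. If anything you are slightly more explicit than the paper in justifying the variance bound via Jensen and one-variable hypercontractivity, but the underlying argument is the same.
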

\begin{proof}
Expanding $\bz^\RF=\sum_{k=0}^D \mu_k\bV_k\bh_k(\bx)$ and writing
$\E_\bx$ for the expectation over $\bx$ only (i.e.\ conditional on $\bW$),
we have
\[\E_\bx\|\bz^\RF\|_2^2=\Tr \sum_{k=0}^D \mu_k^2 \bV_k\bV_k^\sT.\]
Recalling $\bV_k=[\bq_k(\bw_1),\ldots,\bq_k(\bw_p)]^\sT$, we have
\[\Tr \bV_k\bV_k^\sT=\sum_{i=1}^p \<\bq_k(\bw_i),\bq_k(\bw_i)\>
=\sum_{i=1}^p \<\bw_i^{\otimes k},\bw_i^{\otimes k}\>
=\sum_{i=1}^p (\|\bw_i\|_2^2)^k=p\]
for each $k=0,\ldots,D$. Furthermore $|\mu_k| \prec 1$
by Assumption \ref{assumption:activation}, so
$\E_\bx\|\bz^\RF\|_2^2 \prec p \asymp d^2$.
Then by Gaussian hypercontractivity over $\bx$, we have
$\|\bz^\RF\|_2 \prec d$. For $\bz^\sPG$ and $\bz^\sCG$, we have similarly
\[\E_{\bx,\bg_*}\|\bz^\sPG\|_2^2
=\E_{\bx,\bg_1,\bg_2,\bg_*}\|\bz^\sCG\|_2^2=\Tr\Big(\sum_{k=0}^2
\mu_k^2\bV_k\bV_k^\sT+\mu_{>2}^2\bI\Big)
\prec d^2\]
with expectations taken over the independent Gaussian variables
$\bx,\bg_*,\bg_1,\bg_2$ defining $\bz^\sPG,\bz^\sCG$. Then applying
hypercontractivity over $(\bx,\bg_*,\bg_1,\bg_2)$, also
$\|\bz^\sPG\|_2,\|\bz^\sCG\|_2 \prec d$.

For the labels, we have $y^\RF=\eta(\bx_S,\bbeta_2^\sT
\bh_2(\bx),\ldots,\bbeta_{D'}^\sT \bh_{D'}(\bx),\eps)$ where $\bbeta_k=[\bbeta_{k,1},\ldots,\bbeta_{k,s_k}]$. The
columns of each $\bbeta_k \in \R^{B_{d,k} \times s_k}$ have
unit norm, so $\E\|\bbeta_k^\sT \bh_k(\bx)\|_2^2=s_k$. Then
Gaussian hypercontractivity over $\bx$ implies
$\|\bbeta_k^\sT \bh_k(\bx)\|_2 \prec 1$. We have also
$|\eps| \prec 1$ by Assumption \ref{assumption:noise}.
Then by Assumption~\ref{assumption:target} for $\eta(\cdot)$,
$|y^\RF| \prec 1$. Similarly, for $y^\sPG$ and $y^\sCG$, we may apply also
$\|\bbeta_k^\sT \bg_k\|_2 \prec 1$ for each $k=2,\ldots,{D'}$ and the
Gaussian vector $\bg_k \in \R^{B_{d,k}}$ defining $y^\sPG,y^\sCG$. Then
$|y^\sPG|,|y^\sCG| \prec 1$.
\end{proof}

\subsection{Misclassification error under linear predictors}

In this section, we prove a technical lemma in the case of binary classification under Assumption \ref{assumption:polynomial-growth}(ii) and Assumption \ref{assumption:bayes-error}.

\begin{lemma}\label{lem:linear_predictor_misclassification}
Under Assumption \ref{assumption:bayes-error},
there exist constants $C,c,c_0>0$ such that with probability at least $1 - Ce^{-cn}$,
    \begin{equation}
          \inf_{\btheta \in \R^p}   \frac{1}{n} \sum_{i=1}^n \1\left(y_i^\RF={-}\sign(\btheta^\sT \1_p + \btheta^\sT \bW\bx_i)\right) \ge c_0 .
    \end{equation}
An identical statement holds for the PGE and CGE labels $y_i^\sPG$, $y_i^\sCG$ as well as the mixed data $\{(\bx_i,y_i)\}_{i=1}^n$ for the intermediary problems of the two Lindeberg swapping phases.
\end{lemma}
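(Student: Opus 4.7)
My approach is to reduce the infimum to one over the $(d+1)$-parameter family of affine classifiers, apply VC uniform concentration, and then lower bound the limiting population misclassification rate via the non-vanishing Bayes error implied by Assumption~\ref{assumption:bayes-error}. Since $[\bones_p,\bW] \in \R^{p \times (d+1)}$ has full column rank almost surely (using $p \ge d+1$ and that $\bw_j \sim \Unif(\S^{d-1})$ has a continuous distribution), the map $\btheta \mapsto (\btheta^\sT\bones_p, \bW^\sT\btheta)$ surjects onto $\R^{d+1}$, so the infimum in the lemma equals $\inf_{(a,\bv) \in \R \times \R^d} \tfrac{1}{n}\sum_{i=1}^n \1(y_i^\RF = -\sign(a + \bv^\sT \bx_i))$. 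The class $\mathcal{H}$ of affine classifiers on $\R^d$ has VC dimension $d+1$, so the Vapnik--Chervonenkis inequality combined with $n \asymp d^2$ yields
\[
\P\!\left(\sup_{h \in \mathcal{H}} \bigl|R_n(h) - R(h)\bigr| > \eps\right) \;\le\; 8\bigl(en/(d+1)\bigr)^{d+1} e^{-n\eps^2/32} \;\le\; C e^{-cn}
\]
for any fixed $\eps > 0$, where $R_n(h) := \tfrac{1}{n}\sum_i \1(y_i = -h(\bx_i))$ and $R(h) := \P(y = -h(\bx))$. It therefore suffices to lower bound $\inf_{h \in \mathcal{H}} R(h)$ by a positive constant.

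\textbf{Bayes-error lower bound for RF.} For every $h \in \mathcal{H}$, $R(h) \ge \E_\bx \min(g(\bx), 1-g(\bx))$ with $g(\bx) := \P(y^\RF = +1 \mid \bx)$. Under the RF model, $g(\bx) = G(f_*(\bx))$ where $G(\boldf) := \P(\eta(\boldf,\eps) = +1)$ is continuous on $\R^m$ by the Lipschitz continuity of $\eta$ in Assumption~\ref{assumption:target}(i) combined with dominated convergence. Choosing $C$ large enough that $\P(\|f_*(\bx)\|_2 \le C) \ge \tfrac34$, Assumption~\ref{assumption:bayes-error} supplies $\boldf_\pm \in B(0,C)$ with $G(\boldf_+) \ge \sc$ and $G(\boldf_-) \le 1-\sc$. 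The intermediate value theorem along a continuous path between $\boldf_+$ and $\boldf_-$ inside the convex ball produces $\boldf^* \in B(0,C)$ with $G(\boldf^*) \in [\sc, 1-\sc]$, and by continuity an open neighborhood $U$ of $\boldf^*$ on which $\min(G, 1-G) \ge \sc/2$. The image law of $f_*(\bx)$ assigns positive mass to $U$: the $\bx_S$-marginal is standard Gaussian on $\R^s$ and each $\xi_{ki}$ has an approximately Gaussian marginal by the fourth-moment condition~\eqref{eq:ass_bbeta_ki_4_th_moment}, so the joint law has nontrivial absolutely continuous content near $\boldf^*$. Hence $R(h) \ge c_0 := (\sc/2) \P(f_*(\bx) \in U) > 0$.

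\textbf{Extensions and main obstacle.} For the PGE and CGE labels, the conditional probability $g(\bx)$ becomes an expectation of $G$ over the independent Gaussian components $(\tilde\bxi_k)$; Jensen's inequality applied to the concave map $x \mapsto \min(x,1-x)$ shows this marginalisation only increases $\E_\bx \min(g,1-g)$, and the same IVT construction applied to the joint law of the effective target (whose components are non-degenerate Gaussian or polynomial-Gaussian) still charges $U$ with positive probability, so the lower bound $c_0$ transfers. For the mixed datasets appearing in the intermediate Lindeberg-swapping problems, at least $\lceil n/2 \rceil$ samples share a common source distribution; we apply the VC concentration to that subset and bound the remaining empirical contribution trivially from below by zero, losing at most a factor of two in $c_0$. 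The main technical subtlety is the Bayes-error lower bound itself: Assumption~\ref{assumption:bayes-error} supplies only a supremum control, and converting this into a positive-probability event on which $\min(g, 1-g)$ is bounded away from zero requires combining the intermediate value theorem, the Lipschitz continuity of $\eta$, and the non-degeneracy of the image law of $f_*(\bx)$ near the IVT-produced level set.
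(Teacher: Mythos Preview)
Your VC-dimension reduction to the class of affine classifiers on $\R^d$ and the resulting concentration are essentially the same as the paper's (the paper uses a Rademacher bound plus Talagrand's inequality rather than the classical VC inequality, but both give $Ce^{-cn}$ since $n\asymp d^2$ dominates $d\log d$).

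The substantive divergence is in the population lower bound. The paper reads Assumption~\ref{assumption:bayes-error} as an \emph{infimum} condition (the $\sup$ in the displayed assumption is evidently a typo, as both the explanatory remark and the proof treat it as ``non-vanishing label noise everywhere on bounded $\boldf$''). Under that reading the bound is immediate: for every affine $h$ and every sample,
\[
\P\bigl[y_i=-h(\bx_i)\bigr]
=\E\bigl[\P[\eta(f_i,\eps)=-h(\bx_i)\mid \bx_i,f_i]\bigr]
\ge \sc\cdot\P[\|f_i\|_2\le 1]\ge \sc\delta,
\]
where $\delta>0$ is a fixed lower bound on $\P[\|f_i\|_2\le 1]$ (available uniformly in $d$ since $\E\|f_i\|_2^2=m$). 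No IVT, no density argument, and the same one-line estimate works verbatim for PGE, CGE, and mixed labels because it only uses $\eta$ and the event $\{\|f_i\|_2\le 1\}$.

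Your route, taking the $\sup$ literally, has a real gap. The IVT furnishes some $\boldf^*\in B(0,C)$ with $G(\boldf^*)\in[\sc,1-\sc]$, but you then need the law of $f_*(\bx)$ to place mass on a neighborhood $U$ of $\boldf^*$ that is bounded away from zero \emph{uniformly in $d$}. This is not established: the coefficients $\bbeta_{ki}$ are $d$-dependent, the image law of $f_*$ changes with $d$, and there is no reason its support contains an arbitrary point of the ball (the fourth-moment condition controls marginals, not joint support). So $c_0=(\sc/2)\P(f_*(\bx)\in U)$ as written is not a constant. The Jensen step for PGE/CGE is also not quite what you want: concavity of $x\mapsto\min(x,1-x)$ gives $\min(\E_{\tilde\bxi}G,1-\E_{\tilde\bxi}G)\ge \E_{\tilde\bxi}\min(G,1-G)$, which compares the PGE Bayes error to an average over a \emph{different} joint law than the RF one, so ``marginalisation only increases'' does not follow.
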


\begin{proof}
    The proof is identical for all data $\{(\bx_i,y_i)\}_{i=1}^n$ with $y_i \in \{y_i^\RF,y_i^\sPG,y_i^\sCG\}$: By the definition of $f_i \in \{f_i^\RF,f_i^\sPG,f_i^\sCG\}$, there exists a constant $\delta>0$ for which $\P[\|f_i\|_2 \leq 1] \geq \delta$.
    Then by Assumption \ref{assumption:bayes-error}, there exists a constant $c_0 \in (0,1/2)$ such that for all $(a,\bb) \in \R \times \R^d$,
    \[
    \P[y_i={-}\sign(a + \bb^\sT \bx_i)]=\E[\P[\eta(f_i,\eps) ={-}\sign(a + \bb^\sT \bx_i)  \mid \bx_i,f_i]] \geq c_0\,\P[\|f_i\|_2 \leq 1] \geq c_0\delta.
    \]
    Let $f_{a,\bb}(\bx,y)=\1\{y=-\sign(a+\bb^\sT \bx)\}$. The function class $\{f_{a,\bb}:(a,\bb) \in \R \times \R^d\}$ has VC-dimension at most that of the half-spaces $\{a+\bb^\sT \bx<0:(a,\bb) \in \S^d\}$ which is $d+1$. Thus (c.f.\ \cite[Theorem 8.3.23]{vershyninHighDimensionalProbabilityIntroduction2018})
    for a universal constant $C>0$, 
    \[\E \underbrace{\sup_{(a,\bb) \in \R \times \R^d}
    {-}\frac{1}{n}\sum_{i=1}^n \1\{y_i=-\sign(a+\bb^\sT \bx_i)\}}_{:=Z}
    \leq \sup_{i=1}^n
   {-}\P[y_i={-}\sign(a + \bb^\sT \bx_i)]+C\sqrt{\frac{d}{n}} \leq {-}\frac{c_0\delta}{2}.\]
    Applying Talagrand's concentration inequality \cite[Theorem 1.4]{talagrand1996new}
    $\P[Z \geq \E Z+t] \leq Ce^{-cnt\log(1+t)}$ for universal constants $C,c>0$ any $t>0$, we have with probability $1-e^{-c'n}$ that $Z \leq {-}c_0\delta/4$, i.e.\
    \[
    \inf_{(a,\bb)\in \R \times \R^d} \frac{1}{n} \sum_{i=1}^n \1\left(y_i={-}\sign(a + \bb^\sT \bx_i)\right) \geq \frac{c_0\delta}{4}.
    \]
    Applying this result to $(a,\bb) = (\bones_p^\sT \btheta, \bW^\sT \btheta)$ and adjusting $c_0$ finishes the proof.
\end{proof}

\clearpage
\section{Gaussian Equivalence Theorems}

\subsection{Preliminaries on Malliavin Calculus}
\label{sec:preliminaries_malliavin}
This section provides a brief overview of the necessary concepts from Malliavin calculus and Wiener chaos expansions that are used in our analysis. For a more comprehensive treatment, we refer the reader to the Chapter 1 of \cite{MalliavinCalculusRelated2006}.

Let $(\R^d)^{\odot k}$ be the space of symmetric tensors in
$(\R^d)^{\otimes k}$, where   $(\R^d)^{\otimes k}$ is the space of general $k$-th order tensors.  Denote by $\Sym:(\R^d)^{\otimes k} \to (\R^d)^{\odot k}$
the symmetrization map
\begin{equation}\label{eq:Sym}
\Sym(\bT)_{i_1i_2\ldots i_k}=\frac{1}{k!}\sum_{\text{permutations } \pi
\text{ of } \{1,2,\ldots,k\}} T_{i_{\pi(1)}i_{\pi(2)}\ldots i_{\pi(k)}}.
\end{equation}
For any $k,\ell \geq 1$, $r \in \{0,1,\ldots,\min(k,\ell)\}$, 
and symmetric tensors $\bS \in (\R^d)^{\odot k}$ and $\bT \in (\R^d)^{\odot
\ell}$, denote the
partial contraction $\otimes_r$ over $r$ coordinates by
\begin{align}\label{eq:partial_contraction}
(\bS \otimes_r \bT)_{i_1\ldots i_{k-r}j_1\ldots j_{\ell-r}}
=\sum_{a_1,\ldots,a_r=1}^d S_{i_1 \ldots i_{k-r} a_1\ldots a_r}
T_{j_1 \ldots j_{\ell-r} a_1\ldots a_r},\end{align}
where $\otimes_0 \equiv \otimes$ for $r=0$ is the usual tensor product.
Then define the symmetrized contractions
$\tilde \otimes_r:(\R^d)^{\odot k} \times
(\R^d)^{\odot \ell} \to (\R^d)^{\odot k+\ell-2r}$ by 
\[\bS \tilde \otimes_r \bT=\Sym(\bS \otimes_r \bT).\]

 Let $\bx \in \R^d$ denote a standard Gaussian vector
 $\bx \sim \normal(0,\id_d)$.
 Let $L^2(\cG)$ be the space of functions of $\bx$ with finite second
 moment, equipped with the Gaussian inner-product
 $\langle F,G \rangle_{L^2(\cG)}=\E[F(\bx)G(\bx)]$.
 For each $k=0,1,2,\ldots$ define the $k^\text{th}$ Wiener chaos space
 \[\cH_k=\operatorname{span}\Big\{\He_\bk(\bx):\bk \in \{0,1,2,\ldots\}^d,\,
 \|\bk\|_1=k\Big\} \subset L^2(\cG),\]
 the closed linear subspace of $L^2(\cG)$ spanned by the multivariate Hermite
 polynomials of degree $k$. For example, $\cH_0$ is the space of constant
 functions, $\cH_1$ is the space of mean-zero linear functions of $\bx$, etc.
 Let $I_0:\R \to \cH_0$ be given by $I_0(t)=t$, and let
 $I_k:(\R^d)^{\odot k} \to \cH_k$ for $k \geq 1$ be the linear map defined by
 \[I_k(\bT)=\sqrt{k!}\<\bT,\iota(\bh_k(\bx))\>\]
where $\iota:\R^{B_{d,k}} \to (\R^d)^{\odot k}$ is
the isometry defined in Appendix \ref{section:HermitePolynomials}, satisfying
$\<\iota(\bu),\iota(\bv)\>=\<\bu,\bv\>$. Then by the orthogonality of the
Hermite polynomials in $L^2(\cG)$, $I_k$ satisfies the isometric property
\begin{equation}\label{eq:Ikisometry}
\<I_k(\bS),I_k(\bT)\>_{L^2(\cG)}=k!\<\bS,\bT\>.
\end{equation}
Note that by Lemma
\ref{lemma:SymmetricTensorProductMalliavinS}, $\cH_k$ is equivalently the linear
span of $\{\He_k(\<\bw,\bx\>):\bw \in \mathbb{S}^{d-1}\}$, and
\[I_k(\bw^{\otimes k})=\sqrt{k!}\,\He_k(\<\bw,\bx\>)
\text{ for all } \bw \in \mathbb{S}^{d-1}.\]
 
 We denote by $J_k:L^2(\cG) \to \cH_k$ the orthogonal projection onto $\cH_k$.
 In particular, $J_0(F)=\E F(\bx)$ is its mean.
 For any $F \in L^2(\cG)$,
 there exists a unique symmetric tensor
 $\bT_k \in (\R^d)^{\odot k}$ for each $k=0,1,2,\ldots$ such that
 $J_k(F)=I_k(\bT_k)$, and thus
 \[F=\sum_{k=0}^\infty J_k(F)=\sum_{k=0}^\infty I_k(\bT_k).\]
 We define the Malliavin derivative $D:\dom(D) \to (L^2(\cG))^d$ by
 \begin{equation}\label{eq:malliavingradient}
 DF=(\partial_{x_1} F,\ldots, \partial_{x_d} F),
 \qquad \dom(D)=\Big\{F \in L^2(\cG):\sum_{k=0}^\infty
 k\|J_k(F)\|_{L^2(\cG)}^2<\infty\Big\}
 \end{equation}
 where, for smooth functions $F$ with compact support,
$\partial_{x_j}$ is the usual partial derivative of
$F(\bx)=F(x_1,\ldots,x_d)$ in the variable $x_j$, and this is extended by
completion to $\dom(D)$.
 We also define the Ornstein-Uhlenbeck infinitesimal generator $L:\dom(L) \to
 L^2(\cG)$ by
 \[LF=\sum_{k=0}^\infty {-}k\,J_k(F),
 \qquad \dom(L)=\Big\{F \in L^2(\cG):\sum_{k=0}^\infty
 k^2\|J_k(F)\|_{L^2(\cG)}^2<\infty\Big\},\]
 and we define its inverse $L^{-1} F=\sum_{k=1}^\infty {-}\frac{1}{k} J_k(F)$
 whenever $J_0(F)=\E F(\bx)=0$.
 
 Our computations rest on the following three identities.
 
 \begin{lemma}[\cite{nourdin2009stein}, Eq.\ (2.29)]
 \label{lemma:SymmetricTensorProductMalliavin}
 For any $k,\ell \geq 1$, $\bS \in (\R^d)^{\odot k}$,
 and $\bT \in (\R^d)^{\odot \ell}$,
 \[I_k(\bS)I_\ell(\bT)=\sum_{r=0}^{\min(k,\ell)} r! \binom{k}{r} \binom{\ell}{r}
 I_{k+\ell-2r}(\bS \tilde \otimes_r \bT).\]
 \end{lemma}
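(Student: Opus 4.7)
The plan is to reduce to rank-one symmetric tensors and then derive the identity from the Hermite generating function. Both sides of the stated formula are bilinear in $(\bS,\bT)$ (the contraction $\tilde\otimes_r$ is bilinear in its two arguments and each $I_m$ is linear), so by polarization---which shows that $(\R^d)^{\odot k}$ is spanned over $\R$ by the pure tensor powers $\{\bu^{\otimes k}:\bu\in\R^d\}$---it suffices to verify the formula for $\bS=\bu^{\otimes k}$, $\bT=\bv^{\otimes \ell}$. For such simple tensors, a direct index computation from \eqref{eq:partial_contraction} yields
\[
\bu^{\otimes k}\,\tilde\otimes_r\, \bv^{\otimes \ell}=\<\bu,\bv\>^{r}\,\Sym\!\left(\bu^{\otimes(k-r)}\otimes \bv^{\otimes(\ell-r)}\right),
\]
while the excerpt already records $I_k(\bu^{\otimes k})=\sqrt{k!}\,\He_k(\<\bu,\bx\>)$ (and similarly for $\bv$) whenever $\|\bu\|_2=\|\bv\|_2=1$, to which general $\bu,\bv$ reduce by homogeneity. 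The claim therefore reduces to a pointwise polynomial identity for the product of two normalized Hermite polynomials in the correlated Gaussian pair $(X,Y)=(\<\bu,\bx\>,\<\bv,\bx\>)$ of correlation $\rho=\<\bu,\bv\>$.

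To establish this identity I will use the generating function
\[
e^{sZ-s^2/2}=\sum_{m\ge 0}\frac{s^m}{\sqrt{m!}}\,\He_m(Z)\qquad\text{for } Z\sim\normal(0,1),
\]
and its vector analogue $e^{\<\bc,\bx\>-\|\bc\|_2^2/2}=\sum_{m\ge 0}\tfrac{1}{m!}I_m(\bc^{\otimes m})$, which follows from $I_m(\bc^{\otimes m})=\|\bc\|_2^{\,m}\sqrt{m!}\,\He_m(\<\bc/\|\bc\|_2,\bx\>)$. Completing the square in the exponent of the product gives
\[
e^{sX-s^2/2}\,e^{tY-t^2/2}\;=\;e^{st\rho}\cdot e^{\<s\bu+t\bv,\,\bx\>-\|s\bu+t\bv\|_2^2/2},
\]
and the second factor expands via the vector generating function with $\bc=s\bu+t\bv$. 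A multinomial expansion of $(s\bu+t\bv)^{\otimes m}$ further yields
\[
(s\bu+t\bv)^{\otimes m}\;=\;\sum_{a+b=m}\binom{m}{a}\,s^{a}t^{b}\,\Sym\!\left(\bu^{\otimes a}\otimes \bv^{\otimes b}\right),
\]
while $e^{st\rho}=\sum_{r\ge 0}(st\rho)^{r}/r!$.

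Extracting the coefficient of $s^k t^\ell$ on both sides and setting $a=k-r$, $b=\ell-r$ in the Cauchy product of the three series gives
\[
I_k(\bu^{\otimes k})\,I_\ell(\bv^{\otimes \ell}) \;=\; \sum_{r=0}^{\min(k,\ell)} \frac{k!\,\ell!}{r!\,(k-r)!\,(\ell-r)!}\,\rho^{r}\,I_{k+\ell-2r}\!\left(\Sym(\bu^{\otimes(k-r)}\otimes \bv^{\otimes(\ell-r)})\right),
\]
and the combinatorial prefactor simplifies to $r!\binom{k}{r}\binom{\ell}{r}$. Combined with the contraction identity from the first paragraph, this is precisely the stated product formula on rank-one tensors, and bilinearity/polarization extends it to arbitrary $(\bS,\bT)$. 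The main bookkeeping obstacle---and essentially the only place the argument can go wrong---is aligning the factorial coefficients produced by the Cauchy product of the three generating series against the symmetrization weights implicit in $\tilde\otimes_r$; once this combinatorial check is carried out, no further analytic input is required.
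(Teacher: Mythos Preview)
Your proof is correct. The paper does not actually prove this lemma: it is stated with a bare citation to \cite{nourdin2009stein} and used as a black box. Your argument---reducing by bilinearity and polarization to rank-one tensors $\bu^{\otimes k},\bv^{\otimes\ell}$, then extracting the product formula from the exponential generating function $e^{\langle \bc,\bx\rangle-\|\bc\|_2^2/2}=\sum_m \tfrac{1}{m!}I_m(\bc^{\otimes m})$ applied to $\bc=s\bu+t\bv$---is the standard elementary proof and is carried out cleanly. The remark about reducing to unit $\bu,\bv$ ``by homogeneity'' is slightly superfluous, since the vector generating function you write already holds for arbitrary $\bc\in\R^d$ and the completing-the-square step goes through for any $\bu,\bv$; but this does no harm. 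The only missing formality is a word on why the generating-function manipulation (matching coefficients of $s^kt^\ell$ in an identity of entire functions) is legitimate, which is routine.
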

 
 \begin{lemma}[\cite{nualartCentralLimitTheorems2007}, Lemma 2]\label{lemma:DIidentity}   
 For any $k,\ell \geq 1$, $\bS \in (\R^d)^{\odot k}$,
 and $\bT \in (\R^d)^{\odot \ell}$,
 \[(DI_k(\bS))^{\sT} (DI_\ell(\bT))=k\ell\sum_{r=1}^{\min(k,\ell)}
 (r-1)!\binom{k-1}{r-1}\binom{\ell-1}{r-1}I_{k+\ell-2r}(\bS \tilde \otimes_r
 \bT).\]
 \end{lemma}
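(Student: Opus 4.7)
The plan is to reduce the identity to a computation on rank-one symmetric tensors $\bw^{\otimes k}$, exploit the product formula in Lemma \ref{lemma:SymmetricTensorProductMalliavin}, and then reorganize the contractions.

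First I would establish the coordinate-wise action of $D$ on $I_k(\bS)$: for $\bS \in (\R^d)^{\odot k}$ and $j \in [d]$, define the ``slice'' $\bS_j \in (\R^d)^{\odot(k-1)}$ by $(\bS_j)_{i_1 \ldots i_{k-1}} = S_{i_1 \ldots i_{k-1}\,j}$ (well-defined by symmetry of $\bS$). I claim that
\begin{equation}
\partial_{x_j} I_k(\bS) = k \, I_{k-1}(\bS_j).
\end{equation}
It suffices to verify this on $\bS = \bw^{\otimes k}$ for $\bw \in \R^d$, since such tensors span $(\R^d)^{\odot k}$. For these we have $I_k(\bw^{\otimes k}) = \sqrt{k!}\,\He_k(\langle \bw,\bx\rangle)$, and since the normalized Hermite polynomials satisfy $\He_k'(u) = \sqrt{k}\,\He_{k-1}(u)$, the chain rule gives $\partial_{x_j} I_k(\bw^{\otimes k}) = \sqrt{k!\cdot k}\,\He_{k-1}(\langle\bw,\bx\rangle)\,w_j = k\,I_{k-1}(\bw^{\otimes(k-1)})\,w_j$, which matches $k\,I_{k-1}((\bw^{\otimes k})_j)$. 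The identity extends to all $\bS$ by linearity.

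Next, substitute to obtain
\begin{equation}
(DI_k(\bS))^{\sT}(DI_\ell(\bT)) = k\ell \sum_{j=1}^d I_{k-1}(\bS_j)\,I_{\ell-1}(\bT_j).
\end{equation}
For each $j$, apply Lemma \ref{lemma:SymmetricTensorProductMalliavin} to the product $I_{k-1}(\bS_j)I_{\ell-1}(\bT_j)$, yielding a sum over $s=0,\ldots,\min(k-1,\ell-1)$ of $s!\binom{k-1}{s}\binom{\ell-1}{s}\,I_{k+\ell-2-2s}(\bS_j \tilde{\otimes}_s \bT_j)$. Exchange the finite sums in $j$ and $s$, and use linearity of each $I_{k+\ell-2-2s}$ to pull the sum in $j$ inside the tensor slot.

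The crux is then the tensor identity
\begin{equation}
\sum_{j=1}^d \bS_j \,\tilde{\otimes}_s\, \bT_j \;=\; \bS \,\tilde{\otimes}_{s+1}\, \bT.
\end{equation}
Unsymmetrized, $\sum_j \bS_j \otimes_s \bT_j$ contracts $s$ of the remaining coordinates of $\bS$ with $s$ of those of $\bT$, and summing over $j$ contracts one additional index (the ``distinguished'' slot used to define $\bS_j,\bT_j$); this is precisely $\bS \otimes_{s+1} \bT$. Since $\bS$ and $\bT$ are fully symmetric, the choice of distinguished slot is immaterial, and symmetrization of both sides then gives the displayed equality. Finally, reindex with $r = s+1$, so that $r$ ranges from $1$ to $\min(k,\ell)$ and $k+\ell-2-2s = k+\ell-2r$, yielding the stated formula.

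The only nontrivial step is the combinatorial verification that the slice-and-sum operation commutes with the symmetrized contraction $\tilde{\otimes}$; this is really just bookkeeping on indices, but must be done carefully to avoid overcounting when the symmetrization in \eqref{eq:Sym} is expanded.
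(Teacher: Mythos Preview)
Your argument is correct and matches the standard derivation; the paper does not supply its own proof but cites the result directly from \cite{nualartCentralLimitTheorems2007}, whose argument is essentially the one you outline (slice formula $\partial_{x_j}I_k(\bS)=kI_{k-1}(\bS_j)$, product formula, reindex $r=s+1$). One simplification for your final concern: since $\Sym$ is linear and on both sides acts on the \emph{same} target space $(\R^d)^{\otimes(k+\ell-2-2s)}$, the symmetrized identity $\sum_j \bS_j\,\tilde\otimes_s\,\bT_j=\bS\,\tilde\otimes_{s+1}\,\bT$ follows immediately from the unsymmetrized version $\sum_j \bS_j\otimes_s\bT_j=\bS\otimes_{s+1}\bT$ (which you already verified)---no careful expansion of \eqref{eq:Sym} is required.
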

 
\begin{lemma}[Integration-by-parts]
\label{lemma:GaussianMalliavin}
Let $F \in \dom(D) \subset L^2(\cG)$ satisfy $\E[F(\bx)]=0$, and let $G \in
\dom(D)$. Then for any smooth and bounded function $f: \R \to \R$, we have
\[
\mathbb{E} \left[ F(\bx) f(G(\bx)) \right]
= \mathbb{E} \left[ \left\langle DG(\bx),\, {-}DL^{-1} F(\bx) \right\rangle f'\big(G(\bx)\big) \right].
\]
\end{lemma}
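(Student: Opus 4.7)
The plan is to derive the stated identity from three standard pieces of Malliavin machinery: (i) the spectral action of $-L^{-1}$ on Wiener chaos, (ii) the duality between the Malliavin derivative $D$ and its adjoint, the divergence operator $\delta$, characterized by $\E[\delta(u) H] = \E[\langle u, DH \rangle]$ for $H \in \dom(D)$ and $u \in \dom(\delta)$, and (iii) the chain rule $D(f(G)) = f'(G)\, DG$ for smooth $f$.

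The first step is to rewrite $F$ as a divergence. Since $\E[F] = 0$, the operator $-L^{-1} F = \sum_{k \geq 1} \frac{1}{k} J_k(F)$ is well defined in $L^2(\cG)$ and lies in $\dom(L)$. I would then invoke the operator identity $L = -\delta D$, which holds chaos-by-chaos: on $\cH_k$, both sides act as multiplication by $-k$. This is the standard number-operator relation for the Ornstein--Uhlenbeck semigroup, and can be verified directly by pairing $D I_k(\bT)$ against $D I_1(\bv)$ for arbitrary $\bv \in \R^d$ via Lemma \ref{lemma:DIidentity} and reading off the action of $\delta$ from its adjointness definition. Combining these observations gives
\[
F = L L^{-1} F = \delta\bigl({-}D L^{-1} F\bigr).
\]

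With this representation in hand, the second step is essentially mechanical. Applying the duality relation with $u = -D L^{-1} F$ and $H = f(G)$ yields
\[
\E\bigl[F\, f(G)\bigr] = \E\bigl[\delta\bigl({-}D L^{-1} F\bigr) \cdot f(G)\bigr] = \E\Bigl[\bigl\langle {-}D L^{-1} F,\, D(f(G)) \bigr\rangle\Bigr],
\]
and invoking the chain rule $D(f(G)) = f'(G)\, DG$ produces the claimed formula.

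The only potential obstacle is verifying the domain conditions needed to legitimately apply the duality, namely that $-D L^{-1} F \in \dom(\delta)$ and $f(G) \in \dom(D)$. The latter is immediate from $f \in C^1_b$ and $G \in \dom(D)$. For the former, the chaos expansion yields
\[
\bigl\|{-}D L^{-1} F\bigr\|_{L^2(\cG; \R^d)}^2 \;=\; \sum_{k \geq 1} \frac{1}{k}\, \|J_k(F)\|_{L^2(\cG)}^2 \;\leq\; \|F\|_{L^2(\cG)}^2 \;<\; \infty,
\]
and a standard approximation argument via cylindrical functionals places $-D L^{-1} F$ in $\dom(\delta)$. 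I expect no deeper obstacle; the lemma is essentially a bookkeeping assembly of three classical identities from the Nourdin--Peccati toolbox, and the proof reduces to checking that each identity applies in the stated generality.
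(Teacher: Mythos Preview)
Your proposal is correct and follows essentially the same route as the paper: write $F = LL^{-1}F = -\delta D L^{-1} F$ via $L = -\delta D$, apply the duality $\E[\delta(u)H] = \E[\langle u, DH\rangle]$, and finish with the chain rule $D(f(G)) = f'(G)DG$. The paper's proof is terser and simply cites \cite[Theorem~3.1]{nourdin2009stein} for the identity, whereas you spell out the domain checks, but the argument is the same.
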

\begin{proof}
This follows as in \cite[Theorem 3.1]{nourdin2009stein}, writing
$F=LL^{-1}F={-}\delta DL^{-1}F$ where $\delta$ is the 
adjoint of $D$ satisfying $\E[F \cdot \delta v]=\E\<DF,v\>$. Hence
$\E[F f(G)]={-}\E[(\delta DL^{-1} F)f(G)]
=\E[f'(G)\<DG,{-}DL^{-1}F\>]$, where we used the chain rule $Df(G)=f'(G)DG$.
\end{proof}

In the case of $F(\bx)=\langle \bw,\bx \rangle \in \cH_1$ 
and $G(\bx)=\langle \bw',\bx \rangle \in \cH_1$ for $\bw,\bw'
 \in \R^d$, we have $-L^{-1}F=F$, $DF(\bx)=\bw$, and $DG(\bx)=\bw'$,
so this gives the usual Gaussian integration-by-parts identity
 \[\E[\langle \bw,\bx \rangle f(\langle \bw',\bx \rangle)]
 =\<\bw,\bw'\>\,\E f'(\langle \bw',\bx \rangle).\]
Lemma \ref{lemma:GaussianMalliavin} may be understood as a generalization
 of this identity to higher-order Hermite polynomial functions $F,G \in \cH_k$
and linear combinations of such functions.

\subsection{Excess kurtosis bound}
\label{app:excess-kurtosis}

\begin{lemma}\label{section:ProofOfLemmaGaussianChaosConvergenceToStandardGaussian}

Fix any $k \geq 1$, and let $Q=I_k(\bT) \in \cH_k$ for some $\bT \in
(\R^d)^{\odot k}$. Then there is a constant $C>0$ depending only on $k$ for which
\[C^{-1}\max_{r=1}^{k-1} \|\bT \otimes_r \bT\|_2^2 \leq \E[Q^4]-3\E[Q^2]^2 \leq C\max_{r=1}^{k-1} \|\bT \otimes_r \bT\|_2^2.\]
\end{lemma}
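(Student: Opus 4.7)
The plan is to follow the classical Nualart--Peccati strategy: compute the excess kurtosis $\E[Q^4] - 3(\E Q^2)^2$ via Malliavin calculus as an explicit positive linear combination of symmetrized contraction norms $\bigl\|\bT \,\tilde\otimes_r\, \bT\bigr\|_2^2$, and then compare these to the non-symmetrized contractions $\bigl\|\bT \otimes_r \bT\bigr\|_2^2$ that appear in the statement.

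First, I apply Gaussian integration-by-parts (Lemma~\ref{lemma:GaussianMalliavin}) to $\E[Q\cdot Q^3]$, using $-L^{-1}Q = Q/k$ for $Q \in \cH_k$ together with the standard identity $\E[\|DQ\|_2^2] = k\,\E[Q^2]$, to obtain the classical Malliavin covariance identity
\[
\E[Q^4] - 3(\E Q^2)^2 \;=\; \frac{3}{k}\,\Cov\!\bigl(\|DQ\|_2^2,\,Q^2\bigr).
\]
I then expand $Q^2$ and $\|DQ\|_2^2$ into their orthogonal Wiener chaos components via the product formula (Lemma~\ref{lemma:SymmetricTensorProductMalliavin}) and its gradient version (Lemma~\ref{lemma:DIidentity}). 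The constant ($r=k$) contributions separate out as $\E Q^2$ and $k\,\E Q^2$, respectively, and the top-order ($r=0$, chaos of order $2k$) component appears only in $Q^2$ and so drops out of the covariance by orthogonality. Combining this with the isometry~\eqref{eq:Ikisometry} gives
\[
\E[Q^4] - 3(\E Q^2)^2 \;=\; \sum_{r=1}^{k-1}\beta_{k,r}\,\bigl\|\bT \,\tilde\otimes_r\, \bT\bigr\|_2^2,
\qquad \beta_{k,r} \;:=\; 3k\,(r-1)!\,r!\,\binom{k-1}{r-1}^{\!2}\binom{k}{r}^{\!2}(2k-2r)! \;>\; 0.
\]

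It remains to establish the two-sided equivalence $\bigl\|\bT \,\tilde\otimes_r\, \bT\bigr\|_2^2 \asymp \bigl\|\bT \otimes_r \bT\bigr\|_2^2$ for each $r \in \{1,\ldots,k-1\}$, with constants depending only on $k$. The upper bound $\bigl\|\bT \,\tilde\otimes_r\, \bT\bigr\|_2^2 \leq \bigl\|\bT \otimes_r \bT\bigr\|_2^2$ is immediate since $\tilde\otimes_r = \Sym \circ \otimes_r$ and $\Sym$ is an orthogonal projection; combined with the previous display this yields the upper half of the lemma. For the matching lower bound, setting $\bS := \bT \otimes_r \bT$, I write
\[
\bigl\|\bT \,\tilde\otimes_r\, \bT\bigr\|_2^2 \;=\; \langle \bS,\,\Sym(\bS)\rangle \;=\; \frac{1}{(2k-2r)!}\sum_{\pi \in S_{2(k-r)}}\langle \bS,\pi\bS\rangle,
\]
and the main technical point---and the only real obstacle---is to show that every summand $\langle \bS,\pi\bS\rangle$ is nonnegative. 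This follows from the symmetry of $\bT$: each $\langle \bS,\pi\bS\rangle$ is a full contraction of four copies of $\bT$, and the graphical pairing induced by $\pi$ on the indices can be redrawn, after using the full symmetry of $\bT$, as a squared norm $\bigl\|\bT \otimes_{s(\pi)} \bT\bigr\|_2^2 \geq 0$ for some $s(\pi) \in \{0,\ldots,k\}$. Since the identity permutation $\pi = e$ already contributes $\|\bS\|_2^2$, this yields $\bigl\|\bT \,\tilde\otimes_r\, \bT\bigr\|_2^2 \geq \|\bS\|_2^2/(2k-2r)!$, which is the required lower bound with $c_{k,r} = 1/(2k-2r)!$. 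The constant $C$ in the lemma is then determined by $\max_{1\leq r\leq k-1}\bigl(\beta_{k,r},\,\beta_{k,r}(2k-2r)!\bigr)$.
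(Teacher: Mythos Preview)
Your Malliavin--Stein derivation of
\[
\E[Q^4]-3(\E Q^2)^2=\sum_{r=1}^{k-1}\beta_{k,r}\,\|\bT\,\tilde\otimes_r\,\bT\|_2^2
\]
is fine, and the upper bound follows from it as you say. The gap is in the lower bound. Your key claim is that for $\bS=\bT\otimes_r\bT$ and any $\pi\in S_{2(k-r)}$, the pairing $\langle\bS,\pi\bS\rangle$ ``can be redrawn \dots\ as a squared norm $\|\bT\otimes_{s(\pi)}\bT\|_2^2$''. This is false once $r\geq 1$. Each $\langle\bS,\pi\bS\rangle$ is a full contraction of four copies of $\bT$ whose contraction graph has edge multiplicities $(r,a,k-r-a)$ along the three pairs of opposite edges of $K_4$ (with $a$ determined by $\pi$), whereas $\|\bT\otimes_s\bT\|_2^2$ always corresponds to a $4$-cycle, i.e.\ to one of those three multiplicities being zero. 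For instance, with $k=3$, $r=1$, and $\pi=(23)$ you obtain the genuine $K_4$ contraction
\[
\sum_{i,j,k,l,a,b} T_{ija}T_{kla}T_{ikb}T_{jlb},
\]
which is not $\|\bT\otimes_s\bT\|_2^2$ for any $s\in\{0,1,2,3\}$. So your argument for $\langle\bS,\pi\bS\rangle\geq 0$ does not go through as written, and hence neither does the term-by-term bound $\|\bT\,\tilde\otimes_r\,\bT\|_2^2\geq c_{k,r}\|\bT\otimes_r\bT\|_2^2$.

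The paper's proof avoids this problem by not trying to control the $\|\bT\,\tilde\otimes_r\,\bT\|_2^2$ for $r\geq 1$ individually. Instead it computes $\E[Q^4]$ directly from the product formula for $Q^2$ and then expands only the \emph{top} ($r=0$) contribution $\|\bT\,\tilde\otimes_0\,\bT\|_2^2=\|\Sym(\bT\otimes\bT)\|_2^2$ as an average over pairs of permutations. In that case the edge pattern is always $(0,s,k-s)$ for some $s$, so every term genuinely equals $\|\bT\otimes_s\bT\|_2^2$ --- exactly the mechanism you describe, but applied where it is valid. This yields
\[
\E[Q^4]-3(\E Q^2)^2
=\sum_{r=1}^{k-1}\Big[(r!)^2\binom{k}{r}^{4}(2k-2r)!\,\|\bT\,\tilde\otimes_r\,\bT\|_2^2
+(k!)^2\binom{k}{r}^{2}\|\bT\otimes_r\bT\|_2^2\Big],
\]
so the non-symmetrized norms appear with positive coefficients already, and both inequalities follow at once without any comparison of $\tilde\otimes_r$ to $\otimes_r$.
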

\begin{proof}An abstract version of
this result is shown in the arguments of \cite{nualart2005central}, which for
convenience we reproduce here for our case:
By Lemma \ref{lemma:SymmetricTensorProductMalliavin},
\begin{align}
Q^2&=\sum_{r=0}^k r! \binom{k}{r}^2 I_{2(k-r)}(\bT \tilde{\otimes}_r \bT).
\end{align}
Taking the expected square and using the isometric property
\eqref{eq:Ikisometry} of $\{I_k\}_{k \geq 0}$, we have
\begin{align}
\E[Q^4]=\E[(Q^2)^2]
&=\sum_{r=0}^k (r!)^2\binom{k}{r}^4 \E[I_{2(k-r)}(\bT \tilde \otimes_r
\bT)^2]\notag\\
&=\sum_{r=0}^k (r!)^2 \binom{k}{r}^4[2(k-r)]!\|\bT \tilde \otimes_r \bT\|_2^2\\
&=(k!)^2\|\bT\|_2^4+(2k)!\|\bT \tilde \otimes_0 \bT\|_2^2
+\sum_{r=1}^{k-1} (r!)^2 \binom{k}{r}^4[2(k-r)]!\|\bT \tilde \otimes_r
\bT\|_2^2,\label{eq:Q4}
\end{align}
the final equality isolating the two summands for $r=k$ and $r=0$.
For the $r=0$ term, we have
\[\|\bT \tilde \otimes_0 \bT\|_2^2
=\|\Sym(\bT \otimes \bT)\|_2^2
=\frac{1}{(2k)!^2}\sum_{\pi,\pi'} \langle \pi(\bT \otimes \bT),
\pi'(\bT \otimes \bT) \rangle\]
where the summation is over all pairs of permutations $\pi,\pi'$ of
$\{1,\ldots,2k\}$, and $\pi(\bT \otimes \bT)$ is the tensor that
permutes the indices of $\bT \otimes \bT$ according to $\pi$. Since $\bT$ is
symmetric, it may be checked that
$\langle \pi(\bT \otimes \bT),\pi'(\bT \otimes \bT) \rangle$ depends
only on the cardinality $r=|\{\pi(1),\ldots,\pi(k)\} \cap
\{\pi'(1),\ldots,\pi'(k)\}|$ and is given by
\[\langle \pi(\bT \otimes \bT),\pi'(\bT \otimes \bT) \rangle
=\|\bT \otimes_r \bT\|_2^2.\]
For each fixed $\pi$, the number of permutations $\pi'$ with
$r=|\{\pi(1),\ldots,\pi(k)\} \cap \{\pi'(1),\ldots,\pi'(k)\}|$ is
$\binom{k}{r}^2(k!)^2$, so
\[\|\bT \tilde \otimes_0 \bT\|_2^2
=\frac{1}{(2k)!}\sum_{r=0}^k \binom{k}{r}^2(k!)^2
\|\bT \otimes_r \bT\|_2^2
=2 \cdot \frac{(k!)^2}{(2k)!}\|\bT\|_2^4
+\sum_{r=1}^{k-1} \frac{(k!)^2}{(2k)!}\binom{k}{r}^2\|\bT \otimes_r \bT\|_2^2,\]
the last equality again isolating the terms for $r=k$ and $r=0$. Applying this
to \eqref{eq:Q4},
\[\E[Q^4]=3(k!)^2\|\bT\|_2^4
+\sum_{r=1}^{k-1} (r!)^2 \binom{k}{r}^4[2(k-r)]!\|\bT \tilde \otimes_r
\bT\|_2^2+\sum_{r=1}^{k-1}
(k!)^2\binom{k}{r}^2\|\bT \otimes_r \bT\|_2^2.\]
The lemma then follows from the identity
$3\E[Q^2]^2=3\E[I_k(\bT)^2]^2=3(k!)^2\|\bT\|_2^4$, and the bound
$\|\bT \tilde \otimes_r \bT\|_2^2
\leq \|\bT \otimes_r \bT\|_2^2$.
\end{proof}

Lemma \ref{section:ProofOfLemmaGaussianChaosConvergenceToStandardGaussian}
allows us to give a different formulation of the genericity condition
\[\E[\<\bbeta_{ki},\bh_k(\bx)\>^4] \leq 3+d^{-c}\]
for the unit vectors $\bbeta_{ki}$ of Assumption \ref{assumption:target}:
Let $\iota(\bbeta_{ki}) \in (\R^d)^{\odot k}$ be the symmetric tensor embedding
of $\bbeta_{ki}$, so that
\begin{align}
\<\bbeta_{ki},\bh_k(\bx)\>
=\<\iota(\bbeta_{ki}),\iota(\bh_k(\bx))\>
=I_k\left(\frac{\iota(\bbeta_{ki})}{\sqrt{k!}}\right).
\end{align}
By Lemma \ref{section:ProofOfLemmaGaussianChaosConvergenceToStandardGaussian},
the genericity condition \eqref{eq:ass_bbeta_ki_4_th_moment} is then equivalent to
the existence of a constant $c'>0$ such that for all $k=2,\ldots,D'$ and
$i=1,\ldots,s_k$,
\begin{align}
\|\bbeta_{ki}\|_2=1,
\qquad \|\iota(\bbeta_{ki}) \otimes_r \iota(\bbeta_{ki})\|_F \leq d^{-c'}
\text{ for every } 1 \leq r \leq k-1.
  \label{eq:admissibility_tensor}
\end{align}
We may check, for example, that this condition holds when
$\bbeta_{ki}$ are uniformly distributed on the unit sphere:

\begin{lemma}\label{lemma:admissibility_condition_example}
If $\bbeta_{ki} \sim \Unif(\S^{B_{d,k}-1})$, then for any constants $C>0$ and
$c' \in (0,1/2)$, $\bbeta_{ki}$ satisfies \eqref{eq:admissibility_tensor}
with probability at least $1-d^{-C}$ for all large $d$.
\end{lemma}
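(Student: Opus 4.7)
The approach is to reduce the bound on $\|\iota(\bbeta) \otimes_r \iota(\bbeta)\|_F$ to an operator-norm estimate on a matricization of a Gaussian symmetric tensor and then apply a standard $\eps$-net argument. Writing $\bbeta \equiv \bbeta_{ki} \overset{d}{=} \bg/\|\bg\|_2$ with $\bg \sim \normal(\bzero,\id_{B_{d,k}})$, I would work instead with the Gaussian symmetric tensor $\bT := \iota(\bg) \in (\R^d)^{\odot k}$. Standard $\chi^2$ concentration gives $\|\bg\|_2^2 \asymp B_{d,k} \asymp d^k$ with probability at least $1 - e^{-cd^k}$.

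For each fixed $1 \leq r \leq k-1$, let $M_r \in \R^{d^{k-r} \times d^r}$ denote the mode-$r$ matricization of $\bT$, so that $\|M_r\|_F = \|\bT\|_F = \|\bg\|_2$, and the identity $\|\bT\otimes_r\bT\|_F^2 = \mathrm{Tr}((M_rM_r^\sT)^2) = \sum_i \sigma_i(M_r)^4$ yields
\[
\|\bT \otimes_r \bT\|_F^2 \;\leq\; \|M_r\|_\op^2\,\|M_r\|_F^2,
\]
reducing the problem to showing $\|M_r\|_\op \leq C(\sqrt{d^{k-r}}+\sqrt{d^r})$ with high probability. For each index $J \in [d]^k$, write $\bk(J) \in \Z_{\geq 0}^d$ for its multiplicity pattern. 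A direct computation using the definition of $\iota$ gives $\iota(\be_\bk)_J = \sqrt{\bk!/k!}\,\mathbf{1}[\bk(J)=\bk]$, so for any unit vectors $\bu \in \R^{d^{k-r}}$ and $\bv \in \R^{d^r}$, the bilinear form expands as
\[
\bu^\sT M_r \bv \;=\; \sum_{\bk} g_\bk\,\sqrt{\bk!/k!}\, \sum_{(I,A)\,:\,\bk(IA)=\bk} u_I\,v_A,
\]
which is centered Gaussian in $\bg$. Applying Cauchy--Schwarz to the inner sum together with the identity $\#\{(I,A):\bk(IA)=\bk\} = k!/\bk!$ exactly cancels the $\bk!/k!$ prefactor and yields the uniform variance bound $\Var[\bu^\sT M_r \bv] \leq \|\bu\|_2^2\|\bv\|_2^2$. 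A $1/4$-net on $\S^{d^{k-r}-1}\times \S^{d^r-1}$ of cardinality at most $12^{d^{k-r}+d^r}$, combined with the sub-Gaussian tail bound and a union bound, then delivers $\|M_r\|_\op \leq C(\sqrt{d^{k-r}}+\sqrt{d^r})$ with probability at least $1 - e^{-c\max(d^{k-r},d^r)}$. Feeding this back gives $\|\iota(\bbeta)\otimes_r\iota(\bbeta)\|_F \prec d^{-\min(r,k-r)/2} \leq d^{-1/2}$, which is below $d^{-c'}$ for any $c'<1/2$ and all sufficiently large $d$. A final union bound over the finitely many $r \in \{1,\ldots,k-1\}$, $k \in \{2,\ldots,D'\}$, $i \in \{1,\ldots,s_k\}$ completes the argument.

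The main technical point is the uniform variance bound $\Var[\bu^\sT M_r \bv] \leq \|\bu\|_2^2\|\bv\|_2^2$: although $M_r$ has highly correlated entries enforced by the symmetry of $\bT$, its mixed bilinear forms behave like those of an i.i.d.\ Gaussian matrix of matching dimensions, thanks to an exact algebraic cancellation between the symmetric-basis coefficient $\bk!/k!$ and the combinatorial multiplicity $k!/\bk!$ of the $k$-tuples sharing a given multiset pattern. Once this variance bound is in hand, the operator-norm control is entirely routine random-matrix theory.
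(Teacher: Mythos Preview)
Your proof is correct and takes a genuinely different route from the paper's. The paper represents $\bbeta_{ki}=\bg/\|\bg\|_2$ just as you do, but then computes $\E\|\bG\otimes_r\bG\|_F^2$ directly by expanding the quartic sum over index tuples $(I,I',A,A')$ and counting when $\E[\bG_{I,A}\bG_{I',A}\bG_{I,A'}\bG_{I',A'}]\neq 0$; this combinatorics yields $\E\|\bG\otimes_r\bG\|_F^2\prec d^{2k-1}$, and Gaussian hypercontractivity then upgrades the moment bound to the high-probability statement. Your argument instead passes through the matricization $M_r$, uses the singular-value identity $\|\bT\otimes_r\bT\|_F^2=\sum_i\sigma_i(M_r)^4\leq\|M_r\|_\op^2\|M_r\|_F^2$, and controls $\|M_r\|_\op$ by an $\eps$-net after establishing the uniform variance bound $\Var[\bu^\sT M_r\bv]\leq 1$ via the exact cancellation between the multinomial weight $\bk!/k!$ and the orbit size $k!/\bk!$. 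Both routes deliver the same rate $d^{-\min(r,k-r)/2}$ when tracked carefully (the paper simply simplifies to $d^{-1/2}$). The paper's argument is shorter and relies only on a moment computation plus hypercontractivity; yours is more geometric, and the intermediate operator-norm bound $\|M_r\|_\op\lesssim\sqrt{d^{k-r}}+\sqrt{d^r}$ together with the variance-cancellation observation are of independent interest.
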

\begin{proof}
Let us represent $\bbeta_{ki}=\bg/\|\bg\|_2$, where $\bg \sim \normal(0,\bI) \in
\R^{B_{d,k}}$.
Write $\bG=\iota(\bg) \in (\R^d)^{\odot k}$. Fixing $r \in \{1,\ldots,k-1\}$,
we have $(\bG \otimes_r \bG)_{I,I'}=\sum_A \bG_{I,A}\bG_{I',A}$
where $I=(i_1,\ldots,i_{k-r})$, $I'=(i_1',\ldots,i_{k-r}')$, and
$A=(a_1,\ldots,a_r)$ denote tuples of indices in $[d]$. Thus
\[\|\bG \otimes_r \bG\|_F^2
=\sum_{I,I'}\Big(\sum_A \bG_{I,A}\bG_{I',A}\Big)^2
=\sum_{I,I',A,A'}\bG_{I,A}\bG_{I',A}\bG_{I,A'}\bG_{I',A'}.\]
Note that two entries $\bG_{I,A}$ and $\bG_{I',A'}$ of $\bG$ are independent
unless $(I,A)$ is a permutation of $(I',A')$. Thus
$\E[\bG_{I,A}\bG_{I',A}\bG_{I,A'}\bG_{I',A'}]=0$ unless either
$I'$ is a permutation of $I$, $A'$ is a permutation of $A$, or
$(I',A')$ is a permutation of $(I,A)$. The number of tuples $(I,I',A,A')$
satisfying each case is at most $Cd^{k-r}d^rd^r$,
$Cd^rd^{k-r}d^{k-r}$, and $Cd^k$ respectively, for a constant $C>0$ depending
only on $(k,r)$. Thus
\[\E\|\bG \otimes_r \bG\|_F^2 \prec \max(d^{k+r},d^{k+(k-r)}) \prec d^{2k-1}.\]
By Gaussian hypercontractivity, this implies
$\|\bG \otimes_r \bG\|_F \prec d^{k-1/2}$. On the other hand, since $B_{d,k}
\asymp d^k$, a standard chi-squared tail bound shows 
$1/\|\bg\|_2 \prec d^{-k/2}$. So
\[\|\iota(\bbeta_{ki}) \otimes_r \iota(\bbeta_{ki})\|_F
=\frac{1}{\|\bg\|_2^2} \cdot \|\bG \otimes_r \bG\|_F \prec d^{-1/2},\]
which implies the lemma.
\end{proof}

 \subsection{An Abstract Partial Gaussian Equivalence Theorem}
\label{sec:PartialGaussianEquivalence}
 In this section, we now present a general interpolation bound for replacing higher-order Wiener chaos components by Gaussian surrogates while preserving the lower-order ones.  

Suppose $\bx \sim \cN(0, \id_d)$, and recall the vector $\bh_l(\bx) \in \R^{B_{d,l}}$ of degree-$l$ multivariate Hermite polynomials.

            \begin{definition}[Tuple-indexed chaos vector along the interpolation path]
\label{def:tuple_interpolated}
Fix integers \(K \ge 1\) and \(K' \in \{0,\dots,K\}\).
For each $l=1,\ldots,K$, fix an integer \(k_l \ge 1\) and choose coefficient vectors \(\ba_{l1},\dots,\ba_{lk_l} \in \R^{B_{d,l}}\). Let
\[
  \cI \;:=\; \bigl\{(l,j)\,\big|\, l=1,\dots,K,\; j=1,\dots,k_l\bigr\}.
\]
For every \(l>K'\), let \(\bg_l \sim \cN(\mathbf 0,\id_{B_{d,l}})\) be independent of each other and of \(\bx\). For \(t\in[0,1]\) and each \((l,j)\in\cI\), define
\[
  s_{l,j}(t)=
  \begin{cases}
    \displaystyle \ba_{lj}^{\sT}\,\bh_l(\bx), & l\le K',\\[6pt]
    \displaystyle \ba_{lj}^{\sT}\!\Bigl(\sqrt{1-t}\,\bh_l(\bx)+\sqrt{t}\,\bg_l\Bigr), & K'<l\le K,
  \end{cases}
\]
and set $\bs(t)=(s_{l,j}(t))_{(l,j) \in \cI}$.
\end{definition}

The special cases are $\bs(0)$, which recovers the original chaos vector, and
$\bs(1)$, which replaces all orders \(l>K'\) by Gaussian surrogates.

\begin{theorem}[Partial Gaussian Equivalence Theorem]
\label{theorem:RecallBoundedLipschitzFunctionSupErgetPartial}
Let \(K \ge 1\), \(K' \in \{0,\dots,K\}\), and 
\(\bs(t) = \bigl(s_{l,j}(t)\bigr)_{(l,j)\in\cI} \in \R^{k_1+\cdots+k_K}\) be as in
Definition~\ref{def:tuple_interpolated}. Let
\[
  \cI_{\mathrm{high}} \;:=\; \{(l,j)\in\cI:\; l>K'\}
\]
denote the set of swapped coordinates.
Let \(\varphi:\R^{k_1+\cdots+k_K}\to\R\) be any twice continuously-differentiable function, with
$\| \|\nabla^2 \varphi\|_\op\|_{\infty} <\;\infty$.
Then
\[
  \bigl|\, \E\bigl[\varphi(\bs(0)) - \varphi(\bs(1))\bigr] \,\bigr|
  \;\le\;
  C\, \| \|\nabla^2 \varphi\|_\op\|_{\infty}\,
  \sum_{\alpha\in\cI_{\mathrm{high}}}\;\sum_{\beta\in\cI}
  \sqrt{\,\operatorname{Var}\bigl(\,\langle \nabla_\bx s_{\alpha}(0),\, \nabla_\bx s_{\beta}(0)\rangle \bigr)}\,,
\]
where $\nabla_\bx$ denotes the gradient in $\bx$,
and \(C>0\) is a constant depending only on \(K\), \(K'\), and \(\{k_l\}_{l=1}^K\).
\end{theorem}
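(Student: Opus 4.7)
I plan to run a smart-path interpolation along $\bs(t)$ and write
\[
\E[\varphi(\bs(1))]-\E[\varphi(\bs(0))]=\sum_{\alpha\in\cI_{\mathrm{high}}}\int_0^1\E[\partial_\alpha\varphi(\bs(t))\,s_\alpha'(t)]\,dt.
\]
For each swapped coordinate $\alpha=(l,j)\in\cI_{\mathrm{high}}$, set $u_\alpha=\ba_{lj}^\sT\bh_l(\bx)\in\cH_l$ and $v_\alpha=\ba_{lj}^\sT\bg_l$, so that $s_\alpha(t)=\sqrt{1-t}\,u_\alpha+\sqrt{t}\,v_\alpha$ and
\[
s_\alpha'(t)=-\frac{u_\alpha}{2\sqrt{1-t}}+\frac{v_\alpha}{2\sqrt{t}}.
\]
Both pieces are naively singular at the endpoints $t\in\{0,1\}$, so the first step is to integrate them by parts: classical Gaussian integration by parts in $\bg_l$ for the $v_\alpha$ piece, and Malliavin integration by parts (Lemma~\ref{lemma:GaussianMalliavin}) in $\bx$ for the $u_\alpha$ piece.

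\textbf{Key cancellation.} Gaussian IBP in $\bg_l$ gives, since $\bs(t)$ depends on $\bg_l$ only through the indices $\beta=(l,j')$ with coefficient $\sqrt{t}$,
\[
\frac{1}{2\sqrt{t}}\E\bigl[v_\alpha\,\partial_\alpha\varphi(\bs(t))\bigr]=\frac{1}{2}\sum_{j'=1}^{k_l}\langle\ba_{lj},\ba_{lj'}\rangle\,\E\bigl[\partial^2_{(l,j),(l,j')}\varphi(\bs(t))\bigr],
\]
removing the $1/\sqrt{t}$ singularity. For the Malliavin piece, using $-DL^{-1}u_\alpha=\frac{1}{l}\nabla_\bx u_\alpha$ on $u_\alpha\in\cH_l$ together with the chain rule $\nabla_\bx\partial_\alpha\varphi(\bs(t))=\sum_{\beta\in\cI}\partial^2_{\alpha\beta}\varphi(\bs(t))\,c_\beta(t)\,\nabla_\bx u_\beta$, where $c_\beta(t)=\sqrt{1-t}$ if $\beta\in\cI_{\mathrm{high}}$ and $c_\beta(t)=1$ otherwise, yields
\[
-\frac{1}{2\sqrt{1-t}}\E\bigl[u_\alpha\,\partial_\alpha\varphi(\bs(t))\bigr]=-\sum_{\beta\in\cI}\frac{c_\beta(t)}{2l\sqrt{1-t}}\,\E\Bigl[\partial^2_{\alpha\beta}\varphi(\bs(t))\,\langle\nabla_\bx u_\alpha,\nabla_\bx u_\beta\rangle\Bigr].
\]
Computing $\E\langle\nabla_\bx u_\alpha,\nabla_\bx u_\beta\rangle$ via Lemma~\ref{lemma:DIidentity} gives $l\,\delta_{l_\alpha=l_\beta}\,\langle\ba_{l_\alpha j_\alpha},\ba_{l_\beta j_\beta}\rangle$, which is nonzero only for $\beta=(l,j')$. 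In that case $c_\beta(t)=\sqrt{1-t}$ exactly cancels the singular denominator and the mean part of the Malliavin term exactly kills the $v_\alpha$ contribution above. What survives is the centered expression
\[
\frac{d}{dt}\E[\varphi(\bs(t))]=-\sum_{\alpha\in\cI_{\mathrm{high}}}\sum_{\beta\in\cI}\frac{c_\beta(t)}{2l_\alpha\sqrt{1-t}}\,\E\Bigl[\partial^2_{\alpha\beta}\varphi(\bs(t))\,\bigl(\langle\nabla_\bx u_\alpha,\nabla_\bx u_\beta\rangle-\E\langle\nabla_\bx u_\alpha,\nabla_\bx u_\beta\rangle\bigr)\Bigr].
\]

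\textbf{Conclusion and main obstacle.} Bounding $|\partial^2_{\alpha\beta}\varphi|\le\|\|\nabla^2\varphi\|_{\op}\|_\infty$ entrywise and applying Cauchy--Schwarz, the integrand of each $(\alpha,\beta)$ contribution is at most $\|\|\nabla^2\varphi\|_{\op}\|_\infty\sqrt{\Var(\langle\nabla_\bx s_\alpha(0),\nabla_\bx s_\beta(0)\rangle)}$, using that $\nabla_\bx s_\alpha(0)=\nabla_\bx u_\alpha$. Integrating in $t$ over $[0,1]$, the factor $\int_0^1 c_\beta(t)/(2\sqrt{1-t})\,dt$ is finite ($=1/2$ when $\beta\in\cI_{\mathrm{high}}$ and $=1$ otherwise); folding these absolute constants together with $1/(2l_\alpha)$ into a single $C=C(K,K',\{k_l\})$ yields the stated bound. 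The main technical obstacle is establishing the exact cancellation of the mean parts between the two integration-by-parts identities, which is what makes the residual integrand carry an integrable $c_\beta(t)/\sqrt{1-t}$ rather than a divergent $1/\sqrt{1-t}$; without this cancellation the naive estimate would blow up. A secondary technicality is that integration-by-parts implicitly requires $\varphi$ to be smoother than the hypothesis provides, which is handled by a routine mollification step that preserves $\|\|\nabla^2\varphi\|_{\op}\|_\infty$ and is then removed in the limit.
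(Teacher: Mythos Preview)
Your proposal is correct and follows essentially the same approach as the paper: interpolate along $\bs(t)$, apply Gaussian integration-by-parts in $\bg_l$ and Malliavin integration-by-parts (Lemma~\ref{lemma:GaussianMalliavin}) in $\bx$, then use Lemma~\ref{lemma:DIidentity} to verify that the mean parts cancel exactly, leaving only the centered inner products which are bounded by Cauchy--Schwarz and the variance. One minor rhetorical slip: $1/\sqrt{1-t}$ is in fact integrable on $[0,1]$, so the cancellation is needed not to ensure integrability but to reduce the bound from an $O(1)$ mean contribution to the claimed variance-type quantity; otherwise your argument and the paper's coincide.
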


\begin{proof}
Define
\[
\psi(t)=\E\big[\varphi(\bs(t))\big],\qquad t\in[0,1],
\]
so that
\begin{equation}\label{eq:GaussianMalliavingood}
  \Bigl|\E\big[\varphi(\bs(0))-\varphi(\bs(1))\big]\Bigr|
  \;=\;|\psi(1)-\psi(0)|
  \;\le\;\int_0^1 |\psi'(t)|\,\d t.
\end{equation}
By the form of \(s_{l,j}(t)\),
only the coordinates $(l,j) \in \cI_{\mathrm{high}}$ depend on \(t\). Thus
\begin{equation}\label{eq:psi-derivative}
  \psi'(t)=
  \E\Bigg[
    \sum_{\alpha=(l,j)\in\cI_{\mathrm{high}}}
      \partial_{\alpha}\varphi(\bs(t))\cdot
      \frac12\Bigl(
          \frac{1}{\sqrt{t}}\;\ba_{lj}^{\sT}\bg_l
          \;-\;
          \frac{1}{\sqrt{1-t}}\;\ba_{lj}^{\sT}\bh_l(\bx)
      \Bigr)
  \Bigg].
\end{equation}

\paragraph{Step 1: Gaussian integration by parts for the \(\bg_l\)-term.}
Fix \(\alpha=(l,j)\in\cI_{\mathrm{high}}\).
For the term involving \(\bg_l\) in \eqref{eq:psi-derivative}, we apply
 Gaussian integration-by-parts on \(\bg_l\).
The chain rule yields
\[
\nabla_{\bg_l}\big(\partial_{\alpha}\varphi(\bs(t))\big)=
  \sum_{j'=1}^{k_l}
  \partial_{\alpha,(l,j')}^{2}\varphi(\bs(t))\,
  \nabla_{\bg_l}s_{l,j'}(t)
  =\sqrt{t}\sum_{j'=1}^{k_l}
  \partial_{\alpha,(l,j')}^{2}\varphi(\bs(t))\,\ba_{lj'}.
\]
Therefore,
\[
  \E\Big[
    \partial_{\alpha}\varphi(\bs(t))\cdot \frac{1}{2\sqrt{t}}\;\ba_{lj}^{\sT}\bg_l
  \Big]=
  \frac12\,
  \E\Big[
    \big\langle \nabla_{\bg_l}\big(\partial_{\alpha}\varphi(\bs(t))\big),\,\ba_{lj}\big\rangle
  \Big]=
  \frac12\,
  \E\Big[
    \sum_{j'=1}^{k_l}\partial_{\alpha,(l,j')}^{2}\varphi(\bs(t))\;
    \ba_{lj}^{\sT}\ba_{lj'}
  \Big].
\]
 
\paragraph{Step 2: Malliavin integration by parts for the \(\bh_l(\bx)\)-term.}
For the term involving \(\bh_l(\bx)\) in \eqref{eq:psi-derivative}, we apply
the Malliavin integration-by-parts identity of Lemma
\ref{lemma:GaussianMalliavin} for \(\bx\), with the choices
\[
  G=\partial_{\alpha}\varphi(\bs(t)),\qquad
  F=\ba_{lj}^{\sT}\bh_l(\bx),\qquad
  f(u)=u.
\]
Here \(F\) lies in the \(l\)-th Wiener chaos (\(l\ge1\)), so
\(L^{-1}F = -\frac{1}{l}F\) and hence
\(-DL^{-1}F = \frac{1}{l}DF\),
where the Malliavin derivative $D$ is simply the gradient $\nabla_\bx$. Therefore
\[
  \E\!\Big[
    \partial_{\alpha}\varphi(\bs(t))\cdot \ba_{lj}^{\sT}\bh_l(\bx)
  \Big]
  \;=\;
  \frac{1}{l}\,
  \E\!\Big[
    \big\langle \nabla_\bx\partial_{\alpha}\varphi(\bs(t)),\,
              \nabla_\bx \ba_{lj}^{\sT}\bh_l(\bx)
    \big\rangle 
  \Big]
  \;=\;
  \frac{1}{l}\,
  \E\!\Big[
    \big\langle \nabla_\bx\partial_{\alpha}\varphi(\bs(t)),\,
              \nabla_\bx s_{\alpha}(0)
    \big\rangle 
  \Big].
\]
By the chain rule and the explicit form of \(s_{r,j'}(t)\),
\[
\nabla_\bx\partial_{\alpha}\varphi(\bs(t))=
  \sum_{\gamma\in\cI}
    \partial_{\gamma\alpha}^{2}\varphi(\bs(t))\; \nabla_\bx s_\gamma(t),
  \qquad
  \nabla_\bx s_{r,j'}(t)=
  \begin{cases}
    \nabla_\bx s_{r,j'}(0), & r\le K',\\[3pt]
    \sqrt{1-t}\,\nabla_\bx s_{r,j'}(0), & r> K'.
  \end{cases}
\]
Splitting the sum accordingly and inserting the prefactor \(\frac{1}{2\sqrt{1-t}}\)
from \eqref{eq:psi-derivative}, we obtain
\begin{align*}
  \E\!\Big[
    \partial_{\alpha}\varphi(\bs(t))\cdot \frac{1}{2\sqrt{1-t}}\;\ba_{lj}^{\sT}\bh_l(\bx)
  \Big]
  &= \frac{1}{2\sqrt{1-t}}\cdot \frac{1}{l}\,
     \E\!\Big[
       \sum_{\gamma=(r,j')\in\cI}\partial_{\gamma\alpha}^{2}\varphi(\bs(t))\;
       \big\langle \nabla_\bx s_{r,j'}(t),\,\nabla_\bx s_{\alpha}(0)\big\rangle
     \Big]\\
  &= \frac12\cdot \frac{1}{l}\,
     \E\!\Big[
       \sum_{\gamma\in\cI_\mathrm{high}}
       \partial_{\gamma\alpha}^{2}\varphi(\bs(t))\;
       \big\langle \nabla_\bx s_{ \gamma}(0),\,\nabla_\bx s_{ \alpha}(0)\big\rangle 
     \Big]
     \\
  &\qquad
     +\;\frac{1}{2\sqrt{1-t}}\cdot \frac{1}{l}\,
     \E\!\Big[
       \sum_{\delta\in(\cI \setminus \cI_\mathrm{high})}
       \partial_{\delta\alpha}^{2}\varphi(\bs(t))\;
       \big\langle \nabla_\bx s_{ \delta}(0),\,\nabla_\bx s_{ \alpha}(0)\big\rangle 
     \Big].
\end{align*}

\paragraph{Step 3: Centering by Lemma \ref{lemma:DIidentity}.}
For any \(l,l'\ge1\) and \(j\in\{1,\dots,k_l\}\), \(j'\in\{1,\dots,k_{l'}\}\), write
\begin{align}
  s_{l,j} (0) = I_l\Big(\frac{\iota(\ba_{lj})}{\sqrt{l!}}\Big),
  \quad
  s_{l',j'} (0) = I_{l'}\Big(\frac{\iota(\ba_{l'j'})}{\sqrt{l'!}}\Big).
\end{align}
Lemma~\ref{lemma:DIidentity} then directly implies that
\[
  \E\!\left[\frac{1}{l}\,
      \big\langle \nabla_\bx s_{l,j}(0),\,\nabla_\bx s_{l',j'}(0)\big\rangle 
  \right]
  \;=\;
  \begin{cases}
    \ba_{lj}^{\sT}\ba_{lj'}, & l=l',\\[3pt]
    0, & l\ne l'.
  \end{cases}
\]
Thus, for any $\alpha=(l,j)$, with some simple calculation,
\begin{align}
   &\left|\frac12\,
   \E\Bigl[
     \sum_{j'=1}^{k_l}
       \partial_{\alpha,(l,j')}^2\varphi(\bs(t))\;
       \ba_{lj}^{\mathsf T}\ba_{lj'}
   \Bigr]     - \frac12\,
   \E\Bigl[
\frac1l\sum_{\gamma\in\cI_{\mathrm{high}}}
       \partial_{\gamma\alpha}^2\varphi(\bs(t))\;
       \bigl\langle \nabla_\bx s_{\alpha}(0),\,\nabla_\bx s_{\gamma}(0)\bigr\rangle
   \Bigr]         \right| \\\leq & C\,
   \|\|\nabla^2 \varphi\|_\op\|_{\infty}\,
   \sum_{\gamma\in\cI_{\mathrm{high}}}
     \sqrt{\,
       \Var\bigl(\,\langle \nabla_\bx s_{ \alpha}(0),\,\nabla_\bx s_{ \gamma}(0)\rangle\bigr)
     }\;.
\end{align}
Similarly,
\begin{align}
    &\frac{1}{2\sqrt{1-t}}\,
    \E\Bigl[
      \frac1l\sum_{\delta\in (\cI \setminus \cI_{\mathrm{high}})}
        \partial_{\alpha\delta}^2\varphi(\bs(t))\;
        \bigl\langle \nabla_\bx s_{ \alpha}(0),\,\nabla_\bx s_{\delta}(0)\bigr\rangle 
    \Bigr] \\      &\leq \frac{C\|\|\nabla^2 \varphi\|_\op\|_{\infty} }{\sqrt{1-t}}
\sum_{\delta \in (\cI \setminus \cI_{\mathrm{high}})} \sqrt{\Var (\<\nabla_\bx
\bs_{\alpha}(0), \nabla_\bx \bs_{\delta}(0)\>) }.
\end{align}
Noting that $\int_{0}^{1} \frac{1}{\sqrt{1-t}}\d t = 2,$ we conclude that \begin{align}
    &\eqref{eq:GaussianMalliavingood} \leq C\,
    \|\|\nabla^2 \varphi\|_\op\|_{\infty}\,
    \sum_{\alpha\in\cI_{\mathrm{high}}}\;
    \sum_{\delta\in\cI}
      \sqrt{\,
        \Var \bigl(\,\langle \nabla_\bx s_{\alpha}(0),\,\nabla_\bx s_{\delta}(0)\rangle\bigr)
      }\;.
\end{align}
\end{proof}

  \begin{corollary}\label{corollary:RecallBoundedLipschitzFunctionSupErgetPartial}
Adopt the notation of Theorem~\ref{theorem:RecallBoundedLipschitzFunctionSupErgetPartial}.
Assume the coefficient families satisfy
\begin{align}
  &\max_{l\le K'}\;\max_{1\le j\le k_l}\|\ba_{lj}\|_2\;\le\;B,
  \label{eq:C1_assump}\\[4pt]
  &\max_{K'<l}\;\max_{1\le j\le k_l}
     \Bigl|
       \E\bigl(\ba_{lj}^{\mathsf T}\bh_l(\bx)\bigr)^{4}
       -3\|\ba_{lj}\|_2^{4}
     \Bigr|
  \leq \rho,
  \label{eq:C2_assump}
\end{align}
for some parameters \(B \equiv B(d)>0\) and \(\rho \equiv \rho(d)>0\).
Then, for any twice continuously-differentiable function
\(\varphi:\R^{k_1+\dots+k_K}\to\R\) with \(\|\|\nabla^2 \varphi\|_\op\|_{\infty}<\infty\),
\[
  \bigl|\,
    \E\bigl[\varphi(\bs_0)-\varphi(\bs_1)\bigr]
  \,\bigr|
  \;\le\;
  C\,\|\|\nabla^2 \varphi\|_\op\|_{\infty}\,
  \bigl(
      B\rho^{1/4}+\rho^{1/2}
  \bigr),
\] 
for a constant \(C>0\) depending only on \(K\), \(K'\), and the multiplicities \(\{k_l\}_{l=1}^{K}\).
\end{corollary}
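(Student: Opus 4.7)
The plan is to apply Theorem~\ref{theorem:RecallBoundedLipschitzFunctionSupErgetPartial} and control each variance term $\Var\bigl(\langle \nabla_\bx s_\alpha(0), \nabla_\bx s_\beta(0)\rangle\bigr)$ for $\alpha=(l,j) \in \cI_{\mathrm{high}}$ and $\beta=(l',j') \in \cI$ using the fourth-moment machinery of Lemma~\ref{section:ProofOfLemmaGaussianChaosConvergenceToStandardGaussian}. Writing $s_\alpha(0)=I_l(\bT_\alpha)$ and $s_\beta(0)=I_{l'}(\bT_\beta)$ with $\bT_\alpha = \iota(\ba_{lj})/\sqrt{l!}$ and $\bT_\beta = \iota(\ba_{l'j'})/\sqrt{l'!}$, Lemma~\ref{lemma:DIidentity} expands the Malliavin inner product as
\[
\langle \nabla_\bx s_\alpha(0), \nabla_\bx s_\beta(0)\rangle = \sum_{r=1}^{\min(l,l')} c_{l,l',r}\, I_{l+l'-2r}\bigl(\bT_\alpha \tilde\otimes_r \bT_\beta\bigr),
\]
with explicit combinatorial constants $c_{l,l',r}$ depending only on $(l,l',r)$. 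Since distinct Wiener chaoses are orthogonal in $L^2(\cG)$ and $\E\, I_k(\cdot)=0$ for $k \ge 1$, the variance collapses to
\[
\Var\bigl(\langle \nabla_\bx s_\alpha(0), \nabla_\bx s_\beta(0)\rangle\bigr) = \sum_{\substack{1\le r\le \min(l,l') \\ l+l'-2r \ge 1}} c_{l,l',r}^2\,(l+l'-2r)!\,\|\bT_\alpha \tilde\otimes_r \bT_\beta\|_F^2,
\]
the deterministic $I_0$-summand (possible only when $l=l'$ and $r=l$) contributing only to the mean and being therefore dropped.

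The next step is to reduce the norms $\|\bT_\alpha \tilde\otimes_r \bT_\beta\|_F^2$ to self-contractions. Using $\|\bT_\alpha \tilde\otimes_r \bT_\beta\|_F \le \|\bT_\alpha \otimes_r \bT_\beta\|_F$ (symmetrization is a contraction) together with the tensor Cauchy--Schwarz inequality
\[
\|\bT_\alpha \otimes_r \bT_\beta\|_F^2 \;\le\; \|\bT_\alpha \otimes_{l-r} \bT_\alpha\|_F \cdot \|\bT_\beta \otimes_{l'-r} \bT_\beta\|_F,
\]
which follows by viewing the contraction as a matrix product $AB^{\sT}$ and applying $\mathrm{tr}(XY)\le\sqrt{\mathrm{tr}(X^2)\,\mathrm{tr}(Y^2)}$ to the PSD matrices $X=A^{\sT} A$, $Y=B^{\sT} B$, it suffices to bound the self-contractions on each side. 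For a high-order index $\gamma=(k,i)$ (i.e., $k>K'$) and any $1\le q\le k-1$, Lemma~\ref{section:ProofOfLemmaGaussianChaosConvergenceToStandardGaussian} combined with the kurtosis hypothesis \eqref{eq:C2_assump} gives $\|\bT_\gamma \otimes_q \bT_\gamma\|_F \le C\sqrt{\rho}$. For a low-order index $\gamma=(k,i)$ (i.e., $k\le K'$), \eqref{eq:C1_assump} yields the crude bound $\|\bT_\gamma \otimes_q \bT_\gamma\|_F \le \|\bT_\gamma\|_F^2 \le CB^2$. Combining cases: when both $\alpha,\beta\in\cI_{\mathrm{high}}$, one obtains $\sqrt{\Var}\le C\rho^{1/2}$; when exactly one of the two lies in $\cI_{\mathrm{high}}$ and the other is low-order, one obtains $\sqrt{\Var}\le CB\rho^{1/4}$. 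Plugging these estimates into Theorem~\ref{theorem:RecallBoundedLipschitzFunctionSupErgetPartial} and summing over the finite collection of pairs $(\alpha,\beta)$ bounded by $K,K',\{k_l\}$ yields the claim.

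The main obstacle is the degenerate boundary case $r=\min(l,l')$ with $l\ne l'$, where one of the Cauchy--Schwarz factors $\|\bT_\gamma \otimes_{l-r} \bT_\gamma\|_F$ collapses to $\|\bT_\gamma\|_F^2$ (a degree-$0$ self-contraction) rather than an interior contraction to which Lemma~\ref{section:ProofOfLemmaGaussianChaosConvergenceToStandardGaussian} applies. The remedy is to exploit the asymmetry $l\ne l'$: whichever of the two indices $l-r$, $l'-r$ is strictly positive lies in the interior range $\{1,\ldots,\max(l,l')-1\}$, so the fourth-moment bound still delivers a $\sqrt{\rho}$ factor there, while the remaining degenerate factor $\|\bT_\gamma\|_F^2$ is absorbed by \eqref{eq:C1_assump} on the low-order side. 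Organizing this asymmetric application of Cauchy--Schwarz consistently across all $(l,l',r)$ is exactly what produces the $\rho^{1/4}$ exponent in the mixed high/low cross-term and matches the claimed $B\rho^{1/4}+\rho^{1/2}$ scaling.
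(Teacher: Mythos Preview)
Your approach mirrors the paper's proof closely: both apply Theorem~\ref{theorem:RecallBoundedLipschitzFunctionSupErgetPartial}, expand $\langle \nabla_\bx s_\alpha,\nabla_\bx s_\beta\rangle$ via Lemma~\ref{lemma:DIidentity}, reduce $\|\bT_\alpha \tilde\otimes_r \bT_\beta\|_F^2$ to self-contractions by the same Cauchy--Schwarz inequality, and then invoke Lemma~\ref{section:ProofOfLemmaGaussianChaosConvergenceToStandardGaussian}.

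There is one gap. You assert $\sqrt{\Var} \le C\rho^{1/2}$ for all high--high pairs, but the degenerate boundary case you flag at the end---$r=\min(l,l')$ with $l\ne l'$---also occurs in the high--high regime, not only in the mixed one. With $l<l'$ and $r=l$, your Cauchy--Schwarz bound yields $\|\bT_\alpha\|_F^2 \cdot \|\bT_\beta \otimes_{l'-l} \bT_\beta\|_F$, and $\|\bT_\alpha\|_F^2$ is a \emph{high}-order norm that assumption~\eqref{eq:C2_assump} does not control; your remedy ``absorbed by~\eqref{eq:C1_assump} on the low-order side'' is inapplicable because neither side is low. The paper does not attempt the sharper $\rho^{1/2}$ claim here: it bounds the high--high variance by $C(\rho + B^2\rho^{1/2})$, implicitly using the norm bound $B$ for the high-order tensor as well (a minor imprecision in the corollary statement; in every application the paper verifies that all coefficient norms, including the high-order ones, are $\prec B$). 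Once you correct the high--high case to $\sqrt{\Var}\le C(\rho^{1/2}+B\rho^{1/4})$, the overall conclusion $C(B\rho^{1/4}+\rho^{1/2})$ is unchanged.
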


  \begin{proof}
  Recall that
  \(\cI_{\mathrm{high}}=\{(l,j):l>K'\}\)
  and
  \(\cI=\bigcup_{l=1}^{K}\{(l,j):1\le j\le k_l\}\).
  
  \paragraph{Step 1: Diagonal high–high terms.}
  Fix \(\alpha=(l,j)\in\cI_{\mathrm{high}}\) and set
  \(F(\bx)=s_{\alpha}(0)=\ba_{lj}^{\mathsf T}\bh_l(\bx) =I_l(\frac{\iota(\ba_{lj})}{\sqrt{l!}})\).
  By Lemma~\ref{lemma:DIidentity},
  \begin{equation}\label{eq:DF-spectral}
    \frac1l\|\nabla_\bx F\|_2^{2}
    =\|\ba_{lj}\|_2^{2}
     +l\sum_{r=1}^{l-1}(r-1)!\binom{l-1}{r-1}^{2}\,
         I_{2l-2r}\Big(\frac{\iota(\ba_{lj})}{\sqrt{l!}}\tilde\otimes_{r}\frac{\iota(\ba_{lj})}{\sqrt{l!}}\Big),
  \end{equation}
  where we recall that \(\iota(\ba_{lj})\) is an isometry satisfying
     $\<\iota(\ba_{lj}),\iota(\ba_{lj})\> =\|\ba_{lj}\|_2^{2}$.
  Then orthogonality of chaoses gives
  \[
    \Var\bigl(\|\nabla_\bx F\|_2^{2}\bigr)
    =l^{4}\!\sum_{r=1}^{l-1}\left(\frac{(r-1)!}{l!}\right)^{2}\binom{l-1}{r-1}^{4}
        (2l-2r)! \|\iota(\ba_{lj})\tilde\otimes_{r}\iota(\ba_{lj})\|_{F}^{2}.
  \]
  Lemma~\ref{section:ProofOfLemmaGaussianChaosConvergenceToStandardGaussian} implies
  \[
    \Var\bigl(\|\nabla_\bx F\|_2^{2}\bigr)
    \leq
    C\bigl(\E[F^{4}]-3\E[F^{2}]^{2}\bigr)
    \leq C\rho,
  \]
  where we used \eqref{eq:C2_assump} for the final inequality.
  
  \paragraph{Step 2: Off-diagonal high–high terms.}
  Let \(\alpha=(l,j)\) and \(\beta=(l',j')\) be two distinct indices in
  \(\cI_{\mathrm{high}}\) with \(l\le l'\).
  Writing \(s_{\alpha}(0)=I_l(\bH)\) and \(s_{\beta}(0)=I_{l'}(\bG)\),
  where \begin{align}
    \bH &=  \frac{\iota(\ba_{lj})}{\sqrt{l!}},\quad
    \bG = \frac{\iota(\ba_{l'j'})}{\sqrt{l'!}},
  \end{align}
  the product formula Lemma~\ref{lemma:DIidentity}
  gives
  \[
    \langle \nabla_\bx s_{\alpha}(0),\nabla_\bx s_{\beta}(0)\rangle
    =ll'\!\sum_{r=1}^{l}(r-1)!
        \binom{l-1}{r-1}\binom{l'-1}{r-1}\,
        I_{l+l'-2r}(\bH\tilde\otimes_{r}\bG).
  \]
  Squaring and using orthogonality, if \(l< l'\) we have
  \begin{equation}\label{eq:HH-var}
    \Var(\langle \nabla_\bx s_{\alpha}(0),\nabla_\bx s_{\beta}(0)\rangle)
    =l^{2}l'^{2} 
       \sum_{r=1}^{l}(r-1)!^{2}
         \binom{l-1}{r-1}^{2}\binom{l'-1}{r-1}^{2}
         (l+l'-2r)!\,
         \|\bH\tilde\otimes_{r}\bG\|_{F}^{2},
  \end{equation}
  and if $l=l'$ we have as in Step 1 above
  \begin{align}\label{eq:HH-var1}
    \Var(\langle \nabla_\bx s_{\alpha}(0),\nabla_\bx s_{\beta}(0)\rangle)
    =l^4 
       \sum_{r=1}^{l-1}(r-1)!^{2}
         \binom{l-1}{r-1}^{4} 
         (2l-2r)!\,
         \|\bH\tilde\otimes_{r}\bG\|_{F}^{2}.
  \end{align}
  
  For \(1\le r<l\), observe that
$\|\bH\tilde\otimes_{r}\bG\|_F
\leq \|\bH\otimes_r\bG\|_F$, where
\((\bH\otimes_r\bG)_{I,J}=\sum_{A} \bH_{I,A}\,\bG_{J,A}\) for index tuples
\(I=(i_1,\ldots,i_{l-r})\), \(J=(j_1,\ldots,j_{l'-r})\), and
\(A=(a_1,\ldots,a_r)\). Then
\begin{align}
  \|\bH\tilde\otimes_{r}\bG\|_F^2
  &\leq \sum_{I,J}\Bigl(\sum_{A} \bH_{I,A}\,\bG_{J,A}\Bigr)^2
   = \sum_{I,J}\;\sum_{A,B} \bH_{I,A}\,\bG_{J,A}\,\bH_{I,B}\,\bG_{J,B}\notag\\
&=\sum_{A,B} (\bH\otimes_{\,l-r}\bH)_{A,B} (\bG\otimes_{\,l'-r}\bG)_{A,B}\\
  &\leq \bigl\|\,\bH\otimes_{\,l-r}\bH\,\bigr\|_{F}
     \bigl\|\,\bG\otimes_{\,l'-r}\bG\,\bigr\|_{F}
\leq \frac12\bigl(
         \|\bH\otimes_{l-r}\bH\|^{2}_F+\|\bG\otimes_{l'-r}\bG\|^{2}_F
       \bigr).\label{eq:contractioninequality}
\end{align}
Then applying Lemma~\ref{section:ProofOfLemmaGaussianChaosConvergenceToStandardGaussian} as above,
  $\|\bH\tilde\otimes_{r}\bG\|_F^{2} \leq C\rho$.  For the maximal contraction
\(r=l\) when $l<l'$, \eqref{eq:contractioninequality} implies
\[\|\bH\tilde\otimes_{l}\bG\|_F^2
\leq \|\bH\|_F^2\|\bG \otimes_{l'-l} \bG\|_F,\]
so \(\|\bH\tilde\otimes_{l}\bG\|_F^{2}\le CB^2\rho^{1/2}\) by 
Lemma~\ref{section:ProofOfLemmaGaussianChaosConvergenceToStandardGaussian}.
  Substituting these bounds into \eqref{eq:HH-var} and \eqref{eq:HH-var1} yields
  \[
    \Var(\langle \nabla_\bx s_{\alpha}(0),\nabla_\bx s_{\beta}(0)\rangle)
    \;\le\;
    C\bigl(\rho+B^2\rho^{1/2}\bigr)
  \]
  
  \paragraph{Step 3: Mixed high–low terms.}
  For \(\alpha\in\cI_{\mathrm{high}}\) and
  \(\beta\in\cI\setminus\cI_{\mathrm{high}}\),
  the same argument 
  gives the identical bound
  \[
    \Var\bigl(\langle \nabla_\bx s_{\alpha}(0), \nabla_\bx s_{\beta}(0)\rangle\bigr)
    \;\le\;
    C\bigl(\rho+B^2\rho^{1/2}\bigr).
  \]
  Substituting these into the bound of
Theorem~\ref{theorem:RecallBoundedLipschitzFunctionSupErgetPartial}
  completes the proof.
  \end{proof}
 
\subsection{Proof of Theorem \ref{theorem:RecallBoundedLipschitzFunctionSupErgetisotropic}}
\label{section:ProofOfTheoremGaussianChaosConvergenceToStandardGaussianGeneral}

Recall the decomposition
\[\bh_k(\bx)=\proj_S\bh_k(\bx)+\proj_{S,\perp}\bh_k(\bx)\]
where $\proj_S\bh_k(\bx)$ is supported on the degree-$k$ multivariate
Hermite polynomials involving only $\bx_S$, and $\proj_{S,\perp}\bh_k(\bx)$ is
supported on those that involve some coordinate of $\bx_{\setminus S}$. We
consider a further decomposition of $\proj_{S,\perp}\bh_k(\bx)$,
\[\bh_k(\bx)=\proj_S\bh_k(\bx)+\proj_{S,\perp}^{\text{mixed}}\bh_k(\bx)
+\proj_{S,\perp}^{\text{pure}}\bh_k(\bx)\]
where $\proj_{S,\perp}^{\text{pure}}\bh_k(\bx)$ is supported on those Hermite
polynomials involving only $\bx_{\setminus S}$, and
$\proj_{S,\perp}^{\text{mixed}}\bh_k(\bx)$ is supported on those depending on
both $\bx_S$ and $\bx_{\setminus S}$. We will write as shorthand
\[\bh_{k,S}(\bx)=\proj_S\bh_k(\bx)+\proj_{S,\perp}^{\text{mixed}}\bh_k(\bx),
\qquad \bh_{k,\setminus S}=\proj_{S,\perp}^{\text{pure}}\bh_k(\bx),\]
i.e.\ $\bh_{k,\setminus S}(\bx)$ is supported on the indices
$\{\bk:k_1+\ldots+k_s=0\}$
and $\bh_{k,S}(\bx)$ is supported on the indices
$\{\bk:k_1+\ldots+k_s \geq 1\}$.
For a Gaussian vector $\bg_k\sim\cN(\mathbf 0,\mathbf I_{B_{d,k}})$, we
decompose correspondingly
\[\bg_k=\bg_{k,S}+\bg_{k,\setminus S}.\]

The following lemma first controls the magnitudes of the components in Theorem
\ref{theorem:RecallBoundedLipschitzFunctionSupErgetisotropic}
corresponding to $\bh_{k,S}(\bx)$ and $\bg_{k,S}$.

\begin{lemma}\label{lemma:NoisePart}
For any fixed constant $K>0$, there exists a constant $c>0$ such that
\begin{enumerate}
  \item For the random feature components of orders $k \geq 3$,
\begin{align}
\sup_{\btheta \in \bTheta_{\bW}^{\sPG}(K)} \E_{\bx} \left| \btheta^\sT
\sum_{k=3}^{D} \mu_k \bV_k \bh_{k,S}(\bx)\right| \prec \frac{1}{d^{1/2}\mu_1} +
\frac{1}{d^{1/4}},\label{equation:NoisePart1}\\
\sup_{\btheta \in \bTheta_{\bW}^{\sPG}(K)} \E_{\bg_k} \left| \btheta^\sT
\sum_{k=3}^{D} \mu_k \bV_k \bg_{k,S}\right| \prec
\frac{1}{d^{1/2}\mu_1} + \frac{1}{d^{1/4}}.\label{equation:NoisePart1gaussian}
\end{align}
\item For the target function component of each order $k \geq 2$ and each
$i \in [s_k]$,
\begin{equation}\label{equation:NoisePart2}
    \E_{\bx} \left|\bbeta_{ki}^\top \bh_{k,S}(\bx)\right|
\prec \frac{1}{d^{c}},
\qquad \E_{\bg_k} \left|\bbeta_{ki}^\top \bg_{k,S}\right|
\prec \frac{1}{d^{c}}.
\end{equation}
\end{enumerate}
\end{lemma}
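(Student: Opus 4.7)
The plan is to reduce both parts to second-moment computations via Jensen's inequality $\E|X|\leq\sqrt{\E[X^2]}$, exploiting orthonormality of the Hermite basis $\{\bh_k(\bx)\}$ (respectively independence of the coordinates of $\bg_k$) to obtain explicit quadratic forms. For Part 2, orthonormality gives $\E_\bx[(\bbeta_{ki}^\sT\bh_{k,S}(\bx))^2] = \|\bbeta_{ki}^{(S)}\|_2^2$, where $\bbeta_{ki}^{(S)}$ denotes the restriction of $\bbeta_{ki}$ to multi-indices $\bk$ with $k_1+\ldots+k_s\ge 1$. Passing to the symmetric tensor $\bT=\iota(\bbeta_{ki})$, a union bound over the $k$ positions together with the symmetry of $\bT$ yields
\[
\|\bbeta_{ki}^{(S)}\|_2^2 \;\leq\; k\sum_{a\in S}(\bT\otimes_{k-1}\bT)[a,a] \;\leq\; ks\,\|\bT\otimes_{k-1}\bT\|_F,
\]
which is $\leq ks\,d^{-c'}$ by the admissibility condition~\eqref{eq:admissibility_tensor}, itself a consequence of the fourth-moment assumption~\eqref{eq:ass_bbeta_ki_4_th_moment} of Assumption~\ref{assumption:target} via Lemma~\ref{section:ProofOfLemmaGaussianChaosConvergenceToStandardGaussian}. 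Hence $\|\bbeta_{ki}^{(S)}\|_2\prec d^{-c'/2}$, and the Gaussian case is identical since $\bg_{k,S}$ has identity covariance on the $S$-touching coordinates.

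For Part 1, orthogonality across Hermite degrees together with the identity $\bV_k\bP_S\bV_k^\sT = (\bW\bW^\sT)^{\odot k}-(\bW_{\setminus S}\bW_{\setminus S}^\sT)^{\odot k}$ (where $\bP_S$ projects onto $S$-touching degree-$k$ multi-indices) gives
\[
\E_\bx\Bigl[\Bigl(\btheta^\sT\sum_{k=3}^D\mu_k\bV_k\bh_{k,S}(\bx)\Bigr)^2\Bigr] = \sum_{k=3}^D\mu_k^2\,\btheta^\sT M_k\btheta,\quad M_k=(\bW\bW^\sT)^{\odot k}-(\bW_{\setminus S}\bW_{\setminus S}^\sT)^{\odot k}.
\]
Writing $\bW\bW^\sT=\bW_{\setminus S}\bW_{\setminus S}^\sT+\bW_S\bW_S^\sT$ and expanding by the Hadamard binomial formula, $M_k=\sum_{l=1}^k\binom{k}{l}(\bW_S\bW_S^\sT)^{\odot l}\odot(\bW_{\setminus S}\bW_{\setminus S}^\sT)^{\odot(k-l)}$. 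Denoting by $\bw^{(a)}\in\R^p$ the $a$-th column of $\bW$, the rank-$s$ structure $\bW_S\bW_S^\sT=\sum_{a\in S}\bw^{(a)}(\bw^{(a)})^\sT$ reduces each $l$-summand to a sum over $a_1,\ldots,a_l\in S$ of quadratic forms in $\tilde\bu = \diag(\bw^{(a_1)}\odot\cdots\odot\bw^{(a_l)})\btheta$, which satisfies $\|\tilde\bu\|_2^2\prec d^{-l}$ via $\max_{i,a}|w_{i,a}|^2\prec 1/d$ and $\|\btheta\|_2\prec 1$. The only delicate case is $k=3,l=1$: $(\bW_{\setminus S}\bW_{\setminus S}^\sT)^{\odot 2}$ carries a rank-one spike in the $\bones_p$ direction of magnitude $\asymp d$ (since its off-diagonal entries have mean $\approx 1/d$), and the $\bones_p$-component of $\tilde\bu_a=\diag(\bw^{(a)})\btheta$ is exactly $(\bW^\sT\btheta)_a$, controlled by the constraint $\|\mu_1\btheta^\sT\bW\|_2\leq(\log d)^K$. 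Extracting this spike gives a contribution $\leq (\log d)^{2K}/(d\mu_1^2)$, while the residual and all terms with $k\ge 4$ or $l\ge 2$ are $\prec 1/d$ via standard random-matrix concentration on zero-mean entrywise-small matrices. Thus $\sum_k\mu_k^2\btheta^\sT M_k\btheta \prec 1/(d\mu_1^2)+1/d$ uniformly in $\btheta\in\bTheta_\bW^{\sPG}(K)$; Jensen then gives $\E_\bx|\cdot|\prec 1/(d^{1/2}\mu_1)+1/\sqrt d$, which is dominated by the stated bound. The Gaussian case follows identically from $\E_{\bg_k}[(\btheta^\sT\bV_k\bg_{k,S})^2]=\btheta^\sT M_k\btheta$ and independence across $k$.

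The principal technical obstacle is the sharp treatment of $(\bW_{\setminus S}\bW_{\setminus S}^\sT)^{\odot 2}$: one must cleanly identify its $d$-sized rank-one $\bones_p$-spike and bound the operator norm of the zero-mean residual (entry variance $\prec d^{-2}$, giving $\|\cdot\|_{\op}\prec 1$ on a $p\times p$ matrix by a Wigner-type estimate). Uniformity over $\btheta\in\bTheta_\bW^{\sPG}(K)$ is then automatic since every estimate is a quadratic form controlled by the fixed norm constraints built into the set, and the required high-probability control over $\bW$ is concentrated in the single random-matrix bound on the residual.
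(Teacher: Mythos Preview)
Your proposal is correct and, for Part~2, essentially identical to the paper's argument (same second-moment reduction, same tensor-contraction bound via the admissibility condition \eqref{eq:admissibility_tensor}).

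For Part~1 your route is slightly cleaner than the paper's. The paper treats the Hermite case \eqref{equation:NoisePart1} and the Gaussian case \eqref{equation:NoisePart1gaussian} separately: for the Hermite case it passes through the translation identity $\tilde\He_k(\<\bw_i,\bx\>)=\sum_j\binom{k}{j}\<\bw_{iS},\bx_S\>^{k-j}\tilde\He_j(\<\bw_{i\setminus S},\bx_{\setminus S}\>)$, then squares and computes expectations term by term, splitting into diagonal and off-diagonal sums over $i,l\in[p]$; only for the Gaussian case does it use the matrix identity $\E_{\bg_k}[(\btheta^\sT\bV_k\bg_{k,S})^2]=\btheta^\sT M_k\btheta$ with $M_k=(\bW\bW^\sT)^{\odot k}-(\bW_{\setminus S}\bW_{\setminus S}^\sT)^{\odot k}$. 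You observe that Hermite orthonormality gives the \emph{same} quadratic form $\btheta^\sT M_k\btheta$ in both cases, so one analysis suffices. Your Hadamard-binomial expansion of $M_k$ then recovers exactly the paper's critical case ($k=3$, $l=1$, spike in $(\bW_{\setminus S}\bW_{\setminus S}^\sT)^{\odot 2}$ controlled via $\|\mu_1\btheta^\sT\bW\|_2$) and handles the remaining terms via operator-norm bounds rather than the paper's entrywise estimates---which in fact yields $\prec 1/d$ for the non-critical terms versus the paper's $\prec 1/\sqrt d$, though both suffice for the stated conclusion. The only caveat is that your ``standard random-matrix concentration'' for the residual still needs the decomposition of $(\bW_{\setminus S}\bW_{\setminus S}^\sT)^{\odot 2}$ from Corollary~\ref{corollary:OperatorNormF2cdecompose} (and Lemma~\ref{lemma:OperatorNormFk} for higher $k-l$), so the paper's technical lemmas are still doing the work underneath.
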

    \begin{proof} Part 1 will be proved by moment calculation, and
Part 2 will be implied by the genericity condition for
$\bbeta_{ki}$.
       \paragraph{Proof of  \eqref{equation:NoisePart1}.} 
For a multivariate polynomial $p(\bx)$, we denote $\mathcal{P}_S[p(\bx)] $ as
the summation of all monomials in $p(\bx)$ that are dependent on at least one
variable $x_1,\ldots,x_s$. Note that for any $k_1,\ldots,k_s \geq 0$
with $k_1+\ldots+k_s \geq 1$,
\[\prod_{i=1}^s \He_{k_i}(x_i)
=\mathcal{P}_S\bigg[\prod_{i=1}^s \He_{k_i}(x_i)\bigg]
-\E\,\mathcal{P}_S\bigg[\prod_{i=1}^s \He_{k_i}(x_i)\bigg],\]
where the right-most term must be the constant in
the monomial expansion of $\prod_{i=1}^s \He_{k_i}(x_i)$ by
the identity $\E \prod_{i=1}^s \He_{k_i}(x_i)=0$. Then
$\bh_{k,S}(\bx)=\mathcal{P}_S \bh_k(\bx)
-\E_{\bx_S} \mathcal{P}_S \bh_k(\bx)$, implying that
\[\btheta^\top \bV_k \bh_{k,S}(\bx)
=\mathcal{P}_S[\btheta^\top \bV_k \bh_k(\bx)]
-\E_{\bx_S}\mathcal{P}_S[\btheta^\top \bV_k \bh_k(\bx)].\]
Thus to show \eqref{equation:NoisePart1}, by Jensen's inequality, it
suffices to show
\begin{equation}\label{eq:NoisePart1reduced}
\sup_{\btheta \in \bTheta_{\bW}^{\sPG}(K)} \E_{\bx} \left|\cP_S\left[\btheta^\sT
\sum_{k=3}^{D} \mu_k \bV_k \bh_k(\bx)\right]\right| \prec
\frac{1}{d^{1/2}\mu_1} + \frac{1}{d^{1/4}}.
\end{equation}

For this, let us write $\tilde\He_j(x)=\sqrt{j!}\He_j(x)$. Then
\begin{equation}
\btheta^\sT \bV_k\bh_k(\bx)
=\sum_{i=1}^p \theta_i
            \frac{1}{\sqrt{k!}}\tilde\He_k(\<\bw_i,\bx\>).
        \end{equation}
Applying the Hermite polynomial identity \eqref{lemma:SumHermite} and selecting
those terms with non-zero exponent in $\bx_S$, we have
        \begin{equation}
           \cP_S\left[ \tilde\He_k(\<\bw_i,\bx\>) \right]
=\sum_{j=0}^{k-1}\binom{k}{j}\<\bw_{iS},\bx_S\>^{k-j}
\tilde\He_j(\<\bw_{i\setminus S},\bx_{\setminus S}\>)
        \end{equation}
where $\bw_i=(\bw_{iS},\bw_{i\setminus S})$ and
$\bx=(\bx_S,\bx_{\setminus S})$. Thus, 
        \begin{align}
            \left(\E_\bx \left|\mathcal{P}_S\left[\sum_{i=1}^{p} \theta_i 
            \tilde\He_k(\<\bw_i,\bx\>)\right]\right| \right)^2&\leq  \E_\bx \left(\mathcal{P}_S\left[\sum_{i=1}^{p} \theta_i  \tilde\He_k(\<\bw_i,\bx\>)\right]\right) ^2\\
            &= \E_\bx \left( \sum_{j=0}^{k-1}\binom{k}{j}\left[\sum_{i=1}^{p}\theta_i
\<\bw_{iS},\bx_S\>^{k-j}\tilde\He_j(\<\bw_{i\setminus S},\bx_{\setminus S}\>)\right] \right) ^2\\
            &\leq C\sum_{j=0}^{k-1} \underbrace{\E_\bx \left(\sum_{i=1}^{p}\theta_i
\<\bw_{iS},\bx_S\>^{k-j}\tilde{\He_j}(\<\bw_{i\setminus S},\bx_{\setminus
S}\>)\right)^2}_{:=A_j}.
        \end{align}
        For fixed $j$, this quantity $A_j \equiv A_j(\btheta,\bW)$ satisfies
        \begin{align}
&A_j
= \E_\bx \sum_{i,l=1}^p \theta_i  \theta_l 
\<\bw_{iS},\bx_S\>^{k-j}\<\bw_{lS},\bx_S\>^{k-j}
\tilde{\He_j}(\<\bw_{i\setminus S},\bx_{\setminus S}\>)\tilde{\He_j}(\<\bw_{l\setminus S},\bx_{\setminus S}\>)\\
            &\overset{(a)}{=} \E_\bx \sum_{i,l=1}^p \theta_i  \theta_l 
\<\bw_{iS},\bx_S\>^{k-j}\<\bw_{lS},\bx_S\>^{k-j}
\left(\sum_{q=0}^{\lfloor \frac{j}{2}\rfloor  }(-1)^q \|\bw_{i\setminus
S}\|_2^{j-2q}
\|\bw_{iS}\|_2^{2q}\binom{j}{2q}\frac{(2q)!}{2^q q!}\tilde{\He}_{j-2q}\left(\frac{\<\bw_{i\setminus
S},\bx_{\setminus S}\>}{\|\bw_{i\setminus S}\|_2}\right)\right) \\
            &
\quad\quad\quad\quad\quad\quad\quad\quad\quad\quad\quad\quad\quad\quad\quad
\cdot \left(\sum_{q=0}^{\lfloor \frac{j}{2}\rfloor  }(-1)^q \|\bw_{l\setminus
S}\|_2^{j-2l}
\|\bw_{lS}\|_2^{2q}\binom{j}{2q}\frac{(2q)!}{2^q q!}\tilde{\He}_{j-2q}\left(\frac{\<\bw_{l\setminus
S},\bx_{\setminus S}\>}{\|\bw_{l\setminus S}\|_2}\right)\right) \\ 
            &\overset{(b)}{=} \sum_{i,l=1}^p \theta_i \theta_l\,
p_{2(k-j)}(\bw_{iS},\bw_{lS})
\sum_{q=0}^{\lfloor \frac{j}{2}\rfloor  }
\|\bw_{iS}\|_2^{2q}\|\bw_{lS}\|_2^{2q}\binom{j}{2q}^2\Big(\frac{(2q)!}{2^q
q!}\Big)^2(j-2q)!\<\bw_{i\setminus S},\bw_{l\setminus S}\>^{j-2q}
\end{align}
where (a) holds by Lemma \ref{lemma:factorHermite}, and (b) holds
by independence of $\bx_S,\bx_{\setminus S}$ and
Lemma \ref{lemma:multivariate_gaussian_hermite}, for a polynomial
$p_{2(k-j)}(\bw_{iS},\bw_{lS})$ that is homogeneous of degree $2(k-j)$
representing a sum over pairings of $k-j$ copies of $\bw_{iS}$ and $k-j$ copies of
$\bw_{lS}$. Isolating the terms for $i=l$, this implies
\begin{align}
A_j &\leq C\underbrace{\sum_{i=1}^p \theta_i^2
\|\bw_{iS}\|_2^{2(k-j)}}_{:=A_j^{(1)}}
+C\sum_{q=0}^{\lfloor \frac{j}{2}\rfloor  }\underbrace{\left(\mathop{\sum_{i,l=1}^p}_{i\ne l} \theta_i \theta_l
p_{2(k-j)}(\bw_{iS},\bw_{lS})
\|\bw_{iS}\|_2^{2q}\|\bw_{lS}\|_2^{2q}\<\bw_{i\setminus S},\bw_{l\setminus
S}\>^{j-2q}\right)_+}_{:=A_{j,q}^{(2)}}.
        \end{align} 

               For the first term $A_j^{(1)}$, for any $j \in
\{0,\ldots,k-1\}$ and $\btheta \in \bTheta^{\sPG}_{\bW}(K)$, applying
$\sup_{i,k} |w_{ik}| \prec d^{-1/2}$ and the condition $\|\btheta\|_2 \prec 1$ defining $\bTheta_\bW^{\sPG}(K)$,
        \begin{equation}\label{equation:WassersteinDistanceRandomVariable3firstterm} 
            A_j^{(1)} \prec d^{-1}.
        \end{equation}
        For the second term $A_{j,q}^{(2)}$, 
        applying $\sup_{i,k} |w_{ik}| \prec d^{-1/2}$ and $\sup_{i\ne l}  \< \bw_i, \bw_l \> \prec d^{-1/2}$,
        \begin{align}
            &A_{j,q}^{(2)}
            \prec  \mathop{\sum_{i,l=1}^p}_{i\ne l} |\theta_i
\theta_l|(d^{-1/2})^{2k-j+2q}. \label{equation:WassersteinDistanceRandomVariable3bbbhighuniform}
        \end{align}
When $k \ge 4$, since $j \leq k-1$ and $q \leq \lfloor j/2 \rfloor$, the
exponent $2k-j+2q$ in
\eqref{equation:WassersteinDistanceRandomVariable3bbbhighuniform} is at least 5. Thus, we have
 \begin{equation} \label{equation:Combineoverlap1}
            A_{j,q}^{(2)} \prec \sum_{i,l}^p |\theta_i \theta_l| \frac{1}{d^{5/2}}  \prec \frac{1 }{\sqrt{d}} \sum_{i=1}^{p}\theta_i^2 \prec \frac{1}{\sqrt{d}}.
        \end{equation}
When $k=3$, the same bound holds for $j  \ne 2,$ or $q \ne 0$. The only remaining case
is $j = 2$ and $q = 0$, for which
$p_{2(k-j)}(\bw_{iS},\bw_{lS})=\<\bw_{iS},\bw_{lS}\>$ and we need to control
\begin{equation} \label{eq:rewrite_k3_quad_form}
A_{j,q}^{(2)}=\Big(\sum_{i\ne l}^p \theta_i \theta_l \<\bw_{iS},\bw_{lS}\>\<\bw_{i\setminus
S},\bw_{l\setminus S}\>^{2}\Big)_+ \leq \Big(\sum_{i,l}^p \theta_i \theta_l 
\<\bw_{iS},\bw_{lS}\>\<\bw_{i\setminus
S},\bw_{l\setminus S}\>^{2}\Big)_+.
\end{equation}
To analyze this, let $\bW_{\backslash S} \in \R^{p \times (d-1)}$ have rows
$\{\bw_{i\backslash S}\}_{i=1}^p$, and let
$\bM \in \R^{p \times p}$ and $\bU \in \mathbb{R}^{p \times s}$ have entries $\bM_{il} = (\<\bw_{i\backslash S} , \bw_{l\backslash S}\>)^2$ and
$\bU_{ik} = \theta_i w_{ik}$. Then the upper bound in
\eqref{eq:rewrite_k3_quad_form} is precisely $(\Tr\bU^\sT \bM \bU)_+$.
Recalling the identity $\bV_k \bV_k^\sT =
(\bW\bW^\sT)^{\odot k}$ and defining $\bV_{2, \backslash S} \in \mathbb{R}^{p
\times B_{d-s,2}}$ from $\bW_{\setminus S}$ in the same manner as $\bV_2$ is defined from $\bW$, we have $\bM = \bV_{2, \backslash S} \bV_{2, \backslash S}^\sT$.
Here, as in Corollary~\ref{corollary:OperatorNormF2cdecompose},
$\bV_{2, \backslash S}$ has a decomposition
\begin{equation}
    \bV_{2, \backslash S} = \bV'_{2c} + \frac{1}{d}\mathbf{1}_{p}\mathbf{e}'^\sT_c,
\end{equation}
where $\|\bV'_{2c}\|_\op \prec 1$ and
$\mathbf{e}'_c=(1,\ldots,1,0,\ldots,0)^\top$ with first $d-s$ entries non-zero.
Substituting this into the quadratic form and using the inequality $(A+B)(A+B)^\sT \preceq 2AA^\sT + 2BB^\sT$,
\begin{align}
\Tr\bU^\sT \bM \bU = \Tr\bU^\sT (\bV_{2,
\backslash S} \bV_{2, \backslash S}^\sT) \bU
    &\leq 2\Tr\bU^\sT (\bV'_{2c} \bV'_{2c}{}^\sT) \bU + 2\Tr \bU^\sT \left(
\frac{1}{d^2} \mathbf{1}_p \mathbf{e}'^\sT_c \mathbf{e}'_c \mathbf{1}_p^\sT
\right) \bU \\
    &= 2\|\bV'_{2c}{}^\sT \bU\|_F^2  + \frac{2\|\mathbf{e}'_c\|_2^2}{d^2}
\|\mathbf{1}_p^\sT \bU\|_2^2.\label{equation:WassersteinDistanceRandomVariable3bbbhighmatrix3}
\end{align}
For the first term of \eqref{equation:WassersteinDistanceRandomVariable3bbbhighmatrix3}, since $\|\bV'_{2c}\|_\op \prec 1$, we have
\begin{equation}
    2\|\bV'_{2c}{}^\sT \bU\|_F^2 \leq 2\|\bV'_{2c}{}^\sT\|_\text{op}^2
\|\bU\|_F^2 \prec \sum_{i=1}^p \theta_i^2 \|\bw_{iS}\|_2^2 \prec \frac{1}{d}.
\end{equation}
For the second term in
\eqref{equation:WassersteinDistanceRandomVariable3bbbhighmatrix3}, note that
$\|\mathbf{e}'_c\|_2^2= d-s$ and $\mathbf{1}_p^\sT \bU = \sum_i \theta_i
\bw_{iS} = \btheta^\sT \bW \be_S$, where $\be_S \in \R^d$ has first $s$ entries 1 and remaining entries 0. From the definition of
$\bTheta^{\sPG}_{\bW}(K)$, we know $\|\mu_1 \btheta^\top \bW\|_2 \prec 1$, which
implies $|\btheta^\top \bW\be_S| \prec 1/|\mu_1|$. Thus, the second term is bounded as
\begin{equation}
    \frac{2\|\mathbf{e}'_c\|_2^2}{d^2} \|\mathbf{1}_p^\sT \bU\|_2^2 \prec \frac{1}{d\mu_1^2}.
\end{equation}
        Combining these bounds gives \begin{equation}\label{equ:Combineoverlap2}
            A_{j,q}^{(2)} \prec \frac{1}{d\mu_{1}^2}.
        \end{equation}
Finally, combining \eqref{equation:WassersteinDistanceRandomVariable3firstterm},
\eqref{equation:Combineoverlap1},  and \eqref{equ:Combineoverlap2} shows $A_j
\prec (d\mu_1^2)^{-1}+d^{-1/2}$ for each $j=0,\ldots,k-1$ and $k=3,\ldots,D$,
which implies \eqref{eq:NoisePart1reduced} and thus
\eqref{equation:NoisePart1}.

\paragraph{Proof of \eqref{equation:NoisePart1gaussian}.} 
For each $k=3,\ldots,D$, recalling that $\bV_k=[\bq_k(\bw_1),\ldots,\bq_k(\bw_p)]^\top$, we have similarly
\begin{align*}
\E_{\bg_k}(\btheta^\top \bV_k\bg_{k,S})^2
&=\E_{\bg_k}\Big(\sum_{i=1}^p \theta_i \<\bq_k(\bw_i),\bg_{k,S}\>\Big)^2\\
&=\sum_{i,l=1}^p \theta_i\theta_l 
\Big(\<\bq_k(\bw_i),\bq_k(\bw_l)\>
-\<\bq_k(\bw_{i\setminus S}),\bq_k(\bw_{l\setminus S})\>\Big)\\
&=\sum_{i,l=1}^p \theta_i\theta_l
\Big(\<\bw_i,\bw_l\>^k-\<\bw_{i\setminus S},\bw_{l\setminus S}\>^k\Big)\\
&=\sum_{i,l=1}^p \theta_i\theta_l \sum_{j=0}^{k-1}
\binom{k}{j} \<\bw_{iS},\bw_{lS}\>^{k-j}\<\bw_{i\setminus S},\bw_{l\setminus
S}\>^j\\
&\leq C\sum_{j=0}^{k-1}\Bigg(\sum_{i=1}^p
\theta_i^2\|\bw_{iS}\|_2^{2(k-j)}+C\Bigg(\mathop{\sum_{i,l=1}^p}_{i \neq l}
\theta_i\theta_l \<\bw_{iS},\bw_{lS}\>^j
\<\bw_{i\setminus S},\bw_{l\setminus S}\>^{k-j}\Bigg)_+\Bigg).
\end{align*}
This is bounded in the same way as the above terms $A_j^{(1)}$ and
$A_{j,q}^{(2)}$ for $q=0$, showing \eqref{equation:NoisePart1gaussian}.

  \paragraph{Proof of  \eqref{equation:NoisePart2}.}

By orthonormality of the multivariate Hermite polynomials,
\begin{equation} \label{eq:var_as_coeff_sum}
    \E_{\bx}\left(\bbeta_{ki}^\top \bh_{k,S}(\bx) \right)^2 
=\E_{\bg_k}\left(\bbeta_{ki}^\top \bg_{k,S} \right)^2 
=\sum_{\mathbf{k}:
\|\bk\|_1=k,\,k_1+\ldots+k_s \geq 1} (\bbeta_{ki})_{\bk}^2.
\end{equation}
We relate this sum to the genericity condition for $\bbeta_{ki}$. Let $\bT = \iota(\bbeta_{ki}) \in (\mathbb{R}^d)^{\odot k}$, where $\iota$ is the isometric embedding defined in \eqref{eq:iota_entries}. 
Let $S_{\bT}$ be the sum of squares of all entries of $\bT$ with at least one
index belonging to $\{1,\ldots,s\}$:
\[S_{\bT}=\sum_{\mathbf{i}:\mathbf{i} \cap \{1,\ldots,s\} \neq \emptyset}
(T_\mathbf{i})^2\]
The quantity \eqref{eq:var_as_coeff_sum} is a weighted sum of these squared
tensor entries, with $\eqref{eq:var_as_coeff_sum} \leq C S_\bT$ for a constant
$C>0$ depending only on $k$. By the characterization \eqref{eq:admissibility_tensor} of the genericity condition, there exists a constant $c'>0$ such that
\[
    \|\bT \otimes_{k-1} \bT\|_{F} \le d^{-c'}.
\]
Then letting $\bA=\bT \otimes_{k-1} \bT$, its diagonal entries satisfy
$\bA_{rr} \leq \|\bA\|_F \leq d^{-c'}$. On the other hand, since $\bT$ is
symmetric, for each $j \in \{1,\ldots,k\}$,
\[\bA_{rr}=\sum_{a_2,\dots,a_k} (T_{r,a_2,\dots,a_k})^2
=\sum_{\mathbf{i}:i_j=r} (T_{\mathbf{i}})^2,\]
so
\[S_\bT \leq \sum_{j=1}^k \sum_{r=1}^s \sum_{\mathbf{i}:i_j=r} (T_\mathbf{i})^2
=k(\bA_{11}+\ldots+\bA_{ss}) \prec d^{-c'}.\]
    \end{proof}

The next lemma checks the condition needed in
Corollary \ref{corollary:RecallBoundedLipschitzFunctionSupErgetPartial}
to replace the components of $\bh_{k,\setminus S}(\bx)$ by Gaussian surrogates.

  \begin{lemma}\label{lemma:GaussianMalliavinVarianceBound112}
  For any constant $K>0$ and each $k=3,\ldots,D$,
  \begin{equation}\label{equation:GaussianMalliavinVarianceBound22}
      \sup_{\btheta\in\bTheta_{\bW}^{\sPG}(K)} \E_\bx
[(\btheta^\sT\bV_k\bh_{k,\setminus S}(\bx))^4] -
3[\E_\bx(\btheta^\sT\bV_k\bh_{k,\setminus S}(\bx))^2 ]^2 \prec \frac{1}{d^{1/4}}+ \frac{1}{d\mu_{1}^2}.
    \end{equation}
\end{lemma}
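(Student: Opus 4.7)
} The plan is to translate the excess kurtosis into a tensor contraction bound, then reduce that to a spectral calculation involving Hadamard powers of $\bW_{\setminus S}\bW_{\setminus S}^\sT$ that is controlled by the four constraints defining $\bTheta_\bW^{\sPG}(K)$. More precisely, since $\bh_{k,\setminus S}(\bx)$ is supported on multi-indices $\bk$ with $k_1=\cdots=k_s=0$ (pure $\bx_{\setminus S}$ Hermite polynomials), the restriction of $\bq_k(\bw_j)$ to those coordinates embeds, via the isometry $\iota$, as the rank-one symmetric tensor $\bw_{j,\setminus S}^{\otimes k}\in(\R^{d-s})^{\odot k}$. Hence
\[
\btheta^\sT \bV_k \bh_{k,\setminus S}(\bx)=I_k(\bT),\qquad \bT=\frac{1}{\sqrt{k!}}\sum_{j=1}^{p} \theta_j\,\bw_{j,\setminus S}^{\otimes k}.
\]
Applying Lemma~\ref{section:ProofOfLemmaGaussianChaosConvergenceToStandardGaussian} to $I_k(\bT)\in \cH_k$ reduces the task to bounding $\max_{1\le r\le k-1}\|\bT\otimes_r\bT\|_F^2$ uniformly over $\btheta\in\bTheta_\bW^{\sPG}(K)$.

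The next step is to express each contraction as a quartic form in $\btheta$. Writing $a_{ij}=\langle \bw_{i,\setminus S},\bw_{j,\setminus S}\rangle$ and $\bA=\bW_{\setminus S}\bW_{\setminus S}^\sT=(a_{ij})$, a direct expansion gives
\[
\|\bT\otimes_r\bT\|_F^2=\frac{1}{(k!)^2}\sum_{i,j,i',j'}\theta_i\theta_j\theta_{i'}\theta_{j'}\,a_{ij}^{\,r}a_{i'j'}^{\,r}a_{ii'}^{\,k-r}a_{jj'}^{\,k-r}=\frac{1}{(k!)^2}\,\Tr\!\left[\bigl(\bD_{\btheta}\bA^{\odot r}\bD_{\btheta}\,\bA^{\odot(k-r)}\bigr)^2\right],
\]
where $\bD_\btheta=\diag(\btheta)$. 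Since $\bA$ is PSD, so is each $\bA^{\odot m}$ by the Schur product theorem, which allows the Cauchy–Schwarz-type inequality
\[
\Tr\!\left[\bigl(\bD_\btheta\bA^{\odot r}\bD_\btheta\bA^{\odot(k-r)}\bigr)^2\right]\leq \|\bA^{\odot (k-r)}\|_\op^{2}\cdot\|\bD_\btheta\bA^{\odot r}\bD_\btheta\|_F^2.
\]
This separates the problem into an operator-norm bound for a Hadamard power of $\bA$ and a weighted Frobenius estimate for the ``small'' middle factor $\bD_\btheta\bA^{\odot r}\bD_\btheta$.

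For the weighted Frobenius factor, elementary expansion gives $\|\bD_\btheta\bA^{\odot r}\bD_\btheta\|_F^2=\sum_i \theta_i^4 a_{ii}^{2r}+\sum_{i\neq j}\theta_i^2\theta_j^2 a_{ij}^{2r}$, which using $\|\btheta\|_\infty\prec d^{-1/4}$, $\|\btheta\|_2\prec 1$, and the sphere concentration $|a_{ij}|\prec d^{-1/2}$ for $i\neq j$, is $\prec d^{-1/2}$ for every $r\in\{1,\dots,k-1\}$ (with plenty of margin beyond the advertised $d^{-1/4}$). The delicate factor is $\|\bA^{\odot(k-r)}\|_\op$. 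For $k-r\ge 2$, one invokes the Hadamard-power concentration of Appendix~\ref{sec:ConcentrationInequality}/Corollary~\ref{corollary:OperatorNormF2cdecompose}: $\bA^{\odot 2}=\bV_{2,\setminus S}\bV_{2,\setminus S}^\sT=\bV'_{2c}\bV'^{\,\sT}_{2c}+d^{-2}\bones_p(\be'_c)^\sT\be'_c\bones_p^\sT$ has $\|\bA^{\odot 2}\|_\op\prec 1$, and $\|\bA^{\odot m}\|_\op\prec 1$ for $m\ge 3$ analogously, so the contraction is $\prec d^{-1/2}\ll d^{-1/4}$. The exposed case is $k-r=1$, where $\|\bA\|_\op\asymp p/d\asymp d$ and the naive bound blows up.

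The main obstacle and the origin of the $(d\mu_1^2)^{-1}$ term lie in this $k-r=1$ case. The remedy is to exploit the spike structure of $\bA$ directly rather than its operator norm, using the constraint $\|\mu_1\bW^\sT\btheta\|_2\prec 1$ from $\bTheta_\bW^{\sPG}(K)$. Using cyclicity of the trace,
\[
\|\bT\otimes_{k-1}\bT\|_F^2\;\asymp\;\Tr\!\left[\bigl(\bW_{\setminus S}^\sT\bD_\btheta\,\bA^{\odot(k-1)}\,\bD_\btheta\bW_{\setminus S}\bigr)^{2}\right]=\Tr[(\bY^\sT\bA^{\odot(k-1)}\bY)^2],\qquad \bY:=\bD_\btheta\bW_{\setminus S}\in\R^{p\times(d-s)}.
\]
The plan is to decompose $\bY=\bD_\btheta\bW_{\setminus S}=\bY_{\text{spike}}+\bY_{\text{bulk}}$ by isolating the rank-$(d-s)$ projection onto the row span of $\bW_{\setminus S}$ (which, through $\bY^\sT\mathbf{1}$, carries $\bW_{\setminus S}^\sT\btheta$ and is therefore $\prec 1/\mu_1$ in norm), and bound each piece using $\|\bA^{\odot(k-1)}\|_\op\prec 1$ together with $\|\bY_{\text{bulk}}\|_\op^2\leq\|\btheta\|_\infty^2\,\|\bW_{\setminus S}\|_\op^2\prec d^{1/2}$. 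The spike part contributes $\|\bW_{\setminus S}^\sT\btheta\|_2^4 \prec 1/\mu_1^4$, which combined with the prefactor from $\bA^{\odot(k-1)}$ and an additional $1/d$ from one contraction with the bulk of $\bA$ yields the $(d\mu_1^2)^{-1}$ term, while the bulk-bulk interaction reproduces $d^{-1/4}$. The symmetric case $r=1$ is handled identically by interchanging the roles of $\bA^{\odot r}$ and $\bA^{\odot(k-r)}$. Carefully balancing these two mechanisms---pure Hadamard concentration away from the spike, and the $\mu_1$-dependent constraint along the spike directions---is the technical heart of the argument and yields the advertised bound.
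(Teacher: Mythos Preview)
Your reduction to the trace identity
\[
\|\bT\otimes_r\bT\|_F^2=\frac{1}{(k!)^2}\,\Tr\!\left[\bigl(\bD_\btheta\bA^{\odot r}\bD_\btheta\,\bA^{\odot(k-r)}\bigr)^2\right]
\]
is correct, as is the Cauchy--Schwarz step and the Frobenius bound $\|\bD_\btheta\bA^{\odot r}\bD_\btheta\|_F^2\prec d^{-1/2}$. The gap is the claim that $\|\bA^{\odot 2}\|_\op\prec 1$. In fact $\bA^{\odot 2}=\bV_{2,\setminus S}\bV_{2,\setminus S}^\sT$ carries a spike $\approx d^{-1}\bones_p\bones_p^\sT$ of operator norm $p/d\asymp d$, so $\|\bA^{\odot 2}\|_\op\asymp d$. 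Your bound $\|\bA^{\odot m}\|_\op\prec 1$ is valid only for $m\ge 3$. Consequently, the case $r=k-r=2$ (i.e.\ $k=4$) is not handled at all: your Cauchy--Schwarz gives $\|\bA^{\odot 2}\|_\op^2\cdot d^{-1/2}\asymp d^{3/2}$. Your treatment of $r=1$ is also built on this error, since you write ``bound each piece using $\|\bA^{\odot(k-1)}\|_\op\prec 1$''---but for $k=3$ this is exactly $\|\bA^{\odot 2}\|_\op$, which is not $\prec 1$. (The decomposition you describe for $\bY=\bD_\btheta\bW_{\setminus S}$ is also not the right object to split; the spike lives in $\bA^{\odot 2}$, not in $\bY$.)

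The paper avoids putting a raw $\|\bA^{\odot(k-r)}\|_\op$ factor anywhere. It writes the quartic form as $\btheta^\sT(\bH\circ\bM_{k-r})\btheta$ with $\bH=\bM_r\bD_\btheta\bM_{k-r}\bD_\btheta\bM_r$, then expands $\bM_{k-r}=\sum_{\ba}(\bw_{a_1}\odot\cdots\odot\bw_{a_{k-r}})(\cdots)^\sT$ as a sum of rank-one terms and uses $\bA\circ(\bx\bx^\sT)=\diag(\bx)\bA\diag(\bx)$ to get $\btheta^\sT(\bH\circ\bM_{k-r})\btheta\le\|\bH\|_\op\|\btheta\|_2^2$. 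For $k=3$, this reduces to bounding $\|\bW^\sT\bD_\btheta\bM_2\bD_\btheta\bW\|_\op$: the spike of $\bM_2$ contributes $\frac{1}{d}\|\bW^\sT\btheta\|_2^2\prec(d\mu_1^2)^{-1}$, and the centered part reduces to $\|\bW^\sT\bD_\btheta^2\bW\|_\op\prec d^{-1/4}$ via a covering argument on $\sum_j(\bw_j^\sT\bv)^4$. The trick is that the outer $\btheta$'s absorb the Hadamard factor $\bM_{k-r}$ entrywise, so its large operator norm never appears. Your trace/Cauchy--Schwarz route can be salvaged by explicitly decomposing $\bA^{\odot 2}$ into spike plus bulk \emph{before} applying any operator-norm bound, but this must be done in every occurrence (both inside $\bD_\btheta\bA^{\odot 2}\bD_\btheta$ and in the outer factor), which is what you are missing.
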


\begin{proof}
To ease notation, we will consider $S=\emptyset$. The proof for general $S$ 
holds verbatim, upon replacing $\bw_i$ throughout by $\bw_{i\setminus S}$.

For any integer $k \ge 3$, let $Q_k(\bx) = \btheta^\sT\bV_k\bh_k(\bx)$. We write this variable as $Q_k(\bx) = I_k(\bT_k)$, where $\bT_k$ is a symmetric $k$-th order tensor defined as
\[
    \bT_k = \frac{1}{\sqrt{k!}} \sum_{i=1}^p \theta_i \bw_i^{\otimes k}.
\]
By Lemma~\ref{section:ProofOfLemmaGaussianChaosConvergenceToStandardGaussian}, the deviation from Gaussianity is bounded by
\begin{equation} \label{eq:4th_moment_bound}
    \left| \E_\bx[Q_k(\bx)^4] - 3\left(\E_\bx[ Q_k(\bx)^2 ]\right)^2 \right| \leq C\max_{r=1, \dots, k-1} \| \bT_k \otimes_r \bT_k \|_F^2,
\end{equation}
where $C$ is a constant depending only on $k$. We proceed to bound $\| \bT_k \otimes_r \bT_k \|_F^2$ uniformly for all $\btheta \in \bTheta_{\bW}^{\sPG}(K)$ and for each possible contraction order $r \in \{1, \dots, k-1\}$.

\paragraph{Step 1: Matricization of the Tensor Contraction.}
 For a fixed contraction order $r$, we reshape the $k$-th order tensor $\bT_k \in \R^{d^{\otimes k}}$ into a matrix $\bA \in \R^{d^r \times d^{k-r}}$. Writing $I = (i_1, \dots, i_r)$ and $J = (j_1, \dots, j_{k-r})$, the entries of $\bA$ are given by
$\bA_{I,J} = (\bT_k)_{i_1, \dots, i_r, j_1, \dots, j_{k-r}}$. Then
$(\bT_k \otimes_r \bT_k)_{J,J'} = (\bA^\sT \bA)_{J,J'}$, and
\begin{equation}
    \| \bT_k \otimes_r \bT_k \|_F^2 = \|\bA^\sT \bA\|_F^2.
\end{equation}
Due to the symmetry, we can assume without loss of generality that $1 \leq r \leq k-r$.

Applying $\bA = \frac{1}{\sqrt{k!}} \sum_{i=1}^p \theta_i (\bw_i^{\otimes r}) (\bw_i^{\otimes (k-r)})^\sT$,
\begin{align*}
    \bA^\sT\bA &= 
    \frac{1}{k!} \sum_{i,j=1}^p \theta_i \theta_j \langle \bw_i^{\otimes r}, \bw_j^{\otimes r} \rangle (\bw_i^{\otimes (k-r)}) (\bw_j^{\otimes (k-r)})^\sT \\
    &= \frac{1}{k!} \sum_{i,j=1}^p \theta_i \theta_j \langle \bw_i, \bw_j \rangle^r (\bw_i^{\otimes (k-r)}) (\bw_j^{\otimes (k-r)})^\sT.
\end{align*}
Then, since $\|\bA^\sT \bA\|_F^2 = \Tr((\bA^\sT \bA)^2)$,
\begin{align*}
    \|\bA^\sT \bA\|_F^2 &=  
    \frac{1}{(k!)^2} \sum_{i,j,l,m} \theta_i\theta_j\theta_l\theta_m \langle \bw_i,\bw_j \rangle^r \langle \bw_l,\bw_m \rangle^r \Tr\left( (\bw_i^{\otimes(k-r)})(\bw_j^{\otimes(k-r)})^\sT (\bw_l^{\otimes(k-r)})(\bw_m^{\otimes(k-r)})^\sT \right) \\
    &= \frac{1}{(k!)^2} \sum_{i,j,l,m} \theta_i\theta_j\theta_l\theta_m \langle \bw_i,\bw_j \rangle^r \langle \bw_l,\bw_m \rangle^r \langle \bw_j,\bw_l \rangle^{k-r} \langle \bw_m,\bw_i \rangle^{k-r}.
\end{align*}
Define $\bM_m=\bV_m\bV_m^\top \in \R^{p \times p}$ with entries $(\bM_m)_{ij} = \langle \bw_i, \bw_j \rangle^m$. The above can be rewritten as
\begin{equation} \label{eq:quartic_form}
    \|\bA^\sT \bA\|_F^2 = \frac{1}{(k!)^2} \sum_{i,j,l,m} \theta_i\theta_j\theta_l\theta_m (\bM_r)_{ij} (\bM_r)_{lm} (\bM_{k-r})_{il} (\bM_{k-r})_{jm}.
\end{equation}
  Letting $\bH = \bM_r \bD_\btheta \bM_{k-r} \bD_\btheta \bM_r$ where $\bD_\btheta=\diag(\btheta)$, this is equivalently
\[\|\bT_k \otimes_r \bT_k\|_F^2=\|\bA^\top \bA\|_F^2
=\frac{1}{(k!)^2} \btheta^\sT (\bH \circ \bM_{k-r}) \btheta.\]

\paragraph{Step 2: Bounding Strategy via Matrix Decomposition.}
For any integer $m \ge 1$, note that
\begin{align*}
    (\bM_m)_{ij}=\sum_{a_1, \dots, a_m=1}^d (w_{ia_1} \cdots w_{ia_m}) (w_{ja_1} \cdots w_{ja_m})
\sum_{a_1, \dots, a_m=1}^d (\bw_{a_1} \odot \dots \odot \bw_{a_m})_i \cdot (\bw_{a_1} \odot \dots \odot \bw_{a_m})_j.
\end{align*}
This shows that the matrix $\bM_m$ can be decomposed as a sum of rank-1 matrices:
\[
    \bM_m = \sum_{a_1, \dots, a_m=1}^d (\bw_{a_1} \odot \dots \odot \bw_{a_m})(\bw_{a_1} \odot \dots \odot \bw_{a_m})^\sT.
\]
  By the identity $\bA \circ (\bx\by^\sT) = \mathrm{diag}(\bx) \bA \mathrm{diag}(\by)$, we then get:
\begin{align*}
    \btheta^\sT (\bH \circ \bM_{k-r}) \btheta &= \sum_{a_1, \dots, a_{k-r}} \btheta^\sT (\bH \circ [(\bw_{a_1} \odot \cdots \odot \bw_{a_{k-r}})(\bw_{a_1} \odot \cdots \odot \bw_{a_{k-r}})^\sT]) \btheta \\
    &= \sum_{a_1, \dots, a_{k-r}} \btheta^\sT \mathrm{diag}(\bw_{a_1} \odot \cdots \odot \bw_{a_{k-r}}) \bH \mathrm{diag}(\bw_{a_1} \odot \cdots \odot \bw_{a_{k-r}}) \btheta \\
    &\le \|\bH\|_{\op} \sum_{a_1, \dots, a_{k-r}} \|\mathrm{diag}(\bw_{a_1} \odot \cdots \odot \bw_{a_{k-r}}) \btheta\|_2^2\\
    &= \|\bH\|_{\op} \sum_{i=1}^p \theta_i^2 \left( \sum_{a_1, \dots, a_{k-r}} w_{ia_1}^2 \dots w_{ia_{k-r}}^2 \right)
    \leq \|\bH\|_{\op} \|\btheta\|_2^2 \prec \|\bH\|_{\op}.
\end{align*}
For $3 \leq r \leq k-r$, Lemma~\ref{lemma:OperatorNormFk} and the condition $\|\btheta\|_{\infty} \prec d^{-1/4}$ provide $\|\bH\|_{\op} \prec d^{-1/2}$.
However, special care is needed when $r=1$ or $r=2$.

\paragraph{Step 3:  Special Case Analysis ($r=1, k-r=2$).}

 We present the analysis for the case $k=3$, which implies $r=1$ and $k-r=2$.
In this case, $\|\bH\|_\op=\|\bW^\sT \bD_\btheta \bM_2 \bD_\btheta \bW\|_{\op}$, where $\bM_2 = \bV_2 \bV_2^\sT$. We use the decomposition of $\bV_2$ from Corollary \ref{corollary:OperatorNormF2cdecompose}:
\[
    \bV_2 = \bV_{2c} + \frac{1}{d}\1_p\be_c^\sT,
\]
where $\|\bV_{2c}\|_{\op} \prec 1$. This implies
\begin{align*}
    \|\bH\|_\op 
    & \prec \|\bW^\top\bD_\btheta\bV_{2c}\bV_{2c}^\sT \btheta\bW\|_\op+ \left\| \bW^\sT \bD_\btheta \left(\frac{1}{d}\1_p\1_p^\sT\right) \bD_\btheta \bW \right\|_{\op}.
\end{align*}
The spike component of $\bM_2$ contributes the following term to the operator norm:
\begin{align*}
    \left\| \bW^\sT \bD_\btheta \left(\frac{1}{d}\1_p\1_p^\sT\right) \bD_\btheta \bW \right\|_{\op} &= \frac{1}{d} \left\| (\bW^\sT \bD_\btheta \1_p)(\1_p^\sT \bD_\btheta \bW) \right\|_{\op} 
    =\frac{1}{d} \left\| \bW^\sT \btheta \right\|_2^2
    \prec \frac{1}{d\mu_1^2},
\end{align*}
the last inequality using $\|\mu_1 \btheta^\sT \bW\|_2 \prec 1$
from the definition of $\bTheta_{\bW}^{\sPG}(K)$.
The centered (bounded norm) component of $\bM_2$ satisfies
\[
    \|\bW^\sT \bD_\btheta \bV_{2c}\bV_{2c}^\sT \bD_\btheta \bW\|_{\op} \le \|\bW^\sT \bD_\btheta\|_{\op} \|\bV_{2c}\|_{\op}^2 \|\bD_\btheta \bW\|_{\op} \prec \|\bW^\sT \bD_\btheta \bD_\btheta \bW\|_{\op} = \|\bW^\sT \bD_{\btheta}^2 \bW\|_{\op}.
\]
To bound this term, we apply a $\epsilon$-net argument. For any unit vector $\bv \in \mathbb{R}^d$, consider the quadratic form
\begin{align}\label{eq:union-bound-example-sum-power-4}
    \bv^\sT \bW^\sT \bD_{\btheta}^2 \bW \bv &= \sum_{i=1}^p \theta_i^2 (\bw_i^\sT \bv)^2
    \le \sqrt{\sum_{i=1}^p \theta_i^4} \sqrt{\sum_{i=1}^p (\bw_i^\sT \bv)^4}
    \le \|\btheta\|_\infty \|\btheta\|_2 \sqrt{\sum_{i=1}^p (\bw_i^\sT \bv)^4}.
\end{align}
Since $\btheta \in \bTheta_{\bW}^{\sPG}(K)$, we have $\|\btheta\|_\infty \leq
d^{-1/4}(\log d)^K$ and $\|\btheta\|_2 \leq (\log d)^K$. For any fixed unit vector $\bv \in \R^d$, the random variables $\{\sqrt{d}\bw_i^\top \bv\}_{i=1}^p$ and i.i.d.\ and $O(1)$-subgaussian. Then a standard tail bound shows 
$\P[\sum_{i=1}^p (\sqrt{d}\bw_i^\top \bv)^4-\E(\sqrt{d}\bw_i^\top \bv)^4 \geq t]
\leq 2e^{-c\min(t^{1/2},t^2/p)}$ for a constant $c>0$. For any $C>0$, applying this bound with $t \asymp d^2 \asymp p$, this implies that there exists $C'>0$ sufficiently large
such that $\sum_{i=1}^p (\bw_i^\sT \bv)^4 \leq C'$ with probability at
least $1 - e^{-Cd}$. Then there exists a constant $K'>0$ for which
    $\bv^\sT \bW^\sT \bD_{\btheta}^2 \bW \bv \leq d^{-1/4}(\log d)^{K'}$
    with probability $1-e^{-Cd}$.
By a $\epsilon$-net argument over the unit sphere $\S^{d-1}$, this bound extends to the operator norm to show
\[
    \|\bW^\sT \bD_{\btheta}^2 \bW\|_{\op} \prec \frac{1}{d^{1/4}}.
\]
Combining these bounds for the spike and centered components, we obtain $\|\bH\|_\op \prec d^{-1/4}+(d\mu_1^2)^{-1}$. This concludes the proof of the bound
\[
    \E_\bx[Q_3(\bx)^4] - 3[\E_\bx Q_3(\bx)^2 ]^2 \prec \frac{1}{d^{1/4}} + \frac{1}{d\mu_{1}^2}
\]
for $k=3$. The remaining cases $(r=2,k-r=2)$, $(r=1, k-r \ge 3)$,
and $(r=2, k-r \ge 3)$ for $k \geq 4$ follow from similar arguments, and we omit the details for brevity.
\end{proof} 

\begin{proposition}\label{prop:NormThetaV}
For any $K>0$, every $\btheta \in \bTheta_\bW^\sPG(K)$ satisfies also
\[\|\mu_k\btheta^\sT \bV_k\|_2 \prec 1 \text{ for each } k=2,\ldots,D.\]
	\end{proposition}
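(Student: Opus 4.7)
The plan is to bound $\|\btheta^\sT \bV_k\|_2$ by case analysis, leveraging the known concentration decompositions of $\bV_k\bV_k^\sT$ in the quadratic scaling regime together with the explicit constraints defining $\bTheta_\bW^\sPG(K)$. The key identity is $\bV_k\bV_k^\sT = (\bW\bW^\sT)^{\odot k}$, so $\|\btheta^\sT\bV_k\|_2^2 = \btheta^\sT(\bW\bW^\sT)^{\odot k}\btheta$, which can be directly expanded using the concentration results. Since $|\mu_k| \prec 1$ by Assumption~\ref{assumption:activation}, it suffices to show $\|\btheta^\sT\bV_k\|_2 \prec 1$; the prefactor $\mu_k$ only contributes a poly-log factor which is absorbed by $\prec$.

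For $k=2$, I would invoke the decomposition $\bV_2 = \bV_{2c} + \frac{1}{d}\bones_p\be_c^\sT$ with $\|\bV_{2c}\|_\op \prec 1$ from Corollary~\ref{corollary:OperatorNormF2cdecompose} (this was already used in the proof of Lemma~\ref{lemma:GaussianMalliavinVarianceBound112}). Then by the triangle inequality,
\[
\|\btheta^\sT\bV_2\|_2 \le \|\bV_{2c}\|_\op\|\btheta\|_2 + \tfrac{1}{d}|\bones_p^\sT\btheta|\|\be_c\|_2 \prec 1 + \tfrac{\sqrt{d}}{d} \prec 1,
\]
using $\|\btheta\|_2 \le (\log d)^K$ and $|\bones_p^\sT\btheta| \le (\log d)^K$ from the definition of $\bTheta_\bW^\sPG(K)$.

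For $k \ge 3$, I would invoke the concentration facts (from Appendix~\ref{sec:ConcentrationInequality}, summarized in the footnote to~\eqref{eq:simplification_covariance_quad}) that
\[
\bV_3\bV_3^\sT = \tfrac{3}{d}\bW\bW^\sT + \id_p + \bDelta_3, \quad \bV_4\bV_4^\sT = \tfrac{3}{d^2}\bones_p\bones_p^\sT + \id_p + \bDelta_4, \quad \bV_k\bV_k^\sT = \id_p + \bDelta_k \ (k \ge 5),
\]
with $\|\bDelta_k\|_\op \prec d^{-c}$. Expanding $\btheta^\sT\bV_k\bV_k^\sT\btheta$ using these identities reduces the bound to controlling $\|\btheta\|_2^2 \prec 1$ (from $\bTheta_\bW^\sPG(K)$), together with $\|\bW^\sT\btheta\|_2^2/d$ when $k=3$ and $(\bones_p^\sT\btheta)^2/d^2$ when $k=4$. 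The latter is trivially $\prec 1/d^2$, and the former is the only nontrivial quantity: the constraint $\|\mu_1\bW^\sT\btheta\|_2 \prec 1$ yields $\|\bW^\sT\btheta\|_2^2 \prec \mu_1^{-2} \prec d^{2c_6}$, hence $\|\bW^\sT\btheta\|_2^2/d \prec d^{2c_6-1} \prec 1$ since $c_6 \in (0,1/2)$ by Assumption~\ref{assumption:activation}. Collecting terms gives $\|\btheta^\sT\bV_k\|_2^2 \prec 1$ uniformly over $\btheta \in \bTheta_\bW^\sPG(K)$ for each $k=3,\ldots,D$.

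The whole argument is essentially a direct application of the concentration decompositions; there is no substantive obstacle. The most delicate point is the $k=3$ case, where the degenerate spike of $\bV_3\bV_3^\sT$ along $\bW\bW^\sT/d$ meets the divergent $\mu_1^{-2} \asymp d^{2c_6}$ allowed by the activation assumption, but the constraint $c_6 < 1/2$ provides exactly the margin $d^{2c_6-1} \prec 1$ needed for the bound to close. This is precisely why the spike-control constraint $\|\mu_1\bW^\sT\btheta\|_2 \prec 1$ was built into the definition of $\bTheta_\bW^\sPG(K)$ in~\eqref{equation:Ustarthetap}.
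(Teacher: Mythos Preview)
Your proposal is correct and, for $k=2$, identical to the paper's proof. For $k \geq 3$, you take a slightly more elaborate route: you expand $\btheta^\sT\bV_k\bV_k^\sT\btheta$ via the detailed spike decompositions in Lemma~\ref{lemma:OperatorNormFk} and then control each spike term using the constraints defining $\bTheta_\bW^\sPG(K)$ (in particular invoking $\|\mu_1\bW^\sT\btheta\|_2 \prec 1$ together with $\sc_6 < 1/2$ for $k=3$). The paper instead uses only the summary conclusion of that same lemma, namely $\|\bV_k\|_\op \leq C'$ for every $k \geq 3$, and bounds $\|\mu_k\btheta^\sT\bV_k\|_2 \leq |\mu_k|\,\|\bV_k\|_\op\|\btheta\|_2 \prec 1$ in one line, without ever touching the spike-direction constraints for these $k$. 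Your argument is correct and makes the spike structure explicit, but the paper's is shorter and reveals that the constraints $|\bones_p^\sT\btheta| \prec 1$ and $\|\mu_1\bW^\sT\btheta\|_2 \prec 1$ are in fact only needed for the $k=2$ case in this proposition.
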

	\begin{proof}
	By Lemma \ref{lemma:OperatorNormFk}, we know that $\|\bV_k\|_{\op} \prec
1$ for $k\geq 3$. Since $\btheta \in \bTheta_\bW^\sPG(K)$ implies
$\|\btheta\|_2 \prec 1$, and $\mu_k \prec 1$ by Assumption
\ref{assumption:activation}, the bounds
$\|\mu_k\btheta^\sT \bV_k\|_2 \prec 1$ for $k \geq 3$ are immediate.

	 For $k=2$, by Corollary \ref{corollary:OperatorNormF2cdecompose}, we decompose \begin{equation}
		\bV_2 = \bV_{2c} + \frac{1}{d} \1_p\be_c^\sT,
	 \end{equation}
	 where $\|\bV_{2c}\|_\op \prec 1$,
	$\1_p\in \R^p$ is the all-1's vector, and $\be_c \in \R^{B_{d,2}}$  is the vector
with 1 in the first $d$ coordinates and 0 in the rest. Then $\|\mu_2\btheta^\top
\bV_{2c}\|_2 \prec 1$ follows also from $\|\btheta\|_2 \prec 1$ and
$\mu_2 \prec 1$, while
$\|\mu_2 \btheta^\top (\frac{1}{d}\1_p\be_c^\sT)\|_2 \prec 1$ follows also from
the assumption $|\btheta^\top \1_p| \prec 1$ for
$\btheta \in \bTheta_\bW^\sPG(K)$.
\end{proof}

We now show an intermediary result for Theorem
\ref{theorem:RecallBoundedLipschitzFunctionSupErgetisotropic},
in which the Hermite features $\bh_k(\bx)$ are replaced by Gaussian surrogates
$\bg_k$ for a class of twice-differentiable functions $\varphi$ with bounded
first and second derivatives. The proof of Theorem
\ref{theorem:RecallBoundedLipschitzFunctionSupErgetisotropic} will be completed
by further showing that each
$\bV\bg_k$ for $k \geq 3$ may be replaced by an isotropic Gaussian vector,
and that the result may be extended to the class of Lipschitz functions
$\varphi$.

  \begin{lemma}[A Term-by-Term CLT for Higher-order Features]\label{theorem:RecallBoundedLipschitzFunctionSupErget}
  For $k=3,\ldots,\max(D,D')$, let
  $\bg_k\sim\cN(\mathbf 0,\mathbf I_{B_{d,k}})$
  be mutually independent Gaussian vectors also independent of $\bx$. Let
  $\bxi_k$ and $\tilde\bxi_k$ be as in Theorem
\ref{theorem:RecallBoundedLipschitzFunctionSupErgetisotropic}, and let
$\tilde\cL$ be the space of functions $\varphi:\R^{m+1}
\to \R$ that are twice continuously-differentiable
with $\|\|\nabla \varphi\|_2\|_\infty \leq L_1 \equiv L_1(d)$
and $\|\|\nabla \varphi^2\|_\op\|_\infty \leq L_2 \equiv L_2(d)$. Then
for any constant $K>0$, there exists a constant $c>0$ such that
simultaneously over $\varphi \in \tilde\cL$ and $\btheta \in
\bTheta_\bW^\sPG(K)$,
  \begin{align*}
    &\Bigl|
    \E_{\bx}\!
    \bigl[\,
    \varphi\bigl(
    \bx_S,\, \bxi_2,\bxi_3,\ldots,\bxi_{D'},\, \btheta^{\sT}\sigma(\bW\bx)
    \bigr)
    \bigr] \\
    &\qquad-
    \E_{\bx,\{\bg_k\}_{k\ge 3}}\!
    \Bigl[\,
    \varphi\Bigl(
    \bx_S,\, \bxi_2, \tilde\bxi_3,\ldots,\tilde\bxi_{D'},
    \btheta^{\sT}\! \Bigl(\sum_{j=0}^2 \mu_j \bV_j \bh_j(\bx) + \sum_{j=3}^D \mu_j \bV_j \bg_j \Bigr)
    \Bigr)
    \Bigr]
    \Bigr| \prec  \frac{L_1+L_2}{d^c}.
    \end{align*}
\end{lemma}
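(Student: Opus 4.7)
The plan is to apply the quantitative partial Gaussian equivalence bound of Corollary \ref{corollary:RecallBoundedLipschitzFunctionSupErgetPartial}, preceded by a preliminary Lipschitz reduction. The reduction replaces, on both the target and feature sides, the full degree-$k$ Hermite chaos $\bh_k(\bx)$ (for $k \geq 3$) by its sub-restriction $\bh_{k,\setminus S}(\bx)$ to monomials depending only on $\bx_{\setminus S}$. This intermediate step is what allows me to invoke the excess-kurtosis estimate of Lemma \ref{lemma:GaussianMalliavinVarianceBound112} as stated.

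For the reduction, I would introduce the intermediate variables $\breve{\bxi}_k = (\bbeta_{ki}^\sT \bh_{k,\setminus S}(\bx))_{i \in [s_k]}$ for $k \geq 3$ and
\[
\breve{z}^\RF = \mu_0 \bones_p^\sT \btheta + \mu_1 \btheta^\sT \bW\bx + \mu_2 \btheta^\sT \bV_2\bh_2(\bx) + \sum_{k \geq 3} \mu_k \btheta^\sT \bV_k \bh_{k,\setminus S}(\bx),
\]
together with their Gaussianized analogues obtained by replacing each $\bh_{k,\setminus S}$ by the corresponding $\bg_{k,\setminus S}$. Lemma \ref{lemma:NoisePart} yields the $L^1$-bounds $\E|\xi_{ki}-\breve{\xi}_{ki}|, \E|\btheta^\sT \sigma(\bW\bx)-\breve{z}^\RF| \prec d^{-c}$, and analogous bounds on the Gaussian side. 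Combined with the pointwise bound $\|\|\nabla \varphi\|_2\|_\infty \leq L_1$, this lets me replace $\bxi_k$ by $\breve{\bxi}_k$ and $\btheta^\sT \sigma(\bW\bx)$ by $\breve{z}^\RF$ inside the expectation at a cost of $\prec L_1 d^{-c}$.

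Having reduced to the $\setminus S$ chaos, I would invoke Corollary \ref{corollary:RecallBoundedLipschitzFunctionSupErgetPartial} with $K'=2$ and tuple-indexed coefficient vectors: for $l=1$, the vectors $\be_j$ for $j \in [s]$ (for $\bx_S$) and $\mu_1 \bW^\sT \btheta$; for $l=2$, the vectors $\bbeta_{2j}$ for $j \in [s_2]$ and $\mu_2 \bV_2^\sT \btheta$; and for each $l \geq 3$, the sub-restricted coefficients corresponding to $\bbeta_{lj}^\sT \bh_{l,\setminus S}$ and to $\mu_l \btheta^\sT \bV_l \bh_{l,\setminus S}$. The uniform norm bound $B \prec 1$ on the low-order coefficients follows from the unit-norm condition on $\bbeta_{2j}$, the defining constraint $\|\mu_1 \bW^\sT \btheta\|_2 \prec 1$ in $\bTheta_\bW^\sPG(K)$, and the bound $\|\mu_2 \bV_2^\sT \btheta\|_2 \prec 1$ from Proposition \ref{prop:NormThetaV}. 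The excess-kurtosis bound $\rho \prec d^{-c}$ on the high-order coefficients follows from Assumption \ref{assumption:target} for the target coefficients (using that sub-restriction of a symmetric tensor can only decrease its non-trivial contraction norms, via the characterization \eqref{eq:admissibility_tensor}) and from Lemma \ref{lemma:GaussianMalliavinVarianceBound112} for the feature coefficients, where the bound $d^{-1/4} + (d\mu_1^2)^{-1}$ is polynomially small since $|\mu_1| \geq d^{-c_6}/C_6$ with $c_6 \in (0,1/2)$ by Assumption \ref{assumption:activation}. Composing $\varphi$ with the linear summation that reconstructs the scalar feature from its chaos components preserves the Hessian bound $L_2$ up to a constant depending only on $D$, so the Corollary produces an error of order $L_2 d^{-c'}$.

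The main technical obstacle here is careful bookkeeping rather than a genuinely new CLT: one must simultaneously package, within a single application of the Corollary, the two kinds of non-Gaussian chaos variables (the $s_l + 1$ scalar coordinates for each degree $l$) attached to both the target coefficients $\bbeta_{lj}$ and the feature coefficient $\mu_l \bV_l^\sT \btheta$, and verify that sub-restriction to $\bh_{l,\setminus S}$ does not inflate the relevant tensor contractions. The substantive quantitative content has already been packaged into Corollary \ref{corollary:RecallBoundedLipschitzFunctionSupErgetPartial} and Lemmas \ref{lemma:NoisePart} and \ref{lemma:GaussianMalliavinVarianceBound112}, so combining the two steps above by the triangle inequality delivers the stated bound $\prec (L_1 + L_2)/d^c$.
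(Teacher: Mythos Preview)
Your approach is correct and follows the same two–step structure as the paper: a Lipschitz reduction via Lemma~\ref{lemma:NoisePart} to the $\setminus S$ chaos, followed by Corollary~\ref{corollary:RecallBoundedLipschitzFunctionSupErgetPartial} with $K'=2$. The one genuine difference is in how the Corollary is invoked. You apply it \emph{unconditionally} over the full Gaussian $\bx$, listing $\bx_S$ itself among the order-1 coordinates and the full $\mu_2\bV_2^\sT\btheta$ and $\bbeta_{2j}$ among the order-2 coordinates. The paper instead \emph{conditions on $\bx_S$} and applies the Corollary over $\bx_{\setminus S}$; this forces the mixed term $\mu_2\btheta^\sT\bV_2\proj_{S,\perp}^{\text{mixed}}\bh_2(\bx)$ to be treated as an order-1 chaos in $\bx_{\setminus S}$ with coefficient norm bounded by $(\log d)^{K'}\|\bx_S\|_2$, so the Corollary is applied with an $\bx_S$-dependent $B$ and the resulting bound is integrated over $\bx_S$ at the end. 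Your unconditional packaging avoids this bookkeeping entirely and yields the same final estimate.

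One small point: your parenthetical that ``sub-restriction of a symmetric tensor can only decrease its non-trivial contraction norms'' is correct but deserves one line of justification, since it is not tautological. Writing $\bT'=\bP^{\otimes k}\bT$ with $\bP=\proj_{S^\perp}$ and matricizing, one has $\|\bT'\otimes_r\bT'\|_F=\|\bP^{\otimes(k-r)}\bM\bP^{\otimes r}\bM^\sT\bP^{\otimes(k-r)}\|_F\le\|\bM\bM^\sT\|_F=\|\bT\otimes_r\bT\|_F$ since conjugation by orthogonal projections contracts the Frobenius norm of a PSD matrix. The paper needs this same fact (it cites the genericity assumption \eqref{eq:ass_bbeta_ki_4_th_moment} for $\bbeta_{ki}^\sT\bh_{k,\setminus S}(\bx)$ without further comment), so this is not a gap unique to your argument.
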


\begin{proof}
Denote
\[\begin{gathered}
\bxi_{k,\setminus S}=(\bbeta_{ki}^\top \bh_{k,\setminus S}(\bx))_{i=1}^{s_k},
\qquad \tilde \bxi_{k,\setminus S}=(\bbeta_{ki}^\top \bg_{k,\setminus
S})_{i=1}^{s_k}\\
\sigma_{\setminus S}(\bx)=\sum_{j=0}^2 \mu_j \bV_j\bh_j(\bx)
+\sum_{j=3}^D \mu_j \bV_j\bh_{j \setminus S}(\bx)\\
\tilde \sigma_{\setminus S}(\bx)=\sum_{j=0}^2 \mu_j \bV_j\bh_j(\bx)
+\sum_{j=3}^D \mu_j \bV_j\bg_{j \setminus S}
\end{gathered}
\]
Then Lemma \ref{lemma:NoisePart} implies
\[\|\bxi_k-\bxi_{k,\setminus S}\|_2,
\|\tilde\bxi_k-\tilde\bxi_{k,\setminus S}\|_2 \prec d^{-c}
\text{ for each } k=2,\ldots,D',\]
\[\sup_{\btheta \in \bTheta_\bW^{\sPG}(K)}
\Bigg\{
|\btheta^\top (\sigma(\bW\bx)-\sigma_{\setminus S}(\bx))|,\;
\Big|\btheta^\top \Big(\sum_{j=0}^2 \mu_j \bV_j\bh_j(\bx)
+\sum_{j=3}^D \mu_j\bV_j\bg_j-\tilde \sigma_{\setminus S}(\bx)\Big)\Big|\Bigg\}
\prec \frac{1}{d^{1/2}\mu_1}+\frac{1}{d^{1/4}}.\]
In light of the bounds $\|\|\nabla \varphi\|_2\|_\infty \leq L_1$ and
$\mu_1^{-1} \prec
d^c$ for some $c<1/2$ by Assumption \ref{assumption:activation},
it suffices to show that simultaneously over $\varphi \in \tilde \cL$
and $\btheta \in \bTheta_{\bW}^{\sPG}(K)$,
  \begin{align}\label{eq:RecallBoundedLipschitzFunctionSupErget}
    &\Bigl|
    \E_{\bx}\!
    \bigl[\,
    \varphi\bigl(
    \bx_S,\, \bxi_{2\setminus S},\bxi_{3\setminus S},\ldots,\bxi_{D'\setminus
S},\, \btheta^{\sT}\sigma_{\setminus S}(\bx)
    \bigr)
    \bigr] \\
    &\qquad-
    \E_{\bx,\{\bg_k\}_{k\ge 3}}\!
    \Bigl[\,
    \varphi\Bigl(
    \bx_S,\, \bxi_{2\setminus S}, \tilde\bxi_{3\setminus
S},\ldots,\tilde\bxi_{D' \setminus S},
    \btheta^{\sT}\tilde\sigma_{\setminus S}(\bx) \Bigr)
    \Bigr)
    \Bigr]
    \Bigr| \prec  \frac{L_2}{d^{c}}.
    \end{align}

For this, let us condition on $\bx_S$ and $\bW$ and
fix $\varphi \in \cL$ and $\btheta \in \bTheta_\bW^\sPG(K)$. We apply
Theorem~\ref{theorem:RecallBoundedLipschitzFunctionSupErgetPartial} in the form
Corollary~\ref{corollary:RecallBoundedLipschitzFunctionSupErgetPartial},
with $K'=2$ and conditional on $\bx_S$ and $\bW$, to replace
$\bh_{k,\setminus S}(\bx)$ by $\bg_{k,\setminus S}$ for all orders
$k \geq K'+1=3$. We note that
\begin{align*}
\bV_1\bh_1(\bx)&=\bV_1\proj_S\bh_1(\bx)+\bV_1\proj_{S,\perp}\bh_1(\bx),\\
\bV_2\bh_2(\bx)&=\bV_2\proj_S\bh_2(\bx)+\bV_2\proj_{S,\perp}^{\text{mixed}}\bh_2(\bx)
+\bV_2\proj_{S,\perp}^{\text{pure}}\bh_2(\bx),
\end{align*}
where $\bV_2\proj_{S,\perp}^{\text{mixed}}\bh_2(\bx)$ has the explicit form
\begin{equation}\label{eq:V2mixedterm}
\bV_2\proj_{S,\perp}^{\text{mixed}}\bh_2(\bx)
=\Big(\sum_{j \in S}\sum_{k \notin S} \sqrt{2}w_{ij}w_{ik} x_jx_k\Big)_{i=1}^p
\end{equation}
and may be understood as linear functions (order-1 chaoses) in the variables
$\bx_{\setminus S}$ conditional on $\bx_S$.
Thus, for fixed $\btheta$, $\bx_S$, and $\bW$,
\[\varphi\bigl(
    \bx_S,\, \bxi_{2\setminus S},\bxi_{3\setminus S},\ldots,\bxi_{D'\setminus
S},\, \btheta^{\sT}\sigma_{\setminus S}(\bx)
    \bigr)\]
is a function --- call it $\tilde \varphi_{\btheta,\bx_S,\bW}$ --- of the
order-1 components
\[\mu_1\btheta^\top \bV_1\proj_{S,\perp}\bh_1(\bx),
\qquad \mu_2\btheta^\top \bV_2\proj_{S,\perp}^{\text{mixed}}\bh_2(\bx)\]
of the Wiener chaos space over the variables $\bx_{\setminus S}$,
the order-2 components
\[\mu_2 \btheta^\top \bV_2\proj_{S,\perp}^\text{pure}\bh_2(\bx)
\equiv \mu_2 \btheta^\top \bV_2\bh_{2,\setminus S}(\bx), \quad
\bbeta_{2,i}^\top \bh_{2,\setminus S}(\bx),\]
and the order-3 and higher components
\[\mu_k \btheta^\top \bV_k\bh_{k,\setminus S}(\bx) \text{ for } k=3,\ldots,D,
\qquad \bbeta_{ki}^\top \bh_{k,\setminus S}(\bx)
\text{ for } k=3,\ldots,D'.\]
By the given condition for $\varphi$, this
function $\varphi_{\btheta,\bx_S,\bW}$ satisfies
$\|\|\nabla^2 \varphi_{\btheta,\bx_S,\bW}\|_\op\|_\infty \leq L_2$.
We check the conditions of Corollary
\ref{corollary:RecallBoundedLipschitzFunctionSupErgetPartial}:
By Lemma \ref{lemma:GaussianMalliavinVarianceBound112}
and the genericity assumption \eqref{eq:ass_bbeta_ki_4_th_moment}, on an event of
probability $1-d^{-C}$ over $\bW$, \eqref{eq:C2_assump} holds
with $\rho=d^{-c}$
for $\mu_k \btheta^\top \bV_k\bh_{k,\setminus S}(\bx)$ with $k
\geq 3$ and $\bbeta_{ki}^\top \bh_{k,\setminus S}(\bx)$ with $k \geq 3$.
By the condition $\|\mu_1\btheta^\top \bW\|_2 \prec 1$ for
$\btheta \in \bTheta_\bW^\sPG(K)$, the bound
$\|\mu_2 \btheta^\top \bV_2\|_2 \prec 1$
of Proposition \ref{prop:NormThetaV}, and 
$\|\bbeta_{2,i}\|_2=1$, \eqref{eq:C1_assump} holds with $B=(\log d)^{K'}$ and some constant $K'>0$ for
$\mu_1\btheta^\top \bV_1\proj_{S,\perp}\bh_1(\bx)$,
$\mu_2\btheta^\top \bV_2\proj_{S,\perp}^\text{pure}\bh_2(\bx)$, and
$\bbeta_{2,i}^\top \bh_{2,\setminus S}(\bx)$. Finally, by the bound
\begin{align}
\E_{\bx_{\setminus S}}
\|\mu_2\btheta^\top \bV_2\proj_{S,\perp}^{\text{mixed}}\bh_2(\bx)\|_2^2
&=2\mu_2^2\sum_{k \notin S}
\Big(\sum_{j \in S} \sum_{i=1}^p
\theta_i w_{ij}w_{ik}x_j\Big)^2\\
&\leq C\mu_2^2 \sum_{j \in S} x_j^2 \sum_{k \notin S}
\Big(\sum_{i=1}^p \theta_i w_{ij} w_{ik}\Big)^2
    \leq C\|\bx_S\|_2^2 \|\mu_2 \btheta^\top \bV_2\|_2^2
\end{align}
and $\|\mu_2 \btheta^\top \bV_2\|_2 \prec 1$ by Proposition
\ref{prop:NormThetaV}, 
the condition \eqref{eq:C1_assump} holds with $B=(\log d)^{K'}\|\bx_S\|_2$
for the final component
$\mu_2\btheta^\top \bV_2\proj_{S,\perp}^{\text{mixed}}\bh_2(\bx)$. Thus, 
Corollary \ref{corollary:RecallBoundedLipschitzFunctionSupErgetPartial} implies
that with probability $1-d^{-C}$ over $\bW$,
there exist constants $K',c>0$ such that
for any $\varphi \in \cL$, $\btheta \in \bTheta_\bW^{\sPG}(\bW)$, and $\bx_S \in
\R^s$, we have
\begin{align*}
&\Bigl|\E_{\bx_{\setminus S}}
    \bigl[\,
    \varphi\bigl(
    \bx_S,\, \bxi_{2\setminus S},\bxi_{3\setminus S},\ldots,\bxi_{D'\setminus
S},\, \btheta^{\sT}\sigma_{\setminus S}(\bx)
    \bigr)
    \bigr]\\
&\qquad-
    \E_{\bx_{\setminus S},\{\bg_k\}_{k\ge 3}}
    \Bigl[\,
    \varphi\Bigl(
    \bx_S,\, \bxi_{2\setminus S}, \tilde\bxi_{3\setminus
S},\ldots,\tilde\bxi_{D' \setminus S},
    \btheta^{\sT}\tilde\sigma_{\setminus S}(\bx) \Bigr)
    \Bigr)
    \Bigr]
    \Bigr| \leq \frac{(\log d)^{K'}L_2(1+\|\bx_S\|_2)}{d^c}.
\end{align*}
Taking the expectation on both sides over $\bx_S$ 
shows \eqref{eq:RecallBoundedLipschitzFunctionSupErget}, completing the proof.
\end{proof}

\begin{lemma}\label{lemma:GaussianMalliavinVarianceBound112beta}
Let $3 \leq k \leq D'$, and let
$\bbeta_{ki} \in \R^{B_{d,k}}$ be any vector satisfying $\|\bbeta_{ki}\|_2
= 1$ and the genericity condition \eqref{eq:ass_bbeta_ki_4_th_moment}. Then 
for any constant $K>0$, there exists a constant $c>0$ such that 
\begin{equation}
    \sup_{\btheta \in \bTheta_{\bW}^{\sPG}(K)} \left| \mu_k \btheta^\sT \bV_k
\bbeta_{ki} \right| \prec \frac{1}{d^{c}}.
\end{equation}
\end{lemma}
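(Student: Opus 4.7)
The plan is to reduce the bound to a second-moment estimate on $\|\bV_k\bbeta_{ki}\|_2$ via Cauchy--Schwarz, then use the genericity hypothesis \eqref{eq:ass_bbeta_ki_4_th_moment} on $\bbeta_{ki}$ together with spherical moment calculations. The main obstacle will be to translate the contraction bounds $\|\iota(\bbeta_{ki})\otimes_r\iota(\bbeta_{ki})\|_F\leq d^{-c'}$ (for $1\le r\le k-1$) into a sub-polynomial bound on $\E_\bw\langle\bT,\bw^{\otimes k}\rangle^2$, where $\bT=\iota(\bbeta_{ki})\in(\R^d)^{\odot k}$.

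First, Cauchy--Schwarz gives $|\mu_k\btheta^\sT\bV_k\bbeta_{ki}|\le|\mu_k|\,\|\btheta\|_2\,\|\bV_k\bbeta_{ki}\|_2$. Since $|\mu_k|\prec 1$ by Assumption~\ref{assumption:activation} and $\|\btheta\|_2\prec 1$ for all $\btheta\in\bTheta_\bW^\sPG(K)$, and since $\|\bV_k\bbeta_{ki}\|_2$ does not depend on $\btheta$, the claim reduces to $\|\bV_k\bbeta_{ki}\|_2\prec d^{-c}$ for some $c>0$. Writing each coordinate as $(\bV_k\bbeta_{ki})_j=\langle\bT,\bw_j^{\otimes k}\rangle$ via the isometry $\iota$, it suffices to bound $\sum_{j=1}^p\langle\bT,\bw_j^{\otimes k}\rangle^2$ in high probability.

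Next, I compute the expectation over the i.i.d.\ $\bw_j\sim\Unif(\S^{d-1})$. Using the polar representation $\bw=\bg/\|\bg\|_2$ with $\bg\sim\normal(0,\id_d)$ and the independence of $\bw$ and $\|\bg\|_2$,
\[
\E\|\bV_k\bbeta_{ki}\|_2^2
=\frac{p}{d(d+2)\cdots(d+2k-2)}\,\E_\bg\langle\bT,\bg^{\otimes k}\rangle^2.
\]
Expanding $\bg^{\otimes k}$ as a sum of Hermite tensors contracted with identities, the $L^2(\bg)$-orthogonality of distinct Wiener chaoses combined with the symmetry of $\bT$ gives the decomposition
\[
\E_\bg\langle\bT,\bg^{\otimes k}\rangle^2
=\sum_{m=0}^{\lfloor k/2\rfloor}\alpha_{k,m}\,\|\bT^{(m)}\|_F^2,
\qquad
\bT^{(m)}_{i_1\ldots i_{k-2m}}:=\sum_{a_1,\ldots,a_m}T_{a_1a_1\cdots a_ma_m\,i_1\ldots i_{k-2m}},
\]
for combinatorial constants $\alpha_{k,m}$ depending only on $k$. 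The central step is to bound $\|\bT^{(m)}\|_F^2$ using the genericity hypothesis: the identity $\|\bT^{(m)}\|_F^2=\sum_{A,B\in[d]^m}(\bT\otimes_{k-2m}\bT)_{AABB}$ combined with Cauchy--Schwarz across $(A,B)\in[d]^{2m}$ gives $\|\bT^{(m)}\|_F^2\le d^m\|\bT\otimes_{k-2m}\bT\|_F\le d^{m-c'}$ for $1\le m\le\lfloor(k-1)/2\rfloor$, and the boundary case $m=k/2$ (even $k$) follows by a single extra trace step from the $m=k/2-1$ estimate; for $m=0$ we have $\|\bT\|_F^2=1$. Substituting back gives $\E\|\bV_k\bbeta_{ki}\|_2^2\le C\bigl(d^{2-k}+d^{2+\lfloor k/2\rfloor-k-c'}\bigr)\prec d^{-c}$ for some $c>0$ whenever $k\ge 3$ (the tightest cases are $k=3,4$, each giving $d^{-c'}$).

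Finally, to upgrade the second-moment bound to a high-probability statement, I will invoke Gaussian hypercontractivity. On the high-probability event that $\|\bg_j\|_2^2=d(1+o(1))$ simultaneously for $j\le p$ (which holds with probability $1-d^{-C}$ for any constant $C$ by standard $\chi^2$ tails), $\|\bV_k\bbeta_{ki}\|_2^2$ equals $(1+o(1))\,d^{-k}\sum_j\langle\bT,\bg_j^{\otimes k}\rangle^2$, a polynomial of bounded degree $2k$ in the independent standard Gaussian vectors $\bg_1,\ldots,\bg_p$. Hypercontractive tail bounds then concentrate it around its expectation, yielding $\|\bV_k\bbeta_{ki}\|_2^2\prec d^{-c}$ and thus $\|\bV_k\bbeta_{ki}\|_2\prec d^{-c/2}$, completing the argument.
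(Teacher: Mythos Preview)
Your proof is correct and follows essentially the same route as the paper: both reduce to $\|\bV_k\bbeta_{ki}\|_2$ via Cauchy--Schwarz, compute its second moment via Wick's formula (the paper groups pairings by the number $r$ of cross-pairs, which is $r=k-2m$ in your parametrization), bound each term by $d^m\|\bT\otimes_{k-2m}\bT\|_F$ using the genericity condition, and conclude by hypercontractivity. The only cosmetic difference is that the paper invokes hypercontractivity directly on the sphere rather than through your Gaussian approximation step.
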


\begin{proof}
Denote $\bu = \bV_k \bbeta_{ki} \in \R^p$. Then $\mu_k \btheta^\sT \bV_k
\bbeta_{ki} = \mu_k \langle \btheta, \bu \rangle$.
Since $|\mu_k| \prec 1$ and $\|\btheta\|_2 \prec 1$
for $\btheta \in \bTheta_{\bW}^{\sPG}(K)$, it suffices to show that $\|\bu\|_2
\prec d^{-c}$.
By definition of $\bV_k = [\bq_k(\bw_1), \dots, \bq_k(\bw_p)]^\sT$, we have
\begin{equation}
    \|\bu\|_2^2 = \|\bV_k \bbeta_{ki}\|_2^2 = \sum_{i=1}^p \underbrace{(\bbeta_{ki}^\sT
\bq_k(\bw_i))^2}_{:=U_i^2}.
\end{equation}
Let $\bT=\iota(\bbeta_{ki}) \in (\R^d)^{\odot k}$ via the isometry
$\iota$ defined in \eqref{eq:iota_entries}. We have $\|\bT\|_F^2 =
\|\bbeta_{ki}\|_2^2 = 1$.
By Lemma \ref{lemma:SymmetricTensorProductMalliavinS},  we have
$\iota(\bq_k(\bw_i)) = \bw_i^{\otimes k}$. Thus,
\begin{align}
    \E[U_i^2]&= \E_{\bw_i} \left[ \sum_{j_1,\ldots,j_k=1}^d
\bT_{j_1\ldots j_k} \bw_{ij_1}\ldots \bw_{ij_k} \cdot
\sum_{l_1,\ldots,l_k=1}^d \bT_{l_1\ldots l_k} \bw_{il_1}\ldots
\bw_{il_k}\right]\\
&=\sum_{j_1,\ldots,j_k,l_1,\ldots,l_k=1}^d \bT_{j_1\ldots j_k}
\bT_{l_1\ldots l_k} \E\left[\bw_{ij_1}\ldots \bw_{ij_k}\bw_{il_1}
\ldots \bw_{il_k}\right].\label{eq:Yi2_expansionin}
\end{align}
Representing $\bw_i=\bg/\|\bg\|_2$ where $\bg \sim \mathcal{N}(0,\bI_d)$,
and using independence of $\bw_i$ and $\|\bg\|_2$, we see that
\begin{equation}
    \E_{\bw_i}[\bw_{ij_1} \ldots \bw_{ij_k}\bw_{il_1} \ldots \bw_{il_k}]
=f(k,d)\E_{\bg}[g_{j_1} \ldots g_{j_k} g_{l_1}\ldots g_{l_k}]
\end{equation}
where $f(k,d)=1/\E\|\bg\|_2^{2k} \leq Cd^{-k}$. By Wick's formula,
$\E_{\bg}[g_{j_1} \ldots g_{j_k} g_{l_1}\ldots g_{l_k}]$ is given by a sum over
pairings of $\{j_1,\ldots,j_k,l_1,\ldots,l_k\}$, which yields
\[\E[U_i^2]=f(k,d)\sum_{\text{pairings of } j_1,\ldots,j_k,l_1,\ldots,j_k}
\mathop{\sum_{j_1,\ldots,j_k,l_1,\ldots,l_k=1}^d}_{\text{indices are equal in
each pair}} \bT_{j_1\ldots j_k}\bT_{l_1\ldots l_k}.\]
We group these pairings based on the number $r \in \{0,\ldots,k\}$
of pairs connecting an index in $J=\{j_1,\ldots,j_k\}$ to an
index in $L=\{l_1,\ldots,l_k\}$. Note that the remaining indices of $J$ and $L$
are paired with themselves, so $k-r$ must be even. Since $\bT$ is symmetric,
the sum over indices $j_1,\ldots,j_k,l_1,\ldots,l_k$ for any fixed
pairing with $r=k$ is $\|\bT\|_F^2=1$. This sum for any fixed pairing with $r<k$
is $\<\bT \otimes_r \bT,\bI_d^{\otimes (k-r)}\>$ where $\bI_d$ is the $d \times
d$ identity matrix.
%
%
By the genericity condition \eqref{eq:admissibility_tensor} for $\bbeta_{ki}$,
we have $|\langle \bT \otimes_r \bT, \bI_d^{\otimes (k-r)}\rangle|
\leq d^{(k-r)/2} \|\bT \otimes_r \bT\|_F \leq d^{-c+(k-r)/2}$
for a constant $c>0$. Applying these cases and using $f(k,d) \leq Cd^{-k}$
gives, for any $k \geq 3$ and $k-r \in \{0,\ldots,k\}$ even,
\[\E[U_i^2] \leq Cd^{-2-c}.\]
Then $\E\|\bu\|_2^2=p \cdot \E[U_i^2] \leq d^{-c'}$ for a constant $c'>0$.
By hypercontractivity of the uniform distribution on the sphere for $\bw_i$, this implies
$\|\bu\|_2 \prec d^{-c'/2}$, as desired.
\end{proof}

We now conclude the proof of Theorem
\ref{theorem:RecallBoundedLipschitzFunctionSupErgetisotropic} by showing that
the terms $\{\bV_k\bg_k\}_{k \geq 3}$ in
Lemma~\ref{theorem:RecallBoundedLipschitzFunctionSupErget} may be replaced by
independent isotropic Gaussian vectors, and that the result 
of Lemma~\ref{theorem:RecallBoundedLipschitzFunctionSupErget} 
may be extended to the class of Lipschitz functions $\varphi$.
 
\begin{proof}[Proof of Theorem \ref{theorem:RecallBoundedLipschitzFunctionSupErgetisotropic}]

Let $\tilde \cL$ be the class of twice-differentiable functions in Lemma
\ref{theorem:RecallBoundedLipschitzFunctionSupErget}.
Let $\bg_k' \sim \mathcal{N}(0, \bI_{B_{d,k}})$ be independent Gaussian vectors
for $k=3,\ldots,D$, independent of all other randomness. Then $\mu_{>2}\bg_*$ is
equal in law to $\mu_3\bg_3'+\ldots+\mu_D\bg_D'$. For any $\varphi \in
\tilde\cL$, since $\|\|\nabla \varphi\|_2\|_\infty \leq L_1$, we have
\begin{align}\label{eq:wass_sum_bound}
    & \sup_{\btheta \in
\bTheta_{\bW}^{\sPG}(K)}\Bigg|\E_{\bx,\{\bg_k\}}
    \left[\,
    \varphi\left(\bx_S,\bxi_2,\tilde\bxi_3,\ldots,\tilde\bxi_{D'},
    \btheta^{\sT}\!\sum_{k=3}^D \mu_j \bV_k\bg_k \right) \right]\notag\\
&\hspace{1in}-
\E_{\bx,\{\bg_k\},\{\bg_k'\}}
    \left[\,
    \varphi\left(\bx_S,\bxi_2,\tilde\bxi_3,\ldots,\tilde\bxi_{D'},
    \btheta^{\sT}\! \sum_{k=3}^D \mu_k \bg_k'
    \right)
    \right]\Bigg| \notag \\
    &\le L_1 \sum_{k=3}^D \sup_{\btheta \in \bTheta_{\bW}^{\sPG}(K)}
W_1\Big(\underbrace{(\tilde \bxi_k, \btheta^\sT \mu_k \bV_k
\bg_k)}_{:=\bX_k},
\underbrace{(\tilde\bxi_k, \btheta^\sT \mu_k \bg_k')}_{:=\bY_k}\;\Big|\;\bW\Big),
\end{align}
where $W_1(\;\cdot \mid \bW)$
denotes the Wasserstein-1 distance between the joint laws
of the above vectors $\bX_k$ and $\bY_k$ over the randomness of
$(\bg_k,\bg_k')$ conditional on $\bW$, and we set
$\tilde \bxi_k=0$ for $k=D'+1,\ldots,D$ if $D>D'$.
Recalling that $\tilde \bxi_k=(\bbeta_{ki}^\top \bg_k)_{i=1}^{s_k}$,
the laws of $\bX_k$ and $\bY_k$ conditional on $\bW$
are multivariate Gaussian, with covariance matrices
\begin{equation}
    \bC_{\bY_k} = \begin{pmatrix}
        (\<\bbeta_{ki},\bbeta_{k,j}\>)_{i,j=1}^{s_k} & 0 \\
        0 & \mu_k^2 \|\btheta\|_2^2
    \end{pmatrix},
\qquad
    \bC_{\bX_k} = \begin{pmatrix}
(\<\bbeta_{ki},\bbeta_{k,j}\>)_{i,j=1}^{s_k}
        & (\mu_k \bbeta_{ki}^\sT \bV_k^\sT \btheta)_{i=1}^{s_k} \\
        (\mu_k \btheta^\sT \bV_k \bbeta_{ki})_{i=1}^{s_k} & \mu_k^2 \btheta^\sT
\bV_k \bV_k^\sT \btheta
    \end{pmatrix}.
\end{equation}
By Lemma~\ref{lemma:GaussianMalliavinVarianceBound112beta}, we have
$|\mu_k \btheta^\sT \bV_k \bbeta_{ki}| \prec d^{-c}$ for each $i=1,\ldots,s_k$.
By Lemma~\ref{lemma:OperatorNormFk}, we have
\[\|\bV_3\bV_3^\sT-(3/d)\bW \bW^\sT - \bI_p\|_{\op} \prec d^{-1/2},
\quad \|\bV_4\bV_4^\sT-(3/d)\1_p\1_p^\top - \bI_p\|_{\op} \prec
d^{-1/2}\]
\[\|\bV_k\bV_k^\sT-\bI_p\|_\op \prec d^{-1/2} \text{ for } k \geq 5.\]
Then applying $|\mu_k| \prec 1$, $\|\btheta\|_2 \prec 1$,
$|\btheta^\sT \1_p| \prec 1$, and
$|\btheta^\sT \bW\bW^\sT \btheta| \le \|\btheta^\sT \bW\|_2^2 \prec \mu_1^{-2}$
for all $\btheta \in \bTheta_{\bW}^{\sPG}(K)$, it follows for any $k \geq 3$
that $|\mu_k^2 \btheta^\sT \bV_k \bV_k^\sT \btheta - \mu_k^2 \|\btheta\|_2^2|
\prec d^{-1/2} + (d\mu_1^2)^{-1}$. Hence, by the condition for $\mu_1$ in Assumption \ref{assumption:activation},
\[\|\bC_{\bX_k}-\bC_{\bY_k}\|_\op \prec d^{-c}\]
for some $c>0$,
which implies that $W_1(\bX_k,\bY_k \mid \bW) \prec d^{-c'}$ for some $c'>0$. Applying this
bound to \eqref{eq:wass_sum_bound} and combining
with the result of Lemma \ref{theorem:RecallBoundedLipschitzFunctionSupErget},
we obtain the claim of Theorem
\ref{theorem:RecallBoundedLipschitzFunctionSupErgetisotropic} over the function
class $\tilde\cL$, i.e.\ for any $K>0$, there exists $c>0$ such that
simultaneously over $\varphi \in \tilde \cL$ and
$\btheta \in \bTheta_\bW^\sPG(K)$,
\begin{equation}\label{eq:CLTforC2function}
\bigg|\E_{\bx}\Big[\varphi(\bx_S,\bxi_2,\bxi_3,\ldots,\bxi_{D'},\btheta^\top
\bz^\RF)\Big]
-\E_{\bx,\{\bg_k\},\bg_*}\Big[\varphi(\bx_S,\bxi_2,\tilde\bxi_3,\ldots,\tilde\bxi_{D'},\btheta^\top
\bz^\sPG)\Big]\bigg| \prec \frac{L_1+L_2}{d^c}.
\end{equation}
To conclude the proof, we extend this to the Lipschitz class $\cL$ by a
smoothing argument: Let $\bG \sim \cN(0, \bI_{m+1})$ be a standard
Gaussian vector independent of all other randomness.
For any $\varphi \in \cL$ and
a smoothing parameter $\kappa \equiv \kappa(d)>0$, we define the smoothed
function $\varphi_\kappa:\R^{m+1}\to \R$ as
$\varphi_\kappa(\bx)=\E_{\bG}[\varphi(\bx+\kappa \bG)]$.
Since $\varphi$ is $L$-Lipschitz,
\begin{equation}\label{eq:varphikappaapprox}
|\varphi_{\kappa}(\bx)-\varphi(\bx)|
\leq \E_\bG[L\|\kappa \bG\|_2] \leq CL\kappa
\end{equation}
for a constant $C>0$ depending only on $m$.
Note that $\varphi_\kappa$ is twice continuously-differentiable, with
\begin{equation}
\|\nabla \varphi_\kappa\|_2=\|\E_{\bG}[\nabla
\varphi(\bx+\kappa \bG)]\|_2 \leq L.
\end{equation}
Applying Gaussian integration by parts, also
\begin{equation}
\|\nabla^2 \varphi_\kappa(\bx)\|_\op
=\|\E_{\bG}[\nabla^2 \varphi(\bx+\kappa \bG)]\|_\op
=\frac{1}{\kappa}\|\E_{\bG}[\nabla\varphi(\bx+\kappa \bG)
\bG^\top]\|_\op \leq \frac{CL}{\kappa}.
\end{equation}
Thus \eqref{eq:CLTforC2function} applies to $\varphi_\kappa$ with $L_1=L$ and
$L_2=CL/\kappa$. Then combining
\eqref{eq:CLTforC2function} with the approximation bound
\eqref{eq:varphikappaapprox} shows, simultaneously over $\varphi \in \cL$ and 
$\btheta \in \bTheta_\bW^\sPG(K)$,
\[\bigg|\E_{\bx}\Big[\varphi(\bx_S,\bxi_2,\bxi_3,\ldots,\bxi_{D'},\btheta^\top
\bz^\RF)\Big]
-\E_{\bx,\{\bg_k\},\bg_*}\Big[\varphi(\bx_S,\bxi_2,\tilde\bxi_3,\ldots,\tilde\bxi_{D'},\btheta^\top
\bz^\sPG)\Big]\bigg| \prec L\kappa+\frac{L}{\kappa d^c}.\]
The theorem now follows upon choosing $\kappa=d^{-c/2}$ and adjusting the value
of $c$.
\end{proof}

\subsection{Proof of Theorem \ref{theorem:RecallBoundedLipschitzFunctionSupErgettt}}
\label{app:proof_theorem:RecallBoundedLipschitzFunctionSupErgettt}

\begin{proof}
The proof uses again the general interpolation framework established in
Theorem~\ref{theorem:RecallBoundedLipschitzFunctionSupErgetPartial} and its
Corollary~\ref{corollary:RecallBoundedLipschitzFunctionSupErgetPartial}, 
and is similar to the proof of
Theorem~\ref{theorem:RecallBoundedLipschitzFunctionSupErgetisotropic}. Here, we
highlight the key differences in the argument, and omit details that are
analogous to the previous proofs.

\paragraph{Step 1: Decomposing the swapped term and controlling mixed-coordinate components.}
We first consider $\varphi \in \tilde \cL$ for the preceding twice-differentiable class
$\tilde \cL$. Denote
\[\btheta^\sT \bV_2 \proj_{S,\perp} \bh_2(\bx)
=\underbrace{\btheta^\sT \bV_2 \proj_{S,\perp}^\text{mixed} \bh_2(\bx)}
_{:=Q_{\text{mixed}}(\bx)}
+\underbrace{\btheta^\sT \bV_2 \proj_{S,\perp}^\text{pure} \bh_2(\bx)}
_{:=Q_{\text{pure}}(\bx_{\setminus S})}\]
Recalling the explicit form \eqref{eq:V2mixedterm} for
$\bV_2 \proj_{S,\perp}^\text{mixed} \bh_2(\bx)$, a
direct calculation shows
\[\E\left[Q_{\text{mixed}}(\bx)^2\right]
=\sum_{j \in S}\sum_{k \notin S}
2\Big(\sum_{i=1}^p \theta_i w_{ij}w_{ik}\Big)^2
=2\bigl\|\bW_S^{\sT}\bD_{\btheta}\,\bW_{\backslash S}\bigr\|_{F}^2.
\]
Applying the condition
$\|\bW_S^{\sT}\bD_{\btheta}\,\bW_{\backslash S}\|_F \leq
d^{-\eps}$ for $\btheta \in \bTheta_{\bW}^{\sCG}(\eps,K)$, this
shows $\E[Q_{\text{mixed}}(\bx)^2]\leq d^{-2\eps}$,
and hence $Q_{\text{mixed}}(\bx) \prec d^{-\eps}$ by Gaussian
hypercontractivity. The same calculation shows $\btheta^\sT
\bV_2\proj_{S,\perp}^\text{mixed}\bg_2 \prec d^{-\eps}$, and hence these
components of $\btheta^\top\bz^\sPG$ and $\btheta^\top\bz^\sCG$ may first be
replaced by 0. As in the proof of
Theorem~\ref{theorem:RecallBoundedLipschitzFunctionSupErgetPartial},
the components $\bbeta_{2,i}^\top(\bh_2(\bx)-\bh_{2,\setminus S}(\bx))$
of $\tilde \bxi_2$ may also be replaced by 0.

\paragraph{Step 2: Bounding the fourth moment of the purely non-signal term.}
We now apply
Corollary~\ref{corollary:RecallBoundedLipschitzFunctionSupErgetPartial} with
$K'=1$, conditional on $\bx_S$, $\bg_3,\ldots,\bg_{D'},\bg_*$, and $\bW$, 
to replace $\proj_{S,\perp}^\text{pure}\bh_2(\bx) \equiv \bh_{2,\setminus S}$
by $\bg_{k,\setminus S}$. For this, we must check the
condition \eqref{eq:C2_assump} for the order-2 chaos components
$Q_{\text{pure}}(\bx_{\setminus S})$ 
and $\bbeta_{2,i}^\top \bh_{2,\setminus S}$ over $\bx_{\setminus S}$,
and the condition \eqref{eq:C1_assump} for the order-1 component
$\btheta^\top \proj_{S,\perp} \bh_1(\bx)$.
We note that \eqref{eq:C2_assump} holds with $\rho=d^{-c}$ for
$\bbeta_{2,i}^\top \bh_{2,\setminus S}$ by the genericity assumption
(\ref{eq:admissibility_tensor}), and
\eqref{eq:C1_assump} holds with $B=(\log d)^{K'}$
for $\btheta^\top \proj_{S,\perp} \bh_1(\bx)$ by the bound $\|\btheta\|_2 \prec
1$. For $Q_{\text{pure}}(\bx_{\setminus S})$, write
$Q_{\text{pure}}(\bx_{\setminus S})=I_2(\bT_{2, \setminus S})$
where
\[\bT_{2, \setminus S}=\frac{1}{\sqrt{2}}\sum_{i=1}^p \theta_i \bw_{i\setminus
S}^{\otimes 2}, \qquad
\bT_{2, \setminus S} \otimes_1 \bT_{2, \setminus S}
=\frac{1}{2}\sum_{i=1}^p \|\bw_{i\setminus S}\|_2^2 \cdot
\theta_i\bw_{i\setminus S}\bw_{i\setminus S}^\top.\]
Then by Lemma~\ref{section:ProofOfLemmaGaussianChaosConvergenceToStandardGaussian}, we have
\begin{equation}
    \E [Q_{\text{pure}}(\bx_{\setminus S})^4] - 3\left(\E[ Q_{\text{pure}}(\bx_{\setminus S})^2 ]\right)^2 \le C\|\bT_{2, \setminus S} \otimes_1 \bT_{2, \setminus S}\|_F^2
\leq C'\left\| \bW_{\backslash S}^\sT \bD_{\btheta} \bW_{\backslash S}\right\|_{\op}^2.
\end{equation}
Now applying the condition
$\|\bW_{\backslash S}^{\sT}\bD_{\btheta}\,\bW_{\backslash S}\|_{\op}
\leq d^{-\eps}$ for
$\btheta \in \bTheta_{\bW}^{\sCG}(\eps,K)$,
this verifies \eqref{eq:C2_assump} with $\rho=Cd^{-2\eps}$
for $Q_{\text{pure}}(\bx_{\setminus S})$. Thus by
Corollary~\ref{corollary:RecallBoundedLipschitzFunctionSupErgetPartial},
we may replace $\proj_{S,\perp}^\text{pure}\bh_2(\bx) \equiv \bh_{2,\setminus
S}$ by $\bg_{k,\setminus S}$ in $\btheta^\top\bz^\sPG$ and $\bbeta_{2,i}^\top
\bh_2(\bx)$, establishing
an analogue of Theorem \ref{theorem:RecallBoundedLipschitzFunctionSupErgettt} for $\varphi \in
\tilde \cL$. The result for the Lipschitz class $\varphi \in \cL$ then follows
from the same smoothing argument as before.
\end{proof}

\clearpage

\section{Lindeberg Phase I}\label{sec:convergence-optimizersspg}

We set up notations for the first phase of our Lindeberg swapping argument,
which interpolates between the original random features model and the Partial
Gaussian Equivalent (PGE) model by replacing Hermite features of degrees
$k \ge 3$ with independent Gaussian noise.

Recall that $\{(\bz_i^\RF,y_i^\RF)\}_{i=1}^n$ denote the features and
labels in the RF model, given by
\begin{align*}
\bz_i^\RF&=\mu_0\1_p+\sum_{k=1}^D \mu_k\bV_k\bh_k(\bx_i),\\
y_i^\RF&=\eta\big(\underbrace{\bx_{iS},\bbeta_2^\sT \bh_2(\bx_i),
\bbeta_3^\sT \bh_3(\bx_i),\ldots \bbeta_{D'}^\sT \bh_{D'}(\bx_i)}_{=f_*(\bx_i) =f_i^\RF};\eps_i\big).
\end{align*}
Here $\bx_{iS}$ are the first $s$ coordinates of 
$\bx_i \in \R^d$, and each $\bbeta_k=[\bbeta_{k1},\ldots,\bbeta_{ks_k}] \in
\R^{B_{d,k} \times s_k}$ for $k=2,\ldots,D'$ represents $s_k$
projections of the degree-$k$ Hermite features $\bh_k(\bx_i)$.

We will denote by $\{(\bz_i^\sPG,y_i^\sPG)\}_{i=1}^n$ the features and target
function in the PGE model. Recall that these are given correspondingly by 
\begin{align*}
\bz_i^\sPG&=\mu_0\1_p+\mu_1\bV_1\bh_1(\bx_i)+\mu_2\bV_2\bh_2(\bx_i)
+\mu_{>2}\bg_{i*},\\
y_i^\sPG&=\eta\big(\underbrace{\bx_{iS},\bbeta_2^\sT \bh_2(\bx_i),
\bbeta_3^\sT \bg_{i3},\ldots,\bbeta_{D'}^\sT \bg_{iD'}}_{=f_i^\sPG};\eps_i).
\end{align*}
Here $\bg_{i3},\ldots,\bg_{iD'},\bg_{i*}$ are independent standard Gaussian
vectors (independent also across samples $i=1,\ldots,n$)
where $\bg_{ik} \in \R^{B_{d,k}}$ and $\bg_{i*} \in \R^p$.

For each sample index $q \in [n]$, we define three interlinked
optimization objectives:

\paragraph{(i) Leave-one-out (LOO) objective.}
Let
\[\bZ_{\setminus q}=\bigl[\bz_1^\RF,\ldots,\bz_{q-1}^\RF,\bzero,
\bz_{q+1}^\sPG,\ldots,\bz_n^\sPG\bigr], \qquad
\by_{\setminus q}=\bigl[y_1^\RF,\ldots,y_{q-1}^\RF,
0,y_{q+1}^\sPG,\ldots,y_n^\sPG\bigr],\]
and denote by $(\bz_i,y_i)$ the columns/entries of $(\bZ_{\setminus
q},\by_{\setminus q})$.
The LOO empirical risk, minimizer, and minimum risk value are defined as
\begin{align*}
\widehat{\cR}_{\setminus q}(\btheta)
&=\frac{1}{n}\sum_{i=1}^n \ell(y_i,\<\btheta,\bz_i\>)
+ \frac{\lambda}{2}\|\btheta\|_2^2+\btau\cdot \bGamma^\bW (\btheta),\\
\hat\btheta_{\setminus q}&=\argmin_{\btheta\in\R^p}
  \widehat{\cR}_{\setminus q}(\btheta),
\qquad \Phi_{\setminus q}=\widehat{\cR}_{\setminus q}(\hat\btheta_{\setminus
q}).
\end{align*}

\paragraph{(ii) Augmented objective.}
Let $(\tilde \bz_q,\tilde y_q)$ be either $(\bz_q^\RF,y_q^\RF)$
or $(\bz_q^\sPG,y_q^\sPG)$, let
\[\bZ_{\cup q}=\bigl[\bz_1^\RF,\ldots,\bz_{q-1}^\RF,\tilde
\bz_q,\bz_{q+1}^\sPG,\ldots,\bz_n^\sPG\bigr], \qquad
\by_{\cup q}=\bigl[y_1^\RF,\ldots,y_{q-1}^\RF,\tilde y_q,y_{q+1}^\sPG,
\ldots,y_n^\sPG\bigr],\]
and denote by $(\bz_i,y_i)$ the columns/entries of
$(\bZ_{\cup q},\by_{\cup q})$.
The augmented empirical risk, minimizer, and minimum risk value are defined as
\begin{align}
\widehat{\cR}_{\cup q}(\btheta)
&=\frac{1}{n}\sum_{i=1}^n \ell(y_i,\<\btheta,\bz_i\>)
+ \frac{\lambda}{2}\|\btheta\|_2^2+\btau\cdot \bGamma^\bW (\btheta) =\widehat{\cR}_{\setminus q}(\btheta)
+\frac{1}{n}\big[\ell\bigl(\tilde y_q,\,\<\btheta,\tilde\bz_q\>\bigr)
-\ell(0,0)\big],
\label{eq:augmented-objective}\\
\hat\btheta_{\cup q}&=\argmin_{\btheta\in\R^p}
\widehat{\cR}_{\cup q}(\btheta),
\qquad
\Phi_q=\widehat{\cR}_{\cup q}(\hat\btheta_{\cup q}).
\end{align}
Here, $\hcR_{\cup q}(\btheta) \equiv \hcR_{\cup q}(\btheta;\tilde \bz_q,\tilde
y_q)$, $\hat\btheta_{\cup q} \equiv \hat\btheta_{\cup q}(\tilde \bz_q,\tilde
y_q)$ and $\Phi_q \equiv \Phi_{\cup q}(\tilde \bz_q,\tilde y_q)$ depend
also on the choice of the re-inserted sample $(\tilde \bz_q,\tilde y_q)$,
although we will suppress this dependence in the notation.

\paragraph{(iii) Quadratic surrogate.}
Let $\hat\btheta_{\setminus q}$ be the minimizer of the above LOO objective, and
let $\bH_{\setminus q}$ be the Hessian of this
LOO objective evaluated at its minimizer:
\begin{align}\label{eq:familyI-Hessian}
  \bH_{\setminus q}=\nabla^2
  \widehat{\cR}_{\setminus q}(\hat \btheta_{\setminus q}).
\end{align}
We define the quadratic surrogate objective, its minimizer, and minimum risk as
\begin{align}
\widetilde\cR_{\cup q}(\btheta)
&=\widehat{\cR}_{\setminus q}\bigl(\hat\btheta_{\setminus q}\bigr)
+\frac{1}{n}\big[\ell\bigl(\tilde y_q,\,\<\btheta,\tilde\bz_q\>\bigr)
-\ell(0,0)\big]
+ \frac12\,(\btheta-\hat\btheta_{\setminus q})^{\sT}
  \bH_{\setminus q}\,(\btheta-\hat\btheta_{\setminus
q}),\label{eq:familyI-quadratic}\\
\tilde\btheta_{\cup q}
&=\argmin_{\btheta\in\R^p}
\widetilde \cR_{\cup q}(\btheta),
\qquad
\Psi_q=\widetilde \cR_{\cup q}(\tilde\btheta_{\cup q}),
\end{align}
again suppressing the notational dependence
on the choice of $(\tilde \bz_q,\tilde y_q)$.\\

We remark that the minimizers $\hat \btheta_{\setminus q}$ and
$\hat\btheta_{\cup q}$ must exist because $\hcR_{\setminus q}$ and
$\hcR_{\cup q}$ are continuous and bounded below by
$(\lambda/2)\|\btheta\|_2^2-2$. Lemma \ref{lemma:optinThetaPG}
to follow will verify that they are in fact unique on a
high-probability event where $\hcR_{\setminus q}$ and $\hcR_{\cup q}$ are
strongly convex on a subset of $\R^p$ containing these minimizers.
On the complementary event, if the minimizer $\hat \btheta_{\setminus q}$ is not
unique, $\widetilde \cR_{\cup q}$ may be defined by any choice of such a
minimizer, and the choice is inconsequential for our arguments.

\subsection{Geometry of sub-level sets of the empirical
risk}\label{sec:sublevelsets}

For each $q=1,\ldots,n$, let us define a set $\widehat S_q(K)$ by
\[\widehat S_q(K)=\left\{\btheta:\frac{1}{n}
\sum_{i:i \neq q} \ell(y_i,\<\btheta,\bz_i\>)
+\frac{\lambda}{2}\|\btheta\|_2^2<(\log d)^K\right\}\]
where $\{(\bz_i,y_i)\}_{i \neq q}$ are the columns/entries of
$(\bZ_{\setminus q},\by_{\setminus q})$ (or equivalently, of
$(\bZ_{\cup q},\by_{\cup q})$). This is a sub-level set of the empirical risk
removing the $q^\text{th}$ sample and the perturbation by $\Gamma^\bW(\btheta)$.

 Corresponding to each LOO optimization problem
$(\hcR,\bZ,\by)=(\hcR_{\setminus q},\bZ_{\setminus q},\by_{\setminus q})$
or augmented optimization problem
$(\hcR,\bZ,\by) \equiv (\hcR_{\cup q},\bZ_{\cup q},\by_{\cup q})$,
let us denote by $\bH \equiv \bH_{\setminus q}$
or $\bH \equiv \bH_{\cup q}$ the Hessian of the empirical risk, i.e.\
\[\bH(\btheta)=\nabla^2 \hcR(\btheta),\]
Let
\begin{equation}\label{eq:Vcombined}
\bV=\begin{pmatrix} \mu_0 \bV_0 & \mu_1 \bV_1 & \ldots & \mu_D \bV_D
\end{pmatrix}
\end{equation}
be the matrix defining the random features, where
\[\bV_0=\1_p, \qquad \bV_1=\bW, \qquad \bV_k=[\bq_k(\bw_1),\ldots,\bq_k(\bw_p)]^\sT \in \R^{p \times
B_{d,k}}.\]

The goal of this section is to prove the following structural lemma
about the above sub-level sets $\widehat S_q$ and the geometries of the LOO and
augmented risks restricted to these sets. In particular,
part (c) establishes an important property for our subsequent
analyses: Although $\bV$ exhibits large ``spike'' singular values
corresponding to a low-rank subspace spanned by
$(\bV_0,\bV_1)=(\1_p,\bW) \in \R^{p \times (d+1)}$,
the Hessian of the empirical risk effectively regularizes these spikes.

\begin{lemma}\label{lemma:sublevelgeometry}
For any constants $C,K>0$, there exists $K'>0$ such that with probability at least $1-d^{-C}$, the following holds: For each $q=1,\ldots,n$, we have that
\begin{enumerate}[label=(\alph*)]
\item For all $\btheta \in \widehat S_q(K)$,
\[\|\btheta\|_2 \leq (\log d)^{K'},
\qquad \|\mu_k \btheta^\sT \bV_k\|_2 \leq (\log d)^{K'} \text{ for each }
k=0,1,\ldots,D.\]
\item Letting $L_\bW(\btheta)$ be the test loss in \eqref{eq:test_loss_perturbation_def},
for all $\btheta \in \widehat S_q(K)$,
\[|L_\bW(\btheta)| \leq (\log d)^{K'},
\qquad
{-}(\log d)^{K'}\bV\bV^\sT \preceq \nabla^2 L_\bW(\btheta) \preceq 
(\log d)^{K'}\bV\bV^\sT.\]
In particular, if $K_{\Gamma}>K'$, then $\Gamma^\bW(\btheta)=(\| \bV_+^\sT \btheta\|_2^2, L_\bW(\btheta))$
for all $\btheta \in \widehat S_q(K)$.
\item Suppose $K_\Gamma>K'$, $0<\tau_1 \leq 1/(\log d)^{K_\Gamma}$, and $|\tau_2| \leq \tau_1/(\log d)^{K_\Gamma}$. For any choice of
$(\tilde \bz_q,\tilde y_q) \in \{(\bz_q^\RF,y_q^\RF),(\bz_q^\sPG,y_q^\sPG)\}$,
let $\bH \equiv \bH_{\setminus q}$
or $\bH \equiv \bH_{\cup q}$ be the corresponding Hessian. Then for all $\btheta \in \widehat S_q(K)$,
\[\|\bV^\sT \bH^{-1}(\btheta)\bV\|_\op \leq 1/\tau_1
\quad\text{ and }\quad 
\bH(\btheta) \succeq \tau_1 \bV\bV^\sT
+\frac{\lambda}{2}\,\bI.\]
\end{enumerate}
\end{lemma}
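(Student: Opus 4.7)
The plan is to prove the three parts in order, each building on the previous one. Part (a) provides the basic norm and spike-projection bounds on $\btheta$; part (b) leverages these to control the test-loss perturbation $L_\bW$ and its Hessian; and part (c) combines these with convexity of $\ell$ to get the Hessian lower bound, after which the operator-norm bound on $\bV^\sT \bH^{-1}\bV$ follows by elementary matrix inversion. Throughout, the constant $K'$ will be enlarged from line to line and depends on $K$ (and the assumption constants), and the constant $K_\Gamma$ will be taken strictly larger than $K'$. A union bound over the at most $n \asymp d^2$ values of $q$ will be absorbed into the failure probability.

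\paragraph{Part (a).} The ridge bound $\|\btheta\|_2 \leq (\log d)^{K'}$ is immediate from $(\lambda/2)\|\btheta\|_2^2 < (\log d)^K$ and Assumption \ref{assumption:lambda}. For $k \geq 3$, Lemma \ref{lemma:OperatorNormFk} gives $\|\bV_k\|_\op \prec 1$, so $\|\mu_k\btheta^\sT \bV_k\|_2 \prec 1$ directly from $|\mu_k|\prec 1$ and $\|\btheta\|_2 \prec 1$. For $k=2$, I use the decomposition $\bV_2 = \bV_{2c} + (1/d)\1_p \be_c^\sT$ of Corollary \ref{corollary:OperatorNormF2cdecompose}: the bounded-norm piece is controlled as above, and the rank-one spike is reduced to a bound on $|\mu_0\btheta^\sT \1_p|$, which is part of the $k=0$ claim. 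For the spike directions $k=0,1$, the key input is the calibrated growth condition (Assumption \ref{assumption:polynomial-growth}): the loss value bound $(1/n)\sum_i \ell(y_i,\<\btheta,\bz_i\>) \leq (\log d)^K$ implies that either $(1/n)\sum_i |\<\btheta,\bz_i\>-y_i|^{\sr_2} \prec 1$ (regression) or $(1/n)\sum_i (-y_i\<\btheta,\bz_i\>)_+^{\sr_2} \prec 1$ (classification). Writing $\<\btheta,\bz_i\> = a + \bb^\sT \bx_i + R_i$ with $a = \mu_0\btheta^\sT \1_p$, $\bb = \mu_1\bW^\sT\btheta$, and $R_i$ the higher-chaos remainder, the bounds on $\|\mu_k\btheta^\sT\bV_k\|_2$ for $k\ge 2$ together with Gaussian hypercontractivity over $\bx_i$ yield $\frac{1}{n}\sum_i R_i^2 \prec 1$. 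Hence the loss bound reduces to a bound on $(1/n)\sum_i g(y_i, a+\bb^\sT\bx_i)$ for a lower bound $g$ growing like $|\cdot|^{\sr_2}$. In the regression case, using $|y_i|\prec 1$ (Lemma \ref{lemma:yzbounds}) and Gaussian concentration of $(\bx_i)_{i\in[n]}$, this forces $|a|+\|\bb\|_2 \prec 1$; in the classification case, I combine this with Lemma \ref{lem:linear_predictor_misclassification} to extract the same conclusion.

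\paragraph{Part (b).} The bound $|L_\bW(\btheta)| \leq (\log d)^{K'}$ follows from the pseudo-Lipschitz property of $\ell_\test$ (Assumption \ref{ass:test_loss}), the label bound $|y^\sPG| \prec 1$ and feature bound $\|\bz^\sPG\|_2 \prec d$ from Lemma \ref{lemma:yzbounds}, and the bounds $\|\mu_k\btheta^\sT\bV_k\|_2 \prec 1$ from part (a) applied to the expansion $\<\btheta,\bz^\sPG\> = \sum_k \mu_k \btheta^\sT\bV_k\bh_k(\bx) + \mu_{>2}\btheta^\sT\bg_*$ (giving $|\<\btheta,\bz^\sPG\>| \prec 1$ in $L^p$). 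Differentiating under the expectation gives
\[
\nabla^2 L_\bW(\btheta) = \E\big[\partial_{\hat y\hat y}^2 \ell_\test(y^\sPG,\<\btheta,\bz^\sPG\>)\, \bz^\sPG (\bz^\sPG)^\sT\,\big|\,\bW\big],
\]
and by the pseudo-Lipschitz bound on derivatives of $\ell_\test$, $|\partial_{\hat y\hat y}^2 \ell_\test| \prec 1$ on the relevant domain. Writing $\bz^\sPG = \bV \bu(\bx) + \mu_{>2}\bg_*$ and using that $\E\bg_*\bg_*^\sT = \bI$ and $\E[\bu(\bx)\bu(\bx)^\sT] \preceq C\bI$ term by term, one concludes $\pm\nabla^2 L_\bW(\btheta) \preceq (\log d)^{K'}(\bV\bV^\sT+\mu_{>2}^2\bI)$; since $\mu_{>2}^2\bI \preceq \bV\bV^\sT$ up to a constant factor (the $k\ge 3$ blocks of $\bV\bV^\sT$ are close to $\mu_k^2\bI_p$ by Lemma \ref{lemma:OperatorNormFk}), the claim follows. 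Combined with part (a), this shows the arguments of $\cT_{K_\Gamma}$ are strictly inside the untruncated regime, so $\bGamma^\bW$ equals $(\|\bV_+^\sT\btheta\|_2^2,\, L_\bW(\btheta))$ there.

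\paragraph{Part (c) and main obstacle.} Using part (b), the Hessian of the perturbed risk on $\widehat S_q(K)$ equals
\[
\bH(\btheta) = \nabla^2 \hcR(\btheta) + 2\tau_1 \bV_+\bV_+^\sT + \tau_2\nabla^2 L_\bW(\btheta),
\]
and convexity of $\ell$ gives $\nabla^2 \hcR(\btheta) \succeq \lambda \bI$. The key structural input is the decomposition
\[
\bV\bV^\sT = \bV_+\bV_+^\sT + \sum_{k=2}^D \mu_k^2 \bV_k\bV_k^\sT \preceq (1+o(1))\,\bV_+\bV_+^\sT + (\log d)^{C_3}\,\bI_p,
\]
which follows from Lemma \ref{lemma:OperatorNormFk} and Corollary \ref{corollary:OperatorNormF2cdecompose}: every $\mu_k^2\bV_k\bV_k^\sT$ for $k\ge 2$ splits into a spike absorbable into $\bV_+\bV_+^\sT$ (the hierarchy of spike magnitudes being guaranteed by the lower bounds on $|\mu_0|,|\mu_2|$ and the calibrated scaling $|\mu_1|\asymp d^{-\sc_6}$ in Assumption \ref{assumption:activation}) plus a bulk of order $\prec \bI_p$. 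Substituting $|\tau_2|\|\nabla^2 L_\bW\|_\op \leq \tau_1(\log d)^{K'-K_\Gamma}(\bV_+\bV_+^\sT+(\log d)^{C_3}\bI)$ and choosing $K_\Gamma$ large enough (depending on $K'$, $C_3$, and $\sC_5$) so that $\tau_1(\log d)^{C_3}\leq\lambda/4$, the elementary inequality $\bH \succeq \tau_1\bV\bV^\sT+(\lambda/2)\bI$ follows. The operator-norm bound is then $\|\bV^\sT\bH^{-1}\bV\|_\op \leq \|\bV^\sT(\tau_1\bV\bV^\sT+(\lambda/2)\bI)^{-1}\bV\|_\op = \max_i \sigma_i^2/(\tau_1\sigma_i^2+\lambda/2) \leq 1/\tau_1$ via the SVD of $\bV$. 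The hard part is part (a): the spike projections $\btheta^\sT\1_p$ and $\bW^\sT\btheta$ are \emph{not} controlled by the ridge penalty (which only pins down $\|\btheta\|_2$), and one must instead extract them from the calibrated growth of $\ell$ and the concentration of $\bx_i$---a delicate step because the ``other'' terms in $\<\btheta,\bz_i\>$ must first be bounded to isolate the spike contribution.
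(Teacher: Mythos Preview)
Your overall structure matches the paper's, and parts (b) and (c) are essentially right (modulo a small imprecision: in (b) you cannot simply factor $\E[\partial_{\hat y\hat y}^2\ell_\test\cdot \bz^\sPG(\bz^\sPG)^\sT]$ as if the second derivative were independent of $\bz^\sPG$---the paper instead bounds $|\bu^\sT\nabla^2 L_\bW\,\bu|$ via Cauchy--Schwarz and Gaussian hypercontractivity on $\E[\langle\bu,\bz^\sPG\rangle^4\mid\bW]$, then compares $\E[\langle\bu,\bz^\sPG\rangle^2\mid\bW]$ to $\|\bu^\sT\bV\|_2^2$ using Lemma~\ref{lemma:OperatorNormFk}).

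The real gap is in part (a), and it is exactly the circularity you flag at the end but do not resolve. Your remainder $R_i$ includes the $k=2$ spike $(\mu_2/d)(\btheta^\sT\1_p)\,\be_c^\sT\bh_2(\bx_i)$, whose size depends on $|\btheta^\sT\1_p|$---the very quantity you are trying to bound. The crude bound $|\btheta^\sT\1_p|\le\sqrt{p}\|\btheta\|_2\prec d$ only gives $\tfrac1n\sum_iR_i^2\prec d$, which is useless. The paper handles this by (i) first proving a separate Lemma~\ref{lemma:predictionbound} showing that a constant fraction $c_0n$ of samples satisfy $|\langle\btheta,\bz_i\rangle|\le(\log d)^{K'}$ (via the calibrated growth condition, and for classification an additional bootstrap against Lemma~\ref{lem:linear_predictor_misclassification}); (ii) on that subset $\widehat\cI$, bounding $\|(\mu_0\btheta^\sT\1_p,\,\mu_1\btheta^\sT\bW)\bA(\widehat\cI)\|_2\prec d+\sqrt{d}|\btheta^\sT\1_p|$ where $\bA(\cI)$ stacks $(1,\bx_i^\sT)$; (iii) lower-bounding $\sigma_{\min}(\bA(\widehat\cI))\succ d$ uniformly over all $\btheta\in\widehat S(K)$ via an anti-concentration and $\eps$-net argument over $\S^d$ (this is where the quadratic scaling $n\asymp d^2\gg d$ is used to beat the union bound); and (iv) dividing to get $|\mu_0\btheta^\sT\1_p|+\|\mu_1\btheta^\sT\bW\|_2\prec 1+d^{-1/2}|\btheta^\sT\1_p|$, a self-consistent inequality that closes because $d^{-1/2}/|\mu_0|\ll 1$. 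Your sketch ``Gaussian concentration of $(\bx_i)$ forces $|a|+\|\bb\|_2\prec 1$'' hides steps (i)--(iv) entirely, and without the self-consistency mechanism in (iv) the argument does not close.
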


The remainder of this section proves
Lemma \ref{lemma:sublevelgeometry}. Throughout this proof,
we abbreviate $\widehat S(K) \equiv \widehat S_q(K)$, and
$(\widehat \cR,\bH,\bZ,\by)$ refers to a choice of
$(\widehat \cR_{\setminus q},\bH_{\setminus q},\bZ_{\setminus q},\by_{\setminus q})$
or $(\widehat \cR_{\cup q},\bH_{\cup q},\bZ_{\cup q},\by_{\cup q})$.
The proof is similar for all such choices, so to ease notation, we will often
focus on the setting of the original random features model
(i.e.\ $(y_i,\bz_i)=(y_i^\RF,\bz_i^\RF)$ for all $i=1,\ldots,n$), and explain
within the arguments any modifications that are needed to treat the other cases.

\begin{lemma}\label{lemma:predictionbound}
For any constants $C,K>0$, there exist $c_0,K'>0$ such that
with probability at least $1-d^{-C}$, every $\btheta \in \widehat S(K)$
satisfies
\begin{align}
\frac{1}{n}\sum_{i=1}^n \1\{|\<\btheta,\bz_i\>| \leq (\log d)^{K'}\}
&\geq c_0,\label{eq:predictionbound}
\end{align}
\end{lemma}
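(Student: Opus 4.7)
I would prove the lemma by contradiction, bounding from above the number of ``large prediction'' indices $i \in [n]$ against the sub-level constraint $\widehat \cR(\btheta) \leq (\log d)^K$, and handling the two cases of Assumption~\ref{assumption:polynomial-growth} separately.

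\textbf{Regression case.} A direct Markov-type bound suffices. By Lemma~\ref{lemma:yzbounds}, $\max_{i \leq n} |y_i| \prec 1$, so for any $C>0$ there exists $K_0$ with $|y_i| \leq (\log d)^{K_0}$ for all $i$ on an event of probability $\geq 1-d^{-C}$. Fix $K' \geq K_0+1$, to be refined. For any $\btheta \in \widehat S(K)$ and any $i$ with $|\<\btheta,\bz_i\>| > (\log d)^{K'}$, we have $|\<\btheta,\bz_i\> - y_i| \geq (\log d)^{K'}/2 \geq \sC_2$ for $d$ large, so Assumption~\ref{assumption:polynomial-growth}(i) gives $\ell(y_i,\<\btheta,\bz_i\>) \geq \sc_2((\log d)^{K'}/2)^{\sr_2}$. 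Summing against $\widehat\cR(\btheta) \leq (\log d)^K$ yields
\[
  \frac{|\{i:|\<\btheta,\bz_i\>| > (\log d)^{K'}\}|}{n} \;\leq\; \frac{C\,(\log d)^K}{(\log d)^{K'\sr_2}},
\]
which is at most $1/2$ for $K' \sr_2 > K+1$, giving the claim with $c_0 = 1/2$.

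\textbf{Classification case.} The calibrated growth $\ell(y,\hat y) \geq \sc_2(-y\hat y)_+^{\sr_2}$ controls $\ell$ only on misclassified samples, so I would split
\[
  M(\btheta) := \{i:|\<\btheta,\bz_i\>| > (\log d)^{K'}\} = M_-(\btheta)\sqcup M_+(\btheta),
  \quad M_\pm(\btheta):= M(\btheta)\cap\{i:\pm y_i\<\btheta,\bz_i\><0\},
\]
into large-margin misclassified and large-margin correctly-classified parts. The regression argument applied to $M_-(\btheta)$ alone gives $|M_-(\btheta)|/n \leq c_0/4$ for $K'$ large. To bound $|M_+(\btheta)|$ I would invoke the non-vanishing Bayes error (Assumption~\ref{assumption:bayes-error}): since $\|f_i\|_2 \prec 1$ from Lemma~\ref{lemma:yzbounds}, conditional on $\{\bx_i\}$ we have $\P[y_i = -\sign\<\btheta,\bz_i\>\mid \bx_i] \geq c>0$ for each $i \in M(\btheta)$ on an event of full measure over $\{\bx_i\}$. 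Thus pointwise in $\btheta$, a multiplicative Chernoff bound on the conditionally independent $y_i$'s gives $|M_-(\btheta)| \geq (c/2)|M(\btheta)|$ with failure probability $\leq e^{-c|M(\btheta)|/8}$, and combined with the loss bound on $|M_-|$ this forces $|M(\btheta)|/n \leq c_0$ pointwise.

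\textbf{Main obstacle: uniformity in $\btheta$.} The pointwise Bayes-error argument above must be promoted to a simultaneous bound over the random set $\widehat S(K)$, which by the ridge term and $\lambda \geq \log^{-\sC_5}d$ is contained in a ball $\{\|\btheta\|_2 \leq (\log d)^{K''}\}$. The sign-pattern class $\{\sign\<\btheta,\bz_i\>\}_\btheta$ has VC dimension $p+1 \asymp n$, so naive VC union bounds produce only $O(1)$ deviations. My plan is to discretize $\widehat S(K)$ on a polynomially-fine $\epsilon$-net, using that $\<\btheta,\bz_i\>$ is $\|\bz_i\|_2$-Lipschitz in $\btheta$ with $\|\bz_i\|_2 \prec d$ (Lemma~\ref{lemma:yzbounds}), and apply the multiplicative Chernoff bound on the net; the exponential rate $e^{-c|M(\btheta)|/8}$ dominates the log-cardinality of the net precisely in the regime $|M(\btheta)| \gtrsim n/(\log d)^{K'''}$, which is the only regime where the target statement has content. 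A final union bound over the $n$ choices of $q$ in $\widehat S_q(K)$ completes the argument. The regression case, by contrast, needs no uniform argument at all.
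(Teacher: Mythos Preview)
Your regression argument is essentially the paper's, and it is correct.

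The classification argument has a real gap at the uniformity step. You correctly identify that the sign-pattern class in $\btheta \in \R^p$ has VC dimension $p+1 \asymp n$, but your proposed workaround---an $\epsilon$-net on the $\|\btheta\|_2 \prec 1$ ball combined with Chernoff on the labels---runs into the same wall. To preserve the set $M(\btheta)$ under perturbation you need $\epsilon \cdot \max_i\|\bz_i\|_2 \ll (\log d)^{K'}$, i.e.\ $\epsilon \lesssim d^{-1}$, so the net has log-cardinality $\gtrsim p\log d \asymp d^2\log d$. Your Chernoff bound on the label noise has exponent at best $c|M(\btheta)| \lesssim cn \asymp d^2$, which is dominated by $d^2\log d$. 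The union bound therefore explodes; the claimed domination ``in the regime $|M(\btheta)| \gtrsim n/(\log d)^{K'''}$'' is backwards.

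The paper's fix is a dimension reduction that your proposal is missing. One decomposes $\<\btheta,\bz_i\> = \mu_0\1_p^\sT\btheta + \mu_1\bx_i^\sT\bW^\sT\btheta + R_i$, where $R_i$ collects the degree-$\geq 2$ chaos (with the $\bV_2$ spike peeled off via Corollary~\ref{corollary:OperatorNormF2cdecompose}). Operator-norm bounds on $\bV_{2c}\bh_2(\bX)$ and $\bV_k\bh_k(\bX)$ give $\|\bR\|_2^2 \prec n\|\btheta\|_2^2$, so $|R_i|$ is polylog-bounded for all but an $o(1)$ fraction of samples, uniformly in $\btheta \in \widehat S(K)$. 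This reduces the sign of $\<\btheta,\bz_i\>$ (on large-margin samples) to the sign of a \emph{linear} predictor $a+\bb^\sT\bx_i$ in $\R^d$. The uniform misclassification bound is then Lemma~\ref{lem:linear_predictor_misclassification}, which uses VC dimension $d+1 \ll n$ plus Talagrand concentration and does beat the union bound. Once you have that at least a constant fraction of large-margin samples are misclassified, the calibrated-growth bound finishes as in your $M_-(\btheta)$ step.
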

\begin{proof}
In the regression setting of Assumption \ref{assumption:polynomial-growth}(i), suppose
$\btheta \in \widehat S(K)$, so
\[\frac{1}{n}\sum_{i:i \neq q} \ell(y_i,\<\btheta,\bz_i\>)
\leq \frac{1}{n}\sum_{i:i \neq q} \ell(y_i,\<\btheta,\bz_i\>)
+\frac{\lambda}{2}\|\btheta\|_2^2<(\log d)^K.\]
The calibrated growth condition of
Assumption~\ref{assumption:polynomial-growth}(i) for $\ell(\cdot)$ then implies
\[
\begin{aligned}(\log d)^K>&~\frac{\sc_2}{n}\sum_{i:i \neq q} |\<\btheta,\bz_i\>-y_i|^{\sr_2} \1 \left\{|\<\btheta,\bz_i\>-y_i| \geq \sC_2 \right\} \\
\geq&~ \frac{\sc_2(\log d)^{2K}}{n}\sum_{i:i \neq q} \1\bigl\{|\<\btheta,\bz_i\>
-y_i|\ge (\log d)^{2K/\sr_2}\bigr\},
\end{aligned}\]
so
\begin{equation}\label{eq:proof_sharp_bound}
\frac{1}{n}\sum_{i:i \neq q} \1\bigl\{|\<\btheta,\bz_i\>|
\geq |y_i|+(\log d)^{2K/\sr_2}\bigr\}
\leq \frac{1}{\sc_2(\log d)^K}.
\end{equation}
By Lemma \ref{lemma:yzbounds}, we have $\|\by\|_\infty \prec 1$.
This means that for any constant $C>0$, there exists $K''>0$ such
that $\|\by\|_\infty \leq (\log d)^{K''}$ with probability $1-d^{-C}$.
On this event, the bound \eqref{eq:proof_sharp_bound}
implies that \eqref{eq:predictionbound}
holds for some constants $c_0,K'>0$ depending on $K,K'',\sc_2,\sr_2$.

Let us consider now the case of binary classification in Assumption \ref{assumption:polynomial-growth}. Let us decompose $\bV_2=\bV_{2c}+\frac{1}{d}\1_p\be_c^\top$ as in Corollary \ref{corollary:OperatorNormF2cdecompose}, and define
\[
R_i := \btheta^\sT \bz_i - \mu_0 \bones_p^\sT \btheta - \mu_1 \bx_i^\sT \bW^\sT \btheta -\frac{\mu_2}{d} (\be_c^\sT \bh_2 (\bx_i) ) \bones_p^\sT \btheta = \mu_2 \btheta^\sT \bV_{2c} \bh_2 (\bx_i) + \sum_{k=3}^{D'} \btheta^\sT \He_k (\bW \bx_i).
\]
By Lemmas \ref{lemma:BernsteinInequalityHermiteFeatures} and \ref{lemma:OperatorNormZkFk} and Corollary \ref{corollary:OperatorNormF2cdecompose}, we have
\[
\| \bR \|_2^2 \leq \left(\mu_2^2 \|\bV_{2c}\|_\op^2 \| \bh_2 (\bX) \|_\op^2 + \sum_{k =3}^{D} \mu_k^2 \| \He_k (\bW\bX^\sT) \|_\op^2\right) \| \btheta \|_2^2 \prec n\| \btheta \|_2^2.
\]
Thus, for any $C>0$, there exist $K_0,K_0'>0$ such that with probability at least $1 - d^{-C}$, for all $\btheta \in \widehat S(K)$,
\[
\frac{1}{n} \sum_{i = 1}^n \1\bigl\{ |R_i | > (\log d)^{K_0}\bigr\} \leq\frac{1}{n} \sum_{i = 1}^n \frac{|R_i|^2}{(\log d)^{2K_0}} \leq \frac{1}{(\log d)^{K_0'}}.
\]
Similarly, by Lemma \ref{lemma:BernsteinInequalityHermiteFeatures}, we have
\[
\| \mu_1 \bW^\sT \bX \|_\op^2 \prec n d^{1/2}, \qquad \| \bh_2 (\bX)^\sT \be_c\|_2^2 \prec d n,
\]
so that for some $K_1,K_1',K_2,K_2'>0$, with probability at least $1 - d^{-C}$, for all $\btheta \in \widehat S(K)$,
\[
\begin{aligned}
\frac{1}{n} \sum_{i = 1}^n \1\bigl\{ | \mu_1 \btheta^\sT \bW \bx_i | > \sqrt{d} (\log d)^{K_1}\bigr\} \leq&~ \frac{1}{(\log d)^{K_1'}}, \\ \frac{1}{n} \sum_{i = 1}^n \1\bigl\{ | \mu_2\be_c^\sT \bh_2(\bx_i) /d | > d^{-1/2} (\log d)^{K_2}\bigr\} \leq&~ \frac{1}{(\log d)^{K_2'}}.
\end{aligned}
\]

We restrict to the high-probability event where the above statements as well as the conclusion of Lemma \ref{lem:linear_predictor_misclassification} holds.
Consider any $\btheta \in \widehat{S} (K)$, fix a sufficiently large constant $K'>0$, and first suppose by contradiction that $| \mu_0 \bones_p^\sT \btheta | \geq \sqrt{d} (\log d)^{K'}$. Then for $K'>0$ large enough,
\begin{equation}\label{eq:thetazcloseto1theta}
|\btheta^\sT \bz_i - \mu_0 \bones_p^\sT \btheta | \leq |R_i| + |\mu_2\be_c^\sT \bh_2(\bx_i)/d| |\bones^\sT_p \btheta| + |\mu_1 \btheta^\sT \bW\bx_i | \leq \frac{1}{2}| \mu_0 \bones_p^\sT \btheta | 
\end{equation}
for at least a fraction $1-o_d(1)$ of the samples $i=1,\ldots,n$. On these samples, for $K'>0$ large enough, we must have $\sign (\btheta^\sT \bz_i) = \sign(\mu_0\bones_p^\sT \btheta ) = \sign (\mu_0 \bones_p^\sT \btheta + \mu_1 \bx_i \bW^\sT \btheta)$. Then by Lemma \ref{lem:linear_predictor_misclassification},
\begin{equation}\label{eq:missclassifedpoints}
\frac{1}{n} \sum_{i=1}^n \1 \{y_i={-}\sign(\<\btheta, \bz_i \>)\} \geq \frac{c_0}{2}.
\end{equation}
Using the calibrated growth assumption (Assumption \ref{assumption:polynomial-growth}(ii)), we deduce from \eqref{eq:thetazcloseto1theta} and \eqref{eq:missclassifedpoints} that
\[
\frac{1}{n} \sum_{i = 1}^n \ell (y_i, \<\btheta, \bz_i \>) \geq \frac{1}{n} \sum_{i = 1}^n \sc_2 ( - y_i \<\btheta, \bz_i \> )_+^{\sr_2} \geq c'|\mu_0 \bones_p^\sT \btheta|^{\sr_2} \geq  c'(\sqrt{d} (\log d)^{K'})^{\sr_2}
\]
for a constant $c'>0$ depending on $c_0,\sc_2,\sr_2$.
This contradicts $\btheta \in \widehat{S}(K)$ for large enough $K'>0$.

Thus, we must have $|\mu_0 \bones_p^\sT \btheta | \leq \sqrt{d} (\log d)^{K'}$. In this case, for some constant $K''>0$ and for a fraction $1-o_d(1)$ of samples $i=1,\ldots,n$, we have
\[
|\btheta^\sT \bz_i - \mu_0 \bones_p^\sT \btheta - \mu_1 \bx_i^\sT \bW^\sT \btheta |
\leq |R_i|+|\mu_2 \be_c^\top \bh_2(\bx_i)/d||\1_p^\top\btheta| \leq (\log d)^{K''}.
\]
Let $c_0>0$ be the constant
in Lemma \ref{lem:linear_predictor_misclassification}, and suppose first that there are fewer than
$n(1-c_0/2)$ samples
where $|\mu_0 \bones_p^\sT \btheta+\mu_1 \bx_i^\sT \bW^\sT \btheta|>2 (\log d)^{K''}$. Then we are done, because $nc_0/2-o_d(1)$ remaining samples then satisfy
$|\btheta^\sT \bz_i|
\leq |\btheta^\sT \bz_i - \mu_0 \bones_p^\sT \btheta - \mu_1 \bx_i^\sT \bW^\sT \btheta |
+| \mu_0 \bones_p^\sT \btheta + \mu_1 \bx_i^\sT \bW^\sT \btheta | \leq 3(\log d)^{K''}$.
On the other hand, if at least $n(1-c_0/2)$ samples satisfy
$|\mu_0 \bones_p^\sT \btheta+\mu_1 \bx_i^\sT \bW^\sT \btheta|>2 (\log d)^{K''}$, then for these samples we have
$\sign(\<\btheta,\bz_i\>)=\sign(\mu_0 \bones_p^\sT \btheta+\mu_1 \bx_i^\sT \bW^\sT \btheta)$.
Then Lemma \ref{lem:linear_predictor_misclassification} implies that at least $nc_0/2$ samples have $y_i={-}\sign(\<\btheta,\bz_i\>)$. Letting $\cI$ be this set of $nc_0/2$ misclassified samples, we have
\[
(\log d)^K \geq \frac{1}{n} \sum_{i=1}^n \ell (y_i, \<\btheta, \bz_i \>) \geq \frac{1}{n} \sum_{i \in \cI} \sc_2 |\<\btheta,\bz_i\>|^{\sr_2},
\]
and hence at least $nc_0/4$ of the samples of $\cI$ have $|\<\btheta,\bz_i\>| \leq ((4/\sc_2)(\log d)^K)^{1/\sr_2}$, concluding the proof.
\end{proof}

\begin{proof}[Proof of Lemma \ref{lemma:sublevelgeometry}(a)]
If $\btheta \in \widehat S(K)$, then (since $\ell(\cdot) \geq 0$) we have
$(\lambda/2)\|\btheta\|_2^2<(\log d)^K$.
Since $\lambda \succ 1$ by Assumption \ref{assumption:lambda}, this shows
\[\|\btheta\|_2 \prec 1 \text{ simultaneously over }\btheta \in \widehat S(K).\]

To show the bounds on $\mu_0\btheta^\sT \1_p$ and $\mu_1\btheta^\sT \bW$,
consider first the case of the original random features model
$\bZ=[\bz_1^\RF,\ldots,\bz_n^\RF]$. Let us expand
\[\btheta^\sT \bz_i^\RF
=\mu_{0} \btheta^{\sT}\1_p + \mu_{1} \btheta^{\sT}\bW \bx_i + 
\sum_{k=2}^D \mu_k \btheta^\sT \bV_k\bh_k(\bx_i)\]
For the $k=2$ term, we further
apply Corollary \ref{corollary:OperatorNormF2cdecompose} to
decompose $\bV_2 = \bV_{2c} + \frac{1}{d} \1_p\be_c^\sT$. Then, denoting
$\bh_k(\bX)=[\bh_k(\bx_1),\ldots,\bh_k(\bx_n)] \in \R^{B_{d,k} \times n}$
and $\hat \by=\btheta^\sT \bZ \in \R^{1 \times n}$, we have
\[\hat\by=\mu_0 \btheta^\sT \1_p \1_n^\sT+\mu_1\btheta^\sT \bW\bX
+\mu_2\btheta^\sT \bV_{2c}\bh_2(\bX)
+\frac{\mu_2}{d}\btheta^\sT \1_p \be_c^\sT \bh_2(\bX)
+\sum_{k=3}^D \mu_k \btheta^\sT \bV_k\bh_k(\bX).\]
Then applying $\|\btheta\|_2 \prec 1$ shown above,
$\|\bV_{2c}\|_\op \prec 1$ and $\|\be_c\|_2=\sqrt{d}$ by Corollary
\ref{corollary:OperatorNormF2cdecompose}, $\|\bh_2(\bX)\|_\op \prec d$ by
Lemma \ref{lemma:BernsteinInequalityHermiteFeatures},
$\|\bV_k\bh_k(\bX)\|_\op \prec d$ for $k \geq 3$ by Lemma \ref{lemma:OperatorNormZkFk},
and $|\mu_k| \prec 1$, we have simultaneously over
$\btheta \in \widehat S(K)$ that
\begin{equation}\label{eq:yhatbound}
\Big\|\hat\by-\mu_0 \btheta^\sT \1_p \1_n^\sT
-\mu_1\btheta^\sT \bW\bX\Big\|_2 \prec d+\sqrt{d}|\btheta^\sT \1_p|.
\end{equation}
The same bound holds for $\hat\by=\btheta^\sT \bZ$ and each case
of $\bZ \in \{\bZ_{\setminus q},\bZ_{\cup q}\}$, by expanding also
\[\btheta^\sT \bz_i^\sPG
=\mu_{0} \btheta^{\sT}\1_p + \mu_{1} \btheta^{\sT}\bW \bx_i + 
\mu_{>2} \btheta^\sT \bg_{i*}\]
and applying the operator norm bound for a Gaussian matrix
$\|[\bg_{1*},\ldots,\bg_{n*}]\|_\op \prec \sqrt{n}+\sqrt{p} \prec d$.

For a given constant $C>0$, let $c_0,K'>0$ be the constants prescribed by
Lemma \ref{lemma:predictionbound}, let
$\widehat \cI \subseteq [n]$ be the (random) set of indices
for which $|\hat y_i|=|\<\btheta,\bz_i\>| \leq (\log d)^{K'}$, and set $m=\lfloor c_0n
\rfloor$. Then
\eqref{eq:predictionbound} of Lemma \ref{lemma:predictionbound} ensures
\begin{equation}\label{eq:hatIsize}
\P[|\widehat \cI| \geq m \text{ for every } \btheta \in \widehat S(K)]
\geq 1-d^{-C}.
\end{equation}
Let $\hat \by_{\widehat \cI}$ denote the coordinates of $\hat
\by=\btheta^\sT [\bz_1,\ldots,\bz_n]$ supported on $\widehat \cI$.
Then by definition of $\widehat \cI$, we must have
$\|\hat\by_{\widehat \cI}\|_2 \leq (\log d)^{K'}\sqrt{n}$.
Then \eqref{eq:yhatbound} implies that simultaneously over $\btheta \in \widehat
S(K)$,
\begin{align}\label{equation:fallin3gauss}
    \left\|\begin{pmatrix}
        \mu_0 \btheta^\sT \1_p & \mu_{1}\btheta^\sT \bW
    \end{pmatrix} \bA(\widehat \cI)
\right\|_2 \prec d+\sqrt{d}|\btheta^\sT \1_p|,
\end{align}
where we define the matrix
\begin{align}\label{equation:MatrixAATsigma}
\bA(\cI)=\begin{pmatrix} 1 & \cdots & 1 \\ \bx_{i_1} & \cdots & \bx_{i_t}
\end{pmatrix} \text{ for } \cI=\{i_1,\ldots,i_t\}.
\end{align}
Let us bound the smallest singular value of $\bA(\widehat \cI)$:
Letting $m=\lfloor c_0 n \rfloor$ be as above, fix any
unit vector $\bw=(u,\bv) \in \S^d \subset \R^{d+1}$
and any $\delta>0$, and observe that
\begin{align}
	 &\mathbb{P}\left(  \bw^{\sT}\left(\frac{1}{n}
\bA(\widehat \cI)\bA(\widehat \cI)^\sT\right) \bw \leq
\delta \text{ and }
|\widehat \cI| \geq m \text{ for all } \btheta \in \widehat S(K)\right)\\
    &\leq \mathbb{P}\left(\inf_{\mathcal{I} \subset[n]:|\mathcal{I}|=m}
\bw^{\sT}\left(\frac{1}{n}  \bA(\mathcal{I})\bA(\mathcal{I})^\sT\right) \bw \leq \delta\right)
    =\mathbb{P}\left(\inf_{\mathcal{I} \subset[n]:|\mathcal{I}|=m}
 \sum_{i \in \mathcal{I}} (u+\<\bx_i,\bv\>)^2
\leq n\delta\right)\nonumber\\
    & \leq \mathbb{P}\left(\left|\left\{i: (u+\<\bx_i,\bv\>)^2\leq
2n\delta/m\right\}\right|\geq m/2\right).\label{equ:unif} 
\end{align}
The final inequality \eqref{equ:unif} holds because if there are fewer than
$m/2$ values of $\{(u+\<\bx_i,\bv\>)^2\}_{i=1}^n$ which are
$\leq 2n\delta/m$, then the $m/2$-th to $m$-th smallest values of
$\{(u+\<\bx_i,\bv\>)^2\}_{i=1}^n$ must all exceed $2n\delta/m$, implying their
sum exceeds $n\delta$.

Let us choose $\delta=(\log d)^{-K'}$. For any fixed unit vector
$\bw=(u,\bv)$, we have $u+\<\bx_i,\bv\> \sim \normal(u,1-u^2)$. Considering
separately the cases $|u| \leq 1/2$ and $|u|>1/2$, one may check that this
normal law has density upper bounded by a constant $C_0>0$ over
$[-\sqrt{2n\delta/m},\sqrt{2n\delta/m}]$. Hence
\begin{equation}\label{eq:anticoncentration}
\mathbb{P}\left((u+\<\bx_i,\bv\>)^2 \leq 2n\delta/m \right) \leq
2C_0\sqrt{2n\delta/m}.
\end{equation}
For the above choices $m=\lfloor c_0n \rfloor$ and
$\delta=(\log d)^{-K'}$, we have
$n \cdot 2C_0\sqrt{2n\delta/m} \leq m/4$ for all large $n,d$, so by
Hoeffding's inequality for a binomial random variable,
$\eqref{equ:unif}\leq e^{-m^2/(8n)}$.
To extend this to a union bound over all $\bw \in \S^d$, we
apply a covering net argument:
Observe that by a standard bound on the operator norm of Gaussian matrices,
$\|\frac{1}{n}\sum_{i=1}^n \bx_i\bx_i^\sT\|_\op \leq 2$ with probability
$1-2e^{-cn}$ and some constant $c>0$. On this event, we have
$\|\frac{1}{n}\bA(\cI)\bA(\cI)^\sT\|_\op \leq 6$ for every $\cI \subseteq [n]$.
Then, letting
$\mathcal{N}_\eps$ be a $\eps$-net of the sphere $\S^d \subset \R^{d+1}$
with $\eps=\delta/24$, we have
\begin{align*}
&\mathbb{P}\left(\inf_{\bw \in \S^d} \bw^{\sT}\left(\frac{1}{n}
\bA(\widehat \cI)\bA(\widehat \cI)^\sT\right) \bw \leq
\delta/2 \text{ and } |\widehat \cI| \geq m \text{ for all } \btheta \in
\widehat S(K)\right)\\
&\leq \mathbb{P}\left(\inf_{\bw \in \mathcal{N}_\eps} \bw^{\sT}\left(\frac{1}{n}
\bA(\widehat \cI)\bA(\widehat \cI)^\sT\right) \bw \leq
\delta \text{ and } |\widehat \cI| \geq m \text{ for all } \btheta \in \widehat
S(K)\right)+2e^{-cn}\\
&\leq |\mathcal{N}_\eps| \cdot e^{-m^2/(8n)}+2e^{-cn}.
\end{align*}
Since $n \asymp d^2$ and $m \asymp n$, we may choose $\mathcal{N}_\eps$ so that
$\log |\mathcal{N}_\eps| \lesssim d\log(1/\eps) 
\asymp d\log \log d \ll m^2/n$. Thus, applying this bound together with
\eqref{eq:hatIsize}, for all large $d$ we have
\begin{equation}\label{eq:uniformlowersingularvalue}
\P\left(\inf_{\bw \in \S^d} \bw^{\sT}\left(\frac{1}{n}
\bA(\widehat \cI)\bA(\widehat \cI)^\sT\right) \bw \leq
\frac{1}{2(\log d)^{K'}} \text{ for all } \btheta \in \widehat S(K)\right) \leq 2d^{-C}.
\end{equation}
This shows that the smallest singular value of
$\bA(\widehat \cI)$ is bounded as
$\sigma_{\min}(\bA(\widehat \cI)) \succ n^{1/2} \succ d$ simultaneously over
$\btheta \in \widehat S(K)$, implying that
\begin{equation}\label{eq:unif_bound_lemma_c1_aa}
\left\|\begin{pmatrix}
        \mu_0 \btheta^\sT \1_p & \mu_{1}\btheta^\sT \bW
    \end{pmatrix}\right\|_2
\prec d^{-1} \left\|\begin{pmatrix}
        \mu_0 \btheta^\sT \1_p & \mu_{1}\btheta^\sT \bW
    \end{pmatrix} \bA(\widehat \cI)\right\|_2.
    \end{equation}
Combining with \eqref{equation:fallin3gauss} shows that
$|\mu_0\btheta^\sT \1_p| \prec 1$ and $\|\mu_1 \btheta^\sT \bW\|_2 \prec 1$.
For $k=2$, applying $\|\btheta\|_2 \prec 1$, $|\mu_2| \prec 1$, and
$\bV_2=\bV_{2c}+\frac{1}{d}\1_p\be_c^\sT$
where $\|\bV_{2c}\|_\op \prec 1$ by Corollary
\ref{corollary:OperatorNormF2cdecompose}, this shows also
$\|\mu_2\btheta^\sT \bV_2\|_\op \prec 1$. Similarly, for $k \geq 3$, applying
$\|\bV_k\|_\op \prec 1$ by Lemma \ref{lemma:OperatorNormFk}, we have
$\|\mu_k\btheta^\sT \bV_k\|_\op \prec 1$.
\end{proof}



\begin{proof}[Proof of Lemma \ref{lemma:sublevelgeometry}(b)]
Recall that $L_\bW(\btheta)=\E_{\bz^{\sPG},y^{\sPG}}
[\ell_{\test}(y^{\sPG},\langle\btheta,\bz^{\sPG}\rangle) \mid \bW]$.
By Assumption \ref{ass:test_loss} for $\ell_\test(\cdot)$, there exists a
constant $C_0>0$ such that
\begin{equation}\label{eq:testlossbound}
|L_\bW(\btheta)| \leq
\E_{\bz^{\sPG},y^{\sPG}}[C_0(1+|y^\sPG|^{C_0}+|\<\btheta,\bz^\sPG\>|^{C_0})
\mid \bW]
\end{equation}
Expanding $\bz^{\sPG}$, we have
$\langle\btheta,\bz^{\sPG}\rangle = \mu_0 \btheta^\sT 
\mathbf{1}_p + \mu_1 \btheta^\sT \bW\bx+
\mu_2 \btheta^\sT \bV_2\bh_2(\bx) +
\mu_{>2}\btheta^\sT \bg_*$
where $(\bx,\bg_*)$ are independent of $\bW$. Then the bounds of
Lemma \ref{lemma:sublevelgeometry}(a) imply, simultaneously over $\btheta \in
\widehat S(K)$,
\[\E_{\bz^{\sPG}}[\langle\btheta,\bz^{\sPG}\rangle^2 \mid \bW] \prec 1.\]
By Gaussian hypercontractivity over $(\bx,\bg_*)$, we then have
(simultaneously over $\btheta \in \widehat S(K)$)
\begin{equation}\label{eq:thetazPGbound}
|\langle\btheta,\bz^{\sPG}\rangle| \prec 1.
\end{equation}
Similarly, expanding $y^{\sPG}=\eta(\bx_S,\bbeta_2^\sT \bh_2(\bx),
\bbeta_3^\sT \bg_3,\ldots,\bbeta_{D'}^\sT \bg_{D'},\eps)$ and applying
Assumption \ref{assumption:target} for $\eta(\cdot)$ and
Assumption \ref{assumption:noise} for the noise $\eps$, we have
\begin{equation}\label{eq:yPGbound}
|y^{\sPG}| \prec 1.
\end{equation}
Applying these bounds to \eqref{eq:testlossbound} shows
$|L_\bW(\btheta)| \prec 1$ simultaneously over $\btheta \in \widehat S(K)$.

For the Hessian $\nabla^2 L_\bW(\btheta)
=\E_{\bz^{\sPG},y^{\sPG}}[\ell_{\test}''(y^{\sPG},\langle\btheta,\bz^{\sPG}\rangle)\bz^{\sPG}(\bz^{\sPG})^\sT
\mid \bW]$, we have for any unit vector $\bu \in \S^{p-1} \subset
\R^p$ that
\begin{align*}
|\bu^\sT \nabla^2 L_\bW(\btheta)\bu| &\leq
\E_{\bz^{\sPG},y^{\sPG}}[C_0(1+|y^\sPG|^{C_0}+|\<\btheta,\bz^\sPG\>|^{C_0})
\<\bu,\bz^{\sPG}\>^2 \mid \bW]\\
&\leq
C_0\,\E_{\bz^{\sPG},y^{\sPG}}[(1+|y^\sPG|^{C_0}+|\<\btheta,\bz^\sPG\>|^{C_0})^2
\mid \bW]^{1/2}\,\E_{\bz^{\sPG}}[\<\bu,\bz^{\sPG}\>^4 \mid \bW]^{1/2}.
\end{align*}
Expanding again $\bz^{\sPG}$, we have
\begin{align}\label{eq:uzPGsquared}
\E_{\bz^{\sPG}}[\langle\bu,\bz^{\sPG}\rangle^2 \mid \bW]
&=\mu_0^2(\bu^\sT \1_p)^2+\mu_1^2\|\bu^\sT \bW\|_2^2
+\mu_2^2\|\bu^\sT \bV_2\|_2^2+\mu_{>2}^2\|\bu\|_2^2.
\end{align}
Recalling that $\mu_{>2}^2=\mu_3^2+\ldots+\mu_D^2$ and
$\bV=\begin{pmatrix} \mu_0\1_p & \mu_1\bW & \cdots & \mu_D\bV_D \end{pmatrix}$,
Lemma \ref{lemma:OperatorNormFk} implies that
simultaneously over all $\bu \in \S^{p-1}$,
\[\Big|\Big(\mu_0^2+\frac{3}{d^2}\mu_4^2\Big)(\bu^\sT
\1_p)^2+\Big(\mu_1^2+\frac{3}{d}\mu_3^2\Big)\|\bu^\sT \bW\|_2^2
+\mu_2^2\|\bu^\sT \bV_2\|_2^2+\mu_{>2}^2 \|\bu\|_2^2
-\|\bu^\sT \bV\|_2^2\Big| \prec d^{-1/2}\mu_{>2}^2.\]
By the conditions of Assumption \ref{assumption:activation}, we have
$\mu_4^2/d^2 \prec \mu_0^2$ and $\mu_3^2/d \prec \mu_1^2$, and hence
applying this to \eqref{eq:uzPGsquared} shows
$\E_{\bz^{\sPG}}[\langle\bu,\bz^{\sPG}\rangle^2 \mid \bW] \prec \|\bu^\sT
\bV\|_2^2$, simultaneously over all $\bu \in \S^{p-1}$.
Then again by Gaussian hypercontractivity,
$\E_{\bz^{\sPG}}[\langle\bu,\bz^{\sPG}\rangle^4 \mid \bW]^{1/2}
\prec \|\bu^\sT \bV\|_2^2$.  Combining with the bounds 
\eqref{eq:thetazPGbound} and \eqref{eq:yPGbound}, this shows
$|\bu^\sT \nabla^2 L_\bW(\btheta)\bu| \prec \|\bu^\sT \bV\|_2^2$
simultaneously over $\btheta \in \widehat S(K)$ and $\bu \in \S^{p-1}$, i.e.\
for any $C>0$, there exists $K'>0$ such that with probability $1-d^{-C}$,
\[|\bu^\sT \nabla^2 L_\bW(\btheta)\bu| \leq (\log d)^{K'}\|\bu^\sT \bV\|_2^2
\text{ for all } \btheta \in \widehat S(K) \text{ and } \bu \in \S^{p-1}.\]
On this event, we have
${-}(\log d)^{K'}\bV\bV^\sT \preceq \nabla^2 L_\bW(\btheta)
\preceq (\log d)^{K'}\bV\bV^\sT$ for all $\btheta \in \widehat S(K)$.
\end{proof}

\begin{proof}[Proof of Lemma \ref{lemma:sublevelgeometry}(c)]
Lemma \ref{lemma:sublevelgeometry}(a) and (b) implies that for $K'>0$ sufficiently large and any $K_\Gamma>K'$, with probability at least $1 -d^{-C}$,
\[
\begin{aligned}
\bH (\btheta) \succeq \lambda \bI + \tau_1 \nabla^2 \Gamma_1^\bW (\btheta) + \tau_2 \nabla^2 \Gamma_2^\bW (\btheta) =&~ \lambda \bI + 2 \tau_1 \bV_+ \bV_+^\sT + \tau_2 \nabla^2 L_\bW (\btheta) \\
\succeq&~  \lambda \bI + 2 \tau_1 \bV_+ \bV_+^\sT - |\tau_2| (\log d)^{K'}  \bV \bV^\sT.
\end{aligned}
\]
Using Corollary \ref{corollary:OperatorNormF2cdecompose} and Lemma \ref{lemma:OperatorNormFk}, and Assumption \ref{assumption:activation}, we have also with probability at least $1 - d^{-C}$,
\[
 \bV \bV^\sT = \bV_+ \bV_+^\sT + \frac{\mu_2^2}{d} \bones_p \bones_p^\sT + \mu_2^2 \bV_{2c} \bV_{2c}^\sT + \sum_{k=3}^D \mu_k^2 \bV_k \bV_k^\sT \preceq (3/2) \bV_+ \bV_+^\sT + (\log d)^{K'/2} \bI,
\]
Thus for $K'>0$ sufficiently large and $K_\Gamma>K'$, using Assumption  \ref{assumption:lambda},
$\tau_1 \leq 1/(\log d)^{K_\Gamma}$, and
$|\tau_2|\leq \tau_1/(\log d)^{K_\Gamma}$, we deduce that
\[
\bH (\btheta) \succeq \frac{4}{3} \tau_1 \bV \bV^\sT + \frac{3\lambda}{4} \bI - |\tau_2| (\log d)^{K'}  \bV \bV^\sT \succeq  \tau_1 \bV \bV^\sT + \frac{\lambda}{2} \bI.
\]
This implies
$\|\bV^\sT \bH^{-1}(\btheta)\bV\|_\op \leq 1/\tau_1$, which concludes the proof.
\end{proof}

\subsection{Inclusion of the minimizers in $\Theta_W^{\sPG}$}
\label{app:localization_Phase_1}

We now show that each of the optimizers $\hat\btheta_{\setminus q}$ 
and $\hat\btheta_{\cup q}$ is uniquely defined and belongs
to $\bTheta_\bW^\sPG(K)$ as well as the sub-level set $\widehat S_q(K)$.

\begin{lemma}\label{lemma:optinThetaPG}
For any constant $C>0$, there exist constants $K,K_\Gamma>0$ such that
with probability at least $1-d^{-C}$, the following holds. For each $q=1,\ldots,n$, choice of
$(\tilde \bz_q,\tilde y_q) \in \{(\bz_q^\RF,y_q^\RF),(\bz_q^\sPG,y_q^\sPG)\}$,
and $\hcR$ given by either $\hcR_{\setminus q}$ or $\hcR_{\cup q}$, there exists
a unique minimizer $\hat\btheta$ of $\hcR$, which furthermore satisfies
\[\hat\btheta \in \bTheta_\bW^{\sPG}(K) \cap \widehat S_q(K).\]
\end{lemma}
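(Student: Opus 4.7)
The plan is to verify, uniformly over the $O(n)$ choices of $q$, $(\tilde\bz_q,\tilde y_q)$, and $\hcR \in \{\hcR_{\setminus q},\hcR_{\cup q}\}$, that (i) a global minimizer $\hat\btheta$ exists, (ii) it lies in $\bTheta_\bW^\sPG(K) \cap \widehat S_q(K)$, and (iii) it is unique. First I would evaluate $\hcR$ at $\bzero$: using the Lipschitz control of $\ell$ (Assumption~\ref{assumption:loss}), the pseudo-Lipschitz test loss at the origin (Lemma~\ref{lemma:sublevelgeometry}(b) applied to $\bzero \in \widehat S_q(K)$), the label bound $|y_i| \prec 1$ from Lemma~\ref{lemma:yzbounds}, and the uniform bound $|\btau \cdot \bGamma^\bW| \leq 2$, I obtain $\hcR(\bzero) \prec 1$. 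Coercivity of $\hcR$ (the ridge term dominates the bounded perturbation) yields a global minimizer $\hat\btheta$ with $\hcR(\hat\btheta) \leq \hcR(\bzero) \prec 1$, and subtracting the perturbation shows that the unperturbed loss-plus-ridge at $\hat\btheta$ is also $\prec 1$, placing $\hat\btheta \in \widehat S_q(K)$. Lemma~\ref{lemma:sublevelgeometry}(a) applied to this $\hat\btheta$ then immediately yields three of the four defining constraints of $\bTheta_\bW^\sPG(K)$: $\|\hat\btheta\|_2 \prec 1$, $|\hat\btheta^\sT \bones_p| \prec 1$ (using the polylog lower bound on $|\mu_0|$ in Assumption~\ref{assumption:activation}), and $\|\mu_1 \hat\btheta^\sT \bW\|_2 \prec 1$.

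The hard part is the fourth constraint $\|\hat\btheta\|_\infty \leq (\log d)^K d^{-1/4}$, which I would establish by a leave-one-feature-out (LOFO) comparison. For each $j \in [p]$, let $\hat\btheta^{(j)}$ minimize the modified risk in which $\sigma(\langle\bw_j,\bx\rangle)$ is replaced by $0$ in the loss term. Strong convexity of $\hcR$ on $\widehat S_q(K)$ from Lemma~\ref{lemma:sublevelgeometry}(c) gives the stability bound $\|\hat\btheta - \hat\btheta^{(j)}\|_2 \prec |\hat\btheta_j|$, and the crucial observation is that $\hat y_i^{(j)} := \langle \hat\btheta^{(j)}, \bz_i \rangle$ is independent of $\bw_j$ conditional on $(\bx,\bw_{-j},\beps)$. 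Using the KKT identity $\lambda\hat\btheta_j = -\tfrac{1}{n}\sum_i z_{i,j}\ell'(y_i,\hat y_i) - \partial_j(\btau \cdot \bGamma^\bW(\hat\btheta))$ and splitting off a LOFO residual via the stability bound, the main term reduces to the polynomial $\bw_j \mapsto \tfrac{1}{n}\sum_i a_i\sigma(\langle\bw_j,\bx_i\rangle)$ with bounded coefficients $a_i = \ell'(y_i,\hat y_i^{(j)})$. Gaussian hypercontractivity on $\bw_j \sim \Unif(\mathbb{S}^{d-1})$ combined with the off-diagonal smallness $|\Cov_{\bw_j}(\sigma(\langle\bw_j,\bx_i\rangle),\sigma(\langle\bw_j,\bx_l\rangle))| \prec d^{-1/2}$ for $i \neq l$ gives a fluctuation of order $d^{-1/4}$ about its mean, while the mean is controlled by the stationarity of $\hat\btheta^{(j)}$ along the spike direction $\bones_p$. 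A careful bookkeeping of the perturbation gradient $\partial_j(\btau \cdot \bGamma^\bW)$ using the eigenstructure of $\bV_+$ and a union bound over the $p \asymp d^2$ features yield $\max_j |\hat\btheta_j| \prec d^{-1/4}$. This is the most delicate step; the principal difficulty is the intertwining of the LOFO residual with the perturbation and the small-eigenvalue directions of $\bH$.

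Uniqueness then follows because any global minimizer must lie in $\widehat S_q(K)$ by the argument of the first paragraph, and $\hcR$ is strongly convex with constant $\lambda/2$ on this convex sub-level set by Lemma~\ref{lemma:sublevelgeometry}(c); two distinct minimizers would violate strong convexity along the line segment joining them (which remains in $\widehat S_q(K)$ by convexity). The overall $1-d^{-C}$ probability guarantee is obtained by taking the union bound over the $O(n)$ triples $(q,(\tilde\bz_q,\tilde y_q),\hcR)$ of the high-probability events from Lemma~\ref{lemma:sublevelgeometry} and the LOFO concentration step, with $K$ chosen sufficiently large (depending only on $C$ and the constants of the assumptions).
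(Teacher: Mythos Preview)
Your overall plan matches the paper: land in $\widehat S_q(K)$ by evaluating at $\bzero$, invoke Lemma~\ref{lemma:sublevelgeometry}(a) for three of the four constraints defining $\bTheta_\bW^\sPG(K)$, handle $\|\hat\btheta\|_\infty$ by a leave-one-feature-out comparison plus concentration over the held-out weight, and deduce uniqueness from strong convexity on the convex sub-level set.

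The gap is in your LOFO construction. You define $\hat\btheta^{(j)}$ by zeroing the $j$-th feature \emph{only in the loss term}, and then assert that $\hat y_i^{(j)}$ is independent of $\bw_j$. This fails: the perturbation $\btau\cdot\bGamma^\bW(\btheta)$ still depends on $\bw_j$, through both $\bV_+^\sT\btheta$ (via the $j$-th row of $\bW$) and $L_\bW(\btheta)=\E[\ell_\test(y^\sPG,\<\btheta,\bz^\sPG\>)\mid\bW]$, so $\hat\btheta^{(j)}$ is not a function of $\bw_{-j}$ alone and the conditional concentration over $\bw_j$ does not apply. The paper instead defines $\hat\btheta_{-p}$ as the minimizer of the \emph{unmodified} $\hcR$ over the subspace $\{\theta_p=0\}$: with $\theta_p=0$, none of $\<\btheta,\bz_i\>$, $\bV_+^\sT\btheta$, or $\<\btheta,\bz^\sPG\>$ involves the $p$-th feature, so $\hat\btheta_{-p}$ is genuinely independent of $\bw_p$. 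A second issue is your route via the KKT identity for $\hat\btheta$ followed by a stability swap $\ell'(y_i,\hat y_i)\to\ell'(y_i,\hat y_i^{(j)})$: even granting $\|\hat\btheta-\hat\btheta^{(j)}\|_2\prec|\hat\theta_j|$, the residual $|\<\hat\btheta-\hat\btheta^{(j)},\bz_i\>|$ picks up $\|\bz_i\|_2\prec d$, which is too large to close the bootstrap. The paper avoids any such residual: a second-order Taylor expansion of $\hcR$ around $\hat\btheta_{-p}$, combined with the constrained first-order condition for $\hat\btheta_{-p}$ and optimality $\hcR(\hat\btheta)\leq\hcR(\hat\btheta_{-p})$, yields directly $\tfrac{\lambda}{4}|\hat\theta_p|\leq\big|[\nabla\hcR(\hat\btheta_{-p})]_p\big|$. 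The right-hand side is then split into Hermite degrees $T_0,\ldots,T_D$ and perturbation pieces; the $T_0$ contribution is controlled by the first-order condition for $\hat\btheta_{-p}$ in the direction $\bones_p-\be_p$ (this is your ``stationarity along $\bones_p$''), and each $T_k$ for $k\geq 1$ is bounded by an explicit conditional-variance computation over $\bw_p$ giving $\Var_{\bw_p}[T_k]\prec d^{-1/2}$, hence $|T_k|\prec d^{-1/4}$ by hypercontractivity on the sphere.
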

\begin{proof}
We abbreviate $\widehat S(K) \equiv \widehat S_q(K)$.
By Lemma \ref{lemma:yzbounds} and Assumption \ref{assumption:loss} for
$\ell(\cdot)$, for any $C>0$, there exists some $K''>0$ such that
$\frac{1}{n}\sum_{i=1}^n |\ell(y_i,0)| \leq (\log d)^{K''}$
with probability $1-d^{-C}$. On this event, since
$|\btau \cdot \bGamma^\bW(\btheta)| \leq 2$ for all $\btheta \in \R^p$
and $\ell(\cdot) \geq 0$, we must have
\begin{align*}
\hcR(\bzero)&=\frac{1}{n}\sum_{i=1}^n \ell(y_i,0)
+\btau \cdot \bGamma^\bW(\bzero) \leq (\log d)^{K''}+2,\\
\hcR(\btheta)&=\frac{1}{n}\sum_{i=1}^n \ell(y_i,\<\btheta,\bz_i\>)
+\frac{\lambda}{2}\|\btheta\|_2^2+\btau \cdot \bGamma^\bW(\btheta)
\geq (\log d)^K-2
\text{ for all } \btheta \notin \widehat S(K).
\end{align*}
Choosing $K$ large enough,
this implies that 
\begin{equation}\label{eq:sublevelRSK}
\{\btheta:\hcR(\btheta) \leq \hcR(\bzero)\} \subseteq \widehat S(K).
\end{equation}
In the remainder of the proof, we fix this value of $K$, and
restrict to the event where \eqref{eq:sublevelRSK} holds and the statements of
Lemma \ref{lemma:sublevelgeometry} also hold for $\widehat S(K)$.
By \eqref{eq:sublevelRSK},
all minimizers of $\hcR$ belong to $\widehat S(K)$.
Since $\widehat S(K)$ is a convex set, Lemma
\ref{lemma:sublevelgeometry}(c) ensures that
$\hcR(\btheta)$ is strongly convex on $\widehat S(K)$,
so the minimizer $\hat\btheta$ is unique.
Since $\hat\btheta \in \widehat S(K)$, 
Lemma \ref{lemma:sublevelgeometry}(a) and (b) ensures that
\begin{equation}\label{eq:hatthetabasicbounds}
\|\hat\btheta\|_2 \leq (\log d)^{K'}
\qquad |\hat\btheta^\sT \1_p| \leq (\log d)^{K'}
\qquad \|\mu_1\hat\btheta^\sT \bW\|_2 \leq (\log d)^{K'}
\end{equation}
for a constant $K'>0$, and for $K_{\Gamma} > 2K'$, and we have $\Gamma^\bW(\btheta)=(\| \bV_+^\sT \btheta\|_2^2, L_\bW(\btheta))$. We will denote
\begin{equation}\label{eq:notation_btau_dot_nabla_gamma}
\btau \cdot \nabla^k \bGamma^\bW(\btheta) = \tau_1 \nabla^k \Gamma_1^\bW (\btheta) + \tau_2 \nabla^k \Gamma_2^\bW (\btheta).
\end{equation}
To check that $\hat\btheta \in \bTheta_\bW^{\sPG}(K')$,
it remains to bound $\|\hat\btheta\|_\infty$.

\paragraph{Step 1: Leave the last coordinate out.}
To bound the last coordinate $\hat \theta_p$ of $\hat\btheta$, let us
define
	\begin{equation}\label{equation:DefinitionOfTheta0constrain}
		\hat \btheta_{-p} := \argmin_{\btheta\in \R^p,\,\theta_p = 0} \frac{1}{n}\sum_{i=1}^{n}\ell(y_i, \btheta^\sT \bz_i )+ \frac{\lambda}{2} \| \btheta \|_2^2+\btau\cdot\bGamma^{\bW} (\btheta)
	\end{equation}
as the minimizer of $\hcR$ constrained to the linear subspace
subspace $\mathcal{C}=\{\btheta \in \R^p \mid \theta_p=0\}$. By
\eqref{eq:sublevelRSK} and the strong convexity of
$\hcR(\btheta)$ on $\widehat S(K)$, this minimizer
$\hat\btheta_{-p}$ is also uniquely defined, and
$\hat\btheta_{-p} \in \widehat S(K)$.
Note that $\hat\btheta_{-p}$ must satisfy the first-order optimality condition
$\langle \nabla \hcR(\hat\btheta_{-p}),\btheta_{-p}
\rangle=0$ for all $\btheta_{-p} \in \mathcal{C}$, implying
\begin{align}\label{equation:KKTConditionForOptimalSolutionReduced}
\frac{1}{n}\sum_{i=1}^n \ell'(y_i, \hat{\btheta}_{-p}^\sT \bz_i)
(\btheta_{-p}-\hat{\btheta}_{-p})^\sT \bz_i + \lambda
\hat\btheta_{-p}^\sT  (\btheta_{-p} - \hat\btheta_{-p})
+[\btau \cdot\nabla\bGamma^{\bW}
(\hat\btheta_{-p})]^\sT (\btheta_{-p}-\hat\btheta_{-p})=0.
\end{align}
For any $\btheta \in \widehat S(K)$, by a second-order Taylor expansion of
$\hcR(\btheta)$ around $\hat\btheta_{-p}$, we have
\begin{align}\label{equation:TaylorExpansionOfObjectiveFunction1}
    \hcR(\btheta) &= \frac{1}{n}\sum_{i=1}^n\left\{\ell(y_i,\hat\btheta_{-p}^\sT
\bz_i) + \ell'(y_i, \hat\btheta_{-p}^\sT \bz_i) (\btheta-\hat\btheta_{-p})^\sT
\bz_i+\frac{1}{2}\ell''(y_i,\tilde \btheta^\sT \bz_i) \|
(\btheta-\hat\btheta_{-p})^\sT \bz_i\|_2^2\right\} \nonumber \\
&\qquad+\frac{\lambda}{2}\|\hat\btheta_{-p}\|_2^2
+\lambda\hat\btheta_{-p}^\sT(\btheta-\hat\btheta_{-p})
+\frac{\lambda}{2}\|\btheta-\hat\btheta_{-p}\|_2^2\\
&\qquad+\btau\cdot\bGamma^{\bW}(\hat\btheta_{-p})
+[\btau\cdot \nabla \bGamma^\bW(\hat\btheta_{-p})]^\sT(\btheta-\hat\btheta_{-p})
+\frac{1}{2}(\btheta-\hat\btheta_{-p})^\sT
[\btau \cdot \nabla^2 \bGamma^\bW(\tilde\btheta)](\btheta-\hat\btheta_{-p}),
\end{align}
where $\tilde \btheta$ is a point on the line segment
between $\hat\btheta_{-p}$ and $\btheta$. Let us decompose
$\btheta=\btheta_{-p}+\theta_p \be_p$, where $\btheta_{-p} \in \R^p$ sets the
last coordinate of $\btheta$ to 0. Then,
substituting the first-order condition
\eqref{equation:KKTConditionForOptimalSolutionReduced} for $\btheta_{-p}$
and noting $\hat\btheta_{-p}^\sT(\btheta_{-p}-\hat\btheta_{-p})
=\hat\btheta_{-p}^\sT(\btheta-\hat\btheta_{-p})$, we obtain
\begin{align}
\hcR(\btheta) &= \frac{1}{n}\sum_{i=1}^n \left\{\ell(y_i,\hat\btheta_{-p}^\sT
\bz_i) +  \ell'(y_i,\hat\btheta_{-p}^\sT \bz_i)\theta_p\be_p^\sT \bz_i  
    + \frac{1}{2}\ell''(y_i,\tilde \btheta^\sT \bz_i) \| (\btheta -
\hat\btheta_{-p})^\sT\bz_i\|_2^2 \right\} \nonumber \\
&\qquad+\frac{\lambda}{2}\|\hat\btheta_{-p}\|_2^2 +
\frac{\lambda}{2}\|\btheta-\hat\btheta_{-p}\|_2^2\\
&\qquad+\btau\cdot\bGamma^{\bW}(\hat\btheta_{-p})
+[\btau\cdot \nabla \bGamma^\bW(\hat\btheta_{-p})]^\sT \theta_p \be_p
+\frac{1}{2}(\btheta-\hat\btheta_{-p})^\sT
[\btau \cdot \nabla^2 \bGamma^\bW(\tilde\btheta)](\btheta-\hat\btheta_{-p})\\
&=\hcR(\hat\btheta_{-p})+\left\{\frac{1}{n}\sum_{i=1}^n \ell'(y_i,\hat
\btheta_{-p}^\sT  \bz_i)\bz_i^\sT\be_p+[\btau\cdot\nabla\bGamma^{\bW}(\hat\btheta_{-p})]^\sT\be_p\right\}\theta_p
+\frac{1}{2}(\btheta-\hat\btheta_{-p})^\sT \nabla^2 \hcR(\tilde
\btheta)(\btheta-\hat\btheta_{-p}).\label{equ:boundalloneKKT}
\end{align}
Since $\btheta,\btheta_{-p} \in \widehat S(K)$ which is a convex set,
we have also $\tilde\btheta \in \widehat S(K)$,
so Lemma \ref{lemma:sublevelgeometry}(c) ensures
$\nabla^2 \hcR(\tilde\btheta) \succeq (\lambda/2)\bI$.
Then the last term of \eqref{equ:boundalloneKKT} is bounded as
\[\frac{1}{2}(\btheta-\hat\btheta_{-p})^\sT \nabla^2 \hcR(\tilde
\btheta)(\btheta-\hat\btheta_{-p})
\geq \frac{\lambda}{4}\|\btheta-\hat\btheta_{-p}\|_2^2 \geq
\frac{\lambda}{4}\theta_p^2,\]
the second inequality holding since $\hat\btheta_{-p}$ has last coordinate 0.
Then, applying \eqref{equ:boundalloneKKT} with $\btheta=\hat\btheta$ being the
optimizer of the original objective $\hcR$, so that
$\hcR(\hat \btheta)-\hcR(\hat\btheta_{-p}) \leq 0$, this shows
	\begin{equation}\label{equation:KKTConditionForOptimalSolutionReduced1thetap}
	  \frac{\lambda}{4} |\hat\theta_p| \leq  \left|[\btau \cdot \nabla\bGamma^{\bW}
(\hat\btheta_{-p})]^\sT
\be_p+\frac{1}{n}\sum_{i=1}^{n}\ell'(y_i,\hat{\btheta}_{-p}^\sT \bz_i)
\bz_i^{\sT} \be_p\right|.
	\end{equation}   
We clarify that here, $\hat \theta_p$ is the last coordinate of the
unconstrained optimizer $\hat\btheta \in \R^p$, while $\hat\btheta_{-p} \in \R^p$ is the
constrained optimizer solving \eqref{equation:DefinitionOfTheta0constrain}.

Let us express the right side of
\eqref{equation:KKTConditionForOptimalSolutionReduced1thetap} more explicitly.
Consider first the case of the original
random features model where $(\bz_i,y_i)=(\bz_i^\RF,y_i^\RF)$.
Since $\hat\btheta_{-p} \in \widehat S(K)$,
by Lemma \ref{lemma:sublevelgeometry}(b) we have $\Gamma_\bW(\btheta)
=\E_{\bz^{\sPG},y^\sPG}[\ell_\test(y^{\sPG},\<\btheta,\bz^\sPG\>) \mid \bW]$
in a neighborhood of $\hat\btheta_{-p}$. 
Then differentiating $\Gamma_\bW(\btheta)$, applying the expansions
$\bz^{\sPG}=\mu_0\1_p+\mu_1\bW\bx'+\mu_2\bV_2\bh_2(\bx')+\mu_{>2} \bg_*$
and $\bz_i^\RF=\sum_{k=0}^D \mu_k \bV_k\bh_k(\bx_i)$,
and applying the identity $\be_p^\sT \bV_k\bh_k(\bx)
=\<\bq_k(\bw_p),\bh_k(\bx)\>=\He_k(\<\bw_p,\bx\>)$,
we may write \eqref{equation:KKTConditionForOptimalSolutionReduced1thetap} as
\begin{equation}\label{eq:KKTBound}
\frac{\lambda}{4} |\hat\theta_p| \leq \left| 2 \tau_1\mu_1^2 \bw_p^\sT \bW^\sT \hbtheta_{-p} + L_0+L_{\geq 1}+\sum_{k=0}^D
T_k\right|
\end{equation}
where
\begin{equation}\label{eq:KKTboundRF}
\begin{aligned}
L_0&=\tau_2\,\E_{\bx',\bg_*}[\ell_\test'(y^\sPG,\<\hat\btheta_{-p},\bz^{\sPG}\>)\mu_0] + 2 \tau_1 \mu_0^2 \<\hat\btheta_{-p},\bones_p\>  \\
L_{\geq 1}&=\tau_2\,\E_{\bx',\bg_*}[\ell_\test'(y^\sPG,\<\hat\btheta_{-p},\bz^{\sPG}\>)
(\mu_1\<\bw_p,\bx'\>+\mu_2\He_2(\<\bw_p,\bx'\>)+\mu_{>2}\bg_*^\sT \be_p)]\\
T_0&=\frac{1}{n}\sum_{i=1}^n \ell'(y_i,\hat\btheta_{-p}^\sT \bz_i)\mu_0\\
T_k&=\frac{1}{n}\sum_{i=1}^n \ell'(y_i,\hat\btheta_{-p}^\sT \bz_i)
\mu_k \He_k(\<\bw_p,\bx_i\>) \text{ for } k=1,\ldots,D.
\end{aligned}
\end{equation}
Here, $(\bx',\bg_*)$ are the independent Gaussian variables defining the test
observation $(\bz^\sPG,y^\sPG)$, and the expectations in $L_0,L_{\geq 1}$ are
over these variables only. Note that by independence of $\bw_p$ and $\bW^\sT \hbtheta_{-p}$, we have directly $|2 \tau_1\mu_1^2 \bw_p^\sT \bW^\sT \hbtheta_{-p} | \prec d^{-1/2} \| \bW^\sT \hbtheta_{-p}\|_2 \prec d^{-1/2}$.

\paragraph{Step 2: Bound $L_0+T_0$.}

We first bound $L_0+T_0$. Applying again the first-order condition
$\nabla \hcR(\hat\btheta_{-p})^\sT \btheta_{-p}=0$ for all $\btheta_{-p}$ with
last coordinate 0, we have
	\begin{equation}
		\Bigg\{\frac{1}{n}\sum_{i=1}^{n}\ell'(y_i,
\hat{\btheta}_{-p}^\sT \bz_i)\bz_i + \lambda \hat\btheta_{-p}
+\btau\cdot\nabla
\bGamma^{\bW} (\hat\btheta_{-p})\Bigg\}^\sT (\1_p-\be_p)=0.
	\end{equation}
Expanding $\bz_i=\mu_0\1_p+\sum_{k=1}^D \mu_k \bV_k\bh_k(\bx_i)$ and
rearranging,
\begin{align}\label{equation:KKTConditionForOptimalSolutionReduced2controlallone}
    &\frac{p-1}{n}\sum_{i=1}^{n}\ell'(y_i, \hat{\btheta}_{-p}^\sT \bz_i)\mu_{0}
+ \lambda \hat\btheta_{-p}^\sT \1_p +
[\btau\cdot\nabla\bGamma^{\bW}(\hat\btheta_{-p})]^\sT(\1_p -\be_p)\\
&=-\sum_{k=1}^D \left(\frac{1}{n}\sum_{i=1}^{n}\ell'(y_i, \hat{\btheta}_{-p}^\sT \bz_i)
\mu_k\bV_k\bh_k(\bx_i)\right)^\sT(\1_p-\be_p).
\end{align}
For $k \geq 3$, we have
\begin{align}
	\left|
\left(\frac{1}{n}\sum_{i=1}^{n}\ell'(y_i, \hat{\btheta}_{-p}^\sT \bz_i)
\mu_k\bV_k\bh_k(\bx_i)\right)^\sT(\1_p-\be_p)\right|
&\leq \frac{|\mu_k|}{n}
\|\bV_k\bh_k(\bX)\|_\op \left\|\begin{pmatrix}
	 \ell'(y_1, \hat{\btheta}_{-p}^\sT \bz_1)\\ 
		\vdots\\ 
	 \ell'(y_n, \hat{\btheta}_{-p}^\sT \bz_n)
	\end{pmatrix} \right\|_2 \|\1_p-\be_p\|_2\\
& \prec |\mu_k|\|\bV_k\bh_k(\bX)\|_\op \prec d,
\end{align}
where the last two inequalities apply
$\|\ell'\|_\infty \prec 1$ by Assumption \ref{assumption:loss}, $p \asymp n
\asymp d^2$, $|\mu_k| \prec 1$, and $\|\bV_k\bh_k(\bX)\|_\op \prec d$ by
Lemma \ref{lemma:OperatorNormZkFk}.
For $k=1,2$, writing $\bV_2=\bV_{2c}+\frac{1}{d}\1_p\be_c^\sT$
and applying also $\|\bW\|_\op \prec \sqrt{d}$, $\|\bV_{2c}\|_\op \prec 1$,
$\|\bX\|_\op \prec d$ and $\|\bh_2(\bX)\|_\op \prec d$ by 
Corollary \ref{corollary:OperatorNormF2cdecompose} and
Lemma \ref{lemma:BernsteinInequalityHermiteFeatures},
similar arguments show
\begin{align}
\left|\left(\frac{1}{n}\sum_{i=1}^{n}\ell'(y_i, \hat{\btheta}_{-p}^\sT
\bz_i)\mu_{1} \bV_1 \bh_1(\bx_i) \right)^{\sT} (\1_p -\be_p) \right| & \prec
|\mu_1|\|\bW\bX\|_\op \prec d^{3/2},\label{equation:BoundOfFirstTermsimilar}\\
	\left| \left( \frac{1}{n}\sum_{i=1}^{n}\ell'(y_i,
\hat{\btheta}_{-p}^\sT \bz_i)\mu_{2}\bV_{2c}\bh_2(\bx_i)\right)^{\sT} (\1_p
-\be_p) \right|  &\prec |\mu_2|\|\bV_{2c}\bh_2(\bX)\|_\op \prec d,\\
	\left| \left( \frac{1}{n}\sum_{i=1}^{n}\ell'(y_i, \hat{\btheta}_{-p}^\sT
\bz_i)\frac{\mu_2}{d}\1_p\be_c^\sT \bh_2(\bx_i) \right)^{\sT} (\1_p -\be_p)
\right| &\prec \frac{|\mu_2|}{d}\|\1_p\be_c^\sT \bh_2(\bX)\|_\op
\prec d^{3/2}.
	\end{align}
Also, we have $|\hat \btheta_{-p}^\sT \1_p| \leq \|\hat\btheta_{-p}\|_2
\|\1_p\|_2 \prec d$ by Lemma \ref{lemma:sublevelgeometry}(a).
Applying these bounds to
\eqref{equation:KKTConditionForOptimalSolutionReduced2controlallone},
\begin{align}\label{equation:KKTConditionForOptimalSolutionReduced2alt}
	\left|\frac{p-1}{n}\sum_{i=1}^{n}\ell'(y_i, \hat{\btheta}_{-p}^\sT
\bz_i)\mu_{0} +  [\btau\cdot\nabla\bGamma^{\bW} (\hat\btheta_{-p})]^\sT(\1_p -\be_p)
\right| \prec d^{3/2}.
\end{align}
Now, writing out
\begin{align}\label{equation:DecompositionOfGradientTermgamma}
&[\btau\cdot\nabla\bGamma^{\bW} (\hat\btheta_{-p})]^\sT (\1_p -\be_p)\\
&=\tau_2
\mu_0 (p-1) \E_{\bx',\bg_*}[\ell_{\test}'(y^\sPG, \<\hat\btheta_{-p},
\bz^\sPG\>)] + 2 \tau_1 (p-1) \mu_0^2 \<\hat\btheta_{-p},\bones_p\> \\
&\qquad + 2 \tau_1 \mu_1^2 (\1_p -\be_p)^\sT \bW\bW^\sT\hat\btheta_{-p}   \\
&\qquad+\tau_2\,\underbrace{\E_{\bx',\bg_*}[\ell_{\test}'(y^\sPG,\<\hat\btheta_{-p},
\bz^\sPG\>)(\mu_1\bW \bx'+\mu_2\bV_2\bh_2(\bx')+\mu_{>2}\bg_*)^\sT(\1_p
-\be_p)]}_{:=M},
\end{align}
similar arguments show that $|M| \prec d^{3/2}$. For example, for the first term
of $M$, we have
\begin{align}
&\E_{\bx',\bg_*}[\ell_{\test}'(y^\sPG,\<\hat\btheta_{-p},\bz^\sPG\>) \mu_{1}(\bW
\bx')^\sT(\1_p -\be_p)] \\
&=\E_{\bx_1',\ldots,\bx_n',\bg_{1*},\ldots,\bg_{n*}}
\left[\left(\frac{1}{n}\sum_{i=1}^{n}\ell'_{\text{test}}(y^\sPG_i,
\hat{\btheta}_{-p}^\sT \bz^\sPG_i)\mu_{1} \bW \bx_i' \right)^{\sT} (\1_p -\be_p)
\right] \prec d^{3/2},
\end{align}
where $\{\bx_i',\bg_{i*}\}_{i=1}^n$ define $n$ independent test observations
$\{y^\sPG_i,\bz_i^\sPG\}_{i=1}^n$. Recalling \eqref{eq:thetazPGbound} and
\eqref{eq:yPGbound}, on the event where $\hat\btheta_{-p} \in \widehat S(K)$,
we have $|y_i^\sPG| \prec 1$ and $|\hat{\btheta}_{-p}^\sT \bz^\sPG_i| \prec 1$,
so $|\ell'_{\text{test}}(y^\sPG_i,\hat{\btheta}_{-p}^\sT \bz^\sPG_i)| \prec 1$
by Assumption \ref{ass:test_loss} for $\ell_\test(\cdot)$.
Then this is $\prec d^{3/2}$ using the same argument as above.
Applying an analogous argument
to each term of $M$ shows $|M| \prec d^{3/2}$. The term $|2 \tau_1 \mu_1^2 (\1_p -\be_p)^\sT \bW\bW^\sT\hat\btheta_{-p}| \prec d \| \bW\|_\op \| \bW^\sT\hat\btheta_{-p} \|_\op \prec d^{3/2}$.
Then, applying these two bounds in
\eqref{equation:DecompositionOfGradientTermgamma} and substituting into
\eqref{equation:KKTConditionForOptimalSolutionReduced2alt},
\begin{align}
   &(p-1)\left|\frac{1}{n}\sum_{i=1}^{n}\left\{\ell'(y_i, \hat{\btheta}_{-p}^\sT
\bz_i)\mu_0+\tau_2 \mu_0 \E_{\bx,\bg_*}[\ell_{\test}'(y^\sPG,
\<\hat\btheta_{-p},\bz^\sPG\>)] + 2 \tau_1  \mu_0^2 \<\hat\btheta_{-p},\bones_p\> \right\} \right|
\prec d^{3/2}.
\end{align}
Recognizing that the left side is $(p-1)|L_0+T_0|$ and using $p \asymp d^2$,
this shows
\begin{equation}\label{equ:boundallone}
|L_0+T_0| \prec d^{-1/2}.
\end{equation}

\paragraph{Step 3: Bound $\E_{\bw_p}[T_k]$.}

For each $k=1,\ldots,D$, we now bound the expectation of $T_k$ over only
the randomness of $\bw_p$, conditioning on all the data $(\bX,\by)$ and also
on the projection directions $\bw_1,\ldots,\bw_{p-1}$. Importantly, by its
definition, $\hat\btheta_{-p}$ is independent of $\bw_p$. Thus
%
%
	\begin{equation}
		\E_{\bw_p}[T_k]=\frac{1}{n}\sum_{i=1}^{n}\ell'(y_i,\hat{\btheta}_{-p}^\sT
\bz_i) \mu_k \E_{\bw_p} \He_k(\<\bw_p,\bx_i\>).
	\end{equation} 
 If $k$ is odd, then $\He_k(\<\bw_p,\bx_i\>)$ is an odd function of
$\<\bw_p,\bx_i\>$, so $\E_{\bw_p} \He_k(\<\bw_p,\bx_i\>)=0$. If $k$ is even,
then $\<\bw_p,\bx_i\>$ is equal in law (conditional on $\bx_i$) to
$w_{p1}\|\bx_i\|_2$. Then by Lemma \ref{lemma:FormualHermite2}, we have
\begin{equation}
    \E_{\bw_p} \He_k(\<\bw_p,\bx_i\>) =    \E_{\bw_p} \He_k(w_{p1}\|\bx_i\|_2) =
\sqrt{k!}\sum_{i=0}^{\lfloor \frac{k}{2}\rfloor }
\frac{(-1)^i}{i!(k-2i)!} \frac{1}{2^i} \|\bx_i\|_2^{k-2i}
\E w_{p1}^{k-2i}.
\end{equation}
Note that for any even $k \geq 0$, we have $\E w_{p1}^k \leq Cd^{-k/2}$, and
from the moments of the chi-squared distribution, also
$\Var[\|\bx\|_2^k]
=2^k\frac{\Gamma(k+d/2)}{\Gamma(d/2)}
-(2^{k/2}\frac{\Gamma(k/2+d/2)}{\Gamma(d/2)})^2 \leq Cd^{k-1}$. Applying this
above shows $\Var_{\bx_i}[\E_{\bw_p} \He_k(\<\bw_p,\bx_i\>)] \leq Cd^{-1}$.
Since also $\E_{\bx_i}[\E_{\bw_p} \He_k(\<\bw_p,\bx_i\>)]=0$, Gaussian
hypercontractivity implies
\[\E_{\bw_p} \He_k(\<\bw_p,\bx_i\>) \prec d^{-1/2}\]
over the randomness of $\bx_i$.
Applying again $\|\ell'\|_\infty \leq C$ and $\mu_k \prec 1$, this shows
\begin{equation}\label{eq:ETkbound}
\E_{\bw_p}[T_k] \prec d^{-1/2}.
\end{equation}
%
%

    \paragraph{Step 4: Concentration of $T_k-\E_{\bw_p}[T_k]$.}
	Next we proceed to prove the concentration of $T_k$ around
$\E_{\bw_p}[T_k]$. We compute the variance of $T_k$ over $\bw_p$:
Squaring the above expression for the mean,
\begin{align}
\E_{\bw_p}[T_k]^2
=\frac{1}{n^2}
\sum_{i,j=1}^{n} \ell'_i\ell'_j \mu_k^2\frac{1}{k!} \sum_{l,q=0}^{\lfloor
\frac{k}{2}\rfloor} \underbrace{\frac{(-1)^{l+q}}{l!q!(k-2l)!(k-2q)!}
\frac{1}{2^{l+q}}}_{:=C(k,l,q)} \|\bx_i\|_2^{k-2l}\|\bx_j\|_2^{k-2q}\E w_{p1}^{k-2l}\E w_{p2}^{k-2q},
	\end{align}
	where we write as shorthand $\ell'_i=\ell'(y_i, \hat{\btheta}_{-p}^\sT
\bz_i)$. On the other hand, let us denote
\begin{align}
	\<\bx_i,\bx_j\> = \|\bx_i\|_2\|\bx_j\|_2\cos\theta_{ij},
\qquad \cos\theta_{ij} = \frac{\<\bx_i,\bx_j\>}{\|\bx_i\|_2\|\bx_j\|_2}.
\end{align}
where $\theta_{ij}$ is the angle between $\bx_i$ and $\bx_j$, and is independent
of $\|\bx_i\|_2$ and $\|\bx_j\|_2$. Then an analogous calculation shows
\begin{align}
	\E_{\bw_p}[T_k^2]
	&=\frac{1}{n^2}\sum_{i,j=1}^{n} \ell'_i\ell'_j \mu_k^2\frac{1}{{k!}}
\sum_{l,q=0}^{\lfloor \frac{k}{2}\rfloor }
C(k,l,q)\|\bx_i\|_2^{k-2l}\|\bx_j\|_2^{k-2q} \E_{\bw_p} w_{p1}^{k-2l}
(w_{p1}\cos \theta_{ij}+ w_{p2}\sin\theta_{ij})^{k-2q}.
	\end{align}
	The conditional variance is then
\begin{align}\label{equation:ConditionalVariancegood}
			\Var_{\bw_p}[T_k]
			&=\frac{1}{n^2}\sum_{\substack{i,j=1 \\ i \ne j}}^{n} \ell'_i\ell'_j
\mu_{k}^2\frac{1}{{k!}} \sum_{l,q=0}^{\lfloor \frac{k}{2}\rfloor }
C(k,l,q)\|\bx_i\|_2^{k-2l}\|\bx_j\|_2^{k-2q}\\
&\hspace{1in}\Big(\underbrace{\E_{\bw_p} w_{p1}^{k-2l} (w_{p1}\cos
\theta_{ij}+w_{p2}\sin\theta_{ij})^{k-2q}
- \E w_{p1}^{k-2l} \E w_{p2}^{k-2q}}_{:=W(k,l,q)}\Big).
	\end{align}
It remains to bound this quantity $W(k,l,q)$.
The intuition is that for large $d$, the angle $\theta_{ij}$ is almost
$\frac{\pi}{2}$, so $W(k,l,q)$ should be small.
We may make this intuition precise as follows:

Consider first the case where $k \geq 2$ is even.
If $k-2q=0$ or $k-2l=0$, then $W(k,l,q)=0$. For
$k-2q \geq 2$ and $k-2l \geq 2$, we use that $\E w_{p1}^i w_{p2}^j=0$ if $i,j$
are odd to expand
\begin{align*}
\E_{\bw_p}w_{p1}^{k-2l}(w_{p1}\cos \theta_{ij}+w_{p2}\sin\theta_{ij})^{k-2q}
&=\mathop{\sum_{m=0}^{k-2q}}_{\text{even}}
\binom{k-2q}{m} \E[w_{p1}^{k-2l+m}w_{p2}^{k-2q-m}]
(\cos \theta_{ij})^m(\sin \theta_{ij})^{k-2q-m}\\
&=\E[w_{p1}^{k-2l}w_{p2}^{k-2q}]+O(d^{-(k-l-q)}\cos^2 \theta_{ij}).
\end{align*}
Here, the second equality applies
$\E[w_{p1}^{k-2l+m}w_{p2}^{k-2q-m}] \leq Cd^{-(k-l-q)}$ for the summands $m \geq
2$, and applies also an expansion of $(\sin \theta_{ij})^{k-2q}=(1-\cos^2
\theta_{ij})^{(k-2q)/2}$ for the summand $m=0$. Then
\begin{align}
W(k,l,q)&=\E w_{p1}^{k-2l}w_{p2}^{k-2q}
-\E w_{p1}^{k-2l} \E w_{p2}^{k-2q}
+O(d^{-(k-l-q)}\cos^2 \theta_{ij}).
\end{align}
A standard application of Bernstein's
inequality shows $|\cos \theta_{ij}| \prec d^{-1/2}$.
Also, expanding $w_{p2}=\sqrt{1-w_{p1}^2} \cdot \tilde w_{p2}$ where
$\tilde w_{p2}$ is independent of $w_{p1}$, we have
that $|\E w_{p1}^{k-2l}w_{p2}^{k-2q}-\E w_{p1}^{k-2l} \E w_{p2}^{k-2q}|
\leq d^{-(k-l-q)-1}$. Consequently,
\[|W(k,l,q)| \prec d^{-(k-l-q)-1}.\]
Combining this with the standard concentration bounds
$\|\bx_i\|_2^{k-2l}\|\bx_j\|_2^{k-2q} \prec d^{k-l-q}$,
the bounds $\|\ell'\|_\infty \leq C$, and the assumption $\mu_k \prec 1$,
this establishes
\[\Var_{\bw_p}[T_k] \prec d^{-1}\]
in the case where $k$ is even.

Consider next the case where $k \geq 1$ is odd. Then, applying similar arguments
as above, $\E w_{p1}^{k-2l}\E w_{p2}^{k-2q}=0$ and
 \begin{align}
W(k,l,q)&=O(d^{-(k-l-q)}\cos\theta_{ij}).
\end{align}
Applying again $|\cos \theta_{ij}| \prec d^{-1/2}$,
$\|\bx_i\|_2^{k-2l}\|\bx_j\|_2^{k-2q} \prec d^{k-l-q}$,
$\|\ell'\|_\infty \leq C$, and $\mu_k \prec 1$, this shows
\[\Var_{\bw_p}[T_k] \prec d^{-1/2}\]
in the case where $k$ is odd.

Thus we have shown in all cases that $\Var_{\bw_p}[T_k] \prec d^{-1/2}$. By
hypercontractivity over the unit sphere, since $T_k$ is a polynomial of
$\bw_p$, this implies $|T_k-\E_{\bw_p}[T_k]| \prec d^{-1/4}$. Combining with
\eqref{eq:ETkbound}, this shows
\begin{equation}\label{eq:Tkbound}
|T_k| \prec d^{-1/4}.
\end{equation}

\paragraph{Step 5: Other cases of $(\bZ,\by)$} 
In the above arguments, we have assumed that $(\bz_i,y_i)=(\bz_i^\RF,y_i^\RF)$
for each $i=1,\ldots,n$.
More generally, in the case of the LOO optimization where $(\bZ,\by)=(\bZ_{\setminus q},
\by_{\setminus q})$, we may write
\eqref{equation:KKTConditionForOptimalSolutionReduced1thetap} as
\begin{equation}\label{eq:KKTBoundLOO}
\frac{\lambda}{4} |\hat\theta_p| \leq L_0+L_{\geq 1}+\sum_{j=0}^D T_k+T_*
\end{equation}
where $L_0,L_{\geq 1}$ have the same forms as in \eqref{eq:KKTboundRF},
\begin{align*}
T_k&=\frac{1}{n}\sum_{i \neq q} \ell'(y_i,\hat\btheta_{-p}^\sT \bz_i)
\mu_k \He_k(\<\bw_p,\bx_i\>) \text{ for } k=0,1,2,\\
T_k&=\frac{1}{n}\sum_{i=1}^{q-1} \ell'(y_i,\hat\btheta_{-p}^\sT \bz_i)
\mu_k \He_k(\<\bw_p,\bx_i\>) \text{ for } k=3,\ldots,D,\\
T_*&=\frac{1}{n}\sum_{i=q+1}^n \ell'(y_i,\hat\btheta_{-p}^\sT \bz_i)
\mu_{>2}\<\bg_{i*},\be_p\>.
\end{align*}
In the case of the auxiliary optimization where $(\bZ,\by)=(\bZ_{\cup q},\by_{\cup q})$,
we may write
\eqref{equation:KKTConditionForOptimalSolutionReduced1thetap} as
\eqref{eq:KKTBoundLOO} with an additional summand for $(\bz_q,y_q)=(\tilde
\bz_q,\tilde y_q)$ included into $T_k$ and/or $T_*$.

The preceding arguments for bounding $L_0+T_0$ and
$T_k$ in \eqref{eq:KKTboundRF} hold equally for bounding these quantities in
\eqref{eq:KKTBoundLOO}, where we apply also
$\|[\bg_{1*},\ldots,\bg_{n*}]\|_\op \prec \sqrt{n}+\sqrt{p} \prec d$ in addition
to $\|\bV_k\bh_k(\bX)\|_\op \prec d$ for $k \geq 3$ to obtain
\eqref{equation:KKTConditionForOptimalSolutionReduced2alt}.
To bound the additional quantity $T_*$, note that
for each $i \in \{q+1,\ldots,n\}$,
$\ell'(y_i,\hat\btheta_{-p}^\sT \bz_i)$ depends only on the first $p-1$
coordinates of $\bg_{i*}$ and is independent of the last coordinate
$(g_{i*})_p$. Then, taking the expectation and variance over only
$\{(g_{i*})_p\}_{i=q+1}^n$,
\[\E_{\{(g_{i*})_p\}_{i=q+1}^n} T_*=0,
\qquad \Var_{\{(g_{i*})_p\}_{i=q+1}^n} T_*=
\frac{1}{n^2}
\sum_{i=q+1}^n \mu_{>2}^2 \left(\ell'(y_i,\hat \btheta_{-p}^\sT \bz_i)
\right)^2 \prec d^{-2}.\]
Thus by a standard Gaussian tail bound,
\begin{equation}\label{eq:Tstarbound}
|T_*|=|T_*-\E_{\{(g_{i*})_p\}_{i=q+1}^n} T_*| \prec d^{-1}.
\end{equation}

\paragraph{Step 6: Bound $L_{\geq 1}$.}
Finally, we bound the term
\[L_{\geq 1}=\tau\,\E_{\bx',\bg_*}[\ell_\test'(y^{\sPG},\<\hat\btheta_{-p},\bz^{\sPG}\>)
(\mu_1\<\bw_p,\bx'\>+\mu_2\He_2(\<\bw_p,\bx'\>)+\mu_{>2}\bg_*^\sT\be_p)]\]
We may again re-express this as
\[L_{\geq 1}
=\tau\,\E_{\bx_1',\ldots,\bx_n',\bg_{1*},\ldots,\bg_{n*}}\left[\frac{1}{n}
\sum_{i=1}^n \ell_\test'(y_i^{\sPG},\<\hat\btheta_{-p},\bz_i^{\sPG}\>)
(\mu_1\<\bw_p,\bx_i'\>+\mu_2\He_2(\<\bw_p,\bx_i'\>)+\mu_{>2}\<\bg_{i*},\be_p\>)\right]\]
where $\{\bx_i',\bg_{i*}\}_{i=1}^n$ define $n$ independent test observations
$\{y_i^\sPG,\bz_i^\sPG\}_{i=1}^n$. Then similar arguments as those above
establishing \eqref{eq:Tkbound} for $T_1,T_2$ and \eqref{eq:Tstarbound} for
$T_*$ may be applied to show
\begin{equation}\label{equ:boundtheperturbationterm}
|L_{\geq 1}| \prec d^{-1/4}.
\end{equation}

Applying \eqref{equ:boundallone}, \eqref{eq:Tkbound}, \eqref{eq:Tstarbound},
\eqref{equ:boundtheperturbationterm}, and $\lambda^{-1} \prec 1$
to \eqref{eq:KKTBound} and \eqref{eq:KKTBoundLOO},
we obtain that $|\hat\theta_p| \prec d^{-1/4}$. This argument applies equally to
each coordinate of $\hat\btheta$, and hence $\|\hat\btheta\|_\infty \prec
d^{-1/4}$, i.e.\ for any $C>0$, there exists $K'>0$ such that with probability
$1-d^{-C}$,
\begin{equation}\label{eq:hatthetainfty}
\|\hat\btheta\|_\infty \leq (\log d)^{K'} d^{-1/4}.
\end{equation}
Combining \eqref{eq:hatthetabasicbounds} and \eqref{eq:hatthetainfty} shows
$\hat\btheta \in \bTheta_\bW^\sPG(K')$, and replacing $K$ by $\max(K,K')$
concludes the proof of the lemma.
\end{proof} 
\subsection{Analysis of the quadratic surrogate problem}
\label{sec:geometric-properties-family-i}

We now analyze the quadratic surrogate optimization
\eqref{eq:familyI-quadratic}.
We define the Moreau envelope and proximal operator of the (centered) loss by
\begin{align}
\cM_y(z;\gamma)&=\min_{x \in \R}
\left\{\ell(y,x)-\ell(0,0)+\frac{(z-x)^2}{2\gamma}\right\},\label{eq:Moreau}\\
\Prox_y(z;\gamma)&=\arg \min_{x \in
\R}\left\{\ell(y,x)-\ell(0,0)+\frac{(z-x)^2}{2\gamma}\right\},\label{eq:Prox}
\end{align}
and write as shorthand
\[\bH_{\setminus q} \equiv \bH_{\setminus q}(\hat\btheta_{\setminus q})\]
for the Hessian of the LOO empirical risk at its minimizer.

\begin{proposition}[Properties of the Moreau Envelope]\label{lemma:MathematicalExpressionLipschitzInBrAndGammaKpure}
Under Assumption \ref{assumption:loss} for the loss $\ell(\cdot)$, there is a
constant $C>0$ such that the Moreau envelope function $\cM_y(z;\gamma)$
is $C$-Lipschitz in each of the arguments $y,z,\gamma$.
\end{proposition}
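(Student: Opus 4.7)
The plan is to verify Lipschitz continuity separately in each of the three arguments, using the envelope theorem together with the first–order optimality condition for the proximal operator \eqref{eq:Prox}. Throughout, fix $\gamma>0$ and let $x^*=\Prox_y(z;\gamma)$, which is uniquely defined because $\ell(y,\cdot)$ is convex. The first–order condition reads
\begin{equation}\label{eq:foc_moreau_plan}
\partial_x \ell(y,x^*) + \frac{x^*-z}{\gamma}=0,\qquad\text{i.e.}\qquad \frac{z-x^*}{\gamma}=\partial_x \ell(y,x^*).
\end{equation}
Since $\|\nabla \ell\|_F\le \sC_1$ by Assumption~\ref{assumption:loss}(ii), the partial derivative $\partial_x\ell$ is bounded by $\sC_1$; combined with \eqref{eq:foc_moreau_plan}, this gives the key a priori bound $|z-x^*|\le \sC_1\gamma$, which will control the $\gamma$-derivative near $\gamma=0$.

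\textbf{Lipschitz in $y$.} This is immediate from the definition as a minimum: with $x^*=\Prox_y(z;\gamma)$ we have by the definition of $\cM_{y'}$ that
\[
\cM_{y'}(z;\gamma)\le \ell(y',x^*)-\ell(0,0)+\tfrac{(z-x^*)^2}{2\gamma}=\cM_y(z;\gamma)+\bigl(\ell(y',x^*)-\ell(y,x^*)\bigr),
\]
and by $\sC_1$-Lipschitzness of $\ell$ in its first argument, the last difference is bounded by $\sC_1|y-y'|$. Swapping the roles of $y$ and $y'$ gives $|\cM_y-\cM_{y'}|\le \sC_1|y-y'|$.

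\textbf{Lipschitz in $z$ and $\gamma$.} By the envelope theorem applied to the (jointly smooth, strictly convex in $x$) problem \eqref{eq:Moreau}, $\cM_y$ is differentiable in $(z,\gamma)$ with
\[
\partial_z \cM_y(z;\gamma)=\frac{z-x^*}{\gamma},\qquad \partial_\gamma \cM_y(z;\gamma)=-\frac{(z-x^*)^2}{2\gamma^2}.
\]
For the $z$-derivative, \eqref{eq:foc_moreau_plan} gives $\partial_z\cM_y(z;\gamma)=\partial_x\ell(y,x^*)$, whose absolute value is at most $\sC_1$. For the $\gamma$-derivative, the a priori bound $|z-x^*|\le \sC_1\gamma$ yields $|\partial_\gamma \cM_y(z;\gamma)|\le \sC_1^2/2$, uniformly in $\gamma>0$. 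Thus $\cM_y$ is $\max(\sC_1,\sC_1^2/2)$-Lipschitz in $z$ and in $\gamma$, completing the proof.

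The only subtle point is the behavior as $\gamma\downarrow 0$: a naive bound on $(z-x^*)^2/(2\gamma^2)$ blows up, and this is where \eqref{eq:foc_moreau_plan} combined with the global bound $\|\nabla\ell\|_F\le \sC_1$ is crucial. The remaining estimates are standard consequences of the envelope theorem.
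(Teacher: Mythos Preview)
Your proof is correct and follows essentially the same approach as the paper: both rely on the first-order condition \eqref{eq:foc_moreau_plan} to rewrite $(z-x^*)/\gamma$ as $\partial_x\ell(y,x^*)$, and then bound the envelope-theorem derivatives using Assumption~\ref{assumption:loss}. The only cosmetic difference is that the paper also obtains $y$-Lipschitzness via the envelope formula $\partial_y\cM_y(z;\gamma)=\partial_y\ell(y,x^*)$, whereas you use a direct comparison argument; both are equally valid and yield the same constant.
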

\begin{proof}
Let $x^* \equiv x^*(y,z,\gamma)
=\Prox_y(z;\gamma)$ be the unique minimizer of the 
optimization \eqref{eq:Moreau} defining $\cM$. The first-order condition for
the optimality of $x^*$ is
\begin{equation} \label{eq:prox_foc}
0=\ell'(y,x^*)-\frac{z-x^*}{\gamma}.
\end{equation}
We remark that $x^*(y,z,\gamma)$
is continuously-differentiable in $(y,z,\gamma)$ by the implicit function
theorem. Then so is $\cM_y(z;\gamma)$, and the envelope theorem gives
\[\partial_z \cM_y(z;\gamma)=\frac{z-x^*}{\gamma}=\ell'(y,x^*),
\qquad
\partial_y \cM_y(z;\gamma)=\partial_y\ell(y,x^*),\]
\[\partial_\gamma \cM_y(z;\gamma)={-}\frac{(z-x^*)^2}{2\gamma^2}
=-\frac{1}{2}\ell'(y, x^*)^2\]
where we have applied \eqref{eq:prox_foc} to substitute for $z-x^*$.
Thus by Assumption \ref{assumption:loss}, these derivatives are all uniformly
bounded, showing the Lipschitz continuity of $(y,z,\gamma) \mapsto
\cM_y(z;\gamma)$.
\end{proof}

\begin{lemma}[Characterization of the Surrogate Minimizer]\label{lemma:Psi_k_function_of_r_Phi_M_k_and_gamma_kr}
For each $q=1,\ldots,n$ and choice of
$(\tilde \bz_q,\tilde y_q) \in \{(\bz_q^\RF,y_q^\RF),(\bz_q^\sPG,y_q^\sPG)\}$,
we have
\begin{equation}
\Psi_q=\Phi_{\backslash q}+\frac{1}{n}\,\cM_{\tilde y_q}(\<\tilde \bz_q,
\hat\btheta_{\backslash q}\>;\gamma_q)
\end{equation}
where the effective step size $\gamma_q \equiv \gamma_q(\tilde \bz_q)$ is
defined as
\begin{equation}\label{eq:gammaq}
\gamma_q=\frac{\tilde \bz_q^\sT \bH_{\backslash q}^{-1}\tilde \bz_q}{n}.
\end{equation}
Moreover,
\begin{equation}\label{equation:theta_k_function_of_br_hat_theta_and_H_inversepurer}
\tilde\btheta_{\cup q}=\hat\btheta_{\backslash q}-\ell'(\tilde y_q,\<\tilde \bz_q,
\tilde\btheta_{\cup q}\>)\frac{\boldsymbol{H}_{\backslash q}^{-1}
\tilde \bz_q}{n},
\qquad
\<\tilde \bz_q,\tilde \btheta_{\cup q}\>=\Prox_{\tilde y_q}(\<\tilde \bz_q,
\hat\btheta_{\backslash q}\>;\gamma_q).
\end{equation}
\end{lemma}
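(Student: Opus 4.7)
The plan is to identify the surrogate minimizer $\tilde\btheta_{\cup q}$ by writing down the first-order optimality condition of $\widetilde\cR_{\cup q}$, then inner-product that identity with $\tilde\bz_q$ to recover the proximal fixed-point equation, and finally substitute back into $\widetilde\cR_{\cup q}$ to produce the Moreau envelope. To start, I observe that the quadratic surrogate is strongly convex: its Hessian is $\frac{1}{n}\ell''(\tilde y_q,\<\btheta,\tilde\bz_q\>)\tilde\bz_q\tilde\bz_q^\sT+\bH_{\setminus q}$, which is positive definite since $\ell''\geq 0$ by convexity (Assumption~\ref{assumption:loss}) and $\bH_{\setminus q}\succeq (\lambda/2)\bI$ by Lemma~\ref{lemma:sublevelgeometry}(c) on the high-probability event supplied by Lemma~\ref{lemma:optinThetaPG}. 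Hence $\tilde\btheta_{\cup q}$ is unique and satisfies
\begin{equation}
\frac{1}{n}\,\ell'\!\left(\tilde y_q,\<\tilde\btheta_{\cup q},\tilde\bz_q\>\right)\tilde\bz_q
+\bH_{\setminus q}\bigl(\tilde\btheta_{\cup q}-\hat\btheta_{\setminus q}\bigr)=0.
\end{equation}
Multiplying by $\bH_{\setminus q}^{-1}$ yields the first identity in \eqref{equation:theta_k_function_of_br_hat_theta_and_H_inversepurer}.

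Next, I take the inner product of this identity with $\tilde\bz_q$ and abbreviate $x^*:=\<\tilde\bz_q,\tilde\btheta_{\cup q}\>$, $z:=\<\tilde\bz_q,\hat\btheta_{\setminus q}\>$. Using the definition $\gamma_q=\tilde\bz_q^\sT\bH_{\setminus q}^{-1}\tilde\bz_q/n$, the relation becomes $x^*=z-\gamma_q\,\ell'(\tilde y_q,x^*)$, equivalently
\begin{equation}
\ell'(\tilde y_q,x^*)-\frac{z-x^*}{\gamma_q}=0,
\end{equation}
which is precisely the first-order condition characterizing $\Prox_{\tilde y_q}(z;\gamma_q)$ in \eqref{eq:Prox}. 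The proximal objective is strongly convex (again by convexity of $\ell$ in its second argument), so this fixed-point equation has a unique solution; hence $x^*=\Prox_{\tilde y_q}(z;\gamma_q)$, giving the second identity in \eqref{equation:theta_k_function_of_br_hat_theta_and_H_inversepurer}.

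Finally, I evaluate $\Psi_q=\widetilde\cR_{\cup q}(\tilde\btheta_{\cup q})$. Substituting $\tilde\btheta_{\cup q}-\hat\btheta_{\setminus q}=-(\ell'(\tilde y_q,x^*)/n)\bH_{\setminus q}^{-1}\tilde\bz_q$ into the quadratic term yields
\begin{equation}
\frac{1}{2}\bigl(\tilde\btheta_{\cup q}-\hat\btheta_{\setminus q}\bigr)^\sT\bH_{\setminus q}\bigl(\tilde\btheta_{\cup q}-\hat\btheta_{\setminus q}\bigr)
=\frac{\gamma_q\,\ell'(\tilde y_q,x^*)^2}{2n}
=\frac{(z-x^*)^2}{2n\gamma_q},
\end{equation}
where the second equality uses $\ell'(\tilde y_q,x^*)=(z-x^*)/\gamma_q$. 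Combined with $\widehat\cR_{\setminus q}(\hat\btheta_{\setminus q})=\Phi_{\setminus q}$ and the linear-loss term, this gives
\begin{equation}
\Psi_q=\Phi_{\setminus q}+\frac{1}{n}\left[\ell(\tilde y_q,x^*)-\ell(0,0)+\frac{(z-x^*)^2}{2\gamma_q}\right]
=\Phi_{\setminus q}+\frac{1}{n}\,\cM_{\tilde y_q}(z;\gamma_q),
\end{equation}
since $x^*$ attains the minimum in the definition \eqref{eq:Moreau}. This is a straightforward calculation and I do not anticipate any real obstacle; the only points requiring care are the invocation of strong convexity (to guarantee uniqueness of both $\tilde\btheta_{\cup q}$ and the proximal minimizer) and the algebraic manipulation using $\gamma_q\,\ell'(\tilde y_q,x^*)=z-x^*$ to collapse the quadratic term into the envelope's squared-distance term.
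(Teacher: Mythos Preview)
Your proposal is correct and follows essentially the same approach as the paper: derive the first-order condition for $\tilde\btheta_{\cup q}$, contract with $\tilde\bz_q$ to obtain the proximal fixed-point equation, and substitute back to recognize the Moreau envelope. The paper's proof is nearly identical in structure and detail; your explicit invocation of Lemmas~\ref{lemma:sublevelgeometry}(c) and~\ref{lemma:optinThetaPG} to justify invertibility of $\bH_{\setminus q}$ is a slight elaboration, but the paper handles this implicitly via the remark preceding the surrogate's definition that on the complementary (low-probability) event the choice of minimizer is inconsequential.
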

\begin{proof}
By definition of the quadratic surrogate objective $\widetilde \cR_{\cup
q}(\btheta)$, its minimum risk value $\Psi_q$ is given by
    \begin{equation}\label{eq:psi_q_definition}
        \Psi_q=\Phi_{\backslash q}+\min_{\btheta \in \mathbb{R}^p}
\left\{\frac{1}{n}\big[\ell(\tilde y_q,\< \tilde \bz_q ,  \btheta
\>)-\ell(0,0)\big]
+\frac{1}{2}(\btheta-\hat \btheta_{\backslash q})^\sT \bH_{\backslash
q}(\btheta-\hat \btheta_{\backslash q})\right\}.
    \end{equation}
Rearranging the first-order optimality condition
$\frac{1}{n}\ell'(\tilde y_q,\< \tilde \bz_q ,\tilde\btheta_{\cup q}\>)\tilde \bz_q +
\bH_{\backslash q}(\tilde\btheta_{\cup q}-\hat\btheta_{\backslash q})=\bzero$
shows the first statement of 
\eqref{equation:theta_k_function_of_br_hat_theta_and_H_inversepurer}.
Multiplying this statement by $\tilde\bz_q$, we get
\begin{align}
\<\tilde\bz_q, \tilde\btheta_{\cup q}\> &=
\<\tilde\bz_q,\hat\btheta_{\backslash q}\>-\ell'(\tilde y_q,\<\tilde\bz_q,
\tilde\btheta_{\cup q}\>)\gamma_q.
\end{align}
Thus $x=\<\tilde\bz_q,\tilde\btheta_{\cup q}\>$ solves the first-order condition
\begin{equation}\label{eq:proxfirstorder}
\ell'(\tilde y_q,x)+(x-\<\tilde\bz_q,\hat\btheta_{\backslash q}\>)/\gamma_q=0
\end{equation}
for the minimization problem defining $\Prox_{\tilde
y_q}(\<\tilde\bz_q,\hat\btheta_{\backslash q}\>;\gamma_q)$. Since this
minimization is strongly convex, this implies the second statement of
\eqref{equation:theta_k_function_of_br_hat_theta_and_H_inversepurer}.

Finally, to establish the form of $\Psi_q$, we evaluate the objective in
\eqref{eq:psi_q_definition} at the minimizer $\tilde\btheta_{\cup q}$:
    \begin{equation}\label{eq:psi_q_expanded}
        \Psi_q=\Phi_{\backslash q}+\frac{1}{n}\,[\ell(\tilde y_q, \<\tilde
\bz_q,\tilde \btheta_{\cup q}\>)-\ell(0,0)]+\frac{1}{2}(\tilde\btheta_{\cup
q}-\hat \btheta_{\backslash q})^\sT \bH_{\backslash q}(\tilde\btheta_{\cup q}- \hat \btheta_{\backslash q}).
\end{equation}
Substituting the form of $\tilde\btheta_{\cup
q}-\hat\btheta_{\backslash q}$ in
\eqref{equation:theta_k_function_of_br_hat_theta_and_H_inversepurer},
and then applying the first-order condition \eqref{eq:proxfirstorder}, gives
\[(\tilde\btheta_{\cup q}-\hat \btheta_{\backslash q})^\sT \bH_{\backslash
q}(\tilde\btheta_{\cup q}-\hat \btheta_{\backslash q})
=\frac{1}{n}\,\ell'(\tilde y_q,\<\tilde\bz_q,\tilde\btheta_{\cup q}\>)^2\gamma_q
=\frac{(\<\tilde\bz_q,\tilde\btheta_{\cup q}\>-
\<\tilde \bz_q,\hat\btheta_{\setminus q}\>)^2}{n\gamma_q}.\]
Plugging this back into \eqref{eq:psi_q_expanded} gives
$\Psi_q=\Phi_{\backslash q}+\frac{1}{n}\cM_{\tilde y_q}(\<\tilde
\bz_q,\hat\btheta_{\setminus q}\>,\gamma_q)$.
\end{proof}   

\begin{corollary}\label{corollary:SurrogateMinimizersInGoodSetallsamples}
For any constant $C>0$, there exists $K>0$ such that with probability at least
$1-d^{-C}$, for each $q=1,\ldots,n$ and choice of
$(\tilde \bz_q,\tilde y_q) \in \{(\bz_q^\RF,y_q^\RF),(\bz_q^\sPG,y_q^\sPG)\}$
defining $\tilde\btheta_{\cup q}$,
\begin{equation}
\tilde\btheta_{\cup q} \in \bTheta_{\bW}^\sPG(K) \cap 
\widehat S_q(K).
\end{equation}
\end{corollary}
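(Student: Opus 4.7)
The plan is to exploit the explicit rank-one characterization of the surrogate minimizer provided by Lemma \ref{lemma:Psi_k_function_of_r_Phi_M_k_and_gamma_kr}: I would write $\tilde\btheta_{\cup q} = \hat\btheta_{\setminus q} + \Delta_q$ with
\[
\Delta_q := {-}\frac{\ell'(\tilde y_q,\,\<\tilde\bz_q,\tilde\btheta_{\cup q}\>)}{n}\,\bH_{\setminus q}^{-1}\tilde\bz_q,
\]
and show that the perturbation $\Delta_q$ is small enough in $\ell_2$-norm that every defining inequality of $\bTheta_\bW^\sPG(K_0) \cap \widehat S_q(K_0)$ inherited from $\hat\btheta_{\setminus q}$ via Lemma \ref{lemma:optinThetaPG} continues to hold at $\tilde\btheta_{\cup q}$ after modestly enlarging $K_0$ to some $K$.

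The key estimate is $\|\Delta_q\|_2 \prec d^{-1}$, which I would derive from three inputs: Assumption \ref{assumption:loss} gives the uniform bound $|\ell'| \leq \sC_1$; Lemma \ref{lemma:yzbounds} gives $\|\tilde\bz_q\|_2 \prec d$; and the Hessian lower bound $\bH_{\setminus q} \succeq (\lambda/2)\bI$ from Lemma \ref{lemma:sublevelgeometry}(c) — applicable because $\hat\btheta_{\setminus q} \in \widehat S_q(K_0)$ by Lemma \ref{lemma:optinThetaPG} — yields $\|\bH_{\setminus q}^{-1}\tilde\bz_q\|_2 \leq (2/\lambda)\|\tilde\bz_q\|_2 \prec d$, so that $\|\Delta_q\|_2 \prec d/n \prec d^{-1}$.

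From this single bound, all four defining inequalities of $\bTheta_\bW^\sPG(K)$ propagate: $\|\Delta_q\|_\infty \leq \|\Delta_q\|_2 \prec d^{-1}$ handles the $\ell_\infty$-constraint (well below the $(\log d)^K d^{-1/4}$ threshold); the $\ell_2$-constraint is immediate; for the spike direction $\1_p$, one has $|\1_p^\sT \Delta_q| \leq \|\1_p\|_2\|\Delta_q\|_2 \prec \sqrt{p}\cdot d^{-1} \prec 1$; and for the spike directions of $\bW$, one has $\|\mu_1 \bW^\sT \Delta_q\|_2 \leq |\mu_1|\|\bW\|_\op\|\Delta_q\|_2 \prec d^{-\sc_6}\cdot\sqrt{d}\cdot d^{-1} \prec 1$, using $|\mu_1| \leq \sC_6 d^{-\sc_6}$ from Assumption \ref{assumption:activation} and the standard $\|\bW\|_\op \prec \sqrt{d}$. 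In particular the refined bound $\|\bV^\sT \bH_{\setminus q}^{-1}\bV\|_\op \leq 1/\tau_1$ from Lemma \ref{lemma:sublevelgeometry}(c) is not needed for this step.

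For the containment in $\widehat S_q(K)$, Assumption \ref{assumption:loss}(ii) gives the Lipschitz estimate $|\ell(y_i,\<\tilde\btheta_{\cup q},\bz_i\>) - \ell(y_i,\<\hat\btheta_{\setminus q},\bz_i\>)| \leq \sC_1\|\Delta_q\|_2\|\bz_i\|_2 \prec 1$ uniformly in $i$, and the quadratic penalty changes by at most $\lambda\bigl(\|\hat\btheta_{\setminus q}\|_2\|\Delta_q\|_2 + \|\Delta_q\|_2^2\bigr) \prec 1$; hence the unpenalized LOO empirical risk at $\tilde\btheta_{\cup q}$ exceeds its value at $\hat\btheta_{\setminus q}$ by at most $\prec 1$ and remains below $(\log d)^K$ once $K$ is enlarged. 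The entire proof is essentially bookkeeping: the substantive content — uniqueness of $\hat\btheta_{\setminus q}$, its containment in $\bTheta_\bW^\sPG\cap\widehat S_q$, and invertibility of $\bH_{\setminus q}$ — is already packaged in Lemmas \ref{lemma:sublevelgeometry} and \ref{lemma:optinThetaPG}, and the only point requiring mild care is verifying that the polynomial-in-$d$ decay of $\|\Delta_q\|_2$ beats the polylogarithmic blow-up allowed by the constants in $\bTheta_\bW^\sPG$ and $\widehat S_q$.
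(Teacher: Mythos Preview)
The proposal is correct and takes essentially the same approach as the paper's proof: write $\tilde\btheta_{\cup q}=\hat\btheta_{\setminus q}+\Delta_q$ via Lemma~\ref{lemma:Psi_k_function_of_r_Phi_M_k_and_gamma_kr}, bound $\|\Delta_q\|_2 \prec d^{-1}$ using $\|\ell'\|_\infty \prec 1$, $\|\tilde\bz_q\|_2 \prec d$, and $\|\bH_{\setminus q}^{-1}\|_\op \prec 1$, then propagate the membership $\hat\btheta_{\setminus q}\in\bTheta_\bW^{\sPG}(K)\cap\widehat S_q(K)$ from Lemma~\ref{lemma:optinThetaPG} through each defining inequality by crude operator-norm bounds on $\1_p$, $\bW$, and $\bz_i$. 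Your observation that the spike-regularization bound $\|\bV^\sT\bH_{\setminus q}^{-1}\bV\|_\op\leq 1/\tau_1$ is unnecessary here is also correct and consistent with the paper.
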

\begin{proof}
We have $\|\bH_{\setminus q}^{-1}\|_\op \prec 1$
by Lemma \ref{lemma:sublevelgeometry}(c),
$\|\bz_q\|_2 \prec d$ by Lemma \ref{lemma:yzbounds}, and $\|\ell'\|_\infty \prec
1$ by Assumption \ref{assumption:loss}. Then applying the form of
$\tilde\btheta_{\cup q}$ in 
\eqref{equation:theta_k_function_of_br_hat_theta_and_H_inversepurer},
\begin{equation}\label{eq:surrogateLOOclose}
\|\tilde\btheta_{\cup q}-\hat\btheta_{\backslash q}\|_2 \prec d^{-1}.
\end{equation}
By Lemma \ref{lemma:optinThetaPG}, we have with probability $1-d^{-C}$ that
$\hat\btheta_{\setminus q} \in \bTheta_\bW^{\sPG}(K) \cap \widehat S_q(K)$,
meaning
\[\|\hat\btheta_{\setminus q}\|_2 \prec 1,
\quad |\hat\btheta_{\setminus q}^\sT \1_p^\sT| \prec 1,
\quad \|\mu_1\hat\btheta_{\setminus q}^\sT \bW\|_2 \prec 1,
\quad \|\hat\btheta_{\setminus q}\|_\infty \prec d^{-1/4},\]
\[\frac{1}{n}\sum_{i:i \neq q} \ell(y_i,\<\hat\btheta_{\setminus q},\bz_i\>)
+(\lambda/2)\|\hat\btheta_{\setminus q}\|_2^2 \prec 1\]
where $(\bz_i,y_i)$ are the features and labels of $(\bZ_{\setminus
q},\by_{\setminus q})$. Then, since
$\|\1_p\|_2 \asymp d$, $\|\bW\|_\op \prec \sqrt{d}$,
$\|\bz_i\|_2 \prec d$ by Lemma \ref{lemma:yzbounds}, and
$\|\ell'\|_\infty \prec 1$ by Assumption \ref{assumption:loss}, 
\eqref{eq:surrogateLOOclose} implies that the same
statements hold for $\tilde\btheta_{\cup q}$, i.e.\ for any $C>0$, there
exists (a possibly larger constant) $K>0$ such that
$\tilde \btheta_{\cup q} \in \bTheta_\bW^\sPG(K) \cap \widehat S_q(K)$.
\end{proof}

\begin{lemma}[Concentration of the Effective Step Size]\label{lemma:gamma_concentration}
For any $q=1,\ldots,n$ and choice of
$(\tilde \bz_q,\tilde y_q) \in \{(\bz_q^\RF,y_q^\RF),(\bz_q^\sPG,y_q^\sPG)\}$,
let $\E_q$ denote the expectation over $\bx_q$ defining $\bz_q^\RF$ or
$(\bx_q,\bg_{q*})$ defining $\bz_q^\sPG$ (both conditioning on $\bW$). Then
for the quantity $\gamma_q=\frac{\tilde \bz_q^\sT \bH_{\backslash q}^{-1} \tilde
\bz_q}{n}$, we have
\begin{equation}
|\gamma_q-\E_q \gamma_q| \prec d^{-1/2}.
\end{equation}
\end{lemma}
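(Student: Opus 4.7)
The strategy is to exploit independence: $\bH_{\setminus q}$, and hence $\bM := \bH_{\setminus q}^{-1}/n$, is measurable with respect to $\{(\bx_i,\eps_i,\bg_{i*})\}_{i\ne q}$ and $\bW$, and is therefore independent of the randomness $(\bx_q,\bg_{q*})$ defining $\tilde\bz_q$. I would condition on $\bM$ and $\bW$ throughout, and show concentration of $\gamma_q = \tilde\bz_q^\sT \bM \tilde\bz_q$ around $\E_q\gamma_q$ at rate $d^{-1/2}$.

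Under this conditioning, $\gamma_q$ is a polynomial of degree at most $2D$ in the Gaussian inputs $(\bx_q,\bg_{q*})$. In the RF case, using $\tilde\bz_q = \sum_{k=0}^D \mu_k\bV_k\bh_k(\bx_q)$, one has $\gamma_q = \sum_{k,l=0}^D \bh_k(\bx_q)^\sT \bA_{kl}\,\bh_l(\bx_q)$ with $\bA_{kl} := \mu_k\mu_l\,\bV_k^\sT \bM \bV_l$, and analogously with additional Gaussian cross-terms involving $\mu_{>2}\bg_{q*}$ in the PGE case. Gaussian hypercontractivity (recorded in Appendix~\ref{app:preliminaries}) then reduces the claim to showing $\Var_q(\gamma_q) \prec 1/d$.

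For the variance, I would decompose $\gamma_q - \E_q\gamma_q$ into its Wiener chaos components of orders $j=1,\ldots,2D$ via the Hermite product/contraction formulas (Lemma~\ref{lemma:SymmetricTensorProductMalliavin}), and bound the $L^2$ norm of each component by Frobenius and operator norms of tensor contractions built from the $\bA_{kl}$'s (and of $\bM$ for the pure $\bg_{q*}$ quadratic form). On the high-probability event where $\hat\btheta_{\setminus q}\in \widehat S_q(K)$ (Lemma~\ref{lemma:optinThetaPG}), the key matrix estimates are: $\|\bM\|_\op \leq 2/(n\lambda) \prec 1/n$ and $\|\bM\|_F^2 \leq p\|\bM\|_\op^2 \prec 1/d^2$; the spike-regularization bound $\|\bV^\sT\bM\bV\|_\op \leq 1/(n\tau_1)$ from Lemma~\ref{lemma:sublevelgeometry}(c); $\|\bV_k\|_\op \prec 1$ for $k\ge 3$ and the spike decomposition $\bV_2=\bV_{2c}+\tfrac{1}{d}\bones_p\be_c^\sT$ with $\|\bV_{2c}\|_\op\prec 1$ (Lemma~\ref{lemma:OperatorNormFk} and Corollary~\ref{corollary:OperatorNormF2cdecompose}); and the scalings $|\mu_k|\prec 1$, $|\mu_1|\asymp d^{-\sc_6}$ with $\sc_6<1/2$. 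The isotropic Gaussian term $\mu_{>2}^2\bg_{q*}^\sT\bM\bg_{q*}$ in the PGE case contributes variance $2\mu_{>2}^4\|\bM\|_F^2 \prec 1/d^2$, which is more than enough.

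The main obstacle is the interaction between the low-rank spike directions of $\bV$ along $(\bones_p,\bW)$ (and the spike component of $\bV_2$) and the inverse Hessian $\bM$. A direct operator-norm bound $\|\bW^\sT \bM\bW\|_F \leq \sqrt{d}\,\|\bW\|_\op^2\|\bM\|_\op \prec d^{-1/2}$ already yields $\mu_1^4\|\bW^\sT\bM\bW\|_F^2 \prec d^{-4\sc_6-1}\prec 1/d$ for the purely linear chaos contribution; however, higher-chaos spike contributions (e.g.\ cross terms $\bh_1(\bx_q)^\sT\bA_{1,k}\bh_k(\bx_q)$ for $k\ge 2$ and the spike part of $\bA_{2,2}$) require the sharper estimate $\|\bV^\sT\bM\bV\|_\op \leq 1/(n\tau_1)$. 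Since the statement is invoked in the regime $\tau_1\ge d^{-c'}$ with $c'$ sufficiently small (depending only on the constants of the assumptions), each chaos component contributes $\prec 1/d$ to $\Var_q(\gamma_q)$, completing the argument.
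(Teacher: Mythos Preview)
Your chaos--decomposition route can be made to work, but the paper takes a much shorter path that you should be aware of: it applies the Gaussian Poincar\'e inequality directly. Writing $\bz_q=\sigma(\bW\bx_q)$ in the RF case, one has $\nabla_{\bx_q}\gamma_q=\tfrac{2}{n}(\nabla_{\bx_q}\bz_q)^\sT\bH_{\setminus q}^{-1}\bz_q$ with $\nabla_{\bx_q}\bz_q=\diag(\sigma'(\bW\bx_q))\bW$, so
\[
\Var_q(\gamma_q)\;\leq\;\E_q\|\nabla_{\bx_q}\gamma_q\|_2^2
\;\leq\;\frac{4}{n^2}\,\E_q\Big[\|\bW\|_\op^2\,\|\sigma'(\bW\bx_q)\|_\infty^2\,\|\bH_{\setminus q}^{-1}\|_\op^2\,\|\bz_q\|_2^2\Big]
\;\prec\;\frac{d\cdot 1\cdot 1\cdot d^2}{n^2}\;\asymp\;\frac{1}{d},
\]
using only $\|\bW\|_\op\prec\sqrt d$, $\|\bH_{\setminus q}^{-1}\|_\op\leq 2/\lambda\prec 1$, $\|\bz_q\|_2\prec d$, and $\|\sigma'(\bW\bx_q)\|_\infty\prec 1$. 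Hypercontractivity then gives $|\gamma_q-\E_q\gamma_q|\prec d^{-1/2}$. The PGE case is identical after appending an $\bI_p$ block to $\nabla\bz_q$. No chaos expansion, no tensor contractions, and crucially no $\tau_1$.

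By contrast, your plan requires bounding $\|J_m(\gamma_q)\|_{L^2}^2$ for each $m\le 2D$, which unwinds into estimates of the form $\|\bG_{k-r}^{1/2}(\bM\odot\bG_r)\bG_{k-r}^{1/2}\|_F$ with $\bG_m=(\bW\bW^\sT)^{\odot m}$; the claim ``each chaos component contributes $\prec 1/d$'' is plausible but is not established by the sketch, particularly for the diagonal blocks $\bh_k^\sT\bA_{kk}\bh_k$ with $k\ge 2$ where the variance is \emph{not} simply $\|\bA_{kk}\|_F^2$. Also note that invoking $\|\bV^\sT\bM\bV\|_\op\le 1/(n\tau_1)$ yields a $\tau_1$-dependent bound, strictly weaker than the lemma's $\tau_1$-free statement; the paper avoids this entirely by never touching the spike structure of $\bV$ (only $\|\bH_{\setminus q}^{-1}\|_\op$ is used).
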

\begin{proof}
Consider first the case $\bz_q=\bz_q^\RF=\sigma(\bW\bx_q)$.
Let us write $\Var_q$ for the variance associated to $\E_q$.
By the Gaussian Poincar\'e inequality,
    \begin{equation}
    \Var_q[\gamma_q] \leq \E_q \| \nabla_{\bx_q} \gamma_q\|_2^2
=\frac{4}{n^2}\E_q \Big((\nabla_{\bx_q} \bz_q)^\sT \bH_{\backslash q}^{-1}
\bz_q\Big)^2
    \end{equation}
where $\nabla_{\bx_q} \bz_q=\diag\left(\sigma'(\bW\bx_q)\right) \bW \in \R^{p
\times d}$. Thus
\begin{align}
\Var_q[\gamma_q] &\leq 
\frac{4}{n^2} \E_q \|\bW\|_\op^2 \|\diag(\sigma'(\bW\bx_q))\|_{\op}^2
\|\bH_{\backslash q}^{-1}\|_{\op}^2 \|\bz_q\|_2^2.
\end{align}
We have $\|\bW\|_\op \prec \sqrt{d}$, $\|\bH_{\setminus q}^{-1}\|_\op \prec 1$
by Lemma \ref{lemma:sublevelgeometry}(c), and $\|\bz_q\|_2 \prec d$ by Lemma
\ref{lemma:yzbounds}. Furthermore,
\[\|\diag(\sigma'(\bW\bx_q))\|_{\op}
=\max_{j \in [p]} |\sigma'(\<\bw_j, \bx_q\>)| \prec 1\]
by the conditions for the polynomial $\sigma$ in Assumption
\ref{assumption:activation}. Applying this above shows
$\Var_q[\gamma_q] \prec d^{-1}$, and hence
$|\gamma_q-\E_q \gamma_q| \prec d^{-1/2}$ by Gaussian hypercontractivity over
$\bx_q$. In the case $\bz_q=\bz_q^\sPG=\sigma_{\leq
2}(\bW\bx_q)+\mu_{>2}\bg_{q*}$, we have instead
$\nabla_{\bx_q,\bg_{q*}} \bz_q=
[\diag\left(\sigma_{\leq 2}'(\bW\bx_q)\right) \bW,\bI]
\in \R^{p \times (d+p)}$, and the rest of the argument is the same as above.
\end{proof}

\subsection{Comparison of surrogate with LOO/auxiliary risks}
\label{sec:surrogateLOOauxcompare}

We now establish the following lemma, which compares the minimum risk values of
the surrogate, LOO, and auxiliary objectives.

\begin{lemma}\label{lemma:surrogateLOOauxcompare}
There exists a constant $K>0$ such that for each $q=1,\ldots,n$ and choice of
$(\tilde \bz_q,\tilde y_q) \in \{(\bz_q^\RF,y_q^\RF),(\bz_q^\sPG,y_q^\sPG)\}$
(defining both $\Psi_q$ and $\Phi_q$), 
\begin{align}
\E(\Psi_q-\Phi_{\setminus q})^2 &\leq \frac{(\log
d)^K}{n^2},\label{eq:PsiPhiLOOcompare}\\
\E|\Psi_q-\Phi_q| &\leq \frac{(\log d)^K}{n^{3/2} \tau_1^{9/2}}.\label{eq:PsiPhiauxcompare}
\end{align}
\end{lemma}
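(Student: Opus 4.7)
The plan is to handle the two bounds separately, using the explicit Moreau-envelope characterization of $\Psi_q$ from Lemma~\ref{lemma:Psi_k_function_of_r_Phi_M_k_and_gamma_kr} for part \eqref{eq:PsiPhiLOOcompare}, and a third-order Taylor remainder comparison for part \eqref{eq:PsiPhiauxcompare}.

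For \eqref{eq:PsiPhiLOOcompare}, I would start from the identity $\Psi_q - \Phi_{\setminus q} = \tfrac{1}{n}\,\mathcal{M}_{\tilde y_q}(\langle\tilde\bz_q,\hat\btheta_{\setminus q}\rangle;\gamma_q)$ and invoke the three-variable Lipschitz continuity of $\mathcal{M}$ from Proposition~\ref{lemma:MathematicalExpressionLipschitzInBrAndGammaKpure}, which reduces the task to uniform polylogarithmic bounds on $|\tilde y_q|$, $|\langle\tilde\bz_q,\hat\btheta_{\setminus q}\rangle|$, and $\gamma_q$. The first is given by Lemma~\ref{lemma:yzbounds}. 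The second uses the independence of $\tilde\bz_q$ from $\hat\btheta_{\setminus q}$ (the LOO construction), the localization $\hat\btheta_{\setminus q}\in\bTheta_\bW^{\sPG}(K)$ from Lemma~\ref{lemma:optinThetaPG}, and Theorem~\ref{theorem:RecallBoundedLipschitzFunctionSupErgetisotropic} to conclude that conditionally the inner product has polylog-bounded variance and hence polylog-bounded size via hypercontractivity. The third follows from $\gamma_q\leq \|\bH_{\setminus q}^{-1}\|_{\op}\|\tilde\bz_q\|_2^2/n\prec 1$ using Lemmas~\ref{lemma:sublevelgeometry}(c) and~\ref{lemma:yzbounds}. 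Second moments on the complementary event of probability $d^{-C}$ are controlled through the deterministic bound $\|\hat\btheta_{\setminus q}\|_2^2\leq (2/\lambda)\widehat{\cR}_{\setminus q}(\bzero)$ combined with polynomial tail bounds on $\|\bz\|_2$ and $|y|$.

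For \eqref{eq:PsiPhiauxcompare}, the key observation is that
\[
\widehat{\cR}_{\cup q}(\btheta)-\widetilde{\cR}_{\cup q}(\btheta)
=\widehat{\cR}_{\setminus q}(\btheta)-\widehat{\cR}_{\setminus q}(\hat\btheta_{\setminus q})-\tfrac{1}{2}(\btheta-\hat\btheta_{\setminus q})^{\sT}\bH_{\setminus q}(\btheta-\hat\btheta_{\setminus q})=:R(\btheta),
\]
which, since $\nabla\widehat{\cR}_{\setminus q}(\hat\btheta_{\setminus q})=\bzero$, equals the third-order Taylor remainder $R(\btheta)=\tfrac{1}{6}\nabla^3\widehat{\cR}_{\setminus q}(\bar\btheta)[\bh,\bh,\bh]$ with $\bh=\btheta-\hat\btheta_{\setminus q}$. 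Minimality then yields the sandwich $R(\hat\btheta_{\cup q})\leq \Phi_q-\Psi_q\leq R(\tilde\btheta_{\cup q})$, so it suffices to bound $|R|$ at both minimizers. For $\tilde\btheta_{\cup q}$ the explicit formula from Lemma~\ref{lemma:Psi_k_function_of_r_Phi_M_k_and_gamma_kr} gives $\bh^{\sT}\bH_{\setminus q}\bh=(\ell')^2\gamma_q/n\prec 1/n$, and for $\hat\btheta_{\cup q}$ the same bound follows by comparing $\widehat{\cR}_{\cup q}(\hat\btheta_{\cup q})\leq \widehat{\cR}_{\cup q}(\hat\btheta_{\setminus q})$ together with strong convexity and Cauchy--Schwarz against $\tilde\bz_q$. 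Using the spike regularization $\bH_{\setminus q}\succeq \tau_1\bV\bV^{\sT}+(\lambda/2)\bI$ from Lemma~\ref{lemma:sublevelgeometry}(c), I would then deduce $\|\bV^{\sT}\bh\|_2^2\prec 1/(n\tau_1)$ and $\|\bh\|_2^2\prec 1/(n\lambda)$. The cubic form $\tfrac{1}{n}\sum_i\ell'''(y_i,\cdot)\langle\bh,\bz_i\rangle^3$ is then bounded by combining H\"older's inequality ($\tfrac{1}{n}\sum|a_i|^3\leq\max_i|a_i|\cdot\tfrac{1}{n}\sum a_i^2$), concentration of the empirical covariance $\tfrac{1}{n}\bZ\bZ^{\sT}\approx\bSigma\preceq C(\bV\bV^{\sT}+\mu_{>2}^2\bI)$, and hypercontractivity for the maximum; the contribution of $\tau_2\nabla^3 L_\bW$ is estimated identically, using Assumption~\ref{ass:test_loss} and the spike/bulk decomposition of $\bz^{\sPG}$.

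The main obstacle is the careful propagation of the $\tau_1^{-1}$ powers through the cascade of estimates: the Hessian inverse contributes $\tau_1^{-1}$ on the spike subspace, the empirical covariance adds another, and a further factor appears when bounding $\hat\btheta_{\cup q}$ (which is only implicit) against $\tilde\btheta_{\cup q}$ along the Taylor path. To evaluate the third derivative along the segment $[\hat\btheta_{\setminus q},\btheta]$ and to apply the spike bound of Lemma~\ref{lemma:sublevelgeometry}(c) at $\bar\btheta$, we need all of $\hat\btheta_{\setminus q}$, $\tilde\btheta_{\cup q}$, $\hat\btheta_{\cup q}$ (and hence their convex hull) to lie in $\bTheta_\bW^{\sPG}(K)\cap \widehat S_q(K)$, which is supplied by Lemma~\ref{lemma:optinThetaPG} and Corollary~\ref{corollary:SurrogateMinimizersInGoodSetallsamples}. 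Finally, passing from a high-probability bound of order $(\log d)^K/(n^{3/2}\tau_1^{9/2})$ to the expected absolute value uses the same polynomial tail bounds on the exceptional event as in part \eqref{eq:PsiPhiLOOcompare}.
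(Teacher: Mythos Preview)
Your argument for \eqref{eq:PsiPhiLOOcompare} is essentially correct and matches the paper's, though the invocation of Theorem~\ref{theorem:RecallBoundedLipschitzFunctionSupErgetisotropic} is misplaced: that theorem compares RF with PGE and is not what gives $|\langle\tilde\bz_q,\hat\btheta_{\setminus q}\rangle|\prec 1$. The paper (and you, implicitly) just compute the conditional second moment $\hat\btheta_{\setminus q}^\sT\E_q[\tilde\bz_q\tilde\bz_q^\sT]\hat\btheta_{\setminus q}$ and apply Lemma~\ref{lemma:sublevelgeometry}(a) together with hypercontractivity over the swapped sample. The paper also avoids the detour through $\gamma_q$ by bounding $|\cM_y(z;\gamma)|$ directly via $-\ell(0,0)\le\cM_y(z;\gamma)\le\ell(y,z)-\ell(0,0)$.

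For \eqref{eq:PsiPhiauxcompare} your plan has a real gap at the step ``hypercontractivity for the maximum.'' The norm bounds $\|\bV^\sT\bh\|_2^2\prec 1/(n\tau_1)$ and $\|\bh\|_2^2\prec 1/(n\lambda)$ that you extract from strong convexity are correct, but they are \emph{aggregate} bounds and do not by themselves yield a per-sample estimate on $\max_i|\langle\bh,\bz_i\rangle|$ when $\bh=\hat\btheta_{\cup q}-\hat\btheta_{\setminus q}$, because $\bh$ depends on \emph{all} samples (through $\bH_{\setminus q}^{-1}$ and $\hat\btheta_{\cup q}$). If you try to cash out the bound naively, the degree-2 contribution gives $|\langle\mu_2\bV_2^\sT\bh,\bh_2(\bx_i)\rangle|\lesssim\|\bV^\sT\bh\|_2\cdot\|\bh_2(\bx_i)\|_2\prec d/\sqrt{n\tau_1}\asymp 1/\sqrt{\tau_1}$, and then H\"older yields $\tfrac1n\sum_i|\langle\bh,\bz_i\rangle|^3\prec 1/(n\tau_1^{3/2})$. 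After summing over $q\in[n]$ in the Lindeberg telescope this is $\tau_1^{-3/2}$, which does \emph{not} vanish.

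The paper fixes this by inserting the surrogate and exploiting the \emph{independence of the swapped sample} $\tilde\bz_q$. It writes $\hat\btheta_{\cup q}-\hat\btheta_{\setminus q}=(\hat\btheta_{\cup q}-\tilde\btheta_{\cup q})+(\tilde\btheta_{\cup q}-\hat\btheta_{\setminus q})$ and proves two per-sample lemmas: Lemma~\ref{lemma:BernsteinInequalityHermiteFeaturesrt} shows $|\bz_i^\sT\bH_{\setminus q}^{-1}\tilde\bz_q|/n\prec 1/(d\tau_1^{1/2})$ by taking the conditional second moment over $\tilde\bz_q$ (this is where hypercontractivity is legitimately used---over the \emph{new} sample, not over $\bx_i$), and Lemma~\ref{lemma:BernsteinInequalityHermiteFeaturesrt2} shows $|\bz_i^\sT(\hat\btheta_{\cup q}-\tilde\btheta_{\cup q})|\prec 1/(d\tau_1^{3/2})$ by first expressing $\hat\btheta_{\cup q}-\tilde\btheta_{\cup q}=-\tilde\bH^{-1}\nabla\widehat\cR_{\cup q}(\tilde\btheta_{\cup q})$ and then expanding that gradient as a third-order remainder in $(\tilde\btheta_{\cup q}-\hat\btheta_{\setminus q})=-\tfrac{\ell'}{n}\bH_{\setminus q}^{-1}\tilde\bz_q$, so that two copies of the previous lemma apply. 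These per-sample bounds then give $\tfrac1n\sum_i|\langle\bh,\bz_i\rangle|^3\prec d^{-3}\tau_1^{-9/2}\asymp n^{-3/2}\tau_1^{-9/2}$, which is the missing $n^{-1/2}$ relative to your estimate. Your comparison $\widehat\cR_{\cup q}(\hat\btheta_{\cup q})\le\widehat\cR_{\cup q}(\hat\btheta_{\setminus q})$ is useful for norm control but cannot substitute for this sample-wise independence argument.
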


\begin{proof}[Proof of Lemma \ref{lemma:surrogateLOOauxcompare},
eq.\ \eqref{eq:PsiPhiLOOcompare}]

By Lemma \ref{lemma:Psi_k_function_of_r_Phi_M_k_and_gamma_kr},
\begin{equation}
\Psi_q-\Phi_{\backslash q}=\frac{1}{n}\mathcal{M}_{\tilde y_q}\left(\<\tilde
\bz_q, \hat\btheta_{\backslash q}\>;\gamma_q\right).
\end{equation}
Evaluating the optimization \eqref{eq:Moreau} defining
$\mathcal{M}_{y}(z;\gamma)$ at $x=z$, we have $\mathcal{M}_{y}(z;\gamma)
\leq \ell(y,z)-\ell(0,0)$. By \eqref{eq:Moreau}, since $\ell(\cdot) \geq 0$, we
have also $\mathcal{M}_{y}(z;\gamma) \geq {-}\ell(0,0)$. Thus
\[|\Psi_q-\Phi_{\backslash q}|
\leq \frac{1}{n}\,|\ell(\tilde y_q,\<\tilde\bz_q,\hat\btheta_{\setminus q}\>)|
+\frac{1}{n}\,|\ell(0,0)|
\leq \frac{C}{n}(1+|\tilde y_q|+|\<\tilde \bz_q,\hat\btheta_{\setminus q}\>|)\]
where the second inequality holds
by the properties of $\ell(\cdot)$ in Assumption \ref{assumption:loss}.

Importantly, $\hat\btheta_{\setminus q}$ is independent of the variables
$(\bx_q,\bg_{q*})$ defining $\tilde\bz_q$. Writing $\E_q$
for the expectation over $(\bx_q,\bg_{q*})$, we then have
$\E_q|\<\tilde \bz_q,\hat\btheta_{\setminus q}\>|^2
=\hat\btheta_{\setminus q}^\sT \E_q[\tilde \bz_q\tilde
\bz_q^\sT]\hat\btheta_{\setminus q}$.
In the case $\tilde \bz_q=\bz_q^\RF$, we have
$\E_q[\tilde \bz_q\tilde \bz_q^\sT]=\sum_{k=0}^D \mu_k^2 \bV_k\bV_k^\sT$,
while in the case $\tilde \bz_q=\bz_q^\sPG$, we have
$\E_q[\tilde \bz_q\tilde \bz_q^\sT]=\sum_{k=0}^2 \mu_k^2 \bV_k\bV_k^\sT
+\mu_{>2}^2\bI$. In both cases, applying Lemma \ref{lemma:sublevelgeometry}(a),
we get $\E_q|\<\tilde \bz_q,\hat\btheta_{\setminus q}\>|^2 \prec 1$, and hence
$|\<\tilde \bz_q,\hat\btheta_{\setminus q}\>| \prec 1$ by Gaussian
hypercontractivity over $(\bx_q,\bg_{q*})$. Applying also $|\tilde y_q| \prec 1$
from Lemma \ref{lemma:yzbounds}, this shows
\[|\Psi_q-\Phi_{\setminus q}| \prec n^{-1}.\]

This means that for any constant $C_0>0$, there exists $K>0$ for which
\begin{equation}\label{eq:goodevent}
\cE=\{(\Psi_q-\Phi_{\setminus q})^2 \leq (\log d)^K n^{-2}\}
\text{ satisfies } \P[\cE] \geq 1-d^{-C_0}.
\end{equation}
On the complementary event $\cE^c$, we note that since $|\btau \cdot
\bGamma^\bW(\btheta)| \leq 2$ and $\ell(\cdot) \geq 0$, we have by
definition
\[\Phi_{\setminus q} \geq -2,
\qquad \Phi_{\setminus q} \leq \hcR_{\setminus q}(\bzero)
\leq \frac{1}{n}\sum_{i=1}^n \ell(y_i,0)+2
\leq \frac{C}{n}\sum_{i=1}^n (1+|y_i|)+2,\]
where $\{y_i\}_{i=1}^n$ denote the labels of $\by_{\setminus q}$.
Since $\bH_{\setminus q}=
\nabla^2 \hcR_{\setminus q}(\hat\btheta_{\setminus q})$ and
$\hat\btheta_{\setminus q}$ is a minimizer of $\hcR_{\setminus q}$, we have
$\bH_{\setminus q} \succeq 0$. Then also by definition,
\[\Psi_q \geq -2,
\qquad \Psi_q \leq \hcR_{\setminus q}(\hat\btheta_{\setminus q})
\leq \Phi_{\setminus q}
+\frac{1}{n}\,\ell(\tilde y_q,\<\hat\btheta_{\setminus q},\tilde \bz_q\>)
\leq \Phi_{\setminus q}
+\frac{C}{n}(1+|\tilde y_q|+\|\hat\btheta_{\setminus q}\|_2 \cdot \|\tilde
\bz_q\|_2).\]
From the optimality condition $\hcR_{\setminus q}(\hat\btheta_{\setminus q})
\leq \hcR_{\setminus q}(\bzero)$ and bounds $|\btau \cdot \bGamma^\bW(\btheta)|
\leq 2$ and $\ell(\cdot) \geq 0$, we obtain
\[\frac{\lambda}{2}\|\hat\btheta_{\setminus q}\|_2^2
\leq \frac{1}{n}\sum_{i=1}^n \ell(y_i,0)+4
\leq \frac{C}{n}\sum_{i=1}^n (1+|y_i|)+4.\]
Combining these bounds, it may be verified that 
$\E[\Phi_{\setminus q}^4] \leq (\log d)^K$ and $\E[\Psi_q^4] \leq (\log d)^K$
for a sufficiently large constant $K>0$. Then
\[\E[(\Psi_q-\Phi_{\setminus q})^2\1\{\cE^c\}]
\leq \E[(\Psi_q-\Phi_{\setminus q})^4]^{1/2} \cdot \P[\cE^c]^{1/2}
\prec \P[\cE^c]^{1/2}.\]
Choosing $C_0=8$ in \eqref{eq:goodevent} ensures that
$\P[\cE^c]^{1/2} \prec n^{-2}$, i.e.\ there exists $K>0$
for which
\begin{equation}\label{eq:Ecomplementbound}
\E[(\Psi_q-\Phi_{\setminus q})^2\1\{\cE^c\}] \leq (\log d)^K n^{-2},
\end{equation}
and combining with \eqref{eq:goodevent} concludes the proof
of \eqref{eq:PsiPhiLOOcompare}.
\end{proof}

We next prove \eqref{eq:PsiPhiauxcompare}. For this, we establish two auxiliary
lemmas.

\begin{lemma}[Concentration of Cross-Sample Influence]\label{lemma:BernsteinInequalityHermiteFeaturesrt}

For each $q \in \{1,\ldots,n\}$, choice of
$(\tilde \bz_q,\tilde y_q) \in \{(\bz_q^\RF,y_q^\RF),(\bz_q^\sPG,y_q^\sPG)\}$,
and $i \in [n] \setminus q$, letting $\bz_i$ denote the $i^\text{th}$ column
of $\bZ_{\setminus q}$ and letting $\bz^\sPG$ denote the independent test sample defining
$L_\bW(\btheta)$, 
\begin{equation}
\frac{\bz_i^\sT \boldsymbol{H}_{\backslash q}^{-1} \tilde\bz_q}{n}
\prec \frac{1}{\tau_1^{1/2} d},
\qquad \frac{{\bz^\sPG}^\sT \boldsymbol{H}_{\backslash q}^{-1} \tilde\bz_q}{n}
\prec \frac{1}{\tau_1^{1/2}d}.
\end{equation}
\end{lemma}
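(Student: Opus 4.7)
The plan is to prove both bounds by conditioning on all randomness independent of $\tilde\bz_q$ and applying Gaussian hypercontractivity to the resulting polynomial in the Gaussian variables defining $\tilde\bz_q$. Throughout, I work on the high-probability event of Lemma~\ref{lemma:optinThetaPG} where $\hat\btheta_{\setminus q} \in \widehat S_q(K) \cap \bTheta_\bW^{\sPG}(K)$, so that Lemma~\ref{lemma:sublevelgeometry}(c) applies to $\bH_{\setminus q} \equiv \bH_{\setminus q}(\hat\btheta_{\setminus q})$. The two key inputs are $\|\bV^\sT \bH_{\setminus q}^{-1}\bV\|_{\op} \leq 1/\tau_1$ (which in particular gives $\mu_0^2\bones_p^\sT\bH_{\setminus q}^{-1}\bones_p \leq 1/\tau_1$ since $\mu_0\bones_p$ is a column of $\bV$) and $\|\bH_{\setminus q}^{-1}\|_{\op} \leq 2/\lambda \prec 1$ from Assumption~\ref{assumption:lambda}.

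For the first bound, fix $i \neq q$ and set $\ba = \bH_{\setminus q}^{-1}\bz_i$. Since $\bH_{\setminus q}$ and $\bz_i$ are functions of $\{\bx_j,\bg_{j*}\}_{j\neq q}$, the vector $\ba$ is independent of $\tilde\bz_q$. Writing $\E_q$ for the conditional expectation over the Gaussians defining $\tilde\bz_q$, we have $\E_q\tilde\bz_q = \mu_0\bones_p$ in either model, and $\mathrm{Cov}_q(\tilde\bz_q)\preceq \bV\bV^\sT + C\bI$ (with $C=0$ in the RF case and $C = \mu_{>2}^2$ in the PGE case). Cauchy–Schwarz and the two inputs above then give
\[
|\E_q[\ba^\sT\tilde\bz_q]| = |\mu_0\bones_p^\sT\bH_{\setminus q}^{-1}\bz_i| \leq \sqrt{\mu_0^2\bones_p^\sT\bH_{\setminus q}^{-1}\bones_p}\cdot\sqrt{\bz_i^\sT\bH_{\setminus q}^{-1}\bz_i} \leq \tfrac{1}{\sqrt{\tau_1}}\cdot \sqrt{2/\lambda}\,\|\bz_i\|_2 \prec \tfrac{d}{\sqrt{\tau_1}},
\]
using $\|\bz_i\|_2 \prec d$ from Lemma~\ref{lemma:yzbounds}; and
\[
\mathrm{Var}_q(\ba^\sT\tilde\bz_q) \leq \|\bV^\sT\bH_{\setminus q}^{-1}\bz_i\|_2^2 + C\|\bH_{\setminus q}^{-1}\bz_i\|_2^2 \leq \Bigl(\tfrac{1}{\tau_1}+\tfrac{2C}{\lambda}\Bigr)\bz_i^\sT\bH_{\setminus q}^{-1}\bz_i \prec \tfrac{d^2}{\tau_1}.
\]
Since $\tilde\bz_q$ is a polynomial of bounded degree in the Gaussians $\bx_q$ (and $\bg_{q*}$ in the PGE case), Gaussian hypercontractivity yields $|\ba^\sT\tilde\bz_q|\prec d/\sqrt{\tau_1}$, and dividing by $n\asymp d^2$ gives the first claim.

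The second bound follows by the same argument with $\bz_i$ replaced by the independent test sample $\bz^\sPG$: condition additionally on $\bz^\sPG$, define $\ba = \bH_{\setminus q}^{-1}\bz^\sPG$ (independent of $\tilde\bz_q$), and use $\|\bz^\sPG\|_2\prec d$ from Lemma~\ref{lemma:yzbounds}. The main difficulty, and the reason for the two regimes $\prec 1/(\sqrt{\tau_1}\,d)$ rather than the cruder $\prec 1/\tau_1$ one might expect, is the presence of the spikes of $\bV$ along $\bones_p$ and the columns of $\bW$: without the $\tau_1\bV\bV^\sT$ piece in the Hessian coming from $\Gamma_1^\bW$, the quantity $\mu_0^2\bones_p^\sT\bH^{-1}\bones_p$ would only be controlled by $\|\bH^{-1}\|_{\op}\|\bones_p\|_2^2\asymp p/\lambda$, which is much too large. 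It is precisely Lemma~\ref{lemma:sublevelgeometry}(c) that converts the perturbation $\tau_1\Gamma_1^\bW$ into the effective spectral bound $\|\bV^\sT\bH^{-1}\bV\|_{\op}\leq 1/\tau_1$ we need to close the argument.
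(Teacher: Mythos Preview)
Your proposal is correct and follows essentially the same approach as the paper. The paper's argument is marginally more streamlined: rather than splitting into a mean term and a variance term, it bounds the full conditional second moment directly via $\E_q[(\ba^\sT\tilde\bz_q)^2]=\ba^\sT\E_q[\tilde\bz_q\tilde\bz_q^\sT]\ba$ with $\E_q[\tilde\bz_q\tilde\bz_q^\sT]=\bV\bV^\sT$ (or $\bV_{\leq 2}\bV_{\leq 2}^\sT+\mu_{>2}^2\bI$), but the key inputs---$\|\bV^\sT\bH_{\setminus q}^{-1}\bV\|_{\op}\leq 1/\tau_1$, $\|\bH_{\setminus q}^{-1}\|_{\op}\prec 1$, $\|\bz_i\|_2\prec d$, and hypercontractivity over the Gaussians defining $\tilde\bz_q$---are identical to yours.
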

\begin{proof}
Write $\E_q$ for the expectation over $(\bx_q,\bg_{q*})$ defining $\tilde
\bz_q$. Since $\bz_i$ and $\bH_{\backslash q}$ are independent of
$(\bx_q,\bg_{q*})$ we have
\[\E_q\bigg(\frac{\bz_i^\sT \boldsymbol{H}_{\backslash q}^{-1}
\tilde\bz_q}{n}\bigg)^2
=\frac{1}{n^2} \bz_i^\sT \bH_{\backslash q}^{-1}\E_q[\tilde\bz_q
\tilde\bz_q^\sT]\bH_{\backslash q}^{-1} \bz_i.\]
In the case $\tilde \bz_q=\bz_q^\RF$, we have
$\E_q[\tilde\bz_q \tilde\bz_q^\sT]=\bV\bV^\sT$, where $\bV$ is the matrix
\eqref{eq:Vcombined}. Thus
\begin{equation}\label{eq:zHzsquared}
\bz_i^\sT \bH_{\backslash q}^{-1}\E_q[\tilde\bz_q
\tilde\bz_q^\sT]\bH_{\backslash q}^{-1} \bz_i
=\|\bV^\sT \bH_{\backslash q}^{-1} \bz_i\|_2^2
\leq \|\bV^\sT \bH_{\backslash q}^{-1/2}\|_\op^2
\cdot \|\bH_{\backslash q}^{-1/2}\|_\op^2 \cdot \|\bz_i\|_2^2
\prec \tau_1^{-1} d^2,
\end{equation}
where the last inequality uses
$\|\bV^\sT \bH_{\setminus q}^{-1}\bV\|_\op \prec  \tau_1^{-1}$ 
and $\|\bH_{\setminus q}^{-1}\|_\op \prec 1$
from Lemma \ref{lemma:sublevelgeometry}(c) and $\|\bz_i\|_2 \prec d$
from Lemma \ref{lemma:yzbounds}. In the case
$\tilde \bz_q=\bz_q^\sPG$, we have
$\E_q[\tilde\bz_q \tilde\bz_q^\sT]=\bV_{\leq 2}\bV_{\leq 2}^\sT+\mu_{>2}^2\bI$,
where $\bV_{\leq 2}$ is the submatrix of $\bV$ corresponding to its components
for $k=0,1,2$. Then a similar argument shows \eqref{eq:zHzsquared}. Thus
$\E_q(\frac{\bz_i^\sT \boldsymbol{H}_{\backslash q}^{-1}
\tilde\bz_q}{n})^2 \prec \tau_1^{-1} d^{-2}$. The same argument shows
$\E_q(\frac{{\bz^\sPG}^\sT \boldsymbol{H}_{\backslash q}^{-1}
\tilde\bz_q}{n})^2 \prec\tau_1^{-1} d^{-2}$, so both statements of the lemma hold by 
hypercontractivity over $(\bx_q,\bg_{q*})$.
\end{proof}

\begin{lemma}[Moment Bound on the Surrogate Approximation Error Projection]
\label{lemma:BernsteinInequalityHermiteFeaturesrt2}

For each $q=1,\ldots,n$, choice of
$(\tilde \bz_q,\tilde y_q) \in \{(\bz_q^\RF,y_q^\RF),(\bz_q^\sPG,y_q^\sPG)\}$
(defining both $\hat\btheta_{\cup q}$ and $\tilde \btheta_{\cup q}$),
and $i \in [n] \setminus q$, letting $\bz_i$ denote the $i^\text{th}$ column
of $\bZ_{\setminus q}$ and letting $\bz^\sPG$ denote the independent test sample defining
$L_\bW(\btheta)$, 
\begin{equation}
|\bz_i^\sT (\hat{\btheta}_{\cup q}-\tilde{\btheta}_{\cup q})| \prec d^{-1} \tau_1^{-3/2},
\qquad |{\bz^\sPG}^\sT (\hat{\btheta}_{\cup q}-\tilde{\btheta}_{\cup q})| \prec d^{-1} \tau_1^{-3/2},
\end{equation}
\end{lemma}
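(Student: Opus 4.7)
The strategy is to view $\tilde\btheta_{\cup q}$ as a Newton-type linearization of $\hat\btheta_{\cup q}$ around $\hat\btheta_{\setminus q}$, so that their difference is a third-order Taylor residual of the leave-one-out risk $\hcR_{\setminus q}$. Write $\bDelta_1:=\hat\btheta_{\cup q}-\hat\btheta_{\setminus q}$ and $\bDelta_2:=\tilde\btheta_{\cup q}-\hat\btheta_{\setminus q}$. Bound \eqref{eq:surrogateLOOclose} gives $\|\bDelta_2\|_2\prec d^{-1}$. Strong convexity of $\hcR_{\cup q}$ from Lemma~\ref{lemma:sublevelgeometry}(c), combined with $\|\nabla\hcR_{\cup q}(\hat\btheta_{\setminus q})\|_2=\tfrac1n|\ell'(\tilde y_q,\<\tilde\bz_q,\hat\btheta_{\setminus q}\>)|\,\|\tilde\bz_q\|_2\prec d^{-1}$ (using $\|\tilde\bz_q\|_2\prec d$ from Lemma~\ref{lemma:yzbounds} and $\|\ell'\|_\infty \prec 1$), analogously yields $\|\bDelta_1\|_2\prec d^{-1}$; a minor adaptation of Corollary~\ref{corollary:SurrogateMinimizersInGoodSetallsamples} then places both $\hat\btheta_{\cup q}$ and $\tilde\btheta_{\cup q}$, as well as the segment joining them to $\hat\btheta_{\setminus q}$, in $\bTheta_\bW^\sPG(K)\cap \widehat S_q(K)$ with high probability.

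Subtracting the first-order conditions for $\hat\btheta_{\cup q}$ and $\tilde\btheta_{\cup q}$, expanding $\nabla\hcR_{\setminus q}(\hat\btheta_{\cup q})-\nabla\hcR_{\setminus q}(\hat\btheta_{\setminus q})=\bar\bH_1\bDelta_1$ via the integral form of Taylor's theorem, where $\bar\bH_1:=\int_0^1\nabla^2\hcR_{\setminus q}(\hat\btheta_{\setminus q}+t\bDelta_1)\,dt$, and applying the mean value theorem to $\ell'(\tilde y_q,\cdot)$, I obtain the Newton-type identity
\[
\bH_{\rm eff}\,(\bDelta_1-\bDelta_2)\;=\;-(\bar\bH_1-\bH_{\setminus q})\,\bDelta_1,
\qquad \bH_{\rm eff}:=\bar\bH_1+\tfrac{\ell''(\tilde y_q,\eta_*)}{n}\tilde\bz_q\tilde\bz_q^\sT.
\]
The matrix $\bH_{\rm eff}$ is a Hessian of $\hcR_{\cup q}$ evaluated on the relevant segment, so Lemma~\ref{lemma:sublevelgeometry}(c) gives $\bH_{\rm eff}\succeq \tau_1\bV\bV^\sT+(\lambda/2)\bI$ and $\|\bV^\sT\bH_{\rm eff}^{-1}\bV\|_{\op}\leq 1/\tau_1$. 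The right-hand side expands as the third-derivative integral $\int_0^1\int_0^t \nabla^3\hcR_{\setminus q}(\hat\btheta_{\setminus q}+s\bDelta_1)[\bDelta_1,\bDelta_1,\cdot]\,ds\,dt$, whose dominant contribution is $\tfrac1n\sum_{i'\neq q}\ell'''(y_{i'},\ast)\<\bDelta_1,\bz_{i'}\>^2\bz_{i'}$: the $\Gamma_1^\bW$ term vanishes since $\Gamma_1^\bW$ is quadratic on $\widehat S_q(K)$ by Lemma~\ref{lemma:sublevelgeometry}(b), and the $\tau_2$-weighted $\nabla^3 L_\bW$ contribution is handled by an extension of Lemma~\ref{lemma:sublevelgeometry}(b) to the third derivative and absorbed using $|\tau_2|/\tau_1\leq (\log d)^{-K_\Gamma}$.

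To evaluate $\bz_i^\sT(\bDelta_1-\bDelta_2)=-\bz_i^\sT\bH_{\rm eff}^{-1}(\bar\bH_1-\bH_{\setminus q})\bDelta_1$, I would run a bootstrap on the uniform quantity $a:=\sup_{i'\neq q}|\<\bDelta_1-\bDelta_2,\bz_{i'}\>|$. The explicit formula \eqref{equation:theta_k_function_of_br_hat_theta_and_H_inversepurer} together with Lemma~\ref{lemma:BernsteinInequalityHermiteFeaturesrt} and a union bound over $i'\in[n]$ give $\sup_{i'}|\<\bDelta_2,\bz_{i'}\>|\prec \tau_1^{-1/2}d^{-1}$, so $\sup_{i'}|\<\bDelta_1,\bz_{i'}\>|\prec \tau_1^{-1/2}d^{-1}+a$. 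Substituting into the residual and bounding the resulting bilinear sum via Cauchy--Schwarz yields
\[
|\bz_i^\sT(\bDelta_1-\bDelta_2)|\;\prec\;\frac{(\tau_1^{-1/2}d^{-1}+a)^2}{\sqrt{n}}\,\sqrt{\bz_i^\sT\bH_{\rm eff}^{-1}\,\bZ_{\setminus q}\bZ_{\setminus q}^\sT\,\bH_{\rm eff}^{-1}\bz_i}.
\]
The square-root factor is controlled using the local-strong-convexity bound $\bZ_{\setminus q}\bZ_{\setminus q}^\sT\preceq C n(\log d)^{K'}\,\bH_{\rm eff}$ obtained by combining Assumption~\ref{assumption:lsc} with Lemma~\ref{lemma:predictionbound} (which provides a constant fraction of samples on which $\ell''$ is lower bounded), reducing it to $\bz_i^\sT\bH_{\rm eff}^{-1}\bz_i$, itself bounded by the same quantity squared appearing in \eqref{eq:zHzsquared}. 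Choosing the fixed-point scale $a\prec \tau_1^{-3/2}d^{-1}$ then verifies the hypothesis and delivers the desired estimate uniformly in $i$. The companion bound for $\bz^\sPG$ follows by the same argument and is simpler, since $\bz^\sPG$ is independent of the training data conditional on $\bW$, allowing a direct application of Lemma~\ref{lemma:BernsteinInequalityHermiteFeaturesrt} without a union bound.

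\textbf{Main obstacle.} The technical crux is the bootstrap: because every training feature $\bz_{i'}$ enters $\bH_{\rm eff}$ through the LOO Hessian, there is no clean conditional-independence structure to mimic the argument of Lemma~\ref{lemma:BernsteinInequalityHermiteFeaturesrt} coordinate-wise. Obtaining uniform control of $\<\bDelta_1,\bz_{i'}\>$ and of the bilinear quantity $\bz_i^\sT\bH_{\rm eff}^{-1}\bz_{i'}$ requires combining the $\bV\bV^\sT$-direction lower bound from Lemma~\ref{lemma:sublevelgeometry}(c) with the local strong convexity of Assumption~\ref{assumption:lsc} on a constant fraction of samples identified by Lemma~\ref{lemma:predictionbound}, and carefully tracking polylogarithmic factors through the self-referential estimate so that the fixed point closes at the sharp rate $d^{-1}\tau_1^{-3/2}$ rather than a weaker power of $\tau_1$.
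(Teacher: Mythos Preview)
Your overall decomposition is close, but there is a structural simplification in the paper's argument that you miss, and your bootstrap has a genuine gap.

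\textbf{The key difference.} You subtract the two first-order conditions and obtain a residual quadratic in $\bDelta_1=\hat\btheta_{\cup q}-\hat\btheta_{\setminus q}$, which has no explicit form and therefore forces the self-referential fixed-point argument. The paper instead writes
\[
\hat\btheta_{\cup q}-\tilde\btheta_{\cup q}
=-\tilde\bH^{-1}\,\nabla\hcR_{\cup q}(\tilde\btheta_{\cup q}),
\qquad \tilde\bH=\int_0^1 \nabla^2\hcR_{\cup q}\bigl(\hat\btheta_{\cup q}+t(\tilde\btheta_{\cup q}-\hat\btheta_{\cup q})\bigr)\,dt,
\]
and then Taylor-expands $\nabla\hcR_{\cup q}$ around $\hat\btheta_{\setminus q}$. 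Because $\tilde\btheta_{\cup q}$ is the \emph{exact} minimizer of the quadratic surrogate, the first two Taylor terms vanish identically, leaving
\[
\nabla\hcR_{\cup q}(\tilde\btheta_{\cup q})
=\Big(\int_0^1(1-t)\,\nabla^3\hcR_{\setminus q}(\bar\btheta(t))\,dt\Big)[\bDelta_2,\bDelta_2],
\qquad \bDelta_2=\tilde\btheta_{\cup q}-\hat\btheta_{\setminus q}.
\]
The point is that $\bDelta_2=-\tfrac{1}{n}\ell'(\tilde y_q,\langle\tilde\bz_q,\tilde\btheta_{\cup q}\rangle)\,\bH_{\setminus q}^{-1}\tilde\bz_q$ is explicit, so $\langle\bz_j,\bDelta_2\rangle$ is exactly a constant times $\tfrac{1}{n}\bz_j^\sT\bH_{\setminus q}^{-1}\tilde\bz_q$, which is $\prec \tau_1^{-1/2}d^{-1}$ directly by Lemma~\ref{lemma:BernsteinInequalityHermiteFeaturesrt}. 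No bootstrap is needed at all.

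\textbf{Why your bootstrap does not close.} With your Cauchy--Schwarz bound and the paper's estimate $\tfrac{1}{n}\sum_j(\bz_j^\sT\bH_{\rm eff}^{-1}\bz_i)^2\prec d^2\tau_1^{-1}$, the iteration becomes $a\prec \tau_1^{-1/2}(\tau_1^{-1/2}d^{-1}+a)^2$. The attracting fixed point is at $a_-\asymp\tau_1^{-3/2}d^{-2}$, but its basin of attraction is $(0,a_+)$ with $a_+\asymp\tau_1^{1/2}$. Your only a priori bound is $a_0\prec 1$ from $\|\bDelta_1-\bDelta_2\|_2\prec d^{-1}$ and $\|\bz_i\|_2\prec d$, and since $\tau_1^{1/2}<1$ this lies \emph{outside} the basin, so the iteration diverges. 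To salvage the bootstrap you would need an independent a priori bound $a_0\prec\tau_1^{1/2}$, which is not available from the estimates you cite.

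\textbf{A second issue.} You invoke Assumption~\ref{assumption:lsc} to obtain $\bZ_{\setminus q}\bZ_{\setminus q}^\sT\preceq Cn(\log d)^{K'}\bH_{\rm eff}$, but this lemma is stated and used under only Assumptions~\ref{assumption:scaling}--\ref{assumption:activation} and~\ref{ass:test_loss}; local strong convexity is reserved for the test-error result. The paper avoids this by bounding $\tfrac{1}{n}\sum_j(\bz_j^\sT\tilde\bH^{-1}\bz_i)^2$ via the Hermite decomposition of $\bz_j$ and the bound $\|\bV^\sT\tilde\bH^{-1}\bV\|_\op\le\tau_1^{-1}$ from Lemma~\ref{lemma:sublevelgeometry}(c), which uses only the perturbation $\tau_1\|\bV_+^\sT\btheta\|_2^2$.
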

\begin{proof}
For a smooth function $F:\R^p \to \R$, we will apply the representations
\begin{align*}
\nabla F(\by)&=\nabla F(\bx)+\Big(\int_0^1 \nabla^2 F(\bx+t(\by-\bx))
dt\Big)(\by-\bx)\\
&=\nabla F(\bx)+\nabla^2 F(\bx)(\by-\bx)
+\Big(\int_0^1 (1-t)\nabla^3 F(\bx+t(\by-\bx)) dt\Big)[\by-\bx,\by-\bx]
\end{align*}
where $\nabla^3 F(\cdot) \in \R^{p \times p \times p}$
denotes the (symmetric) third-derivative tensor of $F$,
and $\nabla^3 F(\cdot)[\by-\bx,\by-\bx] \in \R^p$
denotes its contraction along two axes with $\by-\bx$.

From the first-order condition for $\hat\btheta_{\cup q}$ to
minimize $\hcR_{\cup q}$, we have
\begin{align}
\nabla \widehat{\cR}_{\cup q}(\tilde{\btheta}_{\cup q}) &=
\nabla\widehat{\cR}_{\cup q}(\tilde{\btheta}_{\cup q})-\nabla\widehat{\cR}_{\cup
q}(\hat{\btheta}_{\cup q}) \nonumber \\
    &=\underbrace{\left(\int_0^1 \nabla^2 \widehat{\cR}_{\cup q}
\left(\hat{\btheta}_{\cup q}
+t(\tilde{\btheta}_{\cup q}-\hat{\btheta}_{\cup q})\right) dt
\right)}_{:=\tilde\bH}(\tilde{\btheta}_{\cup q}-\hat{\btheta}_{\cup q}).
\end{align}
We remark that by Lemma \ref{lemma:optinThetaPG} and
Corollary \ref{corollary:SurrogateMinimizersInGoodSetallsamples},
on an event of probability $1-d^{-C}$, there exists $K>0$ such that both
$\tilde \btheta_{\cup q},\hat\btheta_{\cup q} \in \widehat S_q(K)$.
Then $\hat{\btheta}_{\cup q}+t(\tilde{\btheta}_{\cup q}-\hat{\btheta}_{\cup q})
\in \widehat S_q(K)$ for all $t \in [0,1]$, so
Lemma \ref{lemma:sublevelgeometry}(c) holds also for the above interpolated
Hessian matrix $\tilde \bH$. In particular, $\tilde \bH$ is invertible on this
high-probability event, so rearranging the above gives
\begin{equation}\label{eq:optimization_error_integral_form}
\hat{\btheta}_{\cup q}-\tilde{\btheta}_{\cup q}
=-\tilde\bH^{-1} \nabla \widehat{\cR}_{\cup q}(\tilde{\btheta}_{\cup q}).
\end{equation}

We proceed to expand $\nabla \widehat \cR_{\cup q}(\tilde \btheta_{\cup q})$
around the LOO optimizer $\hat \btheta_{\setminus q}$.
From the first-order condition for $\hat \btheta_{\setminus q}$ as an optimizer
of $\hcR_{\setminus q}$,
\begin{align}
\nabla \widehat{\cR}_{\cup q}(\tilde{\btheta}_{\cup q})
&=\nabla \widehat{\cR}_{\cup q}(\tilde{\btheta}_{\cup q})
-\nabla\widehat{\cR}_{\backslash q}(\hat{\btheta}_{\backslash q})\\
&=\frac{1}{n}\ell'(\tilde y_q, \langle \tilde
\bz_q,\tilde{\btheta}_{\cup q}\rangle)\tilde \bz_q
+\nabla \widehat{\cR}_{\setminus q}(\tilde{\btheta}_{\cup q})
-\nabla\widehat{\cR}_{\backslash q}(\hat{\btheta}_{\backslash q})\\
&=\frac{1}{n}\ell'(\tilde y_q, \langle \tilde
\bz_q,\tilde{\btheta}_{\cup q}\rangle)\tilde \bz_q
+\nabla^2 \widehat{\cR}_{\backslash q}(\hat{\btheta}_{\backslash q})
(\tilde{\btheta}_{\cup q}-\hat{\btheta}_{\backslash q})\\
&\hspace{1in}+\left(\int_0^1 (1-t)\nabla^3 \widehat{\cR}_{\backslash q}\Big(
\hat{\btheta}_{\backslash q}
+t(\tilde\btheta_{\cup q}-\hat{\btheta}_{\backslash q})\Big) dt\right)
[\tilde{\btheta}_{\cup q}-\hat{\btheta}_{\backslash q},
\tilde{\btheta}_{\cup q}-\hat{\btheta}_{\backslash q}].
\end{align}
Let us write as shorthand
\[\bar\btheta(t)=\hat{\btheta}_{\backslash q}
+t(\tilde\btheta_{\cup q}-\hat{\btheta}_{\backslash q}).\]
Noting that
$\frac{1}{n}\ell'(\tilde y_q, \langle \tilde
\bz_q,\tilde{\btheta}_{\cup q}\rangle)\tilde \bz_q
+\nabla^2 \widehat{\cR}_{\backslash q}(\hat{\btheta}_{\backslash q})
(\tilde{\btheta}_{\cup q}-\hat{\btheta}_{\backslash q})
=\nabla \widetilde \cR_{\cup q}(\tilde \btheta_{\cup q})=0$
since $\tilde\btheta_{\cup q}$ optimizes $\widetilde \cR_{\cup q}$, the above
shows
\[\nabla \widehat{\cR}_{\cup q}(\tilde{\btheta}_{\cup q})
=\left(\int_0^1 (1-t)\nabla^3 \widehat{\cR}_{\backslash q}(\bar\btheta(t))
dt\right)[\tilde{\btheta}_{\cup q}-\hat{\btheta}_{\backslash q},
\tilde{\btheta}_{\cup q}-\hat{\btheta}_{\backslash q}].\]
Then, applying the explicit form
$\tilde{\btheta}_{\cup q}-\hat{\btheta}_{\backslash q}
={-}\frac{1}{n}\ell'(\tilde y_q,\<\tilde \bz_q,\tilde \btheta_{\cup q}\>)
\bH_{\setminus q}^{-1}\tilde\bz_q$ from
Lemma \ref{lemma:Psi_k_function_of_r_Phi_M_k_and_gamma_kr} and substituting into
\eqref{eq:optimization_error_integral_form},
\begin{align}
\bz_i^\sT(\hat{\btheta}_{\cup q}-\tilde{\btheta}_{\cup q})
={-}\frac{\ell'(\tilde y_q,\<\tilde \bz_q, \tilde{\btheta}_{\cup q}\>)^2}{n^2}
\,\bz_i^\sT \tilde \bH^{-1}\left(\int_0^1 (1-t) \nabla^3
\hcR_{\setminus q}(\bar\btheta(t))dt \right)
[\bH_{\backslash q}^{-1} \tilde\bz_q, \bH_{\backslash q}^{-1}\tilde\bz_q].
\end{align}
Decomposing (recall that we denote $\btau\cdot \nabla^3 \bGamma^\bW = \tau_1 \nabla^3 \Gamma_1^\bW + \tau_2 \nabla^3 \Gamma_2^\bW$)
\[\nabla^3 \hcR_{\setminus q}(\bar\btheta(t))
=\frac{1}{n}\sum_{j:j \neq q} \ell'''(y_j,\<\bar\btheta(t),\bz_j\>)
\bz_j^{\otimes 3}+\btau\cdot \nabla^3 \bGamma^\bW(\bar\btheta(t)),\]
we arrive finally at
\begin{align}
\bz_i^\sT(\hat{\btheta}_{\cup q}-\tilde{\btheta}_{\cup q})
&={-}\ell'(\tilde y_q,\<\tilde \bz_q, \tilde{\btheta}_{\cup q}\>)^2
\int_0^1 (1-t)\Bigg(\underbrace{\sum_{j:j \neq q} \ell'''(y_j,\<\bar\btheta(t),\bz_j\>)
\Big(\frac{\bz_j^\sT \tilde\bH^{-1} \bz_i}{n}\Big)
\Big(\frac{\bz_j^\sT \bH_{\setminus q}^{-1} \tilde \bz_q}{n}\Big)^2}_{:=A(t)}\\
&\hspace{1in}+\underbrace{\frac{1}{n^2}\{\btau\cdot\nabla^3 \bGamma^\bW(\bar\btheta(t)) \}
[\tilde \bH^{-1}\bz_i,\bH_{\backslash q}^{-1} \tilde\bz_q, \bH_{\backslash
q}^{-1}\tilde\bz_q]}_{:=B(t)}\Bigg)dt.\label{eq:zthetadiffform}
\end{align}

We proceed to bound the above terms $A(t)$ and $B(t)$ uniformly over
$t \in [0,1]$. For $A(t)$, we apply $\|\ell'''\|_\infty \prec 1$ by Assumption
\ref{assumption:loss} and $|\frac{\bz_j^\sT \bH_{\setminus q}^{-1} \tilde
\bz_q}{n}| \prec d^{-1} \tau^{-1/2}$ by Lemma
\ref{lemma:BernsteinInequalityHermiteFeaturesrt} to obtain, simultaneously over
all $t \in [0,1]$,
\[|A(t)|
\prec \frac{1}{d^2 \tau_1}\sum_{j:j \neq q}
\left|\frac{\bz_j^\sT \tilde\bH^{-1} \bz_i}{n}\right|
\prec \frac{1}{d^2\tau_1}\left(\frac{1}{n}\sum_{j:j \neq q}
(\bz_j^\sT \tilde\bH^{-1} \bz_i)^2\right)^{1/2},\]
the second equality following by Cauchy-Schwarz. In the case where
$\bz_j=\bz_j^\RF$ for each $j=1,\ldots,n$, applying 
the expansion $\bz_j=\sum_{k=0}^D \mu_k\bV_k\bh_k(\bx_j)$,
Cauchy-Schwarz, and $\|\bz_i\|_2 \prec d$ from Lemma \ref{lemma:yzbounds},
observe that
\begin{align*}
\frac{1}{n}\sum_{j=1}^n (\bz_j^\sT \tilde\bH^{-1} \bz_i)^2
&\leq \frac{\|\bz_i\|_2^2}{n} \cdot
\left\|\tilde \bH^{-1}\sum_{j=1}^n\bz_j\bz_j^\sT \tilde \bH^{-1}\right\|_\op\\
&\leq \frac{C\|\bz_i\|_2^2}{n} \cdot
\sum_{k=0}^D \mu_k^2\left\|\tilde \bH^{-1}\bV_k\sum_{j=1}^n
\bh_k(\bx_j)\bh_k(\bx_j)^\sT \bV_k^\sT \tilde \bH^{-1}\right\|_\op\\
&\prec \sum_{k=0}^D \|\mu_k\tilde\bH^{-1}\bV_k\bh_k(\bX)\|_\op^2
\end{align*}
where $\bh_k(\bX)=[\bh_k(\bx_1),\ldots,\bh_k(\bx_n)] \in \R^{B_{d,k} \times n}$.
More generally, in the case where some $\bz_j=\bz_j^\sPG$, expanding
$\bz_j=\sum_{k=0}^D \mu_k\bV_k\bh_k(\bx_j)+\mu_{>2}\bg_{j*}$, we may obtain
similarly
\[\frac{1}{n}\sum_{j=1}^n (\bz_j^\sT \tilde\bH^{-1} \bz_i)^2
\prec \sum_{k=0}^D \|\mu_k\tilde\bH^{-1}\bV_k\bh_k(\bX)\|_\op^2
+\|\mu_{>2}\tilde\bH^{-1}\bG_*\|_\op^2\]
where $\bG_*=[\bg_{1*},\ldots,\bg_{n*}] \in \R^{p \times n}$.
For $k=0,1,2$, we have by Lemmas \ref{lemma:sublevelgeometry}(c) and
\ref{lemma:BernsteinInequalityHermiteFeatures} that
\[\|\mu_k\tilde\bH^{-1}\bV_k\bh_k(\bX)\|_\op
\leq \|\tilde\bH^{-1/2}\|_\op
\|\tilde \bH^{-1/2}\bV_k\|_\op\|\bh_k(\bX)\|_\op \prec d \tau_1^{-1/2}.\]
For $k \geq 3$, we have by Lemmas \ref{lemma:sublevelgeometry}(c) and
\ref{lemma:OperatorNormZkFk} that
\[\|\mu_k\tilde\bH^{-1}\bV_k\bh_k(\bX)\|_\op
\leq |\mu_k|\|\tilde\bH^{-1}\|_\op \|\bV_k\bh_k(\bX)\|_\op \prec d.\]
For the term involving $\bG_*$, we have likewise
$\|\mu_{>2}\tilde\bH^{-1}\bG_*\|_\op
\prec \|\tilde\bH^{-1}\|_\op\|\bG_*\|_\op \prec d$, as $\|\bG_*\|_\op \prec d$.
Applying this bound in both cases, we obtain simultaneously over $t \in [0,1]$
that
\begin{equation}\label{eq:Atbound}
|A(t)| \prec d^{-1} \tau_1^{-3/2}.
\end{equation}

For the term $B(t)$, note that by Lemma \ref{lemma:optinThetaPG}
and Corollary \ref{corollary:SurrogateMinimizersInGoodSetallsamples},
with probability $1-d^{-C}$, both
$\hat{\btheta}_{\backslash q},\tilde{\btheta}_{\cup q} \in
\widehat S_q(K)$. Then also
$\bar \btheta(t) \in \widehat S_q(K)$ for each $t \in [0,1]$,
so Lemma \ref{lemma:sublevelgeometry} implies
$\bGamma^\bW(\btheta)= (\| \bV_+^\sT \btheta\|_2^2, L_\bW(\btheta))$
in a neighborhood of $\bar\btheta(t)$, so that $\btau \cdot\nabla^3 \bGamma^\bW (\btheta) = \tau_2 \nabla^3 L_\bW (\btheta)$. Then
\begin{align*}
B(t)&=n \cdot \E_{\bz^{\sPG},y^{\sPG}}\left[\ell_\test'''(y^\sPG,\<\bar\btheta(t),\bz^{\sPG}\>)\Big(\frac{(\bz^\sPG)^\sT \tilde\bH^{-1}\bz_i}{n}\Big)
\Big(\frac{(\bz^\sPG)^\sT \bH_{\setminus
q}^{-1}\tilde\bz_q}{n}\Big)^2\;\Bigg|\;\bW\right]\\
&=\E_{\bz_1^{\sPG},\ldots,\bz_n^{\sPG},y_1^{\sPG},\ldots,y_n^{\sPG}}
\left[\sum_{j=1}^n
\ell_\test'''(y_j^\sPG,\<\bar\btheta(t),\bz_j^{\sPG}\>)\Big(\frac{(\bz_j^\sPG)^\sT
\tilde\bH^{-1}\bz_i}{n}\Big)
\Big(\frac{(\bz_j^\sPG)^\sT \bH_{\setminus
q}^{-1}\tilde\bz_q}{n}\Big)^2\;\Bigg|\;\bW\right]
\end{align*}
where $\E_{\bz_1^{\sPG},\ldots,\bz_n^{\sPG},y_1^{\sPG},\ldots,y_n^{\sPG}}
[\;\cdot\mid \bW]$
denotes the expectation over $n$ independent test samples conditional on
$\bW$. This may be bounded in the same way as above, using 
\eqref{eq:thetazPGbound}, \eqref{eq:yPGbound}, and
Assumption \ref{ass:test_loss} for $\ell_\test(\cdot)$
to bound $|\ell_\test'''(y_j^\sPG,\<\bar\btheta(t),\bz_j^{\sPG}\>)| \prec 1$.
Then simultaneously over $t \in [0,1]$,
\[|B(t)| \prec d^{-1} \tau_1^{-3/2}.\]
Applying these bounds and $\|\ell'\|_\infty \prec 1$
to \eqref{eq:zthetadiffform} shows the first statement of the lemma, that
$|\bz_i^\sT(\hat\btheta_{\cup q}-\tilde\btheta_{\cup q})| \prec 1$.
The same argument holds for $\bz^\sPG$ in place of $\bz_i$,
showing the second statement of the lemma.
\end{proof}

We now conclude the proof of Lemma \ref{lemma:surrogateLOOauxcompare}. 

\begin{proof}[Proof of Lemma \ref{lemma:surrogateLOOauxcompare}, eq.\
\eqref{eq:PsiPhiauxcompare}] 
Applying $\hcR_{\cup q}(\btheta)=\frac{1}{n}\ell(\tilde y_q,\<\tilde \bz_q,\btheta\>)
+\hcR_{\setminus q}(\btheta)$ and a third-order Taylor expansion of
$\hcR_{\setminus q}(\btheta)$ around its optimizer $\hat\btheta_{\setminus q}$,
we get
 \begin{equation}
        \begin{aligned}
        \hcR_{\cup q}(\btheta)&=
\underbrace{\frac{1}{n}\ell(\tilde y_q,\<\tilde \bz_q ,  \btheta \>)
+\Phi_{\backslash q}+\frac{1}{2}(\btheta-\hat\btheta_{\backslash q})^{\sT}
\boldsymbol{H}_{\backslash q}(\btheta -\hat\btheta_{\backslash q})}_{=\widetilde
\cR_{\cup q}(\btheta)} \\
        &\hspace{0.3in}+\frac{1}{6n}\sum_{i:i \neq q} \ell'''(y_i,\<\check \btheta,\bz_i\>)[\boldsymbol{z}_i^{\sT}(\btheta-\hat\btheta_{\backslash q})]^3
+\frac{1}{6} \{ \btau \cdot \nabla^3 \bGamma^\bW(\check\btheta) \}[\btheta-\hat\btheta_{\setminus
q},\btheta-\hat\btheta_{\setminus q},\btheta-\hat\btheta_{\setminus q}]
        \end{aligned}
        \end{equation} 
for a point $\check\btheta$ between $\btheta$ and $\btheta_{\setminus q}$.
Lemma \ref{lemma:optinThetaPG} ensures with probability $1-d^{-C}$ that
$\hat\btheta_{\setminus q} \in \widehat S_q(K)$. Then, for any $\btheta
\in \widehat S_q(K)$, also $\check \btheta \in \widehat S_q(K)$, so Lemma
\ref{lemma:sublevelgeometry}(b) ensures that
$\btau \cdot \nabla^3\bGamma^\bW(\btheta)=\tau_2 \nabla^3 L_\bW(\btheta)$
in a neighborhood of $\check\btheta$. Then, on this event, the above gives
\begin{align}
\hcR_{\cup q}(\btheta)&=\widetilde \cR_{\cup q}(\btheta)
+\frac{1}{6n}\sum_{i:i \neq q} \ell'''(y_i,\<\check
\btheta,\bz_i\>)[\boldsymbol{z}_t^{\sT}(\btheta-\hat\btheta_{\backslash q})]^3\\
&\hspace{1in}+\frac{1}{6}\E_{\bz^{\sPG},y^{\sPG}}\Big[\ell_\test'''(y^{\sPG},\<\check\btheta,\bz^{\sPG}\>)[{\bz^\sPG}^\sT(\btheta-\hat\btheta_{\setminus
q})]^3 \;\Big|\; \bW\Big].
\end{align}
The difference between the minima of $\widehat \cR_{\cup q}$
and $\widetilde \cR_{\cup q}$ can be bounded by the maximum of 
$|\widehat \cR_{\cup q}-\widetilde \cR_{\cup q}|$
evaluated at the two respective minimizers,
\begin{equation}
    |\Psi_q-\Phi_q| \le \max\left\{|(\widehat \cR_{\cup q}-\widetilde \cR_{\cup
q})(\hat\btheta_{\cup q})|,\,|(\widehat \cR_{\cup q}-\widetilde \cR_{\cup
q})(\tilde\btheta_{\cup q})|\right\},
\end{equation}
Applying the above form for $\widehat \cR_{\cup q}-\widetilde \cR_{\cup q}$,
and applying also $|\hat\btheta_{\cup q}-\hat\btheta_{\setminus q}|
\leq |\hat\btheta_{\cup q}-\tilde\btheta_{\cup q}|
+|\tilde\btheta_{\cup q}-\hat\btheta_{\setminus q}|$,
\begin{align}
|\Psi_q-\Phi_q| &\prec \frac{1}{n}\sum_{i:i \neq q}
|\ell'''(y_i,\<\check\btheta,\bz_i\>)|
\left(\left|\bz_i^\sT (\hat \btheta_{\cup q}-\tilde{\btheta}_{\cup
q})\right|^3+\left|\bz_i^\sT (\tilde{\btheta}_{\cup q}-\hat\btheta_{\setminus
q})\right|^3\right)\label{eq:PsiPhidiffsquared}\\
&\qquad+\E_{\bz^{\sPG},y^{\sPG}}\Big[
|\ell_\test'''(y^{\sPG},\<\check\btheta,\bz^{\sPG}\>)|
\Big(\Big|{\bz^\sPG}^\sT(\hat\btheta_{\cup q}-\tilde\btheta_{\cup q})\Big|^3
+\Big|{\bz^\sPG}^\sT(\tilde\btheta_{\cup q}-\hat\btheta_{\setminus
q})\Big|^3\Big) \;\Big|\; \bW\Big].
        \end{align}
We have $\|\ell'''\|_\infty \prec 1$ by Assumption \ref{assumption:loss},
$|\bz_i^\sT (\hat \btheta_{\cup q}-\tilde{\btheta}_{\cup q})| \prec d^{-1}\tau_1^{-3/2}$ by
Lemma \ref{lemma:BernsteinInequalityHermiteFeaturesrt2}, and
\[|\bz_i^\sT(\tilde \btheta_{\cup q}-\hat\btheta_{\setminus q})|
=|\ell'(\tilde y_q,\<\tilde \bz_q,\tilde\btheta_{\cup q}\>)|
\cdot \frac{|\bz_i^\sT \bH_{\setminus q}^{-1}\tilde \bz_q|}{n}
\prec d^{-1} \tau_1^{-1/2}\]
by Lemma \ref{lemma:Psi_k_function_of_r_Phi_M_k_and_gamma_kr},
the bound $\|\ell'\|_\infty \prec 1$, and Lemma
\ref{lemma:BernsteinInequalityHermiteFeaturesrt}. Thus the first term of
\eqref{eq:PsiPhidiffsquared} is $\prec d^{-3}\tau_1^{-9/2}$. The second term 
of \eqref{eq:PsiPhidiffsquared} is bounded similarly, applying
Lemmas \ref{lemma:BernsteinInequalityHermiteFeaturesrt} and
 \ref{lemma:BernsteinInequalityHermiteFeaturesrt2} for $\bz^\sPG$ instead of
$\bz_i$, and \eqref{eq:thetazPGbound} and \eqref{eq:yPGbound} to bound
$|\ell_\test'''(y^\sPG,\<\check \btheta,\bz^\sPG\>)| \prec 1$. Thus
\[|\Psi_q-\Phi_q| \prec d^{-3} \prec n^{-3/2} \tau_1^{-9/2}.\]
This means that for any constant $C_0>0$, there exists $K>0$ for which
\[\cE=\{|\Psi_q-\Phi_q| \leq (\log d)^K(n^{-3/2}\tau_1^{-9/2})\} \text{ satisfies }
\P[\cE] \geq 1-d^{-C_0}.\]
The same arguments as leading to \eqref{eq:Ecomplementbound} show that
$\E[|\Psi_q-\Phi_q|\1\{\cE^c\}] \leq (\log d)^Kn^{-3/2}\tau_1^{-9/2}$ for a sufficiently
large choice of $C_0>0$, establishing \eqref{eq:PsiPhiauxcompare}.
\end{proof}

\subsection{Lindeberg replacement for the surrogate risk}
\label{sec:proof-of-theorem-vanishingquadraticsurrogate1theorem59}

\begin{lemma}\label{lemma:ExpectedValueDifferencePsiProjection}
For each $q \in \{1,\ldots,n\}$, let $\Psi_q(\bz_q^\RF,y_q^\RF)$ and
$\Psi_q(\bz_q^\sPG,y_q^\sPG)$ denote the values of $\Psi_q$ under the choices
$(\tilde \bz_q,\tilde y_q)=(\bz_q^\RF,y_q^\RF)$ and
$(\tilde \bz_q,\tilde y_q)=(\bz_q^\sPG,y_q^\sPG)$ respectively. Let $\E_q$
denote the expectation over $(\bx_q,\bg_{q*},\eps_q)$ defining
$(\bz_q^\RF,y_q^\RF)$ and $(\bz_q^\sPG,y_q^\sPG)$. Then there exists a constant
$c>0$ such that for all large $d$,
\begin{equation}  
\E\left|\mathbb{E}_q\left[\Psi_q(\bz_q^\RF,y_q^\RF)
-\Psi_q(\bz_q^\sPG,y_q^\sPG)\right]\right| \leq n^{-1}d^{-c}.
\end{equation}
\end{lemma}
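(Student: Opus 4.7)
The plan is to use the explicit Moreau envelope representation from Lemma~\ref{lemma:Psi_k_function_of_r_Phi_M_k_and_gamma_kr} to reduce the statement to a bounded-Lipschitz comparison between the joint law of $(y_q^\RF,\langle \bz_q^\RF,\hat\btheta_{\setminus q}\rangle)$ and that of $(y_q^\sPG,\langle \bz_q^\sPG,\hat\btheta_{\setminus q}\rangle)$, and then to invoke the Partial Gaussian Equivalence Theorem~\ref{theorem:RecallBoundedLipschitzFunctionSupErgetisotropic}. First, since $\Phi_{\setminus q}$ depends only on samples $i\neq q$, we can write
\[
n \bigl[\Psi_q(\bz_q^\RF,y_q^\RF)-\Psi_q(\bz_q^\sPG,y_q^\sPG)\bigr]
=\cM_{y_q^\RF}\bigl(\langle \bz_q^\RF,\hat\btheta_{\setminus q}\rangle;\gamma_q^\RF\bigr)
-\cM_{y_q^\sPG}\bigl(\langle \bz_q^\sPG,\hat\btheta_{\setminus q}\rangle;\gamma_q^\sPG\bigr).
\]
Crucially, $(\hat\btheta_{\setminus q},\bH_{\setminus q})$ are independent of the $q^\text{th}$ sample randomness $(\bx_q,\bg_{q*},\eps_q)$, so we may condition on the former.

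Second, I would replace the random step-sizes $\gamma_q^\RF$ and $\gamma_q^\sPG$ by a single deterministic $\gamma^*:=\frac{1}{n}\Tr(\bH_{\setminus q}^{-1}\bV\bV^\sT)$. The Lipschitz property of $\cM_y(z;\gamma)$ in $\gamma$ (Proposition~\ref{lemma:MathematicalExpressionLipschitzInBrAndGammaKpure}) combined with Lemma~\ref{lemma:gamma_concentration} controls $\E_q|\gamma_q^\RF-\E_q\gamma_q^\RF|+\E_q|\gamma_q^\sPG-\E_q\gamma_q^\sPG| \prec d^{-1/2}$, while the difference $|\E_q\gamma_q^\RF-\E_q\gamma_q^\sPG|=\frac{1}{n}|\Tr(\bH_{\setminus q}^{-1}(\E_q\bz^\RF\bz^{\RF\sT}-\E_q\bz^\sPG\bz^{\sPG\sT}))|$ reduces to estimating the trace of $\bH_{\setminus q}^{-1}$ against $\sum_{k\ge 3}\mu_k^2(\bV_k\bV_k^\sT-\bI_p)$; by Lemma~\ref{lemma:OperatorNormFk} the residual is $O(d^{-1/2})$ in operator norm, while the two spike contributions $\tfrac{3\mu_3^2}{d}\bW\bW^\sT$ and $\tfrac{3\mu_4^2}{d^2}\1_p\1_p^\sT$ are absorbed using $\|\bV^\sT\bH_{\setminus q}^{-1}\bV\|_\op\prec 1/\tau_1$ from Lemma~\ref{lemma:sublevelgeometry}(c). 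This phase introduces an error of order $d^{-c}\tau_1^{-C}$.

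Third, with $\gamma^*$ deterministic, the remaining difference has the form $\E_q[\Phi(y_q^\RF,\langle \bz_q^\RF,\hat\btheta_{\setminus q}\rangle)]-\E_q[\Phi(y_q^\sPG,\langle \bz_q^\sPG,\hat\btheta_{\setminus q}\rangle)]$ with $\Phi(y,z):=\cM_y(z;\gamma^*)$, which is Lipschitz in $(y,z)$ with constant bounded by Proposition~\ref{lemma:MathematicalExpressionLipschitzInBrAndGammaKpure}. Composing with $y=\eta(f;\eps_q)$ and integrating over $\eps_q$ (using the Lipschitz condition on $\eta(\cdot,\eps)$ from Assumption~\ref{assumption:target}) yields a function $\psi$ of $(\bx_S,\bxi_2,\bxi_3,\ldots,\bxi_{D'},\langle \bz,\hat\btheta_{\setminus q}\rangle)$ which is $O(1)$-Lipschitz. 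On the high-probability event $\{\hat\btheta_{\setminus q}\in\bTheta_\bW^\sPG(K)\}$ established in Lemma~\ref{lemma:optinThetaPG}, Theorem~\ref{theorem:RecallBoundedLipschitzFunctionSupErgetisotropic} (applied conditionally on $\bW$ and the non-$q$ samples) bounds the difference by $d^{-c}$; off this event I will use a crude deterministic bound ($\cM$ is bounded on bounded predictors/labels, with tail controlled by Lemmas \ref{lemma:yzbounds} and \ref{lemma:sublevelgeometry}), whose contribution is negligible by choosing the high-probability exponent sufficiently large.

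The main obstacle I anticipate is the combined $\gamma^*$ replacement and the simultaneous handling of the degree-3/degree-4 spikes in the feature covariance: these create a genuine discrepancy between $\E_q\gamma_q^\RF$ and $\E_q\gamma_q^\sPG$ that must be absorbed using the structure of $\bH_{\setminus q}^{-1}$ on the spike subspace rather than its operator-norm alone. Once this step is carried out carefully, the outer factor $1/n\asymp d^{-2}$ multiplies the $O(d^{-c})$ comparison error to deliver the stated bound $n^{-1}d^{-c}$.
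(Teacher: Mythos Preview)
Your proposal is correct and follows essentially the same approach as the paper: reduce via the Moreau envelope representation of Lemma~\ref{lemma:Psi_k_function_of_r_Phi_M_k_and_gamma_kr}, replace the random step-sizes $\gamma_q^\RF,\gamma_q^\sPG$ by a common deterministic value using Lemma~\ref{lemma:gamma_concentration} and the Lipschitz property of $\cM$, handle the spike terms in the covariance discrepancy via Lemma~\ref{lemma:sublevelgeometry}(c), and then invoke Theorem~\ref{theorem:RecallBoundedLipschitzFunctionSupErgetisotropic} on the event $\{\hat\btheta_{\setminus q}\in\bTheta_\bW^\sPG(K)\}$ from Lemma~\ref{lemma:optinThetaPG}. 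The paper carries out exactly this plan, including your anticipated ``main obstacle'' of absorbing the degree-3 and degree-4 spikes through $\|\bV^\sT\bH_{\setminus q}^{-1}\bV\|_\op\prec \tau_1^{-1}$ under the standing hypothesis $\tau_1\geq d^{-c'}$.
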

\begin{proof}
By Lemma \ref{lemma:Psi_k_function_of_r_Phi_M_k_and_gamma_kr},
\begin{equation}
\Psi_q(\bz_q^\RF,y_q^\RF)-\Psi_q(\bz_q^\sPG,y_q^\sPG)
=\frac{1}{n}\mathcal{M}_{y_q^\RF}\left(\langle\bz_q^\RF,\hat\btheta_{\backslash
q}\rangle;\gamma_q(\bz_q^\RF)\right)-\frac{1}{n}\mathcal{M}_{y_q^\sPG}\left(\langle\bz_q^\sPG,
\hat\btheta_{\backslash q}\rangle; \gamma_q(\bz_q^\sPG)\right)
\end{equation}
where we denote by $\gamma_q(\bz_q^\RF)$ and $\gamma_q(\bz_q^\sPG)$ the values
of $\gamma_q$ in \eqref{eq:gammaq} for the two cases. Let us denote
\[\gamma_q^\text{const}=\E_q[\gamma_q(\bz_q^\RF)],\]
and first compare $\gamma_q(\bz_q^\RF)$ and $\gamma_q(\bz_q^\sPG)$ with
$\gamma_q^\text{const}$. By
Lemma \ref{lemma:gamma_concentration}, we have
$|\gamma_q(\bz_q^\RF)-\E_q \gamma_q(\bz_q^\RF)| \prec d^{-1/2}$ and
$|\gamma_q(\bz_q^\sPG)-\E_q \gamma_q(\bz_q^\sPG)| \prec d^{-1/2}$.
By the definition of $\gamma_q$, also
\begin{align*}
    \left|\E_q[\gamma_q(\bz_q^\RF)] - \E_q[\gamma_q(\bz_q^\sPG)]\right| &=
\frac{1}{n} \left| \Tr\left( \left(\E_q[\bz_q^\RF(\bz_q^\RF)^\sT] -
\E_q[\bz_q^\sPG(\bz_q^\sPG)^\sT]\right)\bH_{\backslash q}^{-1} \right) \right|.
\end{align*}
Using $\E_q[\bz_q^\RF(\bz_q^\RF)^\sT] = \bV\bV^\sT$ and
$\E_q[\bz_q^\sPG(\bz_q^\sPG)^\sT]=\bV_{\leq 2}\bV_{\leq 2}^\sT+\mu_{>2}^2\bI$
where $\bV_{\leq 2}$ contains the components of $\bV$ for $k=0,1,2$, this
implies
\begin{align}
\left|\E_q[\gamma_q(\bz_q^\RF)] - \E_q[\gamma_q(\bz_q^\sPG)]\right|
&\leq \frac{1}{n} \left|\bH_{\backslash q}^{-1/2}\Tr\left( (\bV\bV^\sT-(\bV_{\leq 2}\bV_{\leq
2}^\sT+\mu_{>2}^2\bI))\bH_{\backslash q}^{-1/2} \right) \right|\\
&\leq \frac{p}{n} \left\|\bH_{\backslash q}^{-1/2}\bigg(\sum_{k=3}^D
\mu_k^2(\bV_k\bV_k^\sT-\bI)\bigg)
\bH_{\backslash q}^{-1/2}\right\|_{\op}. \label{eq:trace_bound_op_norm}
\end{align}
Then applying Lemma \ref{lemma:OperatorNormFk},
\begin{align*}
\left|\E_q[\gamma_q(\bz_q^\RF)] - \E_q[\gamma_q(\bz_q^\sPG)]\right|
&\prec \left\|\bH_{\backslash q}^{-1/2} \left(\frac{\mu_3^2}{d}\bW\bW^\sT +
\frac{\mu_4^2}{d^2}\mathbf{1}_p\mathbf{1}_p^\sT\right) \bH_{\backslash q}^{-1/2}
\right\|_{\op}+\frac{1}{\sqrt{d}}\left\|\bH_{\backslash q}^{-1}\right\|_{\op}\\
&\prec \frac{\mu_3^2}{d} \left\| \bW^\sT\bH_{\backslash q}^{-1}\bW
\right\|_{\op} + \frac{\mu_4^2}{d^2} \left| \mathbf{1}_p^\sT\bH_{\backslash
q}^{-1}\mathbf{1}_p \right|+\frac{1}{\sqrt{d}}\left\|\bH_{\backslash q}^{-1}\right\|_{\op}.
\end{align*}
Applying Lemma \ref{lemma:sublevelgeometry}(c), we have
$\|\bW^\sT\bH_{\backslash q}^{-1}\bW\|_{\op},
|\mathbf{1}_p^\sT\bH_{\backslash q}^{-1}\mathbf{1}_p| \prec \tau_1^{-1}$, and $\|\bH_{\setminus q}\|_\op
\prec 1$. Thus, for $\tau_1 \geq d^{-1/2}$,
$|\E_q[\gamma_q(\bz_q^\RF)] - \E_q[\gamma_q(\bz_q^\sPG)]| \prec d^{-1/2}$, so
\[|\gamma_q(\bz_q^\RF)-\gamma_q^\text{const}| \prec d^{-1/2},
\qquad |\gamma_q(\bz_q^\sPG)-\gamma_q^\text{const}| \prec d^{-1/2}.\]

Together with the Lipschitz bound for $\gamma \mapsto \cM_y(z;\gamma)$ from
Lemma \ref{lemma:MathematicalExpressionLipschitzInBrAndGammaKpure}, this shows
\begin{align}
\left|\mathcal{M}_{y_q^\RF}\left(\langle\bz_q^\RF,\hat\btheta_{\backslash
q}\rangle;\gamma_q(\bz_q^\RF)\right)
-\mathcal{M}_{y_q^\RF}\left(\langle\bz_q^\RF,\hat\btheta_{\backslash
q}\rangle;\gamma_q^\text{const}\right)\right| &\prec d^{-1/2},
\label{eq:moreauapprox1}\\
\left|\mathcal{M}_{y_q^\sPG}\left(\langle\bz_q^\sPG,
\hat\btheta_{\backslash q}\rangle;\gamma_q(\bz_q^\sPG)\right)
-\mathcal{M}_{y_q^\sPG}\left(\langle\bz_q^\sPG,
\hat\btheta_{\backslash q}\rangle;\gamma_q^\text{const}\right)\right| &\prec
d^{-1/2}.\label{eq:moreauapprox2}
\end{align}
Finally, recall that $\bz_q^\RF=\sigma( \bW\bx_q)$ and
$y_q^\RF=\eta(\bx_{q,S},\bbeta_2^\sT \bh_2(\bx_q),\ldots,\bbeta_{D'}^\sT
\bh_{D'}(\bx_q),\eps_q)$. By the Lipschitz bound for $(y,z) \mapsto
\cM_y(z;\gamma_q^\text{const})$ from Lemma
\ref{lemma:MathematicalExpressionLipschitzInBrAndGammaKpure} and the Lipschitz
assumption for $\eta(\cdot)$ in Assumption \ref{assumption:target}, observe
that
\[\mathcal{M}_{y_q^\RF}\Big(\langle\bz_q^\RF,\btheta\rangle;
\gamma_q^\text{const}\Big)
=\varphi_{\eps_q}(\bx_{q,S},\bbeta_2^\sT \bh_2(\bx_q),
\ldots,\bbeta_D^\sT \bh_D(\bx_q),\btheta^\sT \sigma(\bW\bx_q))\]
for a (random, depending on $\eps_q$) function $\varphi_{\eps_q}(\cdot)$ that
is $C$-Lipschitz for a constant $C>0$.
Then Theorem
\ref{theorem:RecallBoundedLipschitzFunctionSupErgetisotropic} implies that for
any $K>0$,
\[\sup_{\btheta \in \bTheta_\bW^\sPG(K)}
\left|\mathcal{M}_{y_q^\RF}\Big(\langle\bz_q^\RF,\btheta\rangle;
\gamma_q^\text{const}\Big)-\mathcal{M}_{y_q^\sPG}\Big(\langle\bz_q^\sPG,\btheta\rangle;
\gamma_q^\text{const}\Big)\right|
\prec d^{-c}\]
for some constant $c>0$. Since Lemma \ref{lemma:optinThetaPG} ensures that
$\hat\btheta_{\backslash q} \in \bTheta_{\bW}^{\sPG}(K)$ with probability
$1-d^{-C}$ for any $C>0$ some $K \equiv K(C)>0$, this implies
\begin{equation}\label{eq:moreauswap}
\left|\mathcal{M}_{y_q^\RF}\Big(\langle\bz_q^\RF,\hat\btheta_{\setminus
q}\rangle;
\gamma_q^\text{const}\Big)-\mathcal{M}_{y_q^\sPG}\Big(\langle\bz_q^\sPG,\hat\btheta_{\setminus
q}\rangle;\gamma_q^\text{const}\Big)\right| \prec d^{-c}.
\end{equation}
Combining \eqref{eq:moreauapprox1}, \eqref{eq:moreauapprox2}, and
\eqref{eq:moreauswap} shows that for any $C_0>0$, there exist $K,c>0$ for which
\[\cE=\{|\mathbb{E}_q[\Psi_q(\bz_q^\RF,y_q^\RF)-\Psi_q(\bz_q^\sPG,y_q^\sPG)]|
\leq n^{-1}d^{-c}\} \text{ satisfies } \P[\cE] \geq 1-d^{-C_0}.\]
The same arguments as leading to \eqref{eq:Ecomplementbound} show that
$\E[|\Psi_q(\bz_q^\RF,y_q^\RF)-\Psi_q(\bz_q^\sPG,y_q^\sPG)|\1\{\cE^c\}]
\leq n^{-1}d^{-c}$ for a sufficiently large choice of $C_0>0$, concluding
the proof.
\end{proof}

\subsection{Proof of Theorem \ref{sec:main-conditions-lindeberg-swap-framework}}
\label{app:putting_things_together_phase_i_lindeberg}

\begin{proof}[Proof of Theorem \ref{sec:main-conditions-lindeberg-swap-framework}] For brevity, denote by $\bt_q=(\bz_q,y_q)$ the $q$-th data point.  
We interpolate between the two minimum risks by sequentially swapping one sample at a time between the RF and PGE models.  
By the standard Lindeberg telescoping argument,
\[
\begin{aligned}
    &~ \left| \E \Big[ \varphi \left( \hcR_{n,p}^* (\btau,\bZ^\RF,\boldf^\RF) \right) \Big] - \E \Big[ \varphi \left( \hcR_{n,p}^* (\btau, \bZ^\sPG,\boldf^\sPG) \right) \Big] \right| \\\leq&~ \sum_{q=1}^n \left|\E \big[ \varphi \big( \Phi_{q} ( \bt_q^\RF) \big) \big] - \E \big[ \varphi \big( \Phi_{q} ( \bt_q^\sPG) \big) \big]  \right|.
\end{aligned}
\]
For each $q$, we perform a Taylor expansion of $\varphi(\Phi_q)$ around the leave-one-out (LOO) risk $\Phi_{\setminus q}$:
\[
\varphi(\Phi_{ q} (\bt_q) ) = \varphi(\Phi_{\setminus q}) + \varphi' (\Phi_{\setminus q}) ( \Phi_{q}(\bt_q) - \Phi_{\setminus q}) + \frac{1}{2} \varphi '' (\zeta_q) ( \Phi_{ q}(\bt_q) - \Phi_{\setminus q})^2,
\]
for some $\zeta_q$ between $\Phi_q(\bt_q)$ and $\Phi_{\setminus q}$.
Applying this expansion to both $ \{\bt_q^\RF, \bt_q^\sPG \}$, and taking their difference, we obtain
\begin{equation}\label{eq:lindeberg_i_taylor_swap}
\begin{aligned}
    \Big|\E \big[ \varphi \big( \Phi_{q} ( \bt_q^\RF) \big) \big] - \E \big[ \varphi \big( \Phi_{q} ( \bt_q^\sPG) \big) &\big] \Big| \leq \| \varphi ' \|_\infty \E \left[\left| \E_{q} [  \Phi_{q} ( \bt_q^\RF) -  \Phi_{q} ( \bt_q^\sPG)] \right|\right]\\
    & + \frac{\| \varphi ''\|_\infty}{2} \left\{ \E [(\Phi_{ q} ( \bt_q^\RF) - \Phi_{\setminus q} )^2 ] + \E [(\Phi_{q} ( \bt_q^\sPG) - \Phi_{\setminus q} )^2 ]\right\},
\end{aligned}
\end{equation}
where $\E_q$ is the expectation with respect to the $q$-th sample $\bt_q$. For the first order term, we introduce the quadratic surrogate problem $\Psi_q$ and decompose
\[
\begin{aligned}
    \left| \E_{q} [  \Phi_{ q} ( \bt_q^\RF) -  \Phi_{ q} ( \bt_q^\sPG)] \right| \leq&~ \E_{q} [ |\Phi_{q} ( \bt_q^\RF) - \Psi_q ( \bt_q^\RF)| ] + \E_{q} [ |\Phi_{ q} ( \bt_q^\sPG) - \Psi_q ( \bt_q^\sPG)| ] \\
    &~+ \left| \E_q \left[\Psi_q ( \bt_q^\RF) - \Psi_q ( \bt_q^\sPG) \right] \right|.
\end{aligned}
\]
Similarly, for the second order  term,
\[
\E \left[\left(\Phi_{ q} (\bt_q) - \Phi_{\setminus q} \right)^2 \right] \leq 2 \E \left[\left(\Phi_{ q}  (\bt_q) - \Psi_{ q}  (\bt_q) \right)^2 \right] + 2\E \left[\left(\Psi_{q}  (\bt_q) - \Phi_{\setminus q}  \right)^2 \right].
\]
By Lemma \ref{lemma:surrogateLOOauxcompare}, for all $q \in [n]$ and $\bt_q \in \{ \bt^\RF_q, \bt^\sPG_q \}$,
\[
\E \left[\left(\Psi_{q}  (\bt_q) - \Phi_{\setminus q}  \right)^2 \right] \leq \frac{(\log d)^K}{n^2}, \qquad \E_{q} [ |\Phi_{q} ( \bt_q) - \Psi_q ( \bt_q)| ] 
\leq \frac{(\log d)^K}{n^{3/2} \tau_1^{9/2}},
\]
and by Lemma \ref{lemma:ExpectedValueDifferencePsiProjection},
\[
\E \left| \E_q \left[\Psi_q ( \bt_q^\RF) - \Psi_q ( \bt_q^\sPG) \right] \right| \leq \frac{1}{n d^c}.
\]
Substituting these bounds in the previous displays and combining them in \eqref{eq:lindeberg_i_taylor_swap}, we get
\[
  \left|\E \big[ \varphi \big( \Phi_{q} ( \bt_q^\RF) \big) \big] - \E \big[ \varphi \big( \Phi_{q} ( \bt_q^\sPG) \big) \big] \right| \leq \frac{1}{n d^{c'} \tau_1^{9/2}}.
\]
Summing over $q=1,\dots,n$ establishes the desired bound.
\end{proof}

\newpage

\section{Lindeberg Phase II: Swapping   Second-Order Chaos}
\label{sec:lindeberg-phase-ii}

We now consider the second phase of the Lindeberg swapping argument, which interpolates between the Partial Gaussian Equivalent (PGE) model and the Conditional Gaussian Equivalent (CGE) model, by replacing the degree-2 Hermite features that are not supported on $\bx_S$ with independent Gaussian noise.
The proof of Theorem~\ref{sec:main-conditions-lindeberg-swap-framework2} follows an identical outline as the first Lindeberg swapping, including the definitions of the leave-one-out, augmented, and surrogate objectives, as well as the technical results (Lemmas~\ref{lemma:sublevelgeometry}–\ref{lemma:ExpectedValueDifferencePsiProjection}). For brevity, we omit this redundant exposition.

We therefore focus on the key distinction between the two phases, which is proving the inclusion of the minimizers in $\bTheta^{\sCG}_\bW \subseteq \bTheta^{\sPG}_\bW$.

\begin{theorem}\label{thm:inclusion_theta_CGE}
    For any constant $C>0$, there exist constants $K,\eps,K_\Gamma>0$ such that
with probability at least $1-d^{-C}$, the following holds. For each $q=1,\ldots,n$, choice of
$(\tilde \bz_q,\tilde y_q) \in \{(\bz_q^\sPG,y_q^\sPG),(\bz_q^\sCG,y_q^\sCG)\}$,
and $\hcR$ given by either $\hcR_{\setminus q}$ or $\hcR_{\cup q}$, there exists
a unique minimizer $\hat\btheta$ of $\hcR$, which furthermore satisfies
\[\hat\btheta \in \bTheta_\bW^{\sCG}(\eps,K) \cap \widehat S_q(K).\]
\end{theorem}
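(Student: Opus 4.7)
The inclusion $\hat\btheta \in \bTheta_\bW^{\sPG}(K)\cap \widehat S_q(K)$ is obtained by rerunning the Phase~I argument of Lemma~\ref{lemma:optinThetaPG} essentially verbatim: the PGE and CGE features share identical mean, degree-$1$, and degree-$2$ Hermite components, so the calibrated-growth and coordinate-wise KKT reasoning of that lemma, together with the sub-level-set geometry of Lemma~\ref{lemma:sublevelgeometry}, applies unchanged to the LOO/augmented objectives built from mixed PGE--CGE data. The new content of Theorem~\ref{thm:inclusion_theta_CGE} is therefore the two operator- and Frobenius-norm constraints defining $\bTheta_\bW^\sCG(\eps,K)$. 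Because $s=|S|$ is a fixed constant, $\|\bW_S^\sT \bD_{\hat\btheta}\bW_{\setminus S}\|_F \leq \sqrt{s}\,\|\bW_S^\sT \bD_{\hat\btheta}\bW_{\setminus S}\|_\op$, so both constraints reduce to bounding bilinear forms $\bu^\sT \bW_\bullet^\sT \bD_{\hat\btheta}\bW_{\bullet'}\bv$ uniformly over unit vectors $\bu$ and $\bv$ on spheres of the appropriate dimensions. The hard case is $\bW_\bullet = \bW_{\bullet'} = \bW_{\setminus S}$, on which I focus.

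For each direction $\bv \in \S^{d-s-1}$, the plan is to compare $\hat\btheta$ to the leave-one-direction-out (LODO) minimizer
\[
\check\btheta_{-\bv} = \argmin_{\btheta}\Big\{\widehat{\cR}_{n,p}(\btheta;\bZ_{-\bv},\boldf_{-\bv}) + \btau \cdot \bGamma^{\bW_{-\bv}}(\btheta) + d^{1/8}\|\btheta\|_\infty\Big\},
\]
built from data and weights projected orthogonally to $\bv$. Two facts will be essential. First, a coordinate-wise KKT calculation in the spirit of Step~1 in the proof of Lemma~\ref{lemma:optinThetaPG}, but with the subgradient of the added $d^{1/8}\|\btheta\|_\infty$ regularizer now providing the dominant restoring term, will upgrade the $\ell_\infty$ control from $\|\hat\btheta\|_\infty \prec d^{-1/4}$ to $\|\check\btheta_{-\bv}\|_\infty \prec d^{-1/8}$, uniformly over $\bv$. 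Second, by construction $\check\btheta_{-\bv}$ is a measurable function only of the projections $(\bI-\bv\bv^\sT)\bw_j$ and $(\bI-\bv\bv^\sT)\bx_i$, and is therefore independent of the $\bv$-components $\{\bw_j^\sT \bv\}_{j\leq p}$. Strong convexity of the empirical risk on $\widehat S_q(K)$, combined with a Lipschitz comparison between the original and LODO objectives (whose difference is a rank-one perturbation of the data plus the $d^{1/8}\|\cdot\|_\infty$ penalty), will then give $\|\hat\btheta - \check\btheta_{-\bv}\|_2 \prec d^{-c}$ for some $c>0$, uniformly in $\bv$.

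With these two ingredients I decompose
\[
\bv^\sT \bW_{\setminus S}^\sT \bD_{\hat\btheta}\bW_{\setminus S}\bv
= \underbrace{\sum_{j=1}^p \check\theta_{-\bv,j}\,(\bw_{j,\setminus S}^\sT \bv)^2}_{(\mathrm{I})}
+ \underbrace{\sum_{j=1}^p (\hat\theta_j - \check\theta_{-\bv,j})\,(\bw_{j,\setminus S}^\sT \bv)^2}_{(\mathrm{II})}.
\]
Conditional on $\check\btheta_{-\bv}$, the summands of $(\mathrm{I})$ are independent sub-exponential variables with common mean $1/d$; the coefficients satisfy $\|\check\btheta_{-\bv}\|_\infty \prec d^{-1/8}$ and $\|\check\btheta_{-\bv}\|_2 \prec 1$, and the contribution of the mean is $(\bones_p^\sT \check\btheta_{-\bv})/d \prec d^{-1}$ by the $\bTheta^\sPG$ control. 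A Bernstein inequality sharpened by the small $\ell_\infty$ norm then yields $|(\mathrm I)| \prec d^{-\eps}$ for each fixed $\bv$ with failure probability $e^{-d^{c'}}$, for some $c'>0$. The correction $(\mathrm{II})$ is bounded by Cauchy--Schwarz using $\|\hat\btheta - \check\btheta_{-\bv}\|_2 \prec d^{-c}$ and the deterministic estimate $\sum_j (\bw_{j,\setminus S}^\sT \bv)^4 \prec 1$ from hypercontractivity on the sphere.

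The main obstacle lies in passing from a fixed $\bv$ to a uniform bound over $\S^{d-s-1}$: a standard net of the sphere has cardinality exponential in $d$, so the per-direction concentration must dominate this entropy. The choice of the regularization exponent $1/8$ in $\cP_{n,-\bv}$ is calibrated precisely for this tradeoff, being small enough that the induced $\|\check\btheta_{-\bv}\|_\infty$ bound yields a sub-exponential tail of sufficient order for $(\mathrm{I})$, while not so aggressive as to spoil the strong-convexity comparison $\|\hat\btheta - \check\btheta_{-\bv}\|_2$. To extend the bound from the net to all of $\S^{d-s-1}$ I will use the deterministic Lipschitz continuity of $\bv \mapsto \bv^\sT \bW_{\setminus S}^\sT \bD_{\check\btheta_{-\bv}}\bW_{\setminus S}\bv$, together with a regularity estimate on the map $\bv \mapsto \check\btheta_{-\bv}$ obtained from strong convexity. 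The Frobenius constraint on $\bW_S^\sT \bD_{\hat\btheta}\bW_{\setminus S}$ then follows by the same LODO machinery applied to the bilinear form $\bu^\sT \bW_S^\sT \bD_{\hat\btheta}\bW_{\setminus S}\bv$, where the additional net over $\bu \in \S^{s-1}$ is cheap because $s$ is a fixed constant.
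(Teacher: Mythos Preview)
Your proposal captures the paper's strategy accurately: the LODO construction with the $d^{1/8}\|\cdot\|_\infty$ regularizer, the decomposition $(\mathrm{I})+(\mathrm{II})$, Bernstein on $(\mathrm{I})$ exploiting independence of $\check\btheta_{-\bv}$ from $\{\<\bw_j,\bv\>\}_j$, and Cauchy--Schwarz on $(\mathrm{II})$. Three points of execution differ from the paper and are worth noting.

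First, the paper does not compare $\hat\btheta$ to $\check\btheta_{-\bv}$ directly. It introduces the intermediate minimizer $\check\btheta=\argmin\cP_n$ (same data, but with the $\kappa_d\|\cdot\|_\infty$ penalty) and bounds $\|\hat\btheta-\check\btheta\|_2$ and $\|\check\btheta-\check\btheta_{-\bv}\|_2$ separately. This decouples the two perturbations: the first comparison handles only the added penalty (and uses $\|\hat\btheta\|_\infty\prec d^{-1/4}$ to show $\cP_n(\hat\btheta)-\cP_n(\check\btheta)\prec d^{-1/8}$), while the second handles only the direction removal and can exploit the subgradient optimality condition for $\check\btheta_{-\bv}$ against $\cP_n(\check\btheta)$, both objectives sharing the $\kappa_d\|\cdot\|_\infty$ term so that it cancels cleanly via $\bg_{-\bv}^\sT\check\btheta_{-\bv}=\|\check\btheta_{-\bv}\|_\infty$. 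Your direct route would have to simultaneously absorb both perturbations, and the penalty value $\kappa_d\|\check\btheta_{-\bv}\|_\infty$ is only $\prec 1$, not small, so the Lipschitz-comparison argument you describe does not immediately close.

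Second, the bound $\|\check\btheta_{-\bv}\|_\infty\prec d^{-1/8}$ is simpler than a KKT argument: it follows from $\kappa_d\|\check\btheta_{-\bv}\|_\infty\le \cP_{n,-\bv}(\check\btheta_{-\bv})\le \cP_{n,-\bv}(\bzero)\prec 1$.

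Third, no regularity of $\bv\mapsto\check\btheta_{-\bv}$ is needed to pass from the net to the sphere. The matrix $\bW_{\setminus S}^\sT\bD_{\hat\btheta}\bW_{\setminus S}$ is \emph{fixed}, so the standard $1/4$-net bound $\|M\|_\op\le 2\max_{\bv\in\mathcal{N}}|\bv^\sT M\bv|$ applies once you have controlled the quadratic form on the net; the $\bv$-dependence of $\check\btheta_{-\bv}$ enters only inside the net-level bound and need not be tracked further.
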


Lemma \ref{lemma:optinThetaPG} (now with CGE data) already showed that $\hbtheta \in \bTheta_\bW^{\sPG}(K) \cap \widehat S_q(K)$ with this probability. Thus, it remains to show that there exists a constant $\eps >0$ such that 
\[
\bigl\|\bW_{\setminus S}^{\sT}\bD_{\hbtheta}\,
                 \bW_{\setminus S}\bigr\|_{\op}
              \leq d^{-\eps},\quad
              \bigl\|\bW_S^{\sT}\bD_{\hbtheta}\,
                 \bW_{\setminus S}\bigr\|_{F}
              \leq d^{-\eps}.
\]

\subsection{Tensor formulation and regularized LODO risk}
\label{subsec:phaseII-tensor}

It will be convenient to introduce a tensor-based formulation for the degree-2 chaos (see Section~\ref{section:HermitePolynomials}). For $k=2$, the symmetric Hermite tensor is given by $\bH_2 (\bx) = \frac{1}{\sqrt{2}} (\bx^{\otimes 2} - \bI_d) \in (\R^{d})^{\odot 2}$.  Define the order-$3$ weight tensor $\cV_2 \in \R^{p \times d \times d}$ whose $j$-th slice is $(\cV_2)_j := \bw_j^{\otimes 2} \in (\R^{d})^{\odot 2}$. The tensor contraction of $\cV_2$ with $ \bH_2 (\bx)$ is then
\[
\cV_2 \bH_2 (\bx) = ( \< (\cV_2)_j ,\bH_2 (\bx)\>)_{j \in [p]} = (\He_2 (\<\bw_j,\bx\>))_{j \in [p]} \in \R^p .
\]
Denote by $\btheta^\sT \cV_2$ the linear combination of the slices with weights $\btheta$
\[
\btheta^\sT \cV_2 = \sum_{j = 1}^p \theta_j (\cV_2)_{j} =  \sum_{j = 1}^p \theta_j \bw_j \bw_j^\sT = \bW^\sT \bD_\btheta \bW, \qquad \bD_\btheta := \diag (\theta_j)_{j \in [p]}.
\]
Let $S = \{1,\ldots , s\} \subset [d]$ be the indices of the signal support. For any order-$2$ tensor $\cA$, let $\proj_S \cA$ be the projection onto the entries whose indices both lie in $S$, and define $\proj_{S,\perp} \cA = \cA - \proj_S \cA$. Then
\[
\btheta^\sT \proj_S \cV_2 =  \begin{pmatrix}
    \bW_S^\sT \bD_\btheta \bW_S & \bzero \\
    \bzero & \bzero
\end{pmatrix}, \qquad \btheta^\sT \proj_{S,\perp} \cV_2 =  \begin{pmatrix}
   \bzero &  \bW_{S}^\sT \bD_\btheta \bW_{\setminus S} \\
    \bW_{\setminus S}^\sT \bD_\btheta \bW_{S} &  \bW_{\setminus S}^\sT \bD_\btheta \bW_{\setminus S}.
\end{pmatrix}
\]

For convenience, let $\cV_1 := \bW \in \R^{p \times d}$, and define the full design tensor $\cV$ by concatenation:
\[
  \mathcal{V}
  :=
  \bigl[
      \mu_{0}\1_{p}\;
      \bigm|\;
      \mu_{1}\mathcal{V}_{1}\;
      \bigm|\;
      \mu_{2}\mathcal{V}_{2}\;
      \bigm|\;
      \mu_{>2}\id_{p}
  \bigr].
\]
We similarly define the stacked chaos  covariates $\bh_i$ for the PGE and CGE models
\[
\bh_i^\sPG := \begin{pmatrix}
    1 \\
    \bx_i \\
    \bH_2 (\bx_i) \\
   \bg_{i*}
\end{pmatrix}, \qquad \bh_i^\sCG := \begin{pmatrix}
    1 \\
    \bx_i \\
    \proj_{S}\bH_2 (\bx_i) + \proj_{S,\perp} \bG_{i2} \\
    \bg_{i*}
\end{pmatrix},
\]
where $\bG_{i2}  = \iota (\bg_{i2})$ and $\iota(\cdot)$ is the isometry described in Section \ref{section:HermitePolynomials}. With these notations, the feature vectors can be expressed as
\[
\begin{aligned}
\bz_i^\sPG  =&~ \cV \bh_i^\sPG = \mu_0 \1_p + \mu_1\cV_1 \bx_i + \mu_2 \cV_2 \bH_2 (\bx_i) + \mu_{>2} \bg_{i*}, \\
\bz_i^\sCG  =&~ \cV \bh_i^\sCG = \mu_0 \1_p + \mu_1\cV_1 \bx_i + \mu_2 \cV_2 \proj_{S}\bH_2 (\bx_i) + \mu_2 \cV_2 \proj_{S,\perp} \bG_{i2} + \mu_{>2} \bg_{i*}, \\
\end{aligned}
\]
and the corresponding responses are
\[
\begin{aligned}
y_i^\sPG=&~\eta\big(f_i^\sPG;\eps_i), \qquad f_i^\sPG := \{\bx_{iS},\bbeta_2^\sT \bh_2(\bx_i),
\bbeta_3^\sT \bg_{i3},\ldots,\bbeta_{D'}^\sT \bg_{iD'}\}, \\
y_i^\sCG=&~\eta\big(f_i^\sCG;\eps_i), \qquad f_i^\sCG := \{\bx_{iS},\bbeta_2^\sT \bg_{i2},
\bbeta_3^\sT \bg_{i3},\ldots,\bbeta_{D'}^\sT \bg_{iD'}\},
\end{aligned}
\]
where $\bbeta_2^\sT \bh_2(\bx_i) = ( \< \bbeta_{2i}, \bh_2 (\bx_i) \>)_{i \in [s_2]} = \{ \< \bB_{2i}, \bH_2 (\bx_i) \>\}_{i \in [s_2]}$ with $\bB_{2i} := \iota (\bbeta_{2i}) \in \R^{d \times d}$. 

Finally, let $\hbtheta$ be the minimizer of the empirical risk $\widehat{\cR}_{n,p} (\btheta;\btau,\bZ,\boldf)$, where $(\bZ,\boldf)$ is a data matrix involving both PGE and CGE samples, as appears in the Lindeberg interpolation.

\paragraph{Leave-One-Direction-Out (LODO) objectives.}
Our goal is to bound 
\[
\| \bW_{\setminus S}^\sT \bD_{\hbtheta} \bW_{\setminus S} \|_\op = \sup_{\bv \in \S^{d-1}, \; \proj_S \bv = 0} \;\; \sum_{j = 1}^p \hat\theta_j \< \bv, \bw_j \>^2,
\]
where $\proj_S \bv$ is the projection onto the support coordinates $S = \{1,\ldots, s\}$. Decompose
\[
\sum_{j = 1}^p \hat\theta_j \< \bv, \bw_j \>^2 = \sum_{j = 1}^p \hat\theta_j (  \< \bv, \bw_j \>^2 - d^{-1} ) + d^{-1} \< \hat\btheta , \1_p\>,
\]
where the second term is negligible, using that $\< \hat\btheta , \1_p\> \prec 1$ by Lemma \ref{lemma:sublevelgeometry}. For the first term, if $\hbtheta$ were independent of $(\< \bv, \bw_j \>)_{j \in [p]}$, then, over the randomness in $\bW$, 
\begin{equation}\label{eq:thought-exp-bound}
\sum_{j = 1}^p \hat\theta_j (  \< \bv, \bw_j \>^2 - d^{-1} ) \prec \| \hat\btheta \|_\infty \prec d^{-1/4},
\end{equation}
since $(  \< \bv, \bw_j \>^2 - d^{-1} )$ are i.i.d.~with variance $d^{-2}$, and $ \hat\btheta \in \bTheta^\sPG_\bW$ with high probability. 

In reality, $\hbtheta$ is not independent of the $\{\<\bv,\bw_j\>\}_{j \in [p]}$, though we expect it to be nearly so. To formalize this, we compare $\hbtheta$ with $\hbtheta_{-\bv}$, the minimizer of a \emph{leave-one-direction-out} (LODO) objective, obtained by removing the direction $\bv$ from the weights matrix $\bW$ (and normalizing the obtained weights). Thus $\hbtheta_{-\bv}$ is independent of $\{\<\bv,\bw_j\>\}_{j \in [p]}$, and we can apply \eqref{eq:thought-exp-bound}. If we can bound $\| \hbtheta - \hbtheta_{-\bv} \|_2 \prec d^{-\eps}$, $\<\hbtheta_{-\bv},\1_p \> \prec 1$, and $\| \hbtheta_{-\bv}\|_\infty \prec d^{-\eps}$ then
\[
\sum_{j = 1}^p \hat\theta_j \< \bv, \bw_j \>^2 \prec d^{-1}|\<\hbtheta_{-\bv},\1_p \>| + d^{-1}|\<\hbtheta,\1_p \>| + \| \hbtheta_{-\bv} - \hbtheta\|_2 + \| \hbtheta_{-\bv}\|_\infty \prec d^{-\eps}.
\]
The main difficulty lies in controlling these quantities \emph{uniformly} over all $\bv \in \S^{d-1}$, $\proj_S\bv=0$, in particular $\| \hbtheta_{-\bv}\|_\infty$. To achieve this, we note that $\|\hbtheta\|_\infty \prec d^{-1/4}$. Hence, adding a regularization term $\kappa_d\|\cdot\|_\infty$ to the LODO objective should not significantly alter the minimizer provided $\kappa_d \ll d^{1/4}$. Choosing $\kappa_d = d^{1/8}$ ensures uniform control $\| \hbtheta_{- \bv}\|_\infty \leq (\log d)^K d^{-1/8}$ over all $\bv$.

Thus, we introduce the following regularized objective and regularized LODO objective:
\begin{align}
  \cP_{n}(\btheta)
  &:= \frac1n\sum_{i=1}^{n}
        \ell\bigl(
          y_{i},\<
          \btheta, \bz_i\> 
        \bigr)
      +\frac{\lambda}{2}\|\btheta\|_{2}^{2}
      +\btau\cdot\bGamma^{\bW}(\btheta)
      +\kappa_{d}\|\btheta\|_{\infty},
      \label{eq:Pn-main}
      \\
  \cP_{n,-\bv}(\btheta)
  &:=\frac1n\sum_{i=1}^{n}
        \ell\bigl(
          y_{i,-\bv},\<\btheta, \bz_{i,-\bv}\>
        \bigr)
      +\frac{\lambda}{2}\|\btheta\|_{2}^{2}
      +\btau\cdot\Gamma^{\bW_{-\bv}}(\btheta)
      +\kappa_{d}\|\btheta\|_{\infty},
      \label{eq:Pn-v-main}
\end{align}
where $(y_{i,-\bv},\bz_{i,-\bv})$ corresponds to data where $\bx_i,\bW$, and $\bB_{2i}$ are replaced by 
\[
\bx_{i,-\bv} := (\bI - \bv\bv^\sT) \bx_i, \qquad \bW_{-\bv} := (\bw_{j,-\bv}/\| \bw_{j,-\bv}\|_2 )_{j \in [p]}, \qquad \bB_{2i,-\bv}:= (\bI - \bv\bv^\sT)\bB_{2i} (\bI - \bv\bv^\sT),
\]
respectively, with $\bw_{j,-\bv} := (\id-\bv\bv^{\sT})\bw_j$. Let $\cV_{1,-\bv}$, $\cV_{2,-\bv}$, and $\cV_{-\bv}$ be the associated weight tensors, and let $\bh_{i,-\bv}$ be the   stacked chaos with $\bx_i$ replaced by $(\bI - \bv\bv^\sT) \bx_i$,  $\bH_2(\bx_i)$ replaced by $(\id -\bv\bv^\sT)\bH_2(\bx_i)(\id-\bv\bv^\sT)$. With slight abuse of notation, we also denote $\bH_2(\bx_{i,-\bv}) := (\id -\bv\bv^\sT)\bH_2(\bx_i)(\id-\bv\bv^\sT)$.
Note that $\cV_{-\bv} \bh_{i,-\bv} = \cV_{-\bv} \bh_i $, but  $\cV_{-\bv} \bh_{i,-\bv}  \neq \cV \bh_{i,-\bv}$ because of the normalization in $\bW_{-\bv}$. We will denote $(\bz_{i,-\bv},f_{i,-\bv})$ and $y_{i,-\bv}$ the LODO data.

Consider
\[
\check{\btheta}
  \in\argmin_{\btheta}\cP_{n}(\btheta),\qquad  \check{\btheta}_{-\bv}
  \in\argmin_{\btheta}\cP_{n,-\bv}(\btheta),
\]
some arbitrary minimizers. We will show in fact that $\check{\btheta}$ and $\check{\btheta}_{-\bv}$ are unique uniformly over $\bv$ with high probability. 
Note that because the weights
\(\mathcal{V}_{-\bv}\) are independent of
\(\{\langle\bw_j,\bv\rangle\}_{j=1}^{p}\), so is
the minimizer \(\check{\btheta}_{-\bv}\).

\subsection{Uniform bounds for the LODO objective}
\label{sec:uniform_convergencefall}

In this section, $(\bz,y)$ may refer to either PGE or CGE data.
In the proofs, unless stated otherwise, we take $(\bz,y)=(\bz^\sPG,y^\sPG)$, as this case is more involved to analyze; modifications for the CGE setting will be noted where necessary.
Let $V_S=\mathrm{span}\{\be_j : j\in S\}$ denote the signal subspace, and let $\mathcal{N}_{S^\perp}$ be a fixed $1/4$-net of the unit sphere in its orthogonal complement $V_S^\perp$, with cardinality satisfying $|\mathcal{N}_{S^\perp}|\le 9^d$.
All results in this section will be established uniformly over $\bv\in\mathcal{N}_{S^\perp}$.

    \begin{lemma}\label{lem:firstlemma}
 There exist constants $c, C,d_0 > 0$, depending only on the constants in the assumptions, such that for all $d \geq d_0$, the following holds with probability at least $1-e^{-Cd}$:
\begin{align}\label{eq:letusfirst}
    \sup_{\bv \in \mathcal{N}_{S^\perp}} \frac{1}{n} \sum_{i=1}^{n} (y_i - y_{i,-\bv})^2 \leq d^{-c},
\end{align}
Moreover, for every $k \geq 0$, there exists a constant $c_k >0$ such that
\begin{equation}\label{eq:LODO_exp_f_f_bv}
    \sup_{\bv \in \mathcal{N}_{S^\perp}} \E \left[ \left\| f - f_{-\bv}\right\|_2^k\right] \leq  d^{-c_k}.
\end{equation}
\end{lemma}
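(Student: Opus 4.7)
The plan begins by reducing the bound on $\sup_{\bv}\tfrac{1}{n}\sum_i(y_i-y_{i,-\bv})^2$ to a statement about the degree-$2$ Hermite chaos. For $\bv\in\mathcal{N}_{S^\perp}$ we have $\bv\perp V_S$, so the signal coordinates $\bx_{iS}$ are preserved by $\bx_i\mapsto\bx_{i,-\bv}$. In particular, CGE samples contribute zero to the sum, since $y_i^\sCG$ depends on $\bx$ only through $\bx_S$ and independent Gaussians. For PGE samples only the degree-$2$ coordinates $\xi_{2j,i}=\langle\bB_{2j},\bH_2(\bx_i)\rangle$ (with $\bB_{2j}=\iota(\bbeta_{2j})$) differ, so by Lipschitz continuity of $\eta$ in Assumption~\ref{assumption:target}(i),
\[
(y_i-y_{i,-\bv})^2 \;\leq\; \sC_3^{\,2}\,\|f_i-f_{i,-\bv}\|_2^{\,2} \;\leq\; \sC_3^{\,2}\sum_{j=1}^{s_2}(\xi_{2j,i}-\xi_{2j,i,-\bv})^{2}.
\]

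Next I would compute $\xi_{2j,i}-\xi_{2j,i,-\bv}$ explicitly. Setting $a_i=\langle\bv,\bx_i\rangle$ and writing $\bB_{2j}\bv=\alpha_j\bv+\bu_{j,\perp}$ with $\alpha_j=\bv^\sT\bB_{2j}\bv$ and $\bu_{j,\perp}\perp\bv$, and using the paper's convention $\bH_2(\bx_{i,-\bv})=(\id-\bv\bv^\sT)\bH_2(\bx_i)(\id-\bv\bv^\sT)$, a direct calculation gives
\[
\xi_{2j,i}-\xi_{2j,i,-\bv}=\tfrac{1}{\sqrt{2}}\bigl[\alpha_j(a_i^{2}-1)+2a_i\langle\bu_{j,\perp},\bx_i\rangle\bigr].
\]
The genericity condition \eqref{eq:ass_bbeta_ki_4_th_moment}, via Lemma~\ref{lemma:admissibility_condition_example} (which is equivalent to $\|\bB_{2j}^{\,2}\|_F\leq d^{-c'}$), forces $\|\bB_{2j}\|_\op\leq d^{-c'/2}$ for some constant $c'>0$, and hence $|\alpha_j|,\|\bu_{j,\perp}\|_2\leq d^{-c'/2}$. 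Since $a_i$ and $\langle\bu_{j,\perp},\bx_i\rangle$ are independent with $\E[(a_i^2-1)^2]=2$, a short second-moment computation yields
\[
\E\bigl[(\xi_{2j,i}-\xi_{2j,i,-\bv})^{2}\bigr]=\alpha_j^{2}+2\|\bu_{j,\perp}\|_2^{2}\;\leq\;3d^{-c'}.
\]
Summing over $j$ gives $\E\|f_i-f_{i,-\bv}\|_2^{2}\leq Cd^{-c'}$, and the $k$-th moment bound \eqref{eq:LODO_exp_f_f_bv} follows (with $c_k=kc'/2$) from Gaussian hypercontractivity applied to the degree-$4$ polynomial $\|f_i-f_{i,-\bv}\|_2^{2}$, using $\|X\|_{L^{2k}}\leq C_k\|X\|_{L^{2}}$.

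For the uniform sample bound, I would view $T_i(\bv):=(y_i-y_{i,-\bv})^{2}$ as i.i.d.\ nonnegative random variables, each bounded by a degree-$4$ polynomial in $\bx_i$. Gaussian hypercontractivity gives the sub-Weibull-$1/2$ Orlicz norm $\|T_i\|_{\psi_{1/2}}\leq C\|T_i\|_{L^{2}}\leq Cd^{-c'}$ and $\mathrm{Var}(T_i)\leq Cd^{-2c'}$. A Bernstein-type inequality for i.i.d.\ sub-Weibull-$1/2$ sums then gives, for $t=\tfrac{1}{2}d^{-c}$ with $0<c<c'$,
\[
\P\Bigl(\tfrac{1}{n}\sum_i(T_i(\bv)-\E T_i(\bv))>t\Bigr)\;\leq\;2\exp\!\Bigl({-}c_0\min\bigl(nt^{2}/\mathrm{Var}(T_i),\,(nt/\|T_i\|_{\psi_{1/2}})^{1/2}\bigr)\Bigr).
\]
With $n\asymp d^{2}$, the two rates become $d^{2+2(c'-c)}$ and $d^{1+(c'-c)/2}$; the minimum is the latter, and its exponent strictly exceeds $1$ for every $c<c'$. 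A union bound over $|\mathcal{N}_{S^\perp}|\leq 9^{d}$ therefore produces an overall failure probability at most $9^{d}\cdot 2\exp({-}c_0d^{1+(c'-c)/2})\leq e^{-Cd}$ for all large $d$, which combined with the mean bound $\E T_i(\bv)\leq Cd^{-c'}$ gives $\sup_{\bv}\tfrac{1}{n}\sum_iT_i(\bv)\leq d^{-c}$ after absorbing constants into a slightly smaller $c$.

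The main technical obstacle is reconciling the heavy sub-Weibull-$1/2$ tails of degree-$4$ Gaussian polynomials with the $9^{d}$-size net. Treating $\sum_iT_i(\bv)$ as a single degree-$4$ polynomial in $nd$ Gaussians and invoking Gaussian hypercontractivity directly yields only the concentration exponent $(1+c'-c)/2$, which is below $1$ whenever the genericity constant $c'<1$ and hence insufficient to beat the union bound. The essential move is to exploit the i.i.d.\ structure of the sum via the sub-Weibull Bernstein inequality, which gains an extra factor of $\sqrt{n}=d$ over the hypercontractive rate and pushes the concentration exponent above $1$ for every $c<c'$.
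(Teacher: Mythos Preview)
Your argument is essentially correct and uses a genuinely different concentration strategy than the paper. Both proofs begin identically: reduce to the degree-$2$ chaos via Lipschitz continuity of $\eta$, compute $\Delta\xi_{2j}$ explicitly, and invoke the genericity condition to get $\|\bB_{2j}\|_\op\le d^{-c'/2}$ (your citation should be to \eqref{eq:admissibility_tensor} via Lemma~\ref{section:ProofOfLemmaGaussianChaosConvergenceToStandardGaussian}, not Lemma~\ref{lemma:admissibility_condition_example}, which is merely the uniform-on-sphere example). The moment bound \eqref{eq:LODO_exp_f_f_bv} is handled essentially the same way in both.

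For the uniform sample bound \eqref{eq:letusfirst}, the paper instead \emph{factors} the smallness deterministically: it bounds
\[
\frac{1}{n}\sum_i(\Delta\xi_{2j,i})^2 \;\le\; C\|\bB_{2j}\|_\op^2\Bigl[\Bigl\|\frac{1}{n}\sum_i(\bv^\top\bx_i)^2\bx_i\bx_i^\top\Bigr\|_\op+\frac{1}{n}\sum_i(\bv^\top\bx_i)^4+1\Bigr],
\]
and then controls the bracketed $O(1)$ random quantities uniformly over the net using only standard sub-exponential Bernstein, which already gives $e^{-Cd}$ failure probability directly. Your route keeps the small factor inside $T_i(\bv)$ and compensates by invoking a sub-Weibull-$\tfrac12$ Bernstein inequality to push the concentration exponent to $d^{1+(c'-c)/2}>d$. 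The paper's factorization is more elementary---it avoids the less-standard sub-Weibull tail bound and matches the net size without any slack---while your approach is more systematic and would transfer more readily to settings where the clean decoupling into ``deterministic small $\times$ random $O(1)$'' is unavailable.

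One minor correction: your claim that CGE samples contribute zero is not right, since the LODO also replaces $\bB_{2j}$ by $\bB_{2j,-\bv}=(\bI-\bv\bv^\top)\bB_{2j}(\bI-\bv\bv^\top)$, so $\tilde\xi_{2j,i}-\tilde\xi_{2j,i,-\bv}=\langle\bB_{2j}-\bB_{2j,-\bv},\bG_{i2}\rangle$ is generally nonzero. This is a Gaussian with variance $\le C\|\bB_{2j}\|_\op^2\le Cd^{-c'}$, so the CGE case is in fact easier and your argument goes through unchanged once this is noted.
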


    \begin{proof}
By the Lipschitz property of $\eta$ in Assumption~\ref{assumption:target}, we have
\begin{align}\label{eq:proof_reduction_sum_revised}
    \frac{1}{n} \sum_{i=1}^{n}(y_i - y_{i,-\bv})^2 \leq C \sum_{j=1}^{s_2} \frac{1}{n} \sum_{i=1}^{n} \left( \Delta\xi_{2j}(\bx_i) \right)^2,
\end{align}
where, from the form $\bH_2(\bx)=\frac{1}{\sqrt{2}}(\bx^{\otimes 2}-\bI_d)$,
\[
\begin{aligned}
    \Delta\xi_{2,j}(\bx_i) :=&~ \langle\bB_{2j}, \bH_2(\bx_i) -  \bH_2(\bx_{i,-\bv}))\rangle 
    = \frac{1}{\sqrt{2}} \left( 2(\bx_i^\sT \bB_{2j} \bv)(\bv^\sT \bx_i) - (\bv^\sT \bB_{2j} \bv)[(\bv^\sT \bx_i)^2+1] \right).
\end{aligned}
\]
Hence,
\begin{equation}\label{eq:delta_xi_2j_bound}
\begin{aligned}
    \frac{1}{n}\sum_{i=1}^n (\Delta\xi_{2j}(\bx_i))^2 
    \leq&~  4 \bv^\sT \bB_{2j} \left( \frac{1}{n}\sum_{i=1}^n (\bv^\sT \bx_i)^2 \bx_i\bx_i^\sT \right) \bB_{2j} \bv + 2(\bv^\sT \bB_{2j} \bv)^2 \left( \frac{1}{n}\sum_{i=1}^n (\bv^\sT \bx_i)^4 +1\right)\\
    \leq&~ 4 \| \bB_{2j} \|_\op^2 \left\| \frac{1}{n}\sum_{i=1}^n (\bv^\sT \bx_i)^2 \bx_i\bx_i^\sT\right\|_\op + 2\| \bB_{2j} \|_\op^2 \left[\frac{1}{n}\sum_{i=1}^n (\bv^\sT \bx_i)^4+1\right].
\end{aligned}
\end{equation}

We first bound the second term.
A standard tail bound implies that
$\P[\sum_{i=1}^n (\bx_i^\sT \bv)^4-\E(\bx_i^\sT \bv)^4 \geq t]
\leq 2e^{-c\min(t^{1/2},t^2/n)}$ for some constant $c>0$. For any $C>0$, choosing $t \asymp n \asymp d^2$ yields a constant $C'>0$ sufficiently large
such that $\sum_{i=1}^n (\bx_i^\sT \bv)^4 \leq C' n$ with probability at
least $1 - e^{-Cd}$. By union bound, with probability at least $1 - e^{-C d}$,
\[
\sup_{\bv \in \mathcal{N}_{S^\perp}} \frac{1}{n } \sum_{i = 1}^n \<\bx_i,\bv\>^4 \leq C'.
\]
Next, consider the first term in~\eqref{eq:delta_xi_2j_bound}.
We decompose it as
\begin{equation}\label{eq:first_term_2_4_op_norm}
\begin{aligned}
\left\| \frac{1}{n}\sum_{i=1}^n (\bv^\sT \bx_i)^2 \bx_i\bx_i^\sT\right\|_\op \leq&~ 2 \left\| \frac{1}{n} \sum_{i =1}^n (\bv^\sT \bx_i)^2  \bx_{i,S} \bx_{i,S}^\sT \right\|_\op + 2 \left\| \frac{1}{n} \sum_{i =1}^n (\bv^\sT \bx_i)^2  \bx_{i,\setminus S} \bx_{i, \setminus S}^\sT \right\|_\op \\
\leq&~ \frac{2}{n} \sum_{i =1}^n (\bv^\sT \bx_i)^2  \| \bx_{i,S}\|_2^2 + 4 \sup_{\bu \in \mathcal{N}_{S^\perp}} \frac{1}{n } \sum_{i = 1}^n \<\bx_i,\bv\>^2 \<\bx_i,\bu\>^2.
\end{aligned}
\end{equation}
where the last inequality uses a standard covering argument over $1/4$-nets
(see \cite[Lemma~4.4.1]{vershyninHighDimensionalProbabilityIntroduction2018}).
Note that
\[
\sup_{\bu,\bv \in \mathcal{N}_{S^\perp}} \frac{1}{n } \sum_{i = 1}^n \<\bx_i,\bv\>^2 \<\bx_i,\bu\>^2 \leq \sup_{\bv \in \mathcal{N}_{S^\perp}} \frac{1}{n } \sum_{i = 1}^n \<\bx_i,\bv\>^4.
\]
Hence, by the same argument as above, the right-hand side of~\eqref{eq:first_term_2_4_op_norm} is bounded by $C'$ with probability at least $1 - e^{-Cd}$. Combining these estimates in \eqref{eq:delta_xi_2j_bound} and using that $\| \bB_{2j} \|_\op^2 \leq \| \bB_{2j}^2 \|_F \leq d^{-c}$ by the genericity condition in \eqref{eq:admissibility_tensor}, we obtain the desired bound \eqref{eq:letusfirst}. For the second claim, note that
\[
\E [\| f - f_{-\bv} \|_2^k] = C_k\sum_{j = 1}^{s_2} \| \bB_{2j} - (\bI- \bv\bv^\sT) \bB_{2j} (\bI- \bv\bv^\sT)\|_F^k \leq C'_k \sum_{j = 1}^{s_2} \|\bB_{2j}\|_\op^k,
\]
from which~\eqref{eq:LODO_exp_f_f_bv} follows immediately.
\end{proof}

The following lemma plays an analogous role as Lemma \ref{lemma:sublevelgeometry}.

\begin{lemma}\label{lemma:ConcentrationOfResiduallemmaAAAAAAAuniform} For any constants $C,K>0$, there exist $K'>0$ such that for all $K_\Gamma >K'$
(with $K_\Gamma$ defining $\bGamma^\bW(\btheta)$), with probability at least $1-d^{-C}$, the following holds:
\begin{itemize}
    \item[(a)] For $\btheta \in \{\hbtheta, \check{\btheta} \}$, the minimizers of $\widehat{\cR}_{n,p}$ and $\cP_{n}$ respectively, 
    \begin{equation}
\label{eq:uniform-bounds-theta-hat-check}
    \| \btheta \|_2, \;\;| \mu_0 \1_p^\sT \btheta |,\;\;  \| \mu_1 \btheta^\sT \cV_1 \|_2 , \;\; \| \mu_2 \btheta^\sT \cV_2 \|_F \; \; \leq (\log d)^{K'}.
    \end{equation}
    Furthermore for all $\bv \in \mathcal{N}_{S^\perp}$, the minimizer $\check{\btheta}_{-\bv}$ of $\cP_{n,-\bv}$ satisfies
\begin{equation}
\label{eq:uniform-bounds-theta-check-minus-v}
     \| \check{\btheta}_{-\bv}\|_2, \;\;| \mu_0 \1_p^\sT \check{\btheta}_{-\bv} |,\;\;  \| \mu_1 \check{\btheta}_{-\bv}^\sT \cV_{1,-\bv} \|_2 , \;\; \| \mu_2 \check{\btheta}_{-\bv}^\sT \cV_{2,-\bv} \|_F \; \; \leq (\log d)^{K'}.
\end{equation}

\item[(b)] We have $\bGamma^\bW (\btheta) = ( \|\bV_{+}^\sT \btheta\|_2^2, L_{\bW}(\btheta))$ for $\btheta \in \{\hbtheta, \check{\btheta} \}$. Similarly, for all $\bv \in \mathcal{N}_{S^\perp}$, 
\[
\bGamma^{\bW_{-\bv}} ( \check{\btheta}_{-\bv}) = \left( (\1_p^\sT  \check{\btheta}_{-\bv})^2 + \| \bW_{-\bv}^\sT  \check{\btheta}_{-\bv}\|_2^2, L_{\bW_{-\bv}}( \check{\btheta}_{-\bv})\right). 
\]
Furthermore, denoting $\bH_{-\bv} (\btheta) = \nabla^2 (\cP_{n,-\bv} (\btheta) - \kappa_d \| \btheta\|_\infty)$, we have
\[
 \bH_{-\bv} ( \check{\btheta}_{-\bv}) \succeq \frac{\lambda}{2} \bI, \qquad \| [\1_p \; \bW_{-\bv}] \bH_{-\bv} ( \check{\btheta}_{-\bv})  [\1_p \; \bW_{-\bv}]^\sT \|_\op \leq \tau_1^{-1}.
\]
\item[(c)] We have
\[
\| \hbtheta \|_\infty \leq \frac{(\log d)^{K'}}{d^{1/4}}, \quad \| \check{\btheta} \|_\infty \leq \frac{(\log d)^{K'}}{d^{1/4}}, \quad \sup_{\bv \in \mathcal{N}_{S^\perp}} \| \check{\btheta}_{-\bv} \|_\infty \leq \frac{(\log d)^{K'}}{d^{1/8}}.
\]
\end{itemize}

\end{lemma}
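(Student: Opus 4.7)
The proof of Lemma~\ref{lemma:ConcentrationOfResiduallemmaAAAAAAAuniform} parallels the combined arguments of Lemmas~\ref{lemma:sublevelgeometry} and~\ref{lemma:optinThetaPG} from Phase~I, but with two new features: (i) an additional bound on $\|\mu_2 \btheta^\sT \cV_2\|_F$ in part~(a), and (ii) the need for uniform control over $\bv \in \cN_{S^\perp}$, which demands $e^{-cd}$-type failure probabilities rather than the polynomial $d^{-C}$ rates used previously. Throughout I will track carefully the dependence on $\bv$ to enable a union bound over the $9^d$-element net.

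For part~(a), the plan is to first place each minimizer in a sub-level set $\widehat S(K)$ by comparing its objective value with $\cR(\bzero) \prec 1$; for $\check\btheta_{-\bv}$ this requires showing $\cP_{n,-\bv}(\bzero) \prec 1$ uniformly over $\bv$, which follows from Lemma~\ref{lem:firstlemma}. The ridge term then yields $\|\btheta\|_2 \prec 1$. For $|\mu_0 \1_p^\sT \btheta|$ and $\|\mu_1 \btheta^\sT \bW\|_2$, I adapt the prediction-concentration strategy of Lemma~\ref{lemma:sublevelgeometry}(a): the calibrated-growth condition forces $|\<\btheta,\bz_i\>| \prec 1$ on a constant fraction of samples $\widehat\cI$; isolating the degree-$0$ and degree-$1$ contributions via Corollary~\ref{corollary:OperatorNormF2cdecompose} and Lemma~\ref{lemma:OperatorNormZkFk} gives $\|(\mu_0\1_p^\sT\btheta,\,\mu_1\btheta^\sT\bW)\bA(\widehat\cI)\|_2 \prec d$, and the anti-concentration bound $\sigma_{\min}(\bA(\widehat\cI)) \succ \sqrt{n}$ concludes. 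For the LODO version, since $\bv \in V_S^\perp$ leaves $\bx_{iS}$ unchanged, the analogous anti-concentration for $\bA_{-\bv}(\widehat\cI) = [\1;(\id-\bv\bv^\sT)\bX]$ holds with failure probability $e^{-cn}$ at each fixed $\bv$, which survives the union bound over $|\cN_{S^\perp}| \leq 9^d$ since $n \asymp d^2$. The final bound $\|\mu_2\btheta^\sT\cV_2\|_F \prec 1$ then follows from Proposition~\ref{prop:NormThetaV} since the previous constraints certify $\btheta \in \bTheta^{\sPG}_\bW(K'')$.

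Part~(b) follows almost directly from part~(a). Once $\|\bV_+^\sT\btheta\|_2^2 \leq (\log d)^{K''}$ and $|L_\bW(\btheta)| \leq (\log d)^{K''}$ (the latter by Lemma~\ref{lemma:sublevelgeometry}(b)), choosing $K_\Gamma > K''$ makes the truncation $\cT_{K_\Gamma}$ inactive at each minimizer, so $\bGamma$ reduces to its explicit quadratic-plus-test-loss form. The Hessian bound $\bH_{-\bv} \succeq \lambda\bI + 2\tau_1\bV_{+,-\bv}\bV_{+,-\bv}^\sT + \tau_2\nabla^2 L_{\bW_{-\bv}}$, combined with $|\tau_2| \leq \tau_1/(\log d)^{K_\Gamma}$ and the sandwich ${-}(\log d)^{K''}\bV\bV^\sT \preceq \nabla^2 L_{\bW_{-\bv}} \preceq (\log d)^{K''}\bV\bV^\sT$, yields both $\bH_{-\bv} \succeq (\lambda/2)\bI$ and the spike-direction inverse norm bound by $\tau_1^{-1}$, exactly as in Lemma~\ref{lemma:sublevelgeometry}(c).

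Part~(c) is the main technical hurdle and where I expect most of the work. For $\check\btheta_{-\bv}$, the bound is direct: $\kappa_d\|\check\btheta_{-\bv}\|_\infty \leq \cP_{n,-\bv}(\check\btheta_{-\bv}) \leq \cP_{n,-\bv}(\bzero) \prec 1$ gives $\|\check\btheta_{-\bv}\|_\infty \prec 1/\kappa_d = d^{-1/8}$, uniformly over $\bv \in \cN_{S^\perp}$ using Lemma~\ref{lem:firstlemma}. For $\hbtheta$, the unregularized $d^{-1/4}$ bound carries over verbatim from Lemma~\ref{lemma:optinThetaPG}, noting only that the PGE/CGE features $\bz_i^\sPG,\bz_i^\sCG$ enter the bounds in exactly the same way as $\bz_i^\RF$. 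For $\check\btheta$, the stronger $d^{-1/4}$ bound requires adapting the leave-one-coordinate-out strategy of Lemma~\ref{lemma:optinThetaPG} to handle the non-smooth $\kappa_d\|\btheta\|_\infty$ regularizer. The argument compares $\check\btheta$ with the constrained minimizer $\check\btheta_{-p}$ setting $\theta_p = 0$, uses the strong convexity of Step~3 to extract $|\check\theta_p|$ from the objective gap, and bounds the resulting gradient residual as in Phase~I. The subgradient of $\kappa_d\|\cdot\|_\infty$ is zero at coordinates $p \notin \argmax_j|\check\theta_{-p,j}|$ and is at most $\kappa_d = d^{1/8}$ at the extremal coordinates; the former case reproduces the Phase~I bound and the latter must be absorbed using $\kappa_d/\lambda$ and the fact that only $O(1)$ coordinates can attain the maximum. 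The main obstacle is precisely this bookkeeping of the subdifferential contribution at extremal coordinates while preserving uniformity across the LODO construction, combined with ensuring that every tail bound upgrades from $d^{-C}$ to $e^{-cd}$ to survive the union bound over $\cN_{S^\perp}$.
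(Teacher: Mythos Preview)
Your plan for parts~(a) and~(b) is essentially the paper's approach, and your care about $e^{-cd}$-type tail bounds for the union bound over $\cN_{S^\perp}$ is well-placed (the anti-concentration argument in Lemma~\ref{lemma:sublevelgeometry}(a) already gives $e^{-cn}$ with $n\asymp d^2$, so this goes through). One small gap in~(a): your invocation of Proposition~\ref{prop:NormThetaV} for $\|\mu_2\check\btheta_{-\bv}^\sT\cV_{2,-\bv}\|_F$ does not directly apply, since that proposition concerns $\bV_2$ rather than the renormalized $\cV_{2,-\bv}$. The paper handles this by splitting $\cV_{2,-\bv}=\cV_2(\id-\bv\bv^\sT)^{\otimes 2}+\Delta(\bv)$ and bounding $\|\check\btheta_{-\bv}^\sT\Delta(\bv)\|_F\le\|\check\btheta_{-\bv}\|_2(\sum_j\<\bw_j,\bv\>^4)^{1/2}\prec 1$ uniformly; this is routine but needs to be said.

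The real issue is part~(c) for $\check\btheta$. You flag the leave-one-coordinate-out argument with $\ell_\infty$-subdifferential bookkeeping as ``the main technical hurdle,'' but your sketch has a quantitative problem: at an extremal coordinate the subgradient of $\kappa_d\|\cdot\|_\infty$ contributes a term of size $\kappa_d=d^{1/8}$ to the right-hand side of the analogue of~\eqref{equation:KKTConditionForOptimalSolutionReduced1thetap}, and dividing by $\lambda\succ 1$ gives only $|\check\theta_p|\prec d^{1/8}$, not $d^{-1/4}$. Your claim that ``only $O(1)$ coordinates can attain the maximum'' is unjustified and does not help in any case. The paper bypasses this entirely with a two-line comparison argument: optimality of $\check\btheta$ for $\cP_n$ gives $\hcR_{n,p}(\check\btheta)+\kappa_d\|\check\btheta\|_\infty\le\hcR_{n,p}(\hbtheta)+\kappa_d\|\hbtheta\|_\infty$, while optimality of $\hbtheta$ for $\hcR_{n,p}$ gives $\hcR_{n,p}(\hbtheta)\le\hcR_{n,p}(\check\btheta)$; subtracting yields $\|\check\btheta\|_\infty\le\|\hbtheta\|_\infty\prec d^{-1/4}$ immediately. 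You should replace your proposed argument for $\check\btheta$ with this.
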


\begin{proof}[Proof of Lemma \ref{lemma:ConcentrationOfResiduallemmaAAAAAAAuniform}(a)]
    The results for $\hbtheta$ and $\check{\btheta}$ follow directly from Lemma \ref{lemma:sublevelgeometry}(a). For the LODO minimizers, we adapt the proof of Lemma \ref{lemma:sublevelgeometry}(a) to obtain a uniform bound over $\bv \in \mathcal{N}_{S^\perp}$. First, note that
    \begin{equation}\label{eq:uniform_bound_LODO_objective_trivial}
    \frac{\lambda}{2}\| \check{\btheta}_{-\bv}\|_2^2 \leq \cP_{n,-\bv} (\bzero) \leq \frac{1}{n} \sum_{i \in [n]} \ell (y_i,0) + C \left( \frac{1}{n} \sum_{i \in [n]} (y_i - y_{i,-\bv} )^2 \right)^{1/2},
    \end{equation}
    where we used the Lipschitz property of $\ell$ (Assumption \ref{assumption:loss}).  By Lemma~\ref{lem:firstlemma}, the right-hand side is uniformly bounded over $\bv \in \mathcal{N}_{S^\perp}$, implying that $\|\check{\btheta}_{-\bv}\|_2 \prec 1$ simultaneously over all $\bv \in \cN_{S^\perp}$.
    
    We now focus on the order-$2$ chaos component:
\begin{equation}
\label{eq:V2-triangle-inequality}
    \|\check{\btheta}_{-\bv}^\sT \mathcal{V}_{2,-\bv} \|_F \leq \|\check{\btheta}_{-\bv}^\sT \mathcal{V}_2(\id - \bv\bv^\sT)^{\otimes 2}\|_F + \|\check{\btheta}_{-\bv}^\sT \left(\mathcal{V}_{2,-\bv} - \mathcal{V}_2(\id - \bv\bv^\sT)^{\otimes 2}\right)\|_F.
\end{equation}
Since $(\id - \bv\bv^\sT)^{\otimes 2}$ is an orthogonal projection, we have
\begin{equation}
    \|\check{\btheta}_{-\bv}^\sT \mathcal{V}_2(\id - \bv\bv^\sT)^{\otimes 2}\|_{F} \leq \|\check{\btheta}_{-\bv}^\sT \mathcal{V}_2\|_F = \| \bV_2^\sT \check{\btheta}_{-\bv}\|_2 \leq \frac{|\mathbf{1}_p^\sT \check{\btheta}_{-\bv}| }{\sqrt{d}} +\|\bV_{2c}\|_{\op} \|\check{\btheta}_{-\bv}\|_2,
\end{equation}
where we used the decomposition of $\bV_2$ from Corollary~\ref{corollary:OperatorNormF2cdecompose}. Note that $\|\bV_{2c}\|_{\op}\|\check{\btheta}_{-\bv}\|_2 \prec 1$  simultaneously over $\bv$. For the second term in \eqref{eq:V2-triangle-inequality}, define
\begin{equation}
    \Delta(\bv) := \sum_{j=1}^p (\check{\btheta}_{-\bv})_j \left( \frac{(\bw_{j,-\bv})^{\otimes 2}}{\|\bw_{j,-\bv}\|_2^2} - (\bw_{j,-\bv})^{\otimes 2} \right) = \sum_{j=1}^p (\check{\btheta}_{-\bv})_j \left( \frac{1 - \|\bw_{j,-\bv}\|_2^2}{\|\bw_{j,-\bv}\|_2^2} \right) (\bw_{j,-\bv})^{\otimes 2}.
\end{equation}
Using $1 - \|\bw_{j,-\bv}\|_2^2 = \langle \bw_j, \bv \rangle^2$, we obtain
\begin{align}
    \|\Delta(\bv)\|_F &\leq \sum_{j=1}^p |(\check{\btheta}_{-\bv})_j| \frac{\langle \bw_j, \bv \rangle^2}{ \|\bw_{j,-\bv}\|_2^2} \|(\bw_{j,-\bv})^{\otimes 2}\|_F \nonumber  = \sum_{j=1}^p |(\check{\btheta}_{-\bv})_j| \langle \bw_j, \bv \rangle^2.
\end{align}
Applying the Cauchy–Schwarz inequality gives
\begin{equation}
    \sum_{j=1}^p |(\check{\btheta}_{-\bv})_j| \langle \bw_j, \bv \rangle^2 \leq   \|\check{\btheta}_{-\bv}\|_2 \left( \sum_{j=1}^p \langle \bw_j, \bv \rangle^4 \right)^{1/2}.
\end{equation}
From \eqref{eq:union-bound-example-sum-power-4} in the proof of Lemma \ref{lemma:GaussianMalliavinVarianceBound112}, we have $\sup_{\bv \in \mathcal{N}_{S^\perp}} \sum_{j=1}^p \langle \bw_j, \bv \rangle^4 \prec 1$. Therefore,
\begin{equation}\label{eq:bound_hermite_2_geom_lind_2}
    \sup_{\bv \in \mathcal{N}_{S^\perp}} \|\check{\btheta}_{-\bv}^\sT \mathcal{V}_{2,-\bv} \|_F \prec 1 + \frac{1}{\sqrt{d} }  \sup_{\bv \in \mathcal{N}_{S^\perp}}  | \1_p^\sT \check{\btheta}_{-\bv}|.
\end{equation}

Thus it remains to bound $| \mu_0 \1_p^\sT \check{\btheta}_{-\bv} |,  \| \mu_1 \check{\btheta}_{-\bv}^\sT \cV_{1,-\bv} \|_2$ uniformly over $\bv$. The bound \eqref{eq:uniform_bound_LODO_objective_trivial} implies that Lemma \ref{lemma:predictionbound} holds simultaneously for all $ \check{\btheta}_{-\bv}$, thus we can adapt directly the proof of Lemma~\ref{lemma:sublevelgeometry}(a) with the bound \eqref{eq:bound_hermite_2_geom_lind_2}. In particular, \eqref{equation:fallin3gauss} and \eqref{eq:unif_bound_lemma_c1_aa} hold simultaneously over $\bv \in \mathcal{N}_{S^\perp}$, with $\bW$ replaced by $\bW_{-\bv}$. For brevity, we omit the repetition of these arguments here.
\end{proof}

\begin{proof}[Proof of Lemma \ref{lemma:ConcentrationOfResiduallemmaAAAAAAAuniform}(b)]
    The proof follows the same argument as Lemma \ref{lemma:sublevelgeometry}.(b) and \ref{lemma:sublevelgeometry}.(c), using the uniform bound in Lemma \ref{lemma:ConcentrationOfResiduallemmaAAAAAAAuniform}(a).
\end{proof}

\begin{proof}[Proof of Lemma \ref{lemma:ConcentrationOfResiduallemmaAAAAAAAuniform}(c)]
First note that (recalling \eqref{eq:uniform_bound_LODO_objective_trivial} and Lemma \ref{lem:firstlemma}):
\[
\kappa_{d}\|\check{\btheta}_{-\bv}\|_{\infty} \le P_{n,-\bv}(\check{\btheta}_{-\bv}) \le \sup_{\bv \in \mathcal{N}_{S^\perp}} P_{n,-\bv} (\bzero) \prec 1.
\]
Thus, using that we chose $\kappa_{d} = d^{1/8}$, we immediately obtain $\sup_{\bv \in \mathcal{N}_{S^\perp}} \|\check{\btheta}_{-\bv}\|_{\infty} \prec d^{-1/8}$. The result for $\hbtheta$ follows from Lemma \ref{lemma:optinThetaPG}. For $\check{\btheta}$, we use the optimality of $\hbtheta,\check{\btheta}$: 
\[
\begin{aligned}
    \cP_{n} (\check{\btheta}) = \widehat{\cR}_{n,p} (\check{\btheta}) + \kappa_d \| \check{\btheta}\|_\infty \leq&~ \cP_{n} (\hbtheta) = \widehat{\cR}_{n,p} (\hbtheta) + \kappa_d \| \hbtheta\|_\infty, \qquad \widehat{\cR}_{n,p} (\hbtheta) \leq \widehat{\cR}_{n,p} (\check{\btheta}),
\end{aligned}
\]
which implies that $\| \check{\btheta} \|_\infty \leq \| \hbtheta \|_\infty \prec d^{-1/4}$.
\end{proof}

    \begin{lemma}
\label{lem:surgate1}
  We have
\begin{align}\label{eq:surgate1}
  \sup_{\bv \in \mathcal{N}_{S^\perp}} \frac{1}{n} \sum_{i=1}^{n} \left( \check{\btheta}_{-\bv}^\sT \mathcal{V}\bh_{i,-\bv} - \check{\btheta}_{-\bv}^\sT \mathcal{V}_{-\bv}\bh_{i,-\bv} \right)^2 \prec&~ d^{-1/8}, \\
  \label{eq:letussec}
    \sup_{\bv \in \mathcal{N}_S^\perp} \frac{1}{n} \sum_{i=1}^{n} \left( \check{\btheta}_{-\bv}^\sT \mathcal{V} \bh_i - \check{\btheta}_{-\bv}^\sT \mathcal{V}_{-\bv} \bh_{i,-\bv} \right)^2 \prec&~  d^{-1/8} + \mu_{1}^2  .
\end{align}
Moreover, for any $k \geq 1$, there exists a constant $c_k>0$ depending only on the constants in the assumptions such that 
    \begin{align}\label{cor:leaveoneoutstability}
        \sup_{\bv \in \mathcal{N}_S^\perp} \E \left[\left|\<\check \btheta_{-\bv}, \bz - \bz_{-\bv}\>\right|^k\mid \bW\right] \prec d^{-c_k}.
    \end{align}
    Furthermore, the bound \eqref{eq:surgate1}  holds also for $\hat \btheta$ and $\check \btheta$ in place of $\check\btheta_{-\bv}$.
\end{lemma}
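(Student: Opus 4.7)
The strategy is to expand the two sources of discrepancy in closed algebraic form: the rescaling $\bw_j \mapsto \bw_{j,-\bv}/\|\bw_{j,-\bv}\|_2$ in $\cV - \cV_{-\bv}$, and the projection $\bx_i \mapsto (\id-\bv\bv^\sT)\bx_i$ in $\bh_i - \bh_{i,-\bv}$. Writing $\alpha_j := \<\bw_j,\bv\>$ (typically of size $d^{-1/2}$) and $u_i := \<\bv,\bx_i\>$ (Gaussian $O(1)$), each summand acquires a small prefactor involving either $\alpha_j^2$ or $u_i\alpha_j$. I will combine these explicit expansions with the uniform $\ell_\infty$-bound $\sup_{\bv} \|\check\btheta_{-\bv}\|_\infty \prec d^{-1/8}$ from Lemma~\ref{lemma:ConcentrationOfResiduallemmaAAAAAAAuniform}(c), the bound $\sup_{\bv \in \cN_{S^\perp}}\sum_j \alpha_j^4 \prec 1$ already established in the proof of Lemma~\ref{lemma:GaussianMalliavinVarianceBound112}, and standard operator-norm bounds on $\bW$, $\bX_{-\bv}$, and the degree-$k$ Hermite feature matrices.

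\paragraph*{Bounding \eqref{eq:surgate1}.} The key observation is the decomposition
\[
\bw_j^{\otimes 2} - (\bw_{j,-\bv}/\|\bw_{j,-\bv}\|_2)^{\otimes 2}
= -\frac{\alpha_j^2}{1-\alpha_j^2}\bw_{j,-\bv}^{\otimes 2}
+\alpha_j(\bv\bw_{j,-\bv}^\sT + \bw_{j,-\bv}\bv^\sT)+\alpha_j^2 \bv\bv^\sT,
\]
together with an analogous expansion for the degree-1 part. Since $\bH_2(\bx_{i,-\bv})$ is orthogonal to $\bv$ on both indices and $\<\bv,\bx_{i,-\bv}\>=0$, the last two pieces annihilate and only the radial rescaling piece of size $O(\alpha_j^2)$ survives on each order. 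Writing the resulting sums in matrix form as $\mu_1\bX_{-\bv}^\sT \bW_{-\bv}^\sT \bd^{(1)}$ and $\mu_2\He_2(\bW_{-\bv}\bX_{-\bv}^\sT)^\sT \bd^{(2)}$ with $\|\bd^{(k)}\|_2^2 \le \|\check\btheta_{-\bv}\|_\infty^2 \sum_j \alpha_j^{2k} \prec d^{-1/4}$, and applying $\|\He_k(\bW_{-\bv}\bX_{-\bv}^\sT)\|_\op \prec d$ from Lemma~\ref{lemma:OperatorNormZkFk}, yields a uniform bound of order $d^{-1/4} \prec d^{-1/8}$.

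\paragraph*{Bounding \eqref{eq:letussec}.} Via $(a+b)^2 \le 2a^2+2b^2$, I decompose into $(\cV-\cV_{-\bv})\bh_{i,-\bv}$ (controlled by \eqref{eq:surgate1}) and $\cV(\bh_i-\bh_{i,-\bv})$. The degree-$1$ contribution of the latter equals $\mu_1 u_i\,r$ with $r := \check\btheta_{-\bv}^\sT \bW\bv = \sum_j(\check\theta_{-\bv})_j\alpha_j$. Since $\check\btheta_{-\bv}$ is independent of the Rademacher signs of $\{\alpha_j\}$ conditional on $\bW_{-\bv}$, Hoeffding's inequality yields $\sup_{\bv\in\cN_{S^\perp}}|r|\prec 1$ but no better uniformly, because the exponential-size union bound over $9^d$ directions absorbs the sub-Gaussian variance $\|\check\btheta_{-\bv}\|_2^2/d$. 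Averaging $(u_i r)^2$ over $i$ with the concentration $\tfrac1n\sum_i u_i^2 = 1+o(1)$ produces the residual $\mu_1^2$ term. The degree-$2$ contribution, after expanding $\He_2(\<\bw_j,\bx_i\>)-\He_2(\<\bw_j,\bx_{i,-\bv}\>) = \sqrt 2\,u_i\alpha_j\<\bw_{j,-\bv},\bx_{i,-\bv}\>+\tfrac{1}{\sqrt 2}u_i^2\alpha_j^2$, reduces to controlling $\bu := \sum_j(\check\theta_{-\bv})_j\alpha_j\bw_{j,-\bv}$ and the scalar $\sum_j(\check\theta_{-\bv})_j\alpha_j^2$. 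Taking the conditional expectation over the Rademacher signs gives $\E\|\bu\|_2^2 \le \sum_j \theta_j^2\alpha_j^2\|\bw_{j,-\bv}\|_2^2 \prec 1/d$, which promotes to a uniform bound $\sup_{\bv}\|\bu\|_2 \prec d^{-c}$ by a matrix-Bernstein / $\eps$-net argument. Combined with Gaussian moments of $u_i$, this contribution is $\prec d^{-c}$, so the full bound is $\prec d^{-1/8}+\mu_1^2$.

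\paragraph*{The expectation bound and extension to $\hat\btheta,\check\btheta$; main obstacle.} For \eqref{cor:leaveoneoutstability}, the same decomposition applies sample-wise; taking the $\bx$-expectation instead of an empirical average, the Gaussian moments $\E|u|^k = O_k(1)$ multiply the $\mu_1^k|r|^k$ and higher-order analogues, yielding $\prec \mu_1^k + d^{-ck} \prec d^{-c_k}$ from Assumption~\ref{assumption:activation}. For the extension of \eqref{eq:surgate1} to $\hat\btheta$ and $\check\btheta$, I simply rerun the argument of Paragraph~2 with the stronger bound $\|\cdot\|_\infty\prec d^{-1/4}$ from Lemma~\ref{lemma:ConcentrationOfResiduallemmaAAAAAAAuniform}(c); this does not require the LODO independence structure, because the control only exploits the $\ell_\infty$ norm together with the geometric bound $\sup_\bv\sum_j\alpha_j^4 \prec 1$ which is intrinsic to $\bW$. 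The main obstacle throughout is that the exponentially large net $\cN_{S^\perp}$ forces us to upgrade pointwise concentration to uniform bounds; for quadratic quantities in $\{\alpha_j\}$ this works via Bernstein, but for the linear functional $r = \check\btheta_{-\bv}^\sT\bW\bv$ no uniform improvement beyond $O(1)$ is achievable, which is precisely the reason for the irreducible $\mu_1^2$ term in \eqref{eq:letussec}.
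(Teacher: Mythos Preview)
Your decomposition for \eqref{eq:surgate1} is correct --- the terms involving $\bv$ in $\bw_j^{\otimes 2}-(\bw_{j,-\bv}/\|\bw_{j,-\bv}\|_2)^{\otimes 2}$ do annihilate against $\bH_2(\bx_{i,-\bv})$, leaving only the radial rescaling factors of order $\alpha_j^2$. The gap is in the operator-norm step that follows. You invoke Lemma~\ref{lemma:OperatorNormZkFk} to claim $\|\He_k(\bW_{-\bv}\bX_{-\bv}^\sT)\|_\op \prec d$, but that lemma is stated and proved only for $k\ge 3$. For $k=1$, $\|\bW_{-\bv}\bX_{-\bv}^\sT\|_\op \le \|\bW_{-\bv}\|_\op\|\bX_{-\bv}\|_\op \prec \sqrt d \cdot d = d^{3/2}$; for $k=2$, the spike in $\bV_{2,-\bv}=\tfrac1{d}\1_p\be_c^\sT+\bV_{2c,-\bv}$ likewise makes $\|\bV_{2,-\bv}\bh_2(\bX_{-\bv})\|_\op \prec d^{3/2}$, not $d$. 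Plugging these into your bound gives $\tfrac{\mu_k^2}{n}\cdot d^3\cdot d^{-1/4}\asymp \mu_k^2\, d^{3/4}$, which for $\mu_2\asymp 1$ (and for $|\mu_1|\asymp d^{-c}$ with $c<3/8$) diverges rather than being $\prec d^{-1/8}$.

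The paper circumvents this by never invoking the full operator norm of $\He_k(\bW_{-\bv}\bX_{-\bv}^\sT)$. Instead it bounds $\|\check\btheta_{-\bv}^\sT\Delta\cV_2(\bv)\|_F$ directly: writing $\bA(\bv)=\sum_j(\check\theta_{-\bv})_j\alpha_j^2\bw_{j,-\bv}^{\otimes 2}$, one has $\|\bA(\bv)\|_F\le\|\bV_2^\sT\bc(\bv)\|_2$ with $\bc(\bv)=((\check\theta_{-\bv})_j\alpha_j^2)_j$, and then the spike decomposition $\bV_2=\tfrac1d\1_p\be_c^\sT+\bV_{2c}$ splits this into $\tfrac1d(\1_p^\sT\bc(\bv))^2\prec 1/d$ (using only $\|\check\btheta_{-\bv}\|_2\prec1$) plus $\|\bV_{2c}\|_\op^2\|\bc(\bv)\|_2^2\prec d^{-1/4}$. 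The linear term is handled by the sharper bound $\|\bW_{-\bv}^\sT\bD_{\check\btheta_{-\bv}}\|_\op\prec d^{-1/16}$, obtained via Cauchy--Schwarz with the $\ell_4$ norm of $\check\btheta_{-\bv}$ and $\sup_\bu\sum_j\langle\bw_{j,-\bv}/\|\bw_{j,-\bv}\|_2,\bu\rangle^4\prec1$. Both of these refinements are essential; the crude operator-norm route cannot close the argument because the spikes in $\bW$ and $\bV_2$ are too large. Your treatment of \eqref{eq:letussec} inherits the same issue, and the uniform bound on $\|\bu\|_2$ there also needs a conditional-subgaussian tail (exploiting the independence of $\check\btheta_{-\bv}$ from $\{\langle\bw_j,\bv\rangle\}_j$) rather than a variance bound, since a second-moment estimate alone cannot survive the union bound over a net of cardinality $9^d$.
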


\begin{proof}[Proof of Lemma \ref{lem:surgate1} Eq.~\eqref{eq:surgate1}]
    Recall $\bx_{i,-\bv}=(\id-\bv\bv^\sT)\bx_i$. We decompose the difference as
\begin{equation}
 \check{\btheta}_{-\bv}^\sT \mathcal{V}\bh_{i,-\bv} - \check{\btheta}_{-\bv}^\sT\mathcal{V}_{-\bv} \bh_{i,-\bv} = \mu_1 \check{\btheta}_{-\bv}^\sT(\mathcal{V}_1 - \mathcal{V}_{1,-\bv})\bx_{i,-\bv} + \mu_2 \check{\btheta}_{-\bv}^\sT(\mathcal{V}_2 - \mathcal{V}_{2,-\bv})\bH_2(\bx_{i,-\bv}),
\end{equation}
and bound the two contributions separately. (In the case of a sample $\bh_i^\CG$ from the CGE model, the second term is
$\mu_2 \check{\btheta}_{-\bv}^\sT(\mathcal{V}_2 - \mathcal{V}_{2,-\bv})(\proj_S \bH_2(\bx_{i,-\bv})+\proj_{S,\perp}\bG_{2i,-\bv})$, and may be treated similarly in the following.)

\paragraph*{Step 1: Linear term.} Since $\bw_j^\sT \bx_{i,-\bv} = \bw_{j,-\bv}^\sT \bx_{i,-\bv}$, 
\begin{align}
  \Delta_{i,1}(\bv) :=&~ \check{\btheta}_{-\bv}^\sT \mu_1 (\mathcal{V}_1 - \mathcal{V}_{1,-\bv}) \bx_{i,-\bv} \nonumber \\
  =&~ \mu_1 \sum_{j=1}^p (\check{\btheta}_{-\bv})_j \left( \bw_j^\sT - \frac{\bw_{j,-\bv}^\sT}{\|\bw_{j,-\bv}\|_2} \right) \bx_{i,-\bv}\\
  =&~ \mu_1 \sum_{j=1}^p (\check{\btheta}_{-\bv})_j \left( 1 - \frac{1}{\|\bw_{j,-\bv}\|_2} \right) \langle \bw_{j,-\bv}, \bx_{i,-\bv} \rangle =: \<\ba (\bv), \bx_{i,-\bv}\>, \label{eq:delta_i1_def}
\end{align}
where
\begin{equation}
  \ba(\bv) := \sum_{j=1}^p \mu_1 (\check{\btheta}_{-\bv})_j \left( 1 - \frac{1}{\|\bw_{j,-\bv}\|_2} \right) \bw_{j,-\bv}.
\end{equation}
Since $\ba(\bv)$ lies in the subspace orthogonal to $\bv$, we have $\langle \ba(\bv), \bx_{i,-\bv} \rangle = \langle \ba(\bv), \bx_i \rangle$. The average squared difference is then
\begin{equation}
  \frac{1}{n} \sum_{i=1}^n (\Delta_{i,1}(\bv))^2 = \frac{1}{n}\sum_{i=1}^n \langle \ba(\bv), \bx_i \rangle^2 \leq    \|\ba(\bv)\|_2^2 \left\| \frac{1}{n} \sum_{i=1}^n \bx_i\bx_i^\sT\right\|_{\op}
\end{equation}
Since $\{\bx_i\}_{i=1}^n$ are i.i.d.\ standard Gaussian vectors in $\R^d$, we have $\|\frac{1}{n} \sum_{i=1}^n \bx_i\bx_i^\sT\|_{\op} \prec 1$ \cite[Theorem 4.4.5]{vershyninHighDimensionalProbabilityIntroduction2018}, so
\begin{equation}
  \sup_{\bv \in \mathcal{N}_{S^\perp}}\frac{1}{n} \sum_{i=1}^n \langle \ba(\bv), \bx_i \rangle^2 \prec  \sup_{\bv \in \mathcal{N}_{S^\perp}}\|\ba(\bv)\|_2^2.
\end{equation}

We now bound $\|\ba(\bv)\|_2$ uniformly in $\bv$. Rewrite
\begin{equation}\label{eq:def_a_v_LODO}
\ba (\bv) = \mu_1 \sum_{j=1}^p (\|\bw_{j,-\bv}\|_2 - 1)(\check{\btheta}_{-\bv})_j \frac{\bw_{j,-\bv}}{\|\bw_{j,-\bv}\|_2} = \mu_1 \bW_{-\bv}^\sT  \bD_{\check{\btheta}_{-\bv}} \br (\bv),
\end{equation}
where $\br (\bv) :=  (\|\bw_{j,-\bv}\|_2 - 1)_{j \in [p]}$ and $\bD_{\check{\btheta}_{-\bv}} = \diag ((\check{\btheta}_{-\bv})_j)_{j \in [p]}.$ Thus, we simply get $\| \ba (\bv) \|_2^2 \leq \mu_1^2 \| \bW_{-\bv}^\sT  \bD_{\check{\btheta}_{-\bv}}\|_\op^2 \| \br (\bv) \|_2^2.$
Since $\|\bw_{j,-\bv}\|_2 = \sqrt{1 - \<\bw_j,\bv\>^2}$, we have $ (\|\bw_{j,-\bv}\|_2 - 1)^2 \leq \< \bw_j,\bv\>^4$, hence
\[
\sup_{\bv \in \mathcal{N}_{S^\perp}} \| \br (\bv) \|_2^2 \leq \sup_{\bv \in \mathcal{N}_{S^\perp}} \sum_{j = 1}^p \< \bw_j,\bv\>^4 \prec 1,
\]
by the same bound as in \eqref{eq:union-bound-example-sum-power-4}. For the operator norm, $\| \bW_{-\bv}^\sT  \bD_{\check{\btheta}_{-\bv}}\|_\op^2 \leq 2 \sup_{\bu \in \mathcal{N}_{S^\perp}} \| \bD_{\check{\btheta}_{-\bv}} \bW_{-\bv} \bu \|_2^2,$
and by Cauchy–Schwarz,
\[
     \| \bD_{\check{\btheta}_{-\bv}} \bW_{-\bv} \bu \|_2^2 = \sum_{j=1}^p (\check{\btheta}_{-\bv})_j^2 \left\langle \frac{\bw_{j,-\bv}}{\|\bw_{j,-\bv}\|_2}, \bu \right\rangle^2 \le \sqrt{\sum_{j=1}^p (\check{\btheta}_{-\bv})_j^4} \sqrt{\sum_{j=1}^p \left\langle \frac{\bw_{j,-\bv}}{\|\bw_{j,-\bv}\|_2}, \bu \right\rangle^4}.
\]
Using Lemmas \ref{lemma:ConcentrationOfResiduallemmaAAAAAAAuniform}(a) and \ref{lemma:ConcentrationOfResiduallemmaAAAAAAAuniform}(c),
 $\sup_{\bv \in \mathcal{N}_{S^\perp}} \| \check{\btheta}_{-\bv} \|_4^2 \leq  \sup_{\bv \in \mathcal{N}_{S^\perp}} \|\check{\btheta}_{-\bv}\|_\infty \| \check{\btheta}_{-\bv}\|_2 \prec d^{-1/8},$
and, by the same union bound as in \eqref{eq:union-bound-example-sum-power-4},
\[
\sup_{\bu,\bv \in \mathcal{N}_{S^\perp}} \sum_{j=1}^p \left\langle \frac{\bw_{j,-\bv}}{\|\bw_{j,-\bv}\|_2}, \bu \right\rangle^4 \prec 1.
\]
Combining these estimates yields
\begin{equation}\label{eq:bound_lin_term_v_feature_diff}
\sup_{\bv \in \mathcal{N}_{S^\perp}} \frac{1}{n} \sum_{i = 1}^n  \Delta_{i,1}(\bv)^2 
\prec \sup_{\bv \in \cN_{S^\perp}} \|\ba(\bv)\|_2^2 \prec d^{-1/8}.
\end{equation}

\paragraph*{Step 2: Quadratic term.} Denote 
\[
\Delta_{i,2}(\bv) := \check{\btheta}_{-\bv}^\sT \mu_2 (\mathcal{V}_2 - \mathcal{V}_{2,-\bv})   \bH_2(\bx_{i,-\bv})\, \quad \text{and} \quad \Delta \cV_2 (\bv) := \mathcal{V}_2 (\id-\bv\bv^\sT)^{\otimes 2} - \mathcal{V}_{2,-\bv}.
\]
Here $\mathcal{V}_2 (\id-\bv\bv^\sT)^{\otimes 2}$ denotes an order-3 weight tensor in $\R^{p\times d\times d}$ whose $j$-th slice is $\bw_{j,-v}^{\otimes 2}=\left[(\id-\bv\bv^\sT)\bw_j\right]^{\otimes 2}$.
Let $\bH_2(\bX_{-\bv})=[\bH_2(\bx_{1,-\bv}), \ldots, \bH_2 (\bx_{n,-\bv})]^\sT \in \R^{n \times d \times d}$, and let $\bh_2(\bX_{-\bv}) \in \R^{n \times B_{d,2}}$ denote its preimage under the isometry $\iota(\cdot)$. Then
\begin{equation}
\frac{1}{n} \sum_{i=1}^n (\Delta_{i,2}(\bv))^2 = \frac{\mu_2^2}{n} \left\| \check{\btheta}_{-\bv}^\sT \Delta\mathcal{V}_2(\bv)  \bH_2(\bX_{-\bv}) \right\|_F^2 \leq \frac{\mu_2^2}{n} \left\| \check{\btheta}_{-\bv}^\sT \Delta\mathcal{V}_2(\bv) \right\|_F^2 \left\| \bh_2(\bX_{-\bv}) \right\|_{\text{op}}^2.
\end{equation}
By Lemma \ref{lemma:BernsteinInequalityHermiteFeatures}, $\|\bh_2(\bX_{-\bv})\|_{\text{op}}^2 \leq \|\bh_2(\bX)\|_{\text{op}}^2 \prec d^2$. To bound $\left\| \check{\btheta}_{-\bv}^\sT \Delta\mathcal{V}_2(\bv) \right\|_F^2 $, use $\frac{x^2}{1-x^2} = x^2 + \frac{x^4}{1-x^2}$ to write
\begin{align}
    \check{\btheta}_{-\bv}^\sT \Delta\mathcal{V}_2(\bv) &=
    \sum_{j=1}^p (\check{\btheta}_{-\bv})_j\Big(\bw_{j,-\bv}^{\otimes 2}
    -\frac{\bw_{j,-\bv}^{\otimes 2}}
    {\|\bw_{j,-\bv}\|_2^2}\Big)
    ={-}\sum_{j=1}^p (\check{\btheta}_{-\bv})_j\Big(\frac{\<\bw_j,\bv\>^2}{1-\<\bw_j,\bv\>^2}\Big)
    \bw_{j,-\bv}^{\otimes 2}\\
    &={-}\underbrace{\sum_{j=1}^p (\check{\btheta}_{-\bv})_j \langle \bw_j, \bv \rangle^2 \bw_{j,-\bv}^{\otimes 2}}_{=: \bA(\bv)} - \underbrace{\sum_{j=1}^p (\check{\btheta}_{-\bv})_j \frac{\langle \bw_j, \bv \rangle^4}{1-\langle \bw_j, \bv \rangle^2} \bw_{j,-\bv}^{\otimes 2}}_{=: \bB(\bv)}.
\end{align}
 
For $\bA(\bv)$, define $\bc(\bv):= \left( (\check{\btheta}_{-\bv})_j \langle \bw_j, \bv \rangle^2\right)_{j \in [p]}$. Then 
\[
\| \bA(\bv) \|_F^2 = \left\| (\bI - \bv \bv^\sT) \left( \sum_{j=1}^p c_j (\bv) \bw_j \bw_j^\sT \right)  (\bI - \bv \bv^\sT) \right\|_F^2 \leq \| \bc(\bv)^\sT \cV_2 \|_F^2 = \| \bV_2^\sT \bc(\bv) \|_2^2.
\]
Using the decomposition from Corollary~\ref{corollary:OperatorNormF2cdecompose},
\begin{equation}
    \|\bV_2^\sT \bc(\bv)\|_2^2 = \left\| \left(\frac{1}{d}\be_c \mathbf{1}_p^\sT + \bV_{2c}\right)^\sT \bc(\bv) \right\|_2^2 \le \frac{2}{d^2} \|\be_c\|_2^2 (\mathbf{1}_p^\sT \bc(\bv))^2 + 2\|\bV_{2c}^\sT \bc(\bv)\|_2^2.
\end{equation}
For the first term, 
\[
\sup_{\bv \in \mathcal{N}_{S^\perp}} (\mathbf{1}_p^\sT \bc(\bv))^2  = \sup_{\bv \in \mathcal{N}_{S^\perp}}  \left| \sum_{j=1}^p (\check{\btheta}_{-\bv})_j \langle \bw_j, \bv \rangle^2 \right|^2 \leq  \sup_{\bv \in \mathcal{N}_{S^\perp}} \|\check{\btheta}_{-\bv}\|_2^2 \cdot \sum_{j=1}^p \langle \bw_j, \bv \rangle^4 \prec 1.
\]
For the second term, $\|\bV_{2c}^\sT \bc(\bv)\|_2^2 \leq \| \bV_{2c} \|_\op^2 \| \bc(\bv) \|_2^2 \prec \| \bc (\bv)\|_2^2$, and
\[
\sup_{\bv \in \mathcal{N}_{S^\perp}} \| \bc (\bv)\|_2^2 = \sup_{\bv \in \mathcal{N}_{S^\perp}} \sum_{j=1}^p (\check{\btheta}_{-\bv})_j^2 \langle \bw_j, \bv \rangle^4 \leq \sup_{\bv \in \mathcal{N}_{S^\perp}}  \|\check{\btheta}_{-\bv}\|_\infty^2 \sum_{j=1}^p \langle \bw_j, \bv \rangle^4 \prec d^{-1/4},
\]
where we used Lemma \ref{lemma:ConcentrationOfResiduallemmaAAAAAAAuniform}(c).
Hence, putting the above estimates together, we get 
\begin{equation}\label{eq:bound_A_v_unif}
\sup_{\bv \in \mathcal{N}_{S^\perp}} \| \bA(\bv) \|_F^2 \prec d^{-1/4}.
\end{equation}
For $\bB(\bv)$, note that
    \begin{align}
        \|\bB(\bv)\|_F &\leq \sum_{j=1}^p |(\check{\btheta}_{-\bv})_j| \frac{\langle \bw_j, \bv \rangle^4}{1-\langle \bw_j, \bv \rangle^2} \|\bw_{j,-\bv}\|_2^2 \leq \|\check{\btheta}_{-\bv}\|_\infty \sum_{j=1}^p \langle \bw_j, \bv \rangle^4,
    \end{align}
since  $\|\bw_{j,-\bv}\|_2^2 = 1-\langle \bw_j, \bv \rangle^2$. Arguing as above, we immediately obtain
\begin{equation}\label{eq:bound_B_v_unif}
\sup_{\bv \in \mathcal{N}_{S^\perp}} \| \bB(\bv) \|_F^2 \prec d^{-1/4}.
\end{equation}
Combining \eqref{eq:bound_A_v_unif} and \eqref{eq:bound_B_v_unif} yields the bound on the quadratic part:
\begin{equation}\label{eq:bound_quad_term_v_feature_diff}
    \sup_{\bv \in \mathcal{N}_{S^\perp}} \frac{1}{n} \sum_{i=1}^n \Delta_{i,2}(\bv)^2 \prec d^{-1/4}.
\end{equation}
Finally, \eqref{eq:surgate1} follows by combining \eqref{eq:bound_lin_term_v_feature_diff} and \eqref{eq:bound_quad_term_v_feature_diff}. We note that the preceding arguments hold equally with $\hat\btheta$ or $\check\btheta$ in place of $\check\btheta_{-\bv}$.
\end{proof}

\begin{proof}[Proof of Lemma \ref{lem:surgate1} Eq.~\eqref{eq:letussec}]
    First, note that
\begin{align}
    \frac{1}{n} \sum_{i=1}^{n} \left( \check{\btheta}_{-\bv}^\sT (\mathcal{V} \bh_i - \mathcal{V}_{-\bv} \bh_{i,-\bv}) \right)^2
    \leq \frac{2}{n} \sum_{i=1}^{n} \left( \check{\btheta}_{-\bv}^\sT \mathcal{V} (\bh_i - \bh_{i,-\bv}) \right)^2 + \frac{2}{n} \sum_{i=1}^{n} \left( \check{\btheta}_{-\bv}^\sT ( \mathcal{V} -  \mathcal{V}_{-\bv}) \bh_{i,-\bv} \right)^2.
\end{align}
The second term is exactly \eqref{eq:surgate1} and is uniformly $\prec d^{-1/8}$. For the first term, decompose
\[
\check{\btheta}_{-\bv}^\sT \mathcal{V} (\bh_i - \bh_{i,-\bv}) = \underbrace{\mu_1 \check{\btheta}_{-\bv}^\sT \mathcal{V}_1 \left(\bx_i - \bx_{i,-\bv} \right)}_{:= \Delta_{i,1} (\bv)} + \underbrace{\mu_2 \check{\btheta}_{-\bv}^\sT \mathcal{V}_2 \left(\bH_2 (\bx_i) - \bH_2 ( \bx_{i,-\bv}) \right)}_{:= \Delta_{i,2} (\bv)}.
\]
(In the case of a sample $\bh_i^\CG$ from the CGE model, the second term is $\Delta_{i,2}(\bv)=\mu_2 \check{\btheta}_{-\bv}^\sT \mathcal{V}_2 \proj_{S,\perp} \bG_{i2,-\bv}$ and again may be treated similarly in the following.)

\paragraph*{Step 1: Linear term.} Since $\bx_i-\bx_{i,-\bv}=\<\bx_i,\bv\>\bv$, this term is $\Delta_{i,1} (\bv) =\mu_1 \langle \bx_i, \bv \rangle \left( \check{\btheta}_{-\bv}^\sT \bW \bv \right).$
Hence,
\begin{align*}
    \frac{1}{n}\sum_{i=1}^n \Delta_{i,1}(\bv)^2 = \mu_1^2 \left( \check{\btheta}_{-\bv}^\sT \bW \bv \right)^2 \left( \frac{1}{n}\sum_{i=1}^n \langle \bx_i, \bv \rangle^2 \right).
\end{align*}
By a chi-squared tail bound and union bound, $ \sup_{\bv \in \mathcal{N}_{S^\perp}} n^{-1}\sum_{i=1}^n \langle \bx_i, \bv \rangle^2\prec 1$. For fixed $\bv$, note that $\<\bw_j,\bv\>$ is independent of 
$\bw_{j,-\bv}/\|\bw_{j,-\bv}\|_2$, so
$\check{\btheta}_{-\bv}$ is independent of  $\{ \< \bw_j ,\bv\> \}_{j \in [p]}$. Then conditional on $\check\btheta_{-\bv}$, $\check{\btheta}_{-\bv}^\sT \bW \bv$ is $(\|\check{\btheta}_{-\bv}\|_2^2/d)$-subgaussian, so $\P[|\check{\btheta}_{-\bv}^\sT \bW \bv| \geq t \mid \check{\btheta}_{-\bv}] \leq 2e^{-ct^2d/\|\check\btheta_{-\bv}\|_2^2}$.
Using
$\sup_{\bv}\|\check{\btheta}_{-\bv}\|_2\prec 1$,
choosing $t \asymp (\log d)^K$ for large enough $K$, and taking a union bound over $\cN_{S^\perp}$,
\begin{equation}
\sup_{\bv \in \mathcal{N}_{S^\perp}}  \frac{1}{n}\sum_{i=1}^n \Delta_{i,1}(\bv)^2 \prec \mu_1^2  \cdot \sup_{\bv \in \mathcal{N}_{S^\perp}}( \check{\btheta}_{-\bv}^\sT \bW \bv)^2 \prec \mu_1^2.\label{eq:linear-term-bound}
\end{equation}

\paragraph*{Step 2: Quadratic term.} Using $\bH_2(\bx) = \frac{1}{\sqrt{2}}(\bx\bx^\top - \id)$ and $\bH_2(\bx_{i,-\bv})=(\bI-\bv\bv^\top)\bH_2(\bx_i)(\bI-\bv\bv^\top)$,
\begin{align*}
    \bH_2(\bx_i) - \bH_2(\bx_{i,-\bv}) &=  \frac{1}{\sqrt{2}} \left( \langle \bx_i, \bv \rangle (\bx_{i}\bv^\sT + \bv\bx_{i}^\sT) - \left[\langle \bx_i, \bv \rangle^2+1\right] \bv\bv^\sT \right).
\end{align*}
Thus we can decompose the quadratic term as
\begin{align*}
  \frac{1}{n} \sum_{i=1}^n \Delta_{i,2}(\bv)^2 &\leq  2\mu_2^2 \underbrace{ \frac{1}{n}\sum_{i=1}^n \left( \langle \bx_i, \bv \rangle^2 \sum_{j=1}^p (\check{\btheta}_{-\bv})_j \langle \bw_j, \bv \rangle^2 \right)^2}_{=:a(\bv)}+
  2\mu_2^2\underbrace{\left(\sum_{j=1}^p (\check{\btheta}_{-\bv})_j\<\bw_j,\bv\>^2\right)^2}_{:=S(\bv)^2}\\&+ 4 \mu_2^2  \underbrace{\frac{1}{n}\sum_{i=1}^n \left(\langle \bx_i, \bv \rangle \sum_{j=1}^p (\check{\btheta}_{-\bv})_j \langle \bw_j, \bx_{i} \rangle \langle \bw_j, \bv \rangle \right)^2}_{=:b(\bv)}.
\end{align*} 
The first term can be rewritten as 
\[a (\bv) =  \left(\sum_{j=1}^p (\check{\btheta}_{-\bv})_j \langle \bw_j, \bv \rangle^2 \right)^2 \left(\frac{1}{n}\sum_{i=1}^n \langle \bx_i, \bv \rangle^4\right)= S(\bv)^2 \left(\frac{1}{n}\sum_{i=1}^n \langle \bx_i, \bv \rangle^4\right).
\]
Write
\[
S(\bv) = d^{-1} \< \1_p, \check{\btheta}_{-\bv} \> + \sum_{j=1}^p(\check{\btheta}_{-\bv})_j (  \langle \bw_j, \bv \rangle^2 - d^{-1} ).
\]
By Lemma~\ref{lemma:ConcentrationOfResiduallemmaAAAAAAAuniform}(a), $\sup_{\bv \in \mathcal{N}_{S^\perp}}  |\< \1_p, \check{\btheta}_{-\bv} \>| \prec 1$. 
Since $\check{\btheta}_{-\bv}$ is independent of $\{\<\bw_j,\bv\>\}_{j \in [p]}$, Bernstein’s inequality \cite[Theorem 2.8.1]{vershyninHighDimensionalProbabilityIntroduction2018} 
yields, for each $\bv \in \cN_{S^\perp}$,
\[\P\bigg[\bigg|\sum_{j=1}^p (\check{\btheta}_{-\bv})_j (  \langle \bw_j, \bv \rangle^2 - d^{-1})| \geq t \;\bigg|\; \check{\btheta}_{-\bv} \bigg]
\leq 2e^{-c\min(\frac{t^2d^2}{\|\check\btheta_{-\bv}\|_2^2},\frac{td}{\|\check\btheta_{-\bv}\|_\infty})}.\]
Then applying
$\sup_{\bv}\|\check{\btheta}_{-\bv}\|_2\prec 1$ and
$\sup_{\bv}\|\check{\btheta}_{-\bv}\|_\infty\prec d^{-1/8}$
(Lemma~\ref{lemma:ConcentrationOfResiduallemmaAAAAAAAuniform}(c)) and taking a union bound over $\cN_{S^\perp}$ yields
\begin{equation}
\label{equ:fix1}
\sup_{\bv \in \mathcal{N}_{S^\perp}} \left| \sum_{j=1}^p(\check{\btheta}_{-\bv})_j (  \langle \bw_j, \bv \rangle^2 - d^{-1} ) \right|  \prec  d^{-1/8}.
\end{equation}
Thus $\sup_{\bv} S(\bv)^2 \prec d^{-1/4}$.
Moreover $\sup_{\bv} n^{-1}\sum_i\<\bx_i,\bv\>^4\prec 1$, so
$\sup_{\bv} a(\bv)\prec d^{-1/4}$.

For $b(\bv)$, introduce the matrix $\bC_\bv = \operatorname{diag}(\langle\bw_1, \bv\rangle, \dots, \langle\bw_p, \bv\rangle)$ and write
\[
    b(\bv)=\check{\btheta}_{-\bv}^\sT \bC_\bv \bW \left(\frac{1}{n}\sum_{i=1}^{n}\langle\bx_i,\bv\rangle^2 \bx_i\bx_i^\sT \right) \bW^\sT \bC_\bv\check{\btheta}_{-\bv}.
\]
From \eqref{eq:first_term_2_4_op_norm}, we have $\sup_{\bv \in \mathcal{N}_{S^\perp}} \left\| n^{-1} \sum_{i=1}^{n}\langle\bx_i,\bv\rangle^2 \bx_i\bx_i^\sT\right\|_\op \prec 1$. Thus it suffices to bound 
\[
\|  \bW^\sT \bC_\bv \check{\btheta}_{-\bv}\|_2 \leq 2\sup_{\bu \in \cN_{S^\perp}} |\bu^\sT \bW^\sT \bC_\bv \check{\btheta}_{-\bv}|.
\]
Let $\bu_{-\bv} = \bu - \<\bv,\bu\>\bv$. Then,
\begin{align}
 \bu^\sT \bW^\sT \bC_\bv \check{\btheta}_{-\bv}&=
 \sum_{j=1}^p (\check{\btheta}_{-\bv})_j
 \<\bu,\bw_j\>\<\bw_j,\bv\>
 \\ &= \<\bu,\bv\> \underbrace{\sum_{j =1}^p(\check{\btheta}_{-\bv})_j\<\bw_j,\bv\>^2}_{=S(\bv)}  + \| \bu_{-\bv} \|_2 \underbrace{\sum_{j =1}^p(\check{\btheta}_{-\bv})_j \<\bw_j,\bv\> \< \bw_j, \bu_{-\bv}/\| \bu_{-\bv} \|_2 \>}_{:=T(\bv,\bu)}.
 \end{align}
 The first term is bounded by $\sup_\bv |S(\bv)| \prec d^{-1/8}$ above. For the second term, let $\tilde \bu_{-\bv}=\bu_{-\bv}/\|\bu_{-\bv}\|_2$ and $\bw_{j,-\bv}=\bw-\<\bv,\bw\>\bv$. Then
 \begin{equation}\label{eq:partial_sum_check_theta_j_k}
 T(\bv,\bu)= \sum_{j =1}^p \left[ (\check{\btheta}_{-\bv})_j\frac{\<\bw_j,\tilde\bu_{-\bv}\>}{\|\bw_{j,-\bv}\|_2} \right] \cdot \| \bw_{j,-\bv} \|_2 \<\bw_j,\bv\>.
 \end{equation}
For $\bv,\bu$ fixed, $\| \bw_{j,-\bv} \|_2 \<\bw_j,\bv\>$ is independent of $(\check{\btheta}_{-\bv})_j \frac{\<\bw_j,\tilde\bu_{-\bv}\>}{\|\bw_{j,-\bv}\|_2}$, as $\tilde\bu_{-\bv}$ is a unit vector orthogonal to $\bv$ so the latter depends on $\bw_j$ only via $\bw_{j,-\bv}/\|\bw_{j,-\bv}\|_2$. Thus, conditional on $\bw_{j,-\bv}/\|\bw_{j,-\bv}\|_2$, $T(\bv,\bu)$ is a subgaussian random variable with subgaussian constant 
\[
\frac{1}{d}\sum_{j = 1}^p (\check{\btheta}_{-\bv})_j^2  \frac{\<\bw_j,\tilde\bu_{-\bv}\>^2}{\| \bw_{j,-\bv} \|_2^2} \leq d^{-1} \| \check{\btheta}_{-\bv} \|_4^2 \sqrt{\sum_{j = 1}^p \frac{\<\bw_j,\tilde\bu_{-\bv}\>^4}{\| \bw_{j,-\bv} \|_2^4}}
\]
We have $\sup_{\bv \in \cN_{S^\perp}} \|\check\btheta_{-\bv}\|_4^2 \leq \sup_{\bv \in \cN_{S^\perp}} \|\check\btheta_{-\bv}\|_\infty \|\check\btheta_{-\bv}\|_2
\prec d^{-1/8}$ by Lemma \ref{lemma:ConcentrationOfResiduallemmaAAAAAAAuniform}(a) and (c), and
$\sup_{\bv,\bu \in \cN_{S^\perp}} \sum_{j=1}^p \frac{\<\bw_j,\tilde\bu_{-\bv}\>^4}{\| \bw_{j,-\bv} \|_2^4} \prec 1$ by the same argument as in \eqref{eq:union-bound-example-sum-power-4} applied to the i.i.d.\ vectors $\bw_{j,-\bv}/\|\bw_{j,-\bv}\|_2$ on the sphere of dimension $d-2$ instead of
$\bw_j \in \S^{d-1}$. Then the above subgaussian constant is $\prec d^{-9/8}$ uniformly over
$\bu,\bv \in \cN_{S^\perp}$. Applying this subgaussian tail bound for $T(\bv,\bu)$ and taking a union bound over $\bv,\bu \in \cN_{S^\perp}$, we obtain $\sup_{\bv,\bu \in \cN_{S^\perp}} |T(\bv,\bu)| \prec d^{-1/16}$.
We conclude that $
\sup_{\bv \in \mathcal{N}_{S^\perp}} b(\bv) \prec d^{-1/8}.$ Combining this bound with the bounds on $a(\bv)$ and $S(\bv)$ yields that
\[\sup_{\bv \in \cN_{S^\perp}} \frac{1}{n}
\sum_{i=1}^n \Delta_{i,2}(\bv)^2 \prec d^{-1/8}.\]

The claim \eqref{eq:letussec} follows by combining these bounds on the linear and quadratic terms, and the bound \eqref{eq:surgate1} for the second summand in the initial decomposition.
\end{proof}

\begin{proof}[Proof of Lemma \ref{lem:surgate1} Eq.~\eqref{cor:leaveoneoutstability}]
Noting that $\bz-\bz_{-\bv}=\cV\bh-\cV_{-\bv}\bh_{-\bv}$, this follows by arguments similar to \eqref{eq:letussec} and the assumption $|\mu_1| \prec d^{-c}$. We omit the details for brevity.
\end{proof}

\begin{lemma}\label{lem:samereanson}
  There exists a constant $c>0$ only depending on the constants in the assumptions, such that
\begin{align}\label{eq:leaveoneoutstabilityreas}
    \sup_{\bv\in \mathcal{N}_S^\perp} \left| \frac{1}{n}\sum_{i=1}^n \nabla \ell \left(y_{i,-\bv}, \check{\btheta}_{-\bv}^\sT \mathcal{V}_{-\bv} \bh_{i,-\bv} \right)^\sT
    \begin{pmatrix}
      y_i - y_{i,-\bv} \\
      \check{\btheta}_{-\bv}^\sT \mathcal{V} \bh_i - \check{\btheta}_{-\bv}^\sT \mathcal{V}_{-\bv} \bh_{i,-\bv}
    \end{pmatrix} \right|
    \prec d^{-c}.
  \end{align}
  The same bound holds if we replace $\check{\btheta}_{-\bv}$  in $  \check{\btheta}_{-\bv}^\sT (\mathcal{V} \bh_i - \mathcal{V}_{-\bv} \bh_{i,-\bv})$ with $\hat{\btheta}$ or $\check{\btheta}$.
\end{lemma}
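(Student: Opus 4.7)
The approach is to apply the Cauchy-Schwarz inequality and reduce the statement to the uniform sum-of-squares bounds already established in Lemmas~\ref{lem:firstlemma} and~\ref{lem:surgate1}. Since $\|\nabla\ell\|_F \le \sC_1$ pointwise by Assumption~\ref{assumption:loss}, each inner product in the sum is at most $\sC_1$ times the Euclidean norm of the second vector. Applying Cauchy-Schwarz across the average in $i$ yields
\[
\sup_{\bv}\left|\frac{1}{n}\sum_i \nabla\ell(\cdots)^\sT \binom{y_i-y_{i,-\bv}}{\check\btheta_{-\bv}^\sT(\cV\bh_i-\cV_{-\bv}\bh_{i,-\bv})}\right|
\le \sC_1 \sup_{\bv}\left(\frac{1}{n}\sum_i (y_i-y_{i,-\bv})^2 + \frac{1}{n}\sum_i \bigl(\check\btheta_{-\bv}^\sT(\cV\bh_i-\cV_{-\bv}\bh_{i,-\bv})\bigr)^2\right)^{1/2}.
\]
Lemma~\ref{lem:firstlemma} bounds the first sum by $d^{-c}$ uniformly in $\bv \in \cN_{S^\perp}$, and Eq.~\eqref{eq:letussec} of Lemma~\ref{lem:surgate1} bounds the second by $d^{-1/8}+\mu_1^2$. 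Since $\mu_1^2 \prec d^{-2\sc_6}$ with $\sc_6>0$ by Assumption~\ref{assumption:activation}, both terms decay polynomially in $d$, and taking the square root yields the main claim.

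The extension to $\hat\btheta$ or $\check\btheta$ in the second component is more delicate, because Eq.~\eqref{eq:letussec} is stated only for $\check\btheta_{-\bv}$, and its proof critically uses the independence of $\check\btheta_{-\bv}$ from $\{\langle\bw_j,\bv\rangle\}_j$ --- a property that fails for the global minimizers. The plan is to split
\[
\cV\bh_i-\cV_{-\bv}\bh_{i,-\bv} = (\cV-\cV_{-\bv})\bh_{i,-\bv} + \cV(\bh_i-\bh_{i,-\bv}),
\]
control the first piece via Eq.~\eqref{eq:surgate1} (which the lemma statement guarantees for $\hat\btheta$ and $\check\btheta$) combined with the same Cauchy-Schwarz argument as above, and attack the second piece by exploiting the observation that $\langle\bx_i,\bv\rangle$ is independent of the entire LODO dataset, and therefore of both $\check\btheta_{-\bv}$ and $\partial_2\ell(y_{i,-\bv},\check\btheta_{-\bv}^\sT\cV_{-\bv}\bh_{i,-\bv})$. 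Writing $\cV(\bh_i-\bh_{i,-\bv}) = \mu_1\langle\bx_i,\bv\rangle\bW\bv + \mu_2\cV_2(\bH_2(\bx_i)-\bH_2(\bx_{i,-\bv}))$, the linear contribution becomes $\mu_1(\hat\btheta^\sT\bW\bv)\cdot\frac{1}{n}\sum_i \partial_2\ell(\cdots)\langle\bx_i,\bv\rangle$; the prefactor is $\prec 1$ by Lemma~\ref{lemma:ConcentrationOfResiduallemmaAAAAAAAuniform}(a), and the sample average is a mean-zero sub-Gaussian sum conditional on the LODO data, concentrating at $\prec d^{-1/2}$ uniformly in $\bv$ by Hoeffding's inequality and a union bound over $\cN_{S^\perp}$ (of cardinality at most $9^d$).

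The main obstacle is the quadratic piece $\mu_2\hat\btheta^\sT\cV_2(\bH_2(\bx_i)-\bH_2(\bx_{i,-\bv}))$ in the extension: the Hermite difference couples $\langle\bx_i,\bv\rangle$ with the orthogonal projection $\bx_{i,-\bv}$, so no single conditional-Gaussian argument suffices. I would address this by further decomposing in analogy with Step~2 of the proof of Eq.~\eqref{eq:letussec}, separately treating the contributions proportional to $\langle\bx_i,\bv\rangle^2$ and $\langle\bx_i,\bv\rangle\bx_{i,-\bv}$, and using the a priori bound $\|\mu_2\hat\btheta^\sT\cV_2\|_F \prec 1$ from Lemma~\ref{lemma:ConcentrationOfResiduallemmaAAAAAAAuniform}(a) in concert with Bernstein-type concentration and a union bound over $\cN_{S^\perp}$ to establish uniform polynomial decay in $d$.
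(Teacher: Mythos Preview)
Your proposal is correct and follows essentially the same route as the paper: Cauchy--Schwarz plus Lemmas~\ref{lem:firstlemma} and~\ref{lem:surgate1} for the main claim, and for the extension to $\hat\btheta,\check\btheta$ the same splitting $\cV\bh_i-\cV_{-\bv}\bh_{i,-\bv}=(\cV-\cV_{-\bv})\bh_{i,-\bv}+\cV(\bh_i-\bh_{i,-\bv})$ with the first piece handled by Eq.~\eqref{eq:surgate1} and the second via conditional independence of $g_i=\langle\bx_i,\bv\rangle$ from the LODO data. The paper carries out exactly the further decomposition you sketch for the quadratic piece, treating the $(g_i^2-1)$ term by Bernstein and the $g_i\bx_{i,-\bv}^\sT\bU_2\bv$ cross term by viewing $\frac{1}{n}\sum_i\partial_{\hat y}\ell(\bt_{i,-\bv})g_i\bx_{i,-\bv}$ as $\frac{1}{n}\bM\bg$ and applying Gaussian concentration conditional on $\bM$.
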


\begin{proof}
We bound \eqref{eq:leaveoneoutstabilityreas} by uniformly bounding each of the two contributions:
\begin{align}
    T_1 (\bv) &:= \frac{1}{n}\sum_{i=1}^n \partial_y \ell\left(y_{i,-\bv}, \check{\btheta}_{-\bv}^\sT \mathcal{V}_{-\bv}\bh_{i,-\bv}\right)(y_i - y_{i,-\bv}), \\
    T_2 (\bv) &:= \frac{1}{n}\sum_{i=1}^n \partial_{\hat y}\ell\left(y_{i,-\bv}, \check{\btheta}_{-\bv}^\sT \mathcal{V}_{-\bv} \bh_{i,-\bv}\right) \left( \check{\btheta}_{-\bv}^\sT \mathcal{V} \bh_i - \check{\btheta}_{-\bv}^\sT \mathcal{V}_{-\bv} \bh_{i,-\bv} \right).
\end{align}
For convenience, we denote $\bt_{i,-\bv} := (y_{i,-\bv}, \check{\btheta}_{-\bv}^\sT \mathcal{V}_{-\bv} \bh_{i,-\bv})$.

By Cauchy-Schwarz inequality, the Lipschitz property of $\ell$ (Assumption \ref{assumption:loss}), and Lemma \ref{lem:firstlemma},
\begin{align}
    \sup_{\bv \in \mathcal{N}_{S^\perp}} |T_1 (\bv) | \leq \sup_{\bv \in \mathcal{N}_{S^\perp}} \left( \frac{1}{n}\sum_{i=1}^n \partial_y \ell(\bt_{i,-\bv})^2 \right)^{1/2} \left( \frac{1}{n}\sum_{i=1}^n (y_i - y_{i,-\bv})^2 \right)^{1/2} \prec d^{-c}.
\end{align}
For $T_2 (\bv)$, the bound \eqref{eq:letussec} of  Lemma \ref{lem:surgate1}, together with Cauchy-Schwarz inequality and the Lipschitzness of $\ell$ again, gives:
\begin{align}
    \sup_{\bv \in \mathcal{N}_{S^\perp}} |T_2 (\bv) | \leq \sup_{\bv \in \mathcal{N}_{S^\perp}} \left( \frac{1}{n}\sum_{i=1}^n \partial_y \ell(\bt_{i,-\bv})^2 \right)^{1/2} \left( \frac{1}{n}\sum_{i=1}^n  \left( \check{\btheta}_{-\bv}^\sT \mathcal{V} \bh_i - \check{\btheta}_{-\bv}^\sT \mathcal{V}_{-\bv} \bh_{i,-\bv} \right)^2 \right)^{1/2} \prec d^{-c}.
\end{align}
However, the bound \eqref{eq:letussec}   does not apply for the optimizers $\hat \btheta$ and $\check \btheta$. Here, we show how to prove \eqref{eq:leaveoneoutstabilityreas} with the optimizer $\check \btheta$; the same argument works for $\hat \btheta$.

We introduce the intermediate term $\check{\btheta} ^\sT \mathcal{V} \bh_{i,-\bv}$ and decompose $T_2 = T_{2a} + T_{2b}$, where
\begin{align}
    T_{2a} (\bv) &:= \frac{1}{n}\sum_{i=1}^n \partial_{\hat y}\ell(\bt_{i,-\bv}) \left( \check{\btheta} ^\sT \mathcal{V} \bh_i - \check{\btheta} ^\sT \mathcal{V} \bh_{i,-\bv} \right), \\
    T_{2b} (\bv) &:= \frac{1}{n}\sum_{i=1}^n \partial_{\hat y}\ell(\bt_{i,-\bv}) \left( \check{\btheta} ^\sT \mathcal{V} \bh_{i,-\bv} - \check{\btheta} ^\sT \mathcal{V}_{-\bv} \bh_{i,-\bv} \right).
\end{align}
Cauchy-Schwarz inequality and the bound \eqref {eq:surgate1} of Lemma \ref{lem:surgate1} directly give
\begin{align}
   \sup_{\bv \in \mathcal{N}_{S^\perp}}  |T_{2b} (\bv)| &\leq  \sup_{\bv \in \mathcal{N}_{S^\perp}} \sqrt{\frac{1}{n}\sum_{i=1}^n \partial_{\hat y}\ell(\bt_{i,-\bv})^2} \sqrt{\frac{1}{n}\sum_{i=1}^n \left( \check{\btheta} ^\sT \mathcal{V}\bh_{i,-\bv} - \check{\btheta} ^\sT \mathcal{V}_{-\bv} \bh_{i,-\bv} \right)^2} \prec d^{-1/16}.
\end{align}

  For $T_{2a}(\bv)$,   denote $u_1 (\bv) = \mu_1\check{\btheta}^\sT \mathcal{V}_1 \bv$ and $\bU_2 = \mu_2\check{\btheta}^\sT \mathcal{V}_2 $, decompose
\begin{align}
    \check{\btheta}^\sT \mathcal{V} (\bh_i-\bh_{i,-\bv}) &= u_1 (\bv) \< \bv,\bx_i\> + \frac{\bv^\sT \bU_2 \bv }{\sqrt{2}}(\langle \bx_i, \bv \rangle^2 - 1) + \sqrt{2} \langle \bx_i, \bv \rangle   \bx_{i,-\bv}^\sT \bU_2  \bv .
\end{align}
For the first two terms we use that $u_1 (\bv) \leq \| \mu_1 \bW^\sT \check{\btheta} \|_2 \prec 1$ and $\bv^\sT \bU_2 \bv \prec 1$ uniformly over $\bv$ by Lemma \ref{lemma:ConcentrationOfResiduallemmaAAAAAAAuniform}(a). For fixed $\bv$, let $g_i := \<\bx_i,\bv\>$. Then $g_i$ is independent of $\bt_{i,-\bv}$.  By Lipschitzness of $\ell$, $  |\frac{1}{n} \sum_{i=1}^n \partial_{\hat y}\ell(\bt_{i,-\bv}) g_i|$ is upper bounded by $\frac{C}{\sqrt{n}}$ with probability at least $1-2e^{-cn}$ for appropriate constants $C,c>0$. By Bernstein's inequality, 
\begin{align}
  \left| \frac{1}{n} \sum_{i=1}^n \partial_{\hat y}\ell(\bt_{i,-\bv}) (g_i^2 - 1)\right| \ge \frac{C}{\sqrt{d}},
\end{align}
with probability at least $1-2e^{-Cd}$ for a sufficiently large constant $C>0$. For the third term, it contributes to the following term inside $T_{2a}(\bv)$:
\begin{align}
   \frac{1}{n} \sum_{i=1}^n \partial_{\hat y}\ell(\bt_{i,-\bv}) g_i \bx_{i,-\bv}^\sT \bU_2(\bv) \bv   \leq \left\|  \frac{1}{n} \sum_{i=1}^n \partial_{\hat y}\ell(\bt_{i,-\bv}) g_i \bx_{i,-\bv}^\sT\right\|_2 \| \bU_2 \bv\|_2.
\end{align}
By Lemma \ref{lemma:ConcentrationOfResiduallemmaAAAAAAAuniform}(a), we know that $\|\bU_2\bv\|_2 \prec 1$.
Construct a matrix $\bM := (\bM_1,\cdots,\bM_n)$ by defining its column vector as \begin{align}
    \bM_i := \partial_{\hat y}\ell(\bt_{i,-\bv}) \bx_{i,-\bv}.
\end{align}
Then it suffices to upper bound $\|\frac{1}{n}\bM \bg\|_2,$ where $\bg = (g_1,\ldots,g_n).$
Conditional on $\bM$, this term follows the distribution $\cN(0,\frac{1}{n^2}\bM\bM^\sT)$, where $\frac{1}{n^2}\bM\bM^\sT$ has rank $d$. Then by standard concentration inequality of Gaussian random vectors, we have \begin{align}
    &\left\|  \frac{1}{n} \sum_{i=1}^n \partial_{\hat y}\ell(\bt_{i,-\bv}) g_i \bx_{i,-\bv}^\sT\right\|_2^2 = \left\|\frac{1}{n}\bM\bg\right\|_2^2  
    \leq  \frac{1}{n^2}\|\bM\bM^\sT\|_{op}(d+dt),
\end{align}
with probability at least $1-2e^{-cd\min(t^2,t)}. $ By Assumption \ref{assumption:loss}, we have \begin{align}
    \|\bM\bM^\sT\|_{op} \leq Cn,
\end{align}
with probability at least $1-2e^{-cn}$. Consequently, for any $C>0$, there exists $C'>0$ such that $\|\frac{1}{n}\bM\bg\|_2 \leq \frac{C'}{\sqrt{d}}$ with probability at least $1-2e^{-Cd}$. Taking a union bound over $\cN_{S^\perp}$ and combining with the analyses of the first two terms, $\sup_{\bv \in \mathcal{N}_{S^\perp}} |T_{2a}| \prec d^{-1/4}$. This concludes the proof.
\end{proof}

\subsection{Proof of Theorem \ref{thm:inclusion_theta_CGE}}
\label{sec:abstract-CLT-conditioniic}

Lemma \ref{lemma:optinThetaPG}  shows that $\hbtheta \in \bTheta_\bW^{\sPG}(K) \cap \widehat S_q(K)$ with probability at least $1 - d^{-C}$. Thus, it remains to control the two conditions associated with order-$2$ chaos CLT.

\begin{proposition}\label{prop:abstractCLTcondition}
 For any constant $C>0$, there exist constants $K,\eps,K_\Gamma>0$ such that
with probability at least $1-d^{-C}$, the following holds. For each $q=1,\ldots,n$, choice of
$(\tilde \bz_q,\tilde y_q) \in \{(\bz_q^\sPG,y_q^\sPG),(\bz_q^\sCG,y_q^\sCG)\}$,
and $\hcR$ given by either $\hcR_{\setminus q}$ or $\hcR_{\cup q}$, the (unique) minimizer $\hat\btheta$ of $\hcR$ satisfies
\begin{align} \label{eq:op_norm_CGE_loc}
\bigl\|\bW_{\setminus S}^{\sT}\bD_{\hbtheta}\,
                 \bW_{\setminus S}\bigr\|_{\op}
              \leq&~ d^{-\eps}, \\
    \bigl\|\bW_S^{\sT}\bD_{\hbtheta}\,
                 \bW_{\setminus S}\bigr\|_{F}
              \leq&~ d^{-\eps}. \label{eq:fob_norm_CGE_loc}        
\end{align}
\end{proposition}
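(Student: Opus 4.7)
The plan is to reduce both bounds to a uniform-over-$\bv$ control on scalar quantities of the form $\sum_j \hat\theta_j \phi_j(\bv)$, and then exploit the LODO surrogate $\check{\btheta}_{-\bv}$, which by construction is independent of $\{\langle\bw_j,\bv\rangle\}_{j\in[p]}$. First I would apply a standard $1/4$-net argument over the unit sphere of $V_S^\perp$ (cardinality $\le 9^d$) to reduce \eqref{eq:op_norm_CGE_loc} and \eqref{eq:fob_norm_CGE_loc} to showing, uniformly over $\bv \in \mathcal{N}_{S^\perp}$ and $k \in S$,
\[
\sum_{j=1}^p \hat\theta_j \langle\bv,\bw_j\rangle^2,\qquad
\sum_{j=1}^p \hat\theta_j w_{jk} \langle\bv,\bw_j\rangle \;\prec\;d^{-\eps}.
\]

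For the first quantity I would use the three-way decomposition
\[
\sum_j \hat\theta_j \langle\bv,\bw_j\rangle^2
= d^{-1}\langle\hbtheta,\bones_p\rangle
+ \sum_j (\check{\btheta}_{-\bv})_j \bigl(\langle\bv,\bw_j\rangle^2-d^{-1}\bigr)
+ \bigl\langle \hbtheta-\check{\btheta}_{-\bv},\, \bzeta(\bv)\bigr\rangle,
\]
with $\bzeta(\bv)=(\langle\bv,\bw_j\rangle^2-d^{-1})_{j\in[p]}$. The first summand is $\prec d^{-1}$ by Lemma~\ref{lemma:sublevelgeometry}; the second is $\prec d^{-1/8}$ uniformly in $\bv$ by a conditional Bernstein estimate identical to that leading to \eqref{equ:fix1}, using the uniform bound $\|\check{\btheta}_{-\bv}\|_\infty\prec d^{-1/8}$ from Lemma~\ref{lemma:ConcentrationOfResiduallemmaAAAAAAAuniform}(c). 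Since $\|\bzeta(\bv)\|_2\prec 1$ after a union bound, Cauchy--Schwarz reduces the third summand to bounding $\sup_\bv\|\hbtheta-\check{\btheta}_{-\bv}\|_2$. The Frobenius bound is handled identically, with $\bzeta(\bv)$ replaced by $\bzeta'(k,\bv)=(w_{jk}\langle\bv,\bw_j\rangle)_{j\in[p]}$; the mean-zero property $\E[w_{jk}\langle\bv,\bw_j\rangle]=0$ is preserved because $\be_k\perp\bv$ for $k\in S$ and $\bv\in V_S^\perp$.

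The core stability estimate splits as $\|\hbtheta-\check{\btheta}_{-\bv}\|_2 \le \|\hbtheta-\check\btheta\|_2+\|\check\btheta-\check{\btheta}_{-\bv}\|_2$. The first piece follows from the optimality of $\check\btheta$ for $\cP_n=\widehat{\cR}_{n,p}+\kappa_d\|\cdot\|_\infty$ combined with strong convexity of $\widehat{\cR}_{n,p}$ (Lemma~\ref{lemma:sublevelgeometry}(c)): $\tfrac{\lambda}{4}\|\hbtheta-\check\btheta\|_2^2 \le \widehat{\cR}_{n,p}(\check\btheta)-\widehat{\cR}_{n,p}(\hbtheta) \le \kappa_d\|\hbtheta\|_\infty \prec d^{1/8-1/4}$, hence $\|\hbtheta-\check\btheta\|_2\prec d^{-1/16}$. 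For the second, I combine the optimality of $\check\btheta$ for $\cP_n$ with strong convexity of $\cP_{n,-\bv}$:
\[
\tfrac{\lambda}{4}\|\check\btheta-\check{\btheta}_{-\bv}\|_2^2
\le \cP_{n,-\bv}(\check\btheta)-\cP_{n,-\bv}(\check{\btheta}_{-\bv})
\le \Delta(\check{\btheta}_{-\bv})-\Delta(\check\btheta),
\qquad \Delta:=\cP_n-\cP_{n,-\bv}.
\]
A Taylor expansion of each $\ell(y_i,\btheta^\sT\bz_i)-\ell(y_{i,-\bv},\btheta^\sT\bz_{i,-\bv})$ around $(y_{i,-\bv},\btheta^\sT\bz_{i,-\bv})$ isolates a first-order piece bounded by Lemma~\ref{lem:samereanson} (for both $\btheta\in\{\check\btheta,\check{\btheta}_{-\bv}\}$ via the remark of that lemma) and a quadratic remainder bounded by Lemmas~\ref{lem:firstlemma} and~\ref{lem:surgate1}, using $|\mu_1|\prec d^{-\sc_6}$ from Assumption~\ref{assumption:activation} to absorb the $\mu_1^2$ factor in \eqref{eq:letussec}. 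The $\btau\cdot\bGamma$-perturbation between $\bGamma^{\bW}$ and $\bGamma^{\bW_{-\bv}}$ is a rank-$(d+1)$ spike term in $\mu_1\bW$ plus the difference of test losses, which is again $\prec d^{-c}$ by Lemma~\ref{lemma:sublevelgeometry}(b) together with $|\mu_1|\prec d^{-\sc_6}$. Altogether $\sup_\bv \|\check\btheta-\check{\btheta}_{-\bv}\|_2\prec d^{-c}$ for some $c>0$.

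The hard part will be the uniform control of $\|\check\btheta-\check{\btheta}_{-\bv}\|_2$ over the exponential-sized net $\mathcal{N}_{S^\perp}$. The introduction of the auxiliary $\kappa_d\|\cdot\|_\infty$ penalty in \eqref{eq:Pn-main}--\eqref{eq:Pn-v-main} is precisely what makes this feasible: it enforces the uniform bound $\|\check{\btheta}_{-\bv}\|_\infty\prec d^{-1/8}$ that every union-bound Bernstein step in Lemmas~\ref{lem:firstlemma}--\ref{lem:samereanson} depends upon, while introducing only the negligible discrepancy $\|\hbtheta-\check\btheta\|_2\prec d^{-1/16}$ between the original and regularized minimizers. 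Given the stability estimate, the conclusion follows by a union bound over $\mathcal{N}_{S^\perp}$ and $k\in S$, yielding \eqref{eq:op_norm_CGE_loc}--\eqref{eq:fob_norm_CGE_loc} for a constant $\eps>0$ depending only on the constants in the assumptions.
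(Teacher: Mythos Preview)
Your overall plan is correct and mirrors the paper's approach closely: the net reduction, the decomposition of the scalar sum, the use of $\check\btheta_{-\bv}$ and conditional Bernstein for the middle piece, and the bound $\|\hbtheta-\check\btheta\|_2\prec d^{-1/16}$ are all exactly what the paper does. There is, however, a genuine gap in how you control $\|\check\btheta-\check\btheta_{-\bv}\|_2$.

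Your inequality $\tfrac{\lambda}{4}\|\check\btheta-\check\btheta_{-\bv}\|_2^2\le\Delta(\check\btheta_{-\bv})-\Delta(\check\btheta)$ is fine, but bounding $\Delta(\check\btheta)$ by a Taylor expansion around $(y_{i,-\bv},\check\btheta^\sT\bz_{i,-\bv})$ does not work with the stated lemmas. First, the resulting first-order term has $\partial_{\hat y}\ell$ evaluated at $\check\btheta^\sT\bz_{i,-\bv}$, whereas the remark in Lemma~\ref{lem:samereanson} only permits replacing $\check\btheta_{-\bv}$ by $\check\btheta$ in the \emph{increment} $\check\btheta_{-\bv}^\sT(\cV\bh_i-\cV_{-\bv}\bh_{i,-\bv})$, not in the point where the derivative is taken; the proof of that lemma for $T_{2a}$ relies crucially on $\bt_{i,-\bv}$ being independent of $g_i=\<\bx_i,\bv\>$, which fails once $\check\btheta$ (depending on all of $\bW$ and $\bX$) enters the evaluation point. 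Second, the quadratic remainder requires $\tfrac{1}{n}\sum_i(\check\btheta^\sT(\bz_i-\bz_{i,-\bv}))^2\prec d^{-c}$, but the remark after Lemma~\ref{lem:surgate1} extends only \eqref{eq:surgate1}, not \eqref{eq:letussec}, to $\check\btheta$. Indeed, the linear contribution $\mu_1\<\bx_i,\bv\>\check\btheta^\sT\bW\bv$ gives only $(\mu_1\check\btheta^\sT\bW\bv)^2\prec 1$ from $\|\mu_1\check\btheta^\sT\bW\|_2\prec 1$, since there is no independence between $\check\btheta$ and $\bW\bv$ to improve this.

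The paper avoids this entirely by never bounding $\Delta(\check\btheta)$. Instead, it works directly with $\cP_n(\check\btheta)-\cP_{n,-\bv}(\check\btheta_{-\bv})$ and uses a \emph{one-sided} convexity inequality $\ell(y_i,\check\btheta^\sT\bz_i)-\ell(y_{i,-\bv},\check\btheta_{-\bv}^\sT\bz_{i,-\bv})\ge\partial_{\hat y}\ell(y_{i,-\bv},\check\btheta_{-\bv}^\sT\bz_{i,-\bv})(\check\btheta^\sT\bz_i-\check\btheta_{-\bv}^\sT\bz_{i,-\bv})+r_i$, combined with the subgradient KKT condition at $\check\btheta_{-\bv}$. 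After cancellation, the surviving cross term is $\tfrac{1}{n}\sum_i\partial_{\hat y}\ell(y_{i,-\bv},\check\btheta_{-\bv}^\sT\bz_{i,-\bv})\,\check\btheta^\sT(\bz_i-\bz_{i,-\bv})$---derivative at the LODO point, $\check\btheta$ only in the increment---which is precisely the form covered by the remark of Lemma~\ref{lem:samereanson}. Convexity being one-sided also eliminates any quadratic remainder, so the problematic $\ell_2$ estimate for $\check\btheta$ is never needed.
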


    \begin{proof}[Proof of Proposition \ref{prop:abstractCLTcondition} Equation \eqref{eq:op_norm_CGE_loc}] Recall that we defined $\mathcal{N}_{S^{\perp}}$ a $1/4$-net on the unit sphere orthogonal to the signal coordinates $S$. A standard covering argument (e.g., \cite[Lemma 4.4.1]{vershyninHighDimensionalProbabilityIntroduction2018}) gives  
\begin{equation}\label{eq:net-reduction}
  \bigl\|
  \proj_{S,\perp}  \bW^{\sT}\bD_{\hat{\btheta}}\bW\proj_{S,\perp}
  \bigr\|_{\op}
  \;\le\;
  2 \cdot \max_{\bv \in \mathcal{N}_{S^{\perp}}}
    \bigl|\,
      \bv^{\sT} \bW^{\sT} \bD_{\hat{\btheta}} \bW \bv
    \bigr|.
\end{equation}
Further recall that with high probability uniformly over $\{ \hbtheta,\check{\btheta},\check{\btheta}_{-\bv}\} $, we can remove the truncation in $\bGamma^\bW (\btheta)$ and $\bGamma^{\bW_{-\bv}} (\btheta)$, thus for the remainder of the proof we will assume that $\cT_{K_{\Gamma}} = {\rm id}$ by Lemma \ref{lemma:ConcentrationOfResiduallemmaAAAAAAAuniform} (b).

\paragraph*{Step 1: Bounding the distance between optimizers.} The optimality conditions for $\hat{\btheta}$ and $\check{\btheta}$ implies that $\hcR_{n,p}(\hat{\btheta}) \le \hcR_{n,p}(\check{\btheta})$ and $\cP_n (\check{\btheta}) \leq \cP_n (\hbtheta)$. Thus,
\begin{align*}
 \hcR_{n,p}(\check{\btheta}) + \kappa_d \|\check{\btheta}\|_{\infty} &\le  \hcR_{n,p}(\hat{\btheta}) + \kappa_d \|\hat{\btheta}\|_{\infty} 
\implies \hcR_{n,p}(\check{\btheta}) - \hcR_{n,p}(\hat{\btheta}) \le \kappa_d ( \|\hat{\btheta}\|_{\infty} - \|\check{\btheta}\|_{\infty} ).
\end{align*}
 By Lemma \ref{lemma:ConcentrationOfResiduallemmaAAAAAAAuniform}(c), $ \hcR_{n,p}(\check{\btheta}) - \hcR_{n,p}(\hat{\btheta})\prec \kappa_d d^{-1/4} = d^{-1/8}$. 
Next, we relate this difference in objective values to the squared $\ell_2$-distance between the optimizers. A second-order Taylor expansion of $\hcR_{n,p}$ around $\hat{\btheta}$ yields:
\[
\hcR_{n,p}(\check{\btheta}) - \hcR_{n,p}(\hat{\btheta}) =  \frac{1}{2} \int_0^1 (1-t) \< \nabla^2 \hcR_{n,p}(t \check{\btheta} + (1-t)\hbtheta), (\check{\btheta}-\hat{\btheta})^{\otimes 2}\> \de t \geq \frac{\lambda}{4} \| \check{\btheta}-\hat{\btheta}\|_2^2
\]
where we used the first order condition and Lemma \ref{lemma:sublevelgeometry}(c). We deduce that 
\begin{equation}\label{equ:step2bound}
  \|\check{\btheta}-\hat{\btheta}\|_2 \prec d^{-1/16} .
\end{equation}

\paragraph*{Step 2: Bounding $\cP_n (\check{\btheta}_{-\bv}) - \cP_{n,-\bv}(\check{\btheta}_{-\bv})$.}
Let us establish a bound on the difference between $\cP_n (\check{\btheta}_{-\bv})$ and the LODO objective $\cP_{n,-\bv}(\check{\btheta}_{-\bv})$ uniformly over $\bv \in \mathcal{N}_{S^\perp}$. Decompose
\begin{equation}
\label{eq:Pn_diff_decomp}
\cP_n(\check{\btheta}_{-\bv}) - \cP_{n,-\bv}(\check{\btheta}_{-\bv}) = \Delta_{\ell}(\check{\btheta}_{-\bv}) + \tau_1 \Delta_{\bGamma,1}(\check{\btheta}_{-\bv}) + \tau_2 \Delta_{\bGamma,2}(\check{\btheta}_{-\bv}),
\end{equation}
where
\begin{align*}
\Delta_{\ell}(\check{\btheta}_{-\bv}) &:= \frac{1}{n}\sum_{i=1}^n \left[ \ell(y_i, \check{\btheta}_{-\bv}^{\sT}\bz_i) - \ell(y_{i,-\bv}, \check{\btheta}_{-\bv}^{\sT}\bz_{i,-\bv}) \right], \\
\Delta_{\bGamma,s}(\check{\btheta}_{-\bv}) &:= \Gamma_s^{\bW}(\check{\btheta}_{-\bv}) - \Gamma_s^{\bW_{-\bv}}(\check{\btheta}_{-\bv}).
\end{align*}
By Lipschitz property of  $\ell$ (Assumption \ref{assumption:loss}), we get
\begin{align*}
|\Delta_{\ell}(\check{\btheta}_{-\bv})| 
&\le \frac{C}{n} \sum_{i=1}^n \left( |y_i - y_{i,-v}| + |\check{\btheta}_{-\bv}^{\sT}(\bz_i - \bz_{i,-v})| \right) \\
&\le C \left( \frac{1}{n}\sum_{i=1}^n (y_i - y_{i,-v})^2 \right)^{1/2} + C \left( \frac{1}{n}\sum_{i=1}^n \left( \check{\btheta}_{-\bv}^{\sT}(\bz_i - \bz_{i,-v}) \right)^2 \right)^{1/2}.
\end{align*}
Thus, by Lemma \ref{lem:firstlemma}, Lemma \ref{lem:surgate1}, and the assumption $|\mu_1| \prec d^{-c}$, there exists a constant $c>0$ such that
\[
\sup_{\bv \in \mathcal{N}_{S^\perp}} |\Delta_{\ell}(\check{\btheta}_{-\bv})|  \prec d^{-c}.
\]
For $\Delta_{\bGamma,1}$, we decompose the difference as  
\begin{align}\label{eq:decompo-delta_gamma-1}
\Delta_{\bGamma,1}(\check{\btheta}_{-\bv}) =&~ \mu_1^2 \check{\btheta}_{-\bv}^\sT \left( \bW\bW^\sT - \bW_{-\bv}\bW_{-\bv}^\sT \right)\check{\btheta}_{-\bv} \\
=&~ \mu_1^2\< \bv , \bW^\sT\check{\btheta}_{-\bv}\>^2 + 2 \mu_1\<\ba (\bv) , \bW_{-\bv}^\sT\check{\btheta}_{-\bv}\> + \| \ba (\bv) \|_2^2,
\end{align}
where $\ba (\bv)=\mu_1\check\btheta_{-\bv}^\top(\bW-\bW_{-\bv})$ is as defined preceding \eqref{eq:def_a_v_LODO}. Thus, from the proof of Lemma \ref{lem:surgate1} and by Lemma \ref{lemma:ConcentrationOfResiduallemmaAAAAAAAuniform}(a) and bound \eqref{eq:linear-term-bound}, we get
\[
\sup_{\bv \in \mathcal{N}_{S^\perp}}  |\Delta_{\bGamma,1}(\check{\btheta}_{-\bv})| \prec d^{-c}.
\]
For $\Delta_{\bGamma,2}$, using the pseudo-Lipschitz property of $\ell_\test$ (Assumption \ref{ass:test_loss}), the Lipschitz property of $\eta$, and Cauchy-Schwarz, 
\[
\sup_{\bv \in \mathcal{N}_{S^\perp}} | \Delta_{\bGamma,2} | \prec\sup_{\bv \in \mathcal{N}_{S^\perp}} \left\{ \E [| f -f_{-\bv}|^C] + \E [| \<\check{\btheta}_{-\bv} , \bz -\bz_{-\bv}\>|^C] \right\} \prec d^{-c},
\]
where we used \eqref{eq:LODO_exp_f_f_bv} in Lemma \ref{lem:firstlemma} and \eqref{cor:leaveoneoutstability} in Lemma \ref{lem:surgate1}. We deduce that
\begin{equation}\label{eq:objective_diff_bound_at_theta_v}
\sup_{\bv \in \mathcal{N}_{S}^{\perp}} \left| \cP_n(\check{\btheta}_{-\bv}) - \cP_{n,-\bv}(\check{\btheta}_{-\bv}) \right| \prec d^{-c}.
\end{equation}

\paragraph{Step 3: Control $\cP_{n,-\bv}(\check{\btheta}_{-\bv}) - \cP_{n}(\check{\btheta})$.} By convexity of $\ell$ with respect to its second argument,
\[
\ell(y_i, \check{\btheta}^{\sT}\bz_i) - \ell(y_{i,-\bv}, \check{\btheta}_{-\bv}^{\sT}\bz_{i,-\bv}) \geq \partial_{\hat y} \ell(y_{i,-\bv}, \check{\btheta}_{-\bv}^{\sT}\bz_{i,-\bv}) (\check{\btheta}^{\sT}\bz_i - \check{\btheta}_{-\bv}^{\sT}\bz_{i,-\bv}) + r_i(\bv),
\]
where $|r_i (\bv)| = | \ell(y_i, \check{\btheta}^{\sT}\bz_i) - \ell(y_{i,-\bv},  \check{\btheta}^{\sT}\bz_i)| \leq C |y_i - y_{i,-\bv}|$ by Assumption \ref{assumption:loss}. Further write $\|\check{\btheta}\|_2^2 - \|\check{\btheta}_{-\bv}\|_2^2 = \|\check{\btheta} - \check{\btheta}_{-\bv}\|_2^2 + 2 \check{\btheta}_{-\bv}^{\sT}(\check{\btheta} - \check{\btheta}_{-\bv})$. Thus we can decompose the difference as
\begin{equation}\label{eq:decompo_diff_cp_n_v}
\begin{aligned}
    \cP_n(\check{\btheta}) - \cP_{n,-\bv}(\check{\btheta}_{-\bv}) \geq &~ \frac{1}{n}\sum_{i=1}^n \partial_{\hat y} \ell(y_{i,-\bv}, \check{\btheta}_{-\bv}^{\sT}\bz_{i,-\bv}) (\check{\btheta}^{\sT}\bz_i - \check{\btheta}_{-\bv}^{\sT}\bz_{i,-\bv}) + R_n (\bv) \\
&~ + \frac{\lambda}{2}\|\check{\btheta} - \check{\btheta}_{-\bv}\|_2^2 + \lambda \check{\btheta}_{-\bv}^{\sT}(\check{\btheta} - \check{\btheta}_{-\bv}) + \kappa_d \left( \|\check{\btheta}\|_\infty - \|\check{\btheta}_{-\bv}\|_\infty \right) \\
&~+ \btau \cdot \left( \bGamma^{\bW}(\check{\btheta}) - \bGamma^{\bW_{-\bv}}(\check{\btheta}_{-\bv}) \right),
\end{aligned}
\end{equation}
where $R_n (\bv) = \frac{1}{n} \sum_{i =1}^n r_i (\bv)$. The first-order condition for optimality at $\check{\btheta}_{-\bv}$ is given by the subgradient inclusion:
\[
\mathbf{0} \in \partial \cP_{n,-\bv}(\check{\btheta}_{-\bv}) = \nabla \hcR_{n,-\bv}(\check{\btheta}_{-\bv}) + \kappa_d \partial \|\check{\btheta}_{-\bv}\|_\infty.
\]
This implies that there exists a subgradient vector $\bg_{-\bv} \in \partial \|\check{\btheta}_{-\bv}\|_\infty$ such that \begin{equation} \nabla \hcR_{n,-\bv}(\check{\btheta}_{-\bv}) = -\kappa_d \bg_{-\bv}. \label{eq:subgradient_condition}\end{equation}
By definition of the subdifferential of the $\ell_\infty$-norm, the vector $\bg_{-\bv}$ satisfies $\|\bg_{-\bv}\|_1 \le 1$ and, crucially, $\bg_{-\bv}^{\sT}\check{\btheta}_{-\bv} = \|\check{\btheta}_{-\bv}\|_\infty$. Taking the inner product of \eqref{eq:subgradient_condition} with the vector $(\check{\btheta} - \check{\btheta}_{-\bv})$ gives:
\begin{equation} \label{eq:kkt_inner_product}
\nabla \hcR_{n,-\bv}(\check{\btheta}_{-\bv})^{\sT}(\check{\btheta} - \check{\btheta}_{-\bv}) = -\kappa_d \bg_{-\bv}^{\sT}(\check{\btheta} - \check{\btheta}_{-\bv}).
\end{equation}
Injecting this identity in \eqref{eq:decompo_diff_cp_n_v} simplifies the expression of the lower bound:
\begin{equation}
\begin{aligned}
    \cP_n(\check{\btheta}) - \cP_{n,-\bv}(\check{\btheta}_{-\bv}) \geq &~ \frac{1}{n}\sum_{i=1}^n \partial_{\hat y} \ell(y_{i,-\bv}, \check{\btheta}_{-\bv}^{\sT}\bz_{i,-\bv})(
\check{\btheta}^{\sT}(\bz_i -  \bz_{i,-\bv})) + R_n (\bv) \\
&~ + \frac{\lambda}{2}\|\check{\btheta} - \check{\btheta}_{-\bv}\|_2^2 +  \kappa_d \left( \|\check{\btheta}\|_\infty -\bg_{-\bv}^\sT \check{\btheta} \right) \\
&~+ \btau \cdot \left( \bGamma^{\bW}(\check{\btheta}) - \bGamma^{\bW_{-\bv}}(\check{\btheta}_{-\bv}) \right) - [\btau \cdot \nabla_\btheta \bGamma^{\bW_{-\bv}}(\check{\btheta}_{-\bv})]^\sT ( \check{\btheta} - \check{\btheta}_{-\bv}).
\end{aligned}
\end{equation}
The first line is  $\prec d^{-c}$ uniformly over $\bv$ by Lemma \ref{lem:firstlemma} and Lemma \ref{lem:samereanson}. For the third line,
\[
\begin{aligned}
    &~\Gamma_1^\bW (\check{\btheta}) -  \Gamma^{\bW_{-\bv}}_1 (\check{\btheta}_{-\bv}) - \nabla_\btheta \Gamma_1^{\bW_{-\bv}}(\check{\btheta}_{-\bv})^\sT ( \check{\btheta} - \check{\btheta}_{-\bv}) \\
    =&~ \check{\btheta}^\sT (\bLambda - \bLambda_{-\bv}) \check{\btheta} + (\check{\btheta} - \check{\btheta}_{-\bv})^\sT \bLambda_{-\bv} (\check{\btheta} - \check{\btheta}_{-\bv}),
\end{aligned}
\]
where $\bLambda := \mu_0^2 \1 \1^\sT + \mu_1^2 \bW\bW^\sT$ and $\bLambda_{-\bv} := \mu_0^2 \1 \1^\sT + \mu_1^2 \bW_{-\bv}\bW_{-\bv}^\sT$. For the first term, we use the decomposition \eqref{eq:decompo-delta_gamma-1} and write
\[
\check{\btheta}^\sT (\bLambda - \bLambda_{-\bv}) \check{\btheta} = \mu_1^2\< \bv , \bW^\sT\check{\btheta}\>^2 + 2 \mu_1\<\ba (\bv) , \bW_{-\bv}^\sT\check{\btheta}\> + \| \ba (\bv) \|_2^2 \geq - 2 \| \mu_1 \bW^\sT \check{\btheta} \|_2\| \ba (\bv) \|_2,
\]
which is lower bounded by $-O_{\prec}(d^{-1/16})$ uniformly over $\bv$.
For $\Gamma_2^{\bW}$, we Taylor expand the test loss around the LODO data:
\begin{equation}\label{equ:taylorexpansionone}
\begin{aligned}
     \Delta \Gamma_2 (\bv) := &~\Gamma_2^\bW (\check{\btheta}) -  \Gamma^{\bW_{-\bv}}_2 (\check{\btheta}_{-\bv}) - \nabla_\btheta \Gamma_2^{\bW_{-\bv}}(\check{\btheta}_{-\bv})^\sT ( \check{\btheta} - \check{\btheta}_{-\bv}) \\
     =&~ \E \left[ \nabla \ell_{\test} \left(y^\sPG_{-\bv}, \<\check{\btheta}_{-\bv} , \bz^\sPG_{-\bv} \>\right) \begin{pmatrix}
y^\sPG - y_{-\bv}^\sPG \\
\<\check{\btheta} , \bz^\sPG - \bz_{-\bv}^\sPG \>
\end{pmatrix} \right] \\
&~ + \frac{1}{2} \E\left[ \left\<\nabla^2 \ell_{\test} (\tilde y, \tilde u), \begin{pmatrix}
y^\sPG - y_{-\bv}^\sPG \\
\<\check{\btheta} , \bz^\sPG \> - \<\check{\btheta}_{-\bv} , \bz_{-\bv}^\sPG \>
\end{pmatrix}^{\otimes 2} \right\>\right].
\end{aligned}
\end{equation}
By the same argument as in Lemma \ref{lem:samereanson}, the first order term is $\prec d^{-c}$ uniformly over $\bv$. For the second order term, by pseudo-Lipschitz property of the second derivative and a truncation argument, it is dominated uniformly over $\bv$ by
\[
\begin{aligned}
\E \left[ ( y^\sPG - y_{-\bv}^\sPG)^2\right] + \E[(\<\check{\btheta}  - \check{\btheta}_{-\bv}, \bz^\sPG \>^2] + \E[(\<\check{\btheta}_{-\bv} , \bz^\sPG - \bz_{-\bv}^\sPG \>)^2].
\end{aligned}
\]
The first and third terms are $\prec d^{-c}$ uniformly over $\bv$ by Lemma \ref{lem:firstlemma} and Lemma \ref{lem:surgate1}. The second term is bounded by
\[
\E[(\<\check{\btheta}  - \check{\btheta}_{-\bv}, \bz^\sPG \>^2] \prec  (\check{\btheta}  - \check{\btheta}_{-\bv})^\sT (\bLambda +\bI ) (\check{\btheta}  - \check{\btheta}_{-\bv}),
\]
where we used Corollary \ref{corollary:OperatorNormF2cdecompose}. Further note that we assume $|\tau_2| \leq \tau_1 / \log^{K_{\Gamma}} d$ and $|\mu_1|\prec d^{-c}$. Thus, 
\[
\begin{aligned}
    &~\frac{\lambda}{2}\|\check{\btheta} - \check{\btheta}_{-\bv}\|_2^2 + \btau \cdot \left( \bGamma^{\bW}(\check{\btheta}) - \bGamma^{\bW_{-\bv}}(\check{\btheta}_{-\bv}) \right) - [\btau \cdot \nabla_\btheta \bGamma^{\bW_{-\bv}}(\check{\btheta}_{-\bv})]^\sT ( \check{\btheta} - \check{\btheta}_{-\bv}) \\
    \succeq&~ - d^{-c} +\frac{\lambda}{4}\|\check{\btheta} - \check{\btheta}_{-\bv}\|_2^2 + \frac{\tau_1}{2} (\check{\btheta} - \check{\btheta}_{-\bv})^\sT \bLambda (\check{\btheta} - \check{\btheta}_{-\bv}).
\end{aligned}
\]
Putting the above bounds together, we obtain that uniformly over $\bv$,
\begin{align}\label{equ:controloperator}
      \cP_n(\check{\btheta}) - \cP_{n,-\bv}(\check{\btheta}_{-\bv})  \succ  \|\check{\btheta} - \check{\btheta}_{-\bv}\|_2^2 - d^{-c}.
    \end{align}

    \paragraph{Step 4: Bound on $\|\check{\btheta} - \check{\btheta}_{-\bv}\|_2$.} By optimality condition,
    \[
    \begin{aligned}
    0 \geq \cP_n(\check{\btheta}) - \cP_n(\check{\btheta}_{-\bv}) =&~  \cP_n(\check{\btheta}) - \cP_{n,-\bv}(\check{\btheta}_{-\bv}) + \cP_{n,-\bv}(\check{\btheta}_{-\bv})- \cP_n(\check{\btheta}_{-\bv})  \\
    \succ &~ \|\check{\btheta} - \check{\btheta}_{-\bv}\|_2^2 - d^{-c},
    \end{aligned}
    \]
    where on the second line, we used \eqref{equ:controloperator} from Step 3 and \eqref{eq:objective_diff_bound_at_theta_v} from Step 2. We deduce that
     \begin{equation}\label{eq:uniform_control_finalsas}
       \sup_{\bv \in \mathcal{N}_{S}^\perp} \|\hat \btheta - \check{\btheta}_{-\bv}\|_2^2 \prec d^{-c}.
      \end{equation}

 \paragraph*{Step 5: Concluding.} Decompose
 \begin{equation}\label{eq:final_sum_decomposition}
 \left| \sum_{j=1}^p \hat\theta_j \<\bw_j,\bv\>^2 \right| \leq  \left| \sum_{j=1}^p (\check{\btheta}_{-\bv})_j \<\bw_j,\bv\>^2 \right| +  \left| \sum_{j=1}^p (\hat\theta_j - (\check{\btheta}_{-\bv})_j) \<\bw_j,\bv\>^2 \right| 
 \end{equation}
 The first term corresponds to $S(\bv)$ and was bounded in the proof of \eqref{eq:letussec} in Lemma \ref{lem:surgate1} by $d^{-c}$ uniformly over $\bv$. For the second term, we use that 
 \[
\sup_{\bv \in \mathcal{N}_{S}^\perp}  \left| \sum_{j=1}^p (\hat\theta_j - (\check{\theta}_{-\bv})_j) \<\bw_j,\bv\>^2 \right|  \leq \sup_{\bv \in \mathcal{N}_{S}^\perp}  \| \hbtheta -\check{\btheta}_{-\bv}\|_2 \cdot \sup_{\bv \in \mathcal{N}_{S}^\perp} \sqrt{\sum_{j \in [p]} \<\bw_j,\bv\>^4}.
 \]
 The second factor is $\prec 1$ (e.g., see \eqref{eq:union-bound-example-sum-power-4}). For the first factor, combining \eqref{equ:step2bound} in Step 1 and \eqref{eq:uniform_control_finalsas} in Step 4, we obtain $\prec d^{-c}$. Thus,
 \[
 \| \bW_{\setminus S}^\sT \bD_{\hbtheta} \bW_{\setminus S}\|_\op \leq 2 \sup_{\bv \in \mathcal{N}_{S}^\perp} \left| \sum_{j=1}^p \hat\theta_j \<\bw_j,\bv\>^2 \right| \prec d^{-c},
 \]
which concludes the proof.
    \end{proof}

\begin{proof}[Proof of Proposition \ref{prop:abstractCLTcondition} Equation \eqref{eq:fob_norm_CGE_loc}] It suffices to bound for each $k \in [S]$, $\| \be_k^\sT \bW^\sT \bD_{\hbtheta} \bW_{\setminus S} \|_2$. Without loss of generality, fix $k = 1$. Then
\[
\| \be_1^\sT \bW^\sT \bD_{\hbtheta} \bW_{\setminus S}\|_2 \leq 2 \sup_{\bv \in \mathcal{N}_{S}^\perp} \left| \sum_{j = 1}^p \hat \theta_j w_{j,1} \<\bw_j,\bv\> \right|.
\]
Proceeding as above, we decompose
\[
\left| \sum_{j = 1}^p \hat \theta_j w_{j,1} \<\bw_j,\bv\> \right| \leq \left| \sum_{j = 1}^p (\check{\theta}_{-\bv})_j w_{j,1} \<\bw_j,\bv\> \right| + \left| \sum_{j = 1}^p (\hat \theta_j-(\check{\theta}_{-\bv})_j) w_{j,1} \<\bw_j,\bv\> \right|.
\]
The first term is bounded similarly as \eqref{eq:partial_sum_check_theta_j_k}, while the second term is bounded using Cauchy-Schwarz and the uniform bound on $\| \hbtheta - \check{\btheta}_{-\bv} \|_2$. We omit these repetitive details.
\end{proof}

\clearpage
\section{Universality of the Test Error}
\label{sec:TestError}

This section is dedicated to the proof of Theorem~\ref{thm:universality_test_error}, which establishes the universality of the test error. Denote
\[
\begin{aligned}
\hbtheta_\btau =&~ \argmin_{\btheta} \widehat{\cR}_{n,p} (\btheta; \btau,\bZ,\boldf), \qquad &&\widehat{\cR}^*_{n,p} ( \btau,\bZ,\boldf) = \widehat{\cR}^*_{n,p} (\hbtheta_\btau; \btau,\bZ,\boldf).
\end{aligned}
\]

\begin{proof}[Proof of Theorem \ref{thm:universality_test_error}]
    Consider $\btau$ as prescribed in Assumption \ref{ass:test_error}. First, note that
    \[
    \begin{aligned}
    &~ \left| \frac{\widehat{\cR}^*_{n,p} ( \btau,\bZ^\RF,\boldf^\RF) - \widehat{\cR}^*_{n,p} ( (\tau_1,0),\bZ^\RF,\boldf^\RF)}{\tau_2} - \rho \right| \\
    \leq &~ \left| \frac{\widehat{\cR}^*_{n,p} ( \btau,\bZ^\RF,\boldf^\RF) - \widehat{\cR}^*_{n,p} ( \btau,\bZ^\sCG,\boldf^\sCG)}{\tau_2}  \right| + \left| \frac{\widehat{\cR}^*_{n,p} ( (\tau_1,0),\bZ^\RF,\boldf^\RF) - \widehat{\cR}^*_{n,p} ( (\tau_1,0),\bZ^\sCG,\boldf^\sCG)}{\tau_2}  \right| \\
    &~ + \left| \frac{\widehat{\cR}^*_{n,p} ( \btau,\bZ^\sCG,\boldf^\sCG) - \widehat{\cR}^*_{n,p} ( (\tau_1,0),\bZ^\sCG,\boldf^\sCG)}{\tau_2} - \rho \right| \\
    \overset{\P}{\to}&~ 0,
    \end{aligned}
    \]
    where we used Theorem \ref{thm:universality_Perturbed_risk} and Assumption \ref{ass:test_error}.

    Moreover,
    \[
    \widehat{\cR}^*_{n,p} ( \btau,\bZ^\RF,\boldf^\RF) \leq \widehat{\cR}^*_{n,p} (\hbtheta^\RF_{(\tau_1,0)}; \btau,\bZ^\RF,\boldf^\RF) = \widehat{\cR}^*_{n,p} ((\tau_1,0),\bZ^\RF,\boldf^\RF) + \tau_2 \Gamma_2^\bW (\hbtheta^\RF_{(\tau_1,0)}).
    \]
    Thus, taking $\tau_2 \in \{\pm (\log d)^{-2K_\Gamma}\}$, we get
    \[
    | \Gamma_2^\bW (\hbtheta^\RF_{(\tau_1,0)}) - \rho | \leq \left| \frac{\widehat{\cR}^*_{n,p} ( \btau,\bZ^\RF,\boldf^\RF) - \widehat{\cR}^*_{n,p} ( (\tau_1,0),\bZ^\RF,\boldf^\RF)}{\tau_2} - \rho \right|.
    \]
    The same holds for the CGE model, and we deduce that
    \[
    \Gamma^\bW_2 (\hbtheta^\RF_{(\tau_1,0)}) \overset{\P}{\to} \rho, \qquad \Gamma^\bW_2 (\hbtheta^\sCG_{(\tau_1,0)}) \overset{\P}{\to} \rho.
    \]
    Let's show that 
    \[
    \Gamma^\bW_2 (\hbtheta^\RF_{(\tau_1,0)}) - \cR_\test (\hbtheta^\RF_{(0,0)}; \P_{\bz^\RF,f^\RF}) \overset{\P}{\to} 0.
    \]
    An identical proof yields $ \Gamma^\bW_2 (\hbtheta^\sCG_{(\tau_1,0)}) - \cR_\test (\hbtheta^\sCG_{(0,0)}; \P_{\bz^\sCG,f^\sCG}) \overset{\P}{\to} 0$, which concludes the proof.

    For simplicity, denote $\hbtheta^\RF_{\tau_1} := \hbtheta^\RF_{(\tau_1,0)}$. By Assumption \ref{ass:test_loss} and Lemma \ref{lemma:sublevelgeometry}, we have $\Gamma^\bW_2 (\hbtheta^\RF_{\tau_1}) - L_\bW ( \hbtheta^\RF_{\tau_1})\overset{\P}{\to} 0$ for $K_\Gamma$ chosen large enough but independent of $d$. Furthermore, by the CLT applied to $\hbtheta^\RF_{\tau_1}$ (Theorem \ref{theorem:RecallBoundedLipschitzFunctionSupErgetisotropic}) with a standard truncation argument, we obtain
    \[
     \underbrace{\E \left[ \ell_{\test} \left(y^\sPG, \<  \hbtheta^\RF_{\tau_1}  , \bz^\sPG \>\right)\Big| \bW\right]}_{=L_{\bW}(\hat\btheta_{\tau_1}^\RF)}  -  \underbrace{\E \left[ \ell_{\test} \left(y^\RF, \<  \hbtheta^\RF_{\tau_1}  , \bz^\RF \>\right)\Big| \bW\right]}_{\cR_\test (\hbtheta^\RF_{\tau_1}; \P_{\bz^\RF,f^\RF})}
      \overset{\mathbb P}{\longrightarrow}  0
    \]
    (where $(y^\sPG,\bz^\sPG)$ and $(y^\RF,\bz^\RF)$ denote a test sample independent of $\hat\btheta_{\tau_1}^\RF$ conditional on $\bW$).
    It remains to show that the test error at $\hbtheta^\RF_{\tau_1}$ converges in probability to the test error at $\hbtheta^\RF_0$. Using the pseudo-Lipschitzness assumption on the test loss (Assumption \ref{ass:test_loss}),
    \[
    \begin{aligned}
    &~\left| \E \left[ \ell_{\test} \left(y^\RF, \<  \hbtheta^\RF_{\tau_1}  , \bz^\RF \>\right)\Big| \bW\right] - \E \left[ \ell_{\test} \left(y^\RF, \<  \hbtheta^\RF_{0}  , \bz^\RF \>\right)\Big| \bW\right]\right| \\
    \leq&~ C \E\left[1 + |y^\RF|^C +| \<  \hbtheta^\RF_{\tau_1}  , \bz^\RF \>|^{C} + | \<  \hbtheta^\RF_{0}  , \bz^\RF \>|^{C} \Big|\bW \right]^{1/2} \E\left[ \<\bz^\RF, \hbtheta^\RF_{\tau_1}- \hbtheta^\RF_0\>^2 \Big|\bW \right]^{1/2}
    \end{aligned}
    \]
    Using Lemma \ref{lemma:sublevelgeometry}, for some constant $K'>0$ not depending on $\tau_1$ and $K_\Gamma$, the first term is bounded with probability at least $1 - d^{-1}$ by $(\log d)^{K'}$. For the second term, by Corollary \ref{corollary:OperatorNormF2cdecompose}, the bounds $|\mu_0 \bones_p^\sT \hbtheta^\RF_{\tau_1}| \prec 1$ and $|\mu_0 \bones_p^\sT \hbtheta^\RF_{0}| \prec 1$, and Lemma \ref{lemma:OperatorNormFk}, we have likewise with probability at least $1 -d^{-1}$ that
    \begin{equation}\label{eq:lipsch_RF_tau_1_0}
     \E\left[ \<\bz^\RF, \hbtheta^\RF_{\tau_1}- \hbtheta^\RF_0\>^2 \Big|\bW\right] \leq \|\bV_+^\sT (\hbtheta^\RF_{\tau_1}- \hbtheta^\RF_0)\|_2^2 + (\log d)^{K'} \| \hbtheta^\RF_{\tau_1}- \hbtheta^\RF_0\|_2^2.
    \end{equation}

    Denote $\hat\btheta^t := t \hbtheta^\RF_{\tau_1} + (1-t) \hbtheta^\RF_0$ and $\widehat{\cR}_{n,p} (\btheta;\tau_1) :=\widehat{\cR}_{n,p} (\btheta;(\tau_1,0),\bZ,\boldf)$. Using the first order optimality  condition, the second order Taylor expansion of the risk gives
    \[
    \widehat{\cR}_{n,p} ( \hbtheta_0^\RF; \tau_1) = \widehat{\cR}_{n,p} ( \hbtheta_{\tau_1}^\RF; \tau_1) + \int_0^1 (1-t) (\hbtheta^\RF_{\tau_1}- \hbtheta^\RF_0)^\sT \nabla^2 \widehat{\cR}_{n,p} ( \hbtheta^t; \tau_1) (\hbtheta^\RF_{\tau_1}- \hbtheta^\RF_0) \de t.
    \]
    Note that with probability at least $1-d^{-1}$, we have $\hbtheta^t \in \widehat{S} (K)$ for all $t \in [0,1]$ for a constant $K>0$ sufficiently large, and thus by Lemma \ref{lem:strong_convex-landscape} to follow, we have
    \[
    \int_0^1\nabla^2 \widehat{\cR}_{n,p} ( \hbtheta^t; \tau_1) \de t \succeq (\log d)^{-K'} ( \bV_+ \bV_+^\sT + \bI_p )
    \]
    for a constant $K'>0$ not depending on $\tau_1$ and $K_\Gamma$.
    We deduce that 
    \begin{equation}\label{eq:upper-bound_tau_1-0}
    \|\bV_+^\sT (\hbtheta^\RF_{\tau_1}- \hbtheta^\RF_0)\|_2^2 + \| \hbtheta^\RF_{\tau_1}- \hbtheta^\RF_0\|_2^2  \leq (\log d)^{K'} \left( \widehat{\cR}_{n,p} ( \hbtheta_0^\RF; \tau_1) - \widehat{\cR}_{n,p} ( \hbtheta_{\tau_1}^\RF; \tau_1) \right).
    \end{equation}
    Furthermore,
    \[
    \begin{aligned}
    \widehat{\cR}_{n,p} ( \hbtheta_0^\RF; \tau_1) =&~ \widehat{\cR}_{n,p} ( \hbtheta^\RF_0; 0) + \tau_1 \cT_{K_\Gamma} (\| \bV_+ \hbtheta_0^\RF\|_2^2) \leq \widehat{\cR}_{n,p} ( \hbtheta^\RF_{\tau_1}; 0) + \tau_1 \cT_{K_\Gamma} (\| \bV_+ \hbtheta_0^\RF\|_2^2) \\
    =&~ \widehat{\cR}_{n,p} ( \hbtheta^\RF_{\tau_1}; \tau_1) + \tau_1 \left( \cT_{K_\Gamma} (\| \bV_+ \hbtheta_0^\RF\|_2^2)- \cT_{K_\Gamma} (\| \bV_+ \hbtheta_{\tau_1}^\RF\|_2^2)\right)\\ 
    \leq&~  \widehat{\cR}_{n,p} ( \hbtheta^\RF_{\tau_1}; \tau_1) + \tau_1 (\log d)^{K'},
    \end{aligned}
    \]
    where the last line holds with probability at least $1 - d^{-1}$ for some constant $K'>0$ and any $K_\Gamma>K'$, by Lemma \ref{lemma:sublevelgeometry}. Combining the above display with \eqref{eq:upper-bound_tau_1-0} and \eqref{eq:lipsch_RF_tau_1_0}, we deduce that there exists a constant $K''>0$ not depending on $\tau_1$ or $K_\Gamma$ such that with probability at least $1 - Cd^{-1}$,
    \[
    \left| \E \left[ \ell_{\test} \left(y^\RF, \<  \hbtheta^\RF_{\tau_1}  , \bz^\RF \>\right)\Big| \bW\right] - \E \left[ \ell_{\test} \left(y^\RF, \<  \hbtheta^\RF_{0}  , \bz^\RF \>\right)\Big| \bW\right] \right| \leq \tau_1(\log d)^{K''}.
    \]
    Taking $K_\Gamma>K''$ and recalling that $\tau_1 = (\log d)^{-K_\Gamma}$ concludes the proof.
\end{proof}

The following lemma shows a version of the Hessian lower bound in Lemma \ref{lemma:sublevelgeometry}, with a leading factor $(\log d)^{-K'}$ for $\bV_+\bV_+^\top$ that does not depend on $\tau_1$ or $K_\Gamma$, using the additional condition of Assumption \ref{assumption:lsc}.

\begin{lemma}\label{lem:strong_convex-landscape}
Under Assumptions \ref{assumption:scaling}--\ref{assumption:activation} and \ref{assumption:lsc}, the following holds. Consider the sub-level set 
\[
\widehat{S} (K) = \left\{ \btheta : \frac{1}{n} \sum_{i = 1}^n \ell (y_i,\<\btheta,\bz_i\>) + \frac{\lambda}{2}\|\btheta \|_2^2 < (\log d)^K \right\},
\]
for data from either the RF or CG models. Let
$\widehat{\cR}_{n,p}(\btheta;(\tau_1,0))$ denote the empirical risk setting $\tau_2=0$.
Then for any constants $C,K>0$, there exists $K'>0$ such that for any sufficiently large constant $K_\Gamma>0$ and all $0\leq \tau_1\leq (\log d)^{-K_\Gamma}$, with probability at least $1 - d^{-C}$,
\[
\bH (\btheta) := \nabla^2 \widehat{\cR}_{n,p} (\btheta;(\tau_1,0)) \succeq \frac{1}{(\log d)^{K'}} \bV_+ \bV_+^\sT + \frac{\lambda}{2} \bI, \qquad \text{for all $\btheta \in \widehat{S} (K)$.}
\]
\end{lemma}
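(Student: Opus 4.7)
The plan is to prove the lemma in three steps: reducing to the loss Hessian, extracting a uniform pointwise lower bound on $\ell''$ on a large random subset via Assumption~\ref{assumption:lsc}, and establishing a restricted spectral lower bound on the sample covariance that does not degrade with $\tau_1$ or $K_\Gamma$.

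\emph{Reduction.} On the event where Lemma~\ref{lemma:sublevelgeometry}(a) holds, every $\btheta\in\widehat S(K)$ satisfies $\|\bV_+^\sT\btheta\|_2\leq(\log d)^{K_0}$ for some $K_0$ depending only on $C,K$. Taking $K_\Gamma$ larger than a constant depending only on $K_0$ ensures that the smoothed truncation $\cT_{K_\Gamma}$ is locally inactive, so $\tau_1\nabla^2\Gamma_1^\bW(\btheta)=2\tau_1\bV_+\bV_+^\sT\succeq 0$; since $\tau_2=0$, the full perturbation is PSD and can be dropped. By Lemma~\ref{lemma:yzbounds}, $\max_i|y_i|\prec 1$, and Lemma~\ref{lemma:predictionbound} provides a $\btheta$-dependent random subset $\widehat\cI(\btheta)\subseteq[n]$ with $|\widehat\cI(\btheta)|\geq c_0 n$ on which $|\langle\btheta,\bz_i\rangle|\leq(\log d)^{K_1}$ simultaneously over $\btheta\in\widehat S(K)$. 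Assumption~\ref{assumption:lsc} then gives $\ell''(y_i,\langle\btheta,\bz_i\rangle)\geq(\log d)^{-K_2}$ on $\widehat\cI(\btheta)$, reducing the lemma to the following \emph{uniform restricted spectral bound}: with probability at least $1-d^{-C}$, for every subset $\cI\subseteq[n]$ with $|\cI|\geq c_0 n$,
\[
\frac{1}{n}\sum_{i\in\cI}\bz_i\bz_i^\sT \succeq c\,\bV_+\bV_+^\sT
\]
for a constant $c>0$ depending only on the constants in the assumptions (and in particular independent of $\tau_1$ and $K_\Gamma$).

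\emph{Restricted spectral bound.} I would establish this via a ``spike-absorbing'' decomposition $\bz_i=\bV_+\widetilde\ba_i+\widetilde\bz_i^{>1}$, where $\widetilde\ba_i\in\R^{d+1}$ collects into the $\mathrm{col}(\bV_+)$-piece \emph{all} diverging spike contributions of $\bz_i$: the linear part $(1,\bx_i)$ together with the $\bones_p$-aligned spike of $\mu_2\bV_2\bh_2(\bx_i)$ and the $\bW$-aligned spike of $\mu_3\bV_3\bh_3(\bx_i)$ singled out by Corollary~\ref{corollary:OperatorNormF2cdecompose} and Lemma~\ref{lemma:OperatorNormFk}. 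With this grouping, the residual matrix $\widetilde\bZ^{>1}=[\widetilde\bz_1^{>1},\ldots,\widetilde\bz_n^{>1}]$ carries no diverging singular value, $\|\widetilde\bZ^{>1}\|_\op\prec d$, and $\widetilde\ba_i$ differs from $\ba_i=(1,\bx_i)$ only by mean-zero polynomials of small norm, so the uniform minimum-singular-value bound \eqref{eq:uniformlowersingularvalue} transfers (up to a multiplicative constant) and yields $\sum_{i\in\cI}\bV_+\widetilde\ba_i(\bV_+\widetilde\ba_i)^\sT\succeq n(\log d)^{-K_3}\bV_+\bV_+^\sT$ simultaneously over $\cI$.

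\textbf{Main obstacle.} The difficulty is combining this with the cross and residual terms of $\bz_i\bz_i^\sT=(\bV_+\widetilde\ba_i+\widetilde\bz_i^{>1})(\cdot)^\sT$ without losing the spike structure. A naive matrix inequality $(\bA{+}\bB)(\bA{+}\bB)^\sT\succeq(1{-}\varepsilon)\bA\bA^\sT-(\varepsilon^{-1}{-}1)\bB\bB^\sT$ subtracts $(\varepsilon^{-1}{-}1)\widetilde\bZ^{>1}(\widetilde\bZ^{>1})^\sT$, whose operator norm $\asymp d^2$ is comparable to $\|\bV_+\bV_+^\sT\|_\op$ and far too large to be absorbed by the $\lambda\bI$ buffer. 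I plan to keep the PSD residual $\sum_i\widetilde\bz_i^{>1}(\widetilde\bz_i^{>1})^\sT$ on the right-hand side and control only the cross term $\sum_{i\in\cI}\bV_+\widetilde\ba_i(\widetilde\bz_i^{>1})^\sT$ by matrix-Bernstein concentration. The $L^2$-orthogonality of Hermite chaoses of degree $1$ and degree $\geq 2$ makes the population mean of each summand vanish, so the sum concentrates around zero; combining this with an $\varepsilon$-net on the $(d{+}1)$-dimensional subspace $\mathrm{col}(\bV_+)$ (where the deviation is measured in the $\bV_+\bV_+^\sT$-weighted norm rather than the operator norm on $\R^p$) and a combinatorial union bound over the $\binom{n}{\lfloor c_0 n\rfloor}$ subsets~$\cI$ yields a uniform cross-term bound that is of lower order than the leading $n(\log d)^{-K_3}\bV_+\bV_+^\sT$. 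Closing this concentration argument uniformly in~$\cI$ is the key technical step; granting it, $(\ast)$ follows and the lemma is proved.
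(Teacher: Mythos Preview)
Your reduction is correct and matches the paper's, but the ``uniform restricted spectral bound'' you reduce to is where the argument breaks. First, the target inequality $\frac{1}{n}\sum_{i\in\cI}\bz_i\bz_i^\sT\succeq c\,\bV_+\bV_+^\sT$ (with no $\lambda\bI$ on the left) is simply false in the regime $c_0n<p$, which is permitted by Assumption~\ref{assumption:scaling}: since $|\cI|<p$, there exists a nonzero $\bw\in\R^p$ with $\bw^\sT\bz_i=0$ for all $i\in\cI$, and there is no reason for such $\bw$ to lie in $\ker(\bV_+^\sT)$ (the span of $\{\bz_i:i\in\cI\}$ does not contain $\mathrm{col}(\bV_+)$ once $\sigma$ has nonlinear components). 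Second, even where the target could hold, your proposed proof of the cross-term bound fails: the union bound is over $\binom{n}{\lfloor c_0n\rfloor}\asymp e^{cd^2}$ subsets, and concentration of $\sum_{i\in\cI}(\bw^\sT\bV_+\widetilde\ba_i)(\bw^\sT\widetilde\bz_i^{>1})$ relies on sign cancellation across $i$ which an adversarial $\cI$ destroys --- selecting the $c_0n$ most negative summands yields a cross term of order $n\|\bV_+^\sT\bw\|_2\|\bw\|_2$, not $o(n\|\bV_+^\sT\bw\|_2^2)$ when $\|\bw\|_2\gtrsim\|\bV_+^\sT\bw\|_2$. Matrix Bernstein cannot deliver $e^{-cd^2}$ tails at the scale you need here.

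The paper avoids both issues by \emph{not} passing to a subset. It keeps the full $\ell''$-weighted sum and writes $\bz_i=\bigl(\bV_+\,\bigm|\,\bI_p\bigr)\binom{\bu_i}{\bv_i}$, so that
\[
\frac{1}{n}\sum_i \ell''\bz_i\bz_i^\sT+\lambda\bI
=\bigl(\bV_+\,\bigm|\,\bI\bigr)\,\bA\,\bigl(\bV_+\,\bigm|\,\bI\bigr)^\sT+\tfrac{\lambda}{2}\bI,
\qquad
\bA=\frac{1}{n}\sum_i\ell''\binom{\bu_i}{\bv_i}\binom{\bu_i}{\bv_i}^{\!\sT}+\begin{pmatrix}\bzero&\bzero\\\bzero&\tfrac{\lambda}{2}\bI\end{pmatrix},
\]
and then lower-bounds $\lambda_{\min}(\bA)$ by a Schur-complement argument: $\lambda_{\min}(\bA_{11})\succ 1$ comes from the anticoncentration/covering argument you already cite (applied to the full $\ell''$-weighted sum, using the curvature on $\widehat\cI$ only to bound $\bA_{11}$ from below); the Schur complement satisfies $\bS_{11}\succeq\tfrac{\lambda}{2}\bI$ directly from the regularization; and the off-diagonal enters only through $\|\bA_{11}^{-1}\bA_{12}\|_\op$, which is controlled by $\lambda_{\min}(\bA_{11})^{-1/2}\|\bD\|_\op^{1/2}\|\bU_2\|_\op\prec 1$ using the \emph{full-sample} operator-norm bounds on $\bV_{2c}\bh_2(\bX)$ and $\bV_k\bh_k(\bX)$ --- no union bound over subsets is needed. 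The $\tfrac{\lambda}{2}\bI$ in the $(2,2)$ block is precisely what your reduction discards, and it is what makes the cross term harmless.
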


\begin{proof} Consider the RF model with $\bz_i := \bz_i^\RF$. The CG model follows a similar argument.
By Lemma \ref{lemma:sublevelgeometry}, for $K_{\Gamma}$ bigger than a constant, $\Gamma_1^\bW (\btheta) =   \| \bV_+^\sT \btheta \|_2^2$ for all $\btheta \in \widehat{S} (K)$ with probability $1 - d^{-C}$. Then, since the loss $\ell(\cdot)$ is convex,
we have $\bH(\btheta) \succeq \lambda\bI$. Noting that $\mu_0 \neq 0$ by Assumption \ref{assumption:activation}, we may
write 
\[\bz_i=\begin{pmatrix} \bV_+ & \bI_p \end{pmatrix} \begin{pmatrix} \bu_i \\ \bv_i
\end{pmatrix}\]
where
\begin{align*}
\bu_i&=\begin{pmatrix}
1+\frac{\mu_2}{d\mu_0}\sum_{j=1}^d \frac{x_{ij}^2-1}{\sqrt{2}}\\ 
          \bx_i
\end{pmatrix},
\quad \bv_i=\mu_2 \bV_{2c}\bh_2(\bx_i) + \mu_3\bV_3\bh_3(\bx_i)+\ldots
+\mu_D\bV_D\bh_D(\bx_i).
\end{align*}
Then
\begin{align}
 \bH(\btheta)
&\succeq\frac{1}{n}\sum_{i=1}^n \ell''(y_i,\<\btheta,\bz_i\>)
\bz_i\bz_i^\top+\lambda\bI\\
&=\begin{pmatrix} \bV_+ & \bI \end{pmatrix}
\underbrace{\left(\frac{1}{n}\sum_{i=1}^n
\ell''(y_i,\<\btheta,\bz_i\>)\begin{pmatrix} \bu_i \\ \bv_i \end{pmatrix}
\begin{pmatrix} \bu_i \\ \bv_i \end{pmatrix}^\top
+\begin{pmatrix} \bzero & \bzero \\
\bzero & \frac{\lambda}{2}\bI \end{pmatrix}\right)}_{:=\bA}
\begin{pmatrix}\bV_+ & \bI \end{pmatrix}^\top+\frac{\lambda}{2} \bI.\label{eq:Hlowerbound}
\end{align}

It remains to bound the smallest eigenvalue of $\bA$. Let us write
\[\bA=\begin{pmatrix} \bA_{11} & \bA_{12} \\ \bA_{21} &
\bA_{22}\end{pmatrix},
\qquad \bA_{11}=\frac{1}{n}\sum_{i=1}^n
\ell''(y_i,\<\btheta,\bz_i\>)\bu_i\bu_i^\top.\]
Then
\[\bA=\begin{pmatrix} \bA_{11} & \bA_{12} \\ \bA_{21} &
\bA_{22}\end{pmatrix}
=\underbrace{\begin{pmatrix} \bI & \bzero \\ \bA_{21}\bA_{11}^{-1} & \bI
\end{pmatrix}}_{:=\bL}
\begin{pmatrix} \bA_{11} & \boldsymbol{0} \\ \boldsymbol{0} & \bS_{11}
\end{pmatrix} \underbrace{\begin{pmatrix} \bI & \bA_{11}^{-1}\bA_{12} \\
\bzero & \bI \end{pmatrix}}_{:=\bL^\top}\]
where $\bS_{11}$ is the Schur-complement of the upper left block,
$\bS_{11}=\bA_{22}-\bA_{21}\bA_{11}^{-1}\bA_{12}$. Then
\begin{equation}\label{eq:lambdaminschur}
\lambda_{\min}(\bA) \ge \min\left\{\lambda_{\min}(\bA_{11}),
\lambda_{\min}(\bS_{11})\right\} \cdot \sigma_{\min}(\bL)^2,
\end{equation}
where $\lambda_{\min}(\cdot)$ denotes the smallest eigenvalue, and
$\sigma_{\min}(\bL)$ is the smallest singular value of $\bL$.

By Lemma \ref{lemma:predictionbound},
the bound $\|\by\|_\infty \prec 1$, Lemma \ref{lemma:yzbounds}, and
the local strong convexity condition of Assumption \ref{assumption:lsc}, we have for any constants $C,K>0$, there exist $c_0,K'>0$ such that
with probability at least $1-d^{-C}$, every $\btheta \in \widehat S(K)$
satisfies
\begin{equation}\label{eq:losscurvaturebound}
\frac{1}{n}\sum_{i=1}^n \1\Big\{\ell''(y_i,\<\btheta,\bz_i\>)
\geq \frac{1}{(\log d)^{K'}}\Big\} \geq c_0.
\end{equation}
To bound $\lambda_{\min}(\bA_{11})$, given any constant $C>0$, let
$c_0,K'>0$ be the constants of \eqref{eq:losscurvaturebound},
and let $\widehat \cI \subseteq [n]$ be those indices for
which $\ell''(y_i,\<\btheta,\bz_i\>) \geq 1/(\log d)^{K'}$. Then
\[\bA_{11} \succeq
\frac{1}{n(\log d)^{K'}}\sum_{i \in \widehat \cI} \bu_i\bu_i^\top.\]
and \eqref{eq:losscurvaturebound} ensures
\[\P[|\widehat \cI| \geq c_0n \text{ for all } \btheta \in \widehat S(K)] \geq
1-d^{-C}.\]
Furthermore, we have $|\mu_2| \prec 1$ and $|\mu_0|^{-1} \prec 1$ by Assumption
\ref{assumption:activation}, and $\sum_{j=1}^d (x_{ij}^2-1)/d \prec d^{-1/2}$.
Then, setting $m=\lfloor c_0 n \rfloor$ and $\delta=(\log d)^{-K'}$,
for any fixed unit vector $\bw \in \S^d \subset \R^{d+1}$,
we may check analogously to
\eqref{eq:anticoncentration} that $\P[\<\bw,\bu_i\>^2 \leq 2n\delta/m]
\leq 2C_0\sqrt{2n\delta/m}$ for a constant $C_0>0$.
Then the same argument as leading to \eqref{eq:uniformlowersingularvalue}
shows that for all large $d$, we have
\[\P\left[\inf_{\bw \in \S^d} \bw^\top\left(\frac{1}{n}
\sum_{i \in \widehat \cI}
\bu_i\bu_i^\top \right)\bw \leq \frac{1}{2(\log d)^{K'}}
\text{ for all } \btheta \in \widehat S(K)\right]
\leq 2d^{-C}.\]
Combining with the above, $\lambda_{\min}(\bA_{11})$ is lower bounded as
\begin{equation}\label{eq:lambdaminA11}
\lambda_{\min}(\bA_{11}) \succ 1 \text{ simultaneously over }
\btheta \in \widehat S(K).
\end{equation}

To bound $\lambda_{\min}(\bS_{11})$, note
that if $\bA \succeq \bB$ and $\bB$ is strictly positive-definite,
then $\bA^{-1} \preceq \bB^{-1}$,
so $[\bA^{-1}]_{22} \preceq [\bB^{-1}]_{22}$ and hence
$\bA_{22}-\bA_{21}\bA_{11}^{-1}\bA_{12}
=([\bA^{-1}]_{22})^{-1} \succeq ([\bB^{-1}]_{22})^{-1}
=\bB_{22}-\bB_{21}\bB_{11}^{-1}\bB_{12}$. This shows that $\bS_{11} \succeq
(\lambda/2)\bI$, so
\begin{equation}\label{eq:lambdaminS11}
\lambda_{\min}(\bS_{11}) \geq \lambda/2.
\end{equation}

Finally, to bound $\sigma_{\min}(\bL)$, observe that
\[\bA_{11}^{-1}\bA_{12}
=\left(\frac{1}{n}\sum_{i=1}^n \ell''(y_i,\<\btheta,\bz_i\>)
\bu_i\bu_i^\top\right)^{-1}
\left(\frac{1}{n}\sum_{i=1}^n \ell''(y_i,\<\btheta,\bz_i\>)
\bu_i\bv_i^\top\right)
:=(\bU_1\bD\bU_1^\top)^{-1}(\bU_1\bD\bU_2)\]
where we set $\bU_1=n^{-1/2}[\bu_1,\ldots,\bu_n]$,
$\bU_2=n^{-1/2}[\bv_1,\ldots,\bv_n]$, and
$\bD=\diag(\{\ell''(y_i,\<\btheta,\bz_i\>)\}_{i=1}^n)$.
Then $\|\bA_{11}^{-1}\bA_{12}\|_\op
\leq \|(\bU_1\bD\bU_1^\top)^{-1}\bU_1\bD^{1/2}\|_\op
\|\bD\|_\op^{1/2}\|\bU_2\|_\op$.
We have $\|(\bU_1\bD\bU_1^\top)^{-1}\bU_1\bD^{1/2}\|_\op
=[\lambda_{\min}(\bU_1\bD\bU_1^\top)]^{-1/2}=[\lambda_{\min}(\bA_{11})]^{-1/2}
\prec 1$ by \eqref{eq:lambdaminA11}.
By Assumption \ref{assumption:loss} for $\ell(\cdot)$, we have $\|\bD\|_\op
\prec 1$. By Corollary \ref{corollary:OperatorNormF2cdecompose},
Lemma \ref{lemma:BernsteinInequalityHermiteFeatures}, and
Lemma \ref{lemma:OperatorNormZkFk} which show
$\|\bV_{2c}\|_\op \prec 1$, $\|\bh_2(\bX)\|_\op \prec d$, and
$\|\bV_k\bh_k(\bX)\|_\op \prec d$ for $k \geq 3$, we have $\|\bU_2\|_\op \prec
1$. This implies that simultaneously over $\btheta \in \widehat S(K)$,
\begin{equation}\label{eq:A11invA12}
\|\bA_{11}^{-1}\bA_{12}\|_\op \prec 1.
\end{equation}
For any unit vector $\bw=(\bw_1,\bw_2)$, note that $\|\bL^\top \bw\|_2^2
=\|\bw_1+\bA_{11}^{-1}\bA_{12}\bw_2\|_2^2+\|\bw_2\|_2^2$.
Supposing that
$\|\bA_{11}^{-1}\bA_{12}\|_\op \leq (\log d)^{K'}$, 
if $\|\bw_2\|_2 \leq 1/2(\log d)^{K'}$ then
$\|\bL^\top \bw\|_2^2 \geq
\|\bw_1+\bA_{11}^{-1}\bA_{12}\bw_2\|_2^2 \geq c$ for a constant $c>0$,
while if $\|\bw_2\|_2 \geq 1/2(\log d)^{K'}$ then
$\|\bL^\top \bw\|_2^2 \geq \|\bw_2\|_2^2 \geq 1/4(\log d)^{2K'}$.
Thus, \eqref{eq:A11invA12} implies
\begin{equation}\label{eq:sigmaminL}
\sigma_{\min}(\bL) \succ 1 \text{ simultaneously over } 
\btheta \in \widehat S(K).
\end{equation}
Applying \eqref{eq:lambdaminA11}, \eqref{eq:lambdaminS11}, and
\eqref{eq:sigmaminL} to \eqref{eq:lambdaminschur} shows our desired lower bound
the smallest eigenvalue for $\bA$,
\begin{equation}\label{eq:lambdaminA}
\lambda_{\min}(\bA) \succ 1 \text{ simultaneously over } 
\btheta \in \widehat S(K).
\end{equation}
The lemma follows by applying this to \eqref{eq:Hlowerbound}.


\end{proof}

\newpage

\section{Technical Background}
\label{sec:TechnicalBackground}

In this section, we provide additional background on Hermite polynomials and prove the technical results on random matrix concentration that are used thoughout the proofs.
\subsection{Hermite Polynomials}\label{section:HermitePolynomials}


We denote the orthonormal
Hermite polynomials of a single variable $x \in \R$ by
\begin{equation}
\He_k(x)=\frac{(-1)^k}{\sqrt{k!}} e^{\frac{x^2}{2}}
\frac{d^k}{dx^k}e^{-\frac{x^2}{2}} \text{ for } k=0,1,2,\ldots
\end{equation}
These have the following basic properties:
\begin{itemize}
\item $\He_k(x)$ is a polynomial of degree $k$, with leading coefficient of
$x^k$ equal to $1/\sqrt{k!}$.
\item (orthonormality) If $X \sim \normal(0,1)$ then $\E[\He_k(X)\He_\ell(X)]=\1\{k=\ell\}$.
\item (sign symmetry) $\He_k(-x) = (-1)^k\He_k(x)$.
\item (differentiation rule) $\He_k'(x) = \sqrt{k}\,\He_{k-1}(x)$.
\item (three-term recurrence) $\sqrt{k+1}\,\He_{k+1}(x) = x\,\He_k(x) - \sqrt{k}\,\He_{k-1}(x)$.
\end{itemize}
We denote the corresponding multivariate Hermite polynomial of $\bx \in \R^d$ by
\begin{equation}
\He_{\bk}(\bx) = \prod_{i=1}^d \He_{k_i}(x_i)
\text{ for } \bk \in \{0,1,2,\ldots\}^d.
\end{equation}
The total degree of $\He_\bk(\bx)$ is given by $\|\bk\|_1:=k_1+\ldots+k_d$. The
above orthonormality of $\{\He_k\}_{k \geq 0}$ implies also the orthonormality
of $\{\He_{\bk}\}_{\bk \in \{0,1,2,\ldots\}^d}$, i.e.\ if
$\bx \sim \normal(0,\id)$, then
\[\E[\He_\bk(\bx)\He_{\pmb{\ell}}(\bx)]=\1\{\bk=\pmb{\ell}\}.\]
The number of multi-indices with $\|\bk\|_1=k$ is
$B_{d,k}=\binom{d+k-1}{k}$.
We collect the corresponding degree-$k$ multivariate Hermite polynomials
into the vector
\begin{equation}
        \label{eq:HermiteVectorunit}
\bh_k(\bx)=(\He_\bk(\bx))_{\|\bk\|_1=k} \in \R^{B_{d,k}}, 
\end{equation}
where the ordering of its coordinates will be clear from context.

Let $(\R^d)^{\odot k}$ denote the space of symmetric tensors in $(\R^d)^{\otimes
k}$. For some arguments, it will be convenient to identify $\bh_k(\bx)$ as an
element of $(\R^d)^{\odot k}$ via the following isometry
$\iota:\R^{B_{d,k}} \to (\R^d)^{\odot k}$: For any index tuple
$\bi=(i_1,\cdots,i_k) \in [d]^k$, let
\[\operatorname{type}(\bi) \in \{\bk \in \{0,1,2,\ldots\}^d:\|\bk\|_1=k\}\]
count the number of occurrences of each distinct index $i \in [d]$, e.g.\ for
$k=3$ and $\bi=(1,3,1)$, $\operatorname{type}(\bi)=(2,0,1,0,\ldots,0)$. Then
for any $\bv \in \R^{B_{d,k}}$ (indexed by $\{\bk:\|\bk\|_1=k\}$), its image
$\iota(\bv) \in (\R^d)^{\odot k}$ is the symmetric tensor with entries
\begin{align}\label{eq:iota_entries}
\iota(\bv)_\bi=\binom{k}{k_1,\ldots,k_d}^{-1/2} v_{\bk} \text{ if }
\operatorname{type}(\bi)=\bk=(k_1,\ldots,k_d).
\end{align}
Note that for each fixed $\bk=(k_1,\ldots,k_d)$, we have
$|\{\bi \in [d]^k:\operatorname{type}(\bi)=\bk\}|=\binom{k}{k_1,\ldots,k_d}$,
so this normalization of entries of $\iota(\bv)$ gives the isometric property
$\langle \iota(\bv),\iota(\bu) \rangle=\langle \bv,\bu \rangle$.

We define
\[\bH_k(\bx)=\iota(\bh_k(\bx)) \in (\R^d)^{\odot k}.\]
Entries of $\bH_k(\bx)$ are given by $1/\sqrt{k!}$ times the \emph{monic}
Hermite polynomials of degree $k$, e.g.\
\begin{equation}\label{eq:H2form}
\bH_2(\bx)=\frac{1}{\sqrt{2}}\begin{pmatrix} x_1^2-1 & x_1x_2 & \cdots &
x_1x_d \\ x_1x_2 & x_2^2-1 & \cdots & x_2x_d \\
\vdots & \vdots & \ddots & \vdots \\
x_1x_d & x_2x_d & \cdots & x_d^2-1 \end{pmatrix} \in (\R^d)^{\odot 2}.
\end{equation}

\subsubsection{Hermite polynomial identities}
\begin{lemma}[Hermite explicit, translation, and scaling formulas]\label{lem:hermite-formulas}
For every $k\ge 0$ and all $x,y,\gamma\in\R$ the following hold.
\begin{enumerate}
\item[\textup{(i)}] \textbf{Explicit formula.} \cite[Chapter V]{szeg1975orthogonal}
\begin{equation}\label{lemma:FormualHermite2}
  \He_k(x)=\sqrt{k!}\sum_{i=0}^{\lfloor k/2\rfloor}\frac{(-1)^i}{2^i\,i!\,(k-2i)!}\,x^{k-2i}.
\end{equation}
\item[\textup{(ii)}] \textbf{Translation  formula.} \cite[Chapter V]{szeg1975orthogonal}
\begin{equation}\label{lemma:SumHermite}
  \sqrt{k!}\,\He_k(x+y)=\sum_{i=0}^k \binom{k}{i}\,\sqrt{i!}\,\He_i(x)\,y^{\,k-i}.
\end{equation}
\item[\textup{(iii)}] \textbf{Multiplication (scaling) formula.} \cite[\href{https://dlmf.nist.gov/18.18.E13}{§18.18(iii)}]{NIST:DLMF}.
\begin{equation}\label{lemma:factorHermite}
  \sqrt{k!}\,\He_k(\gamma x)
  =\sum_{i=0}^{\lfloor k/2\rfloor}\gamma^{\,k-2i}(\gamma^2-1)^i
   \binom{k}{2i}\frac{(2i)!}{i!\,2^{\,i}}\,
   \sqrt{(k-2i)!}\,\He_{k-2i}(x).
\end{equation}
\end{enumerate}
\end{lemma}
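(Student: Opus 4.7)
My plan is to prove all three identities from a single unifying tool, namely the exponential generating function for the normalized Hermite polynomials:
\begin{equation}
G(t,x) \;:=\; \sum_{k=0}^\infty \frac{t^k}{\sqrt{k!}}\,\He_k(x) \;=\; e^{tx - t^2/2},
\end{equation}
which follows directly from the Rodrigues-type definition $\He_k(x)=\tfrac{(-1)^k}{\sqrt{k!}}e^{x^2/2}\tfrac{d^k}{dx^k}e^{-x^2/2}$ by expanding $e^{-(x-t)^2/2}$ around $t=0$. Each of (i)--(iii) reduces to rewriting $G(t,\cdot)$ in a convenient product form and matching coefficients of $t^k$.

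For (i), I would factor $G(t,x)=e^{tx}\cdot e^{-t^2/2}$, expand both exponentials as power series in $t$, take the Cauchy product, and extract the coefficient of $t^k$. Only terms with $t^{k-2i}$ from $e^{tx}$ and $t^{2i}$ from $e^{-t^2/2}$ contribute, and a routine simplification yields \eqref{lemma:FormualHermite2}. For (ii), I would exploit the identity
\begin{equation}
e^{t(x+y)-t^2/2} \;=\; e^{ty}\cdot e^{tx-t^2/2},
\end{equation}
expand $e^{ty}=\sum_{j\ge 0} (ty)^j/j!$ and $e^{tx-t^2/2}=\sum_{i\ge 0}\tfrac{t^i}{\sqrt{i!}}\He_i(x)$, and match $t^k$ coefficients to obtain \eqref{lemma:SumHermite}. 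For (iii), I would write
\begin{equation}
e^{t\gamma x - t^2/2} \;=\; \underbrace{e^{(t\gamma)x - (t\gamma)^2/2}}_{=G(t\gamma,x)} \cdot \underbrace{e^{(\gamma^2-1)t^2/2}}_{\text{scalar in }t},
\end{equation}
substitute the series expansions of both factors, and collect coefficients of $t^k$; the scalar factor contributes $t^{2i}$ terms with explicit coefficients $(\gamma^2-1)^i/(2^i i!)$, which after regrouping gives \eqref{lemma:factorHermite}.

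The only real obstacle is bookkeeping: the paper uses the orthonormal (rather than physicists' or probabilists' monic) convention, so factors of $\sqrt{k!}$, $\sqrt{(k-2i)!}$, and the multinomial coefficients $\binom{k}{2i}$ must be tracked carefully when matching $t^k$ coefficients. Since all three identities are classical and already referenced (Szeg\H{o} \cite{szeg1975orthogonal} and \cite{NIST:DLMF}), the proof could alternatively be dispatched by direct citation; I would include the generating-function derivation for completeness and to make the normalization conventions transparent.
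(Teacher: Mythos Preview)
Your proposal is correct. The paper itself provides no proof for this lemma: it simply states the three formulas with citations to Szeg\H{o} and the NIST DLMF and moves on. Your generating-function derivation is the standard route to all three identities and is exactly what one would write if filling in the details; you also correctly anticipated that the paper might just cite, which is what it does. So your write-up is strictly more complete than the paper's treatment, and the only ``difference'' is that the paper omits the argument entirely.
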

 
 
\begin{lemma}\label{lemma:SymmetricTensorProductMalliavinS}
For any $k \geq 1$ and any unit vector $\bw \in \mathbb{S}^{d-1}$,
\[\He_k(\<\bw,\bx\>)=\<\bq_k(\bw),\bh_k(\bx)\>
=\<\bw^{\otimes k},\bH_k(\bx)\>\]
where we define
\[\bq_k(\bw)=\bigg(\binom{k}{k_1,\ldots,k_d}^{1/2}\prod_{j \in [d]}
w_j^{k_j}\bigg)_{\|\bk\|_1=k} \in \R^{B_{d,k}}.\]
Consequently, $\iota(q_k(\bw))=\bw^{\otimes k}$.
\end{lemma}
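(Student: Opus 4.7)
The plan is to derive the first equality from the multivariate Hermite generating function, and then obtain the tensor identity by checking that $\iota(\bq_k(\bw)) = \bw^{\otimes k}$ entrywise.

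First, I would recall the multivariate generating identity
\[
\sum_{\bk \in \Z_{\geq 0}^d} \frac{\bt^{\bk}}{\sqrt{\bk!}}\,\He_{\bk}(\bx)
= \exp\!\Big(\<\bt,\bx\> - \tfrac12 \|\bt\|_2^2\Big),
\qquad \bt \in \R^d,
\]
which follows immediately by multiplying the univariate identity $\sum_{k\ge0} t^k \He_k(y)/\sqrt{k!} = e^{ty - t^2/2}$ coordinate by coordinate. Specializing to $\bt = t\bw$ for $t \in \R$ and using $\|\bw\|_2 = 1$, the right-hand side becomes $\exp(t \<\bw,\bx\> - t^2/2)$, which is precisely the univariate generating function evaluated at $y = \<\bw,\bx\>$. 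Equating coefficients of $t^k$ on the two sides yields
\[
\frac{\He_k(\<\bw,\bx\>)}{\sqrt{k!}}
= \sum_{\|\bk\|_1 = k} \prod_{j=1}^d \frac{w_j^{k_j}}{\sqrt{k_j!}}\,\He_{\bk}(\bx).
\]
Multiplying through by $\sqrt{k!}$ and using $\sqrt{k!/(k_1!\cdots k_d!)} = \binom{k}{k_1,\ldots,k_d}^{1/2}$ recovers the first equality $\He_k(\<\bw,\bx\>) = \<\bq_k(\bw),\bh_k(\bx)\>$.

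For the tensor identity, I would verify $\iota(\bq_k(\bw)) = \bw^{\otimes k}$ entrywise using definition \eqref{eq:iota_entries}. For any multi-index $\bi = (i_1,\ldots,i_k) \in [d]^k$ with $\operatorname{type}(\bi) = \bk = (k_1,\ldots,k_d)$,
\[
\iota(\bq_k(\bw))_{\bi}
= \binom{k}{k_1,\ldots,k_d}^{-1/2} (\bq_k(\bw))_{\bk}
= \prod_{j=1}^d w_j^{k_j}
= \prod_{l=1}^k w_{i_l}
= (\bw^{\otimes k})_{\bi}.
\]
Hence $\iota(\bq_k(\bw)) = \bw^{\otimes k}$. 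The second equality then follows from the isometric property $\<\iota(\bv),\iota(\bu)\> = \<\bv,\bu\>$ applied to $\bv = \bq_k(\bw)$ and $\bu = \bh_k(\bx)$, together with $\bH_k(\bx) = \iota(\bh_k(\bx))$.

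There is no real obstacle; the only subtle point is bookkeeping of the three competing normalizations (the $\sqrt{\bk!}$ in the generating function, the $\binom{k}{k_1,\ldots,k_d}^{1/2}$ appearing in the definition of $\bq_k(\bw)$, and the inverse factor in the definition of $\iota$), which are designed precisely to cancel so that $\iota(\bq_k(\bw))$ equals the unadorned rank-one tensor $\bw^{\otimes k}$.
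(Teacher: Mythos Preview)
Your proof is correct. The paper does not actually prove this lemma in-line: it simply cites \cite[Proposition 1.1.4]{MalliavinCalculusRelated2006}, where the identity is established in the language of multiple Wiener integrals. Your argument is genuinely different and more elementary: the generating-function trick $\sum_{\bk} \bt^{\bk} \He_{\bk}(\bx)/\sqrt{\bk!} = \exp(\langle \bt,\bx\rangle - \|\bt\|_2^2/2)$ with $\bt = t\bw$ gives the expansion of $\He_k(\langle\bw,\bx\rangle)$ directly, with no appeal to stochastic analysis, and the entrywise check of $\iota(\bq_k(\bw)) = \bw^{\otimes k}$ is the natural way to close the loop given the explicit definition \eqref{eq:iota_entries}. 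What your approach buys is self-containment and transparency about where the unit-norm condition on $\bw$ enters (it is exactly what makes $\|t\bw\|_2^2 = t^2$ so the univariate generating function reappears on the right); what the cited reference buys is a unified framework in which this identity is one instance of the correspondence between symmetric tensors and elements of a fixed Wiener chaos.
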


\begin{proof}
This identity is the content of \cite[Proposition
1.1.4]{MalliavinCalculusRelated2006} which is proven in a setting of multiple
Wiener integrals.
\end{proof}

  
    \begin{lemma}\cite{CARAMELLINO2024110239,jansonGaussianHilbertSpaces1997}
        \label{lemma:multivariate_gaussian_hermite}
        Let $\bx \sim \normal(0,\id_d)$. Consider any 
$k_1,\ldots,k_m \geq 1$ and any deterministic
unit vectors $\bw_1,\ldots,\bw_m \in \mathbb{S}^{d-1}$.

        Let $\mathcal{F}$ be the set of all multigraphs on $[m]$ without
self-loops and having the degree sequence $k_1,\ldots,k_m$. For any multigraph
$G \in \mathcal{F}$ and pair of distinct vertices $i,j \in [m]$, let
$\nu_G(i,j) \geq 0$ denote the multiplicity of the edge $(i,j)$ in $G$. Define
        \[
        \operatorname{val}(G) := \prod_{1 \leq i<j \leq m} \frac{1}{\nu_G(i,j)!}
\langle \bw_i, \bw_j \rangle^{\nu_G(i,j)}.
\]
Then
        \[
        \mathbb{E}\left[ \prod_{i=1}^m \He_{k_i}(\<\bw_i,\bx\>) \right] = \left(
\prod_{i=1}^m \sqrt{k_i!} \right) \sum_{G \in \mathcal{F}} \operatorname{val}(G). 
        \]
        \end{lemma}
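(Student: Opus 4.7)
The cleanest route is through the bivariate generating function for Hermite polynomials, combined with the Gaussian Laplace transform. The plan is to match coefficients of $\prod_i t_i^{k_i}$ on both sides of an identity obtained by exponentiating and taking expectations.

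First I would recall the Hermite generating identity
\[
\sum_{k=0}^\infty \frac{t^k}{\sqrt{k!}}\,\He_k(x) \;=\; \exp\!\Big(tx - \tfrac{t^2}{2}\Big),
\]
and apply it with $x = \<\bw_i,\bx\>$ for each $i \in [m]$. Taking the product over $i$ and then the expectation gives
\[
\sum_{k_1,\ldots,k_m \ge 0}\Bigg(\prod_{i=1}^m \frac{t_i^{k_i}}{\sqrt{k_i!}}\Bigg)\,\E\!\Bigg[\prod_{i=1}^m \He_{k_i}(\<\bw_i,\bx\>)\Bigg] \;=\; \E\!\exp\!\Bigg(\sum_{i=1}^m t_i \<\bw_i,\bx\> \;-\; \sum_{i=1}^m \tfrac{t_i^2}{2}\Bigg).
\]

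Next I would evaluate the right-hand side. Since $Y := \sum_i t_i \<\bw_i,\bx\>$ is centered Gaussian with variance $\sum_{i,j} t_i t_j \<\bw_i,\bw_j\>$, and the unit-norm assumption $\|\bw_i\|_2 = 1$ makes the diagonal terms $\sum_i t_i^2$ cancel against the $-\sum_i t_i^2/2$ subtracted in each factor, one obtains
\[
\E\!\exp\!\Bigg(\sum_{i=1}^m t_i \<\bw_i,\bx\> - \sum_{i=1}^m \tfrac{t_i^2}{2}\Bigg) \;=\; \exp\!\Bigg(\sum_{1\le i<j\le m} t_i t_j \<\bw_i,\bw_j\>\Bigg) \;=\; \prod_{i<j} \sum_{\nu_{ij}\ge 0} \frac{(t_i t_j)^{\nu_{ij}} \<\bw_i,\bw_j\>^{\nu_{ij}}}{\nu_{ij}!}.
\]
Expanding the product as a sum over edge-multiplicity assignments $(\nu_{ij})_{i<j}$, the coefficient of $\prod_i t_i^{k_i}$ collects exactly those assignments with $\sum_{j\ne i} \nu_{ij} = k_i$ for every $i$, i.e.\ the multigraphs $G \in \mathcal{F}$ with prescribed degree sequence, and each such $G$ contributes $\prod_{i<j} \<\bw_i,\bw_j\>^{\nu_G(i,j)}/\nu_G(i,j)! = \operatorname{val}(G)$.

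Finally I would match coefficients of $\prod_i t_i^{k_i}$ on both sides: the left-hand side yields $\big(\prod_i \sqrt{k_i!}\,\big)^{-1} \cdot \E[\prod_i \He_{k_i}(\<\bw_i,\bx\>)]$, while the right-hand side yields $\sum_{G\in\mathcal{F}} \operatorname{val}(G)$. Rearranging gives the claim. The only technical point to justify is the interchange of expectation and the absolutely convergent power-series expansion, which is standard since the Gaussian MGF converges in a neighborhood of $0 \in \R^m$. No serious obstacle is anticipated; this is essentially the classical Wick/diagram formula for moments of Hermite polynomials of correlated Gaussians, applied to the specific case where the correlation matrix has unit diagonal.
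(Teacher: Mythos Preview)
Your proof is correct. The paper does not actually give a proof of this lemma; it simply cites it from \cite{CARAMELLINO2024110239,jansonGaussianHilbertSpaces1997} as a known result, so there is no ``paper's own proof'' to compare against. Your generating-function argument is the standard and clean way to establish this Wick/diagram formula: the key identities (Hermite generating function, Gaussian MGF, cancellation of the diagonal via $\|\bw_i\|_2=1$, and the combinatorial identification of the coefficient of $\prod_i t_i^{k_i}$ with multigraphs of prescribed degree sequence) are all handled correctly, and the interchange of expectation and series is justified exactly as you note.
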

\subsection{Concentration inequalities}
\label{sec:ConcentrationInequality}

\subsubsection{Concentration inequalities for Hermite features}

\begin{lemma}\label{lemma:ConcentrationHermiteFeatures}
Let $\bx \sim \normal(0,\id_d)$. Fix any $k \geq 1$. Then there exist constants
$C,C',c>0$ depending only on $k$ such that with probability at least $1-Ce^{-cd}$,
    \begin{equation}
\|\bh_k(\bx)\|_2^2 \leq C'd^k.
    \end{equation} 
\end{lemma}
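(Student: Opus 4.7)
The strategy is to reduce $\|\bh_k(\bx)\|_2^2$ to a deterministic polynomial in $\|\bx\|_2^2$ and then apply the standard chi-squared tail bound. Since $\He_k$ is a univariate polynomial of degree $k$ with coefficients depending only on $k$, and since (by parity) $\He_k(x)^2$ is a polynomial in $x^2$ of degree $k$, I can write $\He_k(x)^2 = \sum_{i=0}^{k} c_{k,i}\,(x^2)^i$ and use $(x^2)^i \leq (1+x^2)^k$ for $0 \leq i \leq k$ to conclude the pointwise estimate
\[
\He_k(x)^2 \;\leq\; C_k\,(1+x^2)^k \qquad \text{for all } x \in \R,
\]
with $C_k$ depending only on $k$.

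Next, using the product form $\He_\bk(\bx) = \prod_{j} \He_{k_j}(x_j)$ (only finitely many $k_j$ are nonzero), this tensorizes to
\[
\He_\bk(\bx)^2 \;=\; \prod_{j=1}^{d} \He_{k_j}(x_j)^2 \;\leq\; C_k'\,\prod_{j=1}^{d} (1+x_j^2)^{k_j},
\]
where $C_k' := \max\{\prod_j C_{k_j} : \|\bk\|_1 = k\}$ is bounded by $(\max_{1\leq j\leq k} C_j)^k$ and hence depends only on $k$. Summing over $\bk$ and applying the multinomial theorem to $y_j := 1+x_j^2 \geq 0$,
\[
\sum_{\|\bk\|_1=k} \prod_{j=1}^{d} (1+x_j^2)^{k_j}
\;\leq\;
\sum_{\|\bk\|_1=k} \binom{k}{k_1,\ldots,k_d}\prod_{j=1}^{d}(1+x_j^2)^{k_j}
\;=\;
\bigl(d + \|\bx\|_2^2\bigr)^k,
\]
where the inequality uses $\binom{k}{k_1,\ldots,k_d} \geq 1$. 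Combining these two steps yields the deterministic estimate
\[
\|\bh_k(\bx)\|_2^2 \;\leq\; C_k'\,\bigl(d + \|\bx\|_2^2\bigr)^k.
\]

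Finally, the standard chi-squared concentration (e.g.\ Bernstein for sub-exponential variables) gives universal constants $C, c > 0$ such that $\P[\|\bx\|_2^2 > 2d] \leq C e^{-cd}$. On the complementary event,
\[
\|\bh_k(\bx)\|_2^2 \;\leq\; C_k'\,(3d)^k \;=\; 3^k C_k' \cdot d^k,
\]
which proves the lemma with $C' = 3^k C_k'$ depending only on $k$. There is no real obstacle; the one conceptual point worth flagging is that the multinomial identity is used here in an \emph{upper}-bound direction on the unweighted sum $\sum_{\bk} \prod_j y_j^{k_j}$, via the trivial inequality $\binom{k}{k_1,\ldots,k_d} \geq 1$, and this is what makes the sum over $B_{d,k} \asymp d^k$ multi-indices collapse neatly to a single power of $d + \|\bx\|_2^2$.
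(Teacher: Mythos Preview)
Your proof is correct, and it takes a genuinely different and more elementary route than the paper's. The paper works in the symmetric tensor representation $\bH_k(\bx)=\iota(\bh_k(\bx))$, derives from the three-term recurrence the identity
\[
\bH_k(\bx)=\sum_{a=0}^{\lfloor k/2\rfloor} c_{k,a}\,\Sym\bigl(\id^{\otimes a}\otimes \bx^{\otimes(k-2a)}\bigr),
\]
and then expands $\|\bH_k(\bx)\|_F^2$ as a sum of inner products $\langle \pi(\id^{\otimes a}\otimes\bx^{\otimes(k-2a)}),\pi'(\id^{\otimes a'}\otimes\bx^{\otimes(k-2a')})\rangle$ over pairs of permutations, each of which it evaluates via a small multigraph argument to obtain a monomial $\|\bx\|_2^{2b}d^{b'}$ with $b+2b'\le k$; chi-squared concentration finishes. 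Your argument instead exploits the product form $\He_\bk(\bx)=\prod_j\He_{k_j}(x_j)$ directly, bounds each factor pointwise by $C_{k_j}(1+x_j^2)^{k_j}$, and collapses the sum over multi-indices via the multinomial inequality $\sum_{\|\bk\|_1=k}\prod_j y_j^{k_j}\le(\sum_j y_j)^k$. This bypasses the tensor/graph machinery entirely and yields the deterministic bound $\|\bh_k(\bx)\|_2^2\le C_k'(d+\|\bx\|_2^2)^k$ in a few lines, at the modest cost of a slightly cruder constant. The paper's approach, by contrast, gives a more structured decomposition that may be reusable elsewhere, but for this lemma your route is cleaner.
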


\begin{proof}
Applying Lemma \ref{lemma:SymmetricTensorProductMalliavinS} and the three-term
recurrence for $\He_k$, for any $k \geq 2$ and $\bw \in \mathbb{S}^{d-1}$,
\begin{align*}
\sqrt{k}\<\bw^{\otimes k},\bH_k(\bx)\>
&=\sqrt{k}\,\He_k(\<\bw,\bx\>)\\
&=\< \bw,\bx\>\He_{k-1}(\<\bw,\bx\>)-\sqrt{k-1}\,\He_{k-2}(\<\bw,\bx\>)\\
&=\< \bw,\bx\>\<\bw^{\otimes k-1},\bH_{k-1}(\bx)\>
-\sqrt{k-1}\<\bw^{\otimes 2},\id\>\<\bw^{\otimes k-2},\bH_{k-2}(\bx)\>\\
&=\Big\<\bw^{\otimes k},\bx \otimes \bH_{k-1}(\bx)-\sqrt{k-1}\,\id \otimes
\bH_{k-2}(\bx)\Big\>\\
&=\Big\<\bw^{\otimes k},\Sym\Big(\bx \otimes \bH_{k-1}(\bx)-\sqrt{k-1}\,\id \otimes
\bH_{k-2}(\bx)\Big)\Big\>
\end{align*}
Since $\{\bw^{\otimes k}:\bw \in \mathbb{S}^{d-1}\}$ spans the symmetric
tensor space $(\R^d)^{\odot k}$, this implies the identity
\[\bH_k(\bx)=\frac{1}{\sqrt{k}}\,
\Sym\Big(\bx \otimes \bH_{k-1}(\bx)-\sqrt{k-1}\,\id \otimes
\bH_{k-2}(\bx)\Big).\]
Iterating this identity for $\bH_{k-1}(\bx),\bH_{k-2}(\bx)$, and applying
the base case $\bH_1(\bx)=\bx$, this shows that for some coefficients
$\{c_{k,a}\}_{a=0,\ldots,\lfloor k/2 \rfloor}$ depending only on $k$,
\[\bH_k(\bx)=\sum_{a=0}^{\lfloor k/2 \rfloor}
c_{k,a}\,\Sym(\id^{\otimes a} \otimes \bx^{\otimes k-2a})
=\sum_{a=0}^{\lfloor k/2 \rfloor} \frac{c_{k,a}}{k!}
\sum_{\text{permutations } \pi \text{ of } [k]}
\pi(\id^{\otimes a} \otimes \bx^{\otimes k-2a})\]
where $\pi(\id^{\otimes a} \otimes \bx^{\otimes k-2a})$ denotes the tensor
$\id^{\otimes a} \otimes \bx^{\otimes k-2a} \in (\R^d)^{\otimes k}$ with indices
permuted by $\pi$. For any two permutations $\pi,\pi'$ of $[k]$ and indices
$a,a' \in \{0,\ldots,\lfloor k/2 \rfloor\}$, denote
\[f_{\pi,\pi',a,a'}(\bx)=\<\pi(\id^{\otimes a} \otimes \bx^{\otimes k-2a}),
\pi'(\id^{\otimes a'} \otimes \bx^{\otimes k-2a'})\>.\]
Then, since $\|\bh_k\|_2^2=\|\bH_k\|_F^2$,
to establish the lemma it suffices to show
\begin{equation}\label{eq:Hermitequadconc}
\P\Big[\big|f_{\pi,\pi',a,a'}(\bx)\big| \geq C'd^k\Big] \leq Ce^{-cd}
\end{equation}
for some $k$-dependent constants $C,C',c>0$ and each fixed $(\pi,\pi',a,a')$.

We claim that for each $(\pi,\pi',a,a')$,
\begin{equation}\label{eq:Hermitequaddecomp}
f_{\pi,\pi',a,a'}(\bx)=\|\bx\|_2^{2b}d^{b'}
\text{ for some integers } b,b' \geq 0 \text{ with } b+2b' \leq k.
\end{equation}
To see this, consider the multi-graph on $[k]$ with a red edge connecting each
pair $(\pi(1),\pi(2))$, $(\pi(3),\pi(4))$, $\ldots$, $(\pi(2a-1),\pi(2a))$
and a blue edge connecting each pair
$(\pi'(1),\pi'(2))$, $\ldots$, $(\pi'(2a'-1),\pi'(2a'))$. Then
$f_{\pi,\pi',a,a'}(\bx)$ factorizes as a product over connected components of
this multi-graph, where each component is either an isolated vertex, a linear
chain, or a simple cycle since each vertex has degree at most 2. 
Each isolated vertex and linear chain contributes a factor $\|\bx\|_2^2$ to
$f_{\pi,\pi',a,a'}(\bx)$, while each simple cycle contributes a factor $d$.
Letting $b$ be the number of isolated vertices/linear chains and $b'$ the number
of cycles, we must have $b+2b' \leq k$, and this shows
\eqref{eq:Hermitequaddecomp}. In particular, $b \leq k-b'$. Thus,
\[\P\Big[\big|f_{\pi,\pi',a,a'}(\bx)\big| \geq C'd^k\Big]
=\P\Big[\|\bx\|_2^{2b} \geq C'd^{k-b'}\Big]
\leq \P\Big[\|\bx\|_2^{2b} \geq C'd^b\Big],\]
and \eqref{eq:Hermitequadconc} now follows from a standard chi-squared tail
bound for $\|\bx\|_2^2$.
\end{proof}

\begin{lemma}
    \label{lemma:BernsteinInequalityHermiteFeatures}
Let $\bx_1,\cdots,\bx_n \overset{iid}{\sim} \normal(0,\id_d)$. Fix any $k \geq
1$, and consider
\begin{align}
        \bh_k(\bX) = \begin{pmatrix} \bh_k(\bx_1)^\top  \\ \bh_k(\bx_2)^\top  \\ \vdots
\\ \bh_k(\bx_n)^\top  \end{pmatrix} \in \R^{n \times B_{d,k}}.
        \end{align}
        Then for any $D>0$, there exist constants $C,K,c>0$ depending only on
$k,D$ such that with probability at least $1-Cd^{-D}-Cne^{-cd}$,
        \begin{equation}
            \|\bh_k(\bX)\|_{\mathrm{op}}^2 \leq (\log d)^K \max(n,d^k).
        \end{equation} 
\end{lemma}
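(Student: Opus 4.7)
The plan is to reduce $\|\bh_k(\bX)\|_{\op}^2$ to $\|\sum_{i=1}^n \bh_k(\bx_i)\bh_k(\bx_i)^\sT\|_{\op}$ and apply the matrix Bernstein inequality. I would first invoke Lemma~\ref{lemma:ConcentrationHermiteFeatures} together with a union bound over $i=1,\ldots,n$ to establish the event
$$\cE := \bigl\{\max_{i \in [n]}\|\bh_k(\bx_i)\|_2^2 \leq C'd^k\bigr\},$$
which holds with probability at least $1 - Cne^{-cd}$. On $\cE$ the rank-one summands $\bZ_i := \bh_k(\bx_i)\bh_k(\bx_i)^\sT$ coincide with the truncated matrices $\bZ_i^{\mathrm{tr}} := \bZ_i \cdot \1\{\|\bh_k(\bx_i)\|_2^2 \leq C'd^k\}$, so the task reduces to bounding $\|\sum_i \bZ_i^{\mathrm{tr}}\|_{\op}$.

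The two inputs required for matrix Bernstein are: (i) the deterministic bound $\|\bZ_i^{\mathrm{tr}}\|_{\op} \leq C'd^k$, which is immediate; and (ii) the matrix variance bound $\|\E[(\bZ_i^{\mathrm{tr}})^2]\|_{\op} \leq C'd^k$. For (ii), I would use the rank-one identity $(\bZ_i^{\mathrm{tr}})^2 = \|\bh_k(\bx_i)\|_2^2\,\bZ_i^{\mathrm{tr}}$ to obtain
$$\E[(\bZ_i^{\mathrm{tr}})^2] \preceq C'd^k \cdot \E[\bh_k(\bx)\bh_k(\bx)^\sT] = C'd^k\,\bI_{B_{d,k}},$$
where the last equality is the orthonormality of the multivariate Hermite basis. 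I would also check that $\|\E\bZ_1^{\mathrm{tr}} - \bI\|_{\op}$ is exponentially small in $d$, via Cauchy--Schwarz and the tail bound underlying Lemma~\ref{lemma:ConcentrationHermiteFeatures}, so that this centering correction is negligible.

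Matrix Bernstein then yields, for any $t>0$,
$$\P\Bigl[\Bigl\|\sum_{i=1}^n \bZ_i^{\mathrm{tr}} - n\,\E\bZ_1^{\mathrm{tr}}\Bigr\|_{\op} \geq t\Bigr] \leq 2B_{d,k}\exp\Bigl(-\frac{t^2/2}{C'nd^k + C'd^k\,t/3}\Bigr).$$
Setting $t = c\,\max(n, d^k)\,(\log d)^K$ with $K$ sufficiently large and using $B_{d,k} \asymp d^k$, the right-hand side is at most $Cd^{-D}$ for any prescribed $D$. Combined with $\sqrt{nd^k} \leq \max(n, d^k)$, this lets me conclude on $\cE$ that $\|\sum_i \bZ_i\|_{\op} \leq n + t + o(1) \leq (\log d)^K \max(n, d^k)$, giving the claim with total failure probability $Cd^{-D} + Cne^{-cd}$. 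The main obstacle is step~(ii): without truncation, the naive bound $\|\E[\|\bh_k(\bx)\|_2^2\,\bh_k(\bx)\bh_k(\bx)^\sT]\|_{\op} \leq \E\|\bh_k(\bx)\|_2^4 \asymp d^{2k}$ (from hypercontractivity) would yield only the suboptimal $\|\bh_k(\bX)\|_{\op}^2 \prec d^k\sqrt{n} + d^k$, insufficient when $n \gg d^k$. The deterministic cap from $\cE$ replaces one factor of $\|\bh_k(\bx)\|_2^2$ by $C'd^k$ and is exactly what restores the correct $\max(n,d^k)$ scaling.
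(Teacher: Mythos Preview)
Your proposal is correct and follows essentially the same approach as the paper: truncate the rank-one summands using Lemma~\ref{lemma:ConcentrationHermiteFeatures}, then apply matrix Bernstein with the key variance bound $\E[(\bZ_i^{\mathrm{tr}})^2] \preceq C'd^k\,\E[\bh_k\bh_k^\top] = C'd^k\,\bI$ coming from the rank-one identity and Hermite orthonormality. The paper handles the centering slightly more directly by bounding $\|\E\bM_i\|_{\op} \leq \|\E\bh_k\bh_k^\top\|_{\op} = 1$ rather than estimating $\|\E\bZ_1^{\mathrm{tr}} - \bI\|_{\op}$, but this is a cosmetic difference.
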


\begin{proof}
We bound $n^{-1}\bh_k(\bX)^\top\bh_k(\bX)$
by first truncating the Hermite polynomial features using Lemma
\ref{lemma:ConcentrationHermiteFeatures}, and then applying the matrix Bernstein
inequality \cite[Theorem 5.4.1]{vershyninHighDimensionalProbabilityIntroduction2018} to the truncated matrix.

Fixing a large enough constant $C_0>0$, define
\[\bM_i=\bh_k(\bx_i)\bh_k(\bx_i)^\top \1\{\|\bh_k(\bx_i)\|_2^2 \leq
C_0d^k\}.\]
Then
\begin{align}
	\frac{1}{n}\bh_k(\bX)^\top\bh_k(\bX)-\E \bM_i&= \frac{1}{n}\sum_{i =
1}^n(\bh_k(\bx_i)\bh_k(\bx_i)^\sT  - \E \bM_i)\\
&=\underbrace{\frac{1}{n}\sum_{i = 1}^n(\bM_i  - \E\bM_i)}_{:=\mathrm{I}}
+\underbrace{\frac{1}{n}\sum_{i = 1}^n\bh_k(\bx_i)\bh_k(\bx_i)^\sT
\1_{\{\|\bh_k(\bx_i)\|_2^2>C_0d^k\}}}_{:=\mathrm{II}}.
\end{align}
Choosing $C_0$ large enough so that $B_{d,k}+d^k \leq C_0d^k$,
Lemma \ref{lemma:ConcentrationHermiteFeatures} implies $\P[\mathrm{II} \neq 0]
\leq ne^{-cd}$. We bound $\mathrm{I}$ using the matrix Bernstein inequality,
noting that $\|n^{-1}\bM_i\|_\op \leq C_0d^k/n$ and
that the matrix variance satisfies
\begin{align}
	\left\|\frac{1}{n^2}\sum_{i = 1}^n
\E(\bM_i-\E\bM_i)^2 \right\|_{\mathrm{op}}
\leq  \frac{1}{n}\|\E \bM_i^2\|_{\mathrm{op}} 
\leq \frac{C_0d^k}{n}\big\|\E \bh_k(\bx_i)\bh_k(\bx_i)^\top\big\|_\op
=\frac{C_0d^k}{n}.
\end{align}
Thus matrix Bernstein's inequality yields
	$\P[\|\mathrm{I}\|_{\mathrm{op}} \ge t] \leq CB_{d,k}
\exp(-c(n/d^k)\min(t^2,t))$. Combining these bounds for $\mathrm{I}$ and
$\mathrm{II}$,
\[\P\Big[\Big\|n^{-1}\bh_k(\bX)^\top\bh_k(\bX)-\E\bM_i\Big\|_\op \geq t\Big]
\leq CB_{d,k}\exp\Big({-}\frac{cn}{d^k}\min(t^2,t)\Big)
+Cn\exp(-cd).\]
Thus, using $B_{d,k} \leq Cd^k$,
$n^{-1}\|\bh_k(\bX)\|_\mathrm{op}^2
\leq \|n^{-1}\bh_k(\bX)^\top \bh_k(\bX)-\E\bM_i\|_{\mathrm{op}}+\|\E\bM_i\|_{\mathrm{op}}$,
and $\|\E\bM_i\|_\op \leq \|\E \bh_k(\bx_i)\bh_k(\bx_i)^\top \|_\op=1$, we have
\[\P\Big[\|\bh_k(\bX)\|_\op^2 \geq (1+t)n\Big]
\leq Cd^k\exp\Big({-}\frac{cn}{d^k}\min(t^2,t)\Big)+Cn\exp(-cd).\]
The lemma follows from taking $t=(\log d)^K\max(\frac{d^k}{n},1)$
for a sufficiently large $K=K(D)$.
\end{proof}

\subsubsection{Concentration inequalities over the unit sphere}

\begin{lemma}\label{lemma:OperatorNormF2c}
Let $\bw_1,\ldots,\bw_p \overset{iid}{\sim} \Unif(\S^{d-1})$, and define
\[\bV_2=\bq_2(\bW)=[\bq_2(\bw_1),\ldots,\bq_2(\bw_p)]^\sT \in \R^{p \times
B_{d,2}}.\]
Then for any $D>0$, there exist constants $C,K>0$ depending only on $D$ such
that with probability at least $1-Cd^{-D}$,
\[\|\bV_2-\E\bV_2\|_\mathrm{op}^2 \leq (\log d)^K\max(1,p/d^2).\]
\end{lemma}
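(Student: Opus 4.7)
The plan is to apply the matrix Bernstein inequality to the $B_{d,2}\times B_{d,2}$ matrix
\[
(\bV_2-\E\bV_2)^{\sT}(\bV_2-\E\bV_2) \;=\; \sum_{i=1}^{p}\bu_i\bu_i^{\sT}, \qquad \bu_i := \bq_2(\bw_i)-\E\bq_2(\bw_1),
\]
whose operator norm equals $\|\bV_2-\E\bV_2\|_{\op}^2$.

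\textbf{Step 1: Bounded rows (no truncation needed).} By Lemma \ref{lemma:SymmetricTensorProductMalliavinS}, the isometry $\iota$ sends $\bq_2(\bw)$ to $\bw^{\otimes 2}$, so $\|\bq_2(\bw_i)\|_2 = \|\bw_i^{\otimes 2}\|_F = \|\bw_i\|_2^2 = 1$ almost surely. In particular $\|\bu_i\|_2\le 2$ and $\|\bu_i\bu_i^{\sT}\|_{\op}\le 4$ deterministically; in contrast to Lemma \ref{lemma:BernsteinInequalityHermiteFeatures}, no truncation is required.

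\textbf{Step 2: Operator norm of the covariance.} For the matrix Bernstein variance parameter, I need
\[
V := \Big\|\sum_{i=1}^{p}\E[(\bu_i\bu_i^{\sT})^2]\Big\|_{\op} \;\le\; 4p\,\|\Cov(\bq_2(\bw_1))\|_{\op}.
\]
For any unit vector $\bv \in \R^{B_{d,2}}$, the isometry yields
\[
\<\bv,\bq_2(\bw_1)\> \;=\; \<\iota(\bv),\bw_1^{\otimes 2}\> \;=\; \bw_1^{\sT}\bA\,\bw_1,
\]
with $\bA := \iota(\bv)$ symmetric and $\|\bA\|_F = 1$. Using the fourth-moment formulas $\E[w_{1i}^4] = 3/(d(d+2))$ and $\E[w_{1i}^2 w_{1j}^2] = 1/(d(d+2))$ for $i \ne j$ on the uniform sphere, a direct computation gives
\[
\Var\bigl(\bw_1^{\sT}\bA\,\bw_1\bigr) \;=\; \frac{2\|\bA\|_F^{2}}{d(d+2)} - \frac{2(\Tr\bA)^{2}}{d^{2}(d+2)} \;\le\; \frac{C}{d^{2}}.
\]
Taking the supremum over unit $\bv$ yields $\|\Cov(\bq_2(\bw_1))\|_{\op} \le C/d^2$ and hence $V \le C'p/d^2$.

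\textbf{Step 3: Bernstein tail and conclusion.} The matrix Bernstein inequality \cite[Theorem 5.4.1]{vershyninHighDimensionalProbabilityIntroduction2018} gives, for every $t>0$,
\[
\P\Big[\Big\|\sum_{i}(\bu_i\bu_i^{\sT} - \E\bu_i\bu_i^{\sT})\Big\|_{\op} \ge t\Big] \;\le\; 2B_{d,2}\,\exp\!\Big({-}c\min(t^{2}/V,\,t/L)\Big),
\]
with $L = 4$ and $V \le C p/d^2$. Choosing $t = (\log d)^{K}\max(1,\sqrt{p}/d)$ with $K = K(D)$ large enough makes the right side at most $d^{-D}$. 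Combining with the triangle inequality
\[
\Big\|\sum_{i}\bu_i\bu_i^{\sT}\Big\|_{\op} \;\le\; \Big\|p\,\Cov(\bq_2(\bw_1))\Big\|_{\op} + \Big\|\sum_{i}(\bu_i\bu_i^{\sT}-\E\bu_i\bu_i^{\sT})\Big\|_{\op} \;\le\; Cp/d^{2} + t,
\]
and noting that $Cp/d^2 + (\log d)^{K}\max(1,\sqrt{p}/d) \lesssim (\log d)^{K'}\max(1,p/d^{2})$ in both regimes $p\le d^{2}$ and $p\ge d^{2}$, yields the claimed bound.

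The only nontrivial step is the covariance bound in Step~2: it is where the specific structure of $\bq_2$ enters, and it is handled cleanly by passing through the tensor isometry $\iota$ to reduce to a variance computation for a quadratic form $\bw^{\sT}\bA\bw$ on the sphere. Everything else is a standard matrix Bernstein argument, made easier than Lemma~\ref{lemma:BernsteinInequalityHermiteFeatures} by the deterministic bound $\|\bq_2(\bw)\|_2=1$.
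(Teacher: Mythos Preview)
Your proof is correct and follows essentially the same approach as the paper: both apply matrix Bernstein to $(\bV_2-\E\bV_2)^\sT(\bV_2-\E\bV_2)=\sum_i \bu_i\bu_i^\sT$, using the deterministic row-norm bound $\|\bq_2(\bw)\|_2=1$ and the covariance bound $\|\Cov(\bq_2(\bw))\|_\op=O(d^{-2})$. The only cosmetic difference is that the paper computes the covariance matrix explicitly (obtaining $\E\boldf\boldf^\top={-}\frac{2}{d^2(d+2)}\be_c\be_c^\top+\frac{2}{d(d+2)}\id$, which also feeds into Corollary~\ref{corollary:OperatorNormF2cdecompose}), whereas you bound its operator norm via the tensor isometry and the variance formula for $\bw^\sT\bA\bw$; both yield the same $O(d^{-2})$ estimate.
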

\begin{proof}
Denote $\boldf=\bq_2(\bw)-\E\bq_2(\bw)$ for $\bw \sim \Unif(\S^{d-1})$,
$\boldf_i=\bq_2(\bw_i)-\E\bq_2(\bw_i)$, and $\bF=\bV_2-\E\bV_2$.
We apply the matrix Bernstein inequality to bound
\[\bF^\top \bF=\sum_{i=1}^p \boldf_i\boldf_i^\top.\] 
Order the entries of $\bq_2(\bw)$ so that the first $d$ are
$w_1^2,\ldots,w_d^2$, and the remaining $\binom{d}{2}$ are
$\{\sqrt{2}\,w_iw_j\}_{i<j}$. Applying
$\E w_1^2=\frac{1}{d}$, $\E w_1^4=\frac{3}{d(d+2)}$, and $\E
w_1^2w_2^2=\frac{1}{d(d+2)}$, we have
$\boldf=(w_1^2-\frac{1}{d},\ldots,w_d^2-\frac{1}{d},\sqrt{2}w_1w_2,
\ldots,\sqrt{2}w_{d-1}w_d)$, and hence
\begin{equation}
\E \boldf\boldf^\top = {-}\frac{2}{d^2(d+2)}\be_c\be_c^\top
+\frac{2}{d(d+2)}\id
\end{equation}
where $\be_c \in \R^{B_{d,2}}$ is the vector with first $d$ entries being 1
and the rest 0. Then
$\|\E\boldf \boldf^\top \|_{\mathrm{op}} \leq \frac{4}{d^2}$, and also
$\|\boldf\boldf^\top \|_{\mathrm{op}}=\boldf^\top \boldf
=\|\bq_2(\bw)\|_2^2-\|\E \bq_2(\bw)\|_2^2=1-\frac{1}{d}$ with probability 1.
This gives for the matrix variance
\begin{equation}
	\bigg\|\sum_{i=1}^{p} \E(\boldf_i \boldf_i^\top  - \E\boldf\boldf^\top
)^2\bigg\|_{\mathrm{op}} \leq \bigg\|\sum_{i=1}^{p}
\E(\boldf_i\boldf_i^\sT)^2\bigg\|_{\mathrm{op}} \leq \frac{4p}{d^2}.
\end{equation}
So the matrix Bernstein inequality gives
\begin{equation}
	\P[\|\bF^\top \bF-\E \bF^\top\bF\|_{\mathrm{op}} \ge t]  \leq 2B_{d,2}
\exp(-c\min(t^2d^2/p,t)).
\end{equation}
Applying $B_{d,2} \leq Cd^2$,
$\|\bF\|_\mathrm{\op}^2 \leq \|\bF^\top\bF-\E\bF^\top\bF\|_\mathrm{op}$,
and $\|\E\bF^\top\bF\|_\mathrm{op}=p\|\E\boldf\boldf^\top\|_{\mathrm{op}}
\leq \frac{4p}{d^2}$, this shows
\[\P[\|\bF\|_\op^2 \geq (t+4)p/d^2] \leq Cd^2
\exp(-c(p/d^2)\min(t^2,t)),\]
and the lemma follows from taking $t=(\log d)^K \max(1,d^2/p)$.
\end{proof}
\begin{corollary}
\label{corollary:OperatorNormF2cdecompose}
Writing out
\begin{equation}
    \bV_2 = \begin{pmatrix}
      w_{1,1}^2 &  w_{1,2}^2 & \cdots & w_{1,d}^2 & \sqrt{2}w_{1,1}w_{1,2} & \cdots & \sqrt{2}w_{1,d-1}w_{1,d} \\
      w_{2,1}^2 &  w_{2,2}^2 & \cdots & w_{2,d}^2 & \sqrt{2}w_{2,1}w_{2,2} & \cdots & \sqrt{2}w_{2,d-1}w_{2,d} \\
        \vdots & \vdots & \ddots & \vdots & \vdots & \ddots & \vdots \\
        w_{p,1}^2 &  w_{p,2}^2 & \cdots & w_{p,d}^2 & \sqrt{2}w_{p,1}w_{p,2} & \cdots & \sqrt{2}w_{p,d-1}w_{p,d}
    \end{pmatrix} \, .
\end{equation} 
we have
\begin{equation}\label{equation:OperatorNormF2cdecompose}
    \bV_2 = \frac{1}{d}\1_{p}\be_c^\sT + \bV_{2c},
\end{equation}
where $\be_c \in \R^{B_{d,2}}$ has first $d$ entries 1 and remaining entries 0,
$\1_p \in \R^p$ is the all-1's vector, and $\bV_{2c}$ is a mean-zero matrix
satisfying, with high probability, $\|\bV_{2c}\|_\op \leq (\log
d)^K\max(1,\sqrt{p/d^2})$.
\end{corollary}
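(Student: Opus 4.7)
The plan is to deduce this decomposition immediately from Lemma \ref{lemma:OperatorNormF2c} by identifying the mean $\E[\bV_2]$ explicitly and then setting $\bV_{2c} := \bV_2 - \E[\bV_2]$.

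First I would compute $\E[\bq_2(\bw)]$ for $\bw \sim \Unif(\S^{d-1})$ coordinate by coordinate. For the first $d$ entries, $\E[w_j^2] = 1/d$ by symmetry. For the remaining entries of the form $\sqrt{2} w_i w_j$ with $i < j$, the sign-symmetry $w_i \mapsto -w_i$ of the uniform distribution on the sphere gives $\E[\sqrt{2}\, w_i w_j] = 0$. Hence $\E[\bq_2(\bw)] = \tfrac{1}{d}\be_c$, and stacking across rows yields
\begin{equation}
\E[\bV_2] = \1_p \cdot \E[\bq_2(\bw)]^\sT = \tfrac{1}{d}\,\1_p \be_c^\sT.
\end{equation}

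Next I would define $\bV_{2c} := \bV_2 - \E[\bV_2]$, which is automatically mean-zero and gives the stated decomposition. The operator-norm bound is then exactly the conclusion of Lemma \ref{lemma:OperatorNormF2c}: for any $D>0$, there exist constants $C,K'>0$ such that $\|\bV_{2c}\|_\op^2 = \|\bV_2 - \E[\bV_2]\|_\op^2 \le (\log d)^{K'}\max(1,p/d^2)$ with probability at least $1 - Cd^{-D}$, which gives $\|\bV_{2c}\|_\op \le (\log d)^{K'/2}\max(1,\sqrt{p/d^2})$.

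There is no real obstacle here: the only content beyond Lemma \ref{lemma:OperatorNormF2c} is the explicit identification of $\E[\bV_2]$, which is a one-line symmetry computation. The corollary simply packages Lemma \ref{lemma:OperatorNormF2c} in the form that is convenient for the later spike/bulk decomposition used in Sections \ref{sec:setting_GET}--\ref{sec:lindeberg-phase-ii}, isolating the rank-one mean component $\tfrac{1}{d}\1_p\be_c^\sT$ (which is responsible for the diverging singular value of size $\Theta(1)$ in the direction $\1_p$ after scaling) from the centered part whose norm is controlled.
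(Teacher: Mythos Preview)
Your proposal is correct and matches the paper's approach: the corollary is stated without proof immediately after Lemma \ref{lemma:OperatorNormF2c}, and the computation $\E[\bq_2(\bw)]=\tfrac{1}{d}\be_c$ you give is already carried out inside the proof of that lemma. Your write-up simply makes explicit the one-line deduction the paper leaves implicit.
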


\begin{lemma}\label{lemma:OperatorNormFk}
Let $\bw_1,\ldots,\bw_p \overset{iid}{\sim} \Unif(\S^{d-1})$, fix $k \geq 3$,
and define
\[\bV_k=\bq_k(\bW)=[\bq_k(\bw_1),\ldots,\bq_k(\bw_p)]^\sT \in \R^{p \times
B_{d,k}}.\]
Then for any $C_0,D>0$, there exist constants $K,C>0$ depending only on
$k,C_0,D$ such that for any $d \geq C$ and $p \leq C_0d^2$,
with probability at least $1-p^{-D}$,
\begin{equation}\label{equ:vkequationbound}
\begin{aligned}
\|\bV_3\bV_3^\top-\bI_p-\frac{3}{d}\bW\bW^\top\|_\op &\leq (\log
d)^K/\sqrt{d} \text{ if } k=3,\\
\|\bV_4\bV_4^\top-\bI_p-\frac{3}{d^2}\1_p\1_p^\top\|_\op &\leq (\log
d)^K/\sqrt{d} \text{ if } k=4,\\
\|\bV_k\bV_k^\top-\bI_p\|_\op &\leq (\log d)^K/\sqrt{d} \text{ if } k \geq
5.
\end{aligned}
\end{equation}
In particular, $\|\bV_k\|_\op \leq C'$ for a constant $C'>0$
depending only on $k,C_0,D$.
%
\end{lemma}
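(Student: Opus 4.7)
The plan is to exploit the exact identity $\bV_k \bV_k^\top = (\bW\bW^\top)^{\odot k}$, which follows from Lemma~\ref{lemma:SymmetricTensorProductMalliavinS} via $\langle \bq_k(\bw_i), \bq_k(\bw_j)\rangle = \langle \bw_i^{\otimes k}, \bw_j^{\otimes k}\rangle = \langle \bw_i, \bw_j\rangle^k$. Hence the task reduces to controlling the $p\times p$ Hadamard power matrix with entries $\langle \bw_i,\bw_j\rangle^k$ (and ones on the diagonal). Since $\bw_i\in\S^{d-1}$, the natural tool is the Gegenbauer / Funk--Hecke expansion of the zonal kernel $(\bu,\bv)\mapsto\langle\bu,\bv\rangle^k$ into spherical harmonics.

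Let $\{Y_{l,m}\}_{m=1}^{N_l}$ be an orthonormal basis of degree-$l$ spherical harmonics on $\S^{d-1}$ and set $K_l(\bu,\bv):=\sum_m Y_{l,m}(\bu)Y_{l,m}(\bv)$. A classical calculation yields scalar coefficients $\alpha_l^{(k,d)}$, vanishing when $k-l$ is odd, such that $\langle \bu,\bv\rangle^k=\sum_{l=0}^k\alpha_l^{(k,d)}K_l(\bu,\bv)$ on $\S^{d-1}\times\S^{d-1}$. Stacking the evaluations into $\bY_l:=[Y_{l,m}(\bw_i)]_{i\in[p],m\in[N_l]}\in\R^{p\times N_l}$ gives the exact matrix decomposition
\begin{equation*}
\bV_k\bV_k^\top=\sum_{l=0}^k\alpha_l^{(k,d)}\,\bY_l\bY_l^\top.
\end{equation*}
Since $\E[\bY_0\bY_0^\top]=\bones\bones^\top$, $\E[\bY_l\bY_l^\top]=N_l\bI_p$ for $l\geq 1$, and $\sum_l\alpha_l^{(k,d)}N_l=1$ (from evaluating at $\bu=\bv$), the mean contributes $\bI_p+\alpha_0^{(k,d)}(\bones\bones^\top-\bI_p)$. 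A direct Gegenbauer computation gives $\alpha_0^{(k,d)}=\E[w_1^k]$, which equals $3/(d(d+2))\approx 3/d^2$ for $k=4$ and $0$ for odd $k$, matching the $\tfrac{3}{d^2}\bones\bones^\top$ spike. For $k=3$, the degree-1 spike is extracted using $Y_{1,m}(\bw)=\sqrt{d}\,w_m$, so $\bY_1\bY_1^\top=d\,\bW\bW^\top$, and $\alpha_1^{(3,d)}=3/(d(d+2))$ yields $\alpha_1^{(3,d)}\bY_1\bY_1^\top=\tfrac{3}{d+2}\bW\bW^\top$, reproducing $\tfrac{3}{d}\bW\bW^\top$ up to an $O(1/d^2)\bW\bW^\top$ correction whose operator norm is $\prec 1/d$ and is absorbed into the residual.

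The bulk of the work, and the main technical obstacle, will be to show that the centered residual
\begin{equation*}
R:=\sum_{l\geq l_0}\alpha_l^{(k,d)}\bigl(\bY_l\bY_l^\top-\E\bY_l\bY_l^\top\bigr)
\end{equation*}
satisfies $\|R\|_\op\prec 1/\sqrt{d}$, with $l_0=3$ for $k=3$, $l_0=2$ for $k=4$, and $l_0=1$ for $k\geq 5$. The rows of $\bY_l$ are i.i.d.\ random vectors $Y_l(\bw_i)\in\R^{N_l}$ with covariance $\bI_{N_l}$ and deterministic norm $\sqrt{K_l(\bw_i,\bw_i)}=\sqrt{N_l}$ (the last identity is a rotation-invariance consequence of the orthonormality of $\{Y_{l,m}\}$). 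For each $l\geq l_0$ we have $N_l\asymp d^l/l!\gg p\asymp d^2$, and I plan to apply the matrix Bernstein inequality to $\bY_l^\top\bY_l=\sum_{i=1}^pY_l(\bw_i)Y_l(\bw_i)^\top$ with almost-sure operator-norm bound $N_l$ and matrix variance $pN_l\,\bI_{N_l}$, together with the spectral equivalence of $\bY_l\bY_l^\top$ and $\bY_l^\top\bY_l$ restricted to nonzero eigenvalues (a Marchenko--Pastur type argument in the regime $p\ll N_l$), to obtain $\|\bY_l\bY_l^\top-N_l\bI_p\|_\op\prec\sqrt{pN_l}$. Combined with the explicit Gegenbauer coefficient estimates $|\alpha_l^{(k,d)}|N_l\asymp d^{-(k-l)/2}$, derived from the monomial-to-Gegenbauer change of basis via Lemma~\ref{lem:hermite-formulas} and the three-term recursion, this produces $|\alpha_l^{(k,d)}|\sqrt{pN_l}\prec d^{-(k-1)/2}\leq d^{-1/2}$ for every $l\geq l_0$, and summing over finitely many levels closes the argument. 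The delicate bookkeeping lies in (i) tracking the $d$-dependence of $\alpha_l^{(k,d)}$ at each order of $l$, and (ii) justifying the isotropic-matrix concentration bound with the correct polylogarithmic factors, since the row vectors $Y_l(\bw_i)$ are not sub-Gaussian in the standard sense (though they admit uniform $\ell_\infty$ control up to polylog factors by spherical concentration).
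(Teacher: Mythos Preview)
Your overall framework coincides with the paper's: both use the identity $\bV_k\bV_k^\top=(\bW\bW^\top)^{\odot k}$ and the Gegenbauer/Funk--Hecke expansion of $\langle\bu,\bv\rangle^k$, and both isolate the low-degree spike contributions exactly as you describe (the paper writes the expansion in the orthonormal basis $Q_l$ rather than via $K_l$, but the content is identical). The divergence is at the concentration step for the high-degree residual, and there your matrix-Bernstein argument has a genuine gap.

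You propose to apply Bernstein to $\bY_l^\top\bY_l=\sum_{i=1}^p Y_l(\bw_i)Y_l(\bw_i)^\top\in\R^{N_l\times N_l}$ and then transfer to $\|\bY_l\bY_l^\top-N_l\bI_p\|_\op$ via ``spectral equivalence''. This fails on two counts. First, $\E[\bY_l^\top\bY_l]=p\bI_{N_l}$, not $N_l\bI_{N_l}$; when $N_l\gg p$ (the relevant regime for $l\geq 3$) the matrix $\bY_l^\top\bY_l$ has $N_l-p$ zero eigenvalues, so concentration of $\bY_l^\top\bY_l$ around $p\bI_{N_l}$ says nothing about the nonzero eigenvalues being close to $N_l$---which is the target, since $\|\bY_l\bY_l^\top-N_l\bI_p\|_\op=\max_i|\lambda_i(\bY_l\bY_l^\top)-N_l|$. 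Second, even disregarding the target mismatch, matrix Bernstein in ambient dimension $N_l$ with per-summand bound $L=N_l$ and variance $\sigma^2=pN_l$ is governed by the linear regime: to overcome the $N_l$ dimensional prefactor one needs $t/N_l\gtrsim\log N_l$, so the best achievable deviation is $t\asymp N_l\log d$, not $\sqrt{pN_l}$. Concretely, for $l=3$ and $p\asymp d^2$ one has $\sqrt{pN_l}\asymp d^{5/2}>p$, which forces $N_lt\gg\sigma^2$ in the Bernstein denominator. The estimate $\|\bY_l\bY_l^\top-N_l\bI_p\|_\op\prec\sqrt{pN_l}$ is in fact correct, but its proof requires the moment method for Gegenbauer kernel matrices; the paper does not reprove it and instead cites \cite[Proposition~13]{misiakiewicz2024non} as a black box. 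For the leftover $l=2$ piece in the $k=4$ case, the paper also does not use your route but rather recycles the already-established decomposition $\bV_2=\bV_{2c}+d^{-1}\1_p\be_c^\top$ with $\|\bV_{2c}\|_\op\prec 1$. Finally, for $k\geq 5$ the paper bypasses all spectral machinery with the crude Frobenius bound $\|(\bW\bW^\top)^{\odot k}-\bI_p\|_F^2=\sum_{i\neq j}\langle\bw_i,\bw_j\rangle^{2k}\leq p^2[(\log d)^K/\sqrt{d}]^{2k}\prec d^{-1}$, which is more elementary than your spectral approach.
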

\begin{proof}
Applying the isometry $\<\bq_k(\bw_i),\bq_k(\bw_j)\>=\<\bw_i^{\otimes
k},\bw_j^{\otimes k}\>=\<\bw_i,\bw_j\>^k$, we have
\[\bV_k\bV_k^\top = (\bW\bW^\top)^{\odot k}\]
For $k = 3$ and $4$, let us decompose each entry $\< \bw_i,\bw_j\>^k$ of this
matrix in the basis
$\{Q_k\}_{k\geq 0}$ of Gegenbauer polynomials that are orthonormal with respect
to the first coordinate $w_1$ of a vector $\bw \sim \Unif(\S^{d-1})$. Then
\begin{align*}
\<\bw_i,\bw_j\>^3 &= c_{3,1} Q_1 (\<\bw_i,\bw_j\>)+ c_{3,3} Q_3
(\<\bw_i,\bw_j\>),\\
\<\bw_i,\bw_j\>^4 &= c_{4,0} + c_{4,2} Q_2 (\<\bw_i,\bw_j\>)+ c_{4,4} Q_4
(\<\bw_i,\bw_j\>),
\end{align*}
where $Q_0(x)=1$, $Q_1(x)=\sqrt{d}\,x$, $Q_2(x)=c_2(x^2-1/d)$ for a quantity
$c_2 \asymp d$, so
\[
\begin{aligned}
  c_{3,1}&=\E[\<\bw_i,\bw_j\>^3 Q_1(\<\bw_i,\bw_j\>)]
=\sqrt{d}\,\E[\<\bw_i,\bw_j\>^4] = \frac{3}{\sqrt{d} (d+2)},\\
c_{4,0}&=\E[\<\bw_i,\bw_j\>^4] = \frac{3}{d(d+2)},\\
|c_{4,2}|&=|\E[\<\bw_i,\bw_j\>^4 Q_2(\<\bw_i,\bw_j\>)]|
\leq \frac{C}{d^2}.
\end{aligned}
\]
By \cite[Proposition 13]{misiakiewicz2024non}, for any $D>0$, there exist
constants $K,C>0$ such that for any $d \geq C$ and $p \leq B_{d,3}/2 \asymp
d^3$, with probability at least $1-p^{-D}$,
\begin{align}
\|(\bW\bW^\top)^{\odot 3}-c_{3,1}Q_1(\bW\bW^\top)-(1-c_{3,1}Q_1(1))\bI_p\|_\op
&\leq (\log d)^K\sqrt{p/d^3},\label{eq:MS24a3rd}\\
\|(\bW\bW^\top)^{\odot
4}-c_{4,0}\bI_p-c_{4,2}Q_2(\bW\bW^\top)-(1-c_{4,0}-c_{4,2}Q_2(1))\bI_p\|_\op
&\leq (\log d)^K\sqrt{p/d^3},\label{eq:MS24a4th}
\end{align}
where $Q_1(\cdot),Q_2(\cdot)$ are applied to $\bW\bW^\top$ entrywise. For $p
\leq C_0d^2$, a standard covering net argument shows $\|\bW\|_\op \leq
C\sqrt{d}$ with probability at least $1-e^{-cd} \geq 1-d^{-D}$ for large enough
$d$, so
\[\Big\|{-}c_{3,1}Q_1(1)\bI_p
+c_{3,1}Q_1(\bW\bW^\top)-\frac{3}{d}\bW\bW^\top\Big\|_\op
=\Big\|{-}\frac{3}{d+2}\bI_p
+\Big(\frac{3}{d+2}-\frac{3}{d}\Big)\bW\bW^\top\Big\|_\op \leq
\frac{C}{d}.\]
Combining these bounds with \eqref{eq:MS24a3rd} implies the desired statements
for $\bV_3$. Applying the decomposition $\bV_2=d^{-1}\1_p\be_c+\bV_{2c}$ in 
Corollary \ref{corollary:OperatorNormF2cdecompose}, we have also
\begin{align*}
\Big\|{-}c_{4,2}Q_2(1)\bI_p+c_{4,2}Q_2(\bW\bW^\top)\Big\|_\op
&\leq \frac{C}{d}
+\frac{C}{d}\|(\bW\bW^\top)^{\otimes 2}-d^{-1}\1_p\1_p^\top\|_\op\\
&=\frac{C}{d}+\frac{C}{d}\|\bV_2\bV_2^\top-d^{-1}\1_p\1_p^\top\|_\op\\
&=\frac{C}{d}+\frac{C}{d}\|\bV_{2c}\bV_{2c}^\top+d^{-1}\1_p\be_c^\top\bV_{2c}^\top+d^{-1}\bV_{2c}\be_c\1_p^\top\|_\op\\
&\leq (\log d)^K/\sqrt{d},
\end{align*}
the last inequality holding for $p \leq C_0d^2$ with probability $1-p^{-D}$, for
some $K=K(C_0,D)>0$. Combining this bound with \eqref{eq:MS24a4th} shows the
desired statements for $\bV_4$.

Finally, the statement for $k \geq 5$ follows from the simple entrywise bound
$\sup_{i \neq j} |\<\bw_i,\bw_j\>| \leq (\log d)^K/\sqrt{d}$ with probability
$1-d^{-D}$ and some $K=K(D)>0$, hence
\[\|(\bW\bW^\top)^{\odot k}-\bI_p\|_\op^2 \leq
\|(\bW\bW^\top)^{\odot k}-\bI_p\|_F^2 = \sum_{i\neq j} \<\bw_i,\bw_j \>^{2k}
\leq p^2[(\log d)^K/\sqrt{d}]^{2k} \leq (\log d)^{K'}/d.\]
\end{proof}
\begin{lemma}\label{lemma:OperatorNormZkFk}
   Fix any $k \ge 3$, 
let $\bw_1,\cdots,\bw_p \overset{iid}{\sim} \Unif(\S^{d-1})$ and
$\bx_1,\cdots,\bx_n \overset{iid}{\sim} \normal(0,\bI_d)$ be independent,
and consider
\begin{equation}
        \bZ_k = \begin{pmatrix}
             \He_k(\bw_1^{\sT}\bx_1) & \He_k(\bw_1^{\sT}\bx_2) & \cdots & \He_k(\bw_1^{\sT}\bx_n) \\ 
                \He_k(\bw_2^{\sT}\bx_1) & \He_k(\bw_2^{\sT}\bx_2) & \cdots & \He_k(\bw_2^{\sT}\bx_n) \\ 
                \vdots & \vdots & \ddots & \vdots \\ 
                \He_k(\bw_p^{\sT}\bx_1) & \He_k(\bw_p^{\sT}\bx_2) & \cdots & \He_k(\bw_p^{\sT}\bx_n)
        \end{pmatrix}.
    \end{equation}
For any constants
$C_0,D>0$, there exist $C,K>0$ depending only on $k,C_0,D$ such that for any $d
\geq C$ and $n,p \leq C_0d^2$, with probability at least $1-d^{-D}$,
\begin{equation}
        \|\bZ_k\|_{\op} \leq (\log d)^K d.
\end{equation}
\end{lemma}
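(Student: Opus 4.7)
The plan is to factor $\bZ_k$ through the Hermite feature matrix. By Lemma~\ref{lemma:SymmetricTensorProductMalliavinS}, each entry $\He_k(\bw_j^\sT \bx_l) = \langle\bq_k(\bw_j), \bh_k(\bx_l)\rangle$, so
\[\bZ_k = \bV_k\, \bh_k(\bX)^\sT \in \R^{p \times n},\]
where $\bV_k = [\bq_k(\bw_1), \ldots, \bq_k(\bw_p)]^\sT$ and $\bh_k(\bX) = [\bh_k(\bx_1), \ldots, \bh_k(\bx_n)]^\sT$. By Lemma~\ref{lemma:OperatorNormFk} (applicable for $k \geq 3$, $p \leq C_0 d^2$), $\|\bV_k\|_{\op} \prec 1$. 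Naively combining this with Lemma~\ref{lemma:BernsteinInequalityHermiteFeatures} only yields $\|\bZ_k\|_{\op} \prec d^{k/2}$, which is too weak. The key observation is that $\bh_k(\bX)$ enters only through its restriction to the $p$-dimensional row span of $\bV_k$. Writing the reduced SVD $\bV_k = \bQ_1 \bS \bQ_2^\sT$ with $\bQ_2 \in \R^{B_{d,k} \times p}$ having orthonormal columns, we have $\bV_k^\sT \bV_k = \bQ_2 \bS^2 \bQ_2^\sT \preceq \|\bV_k\|_{\op}^2\, \bQ_2 \bQ_2^\sT$, which gives
\[\|\bZ_k\|_{\op}^2 = \|\bh_k(\bX) \bV_k^\sT \bV_k \bh_k(\bX)^\sT\|_{\op} \leq \|\bV_k\|_{\op}^2\, \|\bh_k(\bX) \bQ_2\|_{\op}^2.\]
It therefore suffices to show $\|\bh_k(\bX) \bQ_2\|_{\op}^2 \prec d^2$.

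Condition on $\bW$, so that $\bQ_2$ is a fixed orthonormal $B_{d,k} \times p$ matrix independent of $\bX$. The rows of $\bh_k(\bX) \bQ_2$ are i.i.d.\ vectors $\bu_l = \bQ_2^\sT \bh_k(\bx_l) \in \R^p$ with $\E \bu_l \bu_l^\sT = \bQ_2^\sT \bQ_2 = \bI_p$, and I would apply matrix Bernstein to $\sum_l (\bu_l \bu_l^\sT - \bI_p)$. The crucial input, and main obstacle, is a high-probability uniform bound $\max_{l\in[n]}\|\bu_l\|^2 \prec p$. The scalar
\[F(\bx) = \|\bQ_2^\sT \bh_k(\bx)\|_2^2 = \bh_k(\bx)^\sT (\bQ_2 \bQ_2^\sT)\, \bh_k(\bx)\]
is a degree-$2k$ polynomial in the Gaussian $\bx$ with mean $\Tr(\bQ_2\bQ_2^\sT) = p$. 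By Gaussian hypercontractivity, $\|F - p\|_{L^q} \leq (q-1)^k \sqrt{\Var F}$, so the hard part is showing $\Var F \leq C_k p$. This can be obtained by a Wiener chaos calculation: using the product formula (Lemma~\ref{lemma:SymmetricTensorProductMalliavin}), each $h_a h_b$ decomposes as $\sum_r \frac{r!}{k!}\binom{k}{r}^2 I_{2k-2r}(\iota(\be_a) \,\tilde\otimes_r\, \iota(\be_b))$, and the projection of $F - \E F$ onto each chaos $\cH_{2m}$ has squared $L^2$-norm proportional to the Frobenius norm of a tensor contraction of $\bQ_2 \bQ_2^\sT$. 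Since $\bQ_2 \bQ_2^\sT$ is an orthogonal projection of rank $p$, so that $\|\bQ_2 \bQ_2^\sT\|_F^2 = p$, each such Frobenius norm is bounded by $C\, p$. Summing over the finitely many chaos levels $m = 1, \ldots, k$ gives $\Var F \leq C_k p$, and a union bound over $l \in [n]$ with $n \asymp d^2 \gg (\log d)^{2k}$ yields $\max_l \|\bu_l\|^2 \leq 2p$ with probability $1 - d^{-D}$.

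With this uniform bound in hand, I would apply Tropp's matrix Bernstein inequality to $\bX_l = \bu_l \bu_l^\sT - \bI_p$ on the truncation event, for which $\|\bX_l\|_{\op} \leq 3p$ and $\|\sum_l \E \bX_l^2\|_{\op} \leq 3np$. This gives
\[\Big\|\sum_{l=1}^n (\bu_l \bu_l^\sT - \bI_p)\Big\|_{\op} \leq C\sqrt{np \log p} + Cp \log p \prec d^2,\]
so $\|\bh_k(\bX)\bQ_2\|_{\op}^2 = \|\sum_l \bu_l \bu_l^\sT\|_{\op} \leq n + C\sqrt{np \log p} + Cp\log p \prec d^2$, and combining with $\|\bV_k\|_{\op} \prec 1$ yields $\|\bZ_k\|_{\op} \prec d$, i.e.\ $\|\bZ_k\|_{\op} \leq (\log d)^K d$ with the required probability after integrating out $\bW$. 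The main technical obstacle is the variance bound $\Var F \leq C_k p$ via the Wiener chaos decomposition; once this is in place, the subsequent matrix Bernstein step is routine.
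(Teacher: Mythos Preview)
Your proposal contains a genuine gap: the variance bound $\Var F \leq C_k\,p$ does \emph{not} follow merely from $\|\bQ_2\bQ_2^\sT\|_F^2=p$. As a counterexample, take $k=2$ and let $P$ project onto $\operatorname{span}\{x_1x_c:c=2,\ldots,p+1\}$ (an orthonormal family in $\cH_2$). Then $F(\bx)=x_1^2\sum_{c=2}^{p+1}x_c^2$, with $x_1^2$ independent of the $\chi^2_p$ sum, and one computes $\Var F=2p^2+6p$. So for an arbitrary rank-$p$ projection the relevant contraction norms can be of order $p^2$, not $p$; your sentence ``each such Frobenius norm is bounded by $Cp$'' is where the argument breaks. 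With only $\Var F=O(p^2)$, hypercontractivity gives no useful tail bound for $F$ and the subsequent matrix Bernstein step collapses.

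The paper sidesteps this entirely. It applies matrix Bernstein directly to $\bZ_k\bZ_k^\sT=\sum_{i=1}^n \bz_i\bz_i^\sT$ with $\bz_i=\bV_k\bh_k(\bx_i)$, and obtains the truncation bound $\|\bz_i\|_2^2\prec p$ by the trivial entrywise observation $|\He_k(\bw_j^\sT\bx_i)|\prec 1$ (each entry is a fixed-degree polynomial of a standard Gaussian). The variance proxy is then $\|\E[\bz_i\bz_i^\sT\mid\bW]\|_\op=\|\bV_k\bV_k^\sT\|_\op\prec 1$ by Lemma~\ref{lemma:OperatorNormFk}, and Bernstein gives the result. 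No SVD and no Wiener-chaos variance computation are needed.

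Your detour through $\bQ_2$ can be rescued, but only by using the specific structure of $\bV_k$: since Lemma~\ref{lemma:OperatorNormFk} shows $\bV_k\bV_k^\sT$ is close to the identity (plus bounded-norm corrections), $\sigma_{\min}(\bV_k)$ is bounded below, so $\|\bQ_2^\sT\bh_k(\bx)\|_2\leq\sigma_{\min}(\bV_k)^{-1}\|\bV_k\bh_k(\bx)\|_2\prec\sqrt{p}$ by the same entrywise bound. But at that point you have reproduced the paper's argument with an unnecessary extra step.
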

\begin{proof}
Denote the columns of $\bZ_k$ as $\bz_1,\ldots,\bz_n$, where
$\bz_i=\bV_k\bh(\bx_i)$.
Note that by Gaussian hypercontractivity and a union bound,
for any $D>0$, there exists $K=K(C_0,D)>0$
such that with probability at least $1-d^{-D}$, we have
$\sup_{i,j} |\He_k(\bw_j^\top \bx_i)| \leq (\log d)^K$, and hence
$\sup_i \|\bz_i\|_2^2 \leq n(\log d)^{2K}$. Thus, defining
\[\bM_i=\bz_i\bz_i^\top\1_{\{\|\bz_i\|_2^2 \leq n(\log d)^{2K}\}}\]
we have with probability at least $1-d^{-D}$ that
\begin{equation}\label{eq:ZkZkequalssumM}
\frac{1}{n}\bZ_k\bZ_k^\top=\frac{1}{n} \sum_{i=1}^n \bM_i.
\end{equation}
We apply the matrix Bernstein inequality conditional on $\bW$
to bound $n^{-1}\sum_i (\bM_i-\E[\bM_i \mid \bW])$,
noting that $\|n^{-1}(\bM_i-\E[\bM_i \mid \bW])\|_\op \leq 2(\log d)^{2K}$,
and the matrix variance is bounded as
    \begin{align}
\Big\|\sum_{i = 1}^n\frac{1}{n^2}\E[(\bM_i-\E[\bM_i \mid \bW])^2 \mid \bW]\Big\|
&\leq \Big\|\sum_{i = 1}^n\frac{1}{n^2}\E[\bM_i^2 \mid \bW]\Big\|_\op\\
&\leq (\log d)^{2K}\|\E[\bz_i\bz_i^\top \mid \bW]\|_\op
=(\log d)^{2K}\|\bV_k\bV_k^\top\|_\op.
\end{align}
    Then by the matrix Bernstein inequality,
\[\P\bigg[\Big\|\frac{1}{n}\sum_{i=1}^n(\bM_i-\E[\bM_i \mid
\bW])\Big\|_\op \ge t\;\bigg|\;\bW\bigg] \leq Cn\exp\bigg(-\frac{t^2/2}{(\log
d)^{2K}\|\bV_k\bV_k^\sT\|_\op+{2t(\log d)^{2K}}}\bigg).\]
Applying $\|\bV_k\|_\op \leq (\log d)^K$ with probability $1-d^{-2D}$ by
Lemma \ref{lemma:OperatorNormFk}, and
taking $t=(\log d)^{K'}$ for some sufficiently large $K'=K'(D)>0$, this implies
with probability at least $1-d^{-D}$ that
\begin{equation}\label{eq:Mmatrixbernstein}
\Big\|\frac{1}{n}\sum_{i=1}^n(\bM_i-\E[\bM_i \mid \bW])\Big\|_\op
\leq (\log d)^{K'}.
\end{equation}
Finally, note that with probability $1-d^{-D}$, for every $i=1,\ldots,n$,
\begin{equation}\label{eq:Mmatrixbernsteinmean}
\|\E[\bM_i \mid \bW]\|_\op \leq \|\E[\bz_i\bz_i^\top \mid \bW]\|_\op
=\|\bV_k\bV_k\|_\op \leq (\log d)^{2K},
\end{equation}
the last inequality applying again $\|\bV_k\|_\op \leq (\log d)^K$
by Lemma \ref{lemma:OperatorNormFk}. Combining \eqref{eq:ZkZkequalssumM},
\eqref{eq:Mmatrixbernstein}, and \eqref{eq:Mmatrixbernsteinmean} shows
$n^{-1}\|\bZ_k\|_\op^2 \leq (\log d)^{K''}$ with probability $1-d^{-D}$
for a constant $K''=K''(D)>0$,
which implies the lemma in light of the assumption $n \leq C_0d^2$.
\end{proof}

\end{document}